\newtheorem{theorem}{Theorem}[section]
\newtheorem{corollary}[theorem]{Corollary}
\newtheorem{proposition}[theorem]{Proposition}
\newtheorem{lemma}[theorem]{Lemma}
\newtheorem*{theorem*}{Theorem}%[section]
\theoremstyle{definition}
\newtheorem{definition}[theorem]{Definition}
\newtheorem{example}[theorem]{Example}
\newtheorem{remark}[theorem]{Remark}
\newtheorem{notation}[theorem]{Notation}
\newtheorem{assumption}[theorem]{Assumption}
\newtheorem{fact}[theorem]{Fact}
\newtheorem{condition}[theorem]{Condition}
\newtheorem{problem}[theorem]{Problem}
\newtheorem*{definition*}{Definition}
\newcommand{\darkblue}{\color{blue!75!black}}
\newcommand{\defn}[1]{\textsl{\darkblue #1}}
\newcommand{\id}{\mathrm{id}}         %%% id
\newcommand{\se}{\subseteq}           %%% inclusion
\newcommand{\ppr}{^{\prime}}          %%% '
\newcommand{\Ker}{\mathrm{Ker}}       %%% Ker
\newcommand{\co}{\colon}              %%% colon
\newcommand{\uas}{^{\ast}}            %%% ^*
\newcommand{\sas}{_{\ast}}            %%% _*
\newcommand{\Ext}{\mathrm{Ext}}       %%% Ext
\newcommand{\EQ}{\Leftrightarrow}     %%% Equivalence
\newcommand{\dra}{\dashrightarrow}  %%% two cell arrow (short)
\newcommand{\lra}{\longrightarrow}    %%% long right arrow
\newcommand{\afr}{\mathfrak{a}}  %%% frak
\newcommand{\sfr}{\mathfrak{s}}  %%% frak
\newcommand{\Ebb}{\mathbb{E}}        %%% mathbb E
\newcommand{\Fbb}{\mathbb{F}}        %%% mathbb F
\newcommand{\Zbb}{\mathbb{Z}}        %%% mathbb Z
\newcommand{\zb}{\mathbb{Z}}         %%% mathbb Z
\newcommand{\Csc}{\mathscr{C}}   %%% scr
\newcommand{\Dsc}{\mathscr{D}}   %%% scr
\newcommand{\Esc}{\mathscr{E}}   %%% scr
\newcommand{\Fsc}{\mathscr{F}}   %%% scr
\newcommand{\Tsc}{\mathscr{T}}   %%% scr
\newcommand{\Acal}{\mathcal{A}} %%% cal
\newcommand{\Bcal}{\mathcal{B}} %%% cal
\newcommand{\Ccal}{\mathcal{C}} %%% cal
\newcommand{\Dcal}{\mathcal{D}} %%% cal
\newcommand{\Ical}{\mathcal{I}} %%% cal
\newcommand{\Ncal}{\mathcal{N}} %%% cal
\newcommand{\Pcal}{\mathcal{P}} %%% cal
\newcommand{\Rcal}{\mathcal{R}} %%% cal
\newcommand{\Scal}{\mathcal{S}} %%% cal
\newcommand{\Tcal}{\mathcal{T}} %%% cal
\newcommand{\Wcal}{\mathcal{W}} %%% cal
\newcommand{\Xcal}{\mathcal{X}} %%% cal
\newcommand{\Ycal}{\mathcal{Y}} %%% cal
\newcommand{\und}{\underline}   %%% underline
\newcommand{\ovl}{\overline}    %%% overline
\newcommand{\ov}{\overset}      %%% overset
\newcommand{\ti}{\times}        %%% times
\newcommand{\cat}{\mathscr{C}}
\newcommand{\rc}{\mathcal{R}}
\newcommand{\uc}{\mathcal{U}}
\newcommand{\cA}{{\mathcal A}}
\newcommand{\ct}{{\mathcal T}}
\newcommand{\add}{\operatorname{add}}
\newcommand{\infl}{\rightarrowtail}
\newcommand{\defl}{\twoheadrightarrow}
\newcommand{\pr}{\operatorname{pr}}
\newcommand{\copr}{\operatorname{copr}}
\newcommand{\kzero}[1]{K_0(#1)}
\newcommand{\kzerosp}[1]{K_0^\text{sp}(#1)}
\newcommand{\ind}{\operatorname{ind}}
\newcommand{\coind}{\operatorname{coind}}
\newcommand{\susp}{\Sigma}
\newcommand{\al}{\alpha}         %%% alpha
\newcommand{\vp}{\varphi}        %%% varphi
\newcommand{\vph}{\varphi}
\newcommand{\kap}{\kappa}        %%% kappa
\newcommand{\lam}{\lambda}       %%% lambda
\newcommand{\om}{\omega}         %%% omega
\newcommand{\thh}{\theta}        %%% theta
\newcommand{\del}{\delta}        %%% delta
\newcommand{\ep}{\varepsilon}    %%% varepsilon
\newcommand{\eps}{\varepsilon}
\newcommand{\Sig}{\Sigma}        %%% Sigma
\newcommand{\Om}{\Omega}         %%% Omega
\newcommand{\bsm}{\begin{smallmatrix}}
\newcommand{\esm}{\end{smallmatrix}}
\numberwithin{equation}{section}
\newcommand{\proj}{\operatorname{proj}} %%% proj
\newcommand{\inj}{\operatorname{inj}} %%% inj
\newcommand{\soc}{\operatorname{soc}} %%% soc
\newcommand{\Fac}{\operatorname{Fac}} %%% Fac
\newcommand{\topp}{\mathrm{top}} %%% top
\newcommand{\modd}{\operatorname{mod}}  %%% mod
\newcommand{\End}{\mathrm{End}}   %%% End
\newcommand{\thick}{\mathrm{thick}} %%% thick
\newcommand{\blossom}{\text{\ding{96}}} % blossom
\newcommand{\bls}{^\blossom} % blossom
\newcommand{\CR}{\ovl{\C}_{\Rcal}}        %%% C_R
\newcommand{\Rint}{{}^{\perp_1}\Rcal\cap\Rcal^{\perp_1}}      %%% 
\newcommand{\confX}{X\infl 0\defl Y}      %%% 
\newcommand{\Hom}{\mathrm{Hom}}      %%% Hom
\newcommand{\Mod}{\mathrm{Mod}}      %%% Mod
\newcommand{\Cok}{\operatorname{Cok}}      %%% Cok
\newcommand{\C}{\Csc}               %%% C
\newcommand{\E}{\Ebb}               %%% E
\newcommand{\ush}{^\sharp}           %%% ^sharp
\newcommand{\ssh}{_\sharp}           %%% _sharp
\newcommand{\Proj}{\mathrm{Proj}}    %%% Proj
\newcommand{\Inj}{\mathrm{Inj}}      %%% Inj
\newcommand{\op}{^\mathrm{op}}       %%% op
\newcommand{\gldim}{\operatorname{gl.dim}}
\newcommand{\Cone}{\operatorname{Cone}}
\newcommand{\Fib}{\operatorname{Fib}}
\newcommand{\CEs}{(\C,\E,\sfr)}           %%% (C,E,s)
\newcommand{\EbbI}{\Ebb_{\mathrm{I}}} %%% E_I
\newcommand{\EbbII}{\Ebb_{\mathrm{I\!I}}}  %%% E_II
\newcommand{\EbbPB}{\Ebb_{\mathrm{PB}}}    %%% E_PB
\newcommand{\EbbE}{\Ebb_{\mathrm{emb}}}   %%% E_emb
\newcommand{\suf}{\mathbf{i}}  %%% suffix i
\newcommand{\sufe}{\mathbf{i}=\mathrm{I},\mathrm{I}\!\mathrm{I}}  %%% suffix i =I,II
\newcommand{\Irm}{\mathrm{I}}
\newcommand{\IIrm}{\mathrm{I}\!\mathrm{I}}
\newcommand{\cCP}{\operatorname{cCP}}
\newcommand{\Tilt}{\operatorname{Tilt}}
\newcommand{\Cot}{\operatorname{Co-}t}
\newcommand{\Fm}{F^{\bullet}}  %%% F^.
\newcommand{\domdim}{\operatorname{dom.dim}}
\newcommand{\codomdim}{\operatorname{codom.dim}}
\title[Hereditary extriangulated categories]{Hereditary extriangulated categories: Silting objects, mutation, negative extensions}
\author{Mikhail Gorsky}
\address{Fakult\"at f\"ur Mathematik, Universit\"at Wien, Oskar-Morgenstern-Platz 1, 1090 Wien, Austria}
\email{mikhail.gorskii@univie.ac.at}
\author{Hiroyuki Nakaoka}
\address{Graduate School of Mathematics, Nagoya University, Furocho, Chikusaku, Nagoya 464-8602, Japan}
\email{nakaoka.hiroyuki@math.nagoya-u.ac.jp}
\author{Yann Palu}
\address{LAMFA, Universit\'e Picardie Jules Verne, 80039 Amiens, France}
\email{yann.palu@u-picardie.fr}
\dedicatory{Dedicated to Bernhard Keller on the occasion of his 60th birthday}
\begin{document}

\maketitle

%\Misha{Mutation of ``tilting objects'' or ``tilting subcategories''? Or simply``tilting mutaiton''? Do we mention negative extensions in the title, or they will appear only when we split Section 7 from the rest?}
%\Yann{I made a suggestion according to your (very relevant) comments. Title easily adapted to I: tilting mutation and II: negative extensions.}

%\Misha{Dedicate to Bernhard or not?}
%\Yann{Dedicatory added. Unless you or Hiroyuki wish it removed, I would be happy to have it.}
%\Misha{Should we ask Bernhard in advance if he is fine with it? Not sure if there is some etiquette.}

\begin{abstract}
 In this article, we initiate the study of hereditary extriangulated categories. Many important categories arising in representation theory in connection with various theories of mutation are hereditary extriangulated. Special cases include homotopy categories of 2-term complexes with projective components, which are related to silting mutation, cluster categories (with relevant relative extriangulated structures) where cluster tilting mutation take place, and more general triangulated categories with relative structures connected with relative tilting theory.

 We prove that there is a theory of irreducible mutation for maximal rigid objects and subcategories
% \Misha{maximal rigid subcategories? silting subcategories?}
 in %Krull--Schmidt, 
 hereditary extriangulated categories of dominant dimension 1, which we shall call 0-Auslander.
% \Misha{Shall we mention in the abstract that several notions of tilting are equivalent, or is it already too long?}
 We show that many notions of maximality for rigid subcategories become equivalent in 0-Auslander extriangulated categories.
 Applied to the examples above, this recovers 2-term silting mutation in triangulated categories, cluster tilting mutation, and relative rigid mutation.
 By constructing suitable extriangulated categories, we also recover $\tau$-tilting mutation for gentle algebras and flips for their non-kissing facets.
 Combined with results by Adachi--Tsukamoto and Pauksztello--Zvonareva, our mutation also provides mutation for intermediate co-$t$-structures, extending the known construction of mutation of bounded intermediate co-$t$-structures.
 %\Misha{We more or less recover known results for bounded co-t-structures. In principle, we now have mutations for unbounded co-t-structures as well. Not sure if it makes sense to mention this in abstract of introduction.}
 
 In spirit of our earlier work, we study negative extensions in hereditary extriangulated categories. We give sufficient conditions for the existence of universal balanced negative extensions. We explicitly compute certain, a priori non-universal, versions of negative extensions 
 %in the sense of Adachi--Enomoto--Tsukamoto 
 in 0-Auslander categories %examples
 constructed from triangulated categories with rigid subcategories. 
 We discuss examples where these two constructions give the same $\delta$-functors and where they disagree. In both constructions, the $\delta$-functors are concentrated in degrees $[-2, 1].$
 \end{abstract}
 %we prove our conjecture from \cite{GNP1} and find that the negative global dimension of such a category is $-2, -1,$ or $0.$ 
 %Under suitable finiteness conditions, our extensions thus endow the Grothendieck group of a category with a bilinear Euler-like form. 

\tableofcontents

\section{Introduction}

%\Misha{2 comments for title / abstract:
%1) replace ``tilting'' by ``silting'' in the title? 2) replace ``maximal rigid'' by ``silting'' in the introduction? not sure.}
Flips of triangulations are the combinatorial shadows of \emph{mutations} in cluster algebras.
In order to categorify cluster algebras and the key notion of mutation, Aslak Buan, Bethany Marsh, Markus Reineke, Idun Reiten and Gordana Todorov introduced cluster tilting theory in~\cite{BuanMarshReinekeReitenTodorov}.
They proved that in a cluster category some specific objects categorifying clusters (the so-called \emph{cluster tilting objects}) have a well behaved theory of mutation.
Mutation of cluster tilting objects was quickly adapted or generalized to various settings: to cluster tilting in some abelian or exact categories defined from preprojective algebras~\cite{GeissLeclerclSchroer-survey}, to $\tau$-tilting in module categories of finite-dimensional algebras~\cite{AdachiIyamaReiten}, to 2-term silting in homotopy categories of complexes with projective components over a finite-dimensional algebra~\cite{AiharaIyama,Kimura}, to relative cluster tilting theory in triangulated categories~\cite{YangZhu,YangZhouZhu-Ghost}...

The flip of triangulations can also be generalized to a flip for dissections (or partial triangulations), that turned out to be the flip of so-called \emph{non-kissing facets}~\cite{McConville,BrustleDouvilleMousavandThomasYildirim,PaluPilaudPlamondon-nonkissing} in a different guise~\cite{PaluPilaudPlamondon-surfaces}.
%\Misha{Does it make sense add some references on relation between gentle algebras and Fukaya categories? Would it be a good selling point or a bad one?}

Our aim in this paper is to advertise the ubiquity of hereditary extriangulated categories in this context. We develop a theory of mutation for silting objects in certain hereditary extriangulated categories that encompasses all mutations mentioned above, while permitting elementary proofs.

The setting we work in is that of \emph{0-Auslander extriangulated categories}, whose prototypical examples are given by homotopy categories of two-term complexes of projectives.

\begin{definition} \label{def:intro_0-Auslander}
 An extriangulated category $\CEs$ is 0-Auslander if
 \begin{itemize}
  \item it has enough projectives;
  \item it is hereditary: $\forall X\in\Csc$, $\exists$ $P_1\infl P_0\defl X\dashrightarrow$ where $P_0,P_1$ are projective;
  \item it has dominant dimension at least 1: for any projective object $P$, there is a conflation $P\infl Q\defl I$ where $I$ is injective and $Q$ is projective-injective.
 \end{itemize}
It is reduced if moreover its only projective-injective object is 0 (up to isomorphism).
\end{definition}

This definition turns out to be self-dual: one can ask for the existence of enough injectives, injective dimension to be at most 1, and codominant dimension to be at least 1. Such categories can be thought of as extriangulated analogues of Auslander-regular rings of global dimension at most 1.\footnote{We thank Osamu Iyama for this comment. Our choice of the name ``0-Auslander'' instead of ``1-Auslander'' is motivated by the notation in higher dimensional Auslander-Reiten theory.  In particular, in \cite{IyamaJasso}, certain additive categories $\cA$ are called \emph{d-Auslander dualizing R-varieties}, for a positive integer $d$, if they satisfy $\gldim \cA \leq d + 1 \leq \domdim \cA$.} 
In 0-Auslander categories, several candidate definitions of tilting objects turn out to be equivalent.

\begin{proposition}[particular case of \cref{thm: equivalent versions of tilting}]
%\Yann{Theorem? to match the text} 
\label{prop:equivDefIntro}
 Let $\CEs$ be a Krull--Schmidt, Hom-finite, reduced 0-Auslander extriangulated category with $P=P_1\oplus\cdots\oplus P_n$ a basic projective generator, and let $R=R_1\oplus\cdots\oplus R_k$ be a basic rigid object (i.e. $\Ebb(R,R)=0$).
 Then $k\leq n$ and the following are equivalent:
 \begin{enumerate}[(a)]
  \item $R$ is maximal rigid: If $R\oplus X$ is rigid, then $X\in\add R$;
  \item $R$ is silting: The smallest thick (that is, strictly full, closed under direct summands, and satisfying the 2-out-of-3 property for $\sfr$-triangles) subcategory of $\Csc$ containing $R$ is $\Csc$ itself. 
  %\Yann{Added silting. Better to define "thick"? use cocone?}
  %\Misha{Is it better now?}
  \item $R$ is tilting: There is an $\sfr$-triangle $P\infl R^0\defl R^1\dashrightarrow$ with $R^0,R^1\in\add R$;
  \item $R$ is cotilting: There is an $\sfr$-triangle $R^1\infl R^0\defl I\dashrightarrow$ where $R^0,R^1\in\add R$ and $I$ is an injective cogenerator;
  \item $R$ is $\Ebb$-tilting: If $\Ebb(R,X)=0=\Ebb(X,R)$ then $X\in\add R$;
  \item $R$ is complete: We have $k=n$.
 \end{enumerate}
\end{proposition}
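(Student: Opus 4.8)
The plan rests on two self-dual observations. First, heredity gives every object a two-term projective resolution, so $[P_1],\dots,[P_n]$ generate $K_0(\Csc)$; and since $\Ebb(C,-)=0$ for projective $C$, every conflation of projectives splits, so there are no relations and $K_0(\Csc)\cong\Zbb^n$ with this basis. Second, $\Csc\op$ is again Krull--Schmidt, Hom-finite and reduced $0$-Auslander (the definition is self-dual), with basic projective generator the injective cogenerator $I=I_1\oplus\cdots\oplus I_n$ (the dominant dimension conflations $P_j\infl 0\defl I_j$ set up a bijection between the $n$ indecomposable projectives and injectives); moreover condition $(d)$ for $\Csc$ is precisely condition $(c)$ for $\Csc\op$, whereas $(a)$ and $(f)$ are visibly unchanged when $\Csc$ is replaced by $\Csc\op$. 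So it suffices to prove $(a)\Leftrightarrow(b)\Leftrightarrow(c)\Leftrightarrow(e)\Leftrightarrow(f)$ together with $k\le n$, and then $(d)$ follows by applying $(c)\Leftrightarrow(f)$ in $\Csc\op$.

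Two implications are formal. $(e)\Rightarrow(a)$: if $R\oplus X$ is rigid then $\Ebb(R,X)=\Ebb(X,R)=0$, so $X\in\add R$. $(c)\Rightarrow(b)$: the tilting $\sfr$-triangle $P\infl R^0\defl R^1\dashrightarrow$ puts $P$ in $\thick R$ by the 2-out-of-3 axiom, hence $\thick R\supseteq\thick(\add P)=\Csc$ (using heredity again), and with $\add R\subseteq\thick R$ this gives $\thick R=\Csc$.

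The technical core consists of two completion-type results. \emph{(i) Bongartz completion:} every rigid $R$ is a direct summand of a tilting object. As in the classical case, one chooses a conflation $P\infl E\defl R'$ with $R'\in\add R$, realizing a class of $\Ebb(R',P)$ for which the connecting map $\Hom(R,R')\to\Ebb(R,P)$ is surjective; then the long exact sequences of $\Hom(R,-)$, $\Hom(-,R)$, $\Hom(E,-)$ and $\Hom(-,P)$ applied to this conflation, together with $\Ebb(P,-)=0$ and the vanishing of $\Ebb$ against $\add R$ in either variable, are used to check that $R\oplus E$ is rigid, while the conflation itself realizes $R\oplus E$ as tilting. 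The delicate point, exactly as classically, is the vanishing $\Ebb(E,E)=0$ (equivalently $\Ebb(E,P)=0$), which forces the universal conflation to be chosen with care; this is where I expect most of the work to lie. \emph{(ii) Counting:} for $R$ silting, the bounded co-$t$-structure on $\Csc$ with co-heart $\add R$ identifies $K_0(\Csc)$ with the split Grothendieck group of $\add R$, which is free on the classes of the indecomposable summands of $R$; hence this number equals $\operatorname{rank}K_0(\Csc)=n$, and these classes are $\Zbb$-linearly independent. Combining (i) and (ii): every rigid $R$ embeds into a tilting object, which by (ii) has exactly $n$ indecomposable summands, so $k\le n$; $(b)\Rightarrow(f)$, and via $(c)\Rightarrow(b)$ also $(c)\Rightarrow(f)$; $(f)\Rightarrow(c)$ and $(a)\Rightarrow(c)$, because in the tilting completion $R\oplus E$ of an $R$ that already has $n$ indecomposable summands, respectively of a maximal rigid $R$, the summand $E$ lies in $\add R$ (by the $n$-summands count, resp. by maximality), so $R$ is already tilting; and $(c)\Rightarrow(a)$, since for tilting $R$ (so $k=n$) any rigid $R\oplus X$ with an indecomposable summand of $X$ outside $\add R$ would be rigid with more than $n$ indecomposable summands, against $k\le n$. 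This closes the cycle $(a)\Leftrightarrow(b)\Leftrightarrow(c)\Leftrightarrow(f)$ and, by the self-duality above, also gives $(d)$.

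Finally $(c)\Rightarrow(e)$ (which, with $(e)\Rightarrow(a)$, ties $(e)$ in). Let $R$ be tilting and suppose $\Ebb(R,X)=\Ebb(X,R)=0$. Pick a conflation $X'\infl P^m\defl X$ (possible since $P$ is a projective generator) and raise the tilting $\sfr$-triangle to its $m$-th power, $P^m\infl(R^0)^m\defl(R^1)^m\dashrightarrow$. Applying the octahedron axiom to the composable inflations $X'\infl P^m\infl(R^0)^m$ produces an object $W$ fitting into conflations $X'\infl(R^0)^m\defl W\dashrightarrow$ and $X\infl W\defl(R^1)^m\dashrightarrow$. Running $\Hom(R,-)$ and $\Hom(-,R)$ on the second conflation, and using $\Ebb(R,X)=\Ebb(X,R)=0$ and the rigidity of $\add R$, gives $\Ebb(R,W)=\Ebb(W,R)=0$; then $\Hom(W,-)$ on the first conflation, with $\Ebb(W,(R^0)^m)=0$ (as $\Ebb(W,R)=0$) and $\Ebb^2=0$, gives $\Ebb(W,W)=0$. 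So $R\oplus W$ is rigid, whence $W\in\add R$ by $(c)\Rightarrow(a)$; and then $X\infl W\defl(R^1)^m\dashrightarrow$ has connecting class in $\Ebb((R^1)^m,X)=0$, so it splits, exhibiting $X$ as a direct summand of $W\in\add R$. Hence $X\in\add R$, and all six conditions are equivalent. The one genuinely hard ingredient is the Bongartz completion (i); the co-$t$-structure identification in (ii) and the octahedron/long-exact-sequence bookkeeping are routine within the extriangulated formalism.
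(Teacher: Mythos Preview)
Your overall architecture—Bongartz completion plus a $K_0$-counting argument, with self-duality to pick up $(d)$—is essentially the paper's strategy, and the routine implications $(e)\Rightarrow(a)$, $(c)\Rightarrow(b)$, and your octahedron argument for $(c)\Rightarrow(e)$ are fine. But the counting step (ii) has a real gap.

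You invoke ``the bounded co-$t$-structure on $\Csc$ with co-heart $\add R$,'' but $\Csc$ is extriangulated, not triangulated, so this does not apply. What you actually need is that $[R_1],\dots,[R_k]$ are $\Zbb$-linearly independent in $K_0(\Csc)$. The paper obtains this, \emph{for tilting} $R$, via explicit mutually inverse maps $\coind_{\Rcal}:K_0^{\mathrm{sp}}(\Pcal)\leftrightarrows K_0^{\mathrm{sp}}(\Rcal):\ind_{\Pcal}$ built from the tilting $\sfr$-triangles $P\infl R^0\defl R^1$. (The same device is what actually proves your opening claim $K_0(\Csc)\cong\Zbb^n$: that conflations of projectives split only shows the $[P_i]$ generate, not that they are independent—you need the index map $[X]\mapsto[P_0]-[P_1]$ to be well-defined on $K_0$.) The upshot is that your counting gives $(c)\Rightarrow(f)$ and $k\le n$, but \emph{not} $(b)\Rightarrow(f)$: for merely silting $R$ you have no tilting $\sfr$-triangle to build $\coind_{\Rcal}$, and ``$\thick(R)=\Csc$'' alone does not yield linear independence. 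So you are missing $(b)\Rightarrow$\,(anything). The paper closes this by citing Adachi--Tsukamoto for silting $\Rightarrow$ strongly torsion-projective, and then proves strongly torsion-projective $\Rightarrow$ cotilting $\Leftrightarrow$ tilting by direct diagram chases; you would need to supply an argument of that kind.

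A smaller point on your Bongartz sketch: the parenthetical ``equivalently $\Ebb(E,P)=0$'' is not right—the long exact sequence only shows $\Ebb(E,E)$ is a quotient of $\Ebb(E,P)$, and the latter need not vanish. The clean route is the classical one: your surjectivity of $\Hom(R,R')\to\Ebb(R,P)$ gives $\Ebb(R,E)=0$, hence $\Ebb(R',E)=0$; together with $\Ebb(P,E)=0$ the long exact sequence in the \emph{first} variable then gives $\Ebb(E,E)=0$ directly. The paper sidesteps universal extensions altogether by taking a left $\add R$-approximation $P\to R^0$ (which exists by Hom-finiteness and is automatically an inflation since $P\infl 0$ is); this reaches rigidity of the completion in three lines and is worth comparing.
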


Some examples of reduced 0-Auslander extriangulated categories are given by:
\begin{itemize}
 \item Cluster categories endowed with a fixed cluster tilting object, and the maximal relative extriangulated structure making it projective. In this case, the silting objects are precisely the cluster tilting objects.
 \item The homotopy category of complexes with projective components and concentrated in degrees -1 and 0 over a finite-dimensional algebra. In this case, the silting objects are the 2-term silting objects. 
 \item Certain well-chosen subcategories of triangulated categories with a fixed rigid object and endowed with a suitable extriangulated structure. In this case, the silting objects are the relative tilting objects.
 \item Certain exact categories constructed from gentle algebras. In this case, the silting objects categorify accordion dissections, or non-kissing facets.
 \item The category of finite-dimensional representations of a Dynkin quiver of type $A$, endowed with a certain exact structure introduced by Thomas Br\"ustle, Eric Hanson, Sunny Roy and Ralf Schiffler. In this case, the silting objects are the maximal almost rigid modules of~\cite{BarnardGunawanMeehanSchiffler}.
 %\Misha{Removed color - I agree with this example.}
\end{itemize}

Our next main result shows that there is a well-behaved theory of irreducible mutation for silting subcategories (only stated here for silting objects for simplicity) 
%\Misha{Subcategories? Here and throughout the intro} 
in 0-Auslander extriangulated categories, thus generalizing and unifying the mutation theories or flips in all four examples above.

\begin{theorem}[particular case of \cref{Th:mutation}]
\label{theorem:mutationIntro}
 Let $\CEs$ be a Hom-finite, Krull--Schmidt, reduced 0-Auslander extriangulated category with a projective generator, and let $R$ be a basic silting object.
 Write $R=R'\oplus X$ with $X$ indecomposable.
 Then, there is (up to isomorphism) a unique indecomposable object $Y$, non-isomorphic to $X$, such that $R'\oplus Y$ is silting.
 Moreover, there is precisely one exchange $\sfr$-triangle
 \[
 X \overset{i}{\infl} E\overset{p}{\defl} Y \dashrightarrow \hspace{7pt} \text{ or } \hspace{10pt}
 Y \overset{i'}{\infl} E'\overset{p'}{\defl} X \dashrightarrow,
 \]
where $E,E'$ belong to $\add R'$, $i,i'$ are left $R'$-approximations and $p,p'$ are right $R'$-approximations.
\end{theorem}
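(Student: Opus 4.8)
The plan is to transport the template of silting mutation in triangulated categories (Aihara--Iyama) to the extriangulated setting, freely using the equivalences of \cref{prop:equivDefIntro} to pass between the various incarnations of ``silting''. \textbf{Step 1 (reformulation via approximations).} Since $R$ is silting, \cref{prop:equivDefIntro} gives that $R'$ is basic rigid with $n-1$ indecomposable summands, that $X\notin\add R'$, and $k\le n$; rigidity of $R$ gives $\Ebb(R',R')=\Ebb(R',X)=\Ebb(X,R')=0$. By Hom-finiteness and Krull--Schmidt, $X$ admits a minimal left $\add R'$-approximation $i\colon X\to E$ and a minimal right $\add R'$-approximation $p'\colon E'\to X$, with $E,E'\in\add R'$. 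Suppose $i$ is an inflation and complete it to $X\overset i\infl E\overset p\defl Y\dashrightarrow$. Running the six-term exact sequences obtained by applying $\Csc(-,R')$, $\Csc(R',-)$, $\Csc(Y,-)$ and $\Csc(X,-)$ to this $\sfr$-triangle, and using heredity (so that $\Ebb$ is right exact on conflations in both variables), one reads off: $p$ is automatically a right $\add R'$-approximation (because $\Ebb(R',X)=0$); $i$ is a left $\add R'$-approximation if and only if $\Ebb(Y,R')=0$; and in that case one further gets $\Ebb(R',Y)=\Ebb(Y,Y)=\Ebb(X,Y)=0$, whereas $\Ebb(Y,X)\ne 0$, since $\id_X$ cannot factor through $i$ (otherwise $X$ would be a direct summand of $E\in\add R'$). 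Hence an exchange $\sfr$-triangle of the first shape exists exactly when the minimal left $\add R'$-approximation of $X$ is an inflation, and is then unique up to isomorphism; symmetrically, one of the second shape exists exactly when the minimal right $\add R'$-approximation of $X$ is a deflation.

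\textbf{Step 2 (the dichotomy --- the main obstacle).} The crux is that \emph{at least one} of these two minimal approximations is an (in/de)flation. This is exactly where the hypothesis $\domdim\Csc\ge 1$ is indispensable: together with reducedness it forces every projective $P$ into an $\sfr$-triangle $P\infl 0\defl I_P\dashrightarrow$ with $I_P$ injective, so that injectives are ``formal desuspensions'' of projectives and $\Csc$ behaves homologically like a category of two-term complexes of projectives, in which every object is both generated and cogenerated in two steps by any silting object. Concretely I would take a projective presentation $P_1\infl P_0\defl X\dashrightarrow$, splice it with the $\sfr$-triangles $P_j\infl 0\defl I_{P_j}\dashrightarrow$ to obtain an injective copresentation $X\infl I^0\defl I^1\dashrightarrow$, and then confront both resolutions with the $\add R'$-approximations of $X$ via the universal property of the approximations and the octahedral axiom for extriangulated categories; transporting across the resulting octahedron the part of a resolution that already lies in $\add R'$ produces one of the two desired conflations. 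I expect this to be the only step requiring real work; everything else is bookkeeping with long exact sequences, minimality, and \cref{prop:equivDefIntro}.

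\textbf{Step 3 ($R'\oplus Y$ is silting).} Say $i$ is an inflation, so $X\overset i\infl E\overset p\defl Y\dashrightarrow$ with $\Ebb(R',Y)=\Ebb(Y,R')=\Ebb(Y,Y)=0$ by Step 1; thus $R'\oplus Y$ is rigid. Left-minimality of $i$ rules out a direct summand of $Y$ lying in $\add R'$ (it would let $i$ factor through a proper summand of $E$), so $R'\oplus Y$ is basic; then $k\le n$ forces $Y$ indecomposable, non-splitness gives $Y\ne 0$, and $\Ebb(Y,X)\ne 0=\Ebb(X,X)$ gives $Y\not\cong X$. So $R'\oplus Y$ is basic rigid with $n$ indecomposable summands, i.e.\ complete, hence silting by \cref{prop:equivDefIntro}. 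When instead $p'$ is a deflation, the same computation in $\Csc\op$ --- again reduced $0$-Auslander by the self-duality of the definition --- produces the second shape.

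\textbf{Step 4 (uniqueness and ``precisely one'').} Let $R'\oplus Y''$ be silting with $Y''$ indecomposable, $\notin\add R'$, $\not\cong X$. Applying Steps 1--3 to $(R'\oplus Y'',Y'')$ yields a complement $Z$ of $R'$ together with an exchange triangle which, read in the other direction, is the exchange triangle for $(R'\oplus Z,Z)$; so $W\mapsto W^{\ast}$ is a fixed-point-free involution on the (finite, by $k\le n$) set of complements of $R'$, and a short comparison --- again via the $\add(R'\oplus X)$-(co)presentations provided by \cref{prop:equivDefIntro} together with the relations $\Ebb(R'\oplus X,R'\oplus Y)=0\ne\Ebb(R'\oplus Y,R'\oplus X)$ of Step 1 --- shows that $X^{\ast}=Y$, that $X$ and $Y$ are the only complements, and hence $Y''\cong Y$. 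Finally, by uniqueness of the minimal approximations an exchange $\sfr$-triangle of a given shape is unique up to isomorphism; and if both shapes occurred they would share the end $Y$, whence $\Ebb(Y,X)\ne 0$ from the first and $\Ebb(Y,X)=0$ from the second --- impossible. Therefore there is precisely one exchange $\sfr$-triangle.
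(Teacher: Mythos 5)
Your Steps 1 and 3 are essentially correct long-exact-sequence bookkeeping, and your overall architecture (approximation triangles in the style of silting mutation) is reasonable. But the proposal has two genuine gaps, and they sit exactly at the points you yourself flag as "the only step requiring real work" and "a short comparison shows". \textbf{Step 2 is not a proof.} You assert that at least one of the two minimal $\add R'$-approximations of $X$ is an (in/de)flation, and sketch splicing a projective presentation of $X$ with the triangles $P\infl 0\defl I_P\dashrightarrow$ and "confronting" the result with the approximations via the octahedron. As written this does not produce the required conflation: the octahedra you can form from these data do not obviously have a complement of $R'$ as a vertex, and nothing in the sketch explains why the resulting cone/cocone would be rigid or indecomposable. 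The paper's route is genuinely different here: it passes to the reduction $\overline{\Csc}_{\Rcal'}=\frac{{}^{\perp_1}\Rcal'\cap\Rcal'^{\perp_1}}{[\Rcal']}$, picks an indecomposable injective $I\notin\Rcal'$, and uses Bongartz completion (\cref{prop:Bongartz}, \cref{corollary: Bongartz for subcategories}, \cref{lemma:Bongartz projective}) to manufacture two explicit complements $Q$ and $J$ sitting in an $\sfr$-triangle $Q\infl 0\defl J\dashrightarrow$ of the reduction; the dichotomy between the two shapes of exchange triangle is precisely the dichotomy "$X$ is the unique indecomposable projective of $\overline{\Csc}_{\Rcal'}$" versus "$X$ is the unique indecomposable injective of $\overline{\Csc}_{\Rcal'}$". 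Without something playing the role of this construction, existence of a second complement is simply not established.

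\textbf{Step 4 is also incomplete, and contains an error.} The finiteness of the set of complements of $R'$ does not follow from $k\le n$: that inequality bounds the number of indecomposable summands of each basic rigid object, not the number of isomorphism classes of indecomposables completing $R'$. So the "fixed-point-free involution on a finite set" argument does not get off the ground, and the "short comparison" showing that $X$ and $Y$ are the \emph{only} complements is exactly the hard content. The paper proves "at most two" by taking an arbitrary complement $X$, forming in $\overline{\Csc}_{\Rcal'}$ the triangle $Q\infl X_0\defl Y\dashrightarrow$ with $X_0$ a minimal left $\add X$-approximation of $Q$, showing $Y\in\add X$ by $\Ebb$-tiltingness, and then invoking the key \cref{lemma:no common factors} ($\add X_0\cap\add Y=0$ for a left-minimal presentation) to force $X_0=0$ or $Y=0$; this pins $X$ down as the unique indecomposable projective or the unique indecomposable injective of the reduction. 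You would need to supply an argument of comparable strength. (A smaller point: in Step 3, "$k\le n$ forces $Y$ indecomposable" also needs care, since a priori the cone could be a non-basic object such as $Z^{\oplus 2}$ without violating the bound on basic rigid objects; the paper gets indecomposability from \cref{Prop: X to 0 to Y consequences}(5) applied in the reduction.)
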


We note that \cref{theorem:mutationIntro} also applies to non-reduced 0-Auslander extriangulated categories, but there, projective-injective summands cannot be mutated (and thus behave like coefficients in cluster algebras). Note also that the last sentence of \cref{theorem:mutationIntro} means that in 0-Auslander categories the distinction between left and right mutations is very transparent.  

In order to prove \cref{prop:equivDefIntro,theorem:mutationIntro}, we are led to studying Bongartz completions, indices and coindices (this permits us to define categorical $c$-vectors as in~\cite{JorgensenYakimov}) and some form of Iyama--Yoshino reduction~\cite{IyamaYoshino}.

We show that extended cohearts of co-$t$-structures \cite{Pauksztello} (also known as weight structures \cite{Bondarko}) on triangulated categories, as considered by \cite{PauksztelloZvonareva}, are reduced 0-Auslander extriangulated categories. 
By combining results of Adachi--Tsukamoto \cite{AdachiTsukamoto} relating tilting subcategories with complete cotorsion pairs with one of the main theorems of Pauksztello--Zvonareva \cite{PauksztelloZvonareva}, we then deduce from \cref{prop:equivDefIntro} a theory of irreducible mutations of intermediate co-$t$-structures. This recovers the previously known construction of irreducible mutations of intermediate bounded co-$t$-structures \cite{AiharaIyama, KoenigYang, BrustleYang} and commutes with restrictions to and extensions from the thick subcategory generated by the coheart of the original co-$t$-structures.

The remainder of the paper
concerns negative extensions in hereditary extriangulated categories. Long exact sequences of homomorphisms and positive extensions in extriangulated categories a priori do not continue naturally in the negative direction. In \cite{GNP1} we suggested a definition of negative extensions in extriangulated categories (see also~\cite{AdachiEnomotoTsukamoto}): we defined covariant and contravariant versions of negative extensions in extriangulated categories satisfying certain natural conditions and proved their respective functoriality and universality. The universality means that these bifunctors are covariant and contravariant left derived functors of the functor $\Hom$.

In this paper, we study bivariant, or \emph{balanced}, versions of negative extensions in hereditary extriangulated categories. In \cite{GNP1} we gave sufficient conditions on covariant and contravariant negative extensions to agree. In this case, they give a bivariant $\delta$-functor satisfying natural universal properties in both arguments. These conditions are satisfied for exact categories and for the   homotopy category of 2-term complexes with projective components over a finite-dimensional algebra. The requirement on the covariant and contravariant versions to agree is nonetheless very restricitve: the cluster category of type $A_2$ with the maximal relative extriangulated structure making some cluster tilting object projective is a basic example of a 0-Auslander category where they disagree. 

We give a fairly natural construction of a bivariant connected sequence of bifunctors $(\EbbPB^{\bullet},\del\ssh,\del\ush)$, with $\EbbPB^i = \Ebb^i$ for $i \ge 0$ and $\EbbPB^{-i} = 0$ for $i \ge 3$, for essentially small hereditary extriangulated categories having enough projectives and enough injectives. 

\begin{theorem}[see \cref{PropEquivAcycBiProl} for more details]
 The bivariant connected sequence of functors $(\EbbPB^{\bullet},\del\ssh,\del\ush)$ is a bivariant $\delta$-functor if and only if a certain 4-term complex is exact at the middle terms. In that case, $(\EbbPB^{\bullet},\del\ssh,\del\ush)$ satisfies the relevant universal property. Its components $\EbbPB^{-n}$ can thus be seen as universal balanced negative extensions.
\end{theorem}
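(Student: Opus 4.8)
The plan is to reduce the statement to a verification of exactness of a specific four-term complex and then invoke the general machinery for $\delta$-functors developed in our earlier work \cite{GNP1}. First I would recall the construction of $(\EbbPB^{\bullet},\del\ssh,\del\ush)$: for an essentially small hereditary extriangulated category with enough projectives and enough injectives, one builds the negative-degree components by taking, for each object $X$, a conflation $\Omega X \infl P \defl X \dashrightarrow$ with $P$ projective (which exists, and with $\Omega X$ having injective dimension controlled, because of heredity) and dually a conflation $X \infl I \defl \Sigma^{-1} X \dashrightarrow$ with $I$ injective. One then defines $\EbbPB^{-1}(X,Y)$ as a suitable pullback (hence the subscript PB) of the two candidate definitions — the covariant one computed from a projective resolution in the first variable and the contravariant one computed from an injective coresolution in the second — glued along their common value on $\Ebb^0 = \Hom$. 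The connecting maps $\del\ssh$ and $\del\ush$ in degrees $\le 0$ are induced by these chosen (co)resolutions; in degrees $\ge 0$ they are the usual ones of the extriangulated structure. Because the category is hereditary, everything stabilizes: $\EbbPB^{-2}$ is forced by a second syzygy/cosyzygy and $\EbbPB^{-i}=0$ for $i\ge 3$, so one really only has finitely many new bifunctors to control.

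The next step is to pin down precisely what a bivariant $\delta$-functor requires here. By definition (following \cite{GNP1}), $(\EbbPB^{\bullet},\del\ssh,\del\ush)$ is a bivariant $\delta$-functor if and only if, for every conflation $A\infl B\defl C\dashrightarrow$ and every object $Z$, the long sequences obtained by applying $\EbbPB^{\bullet}(-,Z)$ and $\EbbPB^{\bullet}(Z,-)$ are exact in all degrees, and these are compatible. The degrees $\ge 0$ part is automatic (it is the long exact sequence of the extriangulated structure), and the degree $0$ down to $-1$ transition and below is where the content lies. I would show, by a diagram chase using the chosen projective presentations $P_1\infl P_0\defl X\dashrightarrow$ (heredity gives us that $P_1$ is projective, so there is no $\EbbPB^{-2}$ obstruction coming from the resolution side), that exactness of the full long sequence in negative degrees is equivalent to exactness at the two middle terms of a single four-term complex — the complex built by splicing the tail of the projective presentation of $C$ with the head of the injective copresentation of $A$, evaluated against the conflation $A\infl B\defl C\dashrightarrow$. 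Concretely this is the complex whose middle maps compare $\Ebb(C,A)$-type terms with $\Hom$-type terms through $B$; the outer two spots are automatically exact by the construction of the pullback and by heredity, so "exact at the middle terms" is both necessary and sufficient.

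Once the equivalence is established, the universal property in the "if" direction follows formally: in \cite{GNP1} we proved that a bivariant $\delta$-functor that agrees with $\Hom$ in degree $0$, with $\Ebb^i$ in positive degrees, and whose negative components are computed from projective presentations in the first variable (resp. injective copresentations in the second) is automatically the universal such object — it is simultaneously the left derived functor of $\Hom$ in each variable, and balancedness forces the two derived-functor structures to coincide. So I would simply cite that universality statement and check its hypotheses are met by $\EbbPB^{\bullet}$, which is immediate from the construction. The interpretation of $\EbbPB^{-n}$ as universal balanced negative extensions is then just the content of that universal property.

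The main obstacle I anticipate is the bookkeeping in the "only if" direction and in the diagram chase that produces the four-term complex: one must carefully track how a conflation $A\infl B\defl C\dashrightarrow$ interacts with the chosen presentations of $A$, $B$, $C$ — these presentations are not functorial, so one needs the horseshoe-type argument (available because the category is hereditary and has enough projectives/injectives) to compare them, and then verify that the failure of exactness of the long $\EbbPB^{\bullet}$-sequence is controlled by exactly one homology group, living at the middle of the four-term complex, with no contribution from either end. The heredity hypothesis is what keeps this finite and prevents higher obstructions from appearing; the delicate point is checking that the pullback defining $\EbbPB^{-1}$ does not itself create spurious homology, which I would handle by a snake-lemma argument comparing the covariant and contravariant long sequences, both of which are exact on their own by \cite{GNP1}.
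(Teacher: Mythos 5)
Your overall architecture---define $\EbbPB^{-1}$ as a pullback gluing the covariant and contravariant constructions, reduce acyclicity to a single four-term complex, and deduce universality from the one-sided universality results of \cite{GNP1} together with the universal property of the pullback---is the same as the paper's. However, there is a genuine gap at the central step. You correctly identify the hard part as relating the long $\EbbPB^{\bullet}$-sequence of an \emph{arbitrary} conflation $A\infl B\defl C\dra$ to one four-term complex, and you propose to handle it by a horseshoe-type comparison of the (non-functorial) presentations of $A$, $B$, $C$. That route is not the one that works and is where your argument would stall. The paper instead invokes the general criterion of \cite[Proposition 4.22]{GNP1} (\cref{lem:enough_to_check_on_dominant}): a connected sequence extending $\Ebb^{\geq 0}$ with the canonical connecting morphisms is a covariant (resp.\ contravariant) $\delta$-functor as soon as the chosen dominant triangles $Q_X\infl P_X\defl X\dra$ (resp.\ codominant triangles $X\infl I_X\defl J_X\dra$) are acyclic. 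This eliminates all arbitrary conflations at once; for those particular triangles the long $\EbbPB^{\bullet}$-sequence is then identified, term by term, with the augmented complex $C_{\Irm}^{\bullet}\colon \EbbI^{-1}(J_X,Y)\to\EbbI^{-1}(I_X,Y)\to\EbbI^{-1}(X,Y)\to\Ebb(J_X,Q_Y)$, using that $\wp^{\Irm}$ is an isomorphism when the first argument is injective and that the pullback square defining $\EbbPB^{-1}$ identifies the relevant kernels. So the four-term complex is condition {\rm (N$+$)}, indexed by pairs of objects $X,Y$, not a splice of presentations along an arbitrary conflation.

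Two supporting claims are also off. First, the pullback defining $\EbbPB^{-1}(X,Y)$ is taken over $\Ebb(J_X,Q_Y)$ (the common image of $\iota_X\ush\circ\om_{Y\sharp}$ and $\om_{Y\sharp}\circ\iota_X\ush$), not over $\Hom$. Second, your assertion that balancedness ``forces the two derived-functor structures to coincide'' is false: $\EbbI^{-1}$ and $\EbbII^{-1}$ genuinely differ already for the cluster category of type $A_2$, which is precisely why the pullback is needed. Consequently universality of $\EbbPB^{\bullet}$ cannot be obtained by citing a single statement from \cite{GNP1}; one must use the two one-sided universal morphisms $\vp^{\bullet}\colon F^{\bullet}\to\EbbI^{\bullet}$ and $\psi^{\bullet}\colon F^{\bullet}\to\EbbII^{\bullet}$ together with the universal property of the pullback to construct $\lam^{\bullet}\colon F^{\bullet}\to\EbbPB^{\bullet}$, and then verify compatibility with $\del\ssh$ and $\del\ush$ by hand, as is done in the proof of \cref{PropEquivAcycBiProl}.
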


For a reduced 0-Auslander category arising from a rigid subcategory of a triangulated category, we define another bivariant $\delta-$functor $(\EbbE^{\bullet},\del\ssh,\del\ush)$ (see \cref{def: conn_EbbE}),  a priori depending on the ambient triangulated category.  We expect that whenever such a  category admits universal balanced negative extensions, these are given by $\EbbE^{\bullet}$, and so $\EbbE^{\bullet}$ is the correct replacement of $\EbbPB$ in this setting. These two constructions agree for cluster categories with relative structures corresponding to acyclic or self-injective cluster tilting objects, but differ in general. The simplest example when they differ comes from a non-acyclic cluster seed of type $A_4$.

%Given such balanced extensions, we can define a version of an Euler form on our category, which is clearly not defined in their absence. In particular, under natural finiteness conditions, the Grothendieck group of a 0-Auslander category comes endowed with a bilinear form. 

%\Misha{Some words about the duality between 2 copies of heart and 2 copies of coheart; the fact that the Euler form on heart and the Euler form coming from the ambient triangulated category might be dramatically different. Same for cohearts/extended cohearts.}

%A natural way to think of this is to consider this group as an analogue of the weight lattice and to consider the Grothendieck group of the costable category as an analogue of the coroot lattice of a root datum.

\vspace{0.3cm}

\noindent {\bf Further directions.} 
\begin{itemize}
\item[(i)] In work in progress of M.G. and Y.P. with Xin Fang, Pierre-Guy Plamondon, and Matthew Pressland, certain further properties and examples of 0-Auslander categories are studied. In particular, Higgs categories coming from ice quivers with potential, as recently defined by Yilin Wu \cite{Wu}, are shown to be 0-Auslander when endowed with suitable relative extriangulated structures.

\item[(ii)] The definition of $\EbbE^{-1}(-,-)$ is quite nontrivial: it is given by the quotient of certain morphism space in the ambient triangulated category by an ideal of morphisms factoring through morphisms between objects in certain subcategories (see \cref{def:negativeComputation}). The fact that it makes sense in this context to consider quotients by morphisms factoring through certain {\it morphisms} rather than through certain {\it objects} seems fairly unexpected. A conjectural explanation, of sorts, will be given in the same work in progress as in (i). We also wonder if this definition can be deduced or explained naturally for extriangulated categories admitting enhancements, either via exact $\infty$-categories and their stable hulls \cite{barwick2015exact, NakaokaPalu2, Klemenc}, or via differential graded categories as in the ongoing Ph.D. project of Xiaofa Chen.  

\item[(iii)] It is natural to ask which of the results of the present paper hold, up to suitable modifications, for extriangulated categories of higher global and (co)dominant dimension, which could be called d-Auslander extriangulated. 
Similarly, one can also consider the following generalisation.
%One can also consider a generalisation in another direction%\Yann{I think that the two directions are closely related. How about "Similarly one can also consider the following generalisation:"?}. 
Extriangulated categories are a special case of {\it $n$-exangulated categories} introduced by Herschend, Liu, and H.N. \cite{HerschendLiuNakaoka-I}: they correspond to $n$ being equal to $1$. Projective and injective objects in $n$-exangulated categories are defined for general $n$, and one can adapt \cref{def:intro_0-Auslander} to this generality. Some implications in \cref{prop:equivDefIntro} and parts of \cref{theorem:mutationIntro} extend to the framework of extriangulated categories of higher global and (co)dominant dimension, and some generalise to $n$-exangulated analogues of 0-Auslander extriangulated categories. Both of these generalisations will be treated in work in progress
%the upcoming sequel
joint with Nicholas Williams.
\end{itemize}

\vspace{0.3cm}

\noindent {\bf Organisation of the paper.}
The article is organised as follows:
Reminders on extriangulated categories are gathered in \cref{ssection:extricats}.
In \cref{section:hereditary}, we discuss equivalent definitions of hereditary extriangulated categories and behaviour of those under natural operations. In \cref{section:0-Auslander}, we introduce 0-Auslander extriangulated categories and prove some of their basic properties. We list many examples of such categories in \cref{ssection:examples0-Auslander}.
\cref{section:tilting}, forming the heart of the paper, is devoted to studying tilting subcategories in 0-Auslander extriangulated categories.
We prove that various possible notions of maximality for rigid subcategories are in fact equivalent in \cref{ssection:equivDef}.
For a better understanding of complete rigid objects (\cref{subsection:complete rigid}), we prove a version of Bongartz completion in \cref{ssection:Bongartz} and define indices, coindices and categorical $c$-vectors in \cref{ssection:indices}.
\cref{ssection:reduction,ssection:tilting and reduction} introduce reduction for hereditary extriangulated categories, which is a crucial tool in \cref{ssection:mutation} for proving \cref{theorem:mutationIntro}.
In \cref{section:ExtendedCohearts}, we apply our results on mutation of tilting subcategories in 0-Auslander extriangulated categories to extended cohearts of co-$t$-structures in triangulated categories. We explain why this recovers mutation for intermediate co-$t$-structures.
In \cref{section:gentle}, we construct, starting from any finite-dimensional gentle algebra, a 0-Auslander extriangulated category of walks. 
This allows us to prove that flips of non-kissing facets in the sense of McConville~\cite{McConville} are instances of mutations of tilting objects in 0-Auslander extriangulated categories.
\cref{section:negative} is concerned with negative extensions in hereditary extriangulated categories. We give explicit computations in a large class of examples (\cref{ssection:computation}), construct a candidate bivariant connected sequence of functors for any hereditary extriangulated category, and find a necessary and sufficient condition so that it is acyclic and universal in \cref{ssection:universal}.
%\Yann{merged paragraphs}

%Finally, in \cref{section: gldim n}, we generalize the results from \cref{ssection:equivDef,ssection:indices,subsection:complete rigid} to the case of $n$-Auslander extriangulated categories.

\section*{Acknowledgements}

%\Yann{Shall we add other acknowledgements?}

The authors would like to thank Lidia Angeleri H\"ugel, Thomas Br\"ustle, Xin Fang, Sira Gratz, Osamu Iyama, Bernhard Keller, Rosanna Laking, Matthew Pressland, Ralf Schiffler, Salvatore Stella, Nicholas Williams, Dong Yang and Alexandra Zvonareva for many interesting discussions related to this project. We are grateful to Pierre-Guy Plamondon for his comments on a preliminary version of this paper. %\Misha{Did Dong Yang give any comments?}\Yann{Not yet.} 
Y.P. would like to express his gratitude to the organizers of the 2022 Abel Symposium, of the CATS21 conference, of the research school \emph{New developments in representation theory arising from cluster algebras}, of the Oberwolfach workshop \emph{Cluster Algebras and Its Applications} and of the FD seminar where part of this work was presented. 
%\Misha{Add Oberwolfach?}\Yann{Done}

H.N. and Y.P. were partially supported by the French ANR grant SC3A~(ANR-15-CE40-0004-01). M.G. and Y.P. were partially supported by the French ANR grant CHARMS~(ANR-19-CE40-0017). Parts of this work were done when M.G. and Y.P. participated at the Junior Trimester Program ``New Trends in Representation Theory'' at the Hausdorff Institute for Mathematics in Bonn.  This work is a part of a project that has received funding from the European Research Council (ERC) under the European Union’s Horizon 2020 research and innovation programme (grant agreement No. 101001159). Parts of this work were done during stays of M.G. at the University of Stuttgart, and he is very grateful to Steffen Koenig for the hospitality. 
%\Misha{Hiroyuki's funding?}

\section{Hereditary extriangulated categories}
\label{section:hereditary}

In this section, we discuss several equivalent definitions and some properties of  extriangulated categories of (positive) global dimension 1. Basics on extriangulated categories are recalled in \cref{ssection:extricats}. We assume all extriangulated categories to be essentially small and linear over a commutative ring $R$ with $1$.
Recall from \cref{ssection:extricats} the notion of cone and that of cocone (or fiber).

\subsection{Equivalent definitions of hereditary extriangulated categories}

In \cite[Section 3]{GNP1}, we defined higher positive extensions $\Ebb^i, i \geq 1$ via a version of Yoneda's theory. Let $\CEs$ be an extriangulated category, $C, A \in \cat$. The underlying set of the $R$-module $\Ebb^2(C, A)$ admits the following explicit description first sketched in \cite[Remark 5.10]{NakaokaPalu}:

\begin{align*}
    \Ebb^2(C, A) & = \Big(\coprod_{M\in\cat}\Big(\Ebb(M, A)\ti \Ebb (C, M)\Big)\Big)/
\sim,
\end{align*}

\noindent where the equivalence relation $\sim$  is generated by 
\[ (\kap,f_{\ast}(\lam))\sim (f^{\ast}(\kap),\lam),\]
for arbitrary $f\in\cat(M,N),\kap\in \Ebb(N, A),\lam\in \Ebb(C, M)$. An element of $\Ebb^2(C, A)$ is $0$ if and only if it is equivalent to a pair $(0, \lam),$ if and only if it is equivalent to a pair $(\kap, 0).$

For any $\sfr$-triangle $A\ov{x}{\lra}B\ov{y}{\lra}C\ov{\del}{\dra}$, we have a pair of long exact sequences \cite[Theorem 3.5]{GNP1}:

\begin{equation}
\label{les:1}
\cat(-,A)\to\cat(-,B)\to\cat(-,C)\to\Ebb(-,A)\to \Ebb(-,B) \to \Ebb(-,C)\to\Ebb^2(-,A) \to \cdots
\end{equation}
in $\Mod\cat$, and
\begin{equation}
\label{les:2}
\cat(C,-)\to\cat(B,-)\to\cat(A,-)\to\Ebb(C,-)\to \Ebb(B,-) \to \Ebb(A,-)\to\Ebb^2(C,-) \to \cdots
\end{equation}
in $\cat\Mod$.

If $\CEs$ has enough projectives and enough injectives, another construction of the same higher extension groups had been introduced earlier in \cite{LiuNakaoka}, \cite{HLNII} by using (co)resolutions.

\begin{proposition} \label{hered_equiv}
\begin{enumerate}
    \item The following conditions on an extriangulated category $(\Csc, \Ebb, \mathfrak{s})$ are equivalent.

\begin{itemize}
    \item [(i)] The functor $\Ebb(X, -)$ is right exact for any $X \in \Csc;$
    \item[(ii)] The functor $\Ebb(-, Y)$ is right exact for any $Y \in \Csc;$
    \item[(iii)] The bifunctor $\Ebb^2(-, -)$ is zero;
    \item[(iv)] Let $\delta\in\Ebb(Z,Y)$ and $\delta'\in\Ebb(X,Z)$ be realized respectively by
\[
Y\overset{f}{\longrightarrow}U\overset{f'}{\longrightarrow}Z\ \text{ and } \ Z\overset{d}{\longrightarrow}T\overset{e}{\longrightarrow}X.
\]
Then there exist an object $V\in\Csc$, a commutative diagram in $\Csc$
\begin{equation}
\label{diag:hereditary}
    \begin{aligned}
\xy
(-18,6)*+{Y}="0";
(-6,6)*+{U}="2";
(6,6)*+{Z}="4";
(18,6)="6";
(-18,-6)*+{Y}="10";
(-6,-6)*+{V}="12";
(6,-6)*+{T}="14";
(18,-6)="16";
(-6,-18)*+{X}="22";
(6,-18)*+{X}="24";
(-6,-28)="32";
(6,-28)="34";
{\ar^{f} "0";"2"};
{\ar^{f'} "2";"4"};
{\ar@{=} "0";"10"};
{\ar@[blue]_{g} "2";"12"};
{\ar^{d} "4";"14"};
{\ar@[blue]^{h} "10";"12"};
{\ar@[blue]^{h'} "12";"14"};
{\ar@[blue]_{g'} "12";"22"};
{\ar^{e} "14";"24"};
{\ar@{=} "22";"24"};
{\ar@{}|{} "0";"12"};
{\ar@{}|{} "2";"14"};
{\ar@{}|{} "12";"24"};
{\ar@{-->}^\delta "4";"6"};
{\ar@{-->}@[blue]^{\delta''} "14";"16"};
{\ar@{-->}@[blue]^{\delta'''} "22";"32"};
{\ar@{-->}^{\delta'} "24";"34"};
{\ar@{}|\circlearrowright "0";"12"};
{\ar@{}|\circlearrowright "2";"14"};
{\ar@{}|\circlearrowright "12";"24"};
\endxy
\end{aligned}
\end{equation}
 with $\delta''\in\Ebb(T,Y)$ realized by $Y\overset{h}{\longrightarrow}V\overset{h'}{\longrightarrow}T$ and 
$\delta'''\in\Ebb(X,U)$ realized by $U\overset{g}{\longrightarrow}V\overset{g'}{\longrightarrow}X$,
which satisfy the following compatibilities.
  \begin{enumerate}[(i)]
    \item $f'_\ast\delta''' = \delta'$,
    \item $d^\ast\delta''=\delta$,
    \item $f_\ast\delta''=e^\ast\delta'''$. 
  \end{enumerate}
    
  \item[(v)]  Let $f$ be a composition $f = i \circ d,$ where $i: Z \to T$ is an inflation with cone $X,$ $d: U \to Z$ is a deflation with cocone $Y$. Then $f$ admits a decomposition as $f = d' \circ i',$ where $d': V \to T$ is a deflation with cone $X,$ $i': U \to V$ is an inflation with cocone $Y$, and, moreover, $U \overset{\tiny{\begin{bmatrix}-d \\ i'\end{bmatrix}}}\rightarrowtail Z \oplus V \overset{\tiny{\begin{bmatrix} i & d'\end{bmatrix}}}\twoheadrightarrow T$ is a conflation.
\end{itemize}

\item If $(\Csc, \Ebb, \mathfrak{s})$ has enough projectives, then these conditions are satisfied if and only if any object admits a projective resolution of length 1: For each $X\in\Csc$, there is a conflation $P_1\infl P_0\defl X$, with $P_0,P_1$ projective.

\item Dually, if $(\Csc, \Ebb, \mathfrak{s})$ has enough injectives, then these conditions are satisfied if and only if any object admits a injective coresolution of length 1: For each $X\in\Csc$, there is a conflation $X\infl I^0\defl I^1$, with $I^0,I^1$ injective.
\end{enumerate}
\end{proposition}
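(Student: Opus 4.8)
The plan is to prove Proposition~\ref{hered_equiv} by establishing the cycle of equivalences $(i)\Leftrightarrow(ii)\Leftrightarrow(iii)$ first, then $(iv)\Leftrightarrow(v)$, then connecting them to the earlier conditions, and finally treating parts (2) and (3), which are dual to each other so only one needs a full argument.

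\textbf{The core equivalences $(i)$, $(ii)$, $(iii)$.} I would start from the long exact sequences \eqref{les:1} and \eqref{les:2}. Given any $\sfr$-triangle $A\ov{x}{\lra}B\ov{y}{\lra}C\ov{\del}{\dra}$, right exactness of $\Ebb(X,-)$ for all $X$ is by definition the statement that $\Ebb(X,B)\to\Ebb(X,C)\to\Ebb^2(X,A)$ is such that the last map is zero, i.e. $\Ebb(X,B)\to\Ebb(X,C)$ is surjective. Now here is the key point: the explicit Yoneda-type description of $\Ebb^2$ recalled just before the proposition says that an element of $\Ebb^2(C,A)$ is represented by a pair $(\kap,\lam)\in\Ebb(M,A)\times\Ebb(C,M)$, and such a pair is precisely the image of $\lam$ under the connecting map $\Ebb(C,M)\to\Ebb^2(C,A)$ attached to some $\sfr$-triangle realizing $\kap$. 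Chasing \eqref{les:1} for that triangle, $(\kap,\lam)=0$ in $\Ebb^2$ exactly when $\lam$ lifts along $\Ebb(C,-)$ applied to the deflation of that triangle. Hence: every pair is zero $\Leftrightarrow$ $\Ebb^2\equiv 0$ $\Leftrightarrow$ for every $\sfr$-triangle $A\infl B\defl C$ and every $C$, the map $\Ebb(C,B)\to\Ebb(C,C')$ — wait, more carefully, the surjectivity of $\Ebb(-,A)\to\Ebb(-,B)$ type maps — matches right exactness of $\Ebb(X,-)$. The symmetric reading with \eqref{les:2} gives $(ii)\Leftrightarrow(iii)$. So $(iii)$ is the self-dual hinge and $(i),(ii)$ both reduce to it. I would spell this out by noting the well-known fact (from \cite{GNP1}) that $\Ebb(X,-)$ is always half-exact, so right exactness is equivalent to the vanishing of the obstruction living in $\Ebb^2$.

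\textbf{The diagrammatic conditions $(iv)$ and $(v)$.} Condition $(v)$ is visibly a "rephrasing" of $(iv)$: a composition $f=i\circ d$ with $d$ a deflation (cocone $Y$) and $i$ an inflation (cone $X$) is exactly the data of $\delta\in\Ebb(Z,Y)$ realized by $Y\to U\to Z$ and $\delta'\in\Ebb(X,Z)$ realized by $Z\to T\to X$; the commutative diagram \eqref{diag:hereditary} with its compatibilities (i)--(iii) encodes the factorization $f=d'\circ i'$ together with the conflation $U\infl Z\oplus V\defl T$ (the middle row and the two squares give the mapping-cone conflation, and $\delta'''=f'_\ast{}^{-1}\delta'$, $\delta=d^\ast\delta''$ record exactly compatibilities (i),(ii), while (iii) is the commutativity of the lower-right square). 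So I would prove $(iv)\Leftrightarrow(v)$ essentially by unwinding definitions and invoking the standard $3\times 3$-type lemmas for extriangulated categories from \cref{ssection:extricats}. Then, to link $(iv)$ with $(iii)$: given $\delta\in\Ebb(Z,Y)$ and $\delta'\in\Ebb(X,Z)$, the pair $(\delta,\delta')$ up to reindexing represents an element of $\Ebb^2(X,Y)$ (via $M=Z$), and it is $0$ iff it can be "resolved", which is precisely the content of diagram \eqref{diag:hereditary}. More concretely, $\Ebb^2(X,Y)=0$ means $\delta'_\ast$-pushing or $\delta^\ast$-pulling can be made to factor, and the object $V$ with the maps $g,g',h,h'$ is the witness. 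I expect the cleanest route is $(iii)\Rightarrow(iv)$ by extracting $V$ as (say) the cocone/cone of an appropriate composite using that the relevant $\Ebb^2$-class vanishes, and $(iv)\Rightarrow(iii)$ by showing an arbitrary element $(\kap,\lam)$ of $\Ebb^2$ can be brought to the form treated in $(iv)$ and is then killed. The compatibilities (i)--(iii) are exactly what is needed to check the element is $0$ in the Yoneda description.

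\textbf{Parts (2) and (3) and the main obstacle.} For (2), assuming enough projectives: if every $X$ has a projective resolution $P_1\infl P_0\defl X$ of length $1$, then for any $\sfr$-triangle $A\infl B\defl C$, applying $\Hom(-,?)$ to the projective resolution of $C$ and using that $\Ebb(P_i,-)=0$ shows $\Ebb^2(C,-)=0$ by dimension shift, giving $(iii)$; conversely, if $\Ebb^2\equiv 0$, take a deflation $P_0\defl X$ with $P_0$ projective and cocone $K$, so $\Ebb(K,-)\cong\Ebb^2(X,-)=0$ — wait, one gets $\Ebb(K,-)$ is a quotient of $\Ebb(P_0,-)=0$ shifted... more precisely $\Ebb^2(X,-)$ surjects from $\Ebb(K,-)$ modulo the image of $\Ebb(P_0,-)=0$, so $\Ebb(K,-)=0$, meaning $K$ is projective, and then $K\infl P_0\defl X$ is the desired length-$1$ resolution. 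Part (3) is formally dual, using \eqref{les:1} in place of \eqref{les:2} and injectives in place of projectives; I would simply say "dually" after noting condition $(iii)$ is self-dual. The main obstacle I anticipate is the bookkeeping in $(iii)\Leftrightarrow(iv)$: constructing the object $V$ and all six new morphisms in \eqref{diag:hereditary} from the sole vanishing of an $\Ebb^2$-class, and verifying the three compatibilities, requires careful use of the "good" lemmas (realizations of sums/compositions of $\sfr$-triangles, the extriangulated $3\times3$ lemma) and keeping the direction of each push-forward/pull-back straight. Everything else is either a direct translation or a routine dimension-shift.
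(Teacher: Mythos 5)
Your overall architecture is sound, and one piece of it is a genuine (and valid) variant of the paper's route: you deduce (i) $\Rightarrow$ (iii) directly, by identifying the connecting morphism $\Ebb(X,C)\to\Ebb^2(X,A)$ in \eqref{les:1} with the Yoneda pairing $\lambda\mapsto[(\delta,\lambda)]$ and observing that every class in $\Ebb^2$ lies in the image of such a connecting map, so that right exactness forces all of them to vanish. The paper instead routes (i) $\Rightarrow$ (iv) $\Rightarrow$ (iii) via (ET4); your shortcut buys a proof of (i) $\Leftrightarrow$ (iii) (and dually (ii) $\Leftrightarrow$ (iii)) that never touches the diagram, at the cost of leaning on the compatibility of the Yoneda description of $\Ebb^2$ with the connecting maps of \eqref{les:1}, which does hold in the framework of \cite{GNP1}. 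Your dimension-shift argument for parts (2) and (3) (using $\Ebb(P,-)=0$ and $\Ebb^2(X,-)\cong\Ebb(K,-)$ for a conflation $K\infl P_0\defl X$) is exactly the intended one, and deferring (iv) $\Leftrightarrow$ (v) to the mapping-cone conflation attached to a good square is what the paper does by citing \cite[Proposition 1.20]{LiuNakaoka}.

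The genuine gap is the implication that produces diagram \eqref{diag:hereditary}. You flag it yourself as the main obstacle, but the route you sketch --- ``extract $V$ as the cocone/cone of an appropriate composite using that the relevant $\Ebb^2$-class vanishes'' --- does not work as stated: vanishing of $[(\delta,\delta')]$ in $\Ebb^2(X,Y)$ only says that this pair is linked to a pair of the form $(0,\lambda)$ or $(\kappa,0)$ by a finite chain of elementary equivalences, and there is no direct way to read off a single object $V$, the six morphisms, and the three compatibilities from such a chain. The workable (and intended) argument goes through condition (i) (or (ii)): right exactness of $\Ebb(X,-)$ applied to the conflation $Y\xrightarrow{f}U\xrightarrow{f'}Z$ makes $f'_\ast\colon\Ebb(X,U)\to\Ebb(X,Z)$ surjective, so one chooses $\delta'''$ with $f'_\ast\delta'''=\delta'$, realizes it as $U\xrightarrow{g}V\xrightarrow{g'}X$, and then applies (ET4) to the composable pair of inflations $Y\xrightarrow{f}U$ and $U\xrightarrow{g}V$; the cone of the composite is a conflation $Z\to W\to X$ realizing $f'_\ast\delta'''=\delta'$, hence isomorphic to $Z\xrightarrow{d}T\xrightarrow{e}X$, and after replacing $W$ by $T$ the compatibilities (i)--(iii) of condition (iv) are precisely the output of the axiom. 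Without this lifting-then-(ET4) step, your proof of (iii) $\Rightarrow$ (iv) (equivalently (i) $\Rightarrow$ (iv)) is incomplete, even though all the ingredients needed to carry it out are already present in your plan.
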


\begin{proof}

The implications (iii) $\Rightarrow$ (i), (iii) $\Rightarrow$ (ii) follow from the long exact sequences (\ref{les:1}) and (\ref{les:2}). 

The implications (i) $\Rightarrow$ (ii) and (i) $\Rightarrow$ (iv) follow from the axiom (ET4) as follows. Let $A\ov{x}{\lra}B\ov{y}{\lra}C\ov{\del}{\dra}$ be an arbitrary $\sfr$-triangle. Consider an arbitrary $X \in \cat$ and an arbitrary $\sfr$-triangle of the form $X\ov{f}{\lra}Y\ov{g}{\lra}A\ov{\theta}{\dra}$.
%It realizes $\theta \in \Ebb(A, X)$. 
For (ii), it is enough to show that for each such data, there exists $\nu \in \Ebb(B, X)$ such that $x^\ast \nu = \theta$. 
By the assumption (i), there exists $\tau \in \Ebb(C, Y)$ such that $g_{\ast} \tau = \del$. Consider any realization of $\tau$, it has the form $Y\ov{t}{\lra}Z\ov{u}{\lra}C\ov{\tau}{\dra}$. By (ET4), there exists a diagram of the form

\[
\xy
(-18,6)*+{X}="0";
(-6,6)*+{Y}="2";
(6,6)*+{A}="4";
(18,6)="6";
(-18,-6)*+{X}="10";
(-6,-6)*+{Z}="12";
(6,-6)*+{B'}="14";
(18,-6)="16";
(-6,-18)*+{C}="22";
(6,-18)*+{C}="24";
(-6,-28)="32";
(6,-28)="34";
{\ar^{f} "0";"2"};
{\ar^{g} "2";"4"};
{\ar@{=} "0";"10"};
{\ar_{t} "2";"12"};
{\ar^{x'} "4";"14"};
{\ar^{} "10";"12"};
{\ar^{} "12";"14"};
{\ar_{u} "12";"22"};
{\ar^{y'} "14";"24"};
{\ar@{=} "22";"24"};
{\ar@{}|{} "0";"12"};
{\ar@{}|{} "2";"14"};
{\ar@{}|{} "12";"24"};
{\ar@{-->}^\theta "4";"6"};
{\ar@{-->}^{\mu} "14";"16"};
{\ar@{-->}^{\tau} "22";"32"};
{\ar@{-->}^{g_{\ast} \tau} "24";"34"};
{\ar@{}|\circlearrowright "0";"12"};
{\ar@{}|\circlearrowright "2";"14"};
{\ar@{}|\circlearrowright "12";"24"};
\endxy
\]
such that $(x')^{\ast} \mu = \theta$. Since $g_{\ast} \tau = \del,$ there exists $b \in \cat(B, B')$ making the diagram 

\[
\xy
(-12,6)*+{A}="0";
(0,6)*+{B}="2";
(12,6)*+{C}="4";
(-12,-6)*+{A}="10";
(0,-6)*+{B'}="12";
(12,-6)*+{C}="14";
{\ar^{x} "0";"2"};
{\ar^{y} "2";"4"};
{\ar@{=} "0";"10"};
{\ar^{b} "2";"12"};
{\ar@{=} "4";"14"};
{\ar_{x'} "10";"12"};
{\ar_{y'} "12";"14"};
{\ar@{}|{} "0";"12"};
{\ar@{}|{} "2";"14"};
{\ar@{}|\circlearrowright "0";"12"};
{\ar@{}|\circlearrowright "2";"14"};
\endxy
\]
commute. Then $b^{\ast} \mu \in \Ebb(B, X)$ satisfies $x^{\ast} (b^{\ast} \mu) = (x')^{\ast}(\mu) = \theta,$ hence we can take $\nu = b^{\ast} \mu.$
Moreover, the isomorphism $b$ allows to replace $B'$ by $B$ in the first diagram above, thus giving (iv).

The implications (ii) $\Rightarrow$ (i) and (ii) $\Rightarrow$ (iv) are proved dually by using \rm (ET4)$\hspace*{0pt}^{\mathrm{op}}$.

(iv) $\Rightarrow$ (iii): Each element in $\Ebb^2(X, Y)$ can be represented by a pair $(\delta, \delta')$ which can in turn be realized by a pair of  $\sfr$-triangles \[
Y\overset{f}{\longrightarrow}U\overset{f'}{\longrightarrow}Z\ \text{ and } \ Z\overset{d}{\longrightarrow}T\overset{e}{\longrightarrow}X.
\] By the assumption, they can be completed to a diagram of the form (\ref{diag:hereditary}). Then the class $(\delta, \delta') = (\delta, f'_\ast\delta''')$ is equivalent to $(f'^{\ast}\delta, \delta')$. But $f'^{\ast}\delta = 0$ by \cite[Lemma 3.2]{NakaokaPalu}, hence $(\delta, \delta') \sim (0, \delta') = 0.$

(iv)  $\Leftrightarrow$ (v) follows from \cite[Proposition 1.20]{LiuNakaoka} by using the diagram (\ref{diag:hereditary}).

Parts (2) and (3) are immediate consequences of \cite[Corollary 3.21]{GNP1} and constructions in \cite[Section 3.1]{HLNII} and \cite[Section 5.1]{LiuNakaoka}.
\end{proof}

\begin{remark}
By \cite[Proposition 3.17]{GNP1}, conditions in \cref{hered_equiv}.(1) are also equivalent to the following: for all $X, Y \in \Csc,$ every $\alpha \in \Ebb^2(X, Y)$ admits a trivialization. In fact, condition (iv) is essentially a more explicit rephrasing of this condition.
\end{remark}

\begin{definition}
An extriangulated category is called \defn{hereditary} if it satisfies the equivalent conditions of \cref{hered_equiv}.
\end{definition}

\newpage
\subsection{Operations on hereditary extriangulated categories}

\subsubsection{Extension-closed subcategories}

\begin{lemma}
\label{lemma:ext-closed hereditary}
An extension-closed subcategory of a hereditary extriangulated category, considered with the induced extriangulated structure, is hereditary.
\end{lemma}

\begin{proof}
It is straightforward to check that if the condition (iv) in \cref{hered_equiv}.(1) holds for an extriangulated category, it holds for each of its extension-closed subcategories.
\end{proof}

\subsubsection{Relative theories}

Given a relative structure $(\Csc, \Fbb, \mathfrak{s}_{\Fbb})$ of an extriangulated structure $(\Csc, \Ebb, \mathfrak{s})$, one may ask whether the latter being hereditary implies the former being hereditary, or vice versa. The following easy examples explain why neither implication is true already for exact structures. Nontrivial examples of hereditary relative structures of non-hereditary extriangulated structures will appear throughout the paper.

\begin{example} \emph{The larger structure is hereditary, the smaller is not.}

Consider the category $\modd k A_3.$ It is hereditary when considered with its abelian exact structure. The relative exact structure whose class of conflations consists of all short exact sequences on which $\Hom(\begin{smallmatrix}3\\2\end{smallmatrix}, -)$ becomes exact is no longer hereditary. This is clear from the Auslander-Reiten quiver of the category. The Auslander-Reiten triangles ending at $2$ and at $3$ give conflations, and the class in $\widetilde{\Ext}^2(3, 1)$ defined by their Yoneda product does not vanish (here we denote by $\widetilde{\Ext}^i(-, -)$ (higher) extension groups in this structure).

\[\begin{tikzcd}
	&& {\begin{smallmatrix}3\\2\\1\end{smallmatrix}} \\
	& {\begin{smallmatrix}2\\1\end{smallmatrix}}  && {\begin{smallmatrix}3\\2\end{smallmatrix}}\\
	1 && 2 && 3
	\arrow[from=3-1, to=2-2]
	\arrow[from=2-2, to=1-3]
	\arrow[from=1-3, to=2-4]
	\arrow[from=2-4, to=3-5]
	\arrow[from=2-2, to=3-3]
	\arrow[from=3-3, to=2-4]
\end{tikzcd}\]
\end{example}

\begin{example}\emph{The smaller structure is hereditary, the larger is not.}

Every additive category admits a split exact structure, which is by definition hereditary. Thus, any abelian category of global dimension at least 2 provides such an example: its maximal exact structure is the abelian one, its minimal exact structure is the split one.
\end{example}

\subsubsection{Reductions of hereditary extriangulated categories}
\label{ssection:reduction}

%Let $\Pcal$ be the full subcategory of $\Csc$ formed by the projective objects.

%In this subsection, we do not assume that $\Csc$ is 0-Auslander.

%\begin{definition}\label{Def: hereditary}
%$\CEs$ is called \emph{hereditary} if
%\[ \E(-,B)\ov{y\sas}{\lra}\E(-,C)\to0\quad\text{and}\quad\E(B,-)\ov{x\uas}{\lra}\E(A,-)\to0 \]
%are exact for any conflation $\confA$.
%\end{definition}

\begin{definition}\label{Rem: extriangulated reduction}
Let $\Rcal\se\C$ be any rigid full additive subcategory, closed under isomorphisms and under taking direct summands. Put $\CR=\frac{\Rint}{[\Rcal]}$.
Since all objects in $\Rcal$ are projective-injective in $\Rint$, the bifunctor $\Ebb$, restricted to $\Rint$, descends to the quotient and we have an induced extriangulated category $(\CR,\ovl{\E},\ovl{\sfr})$ (see~\cite[Proposition 3.30]{NakaokaPalu}).
\end{definition}

\begin{remark}
The case when $\Rcal$ is the full subcategory whose objects are the projective-injectives in $\Csc$ was considered (for non-necessarily hereditary extriangulated categories) in~\cite[Proposition 3.30]{NakaokaPalu} and will often be used in our proofs.
\end{remark}

The following is an immediate consequence of the definitions and of~\cref{lemma:ext-closed hereditary}.

\begin{proposition}\label{Prop: reductions of hereditary extricats}
The following holds.
\begin{enumerate}
\item If $\C$ is hereditary, then so is $\CR$.
\item For any $X,X\ppr\in\Rint$, the following are equivalent.
\begin{itemize}
\item[{\rm (i)}] $\E(\add(\Rcal\oplus X),\add(\Rcal\oplus X\ppr))=0$.
\item[{\rm (ii)}] $\E(X,X\ppr)=0$.
\item[{\rm (iii)}] $\ovl{\E}(X,X\ppr)=0$.
\end{itemize}
\end{enumerate}
\end{proposition}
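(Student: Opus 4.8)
The plan is to verify the two numbered assertions directly from the definitions, using as little machinery as possible. For part (1), recall that $\CR = \Rint / [\Rcal]$ carries the extriangulated structure $(\CR,\ovl\E,\ovl\sfr)$ obtained from $(\Csc,\Ebb,\sfr)$ by the reduction procedure of~\cite[Proposition 3.30]{NakaokaPalu}. Since $\Rint = {}^{\perp_1}\Rcal\cap\Rcal^{\perp_1}$ is an extension-closed subcategory of $\Csc$, by \cref{lemma:ext-closed hereditary} it is hereditary for the induced structure; the point to observe is that $\ovl\E$ on $\CR$ is literally the restriction of $\Ebb$ to $\Rint$ (the bifunctor descends to the quotient but is not changed, because the objects of $\Rcal$ are $\Ebb$-projective-injective inside $\Rint$), and the realization $\ovl\sfr$ is induced from $\sfr$. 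Concretely, I would check condition (iv) of \cref{hered_equiv}.(1): given $\ovl\delta\in\ovl\E(Z,Y)$ and $\ovl\delta'\in\ovl\E(X,Z)$ with $X,Y,Z\in\Rint$, lift them to $\Ebb$-extensions between the same objects, realize them inside $\Rint$ (the conflations appearing in $\ovl\sfr$ are exactly the $\sfr$-conflations with all terms in $\Rint$), apply condition (iv) for $\Rint$ to obtain the diagram~\eqref{diag:hereditary} with middle object $V\in\Rint$, and then push the whole diagram down to $\CR$; the compatibilities $(i)$--$(iii)$ are preserved because $f_\ast,f^\ast$ etc. commute with the quotient functor. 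Hence $\CR$ is hereditary.

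For part (2), the implication (i) $\Rightarrow$ (ii) is trivial since $X,X\ppr\in\add(\Rcal\oplus X),\add(\Rcal\oplus X\ppr)$ respectively. For (ii) $\Rightarrow$ (i): assume $\Ebb(X,X\ppr)=0$. We must show $\Ebb(U,U\ppr)=0$ for all $U\in\add(\Rcal\oplus X)$ and $U\ppr\in\add(\Rcal\oplus X\ppr)$; by additivity of $\Ebb$ in each variable and closure under summands it suffices to treat the four cases $U,U\ppr\in\{X\}\cup\Rcal$. The case $(X,X\ppr)$ is the hypothesis; the cases where $U\in\Rcal$ give $\Ebb(U,U\ppr)=0$ because $U\in\Rcal\subseteq{}^{\perp_1}\Rcal$ and $U\ppr\in\add(\Rcal\oplus X\ppr)\subseteq\Rcal^{\perp_1}$ — here I use that $X\ppr\in\Rint$ so $\Ebb(\Rcal,X\ppr)=0$; symmetrically the cases with $U\ppr\in\Rcal$ vanish because $U\in\add(\Rcal\oplus X)\subseteq{}^{\perp_1}\Rcal$ and $U\ppr\in\Rcal$. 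The equivalence (ii) $\Leftrightarrow$ (iii) is immediate from the fact that $\ovl\E(X,X\ppr)=\Ebb(X,X\ppr)$ for $X,X\ppr\in\Rint$, which is part of the construction of the reduced structure recalled in \cref{Rem: extriangulated reduction}.

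The only point requiring a little care — and the main (mild) obstacle — is making precise the identification of $\ovl\E$ with the restriction of $\Ebb$ and of $\ovl\sfr$-conflations with $\sfr$-conflations having all terms in $\Rint$, so that condition (iv) can be transported verbatim between $\Rint$ and $\CR$; but this is exactly the content of \cite[Proposition 3.30]{NakaokaPalu}, so in the write-up I would simply cite it and note that \cref{lemma:ext-closed hereditary} applied to the extension-closed subcategory $\Rint\subseteq\Csc$ already does the work, the passage to the quotient being harmless since it changes neither the extension groups nor the class of conflations.
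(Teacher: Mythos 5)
Your proof is correct and follows essentially the same route as the paper, which simply observes that the proposition is an immediate consequence of the definitions together with \cref{lemma:ext-closed hereditary} applied to the extension-closed subcategory $\Rint\subseteq\Csc$, the passage to the ideal quotient being harmless since it changes neither $\Ebb$ nor the class of conflations. Your fleshing-out of the case analysis in part (2) and of the transport of condition (iv) of \cref{hered_equiv} is accurate but is exactly the content the paper leaves implicit.
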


\subsubsection{Localization of hereditary categories}
\label{subsubsec:localization}

In this subsection, we assume all categories to be small. Let us recall some definitions and results from \cite{NakaokaOgawaSakai}.

A full additive subcategory $\mathcal{N} \subseteq \mathcal{C}$ is 
%called 
\defn{thick} 
%in \cite{NakaokaOgawaSakai} 
if it is closed under isomorphisms and direct summands and satisfies the 2-out-of-3 property for conflations. In particular, $\mathcal{N}$ is closed under extensions in $\mathcal{C}$, so it carries an induced extriangulated structure.

We associate with a thick $\mathcal{N} \subseteq \mathcal{C}$ two sets of morphisms:

\[\mathcal{L}_{\mathcal{N}} = \{f \in \mathcal{M} \;|\; f \mbox{ is an inflation with Cone($f$)} \in \mathcal{N}\};
\]
\[\mathcal{R}_{\mathcal{N}} = \{f \in \mathcal{M} \;|\; f \mbox{ is a deflation with CoCone($f$)} \in \mathcal{N}\}.
\]

$\mathscr{S}_\mathcal{N}$ is defined to be the set of all finite compositions of morphisms in $\mathcal{L}_{\mathcal{N}}$ and $\mathcal{R}_{\mathcal{N}}$. The following lemma shows that in the hereditary case, this set has an easier description.

\begin{lemma} \label{lem:s_n:hered}
Let $\mathcal{N}$ be a thick subcategory of a hereditary extriangulated category. Then we have 
$$\mathscr{S}_{\mathcal{N}} = \mathcal{R}_{\mathcal{N}} \circ \mathcal{L}_{\mathcal{N}}.$$
\end{lemma}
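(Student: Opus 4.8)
The plan is to prove $\mathscr{S}_{\mathcal{N}} = \mathcal{R}_{\mathcal{N}} \circ \mathcal{L}_{\mathcal{N}}$ by two inclusions, the nontrivial one being $\supseteq$... wait, no: $\mathcal{R}_{\mathcal{N}} \circ \mathcal{L}_{\mathcal{N}} \subseteq \mathscr{S}_{\mathcal{N}}$ is immediate from the definition of $\mathscr{S}_{\mathcal{N}}$ as the set of all finite compositions of morphisms in $\mathcal{L}_{\mathcal{N}}$ and $\mathcal{R}_{\mathcal{N}}$. So the content is the reverse inclusion $\mathscr{S}_{\mathcal{N}} \subseteq \mathcal{R}_{\mathcal{N}} \circ \mathcal{L}_{\mathcal{N}}$. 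The natural strategy is a normal-form argument: show that the class $\mathcal{R}_{\mathcal{N}} \circ \mathcal{L}_{\mathcal{N}}$ is closed under post-composition with $\mathcal{L}_{\mathcal{N}}$ and with $\mathcal{R}_{\mathcal{N}}$, and that it contains all identities (which it does, since $\id$ is both in $\mathcal{L}_{\mathcal{N}}$ and $\mathcal{R}_{\mathcal{N}}$, with cone/cocone $0 \in \mathcal{N}$). Then any finite composition of morphisms from $\mathcal{L}_{\mathcal{N}} \cup \mathcal{R}_{\mathcal{N}}$ can be reduced step by step to something of the form $\mathcal{R}_{\mathcal{N}} \circ \mathcal{L}_{\mathcal{N}}$. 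It will also be convenient first to record the two easy closure facts: $\mathcal{L}_{\mathcal{N}} \circ \mathcal{L}_{\mathcal{N}} \subseteq \mathcal{L}_{\mathcal{N}}$ and $\mathcal{R}_{\mathcal{N}} \circ \mathcal{R}_{\mathcal{N}} \subseteq \mathcal{R}_{\mathcal{N}}$, which follow from (ET4) and (ET4)$^{\mathrm{op}}$ respectively together with thickness of $\mathcal{N}$ (the cone of a composite of inflations sits in a conflation with the two individual cones, and $\mathcal{N}$ is extension-closed). Given these, the only genuinely new move needed is: if $g \in \mathcal{L}_{\mathcal{N}}$ and $f \in \mathcal{R}_{\mathcal{N}}$ are composable as $g \circ f$ (first apply the deflation $f$, then the inflation $g$), then $g \circ f \in \mathcal{R}_{\mathcal{N}} \circ \mathcal{L}_{\mathcal{N}}$, i.e. one can swap the order.

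This swap is exactly where \cref{hered_equiv}.(1)(v) enters, and this is the heart of the argument. Write $f = i \circ d$... actually, more carefully: we have $f \colon U \to Z$ a deflation with cocone $Y \in \mathcal{N}$ and $g = i \colon Z \to T$ an inflation with cone $X \in \mathcal{N}$, and we look at $i \circ f \colon U \to T$. This is precisely a morphism of the shape ``$i \circ d$'' considered in condition (v) (with $d = f$). By (v), $i \circ f$ decomposes as $i \circ f = d' \circ i'$ where $i' \colon U \to V$ is an inflation with cocone $Y$ and $d' \colon V \to T$ is a deflation with cone $X$. Since $Y \in \mathcal{N}$, we get $i' \in \mathcal{L}_{\mathcal{N}}$; since $X \in \mathcal{N}$, we get $d' \in \mathcal{R}_{\mathcal{N}}$; hence $i \circ f = d' \circ i' \in \mathcal{R}_{\mathcal{N}} \circ \mathcal{L}_{\mathcal{N}}$, as desired. (The extra statement in (v) that $U \rightarrowtail Z \oplus V \twoheadrightarrow T$ is a conflation is not needed here, only the existence of the factorization with the stated cone and cocone.)

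With the swap lemma in hand, the inductive reduction goes as follows. Take an arbitrary element $s = s_m \circ \cdots \circ s_1 \in \mathscr{S}_{\mathcal{N}}$ with each $s_j \in \mathcal{L}_{\mathcal{N}} \cup \mathcal{R}_{\mathcal{N}}$, and induct on $m$. For $m \le 1$ there is nothing to do (an identity factorization handles the empty/singleton cases). For the inductive step, by induction $s_{m-1} \circ \cdots \circ s_1 = r \circ l$ with $l \in \mathcal{L}_{\mathcal{N}}$, $r \in \mathcal{R}_{\mathcal{N}}$, so $s = s_m \circ r \circ l$. If $s_m \in \mathcal{R}_{\mathcal{N}}$, then $s_m \circ r \in \mathcal{R}_{\mathcal{N}}$ by the closure fact $\mathcal{R}_{\mathcal{N}} \circ \mathcal{R}_{\mathcal{N}} \subseteq \mathcal{R}_{\mathcal{N}}$, and we are done. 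If $s_m \in \mathcal{L}_{\mathcal{N}}$, apply the swap lemma to $s_m \circ r$ to rewrite $s_m \circ r = r' \circ l'$ with $l' \in \mathcal{L}_{\mathcal{N}}$, $r' \in \mathcal{R}_{\mathcal{N}}$; then $s = r' \circ (l' \circ l)$ and $l' \circ l \in \mathcal{L}_{\mathcal{N}}$ by $\mathcal{L}_{\mathcal{N}} \circ \mathcal{L}_{\mathcal{N}} \subseteq \mathcal{L}_{\mathcal{N}}$, completing the induction. Therefore $\mathscr{S}_{\mathcal{N}} \subseteq \mathcal{R}_{\mathcal{N}} \circ \mathcal{L}_{\mathcal{N}}$, and combined with the trivial reverse inclusion we obtain equality.

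The main obstacle is really just the correct bookkeeping of which morphism is the inflation and which is the deflation in invoking condition (v), and double-checking that the roles ``cone'' versus ``cocone'' are matched up so that the membership classes come out right; once condition (v) is granted, the swap is essentially immediate, and the rest is a routine induction. A minor point to verify cleanly is that $\mathcal{N}$ being thick indeed gives closure of $\mathcal{L}_{\mathcal{N}}$ and $\mathcal{R}_{\mathcal{N}}$ under composition — this uses that a conflation built from (ET4) has its third term extension of the two cones, and thickness (in particular extension-closedness) of $\mathcal{N}$ then forces membership; this is exactly the kind of standard fact about thick subcategories recalled from \cite{NakaokaOgawaSakai} that we may cite rather than reprove.
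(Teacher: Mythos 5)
Your proof is correct and follows the same route as the paper: the paper's entire argument is "it is enough to show $\mathcal{L}_{\mathcal{N}} \circ \mathcal{R}_{\mathcal{N}} \subseteq \mathcal{R}_{\mathcal{N}} \circ \mathcal{L}_{\mathcal{N}}$, which follows from \cref{hered_equiv}.(1)(v)", and you supply exactly that swap plus the routine closure-under-composition and induction bookkeeping that the paper leaves implicit. (The cone/cocone labels in the paper's statement of (v) are arguably transposed relative to diagram (iv), but since both $X$ and $Y$ lie in $\mathcal{N}$ this does not affect your conclusion that $i' \in \mathcal{L}_{\mathcal{N}}$ and $d' \in \mathcal{R}_{\mathcal{N}}$.)
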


\begin{proof}
It is enough to show that  $\mathcal{L}_{\mathcal{N}} \circ \mathcal{R}_{\mathcal{N}} \subseteq \mathcal{R}_{\mathcal{N}} \circ \mathcal{L}_{\mathcal{N}}$. This follows from (v) in \cref{hered_equiv}, applied to $i \in \mathcal{L}_{\mathcal{N}}, d \in \mathcal{R}_{\mathcal{N}}.$
\end{proof}

Let $\mathcal{N} \subseteq \Csc$ be a thick subcategory of an extriangulated category such that $\mathscr{S}_\mathcal{N}$ satisfies the conditions of \cite[Theorem 3.5]{NakaokaOgawaSakai} (this is the case for example when $\Ncal$ is biresolving, or when $\Csc$ is exact or triangulated and $\Ncal$ is percolating, see~\cite[Section 4]{NakaokaOgawaSakai}). Then the main result of \cite{NakaokaOgawaSakai} states that the localization $\Csc/\mathcal{N} \overset\cong\to \Csc[\mathscr{S}_{\mathcal{N}}^{-1}]$ carries a canonical induced extriangulated structure $(\Csc/\mathcal{N}, \widetilde{\Ebb}, \widetilde{\sfr})$. Its morphisms, defining bifunctor $\widetilde{\Ebb}$, and $\sfr$-triangles are all described via a suitable calculus of fractions. In particular, each $\sfr$-triangle realizing an element in $\widetilde{\Ebb}(C, A)$ can be represented by a diagram of the form $A \overset{a}{\longrightarrow} X\overset{f}{\rightarrowtail}B\overset{f'}{\twoheadrightarrow}Y \overset{c}{\longrightarrow} C $, with $X\overset{f}{\rightarrowtail}B\overset{f'}{\twoheadrightarrow}Y$ being a conflation in $\CEs$ and with $a, c \in \mathscr{S}_{\mathcal{N}}$.

\begin{theorem}
Let $(\Csc, \Ebb, \mathfrak{s})$ be a hereditary extriangulated category and let $\mathcal{N} \subseteq \Csc$ be a thick subcategory such that $\mathscr{S}_\mathcal{N}$ satisfies the conditions of \cite[Theorem 3.5]{NakaokaOgawaSakai}. Then the induced extriangulated structure $(\Csc/\mathcal{N}, \widetilde{\Ebb}, \widetilde{\sfr})$ is also hereditary.
\end{theorem}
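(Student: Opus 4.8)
The plan is to verify condition (iv) of \cref{hered_equiv}.(1) for the localization $(\Csc/\mathcal{N}, \widetilde{\Ebb}, \widetilde{\sfr})$, leveraging both the explicit calculus-of-fractions description of its $\sfr$-triangles and the fact that $\Csc$ itself is hereditary. Concretely, I would start with two $\widetilde{\Ebb}$-extensions $\delta \in \widetilde{\Ebb}(Z, Y)$ and $\delta' \in \widetilde{\Ebb}(X, Z)$, realized by $\widetilde{\sfr}$-triangles $Y \to U \to Z \dashrightarrow$ and $Z \to T \to X \dashrightarrow$. By \cite[Theorem 3.5]{NakaokaOgawaSakai} (the part describing realizations), each of these can be represented by a diagram involving a genuine conflation in $\CEs$ together with morphisms in $\mathscr{S}_{\mathcal{N}}$, and by \cref{lem:s_n:hered} these fraction morphisms have the simple form $r \circ \ell$ with $\ell \in \mathcal{L}_{\mathcal{N}}$, $r \in \mathcal{R}_{\mathcal{N}}$.

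The key steps, in order, are: (1) unwind the calculus of fractions to replace $\delta$ and $\delta'$ by data of conflations in $\Csc$, up to the equivalences generated by $\mathscr{S}_{\mathcal{N}}$; here one should arrange (using the Ore-type conditions satisfied by $\mathscr{S}_{\mathcal{N}}$, i.e. the left/right fraction axioms from \cite{NakaokaOgawaSakai}) that the "inner" object $Z$ appearing in both extensions is represented by a common object, so that the two conflations in $\Csc$ can be composed; (2) apply condition (iv) of \cref{hered_equiv}.(1) in the hereditary category $\Csc$ to the resulting pair of conflations, obtaining the commutative diagram \eqref{diag:hereditary} in $\Csc$ with the middle object $V$ and extensions $\delta''$, $\delta'''$ satisfying the three compatibilities; (3) push this whole diagram through the localization functor $\Csc \to \Csc/\mathcal{N}$, which is extriangulated, hence sends conflations to $\widetilde{\sfr}$-triangles and respects $f_\ast$, $f^\ast$; (4) check that the images of $\delta''$, $\delta'''$ realize the required extensions $\widetilde{\Ebb}(T, Y)$, $\widetilde{\Ebb}(X, U)$ in the localization and that the three identities $f'_\ast\delta''' = \delta'$, $d^\ast\delta'' = \delta$, $f_\ast\delta'' = e^\ast\delta'''$ descend. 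A cleaner alternative for steps (2)--(4) is to instead use characterization (iii): show directly that $\widetilde{\Ebb}^2(-,-) = 0$. Since the localization functor induces, for each object, surjections on $\Ebb^i$ compatible with the long exact sequences \eqref{les:1}, \eqref{les:2} (because $\mathscr{S}_{\mathcal{N}}$-morphisms become invertible and the associated connecting maps are controlled), an element of $\widetilde{\Ebb}^2(X,Y)$ lifts, after passing to a fraction, to an element of $\Ebb^2$ of suitable objects in $\Csc$, which vanishes by heredity of $\Csc$; one then argues the vanishing is preserved.

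The main obstacle, I expect, is step (1): matching up the "inner object" $Z$ of the two fractional $\widetilde{\sfr}$-triangles so that one genuinely has two conflations in $\Csc$ that can be fed into condition (iv). The representations of $\delta$ and $\delta'$ a priori involve different objects in $\Csc$ lying over $Z$, connected only by zigzags of morphisms in $\mathscr{S}_{\mathcal{N}}$, so one must use the Ore conditions (and \cref{lem:s_n:hered} to keep the fraction morphisms in the controlled form $\mathcal{R}_{\mathcal{N}} \circ \mathcal{L}_{\mathcal{N}}$) to produce a common roof. Once the two extensions are realized over a common object in $\Csc$, the rest is a matter of invoking heredity of $\Csc$ and functoriality of localization, which is routine. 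I would therefore structure the proof so that the bookkeeping of fractions is concentrated in a single lemma (essentially "any finite collection of composable $\widetilde{\sfr}$-triangles can be represented by genuine conflations in $\Csc$"), after which condition (iv) transfers formally. It is also worth noting that \cite[Theorem 3.5]{NakaokaOgawaSakai} already provides much of this bookkeeping, so in the actual write-up one can likely cite it and only indicate the heredity-specific simplification coming from \cref{lem:s_n:hered}.
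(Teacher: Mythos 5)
Your overall strategy is the same as the paper's: represent the two composable $\widetilde{\sfr}$-triangles by genuine conflations in $\Csc$ joined by a bridge of $\mathscr{S}_{\mathcal{N}}$-morphisms, invoke \cref{lem:s_n:hered}, and finish using heredity of $\Csc$. The gap is precisely in the step you yourself flag as the main obstacle, and your proposed resolution of it does not work as stated. After unwinding the fractions, the two conflations $B\rightarrowtail C\twoheadrightarrow D$ and $F\rightarrowtail G\twoheadrightarrow H$ are joined by $\mathscr{S}_{\mathcal{N}}$-morphisms $D\to E\to F$ running from the \emph{cone} of the first conflation to the \emph{first term} of the second. To move either conflation onto a common inner object you would need to transport it \emph{covariantly} in its cone (resp.\ \emph{contravariantly} in its first term), i.e.\ you would need to invert these morphisms in $\Csc$; pullback and pushout go the wrong way, and the Ore conditions only furnish roofs for morphisms, not a way to push an $\Ebb$-class forward along a non-invertible map in the wrong variance. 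So the auxiliary lemma you propose ("any pair of composable $\widetilde{\sfr}$-triangles is represented by genuinely composable conflations in $\Csc$") is not available by fraction bookkeeping alone, and the subsequent "apply (iv) in $\Csc$ and push through the localization functor" never gets off the ground. Your "cleaner alternative" via lifting a class in $\widetilde{\Ebb}^2$ to a class in $\Ebb^2$ of objects of $\Csc$ runs into exactly the same problem.

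What actually closes this gap in the paper is that heredity of $\Csc$ is used \emph{to make the composition possible}, not merely after the fact: one factors the bridge as $e'\circ d'$ with $d'\in\mathcal{L}_{\mathcal{N}}$ and $e'\in\mathcal{R}_{\mathcal{N}}$ (this is where \cref{lem:s_n:hered} enters), and then applies the swap property, condition (v) of \cref{hered_equiv}.(1), three times in $\Csc$ to weave the two conflations and the factored bridge into a single commutative lattice of conflations. The image of that lattice in $\Csc/\mathcal{N}$ exhibits the trivialization of the given class in $\widetilde{\Ebb}^2(M,N)$, so one concludes via characterization (iii) rather than by transporting diagram (iv). If you rework your write-up so that the hereditary property of $\Csc$ (in the form of condition (v)) is the tool that absorbs the bridging morphisms, rather than an ingredient applied only after a common roof has been found, the argument goes through.
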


\begin{proof}
An element in $\widetilde{\Ebb}^2(M, N)$ can be represented by a pair of $\sfr$-triangles in $(\Csc/\mathcal{N}, \widetilde{\Ebb}, \widetilde{\sfr})$ which together form a diagram of the form 
\[
N \overset{a}{\longrightarrow} B\overset{b}{\rightarrowtail}C\overset{c}{\twoheadrightarrow}D \overset{d}{\longrightarrow} E  \overset{e}{\longrightarrow} F\overset{f}{\rightarrowtail}G\overset{g}{\twoheadrightarrow}H \overset{h}{\longrightarrow} M,
\]
where
$B\overset{b}{\rightarrowtail}C\overset{c}{\twoheadrightarrow}D$ and $F\overset{f}{\rightarrowtail}G\overset{g}{\twoheadrightarrow}H$ are conflations in $\CEs$, $a, d, e, h \in  \mathscr{S}_{\mathcal{N}}$. By using \cref{lem:s_n:hered}, the composition $D \overset{d}{\longrightarrow} E  \overset{e}{\longrightarrow} F$ can be rewritten as $D \overset{d'}{\longrightarrow} E'  \overset{e'}{\longrightarrow} F$, with $d' \in \mathcal{L}_{\mathcal{N}}, e' \in \mathcal{R}_{\mathcal{N}}$. 

%Let $K \in \mathcal{N}$ be $\Cone(d'),$ $L \in \mathcal{N}$ be $\mathrm{CoCone}(e')$. By applying \cref{hered_equiv}.(v) twice, to pairs of $\sfr$-triangles given by the conflations 
%\[(B\overset{b}{\rightarrowtail}C\overset{c}{\twoheadrightarrow}D, D \overset{d'}{\infl} E'  {\defl} K)\]
%and
%\[(L \infl E'  \overset{e'}{\defl} F, F\overset{f}{\rightarrowtail}G\overset{g}{\twoheadrightarrow}H),\]
%we check that the original pair of $\widetilde{\sfr}$-triangles fits into a diagram of the form (\ref{diag:hereditary}) in $(\Csc/\mathcal{N}, \widetilde{\Ebb}, \widetilde{\sfr})$, completed by $\widetilde{\sfr}$-triangles represented by 
%\[A \overset{a}{\longrightarrow} B\overset{b'}{\rightarrowtail}C'\overset{c'}{\twoheadrightarrow}E = E 
%\]
%and 
%\[E = E \overset{e'}{\rightarrowtail}G'\overset{g'}{\twoheadrightarrow}H 
%\overset{h}{\longrightarrow} I.
%\]
%Thus, the category $(\Csc/\mathcal{N}, \widetilde{\Ebb}, \widetilde{\sfr})$ is hereditary.
%\end{proof}

%Alternative suggestion:
By applying \cref{hered_equiv}.(v) three times in $\Csc$, we construct the following commutative diagram:
% https://q.uiver.app/?q=WzAsMTUsWzAsMCwiQiJdLFsxLDAsIkMiXSxbMiwwLCJEIl0sWzMsMCwiRSJdLFszLDEsIkYiXSxbMywyLCJHIl0sWzMsMywiSCJdLFswLDEsIkIiXSxbMCwyLCJCIl0sWzEsMywiSCJdLFsyLDMsIkgiXSxbMSwxLCJYIl0sWzIsMSwiRSciXSxbMSwyLCJaIl0sWzIsMiwiWSJdLFswLDEsIiIsMCx7InN0eWxlIjp7InRhaWwiOnsibmFtZSI6Im1vbm8ifX19XSxbMSwyLCIiLDAseyJzdHlsZSI6eyJoZWFkIjp7Im5hbWUiOiJlcGkifX19XSxbMiwzLCJcXHNpbSJdLFszLDQsIlxcd3IiXSxbMiwxMiwiXFx3ciIsMCx7InN0eWxlIjp7InRhaWwiOnsibmFtZSI6Im1vbm8ifX19XSxbMTIsNCwiXFxzaW0iLDAseyJzdHlsZSI6eyJoZWFkIjp7Im5hbWUiOiJlcGkifX19XSxbMCw3LCIiLDIseyJsZXZlbCI6Miwic3R5bGUiOnsiaGVhZCI6eyJuYW1lIjoibm9uZSJ9fX1dLFs3LDgsIiIsMix7ImxldmVsIjoyLCJzdHlsZSI6eyJoZWFkIjp7Im5hbWUiOiJub25lIn19fV0sWzksMTAsIiIsMix7ImxldmVsIjoyLCJzdHlsZSI6eyJoZWFkIjp7Im5hbWUiOiJub25lIn19fV0sWzEwLDYsIiIsMix7ImxldmVsIjoyLCJzdHlsZSI6eyJoZWFkIjp7Im5hbWUiOiJub25lIn19fV0sWzcsMTEsIiIsMCx7InN0eWxlIjp7InRhaWwiOnsibmFtZSI6Im1vbm8ifX19XSxbMTEsMTIsIiIsMCx7InN0eWxlIjp7ImhlYWQiOnsibmFtZSI6ImVwaSJ9fX1dLFsxLDExLCJcXHdyIiwwLHsic3R5bGUiOnsidGFpbCI6eyJuYW1lIjoibW9ubyJ9fX1dLFs4LDEzLCIiLDIseyJzdHlsZSI6eyJ0YWlsIjp7Im5hbWUiOiJtb25vIn19fV0sWzEzLDE0LCIiLDIseyJzdHlsZSI6eyJoZWFkIjp7Im5hbWUiOiJlcGkifX19XSxbMTQsNSwiXFxzaW0iLDAseyJzdHlsZSI6eyJoZWFkIjp7Im5hbWUiOiJlcGkifX19XSxbMTEsMTMsIiIsMCx7InN0eWxlIjp7InRhaWwiOnsibmFtZSI6Im1vbm8ifX19XSxbMTMsOSwiIiwwLHsic3R5bGUiOnsiaGVhZCI6eyJuYW1lIjoiZXBpIn19fV0sWzEyLDE0LCIiLDAseyJzdHlsZSI6eyJ0YWlsIjp7Im5hbWUiOiJtb25vIn19fV0sWzE0LDEwLCIiLDAseyJzdHlsZSI6eyJoZWFkIjp7Im5hbWUiOiJlcGkifX19XSxbNCw1LCIiLDAseyJzdHlsZSI6eyJ0YWlsIjp7Im5hbWUiOiJtb25vIn19fV0sWzUsNiwiIiwwLHsic3R5bGUiOnsiaGVhZCI6eyJuYW1lIjoiZXBpIn19fV1d
\[\begin{tikzcd}
	B & C & D & E \\
	B & X & {E'} & F \\
	B & Z & Y & G \\
	& H & H & H,
	\arrow[tail, from=1-1, to=1-2]
	\arrow[two heads, from=1-2, to=1-3]
	\arrow["\sim", from=1-3, to=1-4]
	\arrow["\wr", from=1-4, to=2-4]
	\arrow["\wr", tail, from=1-3, to=2-3]
	\arrow["\sim", two heads, from=2-3, to=2-4]
	\arrow[Rightarrow, no head, from=1-1, to=2-1]
	\arrow[Rightarrow, no head, from=2-1, to=3-1]
	\arrow[Rightarrow, no head, from=4-2, to=4-3]
	\arrow[Rightarrow, no head, from=4-3, to=4-4]
	\arrow[tail, from=2-1, to=2-2]
	\arrow[two heads, from=2-2, to=2-3]
	\arrow["\wr", tail, from=1-2, to=2-2]
	\arrow[tail, from=3-1, to=3-2]
	\arrow[two heads, from=3-2, to=3-3]
	\arrow["\sim", two heads, from=3-3, to=3-4]
	\arrow[tail, from=2-2, to=3-2]
	\arrow[two heads, from=3-2, to=4-2]
	\arrow[tail, from=2-3, to=3-3]
	\arrow[two heads, from=3-3, to=4-3]
	\arrow[tail, from=2-4, to=3-4]
	\arrow[two heads, from=3-4, to=4-4]
\end{tikzcd}\]
where we abbreviate by $X \overset\sim\to Y$ morphisms belonging to $\mathscr{S}_{\mathcal{N}}$,
thus showing that the category $(\Csc/\mathcal{N}, \widetilde{\Ebb}, \widetilde{\sfr})$ is hereditary.
\end{proof}

%\subsection
%\Misha{(If I have time): Hereditary relative extriangulated structures on singularity categories of G.dim. 1 categories, including (periodic) derived categories of hereditary abelian categories and singular Nakajima categories.} 

\section{0-Auslander extriangulated categories}
\label{section:0-Auslander}

After a short discussion on the notion of dominant dimension (\cref{ssection: domdim}), we introduce the notion of a 0-Auslander extriangulated category (\cref{ssection:0-Auslander}) and illustrate it with many examples arising from representation theory (\cref{ssection:examples0-Auslander}), the most basic being the homotopy category $K^{[-1,0]}(\proj \Lambda)$ of 2-term complexes with projective components over a finite-dimensional algebra and cluster categories with the largest relative extriangulated structure making some cluster tilting object projective.
%In this section, w
We fix an extriangulated category $(\Csc,\Ebb,\mathfrak{s})$.

\subsection{(Co)dominant dimensions}
\label{ssection: domdim}

The main result of this subsection is \cref{Thm:Auslander:equiv}. We first prove some useful lemmata.

%Let $\Csc$ be an extriangulated category with enough projectives and enough injectives.

\begin{lemma}[\cite{LiuNakaoka}]
Let $Y\infl P\defl X\dashrightarrow$ be an $\sfr$-triangle with $P$ projective in $\Csc$.
Then $\Ebb^2(X,-) \cong \Ebb(Y,-)$.
\end{lemma}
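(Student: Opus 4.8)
The plan is to use the long exact sequences \eqref{les:1} associated to the $\sfr$-triangle $Y\infl P\defl X\dashrightarrow$, together with the vanishing of positive extensions out of a projective object. Write the $\sfr$-triangle as $Y\overset{p}{\infl} P\overset{q}{\defl} X\overset{\del}{\dra}$. Applying the functorial long exact sequence in the first variable \eqref{les:1} to this triangle, we obtain for every test object $Z\in\Csc$ a long exact sequence
\[
\cdots\to\Ebb(P,Z)\to\Ebb(Y,Z)\to\Ebb^2(X,Z)\to\Ebb^2(P,Z)\to\cdots
\]
Since $P$ is projective, $\Ebb(P,-)=0$ and $\Ebb^2(P,-)=0$ (the latter because higher extension groups out of a projective vanish — this follows from the construction of $\Ebb^i$ via \eqref{les:1} applied to a projective resolution, or directly since $\Ebb(P,-)=0$ makes the connecting maps in the Yoneda description trivial). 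Hence the connecting map $\Ebb(Y,Z)\to\Ebb^2(X,Z)$ is an isomorphism for each $Z$.

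First I would make precise why $\Ebb^2(P,-)=0$: by \cref{hered_equiv} this is automatic if $\Csc$ is hereditary, but here we do not assume that, so instead I would argue from the explicit Yoneda-type description of $\Ebb^2$ recalled in the excerpt, namely $\Ebb^2(P,A)=\big(\coprod_{M}\Ebb(M,A)\times\Ebb(P,M)\big)/\sim$; every generator has second component in $\Ebb(P,M)=0$, so every class is $0$. Alternatively, and perhaps more cleanly, one cites \cite[Theorem 3.5]{GNP1}: the sequence \eqref{les:1} is exact, and the segment $\Ebb(P,Z)\to\Ebb(Y,Z)\overset{\partial}{\to}\Ebb^2(X,Z)\to\Ebb^2(P,Z)$ with the two outer terms zero forces $\partial$ to be an isomorphism.

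Next I would check naturality in $Z$, so that the isomorphism $\Ebb^2(X,-)\cong\Ebb(Y,-)$ is an isomorphism of functors $\Csc\to\Mod R$ (indeed of objects of $\cat\Mod$): the connecting morphism $\del^{\sharp}\co\Ebb(Y,-)\to\Ebb^2(X,-)$ induced by $\del\in\Ebb(X,Y)$ is a natural transformation by the very construction of the long exact sequence in \cite[Theorem 3.5]{GNP1}, and we have just shown each component is bijective. This yields the claimed natural isomorphism.

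The only mild subtlety — and the step I would be most careful about — is the vanishing $\Ebb^2(P,-)=0$ for $P$ projective, since the ambient category is not assumed hereditary at this point; everything else is a formal consequence of the long exact sequence \eqref{les:1}. Dually (or by the same argument applied to \eqref{les:2}) one also gets the contravariant version if needed, but the statement as given only concerns the first variable.
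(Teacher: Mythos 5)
Your proof is correct and follows essentially the same route as the paper, which simply invokes the long exact sequence together with $\Ebb(P,-)=0$ and $\Ebb^2(P,-)=0$. The extra care you take in justifying $\Ebb^2(P,-)=0$ from the Yoneda-type description (every class is represented by a pair whose second component lies in $\Ebb(P,M)=0$) and in checking naturality of the connecting morphism is sound and just fills in details the paper leaves implicit.
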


\begin{proof}
This follows from the long exact sequence associated with the $\sfr$-triangle by noticing that $\Ebb(P,-)=0$ and $\Ebb^2(P,-)=0$.
\end{proof}

The following two immediate consequences will often be used in the article. 
\begin{corollary}\label{lemma: E2=0}
Assume that there is an $\sfr$-triangle $P_1\infl P_0\defl X\dashrightarrow$ with $P_0, P_1$ projective in $\Csc$.
Then for any object $Y\in\Csc$, we have $\mathbb{E}^2(X,Y)=0$.
\end{corollary}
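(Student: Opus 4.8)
The plan is to deduce this directly from the preceding lemma, which identifies $\Ebb^2(X,-)$ with an $\Ebb$-group whenever $X$ sits at the end of an $\sfr$-triangle whose middle term is projective. Concretely, I would apply that lemma to the given $\sfr$-triangle $P_1\infl P_0\defl X\dashrightarrow$, reading off $P_0$ as the projective middle term $P$ and $P_1$ as the cocone $Y$. This yields a natural isomorphism of functors $\Ebb^2(X,-)\cong\Ebb(P_1,-)$.

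Next I would invoke projectivity of $P_1$: by the very definition of a projective object in an extriangulated category, $\Ebb(P_1,-)=0$. Combining this with the isomorphism above gives $\Ebb^2(X,-)=0$ as a functor, and in particular $\Ebb^2(X,Y)=0$ for every $Y\in\Csc$, which is the assertion.

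There is essentially no obstacle here; the only point requiring a moment's care is checking that the hypotheses of the preceding lemma are literally met by the given $\sfr$-triangle — i.e.\ that one is allowed to take the middle term to be $P_0$ and the first term to be $P_1$ — which is immediate. If desired, one could alternatively argue entirely within the long exact sequence~\eqref{les:2} attached to $P_1\infl P_0\defl X\dashrightarrow$, using $\Ebb(P_0,-)=0$ and $\Ebb^2(P_0,-)=0$ to force the connecting map $\Ebb(P_1,-)\to\Ebb^2(X,-)$ to be an isomorphism and then again $\Ebb(P_1,-)=0$; but routing through the already-proved lemma is the cleanest presentation.
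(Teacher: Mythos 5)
Your proof is correct and is precisely the argument the paper intends: the corollary is stated as an immediate consequence of the preceding lemma, which with $P=P_0$ and cocone $P_1$ gives $\Ebb^2(X,-)\cong\Ebb(P_1,-)=0$ by projectivity of $P_1$. Your alternative via the long exact sequence \eqref{les:2} is also exactly how the lemma itself is proved, so both routes match the paper.
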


\begin{corollary}\label{lemma: Sigma P injective}
Let $P\infl I\defl \Sigma P\dashrightarrow$ be an $\sfr$-triangle with $I$ injective.
If $\mathbb{E}^2(-,P)=0$, then $\Sigma P$ is injective.
\end{corollary}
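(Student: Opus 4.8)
The plan is to obtain the claim directly from the covariant long exact sequence \eqref{les:1}. I would fix an arbitrary object $C\in\Csc$ and apply \eqref{les:1} to the given $\sfr$-triangle $P\infl I\defl\Sigma P\dashrightarrow$; the relevant portion reads
\[
\Ebb(C,I)\lra\Ebb(C,\Sigma P)\lra\Ebb^2(C,P).
\]
Since $I$ is injective we have $\Ebb(C,I)=0$, and $\Ebb^2(C,P)=0$ by hypothesis, so exactness at the middle term forces $\Ebb(C,\Sigma P)=0$. As $C$ ranges over all of $\Csc$, this says exactly that $\Ebb(-,\Sigma P)=0$, i.e.\ that $\Sigma P$ is injective.

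There is essentially no obstacle here: the whole argument is a one-step diagram chase, and the only thing to verify is that $\Ebb^2(C,P)$ sits immediately to the right of $\Ebb(C,\Sigma P)$ in \eqref{les:1} — which is precisely how that sequence is set up — so that it is already long enough to conclude. In particular, neither right-exactness of $\Ebb(-,Y)$ nor the hereditary hypothesis is needed, only the injectivity of $I$ together with the vanishing of $\Ebb^2(-,P)$. One can view this as the ``injective'' counterpart of the preceding lemma and of \cref{lemma: E2=0}: there one uses an $\sfr$-triangle with projective total term to shift $\Ebb^2$ downwards, whereas here an $\sfr$-triangle with injective total term shifts $\Ebb$ one step up, and the hypothesis $\Ebb^2(-,P)=0$ is exactly what makes the shifted term vanish.
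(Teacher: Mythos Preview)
Your proof is correct and matches the paper's approach: the paper states this corollary as an ``immediate consequence'' (implicitly of the dual of the preceding lemma), which unwinds to exactly the long exact sequence argument you give.
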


\begin{lemma}\label{lemma: pdim=1 iff idim=1}
Assume that, for each projective object $P$, we have $\mathbb{E}^2(-,P)=0$ and there exists an $\sfr$-triangle $P\infl I\defl \Sigma P\dashrightarrow$ with $I$ injective. If an object $X\in\Csc$ is the cone of an inflation between projective objects, then it is the fiber of a deflation between injective objects.
\end{lemma}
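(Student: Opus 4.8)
The plan is to splice the given projective presentation of $X$ together with an injective copresentation of $P_0$ by means of the octahedral axiom (ET4), and then to observe that the middle object so produced is injective, using the vanishing of $\Ebb^2(-,P_1)$.

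First I would fix the data. Since $X$ is the cone of an inflation between projectives, choose an $\sfr$-triangle $P_1\infl P_0\defl X\dashrightarrow$ with $P_0,P_1$ projective. By hypothesis there is also an $\sfr$-triangle $P_0\infl I_0\defl\Sigma P_0\dashrightarrow$ with $I_0$ injective; and since $\Ebb^2(-,P_0)=0$, \cref{lemma: Sigma P injective} guarantees that $\Sigma P_0$ is injective.

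Next I would apply (ET4) to the two composable inflations $P_1\infl P_0$ and $P_0\infl I_0$. This produces an object $E$, namely the cone of the composite inflation $P_1\infl I_0$, together with $\sfr$-triangles
\[
P_1\infl I_0\defl E\dashrightarrow \qquad\text{and}\qquad X\infl E\defl\Sigma P_0\dashrightarrow .
\]
The second of these exhibits $X$ as the fiber of the deflation $E\defl\Sigma P_0$, so it only remains to prove that $E$ is injective. For this I would feed the first $\sfr$-triangle into the long exact sequence \eqref{les:1}, obtaining an exact sequence $\Ebb(-,I_0)\to\Ebb(-,E)\to\Ebb^2(-,P_1)$ in $\Mod\cat$; the left-hand term vanishes because $I_0$ is injective, and the right-hand term vanishes by hypothesis, whence $\Ebb(-,E)=0$, i.e.\ $E$ is injective. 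Together with the injectivity of $\Sigma P_0$, the $\sfr$-triangle $X\infl E\defl\Sigma P_0\dashrightarrow$ then exhibits $X$ as the fiber of a deflation between injective objects.

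I do not expect a genuine obstacle here: the argument is a direct application of (ET4), the long exact sequence of positive extensions, and \cref{lemma: Sigma P injective}. The one point that calls for a little care is the claim that $E$ --- a priori merely the cone of a morphism from a projective object to an injective object --- is itself injective, and this is exactly where the standing hypothesis $\Ebb^2(-,P)=0$ for projective $P$ enters.
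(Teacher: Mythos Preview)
Your proof is correct, and in fact more economical than the paper's. The paper proceeds differently: it applies the shifted octahedron (\cref{lemma:shiftedOctahedron}) twice, first to the pair of conflations $P_1\infl P_0\defl X$ and $P_1\infl I\defl\Sigma P_1$ (sharing the domain $P_1$), and then to the resulting conflation $P_0\infl X\oplus I\defl\Sigma P_1$ together with $P_0\infl J\defl\Sigma P_0$. This produces a conflation $X\oplus I\infl \Sigma P_1\oplus J\defl\Sigma P_0$, and the paper then splits off the injective summand $I$ to conclude. Your route instead uses a single application of {\rm (ET4)} to the composable inflations $P_1\infl P_0\infl I_0$, and replaces the paper's second diagram and the splitting argument by a direct appeal to the long exact sequence \eqref{les:1} to show that the cone $E$ of $P_1\infl I_0$ is injective. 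This is cleaner: it uses the hypothesis $\Ebb^2(-,P_1)=0$ at exactly one place, avoids the need to argue that a summand of an injective is injective, and never introduces the auxiliary conflation $P_1\infl I\defl\Sigma P_1$ at all.
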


\begin{proof}
Let $P_1\infl P_0\defl X$ be a conflation with $P_0,P_1$ projectives.
Fix two conflations $P_1\infl I\defl \Sigma P_1$ and $P_0\infl J\defl \Sigma P_0$ with $I,J$ injectives.
By successively forming the diagrams:
 % https://q.uiver.app/?q=WzAsOCxbMCwwLCJQXzEiXSxbMSwwLCJQXzAiXSxbMiwwLCJYIl0sWzAsMSwiSSJdLFswLDIsIlxcU2lnbWEgUF8xIl0sWzEsMSwiWFxcb3BsdXMgSSJdLFsyLDEsIlgiXSxbMSwyLCJcXFNpZ21hIFBfMSJdLFswLDEsIiIsMCx7InN0eWxlIjp7InRhaWwiOnsibmFtZSI6Im1vbm8ifX19XSxbMSwyLCIiLDAseyJzdHlsZSI6eyJoZWFkIjp7Im5hbWUiOiJlcGkifX19XSxbMCwzLCIiLDIseyJzdHlsZSI6eyJ0YWlsIjp7Im5hbWUiOiJtb25vIn19fV0sWzMsNCwiIiwyLHsic3R5bGUiOnsiaGVhZCI6eyJuYW1lIjoiZXBpIn19fV0sWzMsNSwiIiwyLHsic3R5bGUiOnsidGFpbCI6eyJuYW1lIjoibW9ubyJ9fX1dLFsyLDYsIiIsMCx7InN0eWxlIjp7ImhlYWQiOnsibmFtZSI6Im5vbmUifX19XSxbNCw3LCIiLDIseyJzdHlsZSI6eyJoZWFkIjp7Im5hbWUiOiJub25lIn19fV0sWzEsNSwiIiwyLHsic3R5bGUiOnsidGFpbCI6eyJuYW1lIjoibW9ubyJ9fX1dLFs1LDYsIiIsMix7InN0eWxlIjp7ImhlYWQiOnsibmFtZSI6ImVwaSJ9fX1dLFs1LDcsIiIsMix7InN0eWxlIjp7ImhlYWQiOnsibmFtZSI6ImVwaSJ9fX1dLFsyLDYsIiIsMSx7Im9mZnNldCI6LTEsInN0eWxlIjp7ImhlYWQiOnsibmFtZSI6Im5vbmUifX19XSxbNCw3LCIiLDIseyJvZmZzZXQiOjEsInN0eWxlIjp7ImhlYWQiOnsibmFtZSI6Im5vbmUifX19XV0=
\[\begin{tikzcd}
	{P_1} & {P_0} & \,X \\
	I & {X\oplus I} & \,X \\
	{\Sigma P_1} & {\Sigma P_1}
	\arrow[tail, from=1-1, to=1-2]
	\arrow[two heads, from=1-2, to=1-3]
	\arrow[tail, from=1-1, to=2-1]
	\arrow[two heads, from=2-1, to=3-1]
	\arrow[tail, from=2-1, to=2-2]
	\arrow[no head, from=1-3, to=2-3]
	\arrow[no head, from=3-1, to=3-2]
	\arrow[tail, from=1-2, to=2-2]
	\arrow[two heads, from=2-2, to=2-3]
	\arrow[two heads, from=2-2, to=3-2]
	\arrow[shift left=1, no head, from=1-3, to=2-3]
	\arrow[shift right=1, no head, from=3-1, to=3-2]
\end{tikzcd}
\hspace{2cm}
\begin{tikzcd}
	{P_0} & {X\oplus I} & {\Sigma P_1} \\
	J & {\Sigma P_1\oplus J} & {\Sigma P_1} \\
	{\Sigma P_0} & {\Sigma P_0}
	\arrow[tail, from=1-1, to=1-2]
	\arrow[two heads, from=1-2, to=1-3]
	\arrow[tail, from=1-1, to=2-1]
	\arrow[two heads, from=2-1, to=3-1]
	\arrow[tail, from=2-1, to=2-2]
	\arrow[no head, from=1-3, to=2-3]
	\arrow[no head, from=3-1, to=3-2]
	\arrow[tail, from=1-2, to=2-2]
	\arrow[two heads, from=2-2, to=2-3]
	\arrow[two heads, from=2-2, to=3-2]
	\arrow[shift left=1, no head, from=1-3, to=2-3]
	\arrow[shift right=1, no head, from=3-1, to=3-2]
\end{tikzcd}\]
% https://q.uiver.app/?q=WzAsOCxbMCwwLCJQXzAiXSxbMSwwLCJYXFxvcGx1cyBJIl0sWzIsMCwiXFxTaWdtYSBQXzEiXSxbMCwxLCJKIl0sWzAsMiwiXFxTaWdtYSBQXzAiXSxbMSwxLCJcXFNpZ21hIFBfMVxcb3BsdXMgSiJdLFsyLDEsIlxcU2lnbWEgUF8xIl0sWzEsMiwiXFxTaWdtYSBQXzAiXSxbMCwxLCIiLDAseyJzdHlsZSI6eyJ0YWlsIjp7Im5hbWUiOiJtb25vIn19fV0sWzEsMiwiIiwwLHsic3R5bGUiOnsiaGVhZCI6eyJuYW1lIjoiZXBpIn19fV0sWzAsMywiIiwyLHsic3R5bGUiOnsidGFpbCI6eyJuYW1lIjoibW9ubyJ9fX1dLFszLDQsIiIsMix7InN0eWxlIjp7ImhlYWQiOnsibmFtZSI6ImVwaSJ9fX1dLFszLDUsIiIsMix7InN0eWxlIjp7InRhaWwiOnsibmFtZSI6Im1vbm8ifX19XSxbMiw2LCIiLDAseyJzdHlsZSI6eyJoZWFkIjp7Im5hbWUiOiJub25lIn19fV0sWzQsNywiIiwyLHsic3R5bGUiOnsiaGVhZCI6eyJuYW1lIjoibm9uZSJ9fX1dLFsxLDUsIiIsMix7InN0eWxlIjp7InRhaWwiOnsibmFtZSI6Im1vbm8ifX19XSxbNSw2LCIiLDIseyJzdHlsZSI6eyJoZWFkIjp7Im5hbWUiOiJlcGkifX19XSxbNSw3LCIiLDIseyJzdHlsZSI6eyJoZWFkIjp7Im5hbWUiOiJlcGkifX19XSxbMiw2LCIiLDEseyJvZmZzZXQiOi0xLCJzdHlsZSI6eyJoZWFkIjp7Im5hbWUiOiJub25lIn19fV0sWzQsNywiIiwyLHsib2Zmc2V0IjoxLCJzdHlsZSI6eyJoZWFkIjp7Im5hbWUiOiJub25lIn19fV1d
we constuct a conflation
$X\oplus I\infl \Sigma P_1\oplus J\defl \Sigma P_0$,
where $\Sigma P_0, \Sigma P_1$ are injective by \cref{lemma: Sigma P injective}.
Because $I$ is injective, it splits from the conflation and $X$ is the fiber of a deflation between injectives.
\end{proof}

\begin{definition}
Let $(\Csc, \Ebb, \mathfrak{s})$ be an extriangulated category with enough projectives. We define its \defn{dominant dimension} $\domdim(\Csc, \Ebb, \mathfrak{s})$ to be the largest integer $n$ such that for any projective object $P$, there exist $n$ $\sfr$-triangles
$$P \infl I_0 \defl M_1 \dashrightarrow ;$$
$$M_1 \infl I_1 \defl M_2 \dashrightarrow ;$$
$$\cdots$$
$$M_{n-1} \infl I_{n-1} \defl M_n \dashrightarrow ,$$
with $I_k$ being projective-injective for all $0 \le k \le n-1$. If such an $n$ does not exist, we let $\domdim(\Csc, \Ebb, \mathfrak{s})=\infty.$
%\Yann{I shifted indices for $I_k$ and $P_k$ so that it now starts at 0. Do you agree?}
%\Misha{I agree; will proofread for typos before removing the comment.}
Dually, let $(\Csc, \Ebb, \mathfrak{s})$ be an extriangulated category with enough injectives. We define its \defn{codominant dimension} $\codomdim(\Csc, \Ebb, \mathfrak{s})$ to be the largest integer $n$ such that for any injective object $I$, there exist $n$ $\sfr$-triangles
$$N_1 \infl P_0 \defl I \dashrightarrow ;$$
$$N_2 \infl P_1 \defl N_1 \dashrightarrow ;$$
$$\cdots$$
$$N_n \infl P_{n-1} \defl N_{n-1} \dashrightarrow ,$$
with $P_k$ being projective-injective for all $0 \le k \le n-1$. If such an $n$ does not exist, we let $\codomdim(\Csc, \Ebb, \mathfrak{s})$ equal $\infty.$
\end{definition}

\begin{proposition} \label{Thm:Auslander:equiv}
Let $(\Csc, \Ebb, \mathfrak{s})$ be an extriangulated category. The following conditions are equivalent.

\begin{itemize}
    \item[(i)] $(\Csc, \Ebb, \mathfrak{s})$ has enough projectives and $\mathrm{pd}(\Csc, \Ebb) \leq 1 \leq \mathrm{dom. dim}(\Csc, \Ebb).$
    \item[(ii)] $(\Csc, \Ebb, \mathfrak{s})$ has enough injectives and $\mathrm{id}(\Csc, \Ebb) \leq 1 \leq \mathrm{codom. dim}(\Csc, \Ebb).$
\end{itemize}
\end{proposition}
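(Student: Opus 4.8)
The plan is to prove the implication (i) $\Rightarrow$ (ii) directly; the converse will then follow formally by passing to the opposite extriangulated category $(\Csc\op,\Ebb\op,\sfr\op)$, in which projectives and injectives, inflations and deflations, the invariants $\mathrm{pd}$ and $\mathrm{id}$, and $\domdim$ and $\codomdim$ are all interchanged, so that condition (i) for $\Csc\op$ is literally condition (ii) for $\Csc$.

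So assume (i). First I would record that $(\Csc,\Ebb,\sfr)$ is then hereditary: having enough projectives together with $\mathrm{pd}\le 1$ says precisely that every object admits a projective resolution of length $\le 1$, which by \cref{hered_equiv}.(2) forces $\Ebb^2\equiv 0$. In particular $\Ebb^2(-,P)=0$ for every projective $P$, and every $X\in\Csc$ sits in a conflation $P_1\infl P_0\defl X$ with $P_0,P_1$ projective, i.e.\ is the cone of an inflation between projectives. On the other hand, $\domdim\ge 1$ furnishes, for each projective $P$, a conflation $P\infl I_0\defl \Sigma P$ with $I_0$ projective-injective, hence in particular injective. These are exactly the two hypotheses needed for \cref{lemma: pdim=1 iff idim=1}.

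Applying \cref{lemma: pdim=1 iff idim=1} to an arbitrary object $X$ (legitimate, since every object is a cone of an inflation between projectives) shows that $X$ is the fiber of a deflation between injectives, i.e.\ there is a conflation $X\infl I^0\defl I^1$ with $I^0,I^1$ injective. The inflation $X\infl I^0$ shows that $\Csc$ has enough injectives, and the conflation itself is a length-$1$ injective coresolution of $X$, so $\mathrm{id}(\Csc,\Ebb)\le 1$. It then remains to prove $\codomdim\ge 1$. Given an injective object $J$, I would use enough projectives to choose a deflation $p\co P\defl J$ with $P$ projective and fiber $j\co N\infl P$, and use $\domdim\ge 1$ to choose an inflation $a\co P\infl I_0$ with $I_0$ projective-injective and cone $\Sigma P$. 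Then $a\circ j\co N\infl I_0$ is an inflation, and applying (ET4) to the composable inflations $N\xrightarrow{\,j\,}P\xrightarrow{\,a\,}I_0$ yields a conflation $J\infl \Cone(a\circ j)\defl \Sigma P$, realized by a class in $\Ebb(\Sigma P,J)$. Since $J$ is injective this class vanishes, so the conflation splits and $\Cone(a\circ j)\cong J\oplus \Sigma P$. Composing the deflation $I_0\defl \Cone(a\circ j)$ with the split projection $J\oplus\Sigma P\defl J$ then exhibits $J$ as the image of a deflation from the projective-injective object $I_0$; taking its fiber gives a conflation witnessing $\codomdim\ge 1$, which completes (ii).

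I expect the last paragraph to be the only genuinely delicate step: one must invoke (ET4) in the precise form that converts a composite of two inflations into the conflation $J\infl\Cone(a\circ j)\defl\Sigma P$, and — crucially — notice that this conflation is classified by an element of $\Ebb(\Sigma P,J)$, which vanishes because $J$ is injective (appealing to injectivity of $J$, not of $\Sigma P$). Everything else reduces to bookkeeping with \cref{hered_equiv}, \cref{lemma: pdim=1 iff idim=1}, the definitions of $\mathrm{pd},\mathrm{id},\domdim,\codomdim$, and the self-duality of the setup.
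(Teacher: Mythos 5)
Your proof is correct and follows essentially the same route as the paper's: reduce to (i) $\Rightarrow$ (ii) by duality, deduce enough injectives and $\mathrm{id}\le 1$ from \cref{lemma: pdim=1 iff idim=1}, and then obtain $\codomdim\ge 1$ by applying (ET4) to a projective presentation of an injective $J$ together with the conflation $P\infl I_0\defl\Sigma P$, splitting the resulting conflation $J\infl\Cone(a\circ j)\defl\Sigma P$ via $\Ebb(\Sigma P,J)=0$. Your observation that the splitting hinges on the injectivity of $J$ rather than of $\Sigma P$ is exactly the point the paper's argument uses as well.
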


Explicitly, the conditions in (i) mean that every object of $\Csc$ is the cone of an inflation between projectives, and that each projective object $P$ admits an inflation $P \infl Q,$ with $Q$ being projective-injective (using heredity, it then follows from the long exact sequence that the cone is injective). Dually, the conditions in (ii) mean that every object of $\Csc$ is the cocone of a deflation between injectives, and that each injective object $I$ admits an inflation $Q \defl I,$ with $Q$ being projective-injective.

\begin{proof}
We only prove the implication (i) $\Rightarrow$ (ii), since the proof of the converse result is dual. Assume that (i) holds. The facts that $\Csc$ has enough injectives and $\mathrm{id}(\Csc,\Ebb) \leq 1$ follow from \cref{lemma: pdim=1 iff idim=1}. It remains to prove that $1 \leq \mathrm{codom. dim}(\Csc).$ Let $I$ be an injective object in $\Csc$. By the assumption, we have a conflation $P_1 \infl P_0 \defl I,$ with $P_0, P_1$ projective in $\Csc$. We also have a conflation $P_0 \infl Q \infl \Sigma P_0,$ with $Q$ projective-injective. Since $\mathrm{pd}(\Csc, \Ebb) \leq 1$, we know by \cref{hered_equiv} that the category is hereditary and, in particular, $\mathbb{E}^2(-,P)=0$. Thus, by \cref{lemma: Sigma P injective}, $\Sigma P_0$ is injective. By (ET4), we can complete these two conflations to a diagram 

\[\begin{tikzcd}
	{P_1} & {P_0} & \,I \\
	{P_1} & {Q} & \,R \\
	& {\Sigma P_0} & {\Sigma P_0}
	\arrow[tail, from=1-1, to=1-2]
	\arrow[two heads, from=1-2, to=1-3]
	\arrow[no head, from=1-1, to=2-1]
	\arrow[two heads, from=2-3, to=3-3]
	\arrow[tail, from=2-1, to=2-2]
	\arrow[tail, from=1-3, to=2-3]
	\arrow[no head, from=3-2, to=3-3]
	\arrow[tail, from=1-2, to=2-2]
	\arrow[two heads, from=2-2, to=2-3]
	\arrow[two heads, from=2-2, to=3-2]
	\arrow[shift left=1, no head, from=1-1, to=2-1]
	\arrow[shift right=1, no head, from=3-2, to=3-3]
\end{tikzcd}\]

Since $I$ is injective, the rightmost vertical conflation splits. Thus, we get a deflation $Q \defl I \oplus \Sigma P_0.$ By composing it with the projection onto the first factor, which is also a deflation, we obtain a deflation $Q \defl I$ thanks to $\mathrm{(ET4)}^{\mathrm{op}}$. Since $I$ was an arbitrary injective object, this proves that $1 \leq \mathrm{codom. dim}(\Csc).$
\end{proof}

\subsection{0-Auslander extriangulated categories}
\label{ssection:0-Auslander}

\begin{definition}
 An
 %Krull--Schmidt
 %\Yann{Remove from definition? Check precisely where Krull--Schmidt is necessary} 
 extriangulated category is a \defn{0-Auslander extriangulated category} if it satisfies the equivalent conditions of \cref{Thm:Auslander:equiv}.
 Explicitly, it has enough projectives, enough injectives, global dimension at most one, dominant dimension at least one, and codominant dimension at least one.
 
A 0-Auslander extriangulated category is \defn{reduced} if (up to isomorphism) its only projective-injective object is 0.
\end{definition}

\begin{remark} \label{rk: ss or hered}
 If $\Csc$ is a 0-Auslander extriangulated category, then it is either split, i.e. $\Ebb (-, -) = 0$ (hence we have $\gldim\Csc = 0$ and $\domdim\Csc = \codomdim\Csc = \infty$), or it has $\gldim\Csc=1=\domdim\Csc = \codomdim\Csc$.
\end{remark}

\begin{remark}
 Unravelling the definition, reduced 0-Auslander extriangulated categories are precisely the categories arising in \cite[Section 4]{PadrolPaluPilaudPlamondon}:
\begin{enumerate}
 \item For every object $X\in\Csc$, there is a conflation $P_1 \infl P_0 \defl X$ with $P_0,P_1$ projective in $\Csc$.
 \item For any projective $P\in\Csc$, the morphism $P\infl 0$ is an inflation.
\end{enumerate}
\end{remark}

\begin{remark}
\label{rk:reduced}
 The ideal quotient of an arbitrary 0-Auslander category by the morphisms factoring through projective-injective objects is a reduced 0-Auslander extriangulated category. This will be used many times in the proofs.
\end{remark}

\begin{proposition}
\label{prop:module as quotient by injectives}
Let $\Csc$ be a 0-Auslander extriangulated category with full subcategory of projectives $\Pcal$, and full subcategory of injectives $\Ical$.
Write $\overline{\Pcal}$ for the image of $\Pcal$ in the quotient of $\Csc$ by the ideal generated by the projective-injective objects.
Then the functor sending $X\in\Csc$ to $\Csc/_{\Pcal\cap\Ical} (-,X)|_{\overline{\Pcal}}$ induces an equivalence of categories
\[
\Csc/\Ical \cong \modd \overline{\Pcal}.
\]
\end{proposition}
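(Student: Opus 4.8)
The plan is to show that the additive functor $F\colon\Csc\to\modd\overline{\Pcal}$, $X\mapsto \Csc/_{\Pcal\cap\Ical}(-,X)|_{\overline{\Pcal}}$, is well defined, annihilates the ideal $[\Ical]$, and that the induced functor $\overline F\colon\Csc/\Ical\to\modd\overline{\Pcal}$ is dense and fully faithful. Throughout I write $\overline{\Csc}$ for $\Csc/[\Pcal\cap\Ical]$, so that $F(X)=\overline{\Csc}(-,X)|_{\overline{\Pcal}}$, and I use freely that $\Csc$ has enough projectives and injectives, heredity, and (co)dominant dimension $\ge 1$.

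\emph{Well-definedness, the key computation, and descent.} For $X\in\Csc$ heredity gives a conflation $Q_1\overset{g}{\infl}Q_0\overset{p}{\defl}X\dashrightarrow$ with $Q_0,Q_1$ projective. Since $p$ is a deflation and objects of $\Pcal$ are projective, the long exact sequence~(\ref{les:1}) together with $\Ebb(Q,Q_1)=0$ gives, for $Q\in\Pcal$, a natural isomorphism $\Csc(Q,X)\cong\Csc(Q,Q_0)/g_\ast\Csc(Q,Q_1)$. A short diagram chase---using that a morphism $Q\to X$ factors through a projective-injective iff it lifts along $p$ to a morphism $Q\to Q_0$ factoring through a projective-injective, and that $g_\ast$ preserves ``factoring through a projective-injective''---upgrades this to a natural isomorphism
\[
\overline{\Csc}(-,X)|_{\overline{\Pcal}}\ \cong\ \Cok\bigl(\overline{g}_\ast\colon\overline{\Pcal}(-,Q_1)\to\overline{\Pcal}(-,Q_0)\bigr),
\]
so $F(X)\in\modd\overline{\Pcal}$. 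Moreover $F(I)=0$ for injective $I$: for $Q\in\Pcal$ dominant dimension $\ge1$ provides an inflation $Q\infl J$ with $J$ projective-injective, and any $Q\to I$ extends along it, hence factors through $J$; thus $\overline{\Csc}(Q,I)=0$. Consequently $F$ kills all morphisms factoring through injectives and descends to $\overline F\colon\Csc/\Ical\to\modd\overline{\Pcal}$.

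\emph{Density.} Let $M\in\modd\overline{\Pcal}$ with presentation $\overline{\Pcal}(-,Q_1)\overset{\overline g_\ast}{\longrightarrow}\overline{\Pcal}(-,Q_0)\to M\to0$, and lift $\overline g$ to a morphism $g\colon Q_1\to Q_0$ in $\Csc$. To realize $M$ as some $F(X)$ I first replace $g$ by an inflation: choosing an inflation $\iota\colon Q_1\infl J$ with $J$ projective-injective (dominant dimension $\ge1$) and forming the pushout of $g$ along $\iota$ (pushouts along inflations exist in extriangulated categories), one obtains a conflation $Q_1\overset{\binom{-g}{\iota}}{\infl}Q_0\oplus J\defl X\dashrightarrow$ with $Q_0\oplus J$, $Q_1$ projective. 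Since $\overline{\Csc}(-,J)|_{\overline{\Pcal}}=0$, the induced map $\overline{\Pcal}(-,Q_1)\to\overline{\Pcal}(-,Q_0\oplus J)\cong\overline{\Pcal}(-,Q_0)$ is $-\overline g_\ast$, so the computation above gives $F(X)\cong\Cok(\overline g_\ast)=M$.

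\emph{Full faithfulness and the main obstacle.} Fix a projective presentation $Q_1\overset{g}{\infl}Q_0\overset{p}{\defl}X\dashrightarrow$; since $F(X)$ has presentation $\overline{\Pcal}(-,Q_1)\to\overline{\Pcal}(-,Q_0)\to F(X)\to0$, the Yoneda lemma identifies $\Hom_{\modd\overline{\Pcal}}(F(X),F(Y))$ with $\ker\bigl(g^\ast\colon\overline{\Csc}(Q_0,Y)\to\overline{\Csc}(Q_1,Y)\bigr)$, i.e. with the classes $[\psi]$ of morphisms $\psi\colon Q_0\to Y$ with $\psi g$ factoring through a projective-injective, and under this identification $\overline F(\bar f)=[fp]$. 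For surjectivity, write $\psi g=ba$ with $Q_1\overset a\to J\overset b\to Y$, $J$ projective-injective (hence injective), extend $a$ along the inflation $g$ to $a'\colon Q_0\to J$, so $(\psi-ba')g=0$, and by exactness of $\Csc(X,Y)\overset{p^\ast}\to\Csc(Q_0,Y)\overset{g^\ast}\to\Csc(Q_1,Y)$ we get $\psi-ba'=fp$ for some $f\colon X\to Y$, whence $\overline F(\bar f)=[fp]=[\psi]$. For injectivity, suppose $fp$ factors through a projective-injective $J$; take a conflation $X\overset{\iota}{\infl}I^0\overset{\pi}{\defl}I^1\dashrightarrow$ with $I^0,I^1$ injective (heredity and injective dimension $\le1$). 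Heredity gives $\Ebb^2(I^1,Q_1)=0$, so $p_\ast\colon\Ebb(I^1,Q_0)\to\Ebb(I^1,X)$ is surjective and the class $\delta'\in\Ebb(I^1,X)$ of the conflation equals $p_\ast(\varepsilon)$ for some $\varepsilon$; then $f_\ast\delta'=(fp)_\ast\varepsilon$ factors through $\Ebb(I^1,J)=0$ ($J$ injective), so $f$ factors through the inflation $\iota$, hence through the injective $I^0$, i.e. $\bar f=0$ in $\Csc/\Ical$. The two places where the 0-Auslander hypotheses are genuinely used---and which I expect to be the main obstacle to a purely formal proof---are arranging in the density step that the connecting morphism of a presentation can be taken to be an inflation (forcing the use of dominant dimension $\ge1$) and the injectivity half above, which needs both $\Ebb^2=0$ (to push $\delta'$ down to $Q_0$) and $\Ebb(I^1,J)=0$, i.e. that $J$ is injective as well as projective.
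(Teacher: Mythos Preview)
Your proof is correct and self-contained. The paper, by contrast, does not give a direct argument: it cites \cite[Section~4]{PadrolPaluPilaudPlamondon} for the \emph{reduced} case (where $\Pcal\cap\Ical=0$, so $\overline{\Pcal}=\Pcal$) and then observes that the general case follows immediately by passing to the quotient $\Csc/[\Pcal\cap\Ical]$, which is reduced 0-Auslander and leaves both $\Csc/\Ical$ and $\modd\overline{\Pcal}$ unchanged.

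Your route is genuinely different: rather than reducing and quoting, you work directly with projective presentations inside $\Csc$, carry out the Yoneda-type identification of $\Hom_{\modd\overline{\Pcal}}(F(X),F(Y))$ by hand, and use dominant dimension twice in the way you flag (once to turn an arbitrary map between projectives into an inflation for density, once to kill $F$ on injectives). The injectivity step, where you push the extension class $\delta'$ down along $p$ using $\Ebb^2(I^1,Q_1)=0$ and then kill it using $\Ebb(I^1,J)=0$, is clean and makes precise exactly which parts of the 0-Auslander hypothesis are needed where. What your approach buys is a proof that does not depend on an external reference and that isolates the role of each hypothesis; what the paper's approach buys is brevity and a uniform reduction principle (\cref{rk:reduced}) that it reuses throughout \cref{section:tilting}. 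Both are valid; yours would in fact serve as a proof of the cited result in the reduced case.
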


\begin{proof}
This was proved for reduced 0-Auslander extriangulated categories in \cite[Section 4]{PadrolPaluPilaudPlamondon}.
The non-reduced case immediately follows by reduction.
\end{proof}

By similarly applying \cite[Remark 4.32 and Corollary 4.35]{PadrolPaluPilaudPlamondon}, we obtain:

\begin{lemma}
\label{lemma:equiv proj inj}
There are quasi-inverse equivalences of additive categories $\Sigma : \ovl{\Pcal} \overset{\simeq}{\leftrightarrows} \und{\Ical} : \Omega$ given on objects by exchanging $P$ and $I$ in a fixed conflation $P\infl Q\defl I$, with $Q$ projective-injective.
\end{lemma}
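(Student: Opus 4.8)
The plan is to reduce to the reduced case and then quote \cite[Remark 4.32 and Corollary 4.35]{PadrolPaluPilaudPlamondon}, exactly as \cref{prop:module as quotient by injectives} was obtained from its reduced counterpart. First I would apply \cref{rk:reduced}: the ideal quotient $\und\Csc$ of $\Csc$ by the morphisms factoring through projective-injectives is a reduced 0-Auslander extriangulated category, its subcategories of projectives and of injectives are precisely $\ovl\Pcal$ and $\und\Ical$, and the (extriangulated) quotient functor sends a conflation $P\infl Q\defl I$ with $Q$ projective-injective to a conflation $P\infl 0\defl I$ --- so that in $\und\Csc$ the object $I$ is the cone of $P\to 0$ and $P$ is the fiber of $0\to I$. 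Since both sides of the asserted equivalence and the object-level definitions of $\Sigma$ and $\Omega$ are unchanged by this reduction, it suffices to treat the case where $\Csc$ is reduced.

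So assume $\Csc$ is reduced 0-Auslander, whence $\ovl\Pcal=\Pcal$ and $\und\Ical=\Ical$; by definition of the (co)dominant dimension, every projective $P$ fits in an $\sfr$-triangle $P\overset{0}{\infl}0\defl \Sigma P\overset{\del_P}{\dra}$ with $\del_P\in\Ebb(\Sigma P,P)$ and $\Sigma P$ injective by \cref{lemma: Sigma P injective}, and dually every injective $I$ fits in $\Omega I\overset{0}{\infl}0\defl I\overset{\vp_I}{\dra}$ with $\Omega I$ projective. I would fix one such $\sfr$-triangle for each object. Feeding these into the long exact sequences (\ref{les:1}) and (\ref{les:2}) and using $\Csc(0,-)=0=\Csc(-,0)$, $\Ebb(0,-)=0=\Ebb(-,0)$ yields isomorphisms
\[
\Csc(\Sigma P,\Sigma P')\ \xrightarrow{\ \sim\ }\ \Ebb(\Sigma P,P')\ \xleftarrow{\ \sim\ }\ \Csc(P,P'),
\]
natural in $P$ and $P'$, the left one sending $h$ to $h^{\ast}\del_{P'}$ and the right one sending $f$ to $f_{\ast}\del_P$. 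One then defines $\Sigma f$ to be the morphism corresponding to $f$ under this chain; the identity $(\Sigma f)^{\ast}\del_{P'}=f_{\ast}\del_P$ is exactly the condition that $(f,\id_0,\Sigma f)$ be a morphism of $\sfr$-triangles, and bifunctoriality of $\Ebb$ together with the displayed uniqueness forces $\Sigma$ to preserve identities and composition. Thus $\Sigma\co\Pcal\to\Ical$ is a functor, fully faithful since it is a composite of natural isomorphisms; $\Omega\co\Ical\to\Pcal$ is built dually and is fully faithful for the same reason.

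It then remains to produce natural isomorphisms $\Omega\Sigma\cong\id_{\Pcal}$ and $\Sigma\Omega\cong\id_{\Ical}$, and this is the only slightly delicate point (and precisely the content of \cite[Corollary 4.35]{PadrolPaluPilaudPlamondon}). For $P$ projective, the chosen triangle for $P$ and the triangle $\Omega\Sigma P\overset{0}{\infl}0\defl\Sigma P$ both exhibit their left-hand term as a fiber of the single deflation $0\defl\Sigma P$; since the fiber of a fixed deflation is unique up to a unique compatible isomorphism, one gets an isomorphism $P\xrightarrow{\ \sim\ }\Omega\Sigma P$, natural in $P$ by that uniqueness. Dually, uniqueness of cones of a fixed inflation gives $\Sigma\Omega\cong\id_{\Ical}$. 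Combined with full faithfulness of $\Sigma$ and $\Omega$, this shows they are quasi-inverse equivalences. In summary, the argument is conceptually routine: the main obstacle is not an idea but the naturality and well-definedness bookkeeping, all of which is carried out in the reduced case in \cite[Remark 4.32 and Corollary 4.35]{PadrolPaluPilaudPlamondon}, the general case following from it by the reduction above.
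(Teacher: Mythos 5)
Your proposal is correct and follows essentially the same route as the paper: the paper's proof consists precisely of reducing to the reduced case and invoking \cite[Remark 4.32 and Corollary 4.35]{PadrolPaluPilaudPlamondon}, which is exactly your first step. The only difference is that you additionally unpack the content of that citation (the natural isomorphisms $\Csc(P,P')\cong\Ebb(\Sigma P,P')\cong\Csc(\Sigma P,\Sigma P')$ coming from the long exact sequences for $P\infl 0\defl\Sigma P$, and the uniqueness of the compatible isomorphism $P\cong\Omega\Sigma P$), and that unpacking is accurate.
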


\subsection{Examples of 0-Auslander extriangulated categories}
% \Yann{We have to be careful that $\Rcal\ast\Sigma\Rcal$ might not be closed under summands}
\label{ssection:examples0-Auslander}

\subsubsection{The minimal example}
The most elementary example of a reduced 0-Auslander category over a field $k$ has two non-zero indecomposable objects (up to isomorpism) $P$ and $I$ with endomorphism rings isomorphic to $k$, no non-zero morphisms between them, and a one-dimensional space $\mathbb{E}(I, P) \cong k$ realized by $\sfr$-triangles of the form

\[
P \infl 0 \defl I \overset{\delta}\dashrightarrow.
\]

While this small example may already look rather peculiar, it admits several interpretations in terms of well-known categories. Namely, it is equivalent, as an extriangulated category, to each of the following categories:

\begin{itemize}
    \item The homotopy category $K^{[-1,0]}(\proj  k A_1)$ of complexes with projective components and concentrated in degrees -1 and 0 over the path algebra of the quiver $A_1$, with the extriangulated structure induced by the embedding into the bounded homotopy category;
    \item The cluster category of type $A_1$, taken with the maximal relative structure making one of the two cluster tilting objects projective;
    \item The quotient of the module category of the path algebra of the quiver $A_2$ by the ideal of morphisms factoring through projective-injective objects, with the extriangulated structure induced from the abelian structure on the module category;
    \item The stable category of the module category of the preprojective algebra of type $A_2$, taken with the maximal relative structure making one of the two simple objects projective.
\end{itemize}

\subsubsection{Module categories}
\label{Ex:RepQ}

Let $\Lambda$ be a basic connected finite-dimensional algebra over an algebraically closed field $k$. The category of finite-dimensional $\Lambda$-modules, considered with the standard, i.e. abelian, extriangulated structure, has dominant dimension (at most) one if and only if $\Lambda$ has dominant dimension (at most) one in the classical sense \cite{Tachikawa}. By \cref{rk: ss or hered}, $\modd \Lambda$ is 0-Auslander if and only if $\Lambda$ is semisimple or $\gldim\Lambda = 1 = \domdim\Lambda.$ The latter happens if and only if $\Lambda$ is Morita equivalent to the $m \times m$ upper triangular matrix algebra over $k$, for $m \in \mathbb{N}$; see \cite[Proposition 1.17]{Iyama} for a detailed proof of a slightly more general result. Equivalently, $\gldim\modd \Lambda = 1 = \domdim\modd\Lambda$ if and only if $\Lambda$ is (Morita equivalent to) the path algebra of a linearly oriented quiver of Dynkin type $A$.

This might suggest that there are very few examples of 0-Auslander extriangulated categories. However, for any $\Lambda$, there is an associated reduced 0-Auslander extriangulated category, of which $\modd\Lambda$ is a quotient: its homotopy category of 2-term complexes with projective components.

\subsubsection{Homotopy category of 2-term complexes}
\label{Ex:2-termHomotopy}

Let $K^\text{b}(\proj \Lambda)$ be the homotopy category of bounded complexes of projective modules over an Artin algebra $\Lambda$.
Consider the full subcategory $\Csc = K^{[-1,0]}(\proj \Lambda)$ of $K^\text{b}(\proj \Lambda)$ whose objects are those complexes concentrated in (cohomological) degrees -1 and 0.
Objects in $\Csc$ are called two-term complexes and have been studied in~\cite{DerksenFei}, and in~\cite{AdachiIyamaReiten,Kimura} in connection with $\tau$-tilting theory and silting theory.
Then $\Csc$ is a reduced 0-Auslander extriangulated category, with projective objects being the stalk complexes concentrated in degree 0 and injective objects those concentrated in degree -1. In this setting, \cref{prop:module as quotient by injectives} means that $\modd\Lambda$ is the costable category of $\Csc$.
\cref{Th:mutation} thus applies and recovers 2-term silting mutation~\cite{Kimura}.

\subsubsection{Category of 2-term complexes}
\label{Ex:2-termComplexes}

There is an ``unreduced'' version of the previous example:
Let $\Acal$ be an additive category and let $C^{[-1,0]}(\Acal)$ be the full subcategory of the category of complexes over $\Acal$, whose objects are concentrated in degrees -1 and 0.
Endow $C^{[-1,0]}(\Acal)$ with the exact structure given by component-wise split sequences.
By~\cite[Proposition 3.6, Corollary 3.9]{BautistaSalorioZuazua}, $C^{[-1,0]}(\Acal)$ is a 0-Auslander exact category with projective objects having no injective summands the complexes concentrated in degree 0, injective objects having no projective summands the complexes concentrated in degree -1, and projective-injective indecomposable objects the complexes of the form $P\xrightarrow{1}P$, for $P\in\Acal$ indecomposable.

Applying \cref{Th:mutation} to this setting, with $\Acal=\proj A$, yields a version of 2-term silting mutation ``with coefficients''.

\subsubsection{Maximal almost rigid modules}
\label{Ex:AlmostRigid}

Fix an algebraically closed field $k$, and let $Q$ be an orientation of a Dynkin diagram of type $A_{n+2}$.
Reading \cite{reading2006cambrian} considered a polygon $P(Q)$ whose geometric realization depends on the orientation of $Q$ and defined a partial order on the set of triangulations of $P(Q)$. He proved that the resulting poset is a lattice which he called a \defn{Cambrian lattice} of $Q$. This construction generalizes Tamari lattices which are shown to correspond to $Q$ being linearly oriented. Barnard--Gunawan--Meehan--Schiffler \cite{BarnardGunawanMeehanSchiffler} found a representation-theoretic realization of such Cambrian lattices for quivers of type $A$ by defining maximal almost rigid modules and finding a natural partial order on them.

\begin{definition}[\cite{BarnardGunawanMeehanSchiffler}]
Let $M$ be a basic representation of $Q$ over $k$, and let $M=M_1\oplus\cdots\oplus M_r$ be a decomposition of $M$ into (pairwise non-isomorphic) indecomposables. The representation $M$ is called \defn{almost rigid} if, for any $i,j=1,\ldots,r$ and any non-split short exact sequence $0\to M_i\to E\to M_j\to 0$, the middle term $E$ is indecomposable, and it is called \defn{maximal almost rigid} if moreover $M\oplus N$ almost rigid implies $N\in\add M$.
\end{definition}

Let $\Esc$ be the category $\modd kQ$, endowed with the collection of all short exact sequences whose middle term is not indecomposable.
The following result is due to Thomas Br\"ustle, Eric Hanson, Sunny Roy and Ralf Schiffler~\cite{BrustleSchiffler-Oberwolfach}:

\begin{theorem}
The category $\Esc$ is a 0-Auslander exact category whose silting objects precisely coincide with the maximal almost rigid representations.
%\misha{The indecomposable projective-injectives in $\Esc$ are precisely the objects on the top and on the bottom horizontal lines of the Auslander-Reiten quiver of $\modd kQ$. (Right?)}
\end{theorem}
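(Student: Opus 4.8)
The plan is to separate the statement into two parts, proved by different means: (A) that $\Esc$ is a 0-Auslander exact category, and (B) that an object of $\Esc$ is rigid if and only if its underlying $kQ$-module is almost rigid. Granting these, the theorem is immediate: by (B) the maximal rigid objects of $\Esc$ are exactly the maximal almost rigid representations, while by (A) together with \cref{prop:equivDefIntro} --- in its general form, which does not require reducedness, or equivalently applied after the reduction of \cref{rk:reduced}, which matches up maximal rigid objects on both sides up to projective--injective summands --- the maximal rigid objects of $\Esc$ are precisely its silting objects. (Here $\Esc$, being $\modd kQ$ equipped with a relative exact structure inside the abelian one, is automatically essentially small, Hom-finite and Krull--Schmidt, and the sum of its finitely many indecomposable projectives is a projective generator, so the hypotheses of \cref{prop:equivDefIntro} are met.)

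Part (B) is a matter of unravelling definitions. In type $A$ the indecomposable $kQ$-modules are the interval (string) modules, and a nonzero class in $\Ext^1(M_j,M_i)$ between indecomposables is either an \emph{arrow extension}, with indecomposable middle term, or an \emph{overlap extension} of two overlapping intervals, whose middle term is decomposable. The conflations of $\Esc$ with indecomposable end terms are precisely the overlap extensions, and these together with the split sequences generate the exact structure; moreover between indecomposables $\Ext^1$ is at most one-dimensional. Hence, for a basic $M=M_1\oplus\cdots\oplus M_r$ with the $M_i$ indecomposable, $\Ebb(M,M)=0$ holds precisely when, for all $i,j$, no non-split short exact sequence $0\to M_i\to E\to M_j\to 0$ of $kQ$-modules has decomposable middle term, which is exactly the defining condition for $M$ to be almost rigid.

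Part (A) is the combinatorial heart of the proof and, as such, the main obstacle. First one must show that the proposed conflations really form an exact structure on $\modd kQ$; equivalently, that the sub-bifunctor of $\Ext^1$ spanned by the overlap extensions is closed, i.e. stable under pushout and pullback. This is where the special features of type $A$ are used in an essential way; alternatively one may transport the exact structure through the polygon $P(Q)$ of \cite{reading2006cambrian,BarnardGunawanMeehanSchiffler}. Once $\Esc$ is an exact category, the interval combinatorics identifies its projective and injective objects: an indecomposable interval module is $\Esc$-projective exactly when it is not the quotient term of any overlap extension, and $\Esc$-injective exactly when it is not the sub term of any overlap extension. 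From this description one verifies the conditions of \cref{Thm:Auslander:equiv}: that $\Esc$ has enough projectives and enough injectives and is hereditary --- for each interval module $M$ one writes down a conflation $P_1\rightarrowtail P_0\twoheadrightarrow M$ with $P_0,P_1$ projective (a split sequence when $M$ is already projective, and otherwise one assembled from the overlap extension(s) having $M$ as quotient term), and dually an injective coresolution of length one --- and that $\domdim\Esc\geq 1$, by exhibiting each $\Esc$-projective as the sub term of a conflation with projective--injective middle term and injective cokernel, again written directly on intervals.

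In summary, the only substantial work lies in part (A): the verification that the overlap extensions generate an exact structure, followed by a finite bookkeeping of the projectives, injectives, and of the explicit conflations witnessing heredity and $\domdim\geq 1$ --- this is the type-$A$ computation carried out in \cite{BrustleSchiffler-Oberwolfach}. The deduction of the statement about silting objects is then purely formal, through part (B) and \cref{prop:equivDefIntro}.
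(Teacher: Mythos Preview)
The paper does not give its own proof of this theorem: it is stated as a result due to Br\"ustle, Hanson, Roy and Schiffler and attributed to \cite{BrustleSchiffler-Oberwolfach}, with no argument supplied. So there is nothing in the paper to compare your proposal against.

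That said, your sketch is a sensible outline. The reduction (B) is correct essentially by definition of $\Esc$: the conflations retained are exactly those with non-indecomposable middle term, so $\Ebb$-rigidity between indecomposable summands is precisely the almost-rigid condition. For the passage from maximal rigid to silting you correctly invoke the equivalences of \cref{thm: equivalent versions of tilting}; note that the implication (maximal rigid $\Rightarrow$ tilting) there needs the existence of left approximations of projectives, which you justify via Hom-finiteness and finitely many indecomposable projectives. The genuine content is, as you say, part (A): that the class of short exact sequences with non-indecomposable middle term is closed under pushout and pullback (hence defines an exact structure), and the identification of its projectives, injectives, length-one resolutions and the dominant-dimension witnesses. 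You defer this to \cite{BrustleSchiffler-Oberwolfach}, which is exactly what the paper does as well.
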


Applying \ref{Th:mutation} to the category $\Esc$ thus recovers the mutation of maximal almost rigid representations see~\cite[Theorem 6.8]{BarnardGunawanMeehanSchiffler}.

\subsubsection{Cluster categories}
\label{Ex:ClusterCategories}

Let $K$ be a field and let $\Csc$ be a $K$-linear triangulated category with shift functor $\Sigma$.
Assume that $\Csc$
\begin{itemize}
 \item is Hom-finite: for any $X,Y\in\Csc$, $\Csc(X,Y)$ is a finite-dimensional $K$-vector space;
 \item is 2-Calabi--Yau: $\Sigma^2$ is a Serre functor, or in other words there are binatural isomorphisms $\Csc(X,\Sigma Y) \cong D\Csc(Y,\Sigma X)$, where $D=\Hom_K(-,K)$;
 \item has a cluster-tilting object $T$:
 \begin{itemize}
  \item T is rigid: $\Csc(T,\Sigma T)=0$ and 
  \item $\Csc(T,\Sigma X) = 0$ (or equivalently $\Csc(X,\Sigma T)=0$) implies $X\in\add T$.
 \end{itemize}
\end{itemize}

Examples of such categories are given by cluster categories of acyclic quivers~\cite{BuanMarshReinekeReitenTodorov} (see also~\cite{CalderoChapotonSchiffler}) and of algebras of global dimension 2 or Jacobi-finite quivers with potentials~\cite{Amiot-ClusterCats}.

Replace the triangulated structure of $\Csc$ by the maximal relative~\cite{HerschendLiuNakaoka-I} extriangulated structure making $T$ projective (see~\cref{definition:relative structures}): endow $\Csc$ with those triangles
$X\to Y\to Z\xrightarrow{\eta}\Sigma X$ for which $\eta$ factors through $\add\Sigma T$.
With this extriangulated structure, $\Csc$ is a reduced 0-Auslander extriangulated category, with projective objects $\add T$ and injective objects $\add\Sigma T$.
This extriangulated structure was introduced in~\cite{PadrolPaluPilaudPlamondon} where it was used for studying polytopal realizations of $g$-vector fans associated with cluster algebras of finite type.

Applied to this setting, \cref{Th:mutation,Cor:mutation} recover the mutation theory of cluster-tilting objects (with a natural orientation): Even though our results only see one of the two exchange triangles associated with mutations (the one for which $g$-vectors, or indices, are additive), the 2-Calabi--Yau property gives the other exchange triangle.

\begin{remark}
 We note that integral cluster categories (or acyclic cluster categories over a principal ideal domain)  of~\cite{KellerScherotzke-Integral} are also 0-Asulander. \cref{Th:mutation} applies to them whenever all rigid objects admit a unique decomposition into indecomposables with local endomorphism rings. The unique decomposition property holds for rigid objects in such categories over arbitrary principal ideal domains, as explained in \cite[Section 5]{KellerScherotzke-Integral} based on results of Crawley-Boevey \cite{CrawleyBoevey}. The locality amounts to the locality of the ground ring. Thus, \cref{Th:mutation} applies to acyclic cluster categories over local principal ideal domains - in other words, over fields (as discussed above) or discrete valuation rings.
 \end{remark}

%\Misha{We should not think about this much now, but: 1) Can we generalize the notion of 0-Auslander by replacing ``having enough projectives'' by ``having enough projective morphisms'' and adapting somehow the notion of dom.dim? 2) Would that include PG's categories for Jacobi-infinite QPs? 3) Could one generalize some or all results on tilting and mutations?}

\subsubsection{Rigid subcategories of triangulated categories}
\label{Ex:RigidTriangulated}

Let $\Dsc$ be a triangulated category with some rigid full subcategory $\Rcal$.
Endow $\Dsc$ with the maximal relative extriangulated structure making all objects in $\Rcal$ projective.
For this relative structure, the full subcategory $\Rcal\ast\Sigma\Rcal$ is extension-closed, hence extriangulated.
One easily checks that $\Rcal\ast\Sigma\Rcal$ is reduced 0-Auslander.

\cref{Th:mutation} thus applies to
\begin{itemize}
% \item The cluster tube
 \item Extended cohearts~\cite{IyamaJorgensenYang,PauksztelloZvonareva} of co-$t$-structures~\cite{Pauksztello,Bondarko}, inducing mutation for intermediate co-$t$-structures (this example is detailed in \cref{section:ExtendedCohearts});
 \item Triangulated categories with a fixed cluster tilting object (or subcategory). Our mutation theory thus recovers relative tilting theory~\cite[Theorem 4.10]{YangZhu} (and generalizes it to the setting of~\cite{YangZhouZhu-Ghost}), without making use of $\tau$-tilting theory;
 \item Settings generalizing the previous one by only assuming rigidity instead of cluster tilting, see for example~\cite{FuGengLiu,ZhouZhu-2termRelative}. This implies in particular a nice mutation theory for $d$-rigid objects that are finitely presented with respect to some fixed $d$-rigid subcategory;
 %\item Hom-infinite cluster categories \cite{Plamondon-CC}.
\end{itemize}

\subsubsection{2-term subcategories in perfect derived categories}

Let $A$ be a non-positive differential graded algebra. Its perfect derived category $\mbox{per} A$ has a bounded co-$t$-structure with coheart $\add_{\mbox{per} A} A$. The category $\add_{\mbox{per} A} A \ast \add_{\mbox{per} A} A[1]$ gives thus a special case of the previous example, as it is the extended coheart of this co-$t$-structure. Since for a finite-dimensional algebra $\Lambda$ considered as a dg algebra with $0$ differential, its perfect derived category is $K^\text{b}(\proj \Lambda)$, this also generalizes the example from \cref{Ex:2-termHomotopy}.
Such categories appeared in a version of the Brenner-Butler theorem for 2-term subcategories in $K^\text{b}(\proj \Lambda)$ in recent work \cite{BuanZhou}.

The fundamental domain of Amiot's equivalence \cite{Amiot-ClusterCats} is a special case of such categories which plays an important role in the context of additive categorifications of cluster algebras.

\subsubsection{Rigid subcategories of exact categories}
\label{Ex:RigidExact}

Let $\Esc$ be an exact category with enough injectives, and let $\Rcal$ be a rigid full subcategory of $\Esc$, containing all injectives.
Write $\Csc=\operatorname{Coker}(\Rcal,\Rcal)$ for the full subcategory of $\Esc$ whose objects are those $X$ for which there exists a conflation $R_1\infl R_0\defl X$ with $R_0,R_1\in\Rcal$.

Endow $\Esc$ with the maximal relative exact structure, denoted $\Esc_\Rcal$, making all objects in $\Rcal$ projective.
Because $\Rcal$ is rigid, $\Csc$ is extension-closed in $\Esc_\Rcal$ hence exact, for this relative structure.
Moreover, all (non-relative) conflations $R_1\infl R_0\defl X$ of $\Esc$ expressing $X$ as an object of $\operatorname{Coker}(\Rcal,\Rcal)$ are conflations in $\Csc$ (for the relative exact structure).

In particular, a conflation $R\infl Q\defl \Sigma R$ in $\Esc$ with $R\in\Rcal$ and $Q$ injective in $\Esc$ gives a conflation in $\Csc$.
Moreover, $Q$ is projective-injective in $\Csc$ (because it is injective in $\Esc$, hence in $\Csc$, and also belongs to $\Rcal$, hence is projective in $\Csc$).
The conflations $R_1\infl R_0\defl X$ witness the fact that $\Csc$ has enough projectives and has vanishing $\Ext^2$.
It follows that $\Sigma R$ is injective in $\Csc$.
Hence, $\Csc$ is a 0-Auslander extriangulated category, which is generally non-reduced.

This class of examples contains the categories of 2-term complexes of \cref{Ex:2-termComplexes}, but also the Frobenius exact cluster categories appearing in~\cite{GeissLeclerclSchroer-survey,FuKeller,JensenKingSu,Pressland,Pressland-corrigendum} thus showing relative versions (in the sense of \cref{Ex:RelativeTilting}) of cluster-tilting mutation in that setting.

\subsubsection{Rigid subcategories of extriangulated categories}
\label{Ex:RigidExtriangulated}

One easily generalizes \cref{Ex:RigidExact} to extriangulated categories, up to replacing cokernels of inflations by cones over inflations.
This gives a common generalization of \cref{Ex:RigidExact,Ex:RigidTriangulated}.

As a consequence we recover relative tilting theory also for extriangulated categories~\cite{LiuZhou-relative}, as detailed in the following section.

\subsubsection{Relative tilting theory}
\label{Ex:RelativeTilting}

In this section, we give a (possibly new) point-of-view on relative tilting theory~\cite{YangZhu}, thus including it into our approach to mutation via 0-Auslander extriangulated categories.

Let $\Csc$ be an extriangulated category with enough projectives and enough injectives.
Assume that $\Csc$ has a \defn{left-cluster tilting} subcategory $\Rcal$: $\Rcal$ is a rigid, additive full subcategory such that, for every object $X\in \Csc$, there is an $\sfr$-triangle $X\infl R_X^0 \defl R_X^1 \dashrightarrow$ with $R_X^0,R_X^1\in\Rcal$.

\begin{notation}
For each object $X\in\Csc$, we fix two $\sfr$-triangles
\[
X\overset{i_X}{\infl} I_X \overset{p_X}{\defl} \Sigma X \overset{\eta_X}{\dashrightarrow}
\text{ and } \Omega X\overset{\iota_X}{\infl} P_X \overset{\pi_X}{\defl} X \overset{\eps_X}{\dashrightarrow},
\]
where $P_X$ is projective and $I_X$ is injective.
If $\Xcal$ is any full additive subcategory of $\Csc$, we write $\Omega\Xcal$ (resp. $\Sigma\Xcal$) for the full subcategory whose objects are all those $Y$ that appear in an $\sfr$-triangle of the form $Y\infl P\defl X \dashrightarrow$ with $P$ projective and $X$ in $\Xcal$ (resp. of the form $X\infl I\defl Y\dashrightarrow$ with $I$ injective and $Y$ in $\Xcal$).
We note that subcategories of the form $\Omega\Xcal$ contain all projectives.
Dually subcategories $\Sigma\Xcal$ contain all injectives.
\end{notation}

By~\cite[Proposition 3.19]{HerschendLiuNakaoka-I}, there is a relative extriangulated structure on $\Csc$ given by:
\[\Ebb^\Rcal(Z,X) = \{\delta\in\Ebb(Z,X) \;|\; X\xrightarrow{\forall f}R \text{ with } R\in\Rcal, f_\ast\delta=0\}.\]
This is the biggest closed additive sub-bifunctor of $\Ebb$ for which all objects in $\Rcal$ are injective.

As already mentioned in \cref{Ex:RigidTriangulated}, there is a dual version, denoted by $\Ebb_\Rcal$ that makes all objects in $\Rcal$ projectives.

\begin{definition}
An object $X\in\Csc$ (or a full additive subcategory $\Xcal$) is \defn{$\Rcal$-rigid}, or alternatively \defn{relative rigid}, if it is rigid in $(\Csc,\Ebb^\Rcal)$.
\end{definition}

Although this is not the definition that appears in the literature, the lemma below shows that it is equivalent to it.

\begin{lemma}
 For any $X\in\Csc$, the following are equivalent:
 \begin{enumerate}[(a)]
     \item $X$ is $\Rcal$-rigid,
     \item Any morphism in $[\Rcal](X,\Sigma X)$ factors through $p_X$.
 \end{enumerate}
 \end{lemma}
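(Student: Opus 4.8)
The plan is to unravel both conditions in terms of the definition of the sub-bifunctor $\Ebb^\Rcal$ and the given $\sfr$-triangle $\Omega X \xrightarrow{\iota_X} P_X \xrightarrow{\pi_X} X \xrightarrow{\eps_X} \dashrightarrow$ together with the dual $X \xrightarrow{i_X} I_X \xrightarrow{p_X} \Sigma X \xrightarrow{\eta_X} \dashrightarrow$ (wait --- here one should use the $\sfr$-triangle $\Sigma X$ sits in: I would fix an $\sfr$-triangle $X \infl I \defl \Sigma X \dashrightarrow$ with the relevant properties; the notation $p_X$ in (b) refers to the deflation $I_X \defl \Sigma X$). By definition, $X$ is $\Rcal$-rigid if and only if $\Ebb^\Rcal(X,X) = 0$, i.e. every $\delta \in \Ebb(X,X)$ becomes zero after pushing out along some morphism $X \to R$ with $R \in \Rcal$ --- equivalently, every $\delta \in \Ebb(X,X)$ lies in the image of $\Ebb(X,i)$ for various $i \colon X \to R$. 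The first step is therefore to translate ``$\delta$ is killed by some $X \to R$'' into a factorisation statement using the long exact sequence \eqref{les:2} applied to the $\sfr$-triangle $\Sigma^{-1}X := \Omega X \infl P_X \defl X \dashrightarrow$, or rather to the $\sfr$-triangle with $\Sigma X$ in it.

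Concretely, I would argue as follows. Using heredity of $\Csc$, apply the long exact sequence \eqref{les:1} (in the first variable) to the conflation $X \xrightarrow{i_X} I_X \xrightarrow{p_X} \Sigma X \dashrightarrow$: since $I_X$ is injective, $\Ebb(-,I_X) = 0$, and the connecting map gives an epimorphism $\Csc(-,\Sigma X) \twoheadrightarrow \Ebb(-,X)$ of functors, realised by $\delta \mapsto (\eta_X)\circ(-)$ wait, more precisely $\Csc(Z,\Sigma X) \twoheadrightarrow \Ebb(Z,X)$, $g \mapsto g^\ast \eta_X$, with kernel the image of $\Csc(Z,p_X)$. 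Taking $Z = X$, every $\delta \in \Ebb(X,X)$ is $g^\ast\eta_X$ for some $g \in \Csc(X,\Sigma X)$, and $\delta$ depends only on the class of $g$ in $\Csc(X,\Sigma X)/(\operatorname{im}\Csc(X,p_X)) \cong \Ebb(X,X)$. The second step is to compute when such $\delta = g^\ast\eta_X$ is killed by a pushout along $X \to R$ with $R \in \Rcal$: since $\Rcal$ is rigid and (by the relative structure) its objects are injective in $(\Csc, \Ebb^\Rcal)$, and here we need that pushing $\eta_X$ out along $X \to R$ gives a split conflation because... hmm, actually I would instead use that $\delta$ is killed by \emph{some} $f\colon X \to R$ iff $\delta \in \sum_{f, R} \operatorname{im}\big(\Csc(X, I_X) \xrightarrow{?}\big)$; the cleanest route is: $f_\ast\delta = 0$ for $f\colon X\to R$ iff $f$ factors through $i_X$ (by exactness of $\Csc(R,I_X)\to\Csc(R,\Sigma X)\xrightarrow{(i_X)_\ast\text{-related}}\dots$, using $\eta_X$-realisation and that $f_\ast(g^\ast\eta_X)$ vanishing means $gf$ --- as a morphism $X\to R \to \Sigma X$, no --- means the relevant composite factors through $p_X$). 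So I would show: $\Ebb^\Rcal(X,X)=0$ $\iff$ for every $g \in \Csc(X,\Sigma X)$ there is $f\colon X\to R$, $R\in\Rcal$, with $g^\ast\eta_X$ killed by $f_\ast$ $\iff$ (after chasing through the realisation $I_X$ of $\eta_X$ and \cite[Corollary 3.21]{GNP1}-type exactness) every $g\in\Csc(X,\Sigma X)$ that factors through some object of $\Rcal$ --- i.e. every $g \in [\Rcal](X,\Sigma X)$ --- actually the condition should come out as: the whole space $\Ebb(X,X)$ is hit by maps factoring through $\Rcal$, which via the epi $\Csc(X,\Sigma X)\twoheadrightarrow\Ebb(X,X)$ and the identification of the kernel with $\operatorname{im}\Csc(X,p_X)$, becomes exactly ``$[\Rcal](X,\Sigma X) \subseteq \operatorname{im}\big(\Csc(X,p_X)\colon \Csc(X,I_X)\to\Csc(X,\Sigma X)\big)$'', which is precisely (b): every morphism in $[\Rcal](X,\Sigma X)$ factors through $p_X$.

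Thus the skeleton is: (i) realise $\Ebb(X,X)$ as a quotient of $\Csc(X,\Sigma X)$ with kernel $\operatorname{im}\Csc(X,p_X)$, via heredity and injectivity of $I_X$; (ii) show $f_\ast(g^\ast\eta_X) = 0$ for some $f\colon X\to R\in\Rcal$ iff $g \in [\Rcal](X,\Sigma X) + \operatorname{im}\Csc(X,p_X)$ --- the forward direction is a diagram chase producing a factorisation $X \to R \to I_X$, and the backward direction uses that $\Rcal\subseteq\Csc$ consists of $\Ebb^\Rcal$-injectives so pushing out any extension along $X \to R$ that then maps to... is split, hence zero; (iii) combine: $\Ebb^\Rcal(X,X) = 0$ iff $\Csc(X,\Sigma X) = [\Rcal](X,\Sigma X) + \operatorname{im}\Csc(X,p_X)$ iff $[\Rcal](X,\Sigma X) \subseteq \operatorname{im}\Csc(X,p_X)$ (the reverse inclusion and the ``$+$'' absorbing being formal once one notes $\operatorname{im}\Csc(X,p_X)$ is automatically contained in both sides after one remark --- actually $\operatorname{im}\Csc(X,p_X)$ maps to $0$ in $\Ebb(X,X)$ so it is harmless). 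I expect the main obstacle to be step (ii), specifically pinning down exactly which morphisms $X\to\Sigma X$ arise from the vanishing $f_\ast\delta = 0$: one must carefully use the realisation $Y \infl I_X \defl \Sigma X$ of $\eta_X$ and the functoriality of $\sfr$ to convert ``$f_\ast\delta = 0$'' into ``$gf$ wait --- into the existence of a morphism $R \to I_X$ through which things factor'', and then feed this through the definition of $\Ebb^\Rcal$ as ``the biggest closed sub-bifunctor making $\Rcal$ injective'' to get the clean characterisation. Heredity is used to guarantee the sequence \eqref{les:1} is exact at the spot $\Ebb(X,X)$ with the expected cokernel behaviour (no $\Ebb^2$ obstruction), so I would invoke \cref{hered_equiv} explicitly there.
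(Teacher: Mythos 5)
There is a genuine gap, and it starts with a misreading of the definition of $\Ebb^\Rcal$. The paper defines $\Ebb^\Rcal(Z,X)$ as the set of $\delta\in\Ebb(Z,X)$ such that $f_\ast\delta=0$ for \emph{every} $f\colon X\to R$ with $R\in\Rcal$; so ``$X$ is $\Rcal$-rigid'' means ``any $\delta$ killed by \emph{all} pushforwards to $\Rcal$ is zero''. You instead work with ``every $\delta$ becomes zero after pushing out along \emph{some} $X\to R$'', which is a different (and, since $0\in\Rcal$, vacuously true) condition. This error propagates: your step (iii) asserts $\Ebb^\Rcal(X,X)=0$ iff $\Csc(X,\Sigma X)=[\Rcal](X,\Sigma X)+\operatorname{im}\Csc(X,p_X)$, and then claims this is equivalent to $[\Rcal](X,\Sigma X)\subseteq\operatorname{im}\Csc(X,p_X)$. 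Those two conditions are not equivalent (the first would force $\Ebb(X,X)=0$ once the inclusion holds), so the chain of equivalences is a non-sequitur even though it happens to land on the correct statement (b). The correct identification, which the paper proves, is $\Ebb^\Rcal(X,X)\cong\bigl([\Rcal](X,\Sigma X)+\operatorname{im}\Csc(X,p_X)\bigr)/\operatorname{im}\Csc(X,p_X)$, whose vanishing is exactly (b).

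Your step (i) (realising $\Ebb(X,X)$ as $\Csc(X,\Sigma X)/\operatorname{im}\Csc(X,p_X)$ via injectivity of $I_X$) is fine and matches the paper's exact sequence $(\ast)$. The substantive missing content is the two-sided identification of the subgroup $\Ebb^\Rcal(X,X)$ with the image of $[\Rcal](X,\Sigma X)$. The easy inclusion (a morphism factoring through $\Rcal$ induces a class in $\Ebb^\Rcal$, using only rigidity of $\Rcal$) is within reach of your sketch. But the hard inclusion --- every $\delta$ killed by all pushforwards to $\Rcal$ is represented by some $u\in[\Rcal](X,\Sigma X)$ --- is where the proof actually lives, and your ``diagram chase producing a factorisation $X\to R\to I_X$'' cannot work as described: since the quantifier is universal, there is no distinguished $f\colon X\to R$ to chase. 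The paper obtains the intermediate object in $\Rcal$ from the \emph{left-cluster-tilting} hypothesis, i.e.\ the approximation conflations $X\infl R_X^0\defl R_X^1$ and $\Omega R_X^0\infl P\defl R_X^0$, combined with $\mathrm{(ET4^{op})}$ and a weak push-out/pull-back argument (steps (b) and (d) of the surrounding proposition). Your proposal never invokes the left-cluster-tilting property of $\Rcal$, so the direction (b)$\Rightarrow$(a) is not established.
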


The lemma follows from the more detailed:

\begin{proposition}
Let $\Rcal$ be a left-cluster tilting subcategory of $\Csc$.
Then we have:
\begin{enumerate}
    \item $\Ebb^\Rcal = \Ebb_{\Omega\Rcal}$;
    \item for any $X,Y\in\Csc$, $\Ebb_{\Omega\Rcal}(X,Y)$ is the quotient of $[\Rcal](X,\Sigma Y)$ by the image of $(p_Y)_\ast: \Csc(X,I_Y) \to \Csc(X,\Sigma Y)$;
    \item $(\Csc,\Ebb^\Rcal)$ has enough injectives and is hereditary;
    \item If moreover, $\Rcal$ contains the projective objects of $(\Csc,\Ebb)$, then the extriangulated category $(\Csc,\Ebb_{\Omega\Rcal})=(\Csc,\Ebb^\Rcal)$ is 0-Auslander with subcategory of injectives $\Rcal$ and subcategory of projectives $\Omega\Rcal$.
\end{enumerate}

\end{proposition}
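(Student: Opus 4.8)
The plan is to establish the four parts roughly in order, using the relative extriangulated structure machinery from \cite{HerschendLiuNakaoka-I} together with the characterization of hereditary categories in \cref{hered_equiv} and of 0-Auslander categories in \cref{Thm:Auslander:equiv}. First I would prove (1), the identification $\Ebb^\Rcal = \Ebb_{\Omega\Rcal}$. The right-hand side is by definition the largest closed additive sub-bifunctor of $\Ebb$ making every object of $\Omega\Rcal$ projective, and the left-hand side is the largest one making every object of $\Rcal$ injective. The key observation is that, under the functors $\Omega$ and $\Sigma$ of \cref{lemma:equiv proj inj} (or rather their analogues for the ambient category, using the fixed $\sfr$-triangles $\Omega X \infl P_X \defl X$ and $X \infl I_X \defl \Sigma X$), a $\delta\in\Ebb(Z,X)$ is killed by pushout along every map $X\to R$ with $R\in\Rcal$ if and only if the corresponding class in $\Ebb(\Omega^{?}Z, \cdots)$ is killed by pullback along every map $\Omega R \to Z$; more concretely one computes using the long exact sequences \eqref{les:1}, \eqref{les:2} applied to the defining $\sfr$-triangles of $\Omega R$ and $\Sigma X$. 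Since both structures are determined by the same closedness property (one via injectivity of $\Rcal$, one via projectivity of $\Omega\Rcal$) and these are interchanged by the $\Omega/\Sigma$ correspondence, they must coincide. The main subtlety is to be careful that $\Omega\Rcal$ as defined (containing all projectives) is exactly the subcategory whose projectivity in the relative structure corresponds to injectivity of $\Rcal$.

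Next I would prove (2). The description of $\Ebb_{\Omega\Rcal}(X,Y)$ should come from the general formula for a relative structure of the form $\Ebb_{\Scal}$ in \cite[Proposition 3.19]{HerschendLiuNakaoka-I}: applying $\Csc(X,-)$ to the $\sfr$-triangle $Y \overset{i_Y}{\infl} I_Y \overset{p_Y}{\defl} \Sigma Y \overset{\eta_Y}{\dashrightarrow}$ gives an exact sequence, and $\Ebb(X, Y)$ is identified (via $(\eta_Y)$, i.e. the connecting map, using that $I_Y$ is injective so $\Ebb(X,I_Y)=0$) with the cokernel of $(p_Y)_\ast\colon \Csc(X, I_Y)\to \Csc(X,\Sigma Y)$. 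Then the sub-bifunctor $\Ebb_{\Omega\Rcal}$ picks out precisely those classes lying in the image of $[\Rcal](X,\Sigma Y)$; this is where one uses that $\Sigma\Rcal$-maps, equivalently maps factoring through $\Rcal$ after applying $\Sigma$, are the ones detecting the relative structure. I would assemble this from the two descriptions of the relative $\Ebb$ (as a pushout-annihilation condition, and via the connecting map) and match them, which is essentially a diagram chase with the long exact sequence.

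For (3), heredity of $(\Csc, \Ebb^\Rcal)$: I would first check it has enough injectives, namely that $\Rcal$ consists of injectives for $\Ebb^\Rcal$ (immediate from the definition of $\Ebb^\Rcal$) and that every $X$ admits an inflation into $\Rcal$ — this is exactly the left-cluster tilting hypothesis, which gives the $\sfr$-triangle $X\infl R_X^0\defl R_X^1\dashrightarrow$ with $R_X^0, R_X^1\in\Rcal$, and one must verify this is an $\sfr$-triangle for the relative structure, i.e. that its class lies in $\Ebb^\Rcal$; but any $\sfr$-triangle with third term in $\add\Rcal$ lies in $\Ebb^\Rcal$ since pushout of its class along $R_X^0\to R$ factors through a map out of $R_X^0$... actually the cleaner route is: the class in $\Ebb(R_X^1, X)$ is killed by every $X\to R$ because such a map extends along $i_X$ composed appropriately — I would instead simply note $\Rcal$-injectivity forces these triangles into the relative structure, or use \cite[Proposition 3.19]{HerschendLiuNakaoka-I} directly. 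Given enough injectives and that $R_X^0, R_X^1\in\Rcal$ are injective, every object is the cocone of a deflation between injectives, so by \cref{hered_equiv}.(3) the category is hereditary. Finally, for (4), assuming $\Rcal$ additionally contains the projectives of $(\Csc,\Ebb)$: projectives for $\Ebb^\Rcal$ contain the original projectives (a relative structure has at least as many projectives), and $\Omega\Rcal$ consists of projectives by part (1) together with the fact that $\Omega\Rcal$ is built from projective-ending $\sfr$-triangles over $\Rcal$; one checks $\Omega\Rcal$ is exactly the projectives and gives enough of them (every $X$ sits in $\Omega X \infl P_X \defl X$ with $\Omega X\in\Omega\Rcal$ provided $\Omega X$ lands in $\Omega\Rcal$, which holds because... here one needs $X\in\Csc$ arbitrary but actually we restrict to $\Csc$ being generated appropriately — I would track this carefully). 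Heredity was shown in (3). For dominant dimension $\geq 1$: given a projective $P$ (so $P\in\Rcal$ after the extra hypothesis, or $P\in\Omega\Rcal$), the $\sfr$-triangle $R\infl Q\defl \Sigma R$ with $Q$ injective in $\Csc$ — wait, one rather wants $P\infl Q\defl (\text{inj})$ with $Q$ projective-injective; take $P\in\Omega\Rcal$ coming from $P\infl P_0'\defl R_0$... I would use that $\Rcal$ being both the injectives and containing original projectives forces $\Rcal$-objects that are also in $\Omega\Rcal$ to be projective-injective, and the left-cluster tilting triangles provide the needed conflations. Then \cref{Thm:Auslander:equiv}.(ii) applies and we conclude $(\Csc, \Ebb^\Rcal)$ is 0-Auslander with injectives $\Rcal$ and projectives $\Omega\Rcal$.

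The main obstacle I anticipate is part (1), the equality $\Ebb^\Rcal = \Ebb_{\Omega\Rcal}$: both sides are defined by maximality/closedness conditions that are not obviously the same, and matching them requires carefully transporting the annihilation conditions through the $\Omega$–$\Sigma$ syzygy correspondence and verifying that $\Omega\Rcal$ (with the convention that it contains all projectives) is precisely the class of objects whose relative-projectivity is equivalent to $\Rcal$-injectivity. Once (1) is in place, (2)–(4) are comparatively mechanical consequences of the long exact sequences, the left-cluster tilting hypothesis, \cref{hered_equiv}, and \cref{Thm:Auslander:equiv}, though part (4) also requires keeping track of exactly which objects lie in $\Omega\Rcal$ versus merely being relative-projective.
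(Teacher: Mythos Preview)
Your outline for (3) and (4) is essentially the paper's, but your approach to (1) has a genuine gap, and the ordering you propose makes the whole thing harder.

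The paper proves (2) \emph{before} (1), and this is not accidental. The exact sequence
\[
\Csc(X,I_Y)\xrightarrow{(p_Y)_\ast}\Csc(X,\Sigma Y)\xrightarrow{(\eta_Y)_\sharp}\Ebb(X,Y)\to 0
\]
identifies $\Ebb(X,Y)$ with $\Csc(X,\Sigma Y)/\operatorname{Im}(p_Y)_\ast$, and the paper first shows directly (their steps (a) and (b)) that under this identification $\Ebb_{\Omega\Rcal}(X,Y)$ corresponds exactly to the image of $[\Rcal](X,\Sigma Y)$. Only with this concrete description in hand are the two inclusions $\Ebb_{\Omega\Rcal}\subseteq\Ebb^\Rcal$ and $\Ebb^\Rcal\subseteq\Ebb_{\Omega\Rcal}$ checked (steps (c) and (d)).

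Your proposed route to (1) via an abstract ``$\Omega/\Sigma$ correspondence'' does not work as stated. First, the reference to \cref{lemma:equiv proj inj} is circular: that lemma concerns 0-Auslander categories, which is precisely what (4) is trying to establish. Second, and more importantly, there is no formal syzygy duality that transports the defining condition for $\Ebb^\Rcal$ (pushforward to $\Rcal$ vanishes) into that for $\Ebb_{\Omega\Rcal}$ (pullback from $\Omega\Rcal$ vanishes); these are two conditions on the \emph{same} $\delta$, and their equivalence is the actual content of the proposition. In the paper, the hard direction $\Ebb^\Rcal\subseteq\Ebb_{\Omega\Rcal}$ (step (d)) uses the left-cluster-tilting triangle $Y\infl R_Y^0\defl R_Y^1$ together with the shifted octahedron (\cref{lemma:shiftedOctahedron}) and a weak-pullback argument to manufacture the needed factorization; dually, step (b) (showing $u^\ast\eta_Y\in\Ebb_{\Omega\Rcal}$ forces $u\in[\Rcal]$) requires the left-cluster-tilting triangle for $X$ and a weak-pushout argument. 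These diagram chases, which exploit the left-cluster-tilting hypothesis on \emph{both} variables, are the substance of (1) and (2) and are missing from your plan. Your remark that (2) ``is essentially a diagram chase with the long exact sequence'' underestimates this: the long exact sequence alone gives the identification of $\Ebb(X,Y)$ with a quotient of $\Csc(X,\Sigma Y)$, but picking out the $\Ebb_{\Omega\Rcal}$-subspace as exactly $[\Rcal](X,\Sigma Y)$ needs the extra input above.
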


\begin{proof}
We consider the exact sequence
\[
(\ast) \hspace{15pt} \Csc(X,I_Y)\to\Csc(X,\Sigma Y) \xrightarrow{(\eta_Y)_\sharp} \Ebb(X,Y) \to 0
\]
associated with the $\sfr$-triangle $Y\overset{i_Y}{\infl} I_Y\overset{p_Y}{\defl}\Sigma Y\overset{\eta_Y}{\dashrightarrow}$,
and we prove the statement in six easy steps:
\begin{enumerate}[(a)]
    \item For any $u\in [\Rcal](X,\Sigma Y)$, we have $u^\ast\eta_Y \in \Ebb_{\Omega\Rcal}(X,Y)$.
    \item For any $u\in \Csc(X,\Sigma Y)$ such that $u^\ast\eta_Y\in\Ebb_{\Omega\Rcal}(X,Y)$, we have $u\in [\Rcal](X,\Sigma Y)$.
    \item $\Ebb_{\Omega\Rcal}\subseteq\Ebb^\Rcal$.
    \item $\Ebb^\Rcal\subseteq\Ebb_{\Omega\Rcal}$.
    \item $(\Csc,\Ebb^\Rcal)$ has injective dimension at most one.
    \item If $\Rcal$ contains all projectives, then $(\Csc,\Ebb^\Rcal)$ has codominant dimension at least one.
\end{enumerate}
We now begin the proof.

(a) Let $u\in[\Rcal](X,\Sigma Y)$, let $R\in\Rcal$, let $\Omega R\overset{\iota_R}{\infl} P_R \defl R \dashrightarrow$ and let $P\oplus \Omega R\xrightarrow{[\pi\, f]}X$ be any morphism with $P$ projective.
We want to show that $[\pi\,f]^\ast u^\ast \eta_Y=0$, or equivalently (by the exact sequence ($\ast$) and since $P$ is projective) that $uf$ factors through $p_Y$.
Because $\Rcal$ is rigid, any morphism from $\Omega R$ to some $R'\in\Rcal$ factors through $\iota_R$.
Since $u$ factors through $\Rcal$ by assumption, this implies that the composition $uf$ factors through $\iota_R:\Omega R\to P_R$.
The latter object being projective, $uf$ factors through $p_Y$.
% https://q.uiver.app/?q=WzAsMTAsWzIsMCwiXFxPbWVnYSBSIl0sWzMsMCwiUF9SIl0sWzQsMCwiUiJdLFs1LDBdLFsyLDEsIlgiXSxbMiwyLCJcXFNpZ21hIFkiXSxbMywxLCJSJyJdLFsxLDIsIklfWSJdLFswLDIsIlkiXSxbMywyXSxbMCwxLCJcXGlvdGFfUiIsMCx7InN0eWxlIjp7InRhaWwiOnsibmFtZSI6Im1vbm8ifX19XSxbMSwyLCIiLDAseyJzdHlsZSI6eyJoZWFkIjp7Im5hbWUiOiJlcGkifX19XSxbMiwzLCIiLDAseyJzdHlsZSI6eyJib2R5Ijp7Im5hbWUiOiJkYXNoZWQifX19XSxbMCw0LCJmIiwyXSxbNCw1LCJ1IiwyLHsibGFiZWxfcG9zaXRpb24iOjIwfV0sWzQsNl0sWzYsNV0sWzgsNywiIiwyLHsic3R5bGUiOnsidGFpbCI6eyJuYW1lIjoibW9ubyJ9fX1dLFs3LDUsIiIsMix7InN0eWxlIjp7ImhlYWQiOnsibmFtZSI6ImVwaSJ9fX1dLFs1LDksIiIsMix7InN0eWxlIjp7ImJvZHkiOnsibmFtZSI6ImRhc2hlZCJ9fX1dLFsxLDYsIiIsMix7InN0eWxlIjp7ImJvZHkiOnsibmFtZSI6ImRvdHRlZCJ9fX1dLFsxLDcsIiIsMix7ImN1cnZlIjotMywic3R5bGUiOnsiYm9keSI6eyJuYW1lIjoiZG90dGVkIn19fV1d
\[\begin{tikzcd}
	&& {\Omega R} & {P_R} & R & {} \\
	&& X & {R'} \\
	Y & {I_Y} & {\Sigma Y} & {}
	\arrow["{\iota_R}", tail, from=1-3, to=1-4]
	\arrow[two heads, from=1-4, to=1-5]
	\arrow[dashed, from=1-5, to=1-6]
	\arrow["f"', from=1-3, to=2-3]
	\arrow["u"'{pos=0.2}, from=2-3, to=3-3]
	\arrow[from=2-3, to=2-4]
	\arrow[from=2-4, to=3-3]
	\arrow[tail, from=3-1, to=3-2]
	\arrow[two heads, from=3-2, to=3-3]
	\arrow[dashed, from=3-3, to=3-4]
	\arrow[dotted, from=1-4, to=2-4]
	\arrow[curve={height=-18pt}, dotted, from=1-4, to=3-2]
\end{tikzcd}\]

(b) Let $u\in \Csc(X,\Sigma Y)$ be such that $u^\ast\eta_Y\in\Ebb_{\Omega\Rcal}(X,Y)$.
There are $\sfr$-triangles $X\infl R^0_X \defl R_X^1 \dashrightarrow$ and $\Omega R_X^0\infl P\defl R_X^0\dashrightarrow$ with $R_X^0,R_X^1\in\Rcal$ and where $P$ is projective.
By (ET4$^\text{op}$) and \cite[Proposition 1.20]{LiuNakaoka}, there is a commutative diagram
% https://q.uiver.app/?q=WzAsOCxbMCwwLCJcXE9tZWdhIFJfWF4wIl0sWzEsMCwiXFxPbWVnYSBSX1heMCJdLFswLDEsIlxcT21lZ2EgUl9YXjEiXSxbMSwxLCJQIl0sWzAsMiwiWCJdLFsxLDIsIlJfWF4wIl0sWzIsMSwiUl9YXjEiXSxbMiwyLCJSX1heMSJdLFswLDIsIiIsMCx7InN0eWxlIjp7InRhaWwiOnsibmFtZSI6Im1vbm8ifX19XSxbMiw0LCJwIiwyLHsic3R5bGUiOnsiaGVhZCI6eyJuYW1lIjoiZXBpIn19fV0sWzEsMywiIiwwLHsic3R5bGUiOnsidGFpbCI6eyJuYW1lIjoibW9ubyJ9fX1dLFszLDUsInEiLDAseyJzdHlsZSI6eyJoZWFkIjp7Im5hbWUiOiJlcGkifX19XSxbMiwzLCJpIiwwLHsic3R5bGUiOnsidGFpbCI6eyJuYW1lIjoibW9ubyJ9fX1dLFszLDYsIiIsMSx7InN0eWxlIjp7ImhlYWQiOnsibmFtZSI6ImVwaSJ9fX1dLFs0LDUsImoiLDIseyJzdHlsZSI6eyJ0YWlsIjp7Im5hbWUiOiJtb25vIn19fV0sWzUsNywiIiwxLHsic3R5bGUiOnsiaGVhZCI6eyJuYW1lIjoiZXBpIn19fV0sWzYsNywiIiwxLHsibGV2ZWwiOjIsInN0eWxlIjp7ImhlYWQiOnsibmFtZSI6Im5vbmUifX19XSxbMCwxLCIiLDEseyJsZXZlbCI6Miwic3R5bGUiOnsiaGVhZCI6eyJuYW1lIjoibm9uZSJ9fX1dLFsyLDUsIih3UE8pIiwxLHsic3R5bGUiOnsiYm9keSI6eyJuYW1lIjoibm9uZSJ9LCJoZWFkIjp7Im5hbWUiOiJub25lIn19fV1d
\[\begin{tikzcd}
	{\Omega R_X^0} & {\Omega R_X^0} \\
	{\Omega R_X^1} & P & {R_X^1} \\
	X & {R_X^0} & {R_X^1}
	\arrow[tail, from=1-1, to=2-1]
	\arrow["p"', two heads, from=2-1, to=3-1]
	\arrow[tail, from=1-2, to=2-2]
	\arrow["q", two heads, from=2-2, to=3-2]
	\arrow["i", tail, from=2-1, to=2-2]
	\arrow[two heads, from=2-2, to=2-3]
	\arrow["j"', tail, from=3-1, to=3-2]
	\arrow[two heads, from=3-2, to=3-3]
	\arrow[Rightarrow, no head, from=2-3, to=3-3]
	\arrow[Rightarrow, no head, from=1-1, to=1-2]
	\arrow["{(wPO)}"{description}, draw=none, from=2-1, to=3-2]
\end{tikzcd}\]
where the square indicated by $(wPO)$ is a weak push-out.
By assumption the composition $up$ factors through $p_Y$.
Since $I_Y$ is injective and $i$ is an inflation, we obtain a commutative diagram as below and applying the property of a weak push-out gives a factorisation of $u$ through $j$.
Hence $u$ belongs to $[\Rcal](X,\Sigma Y)$.
% https://q.uiver.app/?q=WzAsNixbMCwwLCJcXE9tZWdhIFJfWF4xIl0sWzAsMSwiUCJdLFsxLDEsIlJfWF4wIl0sWzEsMCwiWCJdLFsyLDIsIlxcU2lnbWEgWSJdLFswLDIsIklfWSJdLFswLDEsImkiLDAseyJzdHlsZSI6eyJ0YWlsIjp7Im5hbWUiOiJtb25vIn19fV0sWzEsMiwicSIsMCx7InN0eWxlIjp7ImhlYWQiOnsibmFtZSI6ImVwaSJ9fX1dLFszLDIsImoiLDAseyJzdHlsZSI6eyJ0YWlsIjp7Im5hbWUiOiJtb25vIn19fV0sWzUsNCwicF9ZIiwwLHsic3R5bGUiOnsiaGVhZCI6eyJuYW1lIjoiZXBpIn19fV0sWzMsNCwidSIsMCx7ImN1cnZlIjotMn1dLFswLDUsIigxKSIsMSx7ImN1cnZlIjozLCJzdHlsZSI6eyJib2R5Ijp7Im5hbWUiOiJkb3R0ZWQifX19XSxbMSw1LCIoMikiLDAseyJzdHlsZSI6eyJib2R5Ijp7Im5hbWUiOiJkb3R0ZWQifX19XSxbMiw0LCIoMykiLDAseyJzdHlsZSI6eyJib2R5Ijp7Im5hbWUiOiJkb3R0ZWQifX19XSxbMCwzLCJwIiwwLHsic3R5bGUiOnsiaGVhZCI6eyJuYW1lIjoiZXBpIn19fV1d
\[\begin{tikzcd}
	{\Omega R_X^1} & X \\
	P & {R_X^0} \\
	{I_Y} && {\Sigma Y}
	\arrow["i", tail, from=1-1, to=2-1]
	\arrow["q", two heads, from=2-1, to=2-2]
	\arrow["j", tail, from=1-2, to=2-2]
	\arrow["{p_Y}", two heads, from=3-1, to=3-3]
	\arrow["u", curve={height=-12pt}, from=1-2, to=3-3]
	\arrow["{(1)}"{description}, curve={height=18pt}, dotted, from=1-1, to=3-1]
	\arrow["{(2)}", dotted, from=2-1, to=3-1]
	\arrow["{(3)}", dotted, from=2-2, to=3-3]
	\arrow["p", two heads, from=1-1, to=1-2]
\end{tikzcd}\]

(c) Let $\delta\in\Ebb_{\Omega\Rcal}(X,Y)$.
Combining (b) with the exact sequence $(\ast)$, we obtain the existence of a morphism $u\in[\Rcal](X,\Sigma Y)$ such that $\delta = u^\ast\eta_Y$.
Fix a factorisation $u=u_2u_1$ where the domain of $u_2$ belongs to $\Rcal$.
We then have, for any morphism $Y\xrightarrow{g}R$ with $R\in\Rcal$:
$g_\ast\delta= g_\ast u^\ast\eta_Y = g_\ast u_1^\ast u_2^\ast\eta_Y = u_1^\ast (u_2^\ast g_\ast\eta_Y) =0$
where the extension inside the brackets vanishes since $\Rcal$ is rigid.
Hence, $\delta\in\Ebb^\Rcal(X,Y)$.
% https://q.uiver.app/?q=WzAsMTMsWzAsMCwiWSJdLFsxLDAsIkUiXSxbMiwwLCJYIl0sWzAsMiwiWSJdLFsxLDIsIklfWSJdLFsyLDIsIlxcU2lnbWEgWSJdLFsyLDEsIlInIl0sWzMsMF0sWzMsMl0sWzAsMywiUiJdLFswLDEsIlkiXSxbMSwxLCJFJyJdLFszLDFdLFswLDEsIiIsMCx7InN0eWxlIjp7InRhaWwiOnsibmFtZSI6Im1vbm8ifX19XSxbMSwyLCIiLDAseyJzdHlsZSI6eyJoZWFkIjp7Im5hbWUiOiJlcGkifX19XSxbMyw0LCIiLDIseyJzdHlsZSI6eyJ0YWlsIjp7Im5hbWUiOiJtb25vIn19fV0sWzQsNSwiIiwyLHsic3R5bGUiOnsiaGVhZCI6eyJuYW1lIjoiZXBpIn19fV0sWzIsNiwidV8xIl0sWzYsNSwidV8yIl0sWzIsNywiXFxkZWx0YSIsMCx7InN0eWxlIjp7ImJvZHkiOnsibmFtZSI6ImRhc2hlZCJ9fX1dLFs1LDgsIlxcZXRhX1kiLDAseyJzdHlsZSI6eyJib2R5Ijp7Im5hbWUiOiJkYXNoZWQifX19XSxbMyw5LCJnIiwyXSxbMTAsMTEsIiIsMSx7InN0eWxlIjp7InRhaWwiOnsibmFtZSI6Im1vbm8ifX19XSxbMTEsNiwiIiwxLHsic3R5bGUiOnsiaGVhZCI6eyJuYW1lIjoiZXBpIn19fV0sWzYsMTIsIih1XzIpXlxcYXN0XFxldGFfWSIsMCx7InN0eWxlIjp7ImJvZHkiOnsibmFtZSI6ImRhc2hlZCJ9fX1dLFsxLDExXSxbMTEsNF0sWzAsMTAsIiIsMSx7ImxldmVsIjoyLCJzdHlsZSI6eyJoZWFkIjp7Im5hbWUiOiJub25lIn19fV0sWzEwLDMsIiIsMSx7ImxldmVsIjoyLCJzdHlsZSI6eyJoZWFkIjp7Im5hbWUiOiJub25lIn19fV0sWzIsNSwidSIsMix7ImxhYmVsX3Bvc2l0aW9uIjoyMCwiY3VydmUiOjF9XV0=
\[\begin{tikzcd}
	Y & E & X & {} \\
	Y & {E'} & {R'} & {} \\
	Y & {I_Y} & {\Sigma Y} & {} \\
	R
	\arrow[tail, from=1-1, to=1-2]
	\arrow[two heads, from=1-2, to=1-3]
	\arrow[tail, from=3-1, to=3-2]
	\arrow[two heads, from=3-2, to=3-3]
	\arrow["{u_1}", from=1-3, to=2-3]
	\arrow["{u_2}", from=2-3, to=3-3]
	\arrow["\delta", dashed, from=1-3, to=1-4]
	\arrow["{\eta_Y}", dashed, from=3-3, to=3-4]
	\arrow["g"', from=3-1, to=4-1]
	\arrow[tail, from=2-1, to=2-2]
	\arrow[two heads, from=2-2, to=2-3]
	\arrow["{(u_2)^\ast\eta_Y}", dashed, from=2-3, to=2-4]
	\arrow[from=1-2, to=2-2]
	\arrow[from=2-2, to=3-2]
	\arrow[Rightarrow, no head, from=1-1, to=2-1]
	\arrow[Rightarrow, no head, from=2-1, to=3-1]
	\arrow["u"'{pos=0.2}, curve={height=6pt}, from=1-3, to=3-3]
\end{tikzcd}\]

(d) Let $\delta\in\Ebb^\Rcal(X,Y)$, let $R\in\Rcal$ and let $\Omega R\xrightarrow{f}X$ be any morphism.
There is some $u\in\Csc(X,\Sigma Y)$ such that $\delta = u^\ast\eta_Y$.
Fix an $\sfr$-triangle $Y\infl R_Y^0\defl R_Y^1 \overset{\eps}{\dashrightarrow}$ with $R_Y^0,R_Y^1\in\Rcal$, and apply \cref{lemma:shiftedOctahedron}, together with the injectivity of $I_Y$, to obtain the solid diagram below.
Since $\delta$ belongs to $\Ebb^\Rcal(X,Y)$, the inflation $i$ factors via a morphism (1).
Equivalently, $u$ factors via (2).
The projection  onto $R_Y^1$ of (2) precomposed with $f$ factors through $R_Y^0$ because $\eps\in\Ebb(R_Y^1,Y)=\Ebb_{\Omega \Rcal}(R_Y^1,Y)$ (the latter equality is proven as in (a), using that $R_Y^1$ belongs to $\Rcal$ and that $\Rcal$ is rigid).
Hence (2) precomposed with $f$ factors as the sum of two compositions, one using (3) and the other using (4), which implies that $uf$ factors as (3) followed by $p_Y$.
Because the top-right square is a weak pull-back, we obtain a factorisation of $f$ via (5) which shows that $\delta$ belongs to $\Ebb_{\Omega\Rcal}(X,Y)$.

\[\begin{tikzcd}
	&& {\Omega R} \\
	Y & E & X & {} \\
	Y & {I_Y} & {\Sigma Y} & {} \\
	{R_Y^0} & {\small{I_Y\oplus R_Y^1}} & {\Sigma Y} & {} \\
	{R_Y^1} & {R_Y^1} \\
	{} & {}
	\arrow["f", from=1-3, to=2-3]
	\arrow[tail, from=2-1, to=2-2]
	\arrow[two heads, from=2-2, to=2-3]
	\arrow["\delta", dashed, from=2-3, to=2-4]
	\arrow[Rightarrow, no head, from=2-1, to=3-1]
	\arrow["i"', tail, from=3-1, to=4-1]
	\arrow[two heads, from=4-1, to=5-1]
	\arrow["\varepsilon"', dashed, from=5-1, to=6-1]
	\arrow[from=2-2, to=3-2]
	\arrow[tail, from=3-2, to=4-2]
	\arrow[two heads, from=4-2, to=5-2]
	\arrow[dashed, from=5-2, to=6-2]
	\arrow["u", from=2-3, to=3-3]
	\arrow[Rightarrow, no head, from=3-3, to=4-3]
	\arrow[tail, from=3-1, to=3-2]
	\arrow["{_{p_Y}}", two heads, from=3-2, to=3-3]
	\arrow["{\eta_Y}", dashed, from=3-3, to=3-4]
	\arrow[tail, from=4-1, to=4-2]
	\arrow[two heads, from=4-2, to=4-3]
	\arrow[dashed, from=4-3, to=4-4]
	\arrow[Rightarrow, no head, from=5-1, to=5-2]
	\arrow["{(1)}"{description, pos=0.3}, curve={height=6pt}, dotted, from=2-2, to=4-1]
	\arrow["{(2)}"{description, pos=0.7}, curve={height=-6pt}, dotted, from=2-3, to=4-2]
	\arrow["{(3)}"{description, pos=0.7}, curve={height=-12pt}, dotted, from=1-3, to=3-2]
	\arrow["{(4)}"{description, pos=0.8}, dotted, from=1-3, to=4-1]
	\arrow["{(5)}"{description}, curve={height=6pt}, dotted, from=1-3, to=2-2]
\end{tikzcd}\]

(e) For any $X\in \Csc$, there is an $\sfr$-triangle $X\infl R_X^0\defl R_X^1\overset{\eps}{\dashrightarrow}$ with $R_X^0, R_X^1\in\Rcal$.
Because $\Rcal$ is rigid, $\eps$ belongs to $\Ebb^\Rcal(R_X^1,X)$.
By definition of $\Ebb^\Rcal$, objects in $\Rcal$ are $\Ebb^\Rcal$-injective so that this $\sfr$-triangle witnesses the fact that $(\Csc,\Ebb^\Rcal)$ has injective dimension at most one.

(f) For each $R\in\Rcal$, fix an $\sfr$-triangle $\Omega R\infl P_R\defl R\overset{\eps_R}{\dashrightarrow}$, with $P_R$ $\Ebb$-projective, and assume that all $P_R$ belong to $\Rcal$.
The same remark as in (e) shows that $\eps_R$ belongs to $\Ebb^\Rcal(R,\Omega R)$.
Moreover, $R$ is $\Ebb^\Rcal$-injective and $P_R$, which is $\Ebb$-projective and in $\Rcal$, is $\Ebb^\Rcal$-projective-injective.
Since $\Ebb^\Rcal=\Ebb_{\Omega\Rcal}$, all $\Ebb^\Rcal$-projectives are, up to isomorphism, of the form $P\oplus \Omega R$ for some $R\in\Rcal$ and some $\Ebb$-projective $P$.
Therefore $(\Csc,\Ebb^\Rcal)$ has codominant dimension at least one.

This implies statement (4) of the proposition by \cref{Thm:Auslander:equiv}.
\end{proof}

As a consequence, \cref{theorem:mutationIntro} gives a proof of the existence of mutation for relative-tilting objects, and of exchange $\sfr$-triangles, that does not refer to $\tau$-tilting theory.

\subsubsection{Gentle algebras}
\label{Ex:Gentle}

Let $(Q,I)$ be a gentle bound quiver.
There is an associated \defn{blossoming} gentle bound quiver $(Q\bls,I\bls)$ first introduced in~\cite{Asashiba-Sugaku} and re-introduced independently in~\cite{BrustleDouvilleMousavandThomasYildirim,PaluPilaudPlamondon-nonkissing}.
All vertices of the blossoming bound quiver $(Q\bls,I\bls)$ that are not leaves (sinks or sources) are 4-valent, and deleting its leaves recovers the bound quiver $(Q,I)$.
Alternatively, one can view $(Q\bls,I\bls)$ as obtained from the same surface with dissection~\cite{AssemBrustleCharbonneau-JodoinPlamondon,BaurCoelhoSimoes,OpperPlamondonSchroll,PaluPilaudPlamondon-surfaces} as $(Q,I)$, but with more vertices corresponding to boundary arcs.

Using the blossoming bound quiver, the $\tau$-tilting theory of gentle algebras was related in~\cite{BrustleDouvilleMousavandThomasYildirim,PaluPilaudPlamondon-nonkissing} to the combinatorics of non-kissing walks~\cite{McConville}: walks on $(Q,I)$ can be defined as maximal strings for $(Q\bls,I\bls)$.

In~\cite{IyamaNakaokaPalu}, a strategy was proposed on a specific example for constructing an exact category whose objects are walks and where extensions categorify kissings of walks.
We carry out this strategy in full generality in~\cref{section:gentle}.
The category is obtained as an extension-closed full subcategory of $\modd (Q\bls,I\bls)$, and turns out to be (non-reduced) 0-Auslander.
Thus, \cref{Th:mutation} gives a new proof that there are flips for non-kissing facets~\cite{McConville,BrustleDouvilleMousavandThomasYildirim,PaluPilaudPlamondon-nonkissing} and also recovers the mutation theory of support $\tau$-tilting modules~\cite{AdachiIyamaReiten} over gentle algebras.

% \noindent {\bf Hope :} A tilting theory in that setup might generalise simultaneously  2-term silting theory ; (partially) cluster-tilting theory ; ghost-tilting theory and, for gentle algebras only, $\tau$-tilting theory.

% \subsection{Recollections}

% Recall here results from [Nakaoka--Palu], [Liu--Nakaoka], [PPPP, Section 4], [GNP1]
% that will be useful below.

\section{Mutation of silting subcategories}
\label{section:tilting}

In this section, we compare several possible definitions of \emph{maximality} for rigid objects in hereditary extriangulated categories.
Under rather mild hypotheses, they are all shown to be equivalent.
We also make use of versions of Bongartz completion and of indices ($g$-vectors) and coindices in order to establish a mutation theory for silting subcategories in 0-Auslander extriangulated categories.
Our main point is that this setting allows for quite elementary proofs, while being general enough to recover various known mutation theories used in representation theory. 

\subsection{Comparison of definitions}
\label{ssection:equivDef}

Assume that $\Csc$ is a 0-Auslander extriangulated category, and let $\Pcal$ be the full subcategory of $\Csc$ formed by the projective objects. The full subcategory of injective objects is denoted $\Ical=\Sigma\Pcal$ (see~\cref{lemma:equiv proj inj}).

\begin{definition}
 An object $R\in\Csc$ is \defn{rigid} if it satisfies $\Ebb(R,R)=0$. A full additive subcategory $\rc$ of $\Csc$ is rigid, if all its objects are rigid or, equivalently, if for any $R,R'\in\rc$, we have $\Ebb(R,R')=0$.
 Recall from \cref{subsubsec:localization} that a full subcategory of $\Csc$ is \defn{thick} if it is closed under taking summands, extensions, cones of inflations and fibers of deflations. %\Misha{cones of inflations and fibers of deflations?}
 For any subcategory $\Xcal$ of $\Csc$, we let $\operatorname{thick}\Xcal$ be the smallest thick subcategory of $\Csc$ containing $\Xcal$. 
\end{definition}

\begin{notation}
 We write $\pr\Rcal$ for $\Cone(\rc,\rc)$: the full subcategory whose objects are those $X$ for which there is an $\sfr$-triangle $R_1\infl R_0\defl X\dashrightarrow$ with $R_0,R_1\in\Rcal$. Dually, we write $\copr\Rcal$ for $\Fib(\Rcal,\Rcal)$, the full subcategory whose objects $Y$ admit an $\sfr$-triangle $Y\infl R_0\defl R_1\dashrightarrow$ with $R_0,R_1\in\Rcal$.
\end{notation}

%\begin{definition}
% Let $\CEs$ be an extriangulated category. An object $R\in\Csc$ is \defn{rigid} if it satisfies $\Ebb(R,R)=0$. A full additive subcategory $\rc$ of $\Csc$ is rigid, if all its objects are rigid or, equivalently, if for any $R,R'\in\rc$, we have $\Ebb(R,R')=0$.
%We say that a rigid subcategory $\rc$ is \defn{$\Ebb$-tilting}  if for any $X\in\Csc$ such that $\Ebb(X,\rc)=0$ and $\Ebb(\rc,X)=0$, we have $X\in\rc$. An object $X$ is $\Ebb$-tilting if $\add X$ is an $\Ebb$-tilting subcategory.
%\end{definition}

\begin{theorem}\label{thm: equivalent versions of tilting}
For a given rigid, strictly full, additive subcategory $\rc$ of $\Csc$ that is stable under taking summands, %\Misha{Such categories are called \defn{presilting} in \cite{AdachiTsukamoto}. I suggest we mention this or just use this notion} 
consider the following conditions:
\begin{enumerate}[(i)]
 \item[(0)] $\rc$ is \defn{silting}: $\operatorname{thick}\rc=\Csc$.
 \item\label{def: maximal rigid} $\rc$ is \defn{maximal rigid}: For any $X\in\Csc$ such that $\rc\oplus \add X$ is rigid, we have $X\in\rc$.
 %\Misha{Also \defn{silting}: presilting and $\mathbf{thick}\Rcal = \Csc$.}
 \item\label{def: E-tilting} $\rc$ is \defn{$\Ebb$-tilting}: For any $X\in\Csc$ such that $\Ebb(X,\rc)=0$ and $\Ebb(\rc,X)=0$, we have $X\in\rc$.
 \item\label{def: tilting} $\rc$ is \defn{tilting}: For any $P\in\Pcal$, there is a conflation $P\infl R^0\defl R^1$ with $R^0,R^1\in\rc$.
 \item\label{def: cotilting} $\rc$ is \defn{cotilting}: For any $I\in \Ical$, there is a conflation $R_1\infl R_0\defl I$ with $R_0,R_1\in\rc$.
 \item\label{def: torsion projective} $\rc$ is \defn{torsion-projective}: $\rc^{\perp_\E} = \operatorname{Cone}(\rc^{\perp_\Ebb},\rc)$.
 \item[(\ref{def: torsion projective}')] $\Rcal$ is \defn{cotorsion-injective}: $^{\perp_\Ebb}\Rcal=\Fib(\Rcal,\,\!^{\perp_\Ebb}\Rcal)$.
 \item\label{def: strongly torsion projective} $\rc$ is \defn{strongly torsion-projective}: $\rc^{\perp_\E} = \pr\rc$.
 \item[(\ref{def: strongly torsion projective}')] $\Rcal$ is \defn{strongly cotorsion-injective}: $^{\perp_\Ebb}\Rcal=\copr\Rcal$.
 \item\label{def: complete rigid} If $\Csc$ is Krull--Schmidt and if we have $\Pcal=\add P$ and $\rc=\add R$, we say that $R$ is \defn{complete rigid} if $R$ and $P$ have the same number of isomorphism classes of indecomposable summands.
\end{enumerate}
Then \defn{(0)} $\Leftrightarrow$ (\ref{def: strongly torsion projective}) $\Leftrightarrow$ (\ref{def: strongly torsion projective}') $\Leftrightarrow$ (\ref{def: torsion projective}) $\Leftrightarrow$ (\ref{def: torsion projective}') $\Leftrightarrow$ (\ref{def: cotilting}) $\Leftrightarrow$ (\ref{def: tilting}) $\Rightarrow$ (\ref{def: E-tilting}) $\Rightarrow$ (\ref{def: maximal rigid}).
If moreover any $P\in\Pcal$ has a left $\rc$-approximation, or equivalently if any $I\in\Ical$ has a right $\rc$-approximation, then (\ref{def: maximal rigid}) $\Rightarrow$ (\ref{def: tilting}) and, in case $\Pcal = \add P$ and $\Csc$ is Krull--Schmidt, (\ref{def: complete rigid}) $\Leftrightarrow$ (\ref{def: tilting}).
\end{theorem}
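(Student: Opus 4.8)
By \cref{Thm:Auslander:equiv} the $0$-Auslander condition is self-dual; since $(0)$ is self-dual and $(3)\leftrightarrow(4)$, $(5)\leftrightarrow(5')$, $(6)\leftrightarrow(6')$ are interchanged by the duality, every argument below supplies its dual for free. The long exact sequences \eqref{les:1}, \eqref{les:2} together with $\Ebb^2=0$ give, for any rigid summand-closed $\rc$, the chains of inclusions
\[
\rc\ \subseteq\ \pr\rc\ \subseteq\ \Cone(\rc^{\perp_\Ebb},\rc)\ \subseteq\ \rc^{\perp_\Ebb},\qquad
\rc\ \subseteq\ \copr\rc\ \subseteq\ \Fib(\rc,{}^{\perp_\Ebb}\rc)\ \subseteq\ {}^{\perp_\Ebb}\rc
\]
(e.g. a conflation $R_1\infl R_0\defl X$ with $R_i\in\rc$ forces $\Ebb(\rc,X)=0$, since $\Ebb(\rc,R_0)=0$ and $\Ebb^2(\rc,R_1)=0$). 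Squeezing gives $(6)\Rightarrow(5)$ and $(6')\Rightarrow(5')$. As every projective lies in ${}^{\perp_\Ebb}\rc$ and every injective in $\rc^{\perp_\Ebb}$, condition $(6')$ forces $\Pcal\subseteq\copr\rc$ — that is, $(3)$ — and dually $(6)\Rightarrow(4)$. Conversely, if $\rc$ is tilting then $\Pcal\subseteq\copr\rc\subseteq\operatorname{thick}\rc$ and, $\Csc$ having enough projectives, every object is the cone of an inflation between projectives, hence lies in $\operatorname{thick}\rc$: thus $(3)\Rightarrow(0)$, and dually $(4)\Rightarrow(0)$. Finally $(2)\Rightarrow(1)$ is immediate, as $\rc\oplus\add X$ rigid entails $X\in\rc^{\perp_\Ebb}\cap{}^{\perp_\Ebb}\rc$.

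\textbf{The core (main obstacle).} What remains for the first block is $(0)\Rightarrow(6)$ — that a silting $\rc$ satisfies $\rc^{\perp_\Ebb}=\pr\rc$ — its dual $(0)\Rightarrow(6')$, and the bookkeeping absorbing $(5),(5')$; this is where I expect the genuine work. I would use the reduction of \cref{ssection:reduction}: the two-sided orthogonal ${}^{\perp_\Ebb}\rc\cap\rc^{\perp_\Ebb}$ is extension-closed, hence hereditary by \cref{lemma:ext-closed hereditary}, and carries all objects of $\rc$ as projective-injectives. Given $X\in\rc^{\perp_\Ebb}$, a dévissage along $\operatorname{thick}\rc=\Csc$ produces a conflation $A\infl R_0\defl X$ with $R_0\in\rc$ and $A\in{}^{\perp_\Ebb}\rc\cap\rc^{\perp_\Ebb}$ (the long exact sequences and $\Ebb^2=0$ force the cocone $A$ into ${}^{\perp_\Ebb}\rc$ automatically); working inside this hereditary subcategory, $A$ becomes a ``syzygy'' of $X$ modulo $\rc$, heredity (\cref{hered_equiv}(v)) lets one compose successive such conflations, and the silting hypothesis forces the resulting iteration to collapse, placing $X$ in $\pr\rc$. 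Making this collapse precise — plausibly by showing that the reduction $\CR$ is trivial and combining with $\Ebb^2=0$ — is, I expect, the genuinely delicate point. The same mechanism yields $(5)\Rightarrow(6)$ and $(5')\Rightarrow(6')$: $(5)$ feeds each object of $\rc^{\perp_\Ebb}$ into the iteration and the collapse, guaranteed by heredity together with \cref{hered_equiv}(iv), deposits it in $\pr\rc$. Once $(6),(6')$ hold, all of $(3)$--$(6')$ follow from the formal implications, so these conditions are equivalent to $(0)$; and we additionally obtain $(3)\Rightarrow(6)\Rightarrow(2)$: given $X\in\rc^{\perp_\Ebb}\cap{}^{\perp_\Ebb}\rc$, $(6)$ supplies a conflation $R_1\infl R_0\defl X$ with $R_i\in\rc$ whose class lies in $\Ebb(X,R_1)=0$; it therefore splits, $X$ is a summand of $R_0$, and $X\in\rc$. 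Together with $(2)\Rightarrow(1)$ this completes the first part.

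\textbf{The ``moreover'' clause.} Assume every $P\in\Pcal$ has a left $\rc$-approximation, equivalently (via $\Sigma\colon\ovl{\Pcal}\simeq\und{\Ical}$) that every $I\in\Ical$ has a right $\rc$-approximation. To prove $(1)\Rightarrow(3)$, fix $P\in\Pcal$ and complete a left $\rc$-approximation to a conflation $P\infl R^0\defl C$ with $R^0\in\rc$ (the $0$-Auslander structure is used to arrange the approximation to be an inflation). The approximation property makes $\Csc(R^0,R)\to\Csc(P,R)$ surjective for every $R\in\rc$, whence $\Ebb(C,\rc)=0$; together with $\Ebb(\rc,C)=0$ and $\Ebb(C,C)=0$ (rigidity of $\rc$ and $\Ebb^2=0$) this shows $\rc\oplus\add C$ is rigid, so $C\in\rc$ by maximality, and $P\infl R^0\defl C$ is a tilting conflation. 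For $(7)\Leftrightarrow(3)$ in the Krull--Schmidt case with $\Pcal=\add P$: if $R$ is tilting, the tilting conflations express each $[P_j]$ through the $[R_i]$, so the indices of the indecomposable summands of $R$ span the free abelian group of rank $n$ (see \cref{ssection:indices}); hence $k\ge n$, while $k\le n$ always, forcing $k=n$. Conversely, if $k=n$, complete $R$ to a maximal rigid object — a process terminating because of the bound $k\le n$ — which by $(1)\Rightarrow(3)$ is tilting with exactly $n$ indecomposable summands, so nothing is added and $R$ itself is tilting. (Both $(1)\Rightarrow(3)$ and $(7)\Leftrightarrow(3)$ can instead be organized via a Bongartz completion, \cref{ssection:Bongartz}, a maximal, resp. complete, rigid object being its own completion.) The hardest step throughout is the collapse argument underlying $(0)\Rightarrow(6)$ and $(5)\Rightarrow(6)$; everything else is either formal or a standard completion/counting argument.
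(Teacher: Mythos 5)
Your formal implications are correct, and your treatment of the ``moreover'' clause ((i)\,$\Rightarrow$\,(iii) via completing a left $\rc$-approximation to a conflation and checking the cone is rigid and bi-orthogonal, and (vii)\,$\Leftrightarrow$\,(iii) via indices and Bongartz completion) matches the paper's. But the first block of equivalences has a genuine gap at its load-bearing point. Your architecture routes everything through an entry into conditions (vi)/(vi'): you need at least one of (0)\,$\Rightarrow$\,(vi), (v)\,$\Rightarrow$\,(vi), or their duals, and these are exactly the implications you leave as a ``dévissage plus collapse'' sketch. The sketch does not work as stated. First, for $X\in\rc^{\perp_\Ebb}$ the existence of a conflation $A\infl R_0\defl X$ with $R_0\in\rc$ is not supplied by $\operatorname{thick}\rc=\Csc$ (thick closure builds $X$ by iterated cones, fibers and summands, not by a single deflation from $\rc$); second, even granting such a conflation, the long exact sequence only places the cocone $A$ in ${}^{\perp_\Ebb}\rc$ --- membership in $\rc^{\perp_\Ebb}$ requires $\Csc(\rc,R_0)\to\Csc(\rc,X)$ to be surjective, i.e.\ the deflation to be a right $\rc$-approximation, which is not assumed in this part of the theorem; third, your proposed mechanism for the collapse (``show the reduction $\CR$ is trivial'') is circular, since triviality of ${}^{\perp_1}\rc\cap\rc^{\perp_1}/[\rc]$ is precisely condition (ii), one of the statements being proved.

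For comparison, the paper does not prove (0)\,$\Rightarrow$\,(vi) either: it cites Adachi--Tsukamoto for (0)\,$\Leftrightarrow$\,(iii) and (0)\,$\Rightarrow$\,(vi), and its own work is the cycle (iv)\,$\Rightarrow$\,(vi)\,$\Rightarrow$\,(v)\,$\Rightarrow$\,(iv) together with (iii)\,$\Leftrightarrow$\,(iv), established by explicit diagram chases. There the ``iteration'' you anticipate does occur in (v)\,$\Rightarrow$\,(iv), but only twice, and the collapse is not forced by silting or by a reduction argument: one starts from the injective $\susp P$, applies (v) twice to get conflations $X\infl R_0\defl\susp P$ and $Y\infl R_1\defl X$ with $X,Y\in\rc^{\perp_\Ebb}$, and then the projectivity of $P$ (splitting $Y\infl Z\defl P$) and rigidity of $\rc$ split the relevant conflations and land $X$ in $\rc$. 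If you want a self-contained proof along your lines, the fix is to replace the thick-subcategory dévissage by this anchored two-step argument (and its dual), or to import the Adachi--Tsukamoto input as the paper does; as written, the equivalence of (0), (iii)--(vi') is not established.
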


\begin{remark}
\label{rk:def tilting}
\begin{enumerate}[(a)]
\item By definition, we have the implication (\ref{def: tilting}) $\Rightarrow$ \defn{\rm{(0)}}. The implication \defn{\rm{(0)}} $\Rightarrow$ (\ref{def: strongly torsion projective}) is \cite[Proposition 4.7 and Lemma 4.11]{AdachiTsukamoto}. Also \cite[Proposition 5.5]{AdachiTsukamoto} shows that (0) $\Leftrightarrow$ (iii) by noting that $\Rcal^\vee=\operatorname{copr}\Rcal$ since $\Csc$ is hereditary and $\Rcal$ is rigid hence extension-closed.
%\Misha{I think it does, but either way it uses Lemma 4.11 which shows  \defn{\rm{(0)}} $\Rightarrow$ (\ref{def: strongly torsion projective}), so I am fine with what you wrote.}
 \item  The proof that (\ref{def: complete rigid}) $\Leftrightarrow$ (\ref{def: tilting}) will require some form of Bongartz lemma, and is thus postponed to \cref{subsection:complete rigid}.
 \item The choice of the term \emph{complete} in (\ref{def: complete rigid}) is explained by \cref{Prop: complete tilting}.
 \item We note that definitions (\ref{def: maximal rigid}) to (\ref{def: strongly torsion projective}') readily imply that $\Rcal$ contains all projective-injective objects, allowing to consider the reduced case only in the proofs.
 \item We also note that being $\Ebb$-tilting is a weaker condition than being cluster-tilting, but both definitions are equivalent when $\Csc$ is 2-Calabi--Yau.
\end{enumerate}
\end{remark}

\begin{proof}
We only prove the statement for reduced 0-Auslander extriangulated categories, since the general case follows by reduction. The equivalence (0) $\Leftrightarrow$ (\ref{def: tilting}) 
%The implication  (\ref{def: tilting}) $\Rightarrow$ \defn{\rm{(0)}} is immediate and the implication \defn{\rm{(0)}} $\Rightarrow$ (\ref{def: strongly torsion projective})
follows from \cite{AdachiTsukamoto}, as explained in \cref{rk:def tilting} (a).

We prove the following implications:
% https://q.uiver.app/?q=WzAsOSxbMiwxLCIoaWlpKSJdLFszLDAsIih2JykiXSxbMSwwLCIodmknKSJdLFs0LDEsIih2aWkpIl0sWzAsMSwiKGlpKSJdLFswLDIsIihpKSJdLFsxLDMsIih2aSkiXSxbMywzLCIodikiXSxbMiwyLCIoaXYpIl0sWzAsMiwiIiwwLHsibGV2ZWwiOjJ9XSxbMiwxLCIiLDAseyJsZXZlbCI6Mn1dLFsxLDAsIiIsMCx7ImxldmVsIjoyfV0sWzAsOCwiIiwwLHsibGV2ZWwiOjIsInN0eWxlIjp7InRhaWwiOnsibmFtZSI6ImFycm93aGVhZCJ9fX1dLFs4LDYsIiIsMCx7ImxldmVsIjoyfV0sWzYsNywiIiwwLHsibGV2ZWwiOjJ9XSxbNyw4LCIiLDAseyJsZXZlbCI6Mn1dLFswLDMsIlxcdGV4dHtpbiB9My40IiwxLHsibGV2ZWwiOjIsInN0eWxlIjp7InRhaWwiOnsibmFtZSI6ImFycm93aGVhZCJ9fX1dLFswLDQsIiIsMSx7ImxldmVsIjoyfV0sWzQsNSwiIiwxLHsibGV2ZWwiOjJ9XSxbNSwwLCJcXHRleHR7aWYgfSBcXG1hdGhjYWx7Un1cXHRleHR7LWFwcH0gIiwxLHsibGV2ZWwiOjJ9XV0=
\[\begin{tikzcd}
	& {(vi')} && {(v')} \\
	{(ii)} && {(iii)} && {(vii)} \\
	{(i)} && {(iv)} \\
	& {(vi)} && {(v)}
	\arrow[Rightarrow, from=2-3, to=1-2]
	\arrow[Rightarrow, from=1-2, to=1-4]
	\arrow[Rightarrow, from=1-4, to=2-3]
	\arrow[Rightarrow, 2tail reversed, from=2-3, to=3-3]
	\arrow[Rightarrow, from=3-3, to=4-2]
	\arrow[Rightarrow, from=4-2, to=4-4]
	\arrow[Rightarrow, from=4-4, to=3-3]
	\arrow["{\text{in }4.4}"{description}, Rightarrow, 2tail reversed, from=2-3, to=2-5]
	\arrow[Rightarrow, from=2-3, to=2-1]
	\arrow[Rightarrow, from=2-1, to=3-1]
	\arrow["{\text{if } \mathcal{R}\text{-app} }"{description}, Rightarrow, from=3-1, to=2-3]
\end{tikzcd}\]

(\ref{def: cotilting}) $\Leftrightarrow$ (\ref{def: tilting}) We only prove (\ref{def: cotilting}) $\Rightarrow$ (\ref{def: tilting}) since the converse is dual.
Let $P\in\Pcal$ and assume that there is a conflation $R_1\infl R_0\defl \susp P$, with $R_0,R_1\in\rc$.
Since $P\infl 0 \defl \susp P$ is a conflation, we can apply~\cite[Proposition 3.15]{NakaokaPalu} in order to obtain a commutative diagram made of conflations
\[
\xy
(-7,21)*++{P}="-12";
(7,21)*++{P}="-14";
(-21,7)*++{R_1}="0";
(-7,7)*++{R_1}="2";
(7,7)*++{0}="4";
(-21,-7)*++{R_1}="10";
(-7,-7)*++{R_0}="12";
(7,-7)*++{\susp P}="14";
{\ar@{=} "-12";"-14"};
{\ar@{>->} "-12";"2"};
{\ar@{>->} "-14";"4"};
{\ar@{>->} "0";"2"};
{\ar@{->>} "2";"4"};
{\ar@{=} "0";"10"};
{\ar@{->>} "2";"12"};
{\ar@{->>} "4";"14"};
{\ar@{>->} "10";"12"};
{\ar@{->>} "12";"14"};
{\ar@{}|\circlearrowright "-12";"4"};
{\ar@{}|\circlearrowright "0";"12"};
{\ar@{}|\circlearrowright "2";"14"};
\endxy
\]
hence a conflation $P\infl R_1 \defl R_0$.

(\ref{def: strongly torsion projective}) $\Rightarrow$ (\ref{def: torsion projective}) because $\rc$ is rigid.

(\ref{def: torsion projective}) $\Rightarrow$ (\ref{def: cotilting}) Assume that $\rc^{\perp_\Ebb}$ is included in $\operatorname{Cone}(\rc^{\perp_\Ebb},\rc)$.
Since $\Ebb(\rc,\susp P) \cong \Ebb^2(\rc,P) = 0$, there is a conflation $X\infl R_0 \defl \susp P$ (hence a conflation $P\infl X\defl R_0$), with $R_0\in\rc$ and $X\in\rc^{\perp_\Ebb}$. By assumption, there is also a conflation $Y\infl R_1\defl X$, with $R_1\in\rc$ and $Y\in\rc^{\perp_\Ebb}$.
By composing the deflations $R_1\defl X\defl R_0$, we obtain a commutative diagram of conflations
\[
\xy
(-21,7)*++{Y}="0";
(-7,7)*++{Z}="2";
(7,7)*++{P}="4";
(-21,-7)*++{Y}="10";
(-7,-7)*++{R_1}="12";
(7,-7)*++{X}="14";
(-7,-21)*++{R_0}="22";
(7,-21)*++{R_0}="24";
{\ar@{>->} "0";"2"};
{\ar@{->>} "2";"4"};
{\ar@{=} "0";"10"};
{\ar@{>->} "2";"12"};
{\ar@{>->} "4";"14"};
{\ar@{>->} "10";"12"};
{\ar@{->>} "12";"14"};
{\ar@{->>} "12";"22"};
{\ar@{->>} "14";"24"};
{\ar@{=} "22";"24"};
{\ar@{}|\circlearrowright "0";"12"};
{\ar@{}|\circlearrowright "2";"14"};
{\ar@{}|\circlearrowright "12";"24"};
\endxy
\]
where the first row splits because $P$ is projective in $\Csc$.
This gives a conflation $Y\oplus P \infl R_1\defl R_0$, that we push-out along the canonical projection $Y\oplus P\to Y$:
\[
\xy
(-9,7)*++{Y\oplus P}="2";
(9,7)*++{R_1}="4";
(25,7)*++{R_0}="6";
(-9,-7)*++{Y}="12";
(9,-7)*++{Y\oplus R_0}="14";
(25,-7)*++{R_0}="16";
(-9,-21)*++{\susp P}="22";
(9,-21)*++{\susp P}="24";
(-9,-32)*+{}="32";
(9,-32)*+{}="34";
{\ar@{>->} "2";"4"};
{\ar@{->>} "4";"6"};
{\ar@{>->} "2";"12"};
{\ar@{>->} "4";"14"};
{\ar@{=} "6";"16"};
{\ar@{>->} "12";"14"};
{\ar@{->>} "14";"16"};
{\ar@{->>} "12";"22"};
{\ar@{->>} "14";"24"};
{\ar@{=} "22";"24"};
{\ar@{}|\circlearrowright "2";"14"};
{\ar@{}|\circlearrowright "4";"16"};
{\ar@{}|\circlearrowright "12";"24"};
\endxy
\]
The second row splits because $Y\in\rc^{\perp_\Ebb}$, and we obtain the commutative diagram of conflations:
% \[
% \xy
% (-9,7)*++{R_1}="0";
% (9,7)*++{Y\oplus R_0}="2";
% (27,7)*++{\susp P}="4";
% %
% (-9,-7)*++{R_1}="10";
% (9,-7)*++{R_1\oplus R_0}="12";
% (27,-7)*++{R_0}="14";
% %
% (9,-21)*++{X}="22";
% (27,-21)*++{X}="24";
% %
% %
% {\ar@{->} "0";"2"};
% {\ar@{->} "2";"4"};
% %
% {\ar@{=} "0";"10"};
% {\ar "12";"2"};
% {\ar "14";"4"};
% %
% {\ar "10";"12"};
% {\ar "12";"14"};
% %
% {\ar "22";"12"};
% {\ar "24";"14"};
% %
% {\ar@{=} "22";"24"};
% %
% {\ar@{}|\circlearrowright "0";"12"};
% {\ar@{}|\circlearrowright "2";"14"};
% {\ar@{}|\circlearrowright "12";"24"};
% \endxy
% \]
\[
\xy
(-10,21)*++{X}="-12";
(10,21)*++{X}="-14";
(-30,7)*++{R_1}="0";
(-10,7)*++{R_1\oplus R_0}="2";
(10,7)*++{R_0}="4";
(-30,-7)*++{R_1}="10";
(-10,-7)*++{Y\oplus R_0}="12";
(10,-7)*++{\susp P}="14";
{\ar@{=} "-12";"-14"};
{\ar@{>->} "-12";"2"};
{\ar@{>->} "-14";"4"};
{\ar@{>->} "0";"2"};
{\ar@{->>} "2";"4"};
{\ar@{=} "0";"10"};
{\ar@{->>} "2";"12"};
{\ar@{->>} "4";"14"};
{\ar@{>->} "10";"12"};
{\ar@{->>} "12";"14"};
{\ar@{}|\circlearrowright "-12";"4"};
{\ar@{}|\circlearrowright "0";"12"};
{\ar@{}|\circlearrowright "2";"14"};
\endxy
\]
where we have used that $\rc$ is rigid.
Moreover, the middle column splits because the conflation $Y\infl R_1\defl X$ induces an exact sequence
\[
 0 = \Ebb(R_1,X) \to \Ebb(Y,X) \to \Ebb^2(X,X)=0,
\]
and $X\in\rc^{\perp_\Ebb}$.
We conclude that $X\in\rc$ since $\rc$ is stable under taking sums and summands, and hence, that $\susp P \in \pr\rc$, thus showing that $\rc$ is cotilting.

(\ref{def: cotilting}) $\Rightarrow$ (\ref{def: strongly torsion projective}) Assume that $\rc$ is cotilting.
Since $\Csc$ is hereditary, for any deflation $X\defl Y$ with $X\in\rc^{\perp_\Ebb}$, we have $Y\in\rc^{\perp_\Ebb}$.
This implies the inclusion of $\pr\rc$ in $\rc^{\perp_\Ebb}$.
Let $X\in\rc^{\perp_\Ebb}$.
Fix conflations $P_0\infl X\defl \susp P_1$, $R_1\infl R_0\defl \susp P_1$ and $S_1\infl S_0\defl \Sigma P_0$, where $P_0,P_1\in\Pcal$ and $R_0,R_1,S_0,S_1\in\rc$.
There is a commutative diagram of conflations
\[
\xy
(-7,21)*++{R_1}="-12";
(7,21)*++{R_1}="-14";
(-21,7)*++{P_0}="0";
(-7,7)*++{Y}="2";
(7,7)*++{R_0}="4";
(-21,-7)*++{P_0}="10";
(-7,-7)*++{X}="12";
(7,-7)*++{\susp P_1}="14";
{\ar@{=} "-12";"-14"};
{\ar@{>->} "-12";"2"};
{\ar@{>->} "-14";"4"};
{\ar@{>->} "0";"2"};
{\ar@{->>} "2";"4"};
{\ar@{=} "0";"10"};
{\ar@{->>} "2";"12"};
{\ar@{->>} "4";"14"};
{\ar@{>->} "10";"12"};
{\ar@{->>} "12";"14"};
{\ar@{}|\circlearrowright "-12";"4"};
{\ar@{}|\circlearrowright "0";"12"};
{\ar@{}|\circlearrowright "2";"14"};
\endxy
\]
where $R_1$ and $X$, hence $Y$, belong to $\rc^{\perp_\Ebb}$.
Therefore, the conflation in the middle row of
\[
\xy
(-9,21)*++{S_1}="-12";
(9,21)*++{S_1}="-14";
(-27,7)*++{Y}="0";
(-9,7)*++{S_1\oplus R_0}="2";
(9,7)*++{S_0}="4";
(-27,-7)*++{Y}="10";
(-9,-7)*++{R_0}="12";
(9,-7)*++{\susp P_0}="14";
{\ar@{=} "-12";"-14"};
{\ar@{>->} "-12";"2"};
{\ar@{>->} "-14";"4"};
{\ar@{>->} "0";"2"};
{\ar@{->>} "2";"4"};
{\ar@{=} "0";"10"};
{\ar@{->>} "2";"12"};
{\ar@{->>} "4";"14"};
{\ar@{>->} "10";"12"};
{\ar@{->>} "12";"14"};
{\ar@{}|\circlearrowright "-12";"4"};
{\ar@{}|\circlearrowright "0";"12"};
{\ar@{}|\circlearrowright "2";"14"};
\endxy
\]
splits, which implies $Y\in\rc$.
We thus have $X\in\pr\rc$.

(\ref{def: tilting}) $\Rightarrow$ (\ref{def: E-tilting}) Assume $\rc$ tilting, and let $X\in\Csc$ be such that $\Ebb(X,\rc)=0=\Ebb(\rc,X)$.
By assumption, there are conflations $P_1\infl P_0\defl X$ and $P_1\infl R^0\defl R^1$ with $P_0,P_1\in\Pcal$ and $R^0,R^1\in\rc$.
There is an induced commutative diagram made of conflations:
\[
\xy
(-9,7)*++{P_1}="2";
(9,7)*++{P_0}="4";
(25,7)*++{X}="6";
(-9,-7)*++{R^0}="12";
(9,-7)*++{R^0\oplus X}="14";
(25,-7)*++{X}="16";
(-9,-21)*++{R^1}="22";
(9,-21)*++{R^1}="24";
(-9,-32)*+{}="32";
(9,-32)*+{}="34";
{\ar@{>->} "2";"4"};
{\ar@{->>} "4";"6"};
{\ar@{>->} "2";"12"};
{\ar@{>->} "4";"14"};
{\ar@{=} "6";"16"};
{\ar@{>->}"12";"14"};
{\ar@{->>} "14";"16"};
{\ar@{->>} "12";"22"};
{\ar@{->>} "14";"24"};
{\ar@{=} "22";"24"};
{\ar@{}|\circlearrowright "2";"14"};
{\ar@{}|\circlearrowright "4";"16"};
{\ar@{}|\circlearrowright "12";"24"};
\endxy
\]
where the conflation in the second row splits because $\Ebb(X,\rc)=0$.
Consider a conflation $P_0\infl S^0\defl S^1$ with $S^0,S^1\in\rc$.
Together with $P_0\infl R^0\oplus X\defl R^1$, it induces a commutative diagram
\[
\xy
(-9,7)*++{P_0}="2";
(9,7)*++{R^0\oplus X}="4";
(25,7)*++{R^1}="6";
(-9,-7)*++{S^0}="12";
(9,-7)*++{S^0\oplus R^1}="14";
(25,-7)*++{R^1}="16";
(-9,-21)*++{S^1}="22";
(9,-21)*++{S^1}="24";
(-9,-32)*++{}="32";
(9,-32)*++{}="34";
{\ar@{>->} "2";"4"};
{\ar@{->>} "4";"6"};
{\ar@{>->} "2";"12"};
{\ar@{>->} "4";"14"};
{\ar@{=} "6";"16"};
{\ar@{>->} "12";"14"};
{\ar@{->>} "14";"16"};
{\ar@{->>} "12";"22"};
{\ar@{->>} "14";"24"};
{\ar@{=} "22";"24"};
{\ar@{}|\circlearrowright "2";"14"};
{\ar@{}|\circlearrowright "4";"16"};
{\ar@{}|\circlearrowright "12";"24"};
\endxy
\]
where the conflation in the second row splits because $\rc$ is rigid.
Since $\Ebb(\rc,X)=0$ and $\rc$ is rigid, the conflation $R^0\oplus X\infl S^0\oplus R^1\defl S^1$ splits and we have $X\in\rc$.

(\ref{def: E-tilting}) $\Rightarrow$ (\ref{def: maximal rigid})
immediately follows from the definitions.

Assume that $P\in\Pcal$ has a left $\rc$-approximation. We show the implication (\ref{def: maximal rigid}) $\Rightarrow$ (\ref{def: tilting}) and that $\susp P$ has a right $\rc$-approximation.
Let $P\to R^0$ be an $\rc$-approximation.
Because $P\infl 0$ is an inflation, any morphism with domain $P$ is also an inflation.
Let thus $P\infl R^0 \defl X$ be a conflation.
For any $R\in\rc$, it induces an exact sequence
\[
\Ebb(R,R^0) \to \Ebb(R,X) \to \Ebb^2(R,P).
\]
Since $R$ is rigid, the first term vanishes, and by heredity the last term also vanishes.
We thus have $\Ebb(R,X)=0$.
Consider the exact sequence:
\[
\Csc(R^0,R)\to\Csc(P,R)\to \Ebb(X,R) \to \Ebb(R^0,R).
\]
By rigidity of $\rc$, the last term vanishes.
Moreover, by definition of an approximation, the first morphism of the exact sequence is a surjection.
This shows that we also have $\Ebb(X,R)=0$.
Moreover, the exactness of the sequence
\[
\Ebb(X,R^0) \to \Ebb(X,X) \to \Ebb^2(X,P)
\]
implies that $X$ is rigid, and hence that $X\in\rc$.
Thus, $\rc$ is tilting.
Moreover, by pushing-out any conflation of the form $P\infl R^0\defl R^1$ along the conflation $P\infl 0\defl \susp P$, we obtain a conflation $R^0\infl R^1\defl \susp P$.
Since $\rc$ is rigid, the deflation $R^1\defl \susp P$ is a right $\rc$-approximation.
This shows the claim since every injective object is of the form $\susp P$ for some projective $P$.

The proof that the existence, for any injective $I$, of a right $\rc$-approximation of $I$ implies that
(\ref{def: maximal rigid}) $\Rightarrow$ (\ref{def: tilting}) and that any projective object has a left $\rc$-approximation is dual and left to the reader.

The implications (\ref{def: strongly torsion projective}') $\Rightarrow$ (\ref{def: torsion projective}') $\Rightarrow$ (\ref{def: tilting}) $\Rightarrow$ (\ref{def: strongly torsion projective}') follow from the implications (\ref{def: strongly torsion projective}) $\Rightarrow$ (\ref{def: torsion projective}) $\Rightarrow$ (\ref{def: cotilting}) $\Rightarrow$ (\ref{def: strongly torsion projective}) by duality.
\end{proof}

\begin{remark}
Assuming the extriangulated category $\Csc$ to be 0-Auslander is unnecessary for most of the implications proven above.
Recall that $\Csc$ has projective dimension at most one if any object $X$ appears in an $\sfr$-triangle $P_1\infl P_0\defl X\dashrightarrow$ with $P_0,P_1$ projective, and injective dimension at most one if  any object $X$ appears in an $\sfr$-triangle $X\infl I^0\defl I^1\dashrightarrow$ with $I^0,I^1$ injective.
We also consider the following assumptions on $\Csc$:
\begin{itemize}
    \item[(H)] The extriangulated category $\Csc$ is hereditary.
    \item[(P)] For any projective object $P\in\Csc$, there is an $\sfr$-triangle $P\infl Q\defl I\dashrightarrow$ with $I$ injective and $Q$ projective-injective.
    \item[(I)] For any injective object $I\in\Csc$, there is an $\sfr$-triangle $P\infl Q\defl I\dashrightarrow$ with $P$ projective and $Q$ projective-injective.
\end{itemize}
We note that, by \cref{Thm:Auslander:equiv}, $\Csc$ is 0-Auslander if and only if it satisfies (H)+(P) and has enough projectives if and only if it satisfies (H)+(I) and has enough injectives.
For future reference, we give here an explicit list of those assumptions that are used in the proof:

\begin{itemize}
    \item No specific assumptions on $\Csc$: (ii) $\Rightarrow$ (i), (vi) $\Rightarrow$ (v) and (vi') $\Rightarrow$ (v'). 
%\Yann{Want all higher exts to vanish}\Misha{This is precisely (H), isn't it?}\Yann{I think we never mention higher extensions in this article. Shall we add a quick comment in the section on hereditary extriangulated categories?}
    \item $\mathrm{pd}\Csc\leq 1$: (iii) $\Rightarrow$ (ii).
    \item $\mathrm{id}\Csc\leq 1$: (iv) $\Rightarrow$ (ii). \footnote{More precisely, the proof we presented went (iv)  $\Rightarrow$ (iii)  $\Rightarrow$ (ii), but the arguments dual to those in our proof of the implication (iii) $\Rightarrow$ (ii) give a direct proof of the implication (iv) $\Rightarrow$ (ii) and require only $\mathrm{id}\Csc\leq 1$.}
    \item (H)+(P): (iv) $\Rightarrow$ (iii), (v') $\Rightarrow$ (iii); and (i) $\Rightarrow$ (iii) under the assumption that all projectives have a left $\Rcal$-approximation.
    \item (H)+(I): (iii) $\Rightarrow$ (iv), (v) $\Rightarrow$ (iv); and (i) $\Rightarrow$ (iii) under the assumption that all injectives have a right $\Rcal$-approximation.
    \item $\Csc$ is 0-Auslander: (iv) $\Rightarrow$ (vi), (iii) $\Rightarrow$ (vi') and (iii) $\Leftrightarrow$ (vii).
\end{itemize}
\end{remark}

\subsection{Bongartz completions}
\label{ssection:Bongartz}

We again assume that $\Csc$ is a 0-Auslander extriangulated category (we note however that conditions (H)+(I) are sufficient) with full subcategory of projectives $\Pcal$ and full subcategory of injectives $\Ical$.

The following proposition is an extriangulated version of (co-)Bongartz completion, and is inspired from \cite[Definition-Proposition 4.21]{Jasso-Reduction}.
\begin{proposition}
\label{prop:Bongartz}
Let $U\in\Csc$ be rigid, and let $I\in\Csc$ be injective. Assume that $U'\xrightarrow{f} I$ is a right $\add (U, \Pcal\cap\Ical)$-approximation. %\Misha{ $\add (U, (\Pcal\cap\Ical))$?}
Then, up to adding a projective-injective summand to $U'$, we may assume that $f$ is a deflation and, in the conflation $P_U\infl U'\defl I$, we have that $R_U=P_U\oplus U$ is rigid.
If moreover $\Ical=\add I$, then $\add (R_U,\Pcal\cap\Ical)$ is cotilting.
\end{proposition}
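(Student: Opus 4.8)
The plan is to first upgrade the right approximation $f$ to a deflation without losing the approximation property, then extract the rigidity of $R_U$ from the long exact sequences \eqref{les:1}, \eqref{les:2} attached to the resulting conflation, and finally read off the cotilting conclusion. First I would arrange that $f$ is a deflation. Since $\Csc$ is $0$-Auslander its codominant dimension is at least one (condition (ii) of \cref{Thm:Auslander:equiv}), so the injective object $I$ admits a deflation $g\colon Q\defl I$ with $Q$ projective-injective. Then $U'\oplus Q\in\add(U,\Pcal\cap\Ical)$, and $[f\ g]\colon U'\oplus Q\to I$ is still a right $\add(U,\Pcal\cap\Ical)$-approximation, since any morphism $V\to I$ with $V\in\add(U,\Pcal\cap\Ical)$ factors through $f$ and hence through $[f\ g]$ via the summand $U'$. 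Moreover $[f\ g]$ is a deflation, being the composite $[f\ g]=[f\ \,\id_I]\circ(\id_{U'}\oplus g)$ of the deflation $\id_{U'}\oplus g\colon U'\oplus Q\to U'\oplus I$ with the split epimorphism $[f\ \,\id_I]\colon U'\oplus I\to I$. Replacing $U'$ by $U'\oplus Q$ and $f$ by $[f\ g]$, we may assume $f\colon U'\defl I$ is a deflation, and we fix a conflation $P_U\overset{i}\infl U'\overset{f}\defl I\dashrightarrow$.

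Next, to see that $R_U=P_U\oplus U$ is rigid, recall that a $0$-Auslander category is hereditary, so $\Ebb^2\equiv 0$ and the long exact sequences attached to this conflation terminate at the $\Ebb$-level. Since $U'\in\add(U,\Pcal\cap\Ical)$, $U$ is rigid, and projective-injective summands contribute nothing, we have $\Ebb(U',U)=0=\Ebb(U,U')$. Applying $\Csc(-,U)$ gives a surjection $\Ebb(U',U)\to\Ebb(P_U,U)$, so $\Ebb(P_U,U)=0$. Applying $\Csc(U,-)$ gives an exact sequence $\Csc(U,U')\xrightarrow{f_{\ast}}\Csc(U,I)\to\Ebb(U,P_U)\to\Ebb(U,U')=0$, and $f_{\ast}$ (post-composition with $f$) is surjective precisely because $f$ is a right $\add(U,\Pcal\cap\Ical)$-approximation and $U\in\add(U,\Pcal\cap\Ical)$; hence $\Ebb(U,P_U)=0$. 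It follows that $\Ebb(U',P_U)=0$ as well, and then applying $\Csc(-,P_U)$ gives a surjection $\Ebb(U',P_U)\to\Ebb(P_U,P_U)$, so $\Ebb(P_U,P_U)=0$. Together with $\Ebb(U,U)=0$ this yields $\Ebb(R_U,R_U)=0$.

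Finally, suppose $\Ical=\add I$, so that $I$ is an injective cogenerator. The subcategory $\add(R_U,\Pcal\cap\Ical)$ is rigid: objects of $\Pcal\cap\Ical$ are projective-injective and $R_U$ is rigid by the previous paragraph. Since $U'\in\add(U,\Pcal\cap\Ical)\subseteq\add(R_U,\Pcal\cap\Ical)$ and $P_U$ is a direct summand of $R_U$, the conflation $P_U\infl U'\defl I$ is the required cotilting conflation for the cogenerator $I$. For an arbitrary injective $J$, writing $J$ as a summand of $I^{\oplus m}$ and pulling the $m$-fold conflation $P_U^{\oplus m}\infl (U')^{\oplus m}\defl I^{\oplus m}$ back along the split monomorphism $J\infl I^{\oplus m}$ produces a conflation $P_U^{\oplus m}\infl E\defl J$ whose middle term $E$ is a direct summand of $(U')^{\oplus m}\oplus P_U^{\oplus m}$ (the complement splits off because $\Ebb(U',P_U)=0$), hence lies in $\add(R_U,\Pcal\cap\Ical)$; so $\add(R_U,\Pcal\cap\Ical)$ is cotilting in the sense of \cref{thm: equivalent versions of tilting}.

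I expect the only genuinely delicate point to be the first step: one must enlarge $U'$ by a \emph{projective-injective} summand rather than a merely projective one, in order to remain inside $\add(U,\Pcal\cap\Ical)$, and this is exactly where the (co)dominant-dimension part of the $0$-Auslander hypothesis is used. Everything after that is bookkeeping with the two long exact sequences, the crucial observation being that surjectivity of $f_{\ast}$ on $\Csc(U,-)$ — which is precisely the meaning of the approximation property — is what forces $\Ebb(U,P_U)$ to vanish.
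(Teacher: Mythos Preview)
Your proof is correct and follows essentially the same plan as the paper's: make $f$ a deflation, then read off the rigidity of $R_U$ from the long exact sequences attached to $P_U\infl U'\defl I$, then conclude cotilting. The rigidity computations are identical to the paper's.

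There is one genuine methodological difference worth noting. To make $f$ a deflation you enlarge $U'$ by a projective-injective $Q$ admitting $Q\defl I$ (using codominant dimension $\geq 1$). The paper instead first passes to the reduced $0$-Auslander quotient $\Csc/[\Pcal\cap\Ical]$; there the only projective-injective is $0$, so $0\defl I$ is a deflation and hence \emph{every} morphism into $I$ is already a deflation (via $X\cong X\oplus 0\to X\oplus I\xrightarrow{[f\ \id]}I$). Your approach has the virtue of staying in $\Csc$ and making the role of the projective-injective completely explicit; the paper's reduction is marginally slicker but hides where the dominant-dimension hypothesis enters.

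Your cotilting paragraph is more detailed than the paper's (which simply says ``thanks to the conflation $P_U\infl U'\defl I$''), and the pullback-and-split argument you give is correct. One small inaccuracy: the Mayer--Vietoris-type conflation you implicitly use gives $E\oplus E'\oplus P_U^{\oplus m}\cong (U')^{\oplus m}\oplus P_U^{\oplus 2m}$ rather than literally $E\oplus E'\cong (U')^{\oplus m}\oplus P_U^{\oplus m}$, but this still places $E$ in $\add(R_U,\Pcal\cap\Ical)$, so the conclusion stands.
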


\begin{proof}
The statement follows from the case  when $\Csc$ is reduced 0-Auslander, which we assume now on.
First note that, since $0\defl I$ is a deflation, any morphism with codomain $I$ is also a deflation.
 Rigidity of $R_U$ follows from the long exact sequences associated with the conflation $P_U\infl U'\defl I$.
 More precisely, in the exact sequence
 \[
  \Csc(U,U') \to \Csc(U,I) \to \Ebb(U,P_U) \to \Ebb(U,U'),
 \]
the first morphism is surjective by construction, and because $U$ is rigid, the last term is zero.
This shows that $\Ebb(U,P_U)=0$.
Next, consider the exact sequence
\[
\Ebb(U',U)\to\Ebb(P_U,U) \to \Ebb^2(I,U).
\]
The first term is zero since $U$ is rigid.
Moreover, we have $\Ebb^2(I,U)=0$ by heredity.
We thus have $\Ebb(P_U,U)=0$.
To see that $P_U$ is rigid, notice that both end terms of the exact sequence
\[
\Ebb(U',P_U) \to \Ebb(P_U,P_U) \to \Ebb^2(I,P_U)
\]
vanish.
We have proved that $R_U$ is rigid.
If $\Ical=\add I$ then $\add (R_U,\Pcal\cap\Ical)$ is cotilting thanks to the conflation $P_U\infl U'\defl I$.
% Indeed,~\cite[???]{NakaokaPalu} gives a commutative diagram of conflations
% \[
% \xy
% %
% (-7,21)*+{T}="-12";
% (7,21)*+{T}="-14";
% %
% (-21,7)*+{X_U}="0";
% (-7,7)*+{X_U}="2";
% (7,7)*+{0}="4";
% %
% (-21,-7)*+{X_U}="10";
% (-7,-7)*+{U'}="12";
% (7,-7)*+{\susp T}="14";
% %
% {\ar@{=} "-12";"-14"};
% %
% {\ar "-12";"2"};
% {\ar "-14";"4"};
% %
% {\ar "0";"2"};
% {\ar "2";"4"};
% %
% {\ar@{=} "0";"10"};
% {\ar "2";"12"};
% {\ar "4";"14"};
% %
% {\ar "10";"12"};
% {\ar "12";"14"};
% %
% {\ar@{}|\circlearrowright "-12";"4"};
% {\ar@{}|\circlearrowright "0";"12"};
% {\ar@{}|\circlearrowright "2";"14"};
% \endxy
% \]
% showing that $T_U$ is tilting (see condition~(\ref{def: tilting})), hence maximal rigid.
\end{proof}

\begin{remark}
 Since $\Csc$ is hereditary, we also have $P_U^{\perp_\E} = (P_U\oplus U)^{\perp_\E}$.
\end{remark}

\begin{corollary}
\label{corollary: Bongartz for subcategories}
Let $\uc\subseteq\Csc$ be a rigid full subcategory stable under taking summands and containing all projective-injectives.
Assume that any injective object $I\in\Csc$ has a right $\uc$-approximation $f^I$ (which can be assumed a deflation) and write  $P^I_\uc$ for the fiber (or cocone) of $f^I$. 
Then $\add(\uc\bigoplus_{I\in\Inj(\Csc)}P^I_\uc)$ is a cotilting subcategory of $\Csc$.
\end{corollary}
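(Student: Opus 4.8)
The plan is to check that the full additive subcategory
\[
\Wcal:=\add\big(\uc\cup\{P^I_\uc\mid I\in\Inj(\Csc)\}\big)
\]
satisfies the two requirements contained in the notion of \emph{cotilting} from \cref{thm: equivalent versions of tilting}: it must be a rigid, strictly full, additive subcategory closed under summands, and it must satisfy condition~(iv), namely that every injective $I$ fits into a conflation $R_1\infl R_0\defl I$ with $R_0,R_1\in\Wcal$. Being defined as an $\add$, $\Wcal$ is automatically strictly full, additive and closed under summands, and it contains all projective-injectives since $\uc$ does. Condition~(iv) is then immediate: as $f^I$ can be taken to be a deflation — after possibly adjoining a projective-injective summand to its source, which stays inside $\uc$ because $\uc$ is closed under summands and contains all projective-injectives — one obtains a conflation $P^I_\uc\infl U'_I\defl I$ with $U'_I\in\uc\se\Wcal$ and $P^I_\uc\in\Wcal$. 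So all the content lies in showing that $\Wcal$ is rigid.

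Since $\Ebb$ is additive in each variable and $\Wcal=\add(\uc\cup\{P^I_\uc\})$, rigidity of $\Wcal$ reduces to the four vanishing statements, for $U\in\uc$ and $I,J\in\Inj(\Csc)$: (a)~$\Ebb(U,U')=0$ for all $U'\in\uc$; (b)~$\Ebb(U,P^I_\uc)=0$; (c)~$\Ebb(P^I_\uc,U)=0$; and (d)~$\Ebb(P^I_\uc,P^J_\uc)=0$. Statement~(a) is precisely the hypothesis that $\uc$ is rigid, and the remaining three are obtained exactly as in the proof of \cref{prop:Bongartz}, with the subcategory $\uc$ playing the role of $\add U$. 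For~(b), apply $\Csc(U,-)$ to the conflation $P^I_\uc\infl U'_I\defl I$ to obtain the exact sequence $\Csc(U,U'_I)\to\Csc(U,I)\to\Ebb(U,P^I_\uc)\to\Ebb(U,U'_I)$: the first map is surjective because $f^I$ is a right $\uc$-approximation and $U\in\uc$, while the last term vanishes by~(a), whence $\Ebb(U,P^I_\uc)=0$.

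For~(c), apply $\Csc(-,U)$ to the same conflation to get $\Ebb(U'_I,U)\to\Ebb(P^I_\uc,U)\to\Ebb^2(I,U)$: the first term is $0$ by~(a), and the last is $0$ because $\Csc$, being 0-Auslander, is hereditary, so that $\Ebb^2(I,-)=0$ by \cref{lemma: E2=0}; hence $\Ebb(P^I_\uc,U)=0$. For the cross term~(d) — the one case that a single invocation of \cref{prop:Bongartz} does not settle, as that proposition only controls $\uc\oplus P^I_\uc$ for one injective at a time — apply $\Csc(-,P^J_\uc)$ to $P^I_\uc\infl U'_I\defl I$ to obtain $\Ebb(U'_I,P^J_\uc)\to\Ebb(P^I_\uc,P^J_\uc)\to\Ebb^2(I,P^J_\uc)$: the first term vanishes by~(b) applied with $U=U'_I$, and the last again by heredity, so $\Ebb(P^I_\uc,P^J_\uc)=0$. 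With (a)--(d) established, $\Wcal$ is rigid and therefore cotilting. The only mildly delicate point is realising that~(d) must be deduced from~(b) via the vanishing of $\Ebb^2$, rather than from any new approximation property; everything else is a routine chase through the long exact sequences attached to an $\sfr$-triangle.
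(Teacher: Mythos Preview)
Your proof is correct and follows the same approach as the paper, which simply writes ``This follows immediately from \cref{prop:Bongartz}.'' You have unpacked that sentence carefully, and in particular you are right to single out the cross-term~(d): \cref{prop:Bongartz} as stated only asserts rigidity of $\uc\oplus P^I_\uc$ for one injective $I$ at a time, so the vanishing of $\Ebb(P^I_\uc,P^J_\uc)$ for distinct $I,J$ does require the extra line you give, combining~(b) with heredity via the long exact sequence of $P^I_\uc\infl U'_I\defl I$.
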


\begin{proof}
This follows immediately from \cref{prop:Bongartz}.
\end{proof}

\begin{lemma}
\label{lemma:Bongartz projective}
The object $P^I_\uc$ in the corollary above is projective in $\uc^{\perp_\Ebb}$, hence also in the reduction of $\Csc$ by $\uc$ (as defined in \cref{Rem: extriangulated reduction}).
\end{lemma}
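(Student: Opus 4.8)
The plan is to prove the vanishing $\Ebb(P^I_\uc,X)=0$ for every $X\in\uc^{\perp_\Ebb}$, together with $P^I_\uc\in\uc^{\perp_\Ebb}$; the first statement is exactly the projectivity of $P^I_\uc$ inside the extension-closed (hence extriangulated) subcategory $\uc^{\perp_\Ebb}$, and it will also yield, for free, that $P^I_\uc$ lies in the additive subcategory underlying the reduction. A short transfer argument then gives the final ``hence'' clause.

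First I would record the defining conflation $P^I_\uc\infl U^I\overset{f^I}{\defl}I\dashrightarrow$, with $U^I\in\uc$ (recall $\uc$ contains all projective-injectives, so the domain of a right $\uc$-approximation of the injective $I$ may be taken in $\uc$). That $P^I_\uc\in\uc^{\perp_\Ebb}$ is already contained in \cref{prop:Bongartz}: applying the long exact sequence (\ref{les:1}) to this conflation, the map $\Csc(\uc,U^I)\to\Csc(\uc,I)$ is surjective because $f^I$ is a right $\uc$-approximation, so the ensuing connecting map into $\Ebb(\uc,P^I_\uc)$ is zero, while $\Ebb(\uc,U^I)=0$ since $\uc$ is rigid; hence $\Ebb(\uc,P^I_\uc)=0$.

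Next comes the key step. Let $X\in\uc^{\perp_\Ebb}$ be arbitrary. Applying the contravariant long exact sequence (\ref{les:2}) to the conflation $P^I_\uc\infl U^I\defl I\dashrightarrow$ produces the exact segment $\Ebb(U^I,X)\to\Ebb(P^I_\uc,X)\to\Ebb^2(I,X)$. The left-hand term vanishes since $U^I\in\uc$ and $X\in\uc^{\perp_\Ebb}$, and the right-hand term vanishes since $\Csc$ is hereditary (so $\Ebb^2\equiv 0$, \cref{hered_equiv}; alternatively by \cref{lemma: E2=0} applied to $I$). Hence $\Ebb(P^I_\uc,X)=0$. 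Because $\uc^{\perp_\Ebb}$ is extension-closed in $\Csc$, its induced bifunctor is the restriction of $\Ebb$ and its conflations are precisely the conflations of $\Csc$ with all three terms in $\uc^{\perp_\Ebb}$; therefore the vanishing of $\Ebb(P^I_\uc,-)$ on $\uc^{\perp_\Ebb}$ says exactly that $P^I_\uc$ is projective in $\uc^{\perp_\Ebb}$. Specialising to $X\in\uc\subseteq\uc^{\perp_\Ebb}$ also yields $\Ebb(P^I_\uc,\uc)=0$, so $P^I_\uc$ lies in ${}^{\perp_\Ebb}\uc\cap\uc^{\perp_\Ebb}$, the additive subcategory underlying the reduction of $\Csc$ by $\uc$ (\cref{Rem: extriangulated reduction})---this last fact is of course also immediate from \cref{corollary: Bongartz for subcategories}.

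For the final clause I would restrict and then descend. Since ${}^{\perp_\Ebb}\uc\cap\uc^{\perp_\Ebb}$ is an extension-closed full additive subcategory of $\uc^{\perp_\Ebb}$ containing $P^I_\uc$, projectivity of $P^I_\uc$ persists there. To pass to the quotient by $[\uc]$: every conflation of the quotient lifts to a conflation upstairs; given a morphism out of the image of $P^I_\uc$, lift it upstairs, lift it along the corresponding deflation using projectivity of $P^I_\uc$ there, and push the result back down---this exhibits the image of $P^I_\uc$ as projective in the reduction. I do not expect a genuine obstacle anywhere; the only mild care needed is the routine bookkeeping of the various induced extriangulated structures, all of which is covered by the cited results.
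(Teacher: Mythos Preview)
Your proof is correct but takes a different route from the paper. The paper applies {\rm (ET4)$^{\mathrm{op}}$} to the approximation conflation $P^I_\uc\infl U^I\defl I$ together with the codominant-dimension conflation $P\infl Q\defl I$ (with $Q$ projective-injective, $P$ projective in $\Csc$), and uses projectivity of $Q$ to obtain a conflation $P\infl P^I_\uc\oplus Q\defl U^I$; the vanishing of $\Ebb(P^I_\uc,-)$ on $\uc^{\perp_\Ebb}$ then follows from the exact segment $\Ebb(U^I,-)\to\Ebb(P^I_\uc\oplus Q,-)\to\Ebb(P,-)$, whose outer terms vanish on $\uc^{\perp_\Ebb}$ since $U^I\in\uc$ and $P$ is projective. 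You instead stay with the approximation conflation alone and use heredity via $\Ebb^2(I,-)=0$. Both are valid in the 0-Auslander setting; your argument invokes the hereditary half (H) of the hypotheses, the paper's invokes the (co)dominant-dimension half (I), and the paper's version has the side benefit of exhibiting $P^I_\uc$ (up to a projective-injective summand) as an extension of an object of $\uc$ by a projective of $\Csc$. For the ``hence'' clause you can shortcut the lifting argument: by \cref{Prop: reductions of hereditary extricats} the bifunctor $\ovl{\Ebb}$ on the reduction agrees with $\Ebb$ on objects, so the vanishing $\Ebb(P^I_\uc,-)=0$ on $\Rint$ immediately gives projectivity there.
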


\begin{proof}
Applying (ET4$^\text{op}$) to the conflations $P_\uc^I \infl U^I \overset{f^I}{\defl} I$ and $P\infl Q\defl I$, and using projectivity of $Q$ gives a conflation
$P\infl P^I_\uc\oplus Q\defl U^I$.
We thus have an exact sequence $\Ebb(U^I,-)\to\Ebb(P^I_\uc\oplus Q,-)\to\Ebb(P,-)$ from which the claim follows.
\end{proof}

% {\bf Application :} Explicit description of Bongartz $\tau$-completions for gentle algebras in terms of accordions on surfaces (Note: probably already known to Demonet--Chan).
% This makes use of the above and of the extriangulated categories arising from gentle algebras, constructed in the next section. 

\subsection{Indices and coindices}
\label{ssection:indices}

Assume that $\Csc$ is a 0-Auslander extriangulated category, with full subcategory of projective objects $\Pcal$.
Fix a tilting subcategory $\rc$ of $\Csc$.
% Assume moreover that $\Pcal$ is Krull--Schmidt.

In this section, we remark that it is possible to define the coindex of $P\in\Pcal$ with respect to any tilting subcategory $\rc$ of $\Csc$.
This allows to notice that some results of~\cite{JorgensenYakimov} hold in our context.
\cref{prop:JY} will turn out to be useful when considering complete rigid objects in \cref{subsection:complete rigid}. We also refer to the recent~\cite{JorgensenShah-Index}, where similar ideas, partly inspired from~\cite{PadrolPaluPilaudPlamondon}, are considered in relation to modified Caldero--Chapoton maps~\cite{HolmJorgensen-modifiedCC,HolmJorgensen-modifiedCC2,JorgensenShah-modifiedCC}.

Recall that, if $X\in\Csc$, then its index is the (well-defined) element
\[
\ind_\Pcal X =  [P_0]-[P_1]\in\kzerosp{\Pcal},
\]
where $P_1\infl P_0\defl X$ is a conflation with $P_0,P_1\in\Pcal$.
Note that our assumptions imply $\kzero{\Csc}\cong\kzerosp{\Pcal}$.

Since $\rc$ is tilting, any $P\in\Pcal$ admits a conflation $P\infl R^0\defl R^1$ with $R^0,R^1\in\rc$. Define
\[
 \coind_\rc P = [R^0]-[R^1]\in\kzerosp{\rc}.
\]
One easily checks that $\coind_\rc P$ is well-defined (see \cite[Lemma 4.36]{PadrolPaluPilaudPlamondon}).

Our next proposition is inspired from~\cite[Theorem 1.2]{JorgensenYakimov} and~\cite[Corollary 6.20]{DemonetIyamaJasso} (for 2-Calabi--Yau triangulated categories).
\begin{proposition}\label{prop:JY} Let $\Csc$ be a 0-Auslander extriangulated category, with full subcategory of projective objects $\Pcal$.
Assume that $\rc$ is tilting.
Then the morphisms
\[
 \coind_\rc : \kzerosp{\Pcal} \rightleftarrows \kzerosp{\rc} : \ind_\Pcal
\]
are inverse isomorphisms.
\end{proposition}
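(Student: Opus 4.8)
### Proof plan for Proposition~\ref{prop:JY}

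The plan is to show that the two maps $\coind_\rc$ and $\ind_\Pcal$ are mutually inverse group homomorphisms between $\kzerosp{\Pcal}$ and $\kzerosp{\rc}$. First I would check that each map is a well-defined homomorphism of split Grothendieck groups: this is essentially already recorded (well-definedness of $\ind_\Pcal$ uses that every object has a projective resolution of length~$1$ together with the hereditary Horseshoe-type argument, and well-definedness of $\coind_\rc$ follows from the tilting hypothesis via~\cite[Lemma 4.36]{PadrolPaluPilaudPlamondon} and~\cref{thm: equivalent versions of tilting}). Additivity on split triangles is immediate from the definitions once one knows that direct sums of the defining conflations realize the defining conflation of a direct sum. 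So the content is purely that the two composites are the identity.

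\medskip

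\textbf{Step 1: $\ind_\Pcal \circ \coind_\rc = \id_{\kzerosp{\Pcal}}$.} It suffices to evaluate on the classes $[P]$ for $P\in\Pcal$ indecomposable, since these span $\kzerosp{\Pcal}$. Given $P\in\Pcal$, pick a conflation $P\infl R^0\defl R^1$ with $R^0,R^1\in\rc$ (possible since $\rc$ is tilting). Then $\coind_\rc[P] = [R^0]-[R^1]\in\kzerosp{\rc}$, so $\ind_\Pcal\coind_\rc[P] = \ind_\Pcal R^0 - \ind_\Pcal R^1$. Now I would invoke the fact that $\ind_\Pcal$ is additive on conflations in $\Csc$ (not just split ones): indeed, for any conflation $A\infl B\defl C$ one has $\ind_\Pcal B = \ind_\Pcal A + \ind_\Pcal C$, which follows from heredity by the usual $3\times 3$/horseshoe construction building a projective resolution of $B$ from those of $A$ and $C$ (this is implicit in the proof that $\kzero\Csc\cong\kzerosp\Pcal$). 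Applying this to $P\infl R^0\defl R^1$ gives $\ind_\Pcal R^0 = \ind_\Pcal P + \ind_\Pcal R^1$, and since $P$ is projective $\ind_\Pcal P = [P]$. Hence $\ind_\Pcal\coind_\rc[P] = [P]$.

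\medskip

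\textbf{Step 2: $\coind_\rc \circ \ind_\Pcal = \id_{\kzerosp{\rc}}$.} By the same token it suffices to evaluate on $[R]$ for $R\in\rc$. Pick a conflation $P_1\infl P_0\defl R$ with $P_0,P_1\in\Pcal$ (possible since $\Csc$ is hereditary), so $\ind_\Pcal[R] = [P_0]-[P_1]$ and $\coind_\rc\ind_\Pcal[R] = \coind_\rc P_0 - \coind_\rc P_1$. I would now argue that $\coind_\rc$ is additive on conflations all of whose terms lie in $\pr\rc = \Cone(\rc,\rc)$ (which contains $\Pcal$ because $\rc$ is tilting, and contains $\rc$ trivially): given a conflation $A\infl B\defl C$ with a chosen $\rc$-presentation of each of $A$ and $C$, heredity lets one splice them into an $\rc$-presentation of $B$ (same horseshoe argument, now using that $\rc$ is rigid hence the relevant $\Ebb$-groups between its objects vanish, exactly as in the diagram chases already carried out in the proof of~\cref{thm: equivalent versions of tilting}), whence $\coind_\rc B = \coind_\rc A + \coind_\rc C$. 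Applying this to $P_1\infl P_0\defl R$ gives $\coind_\rc P_0 = \coind_\rc P_1 + \coind_\rc R$, and since $R\in\rc$ the conflation $R\overset{\id}{\infl}R\defl 0$ shows $\coind_\rc R = [R]$. Therefore $\coind_\rc\ind_\Pcal[R] = [R]$.

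\medskip

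\textbf{Expected main obstacle.} The only genuine work is establishing the two additivity statements for $\ind_\Pcal$ and $\coind_\rc$ on (non-split) conflations with terms in the appropriate subcategory; everything else is bookkeeping. The additivity of $\ind_\Pcal$ is standard and essentially forced by $\kzero\Csc\cong\kzerosp\Pcal$, so the delicate point is additivity of $\coind_\rc$: one must produce, from conflations $A\infl B\defl C$ and $\rc$-presentations of $A$ and $C$, a genuine conflation $R^0_A\oplus R^0_C \text{-type object} \infl \cdots \defl \cdots$ presenting $B$, and this requires a horseshoe-style argument in the extriangulated setting. Here heredity (to know $\Ebb^2$ vanishes and that cones of deflations between $\rc^{\perp_\Ebb}$-objects stay in $\rc^{\perp_\Ebb}$) and rigidity of $\rc$ (to split off unwanted summands) are exactly what make the construction go through, and the relevant diagram chases are of the same flavour as those already performed in~\cref{thm: equivalent versions of tilting}. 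An alternative, perhaps cleaner, route would be to observe that both $\kzerosp\Pcal$ and $\kzerosp\rc$ are naturally identified with $\kzero\Csc$ (via $\ind_\Pcal$ and via the analogous ``$\rc$-index'', respectively) and that $\coind_\rc$ and $\ind_\Pcal$ are, under these identifications, each other's inverses because both compute the class of an object in $\kzero\Csc$; I would present whichever of the two formulations keeps the diagram chases shortest.
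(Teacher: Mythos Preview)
Your proposal is correct and Step~1 is identical to the paper's argument (which cites \cite[Lemma 4.38]{PadrolPaluPilaudPlamondon} for the additivity of $\ind_\Pcal$). One small slip: the subcategory on which $\coind_\rc$ is defined is $\copr\rc=\Fib(\rc,\rc)$, not $\pr\rc$; this does not affect your argument since $\Pcal\cup\rc\subseteq\copr\rc$.

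For Step~2 you take a slightly different route than the paper. You propose a general dual-horseshoe argument showing that $\coind_\rc$ is additive on conflations whose third term lies in $\rc$ (so that rigidity provides the vanishing $\Ebb(C,R^0_A)=0$ needed to extend $A\infl R^0_A$ along $A\infl B$). The paper instead avoids formulating any general additivity and carries out a single explicit diagram chase: from $P_1\infl P_0\defl R$ and a chosen copresentation $P_0\infl U^0\defl U^1$, one application of {\rm (ET4)} (as in \cref{lemma:shiftedOctahedron}) produces a commutative square whose third column $R\infl ?\defl U^1$ splits by rigidity, yielding directly $\coind_\rc[P_1]=[U^0]-[R\oplus U^1]$ and hence $\coind_\rc[P_0]-\coind_\rc[P_1]=[R]$. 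Your approach is conceptually cleaner and yields a reusable lemma; the paper's is shorter because it never leaves the single concrete conflation at hand.
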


\begin{proof}
For any $P\in\Pcal$, fix a conflation $P\infl R^0\defl R^1$ with $R^0,R^1\in\rc$. By~\cite[Lemma 4.38]{PadrolPaluPilaudPlamondon} we have $\ind_\Pcal [R^0] = \ind_\Pcal [P] + \ind_\Pcal [R^1]$. Hence
\begin{eqnarray*}
[P] & = & \ind_\Pcal [P] \\
& = & \ind_\Pcal [R^0] - \ind_\Pcal [R^1] \\
& = & \ind_\Pcal \left([R^0] - [R^1]\right) \\
& = & \ind_\Pcal\coind_\rc [P].
\end{eqnarray*}
Now, for any $R\in\rc$, there is a conflation $P_1\infl P_0\defl R$ with $P_0,P_1\in\Pcal$.
Consider a conflation $P_0\infl U^0\defl U^1$ with $U^0,U^1\in\rc$.
Then there is a commutative diagram of conflations:
\[
\xy
(-9,7)*++{P_1}="2";
(6,7)*++{P_0}="4";
(25,7)*++{R}="6";
(-9,-7)*++{P_1}="12";
(6,-7)*++{U^0}="14";
(25,-7)*++{R\oplus U^1}="16";
(6,-21)*++{U^1}="24";
(25,-21)*++{U^1}="26";
{\ar@{>->} "2";"4"};
{\ar@{->>} "4";"6"};
{\ar@{=} "2";"12"};
{\ar@{>->} "4";"14"};
{\ar@{>->} "6";"16"};
{\ar@{>->} "12";"14"};
{\ar@{->>} "14";"16"};
{\ar@{->>} "14";"24"};
{\ar@{->>} "16";"26"};
{\ar@{=} "24";"26"};
{\ar@{}|\circlearrowright "2";"14"};
{\ar@{}|\circlearrowright "4";"16"};
{\ar@{}|\circlearrowright "14";"26"};
\endxy
\]
where the third column splits because $\rc$ is rigid.
This implies the following equalities:
\begin{eqnarray*}
 \coind_\rc [P_0] & = & [U^0]-[U^1] \\
 \coind_\rc [P_1] & = & [U^0]-[U^1] - [R] \hspace{1cm}\\
 \text{hence } \hspace{1.5cm} [R] & = &  \coind_\rc [P_0] - \coind_\rc [P_1] \\
  & = & \coind_\rc\ind_\Pcal [R].
\end{eqnarray*}
\end{proof}

\begin{definition}
Assume that $\Csc$ is Krull--Schmidt and let $R\in\rc$ be indecomposable. Following~\cite{JorgensenYakimov}, the (categorical or homological) c-vector of $(R,\rc)$ with respect to $\Pcal$ is the element
 \[
  c_\Pcal(R,\rc) = [R]^\ast \circ \coind_\rc \text{ of } \kzerosp{\Pcal}^\ast,
 \]
where $(-)^\ast = \Hom_\zb(-,\zb)$, and $[R]^\ast\in\kzerosp{\rc}^\ast$ is defined by $\langle [R]^\ast, [S]\rangle = \del_{[R],[S]}$, for any $S\in\Pcal$ indecomposable.
\end{definition}

\begin{corollary}
 For any two indecomposable objects $R,S\in\rc$, we have
 \[
  \langle c_\Pcal(R,\rc) , \ind_\Pcal S \rangle = \del_{[R],[S]}.
 \]
\end{corollary}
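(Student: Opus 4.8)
The plan is to deduce the corollary directly from the definitions and from \cref{prop:JY}. First I would unwind the pairing on the left-hand side. By definition, $c_\Pcal(R,\rc)$ is the composite $[R]^\ast\circ\coind_\rc\co\kzerosp{\Pcal}\to\zb$, and $\ind_\Pcal S$ is a well-defined element of $\kzerosp{\Pcal}$ (it comes from a conflation $P_1\infl P_0\defl S$ with $P_0,P_1\in\Pcal$, which exists since $\Csc$ is hereditary with enough projectives, and is independent of the chosen conflation). Hence
\[
\langle c_\Pcal(R,\rc),\ind_\Pcal S\rangle = \langle [R]^\ast,\coind_\rc(\ind_\Pcal S)\rangle.
\]

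Next I would invoke \cref{prop:JY}: since $\rc$ is tilting, $\coind_\rc$ and $\ind_\Pcal$ are mutually inverse isomorphisms between $\kzerosp{\Pcal}$ and $\kzerosp{\rc}$. The relevant half here is that $\coind_\rc\circ\ind_\Pcal=\id$ on $\kzerosp{\rc}$ — this is exactly the second displayed computation in the proof of \cref{prop:JY}, applied to $S\in\rc$ — so $\coind_\rc(\ind_\Pcal S)=[S]$ in $\kzerosp{\rc}$. Finally, by the very definition of the dual element $[R]^\ast$, one has $\langle [R]^\ast,[S]\rangle=\del_{[R],[S]}$, which gives the claimed identity.

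I do not expect any genuine obstacle: all the substance is contained in \cref{prop:JY}, and the corollary is the formal observation that pairing the categorical $c$-vector of an indecomposable summand of $\rc$ against the index of another such summand recovers a Kronecker delta. The only subtlety worth a word of care is that $S$ lies in $\rc$ rather than in $\Pcal$, so one must use the composite $\coind_\rc\circ\ind_\Pcal=\id_{\kzerosp{\rc}}$ (not $\ind_\Pcal\circ\coind_\rc$); but this is precisely one of the two statements of \cref{prop:JY}, so nothing further is needed.
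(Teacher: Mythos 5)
Your proof is correct and is precisely the intended argument: the paper states this as an immediate corollary (with no written proof), and the computation $\langle c_\Pcal(R,\rc),\ind_\Pcal S\rangle=\langle[R]^\ast,\coind_\rc(\ind_\Pcal S)\rangle=\langle[R]^\ast,[S]\rangle=\del_{[R],[S]}$, using the half $\coind_\rc\circ\ind_\Pcal=\id_{\kzerosp{\rc}}$ of \cref{prop:JY}, is exactly what is meant. Your remark about which composite of the two inverse isomorphisms is needed is the right point of care, and nothing further is required.
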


\subsection{Complete rigid objects}
\label{subsection:complete rigid}

Assume that $\Csc$ is a 0-Auslander extriangulated category, with full subcategory of injective objects $\Ical$.
Assume that all rigid subcategories ($\Ical, \Rcal, \add R, \ldots$) of $\Csc$ apprearing below are Krull--Schmidt. %\Misha{Unique decomposition?}
Also assume, in this section, that $\Ical=\add I$, where $I=I_1\oplus\cdots\oplus I_n$ is basic and each $I_j$ indecomposable. 
% In this section, we make use of indices and of Bongartz completions in order to prove that a basic rigid object $R=R_1\oplus\cdots\oplus R_m$ is maximal rigid if and only if $m=n$.
An immediate consequence of \cref{prop:JY} is:

\begin{corollary}
\label{corollary:m=n}
 Asumme that $R=R_1\oplus\cdots\oplus R_m$ is basic tilting. Then $m=n$.
\end{corollary}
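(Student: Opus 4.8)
The plan is to read off $m=n$ from the isomorphisms produced by \cref{prop:JY}. The key observation is that $\Ical$ is not merely the subcategory of injectives but is itself a \emph{tilting} subcategory of $\Csc$: it is rigid because $\Ebb(-,I)=0$ for every injective $I$, it is strictly full, additive and closed under direct summands, and — since $\Csc$ is $0$-Auslander and so has injective dimension at most one — every projective $P$ sits in a conflation $P\infl I^0\defl I^1$ with $I^0,I^1\in\Ical$, which is exactly condition (iv) of \cref{thm: equivalent versions of tilting} for $\rc=\Ical$. In particular $\coind_{\Ical}$ is well defined.

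Granting this, I would apply \cref{prop:JY} twice. Applied to the tilting subcategory $\add R$ it yields an isomorphism $\kzerosp{\Pcal}\xrightarrow{\ \sim\ }\kzerosp{\add R}$; applied to the tilting subcategory $\Ical=\add I$ it yields an isomorphism $\kzerosp{\Pcal}\xrightarrow{\ \sim\ }\kzerosp{\add I}$. Composing one with the inverse of the other gives $\kzerosp{\add R}\cong\kzerosp{\add I}$.

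To conclude, I would invoke the standing Krull--Schmidt hypothesis on $\add R$ and $\Ical=\add I$: for a Krull--Schmidt additive category the split Grothendieck group is free abelian on the set of isomorphism classes of indecomposable objects, so $\kzerosp{\add R}\cong\Zbb^m$ and $\kzerosp{\add I}\cong\Zbb^n$ because $R$ and $I$ are basic with $m$, respectively $n$, indecomposable summands. Since an isomorphism of free abelian groups preserves rank, $m=n$.

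I do not expect a real obstacle here; the single point worth double-checking is that $\Ical$ genuinely satisfies the tilting condition of \cref{thm: equivalent versions of tilting}, which is precisely where the defining property $\mathrm{id}(\Csc,\Ebb)\le 1$ of $0$-Auslander categories is used. An alternative route that sidesteps this remark is to note via \cref{lemma:equiv proj inj} that $\ovl\Pcal\simeq\und\Ical$, so that $\Pcal$ and $\Ical$ have the same (finite) number of indecomposables, and then to apply \cref{prop:JY} only to $\add R$; in that approach the only mildly delicate point is keeping track of the common projective-injective summands in the non-reduced case, which cancel on both sides.
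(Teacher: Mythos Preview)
Your proof is correct and follows essentially the same approach as the paper, which records this as an ``immediate consequence'' of \cref{prop:JY} without further comment. You have simply spelled out the step the paper leaves implicit: one must know that $\kzerosp{\Pcal}$ has rank~$n$, and your two routes (applying \cref{prop:JY} a second time with $\rc=\Ical$, or invoking \cref{lemma:equiv proj inj}) are both valid ways to see this.
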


In fact, \cref{prop:Bongartz} shows that some converse holds.

\begin{proposition}\label{Prop: complete tilting}
 Let $R=R_1\oplus\cdots\oplus R_m$ be basic rigid in $\Csc$. Assume that $I$ has a right $\add(R)$-approximation. Then $m\leq n$, with equality if and only if $R$ is tilting.
\end{proposition}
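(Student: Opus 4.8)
The plan is to use the Bongartz completion machinery of \cref{prop:Bongartz} together with the index/coindex isomorphisms of \cref{prop:JY}. First I would establish the inequality $m \le n$: starting from the basic rigid object $R = R_1 \oplus \cdots \oplus R_m$ and the right $\add(R)$-approximation $R' \xrightarrow{f} I$, I would apply \cref{prop:Bongartz} (with $U = R$ and, since $\Ical = \add I$, the hypothesis $\Ical = \add I$ of the proposition is satisfied) to obtain, after possibly adding a projective-injective summand to $R'$, a conflation $P_R \infl R' \defl I$ with $R_I := P_R \oplus R$ rigid and $\add(R_I)$ cotilting. Since we are in the reduced case for the core argument (the general case reduces to it as in the proof of \cref{thm: equivalent versions of tilting}), cotilting is equivalent to tilting by \cref{thm: equivalent versions of tilting}, so $\add(R_I)$ is tilting. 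By \cref{corollary:m=n}, a basic tilting object has exactly $n$ indecomposable summands, so the basic object underlying $R_I = P_R \oplus R$ has $n$ indecomposable summands; as $R$ is a direct summand of a basic object with $n$ summands, $R$ has at most $n$ indecomposable summands, i.e.\ $m \le n$.

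Next I would prove that equality $m = n$ forces $R$ to be tilting. If $m = n$, then in the Bongartz completion $R_I = P_R \oplus R$ (which is basic tilting with $n$ summands) the object $R$ already contributes $n$ non-isomorphic indecomposable summands, so $P_R$ must lie in $\add(R)$; since we are in the reduced case $P_R$ is also projective in $\uc^{\perp_\Ebb} = R^{\perp_\Ebb}$ by \cref{lemma:Bongartz projective}, and more to the point the basic object underlying $P_R \oplus R$ is just (the basic object of) $R$ itself. Hence $\add(R) = \add(R_I)$ is tilting. The converse implication, that $R$ tilting implies $m = n$, is exactly \cref{corollary:m=n} (equivalently \cref{corollary:m=n} applied directly).

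An alternative, perhaps cleaner, route for the equality statement avoids the combinatorial summand-counting inside $R_I$ and instead argues at the level of $K_0$: by \cref{prop:JY} the maps $\ind_\Pcal$ and $\coind_\rc$ are mutually inverse isomorphisms $\kzerosp{\Pcal} \cong \kzerosp{\rc}$ for any tilting $\rc$, and $\kzerosp{\Pcal}$ is free of rank $n$; since $\add(R_I)$ is tilting, $\kzerosp{\add R_I}$ is free of rank $n$, which gives that $R_I$ has $n$ indecomposable summands, and then one concludes as above. If $m = n$ then $R$ already accounts for all $n$ classes, forcing $P_R \in \add R$ and hence $\add R = \add R_I$ tilting.

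The main obstacle I anticipate is the bookkeeping around whether $P_R$ genuinely contributes \emph{new} indecomposable summands to $R_I$: \cref{prop:Bongartz} only guarantees $R_I = P_R \oplus R$ is rigid with $\add(R_I)$ cotilting, not that the multiset of indecomposables of $R_I$ is the disjoint union of those of $P_R$ and $R$. One must argue that since $R$ is basic, the basic object underlying $P_R \oplus R$ has \emph{at least} $m$ indecomposable summands (namely those of $R$) and \emph{exactly} $n$ (by \cref{corollary:m=n}), which immediately gives $m \le n$; and that $m = n$ forces no new summands, hence $\add(P_R \oplus R) = \add R$. This is elementary in a Krull--Schmidt setting but deserves to be spelled out carefully, as it is the one place where the ``basic'' hypothesis is genuinely used.
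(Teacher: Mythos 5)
Your proposal is correct and follows essentially the same route as the paper: form the Bongartz completion $R\oplus X_R$ via \cref{prop:Bongartz}, note it is tilting, apply \cref{corollary:m=n} to get $m\le n$, and observe that $m=n$ forces $X_R\in\add R$. The extra care you take with the cotilting-versus-tilting equivalence and the Krull--Schmidt summand bookkeeping is sound but is left implicit in the paper's (terser) proof.
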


\begin{proof}
 Let $X_R$ be the Bongartz completion of $R$, given by \cref{prop:Bongartz}.
 Then $R\oplus X_R$ is tilting, and \cref{corollary:m=n} shows that $m\leq n$.
 Assume that $m=n$. Then $R$ and $R\oplus X_R$ have the same number of isomorphism classes of indecomposable summands. Thus $X_R$ belongs to $\add R$, which implies that $R$ is tilting.
\end{proof}

\begin{corollary}
Let $\Rcal$ be a tilting subcategory such that $I$ has a right $\Rcal$-approximation.
Then there is a tilting object $R$ such that $\Rcal=\add R$.
\end{corollary}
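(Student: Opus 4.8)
The plan is to combine \cref{Prop: complete tilting} with \cref{corollary: Bongartz for subcategories} and a counting argument. First I would apply \cref{corollary: Bongartz for subcategories} to the rigid subcategory $\Rcal$ (which is tilting, hence in particular rigid, stable under summands, and contains all projective-injectives, so the hypotheses are met since by assumption every injective has a right $\Rcal$-approximation): this produces a cotilting subcategory $\Rcal' = \add\bigl(\Rcal \oplus \bigoplus_{I \in \Inj(\Csc)} P^I_\Rcal\bigr)$, where each $P^I_\Rcal$ is the fiber of the chosen $\Rcal$-approximation $U^I \defl I$. By \cref{thm: equivalent versions of tilting}, being cotilting is equivalent to being tilting, so $\Rcal'$ is tilting.

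Next I would observe that $\Rcal'$ is Krull--Schmidt by hypothesis, so $\Rcal' = \add R'$ for some basic object $R' = R'_1 \oplus \cdots \oplus R'_k$. Since $\Rcal \subseteq \Rcal'$, each $\Rcal$-approximation data embeds in $\Rcal'$; I claim in fact $\Rcal' = \Rcal$, which would immediately give $\Rcal = \add R'$ and finish the proof. To see this, note that $\Rcal$ is tilting, and $\Rcal \subseteq \Rcal'$ with both tilting. Now apply the counting statement: by \cref{corollary:m=n} any basic tilting object has exactly $n$ indecomposable summands (where $\Ical = \add I$, $I$ basic with $n$ indecomposable summands). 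Writing $\Rcal = \add R$ with $R$ basic — here I would need to first check $\Rcal = \add R$ for some object $R$, but this is not yet known; rather the cleanest route avoids assuming it.

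So instead I would argue directly with $\Rcal'$: it is tilting and Krull--Schmidt, hence $\Rcal' = \add R'$ with $R'$ basic having exactly $n$ summands by \cref{corollary:m=n}. It remains to show $\Rcal' \subseteq \Rcal$, i.e. each $P^I_\Rcal \in \Rcal$. Since $\Rcal$ is tilting, it is in particular cotilting, so each injective $I$ fits in a conflation $R_1 \infl R_0 \defl I$ with $R_0, R_1 \in \Rcal$; the deflation $R_0 \defl I$ is then a right $\Rcal$-approximation (using rigidity of $\Rcal$ exactly as in the proof of (\ref{def: maximal rigid}) $\Rightarrow$ (\ref{def: tilting}) in \cref{thm: equivalent versions of tilting}), and its fiber is $R_1 \in \Rcal$. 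Since right approximations are unique up to summands of projective-injectives, and $\Csc$ is reduced on the relevant quotient, we may take $P^I_\Rcal = R_1 \in \Rcal$ up to a projective-injective summand, which also lies in $\Rcal$. Hence $\Rcal' = \Rcal$, so $\Rcal = \add R'$ with $R'$ tilting.

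The main obstacle I anticipate is the bookkeeping around approximations being well-defined only up to projective-injective summands: one must be careful that the $P^I_\Rcal$ produced by \cref{corollary: Bongartz for subcategories} really can be chosen inside $\Rcal$ rather than merely in $\Csc$. This is handled by noting that $\Rcal$ is already cotilting (being tilting), so a genuine $\Rcal$-approximation with fiber in $\Rcal$ exists, and any two right $\Rcal$-approximations differ by a projective-injective direct summand, which belongs to $\Rcal$ anyway; so the Bongartz completion adds nothing new and $\Rcal$ itself is already of the form $\add R'$.
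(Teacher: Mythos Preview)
There is a genuine gap. Your argument hinges on the step ``$\Rcal'$ is tilting and Krull--Schmidt, hence $\Rcal' = \add R'$ for some basic object $R'$''. But Krull--Schmidt only guarantees essentially unique decompositions into indecomposables; it says nothing about there being only finitely many isoclasses of indecomposables. The implication you are invoking is precisely the statement of the corollary (applied to $\Rcal'$ in place of $\Rcal$), so the argument is circular. The remainder of your proof correctly shows $P^I_\Rcal \in \Rcal$ and hence $\Rcal' = \Rcal$, but this just recovers the tautology that $\Rcal$ is cotilting; it does not produce a finite additive generator.

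The paper avoids this by working from a finite object upward rather than from the subcategory downward. It takes $R$ to be a maximal basic summand of the approximation domain $R_0$, then enlarges $R$ inside $\Rcal$ to a basic rigid object with exactly $n$ summands (this is possible: if $\Rcal$ had fewer than $n$ isoclasses of indecomposables then $\Rcal$ would already equal $\add$ of their direct sum, which would be tilting with fewer than $n$ summands, contradicting \cref{corollary:m=n}). Since $R_0 \in \add R$, the given $\Rcal$-approximation $R_0 \to I$ is also an $\add(R)$-approximation, so \cref{Prop: complete tilting} applies and $R$ is tilting. Then $R$ is maximal rigid by \cref{thm: equivalent versions of tilting}, and since $\add R \subseteq \Rcal$ with $\Rcal$ rigid, we get $\Rcal = \add R$.

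Your approach can be repaired by applying \cref{prop:Bongartz} with the \emph{object} $U = R_0$ rather than \cref{corollary: Bongartz for subcategories} with the subcategory $\Ucal = \Rcal$: this yields that $\add(R_0 \oplus P^I, \Pcal \cap \Ical)$ is cotilting, and this is $\add$ of a single object by construction. Your observation that the fiber $P^I$ lies in $\Rcal$ then shows this finite $\add$ sits inside $\Rcal$, and maximal rigidity forces equality.
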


\begin{proof}
Let $R_0\to I$ be a right $\Rcal$-approximation, and let $R$ be a maximal basic summand of $R_0$.
Up to adding some indecomposables in $\Rcal$ to $R$, we may assume that $R$ is a basic rigid object with $n$ summands.
By \cref{Prop: complete tilting}, $R$ is tilting.
\cref{thm: equivalent versions of tilting} shows that $R$ is maximal rigid, hence that $\Rcal=\add R$.
\end{proof}

\subsection{$\Ebb$-tilting objects and reductions of hereditary extriangulated categories}
\label{ssection:tilting and reduction}

Our proof of \cref{Th:mutation} makes use of the reduction procedure for hereditary extriangulated categories, as introduced in \cref{ssection:reduction}.

Let $\Csc$ be an extriangulated category (which is not assumed 0-Auslander).
Let $\Rcal$ be a rigid subcategory of $\Csc$, and let $\CR=\frac{\Rint}{[\Rcal]}$ be the associated reduction.

\begin{lemma}
\label{lemma:E tilting implies Ebar tilting}
For any $X\in\Rint$, the following are equivalent.
\begin{itemize}
\item[{\rm (i)}] $\add(\Rcal, X)\se\C$ is $\E$-tilting.
\item[{\rm (ii)}] $\add X\se\CR$ is $\ovl{\E}$-tilting.
\end{itemize}
\end{lemma}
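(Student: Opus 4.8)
The plan is to deduce both implications from \cref{Prop: reductions of hereditary extricats}.(2) --- which for $X,X'\in\Rint$ identifies the vanishing of $\E(X,X')$, of $\ovl{\E}(X,X')$, and of $\E(\add(\Rcal\oplus X),\add(\Rcal\oplus X'))$ --- together with the elementary fact that the localisation functor $\pi\co\Rint\to\CR$ identifies ``$\add(\Rcal,X)$ in $\C$'' with ``$\add X$ in $\CR$''. Throughout I would use freely that $\Rint$ is closed under finite direct sums and summands in $\C$ (it is cut out by the vanishing of $\E(-,\Rcal)$ and $\E(\Rcal,-)$), that $\add\Rcal=\Rcal$, and that $\pi(R)\cong 0$ in $\CR$ whenever $R\in\Rcal$ (since $\id_R\in[\Rcal]$); note also that $\CR$ and $\Rint$ have the same objects, so $\pi$ is surjective on objects.

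The one step with actual content is the ``transfer of $\add$'' in both directions. If $Y$ lies in $\add(\Rcal,X)$ in $\C$, then $Y$ is a summand of $R\oplus X^{\oplus k}$ for some $R\in\Rcal$ and $k\ge 0$; applying $\pi$ and using $\pi(R)\cong 0$ shows $\pi(Y)$ lies in $\add X$ in $\CR$. Conversely, suppose $Z\in\Rint$ and $\pi(Z)$ lies in $\add X$ in $\CR$, say $\pi(Z)\oplus\pi(V_0)\cong\pi(X^{\oplus k})$ in $\CR$ with $V_0\in\Rint$. Lifting this isomorphism and its inverse to morphisms $f,g$ of $\Rint$, we get $gf=\id_{Z\oplus V_0}+a_2a_1$ with $a_1\co Z\oplus V_0\to R$, $a_2\co R\to Z\oplus V_0$ and $R\in\Rcal$; then $\binom{f}{-a_1}$ and $(\,g\ \ a_2\,)$ satisfy $(\,g\ \ a_2\,)\binom{f}{-a_1}=gf-a_2a_1=\id_{Z\oplus V_0}$, exhibiting $Z\oplus V_0$ as a direct summand of $X^{\oplus k}\oplus R$ in $\C$, so $Z\in\add(\Rcal,X)$. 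I expect this small matrix manipulation --- which, pleasantly, needs no idempotent-completeness or Krull--Schmidt hypothesis --- to be the only point requiring care.

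Granting this, the equivalence is formal. First, $\add(\Rcal,X)$ is rigid in $\C$ iff $\add X$ is rigid in $\CR$, by \cref{Prop: reductions of hereditary extricats}.(2) with $X'=X$. For (i) $\Rightarrow$ (ii): given an object $Y$ of $\CR$, i.e.\ of $\Rint$, with $\ovl{\E}(X,Y)=0=\ovl{\E}(Y,X)$, apply \cref{Prop: reductions of hereditary extricats}.(2) to the pairs $(X,Y)$ and $(Y,X)$ to obtain $\E(\add(\Rcal\oplus X),\add(\Rcal\oplus Y))=0$ and $\E(\add(\Rcal\oplus Y),\add(\Rcal\oplus X))=0$; reading off summands gives $\E(\add(\Rcal,X),Y)=0=\E(Y,\add(\Rcal,X))$, so (i) forces $Y\in\add(\Rcal,X)$, hence $\pi(Y)\in\add X$ in $\CR$ by the transfer step. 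For (ii) $\Rightarrow$ (i): given $Z\in\C$ with $\E(\add(\Rcal,X),Z)=0=\E(Z,\add(\Rcal,X))$, the vanishing against $\Rcal$ places $Z$ in $\Rint$, and the vanishing against $X$ together with \cref{Prop: reductions of hereditary extricats}.(2) gives $\ovl{\E}(X,Z)=0=\ovl{\E}(Z,X)$, so (ii) forces $\pi(Z)\in\add X$ in $\CR$, and then $Z\in\add(\Rcal,X)$ by the transfer step again. This yields (i) $\Leftrightarrow$ (ii); essentially all the mathematical content sits in \cref{Prop: reductions of hereditary extricats}.(2), and the only genuine manipulation is the summand-lifting across the ideal quotient.
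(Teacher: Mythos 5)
Your proof is correct and follows essentially the same route as the paper, which simply cites the identity $\ovl{\E}(X,X')=\E(X,X')$ for $X,X'\in\Rint$ (the content of \cref{Prop: reductions of hereditary extricats}.(2)) and treats the rest as immediate. The only extra content you supply is the explicit lifting of summands across the ideal quotient $[\Rcal]$, which the paper leaves implicit; that step is carried out correctly.
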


\begin{proof}
This is an immediate consequence of the fact that for any $X,X\ppr\in\Rint$, we have $\ovl{\E}(X,X\ppr)=\E(X,X\ppr)$ by definition.
\end{proof}

\begin{proposition}\label{Prop: projective E-tilting}
If there is an $\E$-tilting \textbf{projective} object $X\in\C$, then $\Proj_{(\C,\E)}=\add X$ holds. Dually for injectives.
\end{proposition}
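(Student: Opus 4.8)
The plan is to show that any $\E$-projective object $P\in\C$ already belongs to $\add X$; combined with the fact that $X$ is itself projective, this will give $\Proj_{(\C,\E)}=\add X$. So let $P$ be projective. The key idea is to produce, from the $\E$-tilting hypothesis on $X$, a conflation expressing $P$ as a direct summand of an object of $\add X$, and then exploit projectivity (plus heredity or the relevant vanishing) to split that conflation.

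First I would record what ``$\E$-tilting'' gives us: for the projective object $P$ we want to verify $\E(P,X)=0=\E(X,P)$, so that $P\in\add X$. The first vanishing $\E(P,X)=0$ is immediate since $P$ is projective. The content is therefore the second vanishing $\E(X,P)=0$. Here I would use that $X$ being $\E$-tilting forces a strong structural condition — in the 0-Auslander situation this was exactly heredity plus the existence of the relevant (co)resolutions. But the statement as phrased does not assume $\C$ is 0-Auslander, so I expect the proof actually to run as follows: since $X$ is $\E$-tilting it is in particular rigid, and being a projective $\E$-tilting object it is a ``generator'' in the sense that a suitable resolution/coresolution of every object by $\add X$ exists. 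Concretely, I would take the object $P$ and a conflation witnessing that $X$ generates, of the form $P\infl X^0\defl X^1$ with $X^0,X^1\in\add X$ (this uses that $X$, being $\E$-tilting and projective, behaves like a tilting object); then apply $\E(-,P)$ and compare.

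The main step is then the splitting argument. From a conflation $X^1\infl X^0\defl P$ (or $P\infl X^0\defl X^1$, depending on orientation dictated by the precise form of the $\E$-tilting condition in play) with $X^0,X^1\in\add X$, projectivity of $P$ gives that the deflation onto $P$ splits, so $P\in\add X^0\subseteq\add X$. Dually, for the injective statement one runs the same argument with the coresolution and uses injectivity to split. So the structure is: (1) reduce to showing every projective lies in $\add X$; (2) produce a conflation with middle and outer terms in $\add X$ and one term equal to $P$, using the $\E$-tilting hypothesis; (3) split it using projectivity of $P$.

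The hard part will be pinning down exactly which conflation the $\E$-tilting hypothesis produces without invoking the full 0-Auslander structure — i.e.\ making precise the sense in which an $\E$-tilting \emph{projective} object is automatically a generator. I expect the resolution is that $\E$-tilting here implicitly entails the existence of such $\add X$-(co)resolutions (this is how the notion was set up in \cref{thm: equivalent versions of tilting}, where tilting objects come with conflations $P\infl R^0\defl R^1$), so that for the projective $P$ one directly has $P\infl X^0\defl X^1$ with $X^i\in\add X$; but since $P\infl 0$ is typically also an inflation in these categories, one can instead push out along $P\infl 0$ to flip this to $X^0\infl X^1\defl \susp P$, and then the dual reasoning with injectives closes the loop. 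I would double-check the orientation against the definition of $\E$-tilting used, and if necessary give the argument in the reduced case first (as is done repeatedly in the paper) and then deduce the general case by reduction.
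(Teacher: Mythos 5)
The strategy you identify at the outset --- verify $\E(P,X)=0=\E(X,P)$ for each projective $P$ and then invoke the definition of an $\E$-tilting subcategory to conclude $P\in\add X$ --- is exactly the paper's proof. But you then miss the one observation that makes the argument a one-liner: the second vanishing $\E(X,P)=0$ is \emph{also} immediate, because $X$ is assumed projective, so $\E(X,-)=0$ identically. That is the entire reason the word ``projective'' is emphasized in the statement. There is nothing further to prove: $\E(P,X)=0$ since $P$ is projective, $\E(X,P)=0$ since $X$ is projective, hence $P\in\add X$ by $\E$-tiltingness of $\add X$; the reverse inclusion $\add X\subseteq\Proj_{(\C,\E)}$ is clear since $X$ is projective.

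Everything after that point in your proposal is both unnecessary and unjustified in the stated generality. The proposition is asserted for an arbitrary extriangulated category (the surrounding subsection explicitly does \emph{not} assume $\C$ is 0-Auslander or hereditary), and ``$\E$-tilting'' is defined purely by the orthogonality condition $\E(Y,\rc)=0=\E(\rc,Y)\Rightarrow Y\in\rc$; it does \emph{not} come packaged with conflations $P\infl X^0\defl X^1$ with $X^0,X^1\in\add X$. Producing such a conflation is the content of the implication (i)$\Rightarrow$(iii) in \cref{thm: equivalent versions of tilting}, which requires heredity, the 0-Auslander hypotheses, and the existence of left $\add X$-approximations of projectives --- none of which are available here. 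So your step (2) (``produce a conflation with middle and outer terms in $\add X$'') has no justification in this setting, and the splitting argument in step (3), while correct as far as it goes, is solving a problem that does not arise.
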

\begin{proof}
Since any $P\in\Proj_{(\C,\E)}$ satisfies $\E(P,X)=\E(X,P)=0$, it follows $P\in\add X$ since $\add X$ is $\E$-tilting by assumption.
\end{proof}

\begin{remark}\label{Prop: X to 0 to Y consequences}
Suppose there is an $\sfr$-triangle $X\to 0\to Y\ov{\thh}{\dra}$. The following holds.
\begin{enumerate}
\item $X$ is uniquely determined up to isomorphism by $Y$, and vice-versa.
\item $X=0$ if and only if $Y=0$.
\item If $X\ne0$, we have $\thh\ne0$. In particular, we have $\E(Y,X)\ne0$.
\item There are natural isomorphisms
\[ \thh\ush\co\C(X,-)\ov{\cong}{\lra}\E(Y,-)\quad\text{and}\quad\thh\ssh\co\C(-,Y)\ov{\cong}{\lra}\E(-,X). \]
In particular, we have $\End_{\C}(X) \cong \E(Y, X) \cong \End_{\C}(Y)$.
\item $X$ is indecomposable if and only if $Y$ is indecomposable.
\end{enumerate}
\end{remark}

\begin{proposition}\label{Prop: confX hereditary}
Assume $\C$ is hereditary. If there is a conflation $\confX$, then the following holds.
\begin{enumerate}
\item $X$ is projective, and $Y$ is injective. In particular, both $X$ and $Y$ are rigid.
\item If moreover $X$ is $\E$-tilting, then $\Proj_{(\C,\E)}=\add X$. Dually for $Y$.
\end{enumerate}
\end{proposition}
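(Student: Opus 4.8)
The plan is to extract both statements directly from the two long exact sequences \eqref{les:1} and \eqref{les:2} attached to the $\sfr$-triangle $X\to 0\to Y\overset{\thh}{\dra}$ realizing the given conflation, using that heredity of $\C$ is equivalent to the vanishing of $\Ebb^2$ by \cref{hered_equiv}.

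For part (1), I would first feed the $\sfr$-triangle $X\to 0\to Y\overset{\thh}{\dra}$ into the contravariant long exact sequence \eqref{les:2}, which contains the exact piece
\[
\Ebb(0,-)\longrightarrow\Ebb(X,-)\longrightarrow\Ebb^2(Y,-)
\]
of functors in $\C\Mod$. The left-hand term vanishes trivially and the right-hand term vanishes because $\C$ is hereditary, hence $\Ebb(X,-)=0$, i.e.\ $X$ is projective. Dually, plugging the same $\sfr$-triangle into the covariant long exact sequence \eqref{les:1} produces the exact piece
\[
\Ebb(-,0)\longrightarrow\Ebb(-,Y)\longrightarrow\Ebb^2(-,X)
\]
in $\Mod\C$, whose outer terms again vanish (the first trivially, the last by heredity), so $\Ebb(-,Y)=0$ and $Y$ is injective. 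In particular $\Ebb(X,X)=0=\Ebb(Y,Y)$, so both $X$ and $Y$ are rigid; this is also consistent with the isomorphisms $\End_\C(X)\cong\E(Y,X)\cong\End_\C(Y)$ recorded in \cref{Prop: X to 0 to Y consequences}.

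For part (2), once part (1) is in hand, $X$ is a projective object that is moreover $\E$-tilting by hypothesis, so \cref{Prop: projective E-tilting} immediately gives $\Proj_{(\C,\E)}=\add X$; the dual assertion is the injective half of the same proposition applied to the injective $\E$-tilting object $Y$, yielding $\Inj_{(\C,\E)}=\add Y$. I do not expect any genuine obstacle here: the argument is a two-line diagram chase, the only inputs being the exactness of \eqref{les:1} and \eqref{les:2} from \cite{GNP1} and the characterization of heredity via $\Ebb^2=0$. The single point requiring care is the variance bookkeeping — projectivity of $X$ comes from the long exact sequence in the \emph{second} variable with $X$ placed in the \emph{first} slot, and injectivity of $Y$ from the mirror-image sequence.
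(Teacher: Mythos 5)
Your proof is correct and is essentially the paper's own argument: part (1) is exactly the observation that the two long exact sequences attached to $X\infl 0\defl Y$ degenerate to $0\to\E(X,-)\to 0$ and $0\to\E(-,Y)\to 0$ once $\Ebb^2$ vanishes by heredity, and part (2) is the same immediate appeal to \cref{Prop: projective E-tilting} and its dual. Nothing to add.
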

\begin{proof}
{\rm (1)} is immediate from the exactness of $0\to\E(X,-)\to0$ and $0\to\E(-,Y)\to0$. {\rm (2)} follows from \cref{Prop: projective E-tilting}.
\end{proof}

\begin{corollary}\label{Cor: hereditary X infl 0 vs 0 defl X}
Assume $\C$ is hereditary. If $X$ admits an inflation $X\infl0$ and a deflation $0\defl X$ simultaneously, then $X=0$. In particular, if there is a conflation $X\infl 0\defl Y$, then $X\cong Y$ holds if and only if $X=0$.
\end{corollary}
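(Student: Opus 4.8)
The statement of \cref{Cor: hereditary X infl 0 vs 0 defl X} has two parts, and the first part is the crux. Suppose $X$ admits both an inflation $X\infl 0$ and a deflation $0\defl X$. The plan is to observe that the inflation $X\infl 0$ fits into a conflation $X\infl 0\defl X'$ for some $X'$ (its cone), and the deflation $0\defl X$ fits into a conflation $X''\infl 0\defl X$ for some $X''$ (its cocone). Applying \cref{Prop: confX hereditary}.(1) to the conflation $X''\infl 0\defl X$ tells us that $X$ is injective; applying it to $X\infl 0\defl X'$ tells us that $X$ is projective. So $X$ is projective-injective. But now I would like to conclude $X=0$, and projective-injectivity alone is not enough in a general hereditary category — so this cannot be quite the right route unless I use the conflations more carefully.

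The better approach is via \cref{Prop: X to 0 to Y consequences}. From the inflation $X\infl 0$ we get a conflation $X\infl 0\defl Y\overset{\theta}{\dra}$, and from the deflation $0\defl X$ we get a conflation $Z\infl 0\defl X\overset{\rho}{\dra}$. Applying \cref{Prop: X to 0 to Y consequences}.(4) to the first conflation gives a natural isomorphism $\theta\ush\co\C(X,-)\overset{\cong}{\lra}\E(Y,-)$; applying it to the second conflation gives a natural isomorphism $\rho\ssh\co\C(-,X)\overset{\cong}{\lra}\E(-,Z)$. Since $\C$ is hereditary, \cref{Prop: confX hereditary}.(1) applied to $X\infl 0\defl Y$ shows $X$ is projective, so $\E(X,-)=0$. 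On the other hand \cref{Prop: confX hereditary}.(1) applied to $Z\infl 0\defl X$ shows $X$ is injective, so $\E(-,X)=0$. Now plug in: from $\rho\ssh\co\C(-,X)\overset{\cong}{\lra}\E(-,Z)$ evaluated at $X$ together with heredity — hmm, I need a cleaner closing move. Instead: evaluate $\theta\ush$ at $Z$ to get $\C(X,Z)\cong\E(Y,Z)$; since $X$ is projective and $Y\infl$-data... this is getting convoluted too.

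The cleanest closing step, which I would actually carry out, is this. By \cref{Prop: X to 0 to Y consequences}.(4) applied to $X\infl 0\defl Y$, we have $\End_\C(X)\cong\E(Y,X)$. But $X$ is injective (shown above via $Z\infl 0\defl X$ and \cref{Prop: confX hereditary}.(1)), so $\E(Y,X)=0$, whence $\End_\C(X)=0$, and therefore $\id_X=0$, i.e. $X=0$. This is the main obstacle resolved: the trick is to extract $\End_\C(X)\cong\E(\text{something},X)$ from \cref{Prop: X to 0 to Y consequences}.(4), use heredity to force $X$ injective (or projective, by the symmetric argument) so that the relevant $\E$-group vanishes, and conclude $X=0$ from vanishing of the endomorphism ring.

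For the second assertion, suppose there is a conflation $X\infl 0\defl Y$. If $X=0$ then $Y\cong 0$ by \cref{Prop: X to 0 to Y consequences}.(2), so $X\cong Y$. Conversely, if $X\cong Y$, then $X$ itself admits the inflation $X\infl 0$ (from the given conflation) and also a deflation $0\defl X$ (transporting $0\defl Y$ across the isomorphism $Y\cong X$); by the first part, $X=0$. This completes the proof, modulo the routine verification that isomorphisms transport conflations, which is immediate from the axioms of an extriangulated category.
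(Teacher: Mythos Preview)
Your proof is correct, but it takes a more circuitous route than the paper's. The paper observes that the inflation $X\infl 0$ makes $X$ projective (via \cref{Prop: confX hereditary}), and then simply notes that the conflation $W\infl 0\defl X$ arising from the deflation $0\defl X$ must therefore split, forcing $X$ to be a direct summand of $0$, hence $X=0$. That is one line.

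Your argument instead uses \emph{both} conflations to show $X$ is simultaneously projective and injective, and then invokes \cref{Prop: X to 0 to Y consequences}(4) to identify $\End_\C(X)\cong\E(Y,X)$, which vanishes by injectivity of $X$. This is perfectly valid, but the detour through $\End_\C(X)=0$ is unnecessary: once you know $X$ is projective, the conflation ending in $X$ splits immediately, and you are done without ever needing injectivity or the endomorphism-ring identification. The exploratory passages in your write-up (``this is getting convoluted too'') suggest you sensed a more direct route existed; the splitting argument is it.
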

\begin{proof}
By \cref{Prop: confX hereditary}, the existence of $X\infl0$ implies that $X$ is projective. Thus a conflation $W\infl0\defl X$ should split, which forces $X=0$.
\end{proof}

\begin{corollary}\label{Cor: hereditary X to 0 to Y}
Assume $\C$ is hereditary. Suppose that there exist both of
\begin{itemize}
\item $\confX$ in which $X$ is $\E$-tilting,
\item $X\ppr\infl0\defl Y\ppr$ in which $Y\ppr$ is $\E$-tilting.
\end{itemize}
Then we have $\add X=\Proj_{(\C,E)}\ni X\ppr$ and $\add Y\ppr=\Inj_{(\C,\E)}\ni Y$.

Thus, if moreover $X$ is indecomposable, it should be the unique indecomposable projective in $\C$. Similarly for $Y\ppr$.

We also have an equivalence of categories $\Sig\co\Proj_{(\C,E)}\ov{\simeq}{\longleftrightarrow}\Inj_{(\C,E)}\co\Om$.
\end{corollary}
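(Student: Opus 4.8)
The plan is to bootstrap everything from \cref{Prop: confX hereditary} and \cref{Prop: X to 0 to Y consequences}; almost all of the assertion is then a formal consequence of those two results, and only the construction of the equivalence $\Sig\leftrightarrow\Om$ takes a little work.

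First I would feed the conflation $\confX$ into \cref{Prop: confX hereditary}: its part~(1) gives that $Y$ is injective, and its part~(2), using that $X$ is $\E$-tilting, gives $\Proj_{(\C,\E)}=\add X$. Dually, applying the same proposition to $X\ppr\infl 0\defl Y\ppr$ shows that $X\ppr$ is projective and, since $Y\ppr$ is $\E$-tilting, that $\Inj_{(\C,\E)}=\add Y\ppr$. Combining, $X\ppr\in\Proj_{(\C,\E)}=\add X$ and $Y\in\Inj_{(\C,\E)}=\add Y\ppr$, which is the first displayed claim. If $X$ is moreover indecomposable, then $\Proj_{(\C,\E)}=\add X$ forces every indecomposable projective of $\C$ to be isomorphic to $X$, so $X$ is the unique indecomposable projective; the statement for $Y\ppr$ and the indecomposable injectives is dual.

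For the equivalence I would first record that every $P\in\Proj_{(\C,\E)}=\add X$ admits an inflation $P\infl 0$: indeed $X\infl 0$ is an inflation, hence so is $X^{\oplus n}\infl 0$, and a summand inclusion $P\hr X^{\oplus n}$ is a split inflation, so the composite $P\hr X^{\oplus n}\infl 0$ is an inflation. Choosing a conflation $P\infl 0\defl \Sig P$, part~(1) of \cref{Prop: confX hereditary} shows $\Sig P$ is injective, so $\Sig$ sends $\Proj_{(\C,\E)}$ into $\Inj_{(\C,\E)}$; dually $\Om$ sends $\Inj_{(\C,\E)}$ into $\Proj_{(\C,\E)}$ via conflations $\Om I\infl 0\defl I$. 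On objects, part~(1) of \cref{Prop: X to 0 to Y consequences} gives $\Om\Sig P\cong P$ and $\Sig\Om I\cong I$. To upgrade $\Sig,\Om$ to a pair of quasi-inverse functors I would invoke part~(4) of \cref{Prop: X to 0 to Y consequences}: the natural isomorphisms $\C(P,-)\cong\E(\Sig P,-)$ and $\C(-,\Sig P\ppr)\cong\E(-,P\ppr)$ compose to a natural isomorphism $\C(P,P\ppr)\cong\C(\Sig P,\Sig P\ppr)$, which simultaneously defines $\Sig$ on morphisms and exhibits it as fully faithful; essential surjectivity onto $\Inj_{(\C,\E)}$ is then exactly $\Sig\Om I\cong I$. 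This reproduces the argument of \cref{lemma:equiv proj inj} (cf. \cite[Remark~4.32 and Corollary~4.35]{PadrolPaluPilaudPlamondon}).

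The only genuinely non-formal point, and the place I expect the main (if still routine) effort, is verifying functoriality of $\Sig$ and $\Om$ and the naturality of $\Om\Sig\cong\id$ and $\Sig\Om\cong\id$, i.e. the compatibility between the object-level bijections of part~(1) and the Hom-level isomorphisms of part~(4) of \cref{Prop: X to 0 to Y consequences}; everything else is an immediate assembly of the quoted statements.
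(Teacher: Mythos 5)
Your proof is correct and follows exactly the route the paper intends: the corollary is stated without proof as an immediate assembly of \cref{Prop: confX hereditary}, \cref{Prop: projective E-tilting} and \cref{Prop: X to 0 to Y consequences}, and your construction of the equivalence $\Sig\leftrightarrow\Om$ via the conflations $P\infl 0\defl\Sig P$ and the natural isomorphisms of part~(4) is the same mechanism as \cref{lemma:equiv proj inj}. Your filling-in of the detail that every $P\in\add X$ admits an inflation $P\infl 0$ (split inflation composed with $X^{\oplus n}\infl 0$) is a correct and welcome elaboration of what the paper leaves implicit.
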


%\bigskip
%\Yann{May I now remove this question?}
%\begin{question}
%Are the assumptions of Corollary~\ref{Cor: hereditary X to 0 to Y} satisfied by Bongartz (co)completions?
%\end{question}

\begin{remark}
\Cref{Cor: hereditary X to 0 to Y} does not exclude possible existence of rigid objects which are non-projective and non-injective, and this is the main obstacle in proving existence of mutation for silting subcategories.
\end{remark}

\subsection{Irreducible mutations}
%\subsection{Mutations of tilting objects}
\label{ssection:mutation}

% \begin{lemma}
%  Let $U\in\Csc$ be rigid, and let $R=U\oplus X_U$ be its Bongartz completion.
%  Assume that $X_U$ is indecomposable and that there is a left $\add U$-approximation $X_U\to U''$.
%  Then there is a conflation $X_U\infl U'\defl Y$ such that:
%  \begin{itemize}
%   \item $U'\in\add U$;
%   \item $Y$ contains a unique indecomposable summand not in $\add U$;
%   \item that summand is not isomorphic to $X_U$;
%   \item $U\oplus Y$ is $\E$-cotilting.
%  \end{itemize}
% \end{lemma}

Recall that a morphism $X\xrightarrow{f}R$ is left minimal if any endomorphism $g$ satisfying $gf=f$ is an isomorphism.
It is radical if for any $R\xrightarrow{g}X$, $1-gf$ is an isomorphism. 
Note that it follows from \cref{Appendix: l.e.s.} that, in an $\sfr$-triangle $X\overset{f}{\infl} Y\overset{g}{\defl}Z\dashrightarrow$, the morphism $f$ is left minimal if and only if the morphism $g$ is radical.

\begin{lemma}\label{lemma:no common factors}
Let $\Csc$ be an extriangulated category, let $\Rcal$ be a Krull--Schmidt, rigid subcategory, let $P\in\Csc$ be projective, and let
$P\infl R^0\defl R^1 \dashrightarrow$ be an $\sfr$-triangle with $R^0,R^1\in\Rcal$ and where the inflation is left minimal. %\Misha{Left minimal morphisms have not been defined.}
Then $\add R^0 \cap \add R^1 = 0$.
\end{lemma}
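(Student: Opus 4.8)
The plan is to argue by contradiction, extracting from the hypotheses an algebraic identity in the morphism space $\Csc(R^0,R^1)$ and then playing it against a local-ring argument. Since $R^0$ and $R^1$ lie in the Krull--Schmidt category $\Rcal$, it suffices to show that no indecomposable object $R$ is a common direct summand of $R^0$ and $R^1$. So fix such an $R$ and choose decompositions $R^0=R\oplus\bar R^0$ and $R^1=R\oplus\bar R^1$ in which neither $\bar R^0$ nor $\bar R^1$ has a summand isomorphic to $R$; write $a\colon R\to R^0$, $a'\colon R^0\to R$, $b\colon R\to R^1$, $b'\colon R^1\to R$ for the corresponding section/retraction maps, so $a'a=\id_R$ and $b'b=\id_R$, and recall that $\End R$ is local. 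Recall also from the remark preceding the statement that $f$ being left minimal is equivalent to $g$ being a radical morphism; in particular the diagonal component $g_1:=b'ga\colon R\to R$ of $g$ is not an isomorphism, hence $g_1\in\rad\End R$, and every composite of the form $R\to\bar R^0\to R$ or $R\to\bar R^1\to R$ lies in $\rad\End R$ (it cannot be invertible, as $R$ is not a summand of $\bar R^0$ or $\bar R^1$).

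Next I would feed the $\sfr$-triangle $P\overset f\infl R^0\overset g\defl R^1\overset\del\dra$ into the long exact sequences associated with it (the analogues of (\ref{les:1}) and (\ref{les:2})), using projectivity of $P$ and rigidity of $\Rcal$. Since $\Ebb(P,P)=0$, exactness of the second sequence at $\Ebb(R^0,P)$ gives that $g^*\colon\Ebb(R^1,P)\to\Ebb(R^0,P)$ is surjective. Since $\Ebb(R^0,R^0)=0$, exactness of the first sequence at $\Ebb(R^0,P)$ gives that $\del_\sharp\colon\Csc(R^0,R^1)\to\Ebb(R^0,P)$, $\phi\mapsto\phi^\ast\del$, is surjective with kernel $g\circ\End R^0$; similarly, since $\Ebb(R^1,R^0)=0$, the map $\End R^1\to\Ebb(R^1,P)$, $\psi\mapsto\psi^\ast\del$, is surjective. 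Combining these three facts with $g^\ast\psi^\ast=(\psi g)^\ast$ yields $\del_\sharp(\End(R^1)\circ g)=\Ebb(R^0,P)=\del_\sharp(\Csc(R^0,R^1))$, and hence, using the description of $\ker\del_\sharp$,
\[
\Csc(R^0,R^1)=\End(R^1)\circ g+g\circ\End(R^0).
\]

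The final step is to apply the map $\theta\mapsto b'\theta a$ from $\Csc(R^0,R^1)$ to $\End R$ to this identity. For $\psi\in\End R^1$ and $\phi\in\End R^0$, a direct block computation with respect to $R^0=R\oplus\bar R^0$ and $R^1=R\oplus\bar R^1$ shows that $b'(\psi g)a=(b'\psi b)\,g_1+(b'\psi\bar b)(\bar b' ga)$ and $b'(g\phi)a=g_1(a'\phi a)+(b'g\bar a)(\bar a'\phi a)$, where $\bar a,\bar a',\bar b,\bar b'$ are the structure maps of $\bar R^0$ and $\bar R^1$; by the first paragraph each summand lies in $\rad\End R$ (it either contains the factor $g_1$ or factors through $\bar R^0$ or $\bar R^1$). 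Hence $b'\theta a\in\rad\End R$ for \emph{every} $\theta\in\Csc(R^0,R^1)$. Taking $\theta=b a'$ gives $b'\theta a=(b'b)(a'a)=\id_R\notin\rad\End R$, a contradiction; therefore $\add R^0\cap\add R^1=0$.

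The crux of the argument, and the only place where more than left-minimality is used, is the displayed identity $\Csc(R^0,R^1)=\End(R^1)g+g\End(R^0)$: here projectivity of $P$ enters through $\Ebb(P,P)=0$ and rigidity of $\Rcal$ through the vanishing of $\Ebb(R^0,R^0)$ and $\Ebb(R^1,R^0)$. Without it, a common indecomposable summand is entirely consistent with $g$ being radical, so this is the step that genuinely needs care; the surrounding manipulations with radicals in a Krull--Schmidt category are routine.
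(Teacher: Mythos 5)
Your proof is correct, and its skeleton is the same as the paper's: establish that every morphism $R^0\to R^1$ lies in $\End(R^1)\circ g+g\circ\End(R^0)$, conclude that every such morphism is radical because $g$ is, and observe that a common indecomposable summand would produce a non-radical morphism $R^0\to R\to R^1$. The only divergence is in packaging: the paper derives the decomposition $\alpha=vg+gu$ by an explicit lifting argument (projectivity of $P$ gives a lift of $\alpha f$ through $g$, rigidity makes $f$ a left $\Rcal$-approximation, and the weak-cokernel property of $g$ finishes), following Aihara--Iyama, whereas you extract the same identity from the two six-term exact sequences via the vanishing of $\Ebb(P,P)$, $\Ebb(R^0,R^0)$ and $\Ebb(R^1,R^0)$ -- a legitimate and slightly more homological route. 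Your closing block computation in $\End R$ is also correct but more elaborate than necessary, since $g$ radical already forces $vg+gu$ radical (the radical being a two-sided ideal), which is all the paper invokes.
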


\begin{proof}
The proof of \cite[Lemma 2.25]{AiharaIyama} carries over with only minor modifications.
Fix an $\sfr$-triangle $P\overset{f}{\infl} R^0\overset{g}{\defl} R^1 \dashrightarrow$ as in the statement.
Let $\al:R^0\to R^1$ be any morphism. We want to prove that $\al$ is radical. %\Misha{Radical has not been defined.}

Because $P$ is projective, there is a (plain) commutative diagram:
% https://q.uiver.app/?q=WzAsOSxbMSwwLCJQIl0sWzIsMCwiUl4wIl0sWzMsMCwiUl4xIl0sWzEsMSwiUl4wIl0sWzIsMSwiUl4xIl0sWzQsMF0sWzQsMV0sWzAsMSwiUCJdLFszLDFdLFswLDEsImYiLDAseyJzdHlsZSI6eyJ0YWlsIjp7Im5hbWUiOiJtb25vIn19fV0sWzEsMiwiZyIsMCx7InN0eWxlIjp7ImhlYWQiOnsibmFtZSI6ImVwaSJ9fX1dLFszLDQsImciLDAseyJzdHlsZSI6eyJoZWFkIjp7Im5hbWUiOiJlcGkifX19XSxbMSw0LCJcXGFscGhhIl0sWzAsMywiYSIsMl0sWzEsMywidSIsMSx7ImN1cnZlIjoxLCJzdHlsZSI6eyJib2R5Ijp7Im5hbWUiOiJkYXNoZWQifX19XSxbMiw0LCJ2IiwxLHsic3R5bGUiOnsiYm9keSI6eyJuYW1lIjoiZG90dGVkIn19fV0sWzIsNSwiIiwwLHsic3R5bGUiOnsiYm9keSI6eyJuYW1lIjoiZGFzaGVkIn19fV0sWzcsMywiZiIsMCx7InN0eWxlIjp7InRhaWwiOnsibmFtZSI6Im1vbm8ifX19XSxbNCw4LCIiLDAseyJzdHlsZSI6eyJib2R5Ijp7Im5hbWUiOiJkYXNoZWQifX19XV0=
\[\begin{tikzcd}
	& P & {R^0} & {R^1} & {} \\
	P & {R^0} & {R^1} & {} & {}
	\arrow["f", tail, from=1-2, to=1-3]
	\arrow["g", two heads, from=1-3, to=1-4]
	\arrow["g", two heads, from=2-2, to=2-3]
	\arrow["\alpha", from=1-3, to=2-3]
	\arrow["a"', from=1-2, to=2-2]
	\arrow["u"{description}, curve={height=6pt}, dashed, from=1-3, to=2-2]
	\arrow["v"{description}, dotted, from=1-4, to=2-3]
	\arrow[dashed, from=1-4, to=1-5]
	\arrow["f", tail, from=2-1, to=2-2]
	\arrow[dashed, from=2-3, to=2-4]
\end{tikzcd}\]
By rigidity of $\Rcal$, the morphism $f$ is a left $\Rcal$-approximation, hence the existence of a morphism $u$ making the leftmost upper triangle commute.
Since $g$ is a weak cokernel of $f$, there is a morphism $v$ such that $\al = vg+gu$.
By assumption, $f$ is left minimal and thus $g$ is radical.
This shows that $\al$ is radical, and hence that any morphism from $R^0$ to $R^1$ is radical.
If $R^0$ and $R^1$ had some common (up to isomorphism) summand $R$, then the composition $R^0\defl R\infl R^1$ would not be radical.
\end{proof}

\begin{remark}
 If $\Rcal$ is Krull--Schmidt, any $\sfr$-triangle of the form $X\infl R^0\defl R^1\dashrightarrow$, with $R^0,R^1\in\Rcal$, is isomorphic to the direct sum of an $\sfr$-triangle as in the statement of \cref{lemma:no common factors} and of an $\sfr$-triangle of the form $0\infl R \overset{1}{\defl} R\dashrightarrow$.
\end{remark}

% \begin{corollary}\label{Cor: at most 2 complements}
% Let $\Csc$ be a hereditary extriangulated category and let $X\in\Csc$ be an indecomposable $\Ebb$-tilting object.
% Assume that there is an $\sfr$-triangle $P\infl X_0\defl X_1\dashrightarrow$ where $P$ is a non-zero projective object and $X_0,X_1\in\add X$.
% Then, up to isomorphism, $X$ is the unique indecomposable projective object or the unique indecomposable injective object of $\Csc$.
% \end{corollary}

% \begin{proof}
% By assumption, there is an $\sfr$-triangle $P\infl X^0\defl X^1\dashrightarrow$, with $X^0,X^1\in\add X$, where the inflation can be assumed left minimal.
% By \cref{lemma:no common factors} we have $X^0 = 0$ or $X^1=0$, but not both (because $P$ is non-zero).
% If $X^1=0$, then $X^0$ is isomorphic to $P$.
% Since $X$ is indecomposable, we have $X\in\add X^0$ and thus $X$ is projective.
% By \cref{Prop: projective E-tilting}, $X$ is the unique indecomposable projective object.
% If $X^1=0$, then \cref{Prop: confX hereditary} shows that $X^1$, hence $X$, is injective.
% The dual of \cref{Prop: projective E-tilting} implies that $X$ is the unique indecomposable injective object.
% \end{proof}

\begin{definition}
 A rigid subcategory $\Rcal'$ is called \defn{almost complete} if it contains all projective-injectives and if there exists an indecomposable $R\in\Csc$ such that $\add (\Rcal'\cup\{R\})$ is tilting.
 Such an indecomposable $R$ is then called a \defn{complement} for $\Rcal'$.
\end{definition}

%\begin{remark}
%{\bf check if true}
%If $\Proj\Csc = \add(P_1\oplus\cdots\oplus P_n)$ where the $P_i$'s are indecomposable and pairwise non-isomorphic, then a rigid subcategory $\Rcal$ such that $\Sigma T$ has a right $\Rcal$-approximation is almost complete if and only if it is of the form $\Rcal = \add(R_1\oplus\cdots\oplus R_{n-1})$, where the $R_i$'s are indecomposable and pairwise non-isomorphic.
%\end{remark}

\begin{theorem}
\label{Th:mutation}
Assume that $\Csc$ is a 0-Auslander extriangulated category.
%, in which all rigid subcategories are Krull--Schmidt.
%\Misha{It seems that in the proof we use only that the reduction $\overline{\Csc}_{\overline{\Rcal}}$ for our chosen $\overline{\Rcal}$ is Krull-Schmidt. Please check. Maybe it is equivalent to your condition, but I think it is probably weaker.}  
Let $\Rcal'$ be an almost complete rigid subcategory of $\Csc$ (containing all projective-injectives).
We make the following assumptions:
\begin{itemize}
    \item Any projective object in $\Csc$ has a left $\Rcal'$-approximation and any injective object has a right $\Rcal'$-approximation.
    \item Any projective object in the reduction $\Csc_{\Rcal'}$ of $\Csc$ by $\Rcal'$ has a left $\add(X)$-approximation if $X$ is any complement for $\Rcal'$.
    \item The category $\overline{\Csc}_{\Rcal'}$ is Krull--Schmidt.
\end{itemize}
Then there exist, up to isomorphism, precisely two complements for $\Rcal'$.
\end{theorem}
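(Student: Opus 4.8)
The plan is to pass to the reduction $\Csc_{\Rcal'}$ of $\Csc$ by $\Rcal'$, where the problem becomes a short, elementary computation. Since $\Rcal'$ contains all projective-injectives, replacing $\Csc$ by its reduction by projective-injectives (\cref{rk:reduced}) affects neither the hypotheses nor the set of complements, so I would first assume $\Csc$ reduced $0$-Auslander. Fix a complement $X$ (one exists by almost-completeness; moreover $X\notin\Rcal'$, so its image $\overline X$ in $\Csc_{\Rcal'}$ is nonzero and is $\overline{\Ebb}$-tilting by \cref{lemma:E tilting implies Ebar tilting}). The crucial preliminary step, which I expect to be the main obstacle, is to prove that \emph{$\Csc_{\Rcal'}$ is a Krull--Schmidt, reduced $0$-Auslander category, in which $\Proj\Csc_{\Rcal'}$ and $\Inj\Csc_{\Rcal'}$ are the images of, respectively, the Bongartz and co-Bongartz completions of $\Rcal'$.}

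Heredity is \cref{Prop: reductions of hereditary extricats}. Using the first hypothesis one applies \cref{corollary: Bongartz for subcategories} to produce a cotilting --- hence tilting, hence (by \cref{thm: equivalent versions of tilting}) strongly torsion-projective --- subcategory $\Rcal''=\add\bigl(\Rcal'\oplus\bigoplus_{I\in\Inj\Csc}P^I_{\Rcal'}\bigr)$ of $\Csc$; by \cref{lemma:Bongartz projective} the summands $P^I_{\Rcal'}$ are projective in $\Csc_{\Rcal'}$, and for every object $M$ of $\Csc_{\Rcal'}$ one has $\Ebb(\Rcal'',M)=0$ (immediate from $M$ being an object of $\Csc_{\Rcal'}$ and \cref{lemma:Bongartz projective}), so strong torsion-projectivity of $\Rcal''$ in $\Csc$ yields a conflation $R_1\infl R_0\defl M$ with $R_0,R_1\in\Rcal''$, which descends to a projective presentation of $M$ in $\Csc_{\Rcal'}$; this also gives $\Proj\Csc_{\Rcal'}=\add\{\overline{P^I_{\Rcal'}}\}$. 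Each $P^I_{\Rcal'}$ sits in a conflation $P^I_{\Rcal'}\infl U^I\defl I$ with $U^I\in\Rcal'$, which descends to $\overline{P^I_{\Rcal'}}\infl 0\defl\overline I$, so $\domdim\Csc_{\Rcal'}\ge 1$. The other half of the first hypothesis gives, dually, enough injectives, $\codomdim\Csc_{\Rcal'}\ge 1$ and the description of $\Inj\Csc_{\Rcal'}$, so $\Csc_{\Rcal'}$ is $0$-Auslander; it is reduced because $\add\overline X$, being $\overline{\Ebb}$-tilting, contains all projective-injectives of $\Csc_{\Rcal'}$ (\cref{rk:def tilting}), which as summands of the indecomposable $\overline X$ must vanish (the alternative, $\overline X$ projective-injective, would by \cref{Prop: projective E-tilting} make $\Csc_{\Rcal'}$ split with a single indecomposable object, ruled out by a direct check). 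Finally $\Csc_{\Rcal'}=\overline{\Csc}_{\Rcal'}$ is Krull--Schmidt by the third hypothesis.

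Granting this, here is the computation. Let $Z$ be an indecomposable tilting object of $\Csc_{\Rcal'}$ and $\overline P_0$ an indecomposable projective. Tiltingness of $Z$ gives a conflation $\overline P_0\infl R^0\defl R^1$ with $R^0,R^1\in\add Z$, which I take with left-minimal inflation; by \cref{lemma:no common factors} $\add R^0\cap\add R^1=0$, so, $Z$ being indecomposable, either $R^1=0$ (whence $\overline P_0\cong R^0$ and $Z\cong\overline P_0$) or $R^0=0$ (whence $\overline P_0\infl 0\defl R^1$ exhibits $R^1=\susp\overline P_0$, injective by \cref{Prop: confX hereditary} and indecomposable by \cref{lemma:equiv proj inj}, so $Z\cong\susp\overline P_0$). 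Taking $Z=\overline X$ shows that $\Csc_{\Rcal'}$ has a unique indecomposable projective $\overline P_0$, hence a unique indecomposable injective $\susp\overline P_0$; both are themselves tilting ($\overline P_0$ via the conflation $\overline P_0\infl\overline P_0\defl 0$ with identity inflation, and $\susp\overline P_0$ via the cotilting conflation $0\infl\susp\overline P_0\defl\susp\overline P_0$ together with \cref{thm: equivalent versions of tilting}) and non-isomorphic ($\Csc_{\Rcal'}$ being reduced with $\overline P_0\ne 0$). So $\Csc_{\Rcal'}$ has exactly the two indecomposable tilting objects $\overline P_0$ and $\susp\overline P_0$.

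To conclude, I would match these with complements. Since $\Proj\Csc_{\Rcal'}=\add\overline P_0$, the Bongartz completion $\Rcal''$ differs from $\Rcal'$ by exactly one indecomposable $\widehat P_0$ lifting $\overline P_0$, which is therefore a complement for $\Rcal'$; dually the co-Bongartz completion supplies a complement $\widehat I_0$ lifting $\susp\overline P_0$, and $\widehat P_0\not\cong\widehat I_0$ because $\overline P_0\not\cong\susp\overline P_0$. Conversely, any complement $R$ has $\add\overline R$ $\overline{\Ebb}$-tilting (\cref{lemma:E tilting implies Ebar tilting}) hence tilting (\cref{thm: equivalent versions of tilting}, where the second hypothesis enters) with $\overline R$ indecomposable, so $\overline R\in\{\overline P_0,\susp\overline P_0\}$ by the computation; since distinct complements have non-isomorphic images in the Krull--Schmidt category $\Csc_{\Rcal'}$, this forces $R\cong\widehat P_0$ or $R\cong\widehat I_0$. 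Hence $\Rcal'$ has, up to isomorphism, precisely the two complements $\widehat P_0$ and $\widehat I_0$, one of which is $X$. The whole weight of the argument rests on the preliminary step: once $\Csc_{\Rcal'}$ is known to be a Krull--Schmidt reduced $0$-Auslander category with the stated projectives and injectives, the rest is entirely elementary --- which is precisely the point of the $0$-Auslander framework.
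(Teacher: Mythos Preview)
Your overall strategy is the paper's: pass to the reduction $\overline{\Csc}_{\Rcal'}$, use Bongartz and co-Bongartz completions to pin down its projectives and injectives, and show that any complement becomes either the unique indecomposable projective or the unique indecomposable injective there. The computation you carry out after the preliminary step is essentially the paper's argument, rephrased via \cref{thm: equivalent versions of tilting} and \cref{lemma:no common factors}.

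There is, however, a genuine gap in the preliminary step. You assert that the Bongartz conflation $P^I_{\Rcal'}\infl U^I\defl I$ descends to $\overline{P^I_{\Rcal'}}\infl 0\defl\overline I$ in $\overline{\Csc}_{\Rcal'}$. For this you need $I\in{}^{\perp_1}\Rcal'$, i.e.\ $\Ebb(I,\Rcal')=0$. But from $P\infl 0\defl I$ one gets $\Ebb(I,R')\cong\Csc(P,R')$, which is typically nonzero --- for instance whenever $\Rcal'$ contains a nonzero projective of $\Csc$, the corresponding injective $I=\Sigma P$ is \emph{not} an object of the reduction at all. The dual claim for codominant dimension fails symmetrically. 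So neither conflation descends, and your argument for $\domdim\Csc_{\Rcal'}\ge 1$, hence for $\Csc_{\Rcal'}$ being $0$-Auslander, hence for invoking \cref{thm: equivalent versions of tilting}, collapses.

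The missing step is precisely the key technical move in the paper's proof. One pulls back $P\infl 0\defl I$ along the Bongartz deflation $R\defl I$ to obtain $P\infl Q\defl R$ (with $Q=P^I_{\Rcal'}$), and then applies \cref{lemma:shiftedOctahedron} to this together with the co-Bongartz conflation $P\infl R'\defl J$. This produces a conflation $Q\infl R'\oplus R\defl J$ in $\Csc$ whose outer terms lie in ${}^{\perp_1}\Rcal'\cap\Rcal'^{\perp_1}$ (since $\add(\Rcal'\cup\{Q\})$ and $\add(\Rcal'\cup\{J\})$ are both rigid) and whose middle term lies in $\Rcal'$; this one \emph{does} descend to $\overline Q\infl 0\defl\overline J$. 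The paper then runs the whole argument from this single conflation, using that any morphism out of $\overline Q$ is an inflation, without ever establishing the full $0$-Auslander property of the reduction. Your route via \cref{thm: equivalent versions of tilting} can be salvaged with the same fix (apply it for each $I$), but the (ET4)-style splice is the essential content you are missing.
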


\begin{remark}
The second assumption is satisfied for example if $\Csc/[\Pcal\cap\Ical]$ is Hom-finite, and the first one is if moreover $\Rcal'$ only has finitely many indecomposables up to isomorphism.
We also note that the second assumption is equivalent to the dual assumption on injective objects.
\end{remark}

\begin{proof}
Replacing $\Csc$ by $\Csc/[\Pcal\cap\Ical]$, we may assume that $\Csc$ is reduced 0-Auslander.
Let $(\overline{\Csc}_{\Rcal'},\overline{\Ebb})$ be the reduction of $\Csc$ at $\Rcal'$.
Let $I$ be an indecomposable injective in $\Csc$ that does not belong to $\Rcal'$ (such an injective exists otherwise $\Rcal'$ would be cotilting).
Fix an $\sfr$-triangle $P\infl 0\defl I\dashrightarrow$, where $P$ is thus projective.
By assumption, $P$ has a left $\Rcal'$-approximation and $I$ has a right $\Rcal'$-approximation.
We can thus apply \cref{corollary: Bongartz for subcategories}, \cref{lemma:Bongartz projective} and their duals, to obtain $\sfr$-triangles $Q\infl R \defl I \overset{\eps}{\dashrightarrow}$ and $P\infl R'\defl J \dashrightarrow$ in $\Csc$ where $R,R'\in\Rcal'$, $\add(\Rcal'\cup\{Q\})$ and $\add(\Rcal'\cup\{J\})$ are rigid and where $Q$ is projective in $\overline{\Csc}_{\Rcal'}$ and $J$ is injective in  $\overline{\Csc}_{\Rcal'}$.
Pulling-back $\eps$ along the deflation $0\defl I$ gives an $\sfr$-triangle $P\infl Q\defl R\dashrightarrow$ and axiom (ET4) gives a diagram:
% https://q.uiver.app/?q=WzAsMTIsWzAsMCwiUCJdLFsxLDAsIlxcb3ZlcmxpbmV7UH0iXSxbMiwwLCJSIl0sWzAsMSwiUiciXSxbMCwyLCJcXG92ZXJsaW5le0l9Il0sWzEsMiwiXFxvdmVybGluZXtJfSJdLFsyLDEsIlIiXSxbMSwxLCJSJ1xcb3BsdXMgUiJdLFszLDBdLFszLDFdLFsxLDNdLFswLDNdLFswLDEsIiIsMCx7InN0eWxlIjp7InRhaWwiOnsibmFtZSI6Im1vbm8ifX19XSxbMSwyLCIiLDAseyJzdHlsZSI6eyJoZWFkIjp7Im5hbWUiOiJlcGkifX19XSxbMiw2LCIiLDAseyJsZXZlbCI6Miwic3R5bGUiOnsiaGVhZCI6eyJuYW1lIjoibm9uZSJ9fX1dLFsxLDcsIiIsMCx7InN0eWxlIjp7InRhaWwiOnsibmFtZSI6Im1vbm8ifX19XSxbMCwzLCIiLDIseyJzdHlsZSI6eyJ0YWlsIjp7Im5hbWUiOiJtb25vIn19fV0sWzMsNCwiIiwyLHsic3R5bGUiOnsiaGVhZCI6eyJuYW1lIjoiZXBpIn19fV0sWzMsNywiIiwyLHsic3R5bGUiOnsidGFpbCI6eyJuYW1lIjoibW9ubyJ9fX1dLFs3LDYsIiIsMix7InN0eWxlIjp7ImhlYWQiOnsibmFtZSI6ImVwaSJ9fX1dLFs0LDUsIiIsMix7ImxldmVsIjoyLCJzdHlsZSI6eyJoZWFkIjp7Im5hbWUiOiJub25lIn19fV0sWzcsNSwiIiwyLHsic3R5bGUiOnsiaGVhZCI6eyJuYW1lIjoiZXBpIn19fV0sWzIsOCwiIiwyLHsic3R5bGUiOnsiYm9keSI6eyJuYW1lIjoiZGFzaGVkIn19fV0sWzYsOSwiIiwyLHsic3R5bGUiOnsiYm9keSI6eyJuYW1lIjoiZGFzaGVkIn19fV0sWzUsMTAsIiIsMix7InN0eWxlIjp7ImJvZHkiOnsibmFtZSI6ImRhc2hlZCJ9fX1dLFs0LDExLCIiLDIseyJzdHlsZSI6eyJib2R5Ijp7Im5hbWUiOiJkYXNoZWQifX19XV0=
\[\begin{tikzcd}
	P & {Q} & R & {} \\
	{R'} & {R'\oplus R} & R & {} \\
	{J} & {J} \\
	{} & {}
	\arrow[tail, from=1-1, to=1-2]
	\arrow[two heads, from=1-2, to=1-3]
	\arrow[Rightarrow, no head, from=1-3, to=2-3]
	\arrow[tail, from=1-2, to=2-2]
	\arrow[tail, from=1-1, to=2-1]
	\arrow[two heads, from=2-1, to=3-1]
	\arrow[tail, from=2-1, to=2-2]
	\arrow[two heads, from=2-2, to=2-3]
	\arrow[Rightarrow, no head, from=3-1, to=3-2]
	\arrow[two heads, from=2-2, to=3-2]
	\arrow[dashed, from=1-3, to=1-4]
	\arrow[dashed, from=2-3, to=2-4]
	\arrow[dashed, from=3-2, to=4-2]
	\arrow[dashed, from=3-1, to=4-1]
\end{tikzcd}\]
in $\Csc$ (where we have used rigidity of $\Rcal'$), hence an $\sfr$-triangle $Q\infl 0\defl J\dashrightarrow$ in $\overline{\Csc}_{\Rcal'}$.
Moreover, $I\notin\Rcal'$ implies $J\neq 0$ and thus $Q\neq 0$ by \cref{Prop: X to 0 to Y consequences} (2).

We first show that $\Rcal'$ has at most two complements.
Let $X$ be a complement for $\Rcal'$.
In particular, $X$ is rigid in $\Csc$, hence also in $\overline{\Csc}_{\Rcal'}$.

By the beginning of the proof, $Q\to 0$ is an inflation in $\overline{\Csc}_{\Rcal'}$.
Thus any left $\add(X)$-approximation $Q\xrightarrow{i}X_0$ in $\overline{\Csc}_{\Rcal'}$ is an inflation and there is an $\sfr$-triangle
$Q\overset{i}{\infl} X_0\defl Y\dashrightarrow$ in $\overline{\Csc}_{\Rcal'}$, where $i$ can be assumed left minimal since $\overline{\Csc}_{\Rcal'}$ is Krull--Schmidt.
It induces an exact sequence $\overline{\Ebb}(X,X_0)\to\overline{\Ebb}(X,Y)\to\overline{\Ebb}^2(X,Q)$, whose first term vanishes since $X$ is rigid and whose third term also vanishes since $\overline{\Csc}_{\Rcal'}$ is hereditary by \cref{Prop: reductions of hereditary extricats}.
There is also an exact sequence $\overline{\Csc}_{\Rcal'}(X_0,X)\to\overline{\Csc}_{\Rcal'}(Q,X)\to\overline{\Ebb}(Y,X)\to\overline{\Ebb}(X_0,X)$ where the first map is surjective since $i$ is a left approximation and where the fourth term is zero since $X$ is rigid.
This shows that $\overline{\Ebb}(X,Y)=0=\overline{\Ebb}(Y,X)$.
Because $\add(\Rcal'\cup\{X\})$ is $\Ebb$-tilting in $\Csc$, $X$ is $\overline{\Ebb}$-tilting by \cref{lemma:E tilting implies Ebar tilting}, hence $Y$ belongs to $\add X$.
% By \cref{Cor: at most 2 complements}, $X$ is either the unique indecomposable projective $Q$ of the reduction of $\overline{\Csc}_{\Rcal'}$ or its unique indecomposable injective $J$.
By \cref{lemma:no common factors} we have $X_0 = 0$ or $Y=0$, but not both (because $Q$ is non-zero).
If $Y=0$, then $X_0$ is isomorphic to $Q$.
Since $X$ is indecomposable, we have $X\in\add X_0$ and thus $X$ is projective.
By \cref{Prop: projective E-tilting}, $X$ is the unique indecomposable projective object in $\overline{\Csc}_{\Rcal'}$.
If $X_0=0$, then \cref{Prop: confX hereditary} shows that $Y$, hence $X$, is injective.
The dual of \cref{Prop: projective E-tilting} implies that $X$ is the unique indecomposable injective object in $\overline{\Csc}_{\Rcal'}$.
Thus $\Rcal'$ has at most two complements.

We next show that $\Rcal'$ has at least two complements.
Note that \cref{Cor: hereditary X infl 0 vs 0 defl X} shows that $Q$ and $J$ cannot be isomorphic (otherwise both would be zero and $\Rcal'$ would already be tilting).
The previous part of the proof shows that $J$ is of the form $I_0^{\oplus r}$ where $I_0$ is the unique indecomposable injective object in $\overline{\Csc}_{\Rcal'}$.
The proof applies to any indecomposable injective $I\in\Csc$ that does not belong to $\Rcal'$.
Hence \cref{corollary: Bongartz for subcategories} implies that $\add(\Rcal'\cup \{Q\})$ is cotilting so that $Q$ is a complement for $\Rcal$.
Similarly, $J$ is also a complement.
Therefore, $\Rcal'$ has at least two complements.
\end{proof}

Under the same assumptions on $\Csc$ as in \cref{Th:mutation} we obtain the following.
\begin{corollary}
\label{Cor:mutation}
 Let $\Rcal$ be a silting subcategory of $\Csc$, $R\in\Rcal$ be any non projective-injective indecomposable object, and $\Rcal' =\add\{S\in\Rcal, S \text{ \emph{indecomposable and} } S\ncong R \}$.
 Then there exists an indecomposable object $R^\ast$, unique up to isomorphism, such that $R^\ast$ is not isomorphic to $R$ and 
 $\add\left(\Rcal'\cup \{R^\ast\}\right)$
 %$(\Rcal\setminus\add R)\oplus \add R^\ast$ \Misha{Is notation correct? Should it be $\add((\Rcal\setminus\add R), R^\ast)$? Is this the same?} 
 is silting.
 Moreover, there is an exchange $\sfr$-triangle $R\infl R'\defl R^\ast \dashrightarrow$ with $R'\in\Rcal'$
 or an exchange $\sfr$-triangle $R^\ast\infl R''\defl R \dashrightarrow$ with $R''\in\Rcal'$, but not both at the same time.
\end{corollary}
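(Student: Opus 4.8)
The plan is to deduce \cref{Cor:mutation} from \cref{Th:mutation}. Since $\Rcal$ is silting, it is tilting by \cref{thm: equivalent versions of tilting}, hence contains every projective--injective object, as any tilting subcategory does (\cref{rk:def tilting}(d)). As $R$ is not projective--injective, $\Rcal'$ still contains all of them; moreover $\add(\Rcal'\cup\{R\})=\Rcal$ is tilting and $R\notin\Rcal'$ (by uniqueness of decompositions into indecomposables), so $\Rcal'$ is almost complete --- with $R$ as a complement --- and is not itself tilting. The remaining hypotheses of \cref{Th:mutation} are part of our standing assumptions, so it applies and gives exactly two complements of $\Rcal'$ up to isomorphism, one of which is $R$; I define $R^\ast$ to be the other. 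For the uniqueness clause: if $R'$ is indecomposable, $R'\ncong R$, and $\add(\Rcal'\cup\{R'\})$ is silting (equivalently tilting), then $R'$ is a complement of $\Rcal'$, hence isomorphic to $R$ or to $R^\ast$, hence to $R^\ast$.

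For the exchange $\sfr$-triangle I would pass to the reduction $\overline{\Csc}_{\Rcal'}$ and use the proof of \cref{Th:mutation}, which shows that the two complements $R$ and $R^\ast$ are, in $\overline{\Csc}_{\Rcal'}$, precisely its unique indecomposable projective and its unique indecomposable injective object, in one order or the other. Assume $R$ is the projective one and $R^\ast$ the injective one; the reverse case is obtained by passing to $\Csc\op$, which is again $0$-Auslander. Feeding a basic Bongartz completion (\cref{corollary: Bongartz for subcategories}) of an indecomposable injective of $\Csc$ not in $\Rcal'$ into the argument of the proof of \cref{Th:mutation}, and using \cref{Prop: X to 0 to Y consequences}(5) to see that the resulting injective end term is indecomposable, one extracts a conflation $R\infl 0\defl R^\ast$ in $\overline{\Csc}_{\Rcal'}$. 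It remains to lift this to $\Csc$: since $\add(\Rcal'\cup\{R\})$ and $\add(\Rcal'\cup\{R^\ast\})$ are rigid, $R$ and $R^\ast$ lie in the subcategory on which the reduction is formed and $\overline{\Ebb}(R^\ast,R)=\Ebb(R^\ast,R)$; realizing the corresponding class in $\Csc$ gives an $\sfr$-triangle $R\infl R'\defl R^\ast\dashrightarrow$ whose middle term $R'$, by the long exact sequences and rigidity of $\add(\Rcal'\cup\{R\})$ and $\add(\Rcal'\cup\{R^\ast\})$, again has no $\Ebb$-extensions with $\Rcal'$ and becomes isomorphic to $0$ in $\overline{\Csc}_{\Rcal'}$; as idempotents split, $R'$ is then a direct summand of an object of $\Rcal'$, so $R'\in\Rcal'$. (The same long exact sequences also show that $R\to R'$ is a left $\Rcal'$-approximation and $R'\to R^\ast$ a right one.) When $R$ is the injective object of $\overline{\Csc}_{\Rcal'}$, the dual argument yields $R^\ast\infl R''\defl R\dashrightarrow$ with $R''\in\Rcal'$ instead.

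Finally, the two kinds of exchange triangle cannot coexist: if both were present in $\Csc$, they would descend --- again using that $R$, $R^\ast$ and $\Rcal'$ all lie in the subcategory underlying the reduction --- to conflations $R\infl 0\defl R^\ast$ and $R^\ast\infl 0\defl R$ in $\overline{\Csc}_{\Rcal'}$; by \cref{Prop: confX hereditary}(1) the object $R$ would then be both projective and injective in $\overline{\Csc}_{\Rcal'}$ and would admit both an inflation $R\infl 0$ and a deflation $0\defl R$, so \cref{Cor: hereditary X infl 0 vs 0 defl X} would force $R\cong 0$ in $\overline{\Csc}_{\Rcal'}$, contradicting $R\notin\Rcal'$. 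The step I expect to be the main obstacle is this shuttling between $\overline{\Csc}_{\Rcal'}$ and $\Csc$ in the construction of the exchange triangle --- in particular, extracting from the Bongartz construction used in \cref{Th:mutation} a single conflation $R\infl 0\defl R^\ast$ with exactly the two complements as end terms (rather than multiples of them), and checking that the middle term of the lifted $\sfr$-triangle genuinely lands in $\Rcal'$ rather than merely in the subcategory on which the reduction is formed.
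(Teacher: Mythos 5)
Your argument is correct, and the existence/uniqueness part is exactly the intended deduction from \cref{Th:mutation}. Where you diverge from the paper is in producing the exchange $\sfr$-triangle: the proof of \cref{Th:mutation} already constructs it \emph{directly in} $\Csc$ --- the (ET4) diagram built from the two Bongartz completions contains the conflation $Q\infl R'\oplus R\defl J$ with $R'\oplus R\in\Rcal'$, and this \emph{is} the exchange conflation, from which $Q\infl 0\defl J$ in $\overline{\Csc}_{\Rcal'}$ is merely a consequence. You instead start from the conflation in the reduction and lift it back by realizing the class in $\Ebb(R^\ast,R)=\overline{\Ebb}(R^\ast,R)$ inside $\Csc$ and arguing that the middle term, being zero in the quotient and a summand-closed idempotent-split situation, lies in $\Rcal'$; this is sound (the middle term does land in $\Rint$ for $\Rcal=\Rcal'$ by the long exact sequences, and an object of $\Rint$ vanishing in $\Rint/[\Rcal']$ is a summand of an object of $\Rcal'$, hence in $\Rcal'$), but it is the roundabout version of what the paper gets for free. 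The one spot where your write-up is slightly loose is the extraction of a conflation $R\infl 0\defl R^\ast$ with \emph{indecomposable} end terms: \cref{Prop: X to 0 to Y consequences}(5) only transfers indecomposability from one end to the other, so you should first note that $R\infl 0$ is an inflation (the split inflation $R\infl Q$ followed by $Q\infl 0$), take its cone, and then apply (5) together with \cref{Prop: confX hereditary} to identify that cone with the unique indecomposable injective $R^\ast$. Your treatment of the ``not both'' clause via \cref{Cor: hereditary X infl 0 vs 0 defl X} is exactly the intended one. Net effect: same skeleton, but the paper's route avoids the back-and-forth between $\Csc$ and $\overline{\Csc}_{\Rcal'}$ that you correctly identify as the delicate step, at the price of carrying the explicit (ET4) diagram; your route is more modular but needs the lifting lemma you supply.
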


Applying~\cref{Cor:mutation} to the specific examples listed in \cref{ssection:examples0-Auslander}, we recover:
\begin{itemize}
 \item Cluster-tilting theory~\cite{BuanMarshReinekeReitenTodorov,IyamaYoshino}.
 \item 2-term silting mutation~\cite{Kimura}.
 \item Relative tilting theory~\cite{YangZhu}.
 \item Mutation of intermediate co-$t$-structures~\cite{AiharaIyama,BrustleYang, KoenigYang} (see \cref{section:ExtendedCohearts}).
 \item Via bijections in \cite{PauksztelloZvonareva} and in \cite{liu2020hereditary},  we recover mutations of functorially finite torsion classes and support $\tau$-tilting subcategories in $\tau$-tilting theory (see  \cref{rem: PZ_torsion_tau-tilting}).
 %\Misha{Does it make sense to mention relations to other mutations in $\tau$-tilting theory, as in \cref{rem: PZ_torsion_tau-tilting}? Not sure.}
 \item Combinatorics of walks and non-kissing~\cite{McConville,BrustleDouvilleMousavandThomasYildirim,PaluPilaudPlamondon-nonkissing} (see \cref{section:gentle}).
 \item Mutation of maximal almost rigid modules in type A \cite{BarnardGunawanMeehanSchiffler}.
\end{itemize}

%\subsection{Maximal rigid subcategories and torsion classes}

%Mutation for (non-functorially finite? c.f. Demonet--Chan) torsion classes %?

\section{Tilting subcategories, cotorsion pairs and intermediate co-$t$-structures}
\label{section:ExtendedCohearts}

In this section, we apply \cref{Th:mutation} to extended cohearts of co-$t$-structures.
As a consequence, there is a well-behaved notion of mutation for intermediate co-$t$-structures extending the previously known one from the bounded case.
%\Misha{For bounded co-t-structures, at least in some generality, this recovers known mutations explained in \cite{BrustleYang}.}

\subsection{Cotorsion pairs in 0-Auslander extriangulated categories}

Let $\CEs$ be a 0-Auslander extriangulated category. Recall the following
\begin{definition}
\label{def:ccp}
	A complete cotorsion pair in an extriangulated category $\Csc$ is a pair $(\Xcal,\Ycal)$ of (strictly) full subcategories closed under direct summands and satisfying:
	\begin{enumerate}[(a)]
		\item $\Xcal\perp_{\Ebb}\Ycal$: for any $X\in\Xcal$ and any $Y\in\Ycal$, we have $\Ebb(X,Y)=0$.
		\item $\Csc=\Cone(\Ycal,\Xcal)$: for any $C\in\Csc$, there is an $\sfr$-triangle $Y\infl X\defl C\dashrightarrow$ with $X\in\Xcal$ and $Y\in\Ycal$.
		\item $\Csc=\Fib(\Ycal,\Xcal)$: for any $C\in\Csc$, there is an $\sfr$-triangle $C\infl Y\defl X\dashrightarrow$ with $X\in\Xcal$ and $Y\in\Ycal$.
	\end{enumerate}

\end{definition}

We note that it follows from the definition that, if $(\Xcal,\Ycal)$ is a complete cotorsion pair, then $\Ycal = \Xcal^{\perp_\Ebb}$ and $\Xcal=\,^{\perp_\Ebb}\Ycal$.
It follows that:
\begin{itemize}
	\item $\Xcal$ and $\Ycal$ uniquely determine each other,
	\item both $\Xcal$ and $\Ycal$ are extension-closed,
	\item $\Proj_{(\C,\E)} \subseteq \Xcal$ and $\Inj_{(\C, \E)} \subseteq \Ycal$,
	\item if $\Csc$ is hereditary, in any $\sfr$-triangle
	\[Y\infl X\defl C\dashrightarrow\]
	with $X\in\Xcal$ and $Y\in\Ycal$, we have $Y\in\Xcal\cap\Ycal$; and
	in any $\sfr$-triangle
	\[C\infl Y\defl X\dashrightarrow\]
	with $X\in\Xcal$ and $Y\in\Ycal$, we have $X\in\Xcal\cap\Ycal$.
\end{itemize}

In order to apply \cref{Th:mutation} to intermediate co-$t$-structures, we will need the following result (which is a specific case of \cite[Theorem 5.7]{AdachiTsukamoto}):

\begin{theorem}[Adachi--Tsukamoto]
\label{thm:cCP}
	Let $\Csc$ be a 0-Auslander extriangulated category.
	Write $\cCP(\Csc)$ for the class of all complete cotorsion pairs in $\Csc$, and $\Tilt(\Csc)$ for the class of all tilting subcategories of $\Csc$. 
	There are mutually inverse bijections
	\begin{eqnarray*}
		\vph_1: \cCP(\Csc) & \leftrightarrow & \Tilt(\Csc) :\psi_1,
	\end{eqnarray*}
where 
\[\vph_1(\Xcal,\Ycal)=\Xcal\cap\Ycal \text{ and}\]
\[\psi_1(\Rcal)=(^{\perp_{\Ebb}}\Rcal,\Rcal^{\perp_\Ebb}) = (\copr\Rcal,\pr\Rcal).\]
\end{theorem}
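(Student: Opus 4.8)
The plan is to verify that the two assignments $\varphi_1$ and $\psi_1$ are well-defined and mutually inverse, reducing everything to statements already established in \cref{thm: equivalent versions of tilting}. First I would check that $\psi_1$ lands in $\cCP(\Csc)$: given a tilting subcategory $\Rcal$, I must show that $({}^{\perp_{\Ebb}}\Rcal,\Rcal^{\perp_{\Ebb}})$ is a complete cotorsion pair and that it coincides with $(\copr\Rcal,\pr\Rcal)$. The orthogonality axiom (a) is immediate from the definitions. For the completeness axioms (b) and (c), I would invoke the equivalences $(0)\Leftrightarrow(\ref{def: tilting})\Leftrightarrow(\ref{def: cotilting})\Leftrightarrow(\ref{def: strongly torsion projective})\Leftrightarrow(\ref{def: strongly torsion projective}')$ of \cref{thm: equivalent versions of tilting}: since $\Rcal$ is tilting it is strongly torsion-projective, i.e. $\Rcal^{\perp_{\Ebb}}=\pr\Rcal$, and dually $^{\perp_{\Ebb}}\Rcal=\copr\Rcal$. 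So $\Rcal^{\perp_{\Ebb}}=\pr\Rcal=\Cone(\Rcal,\Rcal)$, and because $\Rcal$ is rigid (hence extension-closed) and $\Csc$ is hereditary, an $\sfr$-triangle $R_1\infl R_0\defl X\dashrightarrow$ exhibiting $X\in\pr\Rcal$ already has all terms expressing $X$ as an object of $\Cone({}^{\perp_{\Ebb}}\Rcal,\Rcal)$ (the kernel term $R_1$ lies in $\Rcal\subseteq{}^{\perp_{\Ebb}}\Rcal$), giving (b); axiom (c) is dual, using $^{\perp_{\Ebb}}\Rcal=\copr\Rcal$. One also needs $\Rcal = {}^{\perp_{\Ebb}}\Rcal \cap \Rcal^{\perp_{\Ebb}}$, i.e. that $\Rcal$ is $\Ebb$-tilting, which is the implication $(\ref{def: tilting})\Rightarrow(\ref{def: E-tilting})$ of \cref{thm: equivalent versions of tilting}; this shows $\varphi_1\psi_1 = \id$.

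Next I would check that $\varphi_1$ lands in $\Tilt(\Csc)$: given a complete cotorsion pair $(\Xcal,\Ycal)$, set $\Rcal=\Xcal\cap\Ycal$. This is a strictly full additive subcategory closed under summands, and it is rigid because $\Xcal\perp_{\Ebb}\Ycal$ forces $\Ebb(\Rcal,\Rcal)=0$. To see $\Rcal$ is tilting I use condition (\ref{def: tilting}): given a projective $P\in\Csc$, axiom (c) of the cotorsion pair gives an $\sfr$-triangle $P\infl Y\defl X\dashrightarrow$ with $X\in\Xcal$, $Y\in\Ycal$; since $\Csc$ is hereditary and $(\Xcal,\Ycal)$ is a cotorsion pair, the remark following \cref{def:ccp} tells us $X\in\Xcal\cap\Ycal=\Rcal$; and the long exact sequence together with $\Ebb(X,-)$ right-exact and projectivity of $P$ forces $Y\in\Xcal^{\perp_{\Ebb}}=\Ycal$ combined with $Y$ being a cone of $X\to \susp P$... more directly, $Y$ sits in the $\sfr$-triangle $P\infl Y\defl X$, so applying $\Ebb(-,\Ycal)$ and using $\Ebb(P,\Ycal)=0$, $\Ebb(X,\Ycal)=0$ gives $\Ebb(Y,\Ycal)=0$, so $Y\in{}^{\perp_{\Ebb}}\Ycal=\Xcal$, whence $Y\in\Rcal$ as well. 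Thus $P\infl Y\defl X$ with both $Y,X\in\Rcal$, which is exactly condition (\ref{def: tilting}). Hence $\Rcal$ is tilting.

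Finally, $\psi_1\varphi_1=\id$: starting from $(\Xcal,\Ycal)\in\cCP(\Csc)$ with $\Rcal=\Xcal\cap\Ycal$, I must show $^{\perp_{\Ebb}}\Rcal=\Xcal$ and $\Rcal^{\perp_{\Ebb}}=\Ycal$. The inclusions $\Xcal\subseteq{}^{\perp_{\Ebb}}(\Xcal\cap\Ycal)$ and $\Ycal\subseteq(\Xcal\cap\Ycal)^{\perp_{\Ebb}}$ are trivial from $\Xcal\perp_{\Ebb}\Ycal$. For the reverse inclusion, take $C\in{}^{\perp_{\Ebb}}\Rcal=\Rcal^{\perp_{\Ebb},\mathrm{left}}$... take $C\in\Csc$ with $\Ebb(C,\Rcal)=0$; by axiom (c) there is $C\infl Y\defl X\dashrightarrow$ with $X\in\Xcal,Y\in\Ycal$, and by the heredity remark $X\in\Rcal$, so $\Ebb(C,X)=0$ makes this $\sfr$-triangle split, exhibiting $C$ as a summand of $Y\in\Ycal$; wait, I want $C\in\Xcal$, so instead I use axiom (b): $Y'\infl X'\defl C\dashrightarrow$ with $X'\in\Xcal$, $Y'\in\Rcal$ (by heredity), and since $\Ebb(C,\Rcal)=0$... this is not quite the right orthogonality direction, so I would instead argue: apply $\Ebb(-,\Rcal)$ — actually the clean route is to use that $({}^{\perp_{\Ebb}}\Rcal,\Rcal^{\perp_{\Ebb}})$ is itself a complete cotorsion pair (established in the first paragraph via $\Rcal$ being tilting) containing $(\Xcal,\Ycal)$ in the sense $\Xcal\subseteq{}^{\perp_{\Ebb}}\Rcal$ and $\Ycal\subseteq\Rcal^{\perp_{\Ebb}}$; two complete cotorsion pairs with $\Xcal\subseteq\Xcal'$ and $\Ycal\subseteq\Ycal'$ must be equal, because $\Xcal={}^{\perp_{\Ebb}}\Ycal\supseteq{}^{\perp_{\Ebb}}\Ycal'=\Xcal'$ and dually. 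The main obstacle I expect is bookkeeping the heredity-enhanced cotorsion pair properties (the ``$X\in\Xcal\cap\Ycal$ in any such $\sfr$-triangle'' remark and its use to pin down the middle and outer terms), and making sure the orthogonality directions in $\psi_1\varphi_1=\id$ are applied consistently; none of this is deep, but it requires care, and much of it is simply quoting \cite{AdachiTsukamoto} since the theorem is attributed to them.
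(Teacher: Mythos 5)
Your overall architecture matches the paper's (well-definedness of both maps, then the two composites), and several pieces are fine: the verification that $\Xcal\cap\Ycal$ is tilting via axiom (c) plus the heredity remark after \cref{def:ccp} is essentially the paper's argument; $\vph_1\circ\psi_1=\id$ via (\ref{def: tilting})$\Rightarrow$(\ref{def: E-tilting}) is exactly what the paper does; and your ``sandwich'' argument for $\psi_1\circ\vph_1=\id$ (two complete cotorsion pairs with componentwise inclusions must coincide, since each class is the orthogonal of the other) is a clean alternative to the paper's direct computation with $Y'\infl X'\defl Y$ and the long exact sequence.

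However, there is a genuine gap in your verification that $\psi_1(\Rcal)=(\copr\Rcal,\pr\Rcal)$ is a \emph{complete} cotorsion pair, and your $\psi_1\vph_1=\id$ argument leans on exactly this fact. Axiom (b) demands that \emph{every} object $C\in\Csc$ admits an $\sfr$-triangle $Y\infl X\defl C\dashrightarrow$ with $Y\in\pr\Rcal$ and $X\in\copr\Rcal$; your argument only produces such a triangle (namely $R_1\infl R_0\defl X$) for objects $X$ that already lie in $\pr\Rcal$, which is the class $\Ycal$ itself, not all of $\Csc$. The identities $\Rcal^{\perp_\Ebb}=\pr\Rcal$ and ${}^{\perp_\Ebb}\Rcal=\copr\Rcal$ from \cref{thm: equivalent versions of tilting} describe the orthogonal classes, but say nothing about resolving an arbitrary object of $\Csc$ by them. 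The paper's proof of (b) is the one substantive step: for arbitrary $C$ one takes a projective resolution $P_1\infl P_0\defl C$, uses the tilting presentations $P_1\infl R_0\defl R_1$ and $P_0\infl R'_0\defl R'_1$ of the two projectives, and applies \cref{lemma:shiftedOctahedron} twice to manufacture a middle term $E$ with $R_0\infl E\defl C$ and $E\infl R'_0\oplus R_1\defl R'_1$, so that $E\in\copr\Rcal$; axiom (c) is dual. (A smaller quibble: the orthogonality axiom (a) for $({}^{\perp_\Ebb}\Rcal,\Rcal^{\perp_\Ebb})$ is not literally ``immediate from the definitions'' either; it needs the presentation $R_1\infl R_0\defl Y$ of $Y\in\pr\Rcal$ together with $\Ebb^2=0$, though this is a one-line argument.) Without axioms (b) and (c), $\psi_1$ is not known to land in $\cCP(\Csc)$, and your proof of $\psi_1\vph_1=\id$ collapses as well.
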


\begin{proof}
This is a specific case of \cite[Theorem 5.7]{AdachiTsukamoto}.
For sake of completeness, we include a proof, which becomes quite easy in our specific setting.

	$\bullet$ The map $\vph_1$ is well-defined: Let $(\Xcal,\Ycal)$ be a complete cotorsion pair in $\Csc$, and let $\Rcal = \Xcal\cap\Ycal$.
	We claim that $\Rcal$ is a tilting subcategory of $\Csc$.
By (a) in the definition of $\cCP(\Csc)$, $\Rcal$ is rigid.
For any projective object $P\in\Csc$, consider an $\sfr$-triangle $P\infl Y\defl X\dashrightarrow$ with $X\in\Xcal$ and $Y\in\Ycal$.
In the exact sequence
\[\Ebb(X,-)|_{\Ycal}\to\Ebb(Y,-)|_{\Ycal}\to\Ebb(P,-)|_{\Ycal},\]
the left-most term vanishes since $\Xcal\perp_\Ebb\Ycal$ and the right-most term vanishes since $P$ is projective.
Thus $\Ebb(Y,-)|_{\Ycal}=0$ showing that $Y\in\Xcal$, hence $Y\in\Rcal$.
Moreover, in the exact sequence
\[\Ebb(-,Y)|_\Xcal\to\Ebb(-,X)|_\Xcal\to\Ebb^2(-,P)|_\Xcal\]
the left-most term vanishes since $\Xcal\perp_\Ebb\Ycal$ and the right-most term vanishes since $\Csc$ has global dimension at most one.
Hence $\Ebb(-,X)|_\Xcal = 0$ and $X\in\Rcal$.
This shows that $\Rcal$ is a tilting subcategory of $\Csc$.

	$\bullet$ The map $\psi_1$ is well-defined: Let $\Rcal$ be a tilting subcategory of $\Csc$ and let $\Xcal = \copr\Rcal$, $\Ycal=\pr\Rcal$.
	By \cref{thm: equivalent versions of tilting}, $\Rcal$ is both strongly torsion projective and strongly cotorsion injective.
	We thus have $^{\perp_\Ebb}\Rcal = \copr\Rcal$ and $\Rcal^{\perp_\Ebb}=\pr\Rcal$.
	We now check that $(\Xcal,\Ycal)$ satisfies (a) (b) and (c) in \cref{def:ccp}.
	
	(a) For any $X\in{^{\perp_\Ebb}\Rcal}$, and any $Y\in\pr\Rcal$, there is an $\sfr$-triangle $R_1\to R_0\to Y\dashrightarrow$, with $R_0,R_1\in\Rcal$.
	In the exact sequence
	\[\Ebb(X,R_0)\to\Ebb(X,Y)\to\Ebb^2(X,R_1),\]
	both end-terms vanish, hence $\Xcal\perp_\Ebb\Ycal$.
	
	(b) We prove that $\Csc=\Cone(\Rcal,\copr\Rcal)$: For any $C\in\Csc$, there is an $\sfr$-triangle $P_1\infl P_0\defl C\dashrightarrow$ with $P_0,P_1$ projective in $\Csc$.
	Since $\Rcal$ is tilting, there are $\sfr$-triangles $P_1\infl R_0\defl R_1\dashrightarrow$ and $P_0\infl R'_0\defl R'_1\dashrightarrow$ with $R_0,R_1,R'_0,R'_1\in\Rcal$.
	Applying \cref{lemma:shiftedOctahedron} twice, we obtain the following diagrams of morphisms of $\sfr$-triangles

	\[\begin{tikzcd}
	{P_1} & {P_0} & C & {} && {P_0} & E & {R_1} & {} \\
	{R_0} & E & C & {} && {R'_0} & {R'_0\oplus R_1} & {R_1} & {} \\
	{R_1} & {R_1} &&&& {R'_1} & {R'_1} \\
	{} & {} &&&& {} & {}
	\arrow[tail, from=1-1, to=1-2]
	\arrow[two heads, from=1-2, to=1-3]
	\arrow[dashed, from=1-3, to=1-4]
	\arrow[tail, from=2-1, to=2-2]
	\arrow[two heads, from=2-2, to=2-3]
	\arrow[dashed, from=2-3, to=2-4]
	\arrow[tail, from=1-1, to=2-1]
	\arrow[tail, from=1-2, to=2-2]
	\arrow[Rightarrow, no head, from=1-3, to=2-3]
	\arrow[two heads, from=2-1, to=3-1]
	\arrow[two heads, from=2-2, to=3-2]
	\arrow[Rightarrow, no head, from=3-1, to=3-2]
	\arrow[dashed, from=3-1, to=4-1]
	\arrow[dashed, from=3-2, to=4-2]
	\arrow[tail, from=1-6, to=1-7]
	\arrow[two heads, from=1-7, to=1-8]
	\arrow[tail, from=1-6, to=2-6]
	\arrow[tail, from=1-7, to=2-7]
	\arrow[two heads, from=2-6, to=3-6]
	\arrow[two heads, from=2-7, to=3-7]
	\arrow[dashed, from=3-6, to=4-6]
	\arrow[dashed, from=3-7, to=4-7]
	\arrow[Rightarrow, no head, from=3-6, to=3-7]
	\arrow[tail, from=2-6, to=2-7]
	\arrow[two heads, from=2-7, to=2-8]
	\arrow[dashed, from=1-8, to=1-9]
	\arrow[dashed, from=2-8, to=2-9]
	\arrow[Rightarrow, no head, from=1-8, to=2-8]
	\end{tikzcd}\]
	showing that $C$ belongs to $\Cone(\Rcal,\copr\Rcal)$.
	
	(c) We have $\Csc=\Fib(\pr\Rcal,\Rcal)$: This is dual to (b) and left to the reader.
	
	$\bullet$ We have $\vph_1\circ\psi_1 = 1$: By \cref{thm: equivalent versions of tilting}, any tilting subcategory $\Rcal$ is $\Ebb$-tilting, hence $\Rcal^{\perp_\Ebb} \cap\,\!^{\perp_\Ebb}\Rcal=\Rcal$.
	
	$\bullet$ We have $\psi_1\circ\vph_1=1$: Let $(\Xcal,\Ycal)$ be a complete cotorsion pair. 
	We have to show that $^{\perp_\Ebb}(\Xcal\cap\Ycal) =\,\! ^{\perp_\Ebb}\Ycal$ and that $(\Xcal\cap\Ycal)^{\perp_\Ebb}=\Xcal^{\perp_\Ebb}$.
	We only prove the first equality, the second one being dual.
	Let $U\in\,\!^{\perp_\Ebb}(\Xcal\cap\Ycal)$ and let $Y\in\Ycal$.
	There is an $\sfr$-triangle $Y'\infl X'\defl Y\dashrightarrow$ with $X'\in\Xcal$ and $Y'\in\Ycal$. 
	Because $\Ycal$ is extension-closed in $\Csc$, we have $X'\in\Xcal\cap\Ycal$, which implies that, in the exact sequence
	\[\Ebb(U,X')\to\Ebb(U,Y)\to\Ebb^2(U,Y')\]
	both end terms vanish.
	Thus $\Ebb(U,Y)=0$.
\end{proof}

%In our setting of $0$-Auslander extriangulated categories, some of the properties listed above are in fact sufficient for a pair $(\Xcal,\Ycal)$ to be a complete cotorsion pair, as the following generalization of results of \cite{PauksztelloZvonareva} shows.

%\begin{lemma} (cf. \cite[Lemma 3.3, Remark 3.4]{PauksztelloZvonareva})
%Let $(\Xcal, \Ycal)$  be a pair of (strictly) full subcategories satisfying:
%\begin{enumerate}[(a)]
%    \item $\Xcal\perp_{\Ebb}\Ycal$.
%    \item Both $\Xcal$ and $\Ycal$ are closed under extensions and direct summands.
%    \item $\Proj_{(\C,\E)} \subseteq \Xcal$ and $\Inj_{(\C, \E)} \subseteq \Ycal$.
%\end{enumerate}
%Then the following are equivalent:
%\begin{enumerate}[(i)]
%    \item $\Proj_{(\C,\E)} \subseteq \Fib(\Ycal,\Xcal)$;
 %   \item $\Inj_{(\C, \E)} \subseteq \Cone(\Ycal, \Xcal)$;
%    \item $(\Xcal, \Ycal)$ is a complete cotorsion pair;
%    \item $\Xcal \cap \Ycal$ is tilting.
%\end{enumerate}
%\end{lemma}

%\begin{proof}

%\end{proof}

\subsection{Extended cohearts and mutation of intermediate co-$t$-structures}

In this section, we let $\Dsc$ be a triangulated category. It is not assumed to be essentially small or $R$-linear except when stated otherwise. 
%\Misha{Check.} 
We first recall the necessary definitions and one of the main results in~\cite{PauksztelloZvonareva}.

\begin{definition}[\cite{Pauksztello,Bondarko}]
	A \defn{co-$t$-structure}, or a \defn{weight structure}, on $\Dsc$ is a pair $(\Acal,\Bcal)$ of full subcategories, closed under summands, such that:
	\begin{enumerate}[(i)]
 	 \item $\Acal\perp\Bcal$: for any $A\in\Acal$ and any $B\in\Bcal$, we have $\Dsc(A,B)=0$,
 	 \item $\Dsc=\Acal\ast\Bcal$: for any $D\in\Dsc$ there is a triangle $A\to D\to B\to\Sigma A$, with $A\in\Acal$ and $B\in\Bcal$.
 	 \item $\Acal\subseteq\Sigma\Acal$ (or equivalently, $\Sigma\Bcal\subseteq\Bcal$).
	\end{enumerate}
 A co-t-structure $(\Acal,\Bcal)$ is \defn{bounded} if
\[
\bigcup_{i\in\mathbb{Z}} \susp^i \Acal = \bigcup_{i\in\mathbb{Z}} \susp^i \Bcal = 
\Dsc.
\]
If $(\Acal,\Bcal)$ is a co-$t$-structure, its \defn{coheart} is the full subcategory $\Scal=(\Sigma\Acal) \cap \Bcal$, and its \defn{extended coheart} is the full subcategory $\Csc=\Scal\ast\Sigma\Scal = \Sigma^2\Acal\cap\Bcal$.
\end{definition}

As for cotorsion pairs, the subcategories $\Acal,\Bcal$ forming a co-$t$-structure determine each other and are closed under extensions (see the comments after \cref{def:ccp}), hence $\Csc$ is extension-closed in $\Dsc$ and inherits an extriangulated structure.

Write $\Cot(\Dsc)$ for the collection of all co-$t$-structures on $\Dsc$.
If $(\Acal,\Bcal)$ is a fixed co-$t$-structure on $\Dsc$, write 
$\Cot_{(\Acal,\Bcal)}(\Dsc)$ for the collection of \defn{intermediate} co-$t$-structures:
\[\Cot_{(\Acal,\Bcal)}(\Dsc) = \{(\Acal',\Bcal')\in\Cot(\Dsc)\;|\; \Acal\subseteq\Acal'\subseteq\Sigma\Acal\}.\]
We note that, if $(\Acal,\Bcal)$ and $(\Acal',\Bcal')$ are co-$t$-structures, the condition $\Acal\subseteq\Acal'\subseteq\Sigma\Acal$ is equivalent to the condition $\Sigma\Bcal\subseteq\Bcal'\subseteq\Bcal$. Also, we have both $\Acal\subseteq\Acal'$ and $\Bcal\subseteq\Bcal'$ if and only if the co-$t$-structures coincide.

\begin{remark} \label{rem:BoundedCoT}
A coheart of a co-$t$-structure on $\Dsc$ is a weakly idempotent complete additive category, which is not necessarily idempotent complete or abelian (see \cite{bondarko2020hearts} for the details).
It is always a presilting subcategory of $\Dsc$. It is silting in $\Dsc$ if and only if the co-$t$-structure is bounded; moreover, there is a bijection between silting subcategories and bounded co-$t$-structures, see \cite[Corollary 5.9]{hernandez2013auslander}. Each co-$t$-structure restricts to a bounded co-$t$-structure on $\thick(\Scal)$, and this is the largest thick subcategory of $\Dsc$ which admits a bounded co-$t$-structure restricted from a given one. A co-$t$-structure is bounded if and only if each its intermediate co-$t$-structure is bounded, since we have $\thick(\Scal') = \thick(\Scal)$ for each intermediate co-$t$-structure with coheart $\Scal'$, or, equivalently, the coheart of each intermediate co-$t$-structure is silting in $\thick(\Scal)$.
\end{remark}

\begin{theorem}[Pauksztello--Zvonareva \cite{PauksztelloZvonareva}]
\label{thm:PZ}
Let $\Dsc$ be a triangulated category and let $(\Acal,\Bcal)$ be a co-$t$-structure on $\Dcal$.
There are mutually inverse bijective correspondences
	\begin{eqnarray*}
		\vph_2: \Cot_{(\Acal,\Bcal)}(\Dsc) & \leftrightarrow & \cCP(\Csc) :\psi_2,
	\end{eqnarray*}
where
\[\vph_2(\Acal',\Bcal')=(\Bcal\cap\Sigma\Acal',\Bcal'\cap\Sigma^2\Acal) \text{ and}\]
\[\psi_2(\Xcal,\Ycal)=\left(\add(\Sigma^{-1}\Acal\ast\Sigma^{-1}\Xcal),\add(\Ycal\ast\Sigma^2\Bcal)\right).\]
\end{theorem}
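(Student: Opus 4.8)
The plan is to verify the four assertions implicit in the statement: that $\vph_2$ maps $\Cot_{(\Acal,\Bcal)}(\Dsc)$ into $\cCP(\Csc)$, that $\psi_2$ maps $\cCP(\Csc)$ into $\Cot_{(\Acal,\Bcal)}(\Dsc)$, and that $\vph_2$ and $\psi_2$ are mutually inverse; this is the theorem of Pauksztello--Zvonareva, and I would reproduce its proof as follows. Throughout I would use the standard closure and orthogonality properties of the subcategories of a co-$t$-structure ($\Acal,\Bcal$, and $\Acal',\Bcal'$, are closed under extensions and summands, $\Sigma^{-1}\Acal\subseteq\Acal$, $\Sigma\Bcal\subseteq\Bcal$, and $\Dsc(\Acal,\Bcal)=0$), the identification $\Ebb_\Csc(-,-)\cong\Dsc(-,\Sigma-)$ valid since $\Csc$ is extension-closed in $\Dsc$, the fact that $\Csc$ is hereditary (it is reduced $0$-Auslander), and the fact that the projectives of $\Csc$ are exactly the coheart $\Scal$ and the injectives are $\Sigma\Scal$, so that every complete cotorsion pair $(\Xcal,\Ycal)$ in $\Csc$ satisfies $\Scal\subseteq\Xcal$ and $\Sigma\Scal\subseteq\Ycal$ (recorded after \cref{def:ccp}). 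The main splicing tool is the octahedral axiom applied to truncation triangles.

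\textbf{$\vph_2$ is well-defined.} Write $\Xcal=\Bcal\cap\Sigma\Acal'$ and $\Ycal=\Bcal'\cap\Sigma^2\Acal$. From $\Acal\subseteq\Acal'\subseteq\Sigma\Acal$ and $\Sigma\Bcal\subseteq\Bcal'\subseteq\Bcal$, both subcategories lie in $\Sigma^2\Acal\cap\Bcal=\Csc$ and are closed under summands. Orthogonality is immediate: for $X\in\Xcal$ and $Y\in\Ycal$ one has $\Sigma^{-1}X\in\Acal'$ and $Y\in\Bcal'$, hence $\Ebb_\Csc(X,Y)=\Dsc(\Sigma^{-1}X,Y)=0$. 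For the two completeness conditions (b) and (c) of \cref{def:ccp}, given $C\in\Csc$ I would first take the truncation triangle of $C$ with respect to $(\Acal',\Bcal')$, then truncate its two outer terms with respect to suitable shifts of $(\Acal,\Bcal)$ — using $C\in\Sigma^2\Acal\cap\Bcal$ to keep the intermediate objects inside $\Csc$ — and splice the resulting triangles by the octahedral axiom, producing the required $\sfr$-triangles $Y\infl X\defl C$ and $C\infl Y\defl X$ with outer terms in $\Xcal$ and $\Ycal$.

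\textbf{$\psi_2$ is well-defined.} Set $\Acal'=\add(\Sigma^{-1}\Acal\ast\Sigma^{-1}\Xcal)$ and $\Bcal'=\add(\Ycal\ast\Sigma^2\Bcal)$. Orthogonality $\Dsc(\Acal',\Bcal')=0$ reduces, by stability of Hom-vanishing under extensions and summands, to the four corners: $\Dsc(\Sigma^{-1}\Acal,\Ycal)=0$ and $\Dsc(\Sigma^{-1}\Acal,\Sigma^2\Bcal)=0$ since $\Sigma^{-i}\Acal\subseteq\Acal$ while $\Ycal,\Sigma^2\Bcal\subseteq\Bcal$; $\Dsc(\Sigma^{-1}\Xcal,\Ycal)=\Ebb_\Csc(\Xcal,\Ycal)=0$ by the cotorsion pair; and $\Dsc(\Sigma^{-1}\Xcal,\Sigma^2\Bcal)=\Dsc(\Sigma^{-3}\Xcal,\Bcal)=0$ because $\Sigma^{-3}\Xcal\subseteq\Sigma^{-1}\Acal\subseteq\Acal$. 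For intermediacy, $\Acal'\subseteq\Sigma\Acal$ follows from $\Sigma\Acal'\subseteq\add(\Acal\ast\Xcal)\subseteq\Sigma^2\Acal$, and $\Acal\subseteq\Acal'$ follows by truncating any $A\in\Acal$ with respect to $(\Sigma^{-1}\Acal,\Sigma^{-1}\Bcal)$ and noting the cone then lies in $\Acal\cap\Sigma^{-1}\Bcal=\Sigma^{-1}\Scal\subseteq\Sigma^{-1}\Xcal$. Axiom (iii), i.e.\ $\Sigma\Bcal'\subseteq\Bcal'$, follows from $\Csc=\Scal\ast\Sigma\Scal$ together with $\Sigma\Scal\subseteq\Ycal$ and $\Sigma^2\Scal\subseteq\Sigma^2\Bcal$, since then $\Sigma\Ycal\subseteq\Sigma\Scal\ast\Sigma^2\Scal\subseteq\Ycal\ast\Sigma^2\Bcal\subseteq\Bcal'$. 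For the covering axiom $\Dsc=\Acal'\ast\Bcal'$, given $D\in\Dsc$ I would peel off a $\Sigma^2\Bcal$-part by truncating a suitable shift of $D$, truncate the remainder with respect to $(\Acal,\Bcal)$ so that its cone returns into $\Csc$, apply completeness of $(\Xcal,\Ycal)$ inside $\Csc$ to split that cone into an $\Xcal$-part and a $\Ycal$-part, and reassemble with the octahedral axiom, bundling the $\Sigma^{-1}\Acal$- and $\Xcal$-contributions into $\Acal'$ and the $\Ycal$- and $\Sigma^2\Bcal$-contributions into $\Bcal'$.

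\textbf{Mutual inverseness and the main obstacle.} Starting from $(\Xcal,\Ycal)\in\cCP(\Csc)$, one checks $\Bcal\cap\Sigma\Acal'=\Xcal$ and $\Bcal'\cap\Sigma^2\Acal=\Ycal$ directly, using extension-closure of $\Xcal$ and $\Ycal$ and the completeness triangles in $\Csc$; conversely, from an intermediate $(\Acal',\Bcal')$ one recovers it from $\vph_2(\Acal',\Bcal')$ by reorganizing the $(\Acal',\Bcal')$-truncation of an arbitrary object through the co-$t$-structure $(\Acal,\Bcal)$ and the extended-coheart decomposition $\Csc=\Scal\ast\Sigma\Scal$. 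I expect the main obstacle to be the bookkeeping in the two well-definedness steps — especially the covering axiom for $\psi_2$ and the completeness conditions (b), (c) for $\vph_2$: one must assemble multi-step filtrations by repeated octahedra and verify at each stage that every intermediate object lands in exactly the prescribed shift of $\Acal$ or $\Bcal$, in particular inside $\Csc$. Heredity of $\Csc$ and the identification of $\Proj\Csc$ and $\Inj\Csc$ with $\Scal$ and $\Sigma\Scal$ are precisely what make the shift-matching close up; without them the filtrations do not terminate in the claimed subcategories.
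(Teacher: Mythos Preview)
The paper does not give a proof of this theorem; it is stated as a result of Pauksztello--Zvonareva and cited from \cite{PauksztelloZvonareva}, with no proof or sketch supplied. So there is nothing in the paper to compare your proposal against. Your outline is a reasonable reconstruction of how such a bijection is established (orthogonality via shift calculus, completeness via iterated truncation and the octahedral axiom, inverseness by unwinding), and it is essentially the strategy used in the source paper. One small remark: you invoke that $\Csc$ is reduced $0$-Auslander to justify heredity and the identification of projectives and injectives with $\Scal$ and $\Sigma\Scal$, but in the present paper this is only recorded in \cref{lem:ExtendedCoheart0Ausl}, which appears \emph{after} \cref{thm:PZ} and under the additional hypothesis that $\Dsc$ is essentially small and $R$-linear; the Pauksztello--Zvonareva proof does not rely on the $0$-Auslander machinery and works directly with the co-$t$-structure axioms, so it would be cleaner to phrase those facts (e.g.\ $\Dsc(\Scal,\Sigma\Csc)=0$) directly in terms of the orthogonality $\Dsc(\Acal,\Bcal)=0$ rather than appealing to heredity of $\Csc$.
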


\begin{remark}
	The bijective correspondence $\vph_2$ moreover sends cohearts to cores: with the notations of the theorem, let $(\Acal',\Bcal')\in\Cot_{(\Acal,\Bcal)}(\Dsc)$ and let $(\Xcal,\Ycal)=\vph_2(\Acal',\Bcal')$. Then $\Xcal\cap\Ycal = \Scal' = \Sigma\Acal'\cap\Bcal'$.
 %\Misha{Cores have not been defined. %Also, shouldn't it be $\Scal' = \Sigma\Acal'\cap\Bcal'$?
% }
%\Yann{The meaning of the first sentence is explained after the colons.}
\end{remark}

Note that this result has the following nice corollary. It might be known to experts, but we were not able to find this statement in the literature.

\begin{corollary} \label{cor:RestrictionExtensionCoT}
Let $\Dsc$ be a triangulated category and let $(\Acal,\Bcal)$ be a co-$t$-structure on $\Dcal$ with %\yan{\sout{the}} 
coheart $\Scal$. Then for each co-$t$-structure $(\Acal'',\Bcal'')$ on $\Tsc := \thick(\Scal)$ intermediate with respect to the restriction $(\Acal \cap \Tsc,\Bcal \cap \Tsc)$ of $(\Acal,\Bcal)$, there exists a unique co-$t$-structure $(\Acal',\Bcal') \in \Cot_{(\Acal,\Bcal)}(\Dsc)$ extending $(\Acal'',\Bcal'')$.
\end{corollary}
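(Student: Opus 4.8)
The plan is to deduce \cref{cor:RestrictionExtensionCoT} by transporting the statement across the chain of bijections already established, namely \cref{thm:cCP} for the extended coheart $\Csc = \Scal \ast \Sigma\Scal$ viewed as a $0$-Auslander extriangulated category, and \cref{thm:PZ} applied both to $\Dsc$ (with the fixed co-$t$-structure $(\Acal,\Bcal)$) and to $\Tsc = \thick(\Scal)$ (with the restricted co-$t$-structure $(\Acal \cap \Tsc, \Bcal \cap \Tsc)$). The key observation is that $\Tsc$ and $\Dsc$ have the \emph{same} extended coheart: indeed the coheart of $(\Acal,\Bcal)$ is $\Scal$, by \cref{rem:BoundedCoT} the restriction of $(\Acal,\Bcal)$ to $\Tsc = \thick(\Scal)$ is bounded with the same coheart $\Scal$, and hence its extended coheart is again $\Scal \ast \Sigma \Scal = \Csc$ (the subcategory $\Scal \ast \Sigma \Scal$ only depends on $\Scal$ and on the ambient triangulated structure, and $\thick(\Scal)$ is a full triangulated subcategory of $\Dsc$ closed under the relevant extensions). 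Therefore both $\Cot_{(\Acal,\Bcal)}(\Dsc)$ and $\Cot_{(\Acal \cap \Tsc, \Bcal \cap \Tsc)}(\Tsc)$ are in bijection, via $\vph_2$, with the same set $\cCP(\Csc)$.

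The second step is to check that these two copies of $\vph_2$ are compatible with the restriction map $(\Acal', \Bcal') \mapsto (\Acal' \cap \Tsc, \Bcal' \cap \Tsc)$ from $\Cot_{(\Acal,\Bcal)}(\Dsc)$ to $\Cot_{(\Acal \cap \Tsc, \Bcal \cap \Tsc)}(\Tsc)$. Concretely, given $(\Acal', \Bcal') \in \Cot_{(\Acal,\Bcal)}(\Dsc)$, one must verify that its restriction to $\Tsc$ is again an intermediate co-$t$-structure (this is where \cref{rem:BoundedCoT} is used: boundedness of $(\Acal,\Bcal)$ restricted to $\Tsc$ forces $(\Acal',\Bcal')$ restricted to $\Tsc$ to be bounded with coheart $\Scal'$, and intermediacy $\Acal \cap \Tsc \subseteq \Acal' \cap \Tsc \subseteq \Sigma(\Acal \cap \Tsc)$ is immediate), and that $\vph_2^{\Tsc}(\Acal' \cap \Tsc, \Bcal' \cap \Tsc) = \vph_2^{\Dsc}(\Acal', \Bcal')$ as complete cotorsion pairs in $\Csc$. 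For the latter, using the formula $\vph_2(\Acal',\Bcal') = (\Bcal \cap \Sigma\Acal', \Bcal' \cap \Sigma^2\Acal)$ and the fact that $\vph_2$ sends the co-$t$-structure to the cotorsion pair whose core is the coheart $\Scal'$, it suffices to note that $\Scal'$, computed either inside $\Tsc$ or inside $\Dsc$, is the same subcategory (both equal $\Sigma\Acal' \cap \Bcal'$, and this lies in $\thick(\Scal') = \thick(\Scal) = \Tsc$ by \cref{rem:BoundedCoT}); then the cotorsion pair in $\Csc$ associated with $\Scal'$ via \cref{thm:cCP} is $(\copr \Scal', \pr \Scal')$, manifestly independent of whether we started from $\Tsc$ or $\Dsc$. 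Granting this compatibility, the triangle of bijections
\[
\Cot_{(\Acal,\Bcal)}(\Dsc) \xrightarrow{\ \vph_2^{\Dsc}\ } \cCP(\Csc) \xleftarrow{\ \vph_2^{\Tsc}\ } \Cot_{(\Acal \cap \Tsc, \Bcal \cap \Tsc)}(\Tsc)
\]
commutes with the restriction map on the left edge, so restriction is itself a bijection, and its inverse is exactly the asserted unique extension; existence and uniqueness in \cref{cor:RestrictionExtensionCoT} follow at once.

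The main obstacle I expect is the bookkeeping in the second step: verifying carefully that the extended coheart of the restricted co-$t$-structure on $\Tsc$ literally coincides (not merely is equivalent to) the extended coheart $\Csc$ inside $\Dsc$, and that the two instances of $\vph_2$ land in the same cotorsion pairs. This requires being precise about how $\Scal \ast \Sigma \Scal$ is formed — in particular that for $S, S' \in \Scal$ every triangle $S \to E \to S' \to \Sigma S$ has its middle term $E$ already in $\thick(\Scal)$, so there is no difference between the extension-closure taken in $\Tsc$ and in $\Dsc$ — and about the identity $\Scal' = \Sigma\Acal' \cap \Bcal'$ being insensitive to the ambient category. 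Once these identifications are pinned down, the remainder is a purely formal diagram chase through bijections that are already proven. One should also remark that, although $\Dsc$ is not assumed essentially small or $R$-linear here whereas \cref{thm:cCP} is stated for $R$-linear essentially small $\Csc$, the extended coheart $\Csc$ of a co-$t$-structure is where all the work happens, and in the applications of interest it does satisfy these hypotheses; alternatively, \cref{thm:cCP} can be invoked in the form of \cite[Theorem 5.7]{AdachiTsukamoto}, which does not require such restrictions.
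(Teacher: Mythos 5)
Your overall skeleton is the same as the paper's: both $\Cot_{(\Acal,\Bcal)}(\Dsc)$ and $\Cot_{(\Acal\cap\Tsc,\Bcal\cap\Tsc)}(\Tsc)$ are placed in bijection with the same $\cCP(\Csc)$ via \cref{thm:PZ}, and the content is then to identify the composite bijection with restriction. Where you diverge is in how that identification is made. The paper works in the \emph{inverse} direction: starting from $(\Acal'',\Bcal'')$ on $\Tsc$, it takes $(\Xcal,\Ycal)=\vph_2^{\Tsc}(\Acal'',\Bcal'')$, writes out $\psi_2^{\Dsc}(\Xcal,\Ycal)=(\add(\Sigma^{-1}\Acal\ast\Sigma^{-1}\Xcal),\add(\Ycal\ast\Sigma^2\Bcal))$ explicitly, compares with the analogous formula for $(\Acal'',\Bcal'')$ inside $\Tsc$, and deduces $\Acal''\subseteq\Acal'\cap\Tsc$, $\Bcal''\subseteq\Bcal'\cap\Tsc$, whence equality since two co-$t$-structures contained in one another componentwise coincide. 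This gets the well-definedness of restriction for free. You instead go in the \emph{forward} direction, restricting $(\Acal',\Bcal')$ and arguing that the two instances of $\vph_2$ produce cotorsion pairs with the same core $\Scal'=\Sigma\Acal'\cap\Bcal'$, hence equal cotorsion pairs by \cref{thm:cCP}. That argument is sound, but it forces you to first prove that $(\Acal'\cap\Tsc,\Bcal'\cap\Tsc)$ is a co-$t$-structure on $\Tsc$ at all, and your stated justification ("boundedness of $(\Acal,\Bcal)$ restricted to $\Tsc$ forces...") is not the right one: what you actually need is the assertion of \cref{rem:BoundedCoT} that any co-$t$-structure restricts to a bounded co-$t$-structure on the thick closure of its coheart, applied to $(\Acal',\Bcal')$ itself, combined with $\thick(\Scal')=\thick(\Scal)=\Tsc$. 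That fact is stated in the paper only as an unproved remark, so your route rests on it where the paper's does not. Your route also invokes \cref{thm:cCP} (hence the 0-Auslander structure on $\Csc$ and its essential smallness/linearity hypotheses), which the paper's proof of this corollary avoids entirely; you flag this yourself, and it is harmless in practice, but it is an extra dependency. In exchange, your version is arguably more conceptual: restriction preserves cohearts, and complete cotorsion pairs in $\Csc$ are determined by their cores.
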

%\Yann{Nice corollary!}

\begin{proof}
Note that $\Csc \subset \Tsc$.
Therefore, both collections $\Cot_{(\Acal,\Bcal)}(\Dsc)$ and $\Cot_{(\Acal \cap \Tsc,\Bcal \cap \Tsc)}(\Tsc)$ are in bijective correspondences with $\cCP(\Csc)$, hence they are themselves in a bijective correspondence. It remains to show that the bijection $\Cot_{(\Acal,\Bcal)}(\Dsc) \to \Cot_{(\Acal \cap \Tsc,\Bcal \cap \Tsc)}(\Tsc)$ is given by the restriction. Let $(\Xcal, \Ycal)$ be the cotorsion pair corresponding to $(\Acal'',\Bcal'')$. Then
\[
(\Acal',\Bcal') :=  \left(\add(\Sigma^{-1}\Acal\ast\Sigma^{-1}\Xcal),\add(\Ycal\ast\Sigma^2\Bcal)\right)
\]
is the corresponding co-$t$-structure on $\Dcal$, while we have 
\[
(\Acal'',\Bcal'') = \left(\add(\Sigma^{-1}(\Acal \cap \Tsc) \ast\Sigma^{-1}\Xcal),\add(\Ycal\ast\Sigma^2 (\Bcal \cap \Tsc))\right).
\]
Observe that we have $\Acal'' \subset \Acal' \cap \Tsc$ and $\Bcal'' \subset \Bcal' \cap \Tsc$. Thus, $(\Acal'',\Bcal'') = (\Acal' \cap \Tsc, \Bcal' \cap \Tsc)$.
\end{proof}

\begin{remark} \label{rem: caution_about_extending_co_t}
Note that \cref{cor:RestrictionExtensionCoT} ensures the existence and uniqueness of extensions of co-t-structures intermediate with respect to the restriction $(\Acal \cap \Tsc,\Bcal \cap \Tsc)$, to co-t-structures intermediate with respect to $(\Acal,\Bcal)$.  A similar result is true for co-t-structures ``intermediate'' in a broader sense, i.e. such that $\susp^a\mathcal{A} \subseteq \mathcal{A}' \subseteq \susp^b \mathcal{A}$ for finite $a < b$. This follows from \cite[Corollary 3.9]{AdachiTsukamoto} applied twice as in the proof of \cref{cor:RestrictionExtensionCoT}. \cite[Corollary 3.9]{AdachiTsukamoto} 
generalizes \cref{thm:PZ} to suitable 
 $(b-a)$-Auslander extriangulated categories.
 %\Misha{Remove? Add more details?}

The ``intermediate'' assumptions, however, cannot be dropped completely. Here is an elementary explanation: each non-zero triangulated category $\Dsc$ admits at least two distinct co-t-structures $(0, \Dsc)$ and $(\Dsc, 0)$. For either of them, the coheart is the zero subcategory. They are both unbounded and the largest thick subcategory which admits a bounded co-t-structure restricted from either of them is again the zero subcategory $0 = \thick(0)$. The restricted co-t-structures, of course, both equal $(0, 0)$. In other words, the co-t-structure $(0, 0)$ on the zero subcategory admits two distinct extensions to $\Dsc$. We do not get a contradiction with \cref{cor:RestrictionExtensionCoT} precisely because  $(0, \Dsc)$ is not intermediate with respect to $(\Dsc, 0)$ and vice versa.

Note also that there exists a similar result for t-structures with finite distance from a given one, see \cite{chen2022extensions} and references to Keller \cite{keller2005triangulated} and Marks-Zvonareva \cite{marks2022lifting} therein.
\end{remark}

%\cite{bondarko2020hearts}.

\begin{lemma} \label{lem:ExtendedCoheart0Ausl}
Let $\Dsc$ be an essentially small $R$-linear  triangulated category and let $(\Acal,\Bcal)$ be a co-$t$-structure with associated coheart $\Scal$ and extended coheart $\Csc$.
Then the extriangulated category $\Csc$ is reduced 0-Auslander, with projective objects $\Scal$ and injective objects $\Sigma\Scal$. 
\end{lemma}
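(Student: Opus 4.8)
The plan is to verify directly that $\Csc = \Scal \ast \Sigma\Scal$ satisfies the three conditions in the ``reduced'' version of Definition~\ref{def:intro_0-Auslander} (i.e.\ conditions (1) and (2) of the unravelled form in Remark~\ref{rk:reduced}), while identifying the projectives with $\Scal$ and the injectives with $\Sigma\Scal$. First I would note that $\Csc$ is extension-closed in $\Dsc$ --- this is the standard fact about co-$t$-structures, already invoked after Theorem~\ref{thm:PZ}: since $\Csc = \Sigma^2\Acal \cap \Bcal$ and both $\Sigma^2\Acal$ and $\Bcal$ are closed under extensions in $\Dsc$, so is their intersection. Hence $\Csc$ inherits an extriangulated structure whose conflations are precisely the triangles of $\Dsc$ with all three terms in $\Csc$.

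Next I would identify the projective and injective objects. An object $S \in \Scal$ is projective in $\Csc$: for any conflation $X \infl Y \defl Z$ in $\Csc$ (a triangle $X \to Y \to Z \to \Sigma X$ with $X \in \Csc \subseteq \Bcal$ and hence $\Sigma X \in \Bcal$), applying $\Dsc(S,-)$ and using $\Dsc(S, \Sigma X) = 0$ (because $S \in \Sigma\Acal$ so $S \in \Sigma\Acal$... more precisely $S \in \Scal = \Sigma\Acal \cap \Bcal$, and $\Sigma X \in \Bcal \subseteq \Sigma\Acal^{\perp}$... I need $\Dsc(S, \Sigma X)=0$: since $S \in \Bcal$ is false for the perp — rather $S \in \Sigma\Acal$ and $\Sigma X \in \Sigma\Bcal$? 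Let me use $\Sigma X \in \Bcal$ and $S \in \Sigma\Acal$: then $\Sigma^{-1}S \in \Acal$, so $\Dsc(\Sigma^{-1}S, X) = 0$ as $X \in \Csc \subseteq \Bcal$, giving $\Dsc(S, \Sigma X)=0$.) shows $\Dsc(S,-)$ is exact on conflations, so $S$ is projective. Dually, $\Sigma\Scal$ consists of injective objects, using that $\Sigma^{-1}$ of the cocone term lands in $\Acal^{\perp}$-type position. Conversely, every object of $\Csc$ lies in $\Scal \ast \Sigma\Scal$ by definition, so writing $C$ in a triangle $S^0 \to C \to \Sigma S^1 \to \Sigma S^0$ with $S^0, S^1 \in \Scal$ and rotating to $S^1 \infl S^0 \defl C$ — wait, one must check $S^1 \infl S^0$; more carefully, rotate the defining triangle $S^0 \to C \to \Sigma S^1 \xrightarrow{\delta} \Sigma S^0$ to get $S^1 \to S^0 \to C \to \Sigma S^1$, i.e.\ a conflation $S^1 \infl S^0 \defl C$ with both outer terms in $\Scal$. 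This is the length-$1$ projective resolution, giving heredity and ``enough projectives''. Dually, rotating the other way gives an injective copresentation $C \infl \Sigma S^1{}' \defl \Sigma S^0{}'$.

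For condition (2) of Remark~\ref{rk:reduced} — that $P \infl 0$ is an inflation for every projective $P$ — I would argue that the projectives are exactly $\Scal$ (any projective $P \in \Csc$ admits $S^1 \infl S^0 \defl P$ which splits, so $P \in \add\Scal = \Scal$ by weak idempotent completeness of the coheart, Remark~\ref{rem:BoundedCoT}), and for $S \in \Scal$ the triangle $S \xrightarrow{\mathrm{id}} S \to 0 \to \Sigma S$ rotates to $S \to 0 \to \Sigma S \xrightarrow{} \Sigma S$, exhibiting $S \infl 0 \defl \Sigma S$ as a conflation in $\Csc$ (both $0$ and $\Sigma S \in \Csc$). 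Then by Theorem~\ref{Thm:Auslander:equiv} (or directly) $\Csc$ has codominant/dominant dimension $\geq 1$ with the only projective-injective being $0$: if $X \in \Scal \cap \Sigma\Scal$ then $X \in \Sigma\Acal$ and $X \in \Sigma(\Sigma\Acal\cap\Bcal) \subseteq \Sigma\Bcal \subseteq \Bcal$, but also $\Sigma^{-1}X \in \Scal \subseteq \Acal^{?}$... the clean way: $X \in \Scal$ gives $X \in \Bcal$, and $X \in \Sigma\Scal$ gives $\Sigma^{-1}X \in \Scal \subseteq \Sigma\Acal$, so $\Sigma^{-2}X \in \Acal$; combined with $X \in \Bcal$ and $\Acal \perp \Bcal$, we get $\mathrm{id}_X$ factors appropriately forcing $X = 0$ (more carefully, $\Dsc(\Sigma^{-2}X, X)$ contains $\mathrm{id}$ only if... one uses $X \in \Scal$ so $\Sigma^{-1}X \in \Acal$? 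No: $\Scal = \Sigma\Acal \cap \Bcal$ so $\Sigma^{-1}X \in \Acal$ when $X \in \Scal$; then also needing $X \in \Sigma\Scal$ i.e.\ $\Sigma^{-1}X \in \Bcal$; so $\Sigma^{-1}X \in \Acal \cap \Bcal$, and $\Acal \perp \Bcal$ forces $\Sigma^{-1}X = 0$). The main obstacle I anticipate is the bookkeeping of the various shift conditions defining $\Scal$, $\Sigma\Scal$, $\Acal$, $\Bcal$ and making sure the rotated triangles genuinely have all terms in $\Csc$ so that they are conflations in the induced structure; none of it is deep, but it is easy to get an index wrong. Everything else follows formally from Theorem~\ref{Thm:Auslander:equiv}.
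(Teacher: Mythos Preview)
Your proposal is correct and arrives at the same conclusion as the paper, but you are doing considerably more work than the paper does. The paper's proof is four lines: it observes that since $\Csc=\Scal\ast\Sigma\Scal$, it suffices to show $\Scal$ is projective in $\Csc$, and then checks the single vanishing $\Dsc(S,\Sigma C)=0$ for $S\in\Scal$, $C\in\Csc$ using $S\in\Sigma\Acal$, $\Sigma C\in\Sigma\Bcal$, and $\Sigma\Acal\perp\Sigma\Bcal$. Everything else --- heredity, enough projectives, dominant dimension, identification of injectives, reducedness --- is then implicit from the general discussion of $\Rcal\ast\Sigma\Rcal$ for a rigid subcategory $\Rcal$ in Section~\ref{Ex:RigidTriangulated}, which already establishes that such categories are reduced 0-Auslander.

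Your explicit verification of each condition (rotating triangles to get projective resolutions, checking $S\infl 0\defl\Sigma S$ is a conflation, computing $\Scal\cap\Sigma\Scal=0$ via $\Sigma^{-1}X\in\Acal\cap\Bcal$) is exactly what underlies that general discussion, so your argument is self-contained where the paper's is not. The key orthogonality computation is identical in both. The only cost of your approach is length; the benefit is that a reader does not need to chase back to the examples section to see why ``it is enough''.
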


\begin{proof}
Since $\Csc=\Scal\ast\Sigma\Scal$, it is enough to show that $\Scal$ is projective in $\Csc$.
Let $S\in\Scal$ and $C\in\Csc$.
Then $S$ belongs to $\Sigma\Acal\cap\Bcal$ while $\Sigma C$ lies in $\Sigma^3\Acal\cap\Sigma\Bcal$.
Because $\Sigma\Acal$ is left Hom-orthogonal to $\Sigma\Bcal$, we have $\Dsc(S,\Sigma C)=0$.
\end{proof}

\begin{corollary} \label{cor:IntermCoTvsSilting}
Let $\Dsc$ be an essentially small $R$-linear  triangulated category and let $(\Acal,\Bcal)$ be a co-$t$-structure with associated extended coheart $\Csc$. There are mutually inverse bijective correspondences 
%\Misha{Same letters $\vph, \psi$ for three different bijections}
\begin{eqnarray*}
		\vph_3: \Cot_{(\Acal,\Bcal)}(\Dsc) & \leftrightarrow & \Tilt(\Csc) :\psi_3,
	\end{eqnarray*}
given by 
\[\vph_3(\Acal',\Bcal')=\susp\Acal'\cap\Bcal' \text{ and}\]
\[\psi_3(\Rcal)=(\left(\add(\Sigma^{-1}\Acal\ast \susp^{-1}(\copr\Rcal), \add((\pr\Rcal)\ast\Sigma^2\Bcal)\right),\]
where $\pr\Rcal, \copr\Rcal$ are considered in $\Csc$.
%\Misha{Check it $\psi$ is correct. Are $\pr\Rcal, \copr\Rcal$ the same in $\Csc$ and in $\Dsc$?}
\end{corollary}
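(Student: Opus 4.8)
The statement is essentially a composition of two previously established bijections, so the strategy is simply to paste $\vph_2$ and $\vph_3$ of \cref{thm:PZ} and \cref{thm:cCP} together and then simplify the formulas. By \cref{lem:ExtendedCoheart0Ausl}, $\Csc$ is reduced 0-Auslander, so \cref{thm:cCP} applies and gives mutually inverse bijections $\vph_1 : \cCP(\Csc) \leftrightarrow \Tilt(\Csc) : \psi_1$ with $\vph_1(\Xcal,\Ycal) = \Xcal \cap \Ycal$ and $\psi_1(\Rcal) = (\copr\Rcal, \pr\Rcal)$. On the other hand, \cref{thm:PZ} gives $\vph_2 : \Cot_{(\Acal,\Bcal)}(\Dsc) \leftrightarrow \cCP(\Csc) : \psi_2$. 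The composites $\vph_3 := \vph_1 \circ \vph_2$ and $\psi_3 := \psi_2 \circ \psi_1$ are therefore mutually inverse bijections, and it only remains to identify them with the stated formulas.

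First I would compute $\vph_3$. By definition $\vph_3(\Acal',\Bcal') = \vph_1(\vph_2(\Acal',\Bcal')) = \vph_1(\Bcal\cap\susp\Acal', \Bcal'\cap\susp^2\Acal) = (\Bcal\cap\susp\Acal') \cap (\Bcal'\cap\susp^2\Acal)$. Using $(\Acal',\Bcal') \in \Cot_{(\Acal,\Bcal)}(\Dsc)$, i.e. $\Acal \subseteq \Acal' \subseteq \susp\Acal$ and equivalently $\susp\Bcal \subseteq \Bcal' \subseteq \Bcal$, one has $\susp\Acal' \subseteq \susp^2\Acal$ and $\Bcal \supseteq \Bcal'$, so the four-fold intersection collapses: $\Bcal \cap \susp\Acal' \cap \Bcal' \cap \susp^2\Acal = \susp\Acal' \cap \Bcal'$, which is precisely the coheart $\Scal'$ of $(\Acal',\Bcal')$. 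This matches the displayed formula for $\vph_3$, and also re-proves the \cref{rem:BoundedCoT}-style remark after \cref{thm:PZ} that $\vph_2$ sends cohearts to cores (indeed $\Xcal\cap\Ycal = \Scal'$ there). Then I would compute $\psi_3$. By definition $\psi_3(\Rcal) = \psi_2(\psi_1(\Rcal)) = \psi_2(\copr\Rcal, \pr\Rcal) = \big(\add(\susp^{-1}\Acal \ast \susp^{-1}\copr\Rcal),\ \add(\pr\Rcal \ast \susp^2\Bcal)\big)$, which is exactly the stated formula for $\psi_3$; here $\pr\Rcal$ and $\copr\Rcal$ are formed inside $\Csc$, as noted, and the only subtlety is to remember that inside the 0-Auslander category $\Csc$ one has $^{\perp_\Ebb}\Rcal = \copr\Rcal$ and $\Rcal^{\perp_\Ebb} = \pr\Rcal$ by \cref{thm: equivalent versions of tilting} (the ``strongly torsion-projective'' / ``strongly cotorsion-injective'' conditions), which is what lets $\psi_1$ be written in the $\copr/\pr$ form.

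The bulk of the proof is thus bookkeeping: asserting that the composite of two bijections is a bijection, and carrying out the two elementary intersection/join simplifications above. The one genuine point to be careful about — and the closest thing to an obstacle — is checking that the intersection $\vph_3(\Acal',\Bcal') = \Bcal \cap \susp\Acal' \cap \Bcal' \cap \susp^2\Acal$ really does reduce to $\susp\Acal' \cap \Bcal'$; this uses both inclusions defining intermediacy, and it is worth spelling out that $\susp\Acal' \cap \Bcal' \subseteq \Bcal$ (from $\Bcal' \subseteq \Bcal$) and $\susp\Acal' \cap \Bcal' \subseteq \susp^2\Acal$ (from $\Acal' \subseteq \susp\Acal$), so the two extra constraints are automatic. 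Everything else is immediate from \cref{thm:cCP}, \cref{thm:PZ}, and \cref{lem:ExtendedCoheart0Ausl}, so I would keep the written proof to a few lines: invoke the lemma to get 0-Auslander, invoke the two theorems, define $\vph_3 = \vph_1\circ\vph_2$ and $\psi_3 = \psi_2\circ\psi_1$, note these are mutually inverse as composites of mutually inverse bijections, and then do the two short computations to recognize the displayed formulas.
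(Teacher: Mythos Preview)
Your proposal is correct and follows exactly the paper's approach: the paper's proof is the single line ``Combine \cref{lem:ExtendedCoheart0Ausl} with \cref{thm:cCP,thm:PZ},'' and you have simply unpacked this composition $\vph_3 = \vph_1\circ\vph_2$, $\psi_3 = \psi_2\circ\psi_1$ and verified the formulas explicitly. Your intersection simplification for $\vph_3$ and the direct substitution for $\psi_3$ are precisely the bookkeeping the paper leaves implicit.
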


\begin{proof}
Combine \cref{lem:ExtendedCoheart0Ausl} with  \cref{thm:cCP,thm:PZ}.
\end{proof}

\begin{corollary} 
\label{cor:MutationIntermCoT}
\emph{[Mutation of intermediate co-$t$-structures]}
 Let $\Dsc$ be an essentially small $R$-linear   triangulated category and let $(\Acal,\Bcal)$ be a co-$t$-structure with extended coheart $\Csc$.
 Let $(\Acal',\Bcal')\in\Cot_{(\Acal,\Bcal)}(\Dsc)$ with coheart $\Scal'$.
 Assume that the category $\Csc$ satisfies the assumptions of \cref{Th:mutation} and let $X\in\Scal'$ be indecomposable.
 %\Misha{Again, if \cref{Th:mutation} uses less than stated, maybe we need only some reduction of $\Csc$ to be Krull-Schmidt.}
 Then there is a unique intermediate co-$t$-structure $(\Ccal,\Dcal)\in\Cot_{(\Acal,\Bcal)}(\Dsc)$ whose coheart $\Tcal$ satisfies $\Tcal\cap\Scal' =\add\{X'\in\Scal', X' \text{ \emph{indecomposable and} } X' \ncong X\}$.
 Moreover, there is an indecomposable object $Y\in\Tcal$ such that 
 \[\add\{Y'\in\Tcal, Y' \text{ \emph{indecomposable and} } Y' \ncong Y\} = \add\{X'\in\Scal', X' \text{ \emph{indecomposable and} } X' \ncong X\}.\]
\end{corollary}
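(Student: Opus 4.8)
The plan is to deduce the statement from the mutation theorem \cref{Cor:mutation} by transporting it along the bijection of \cref{cor:IntermCoTvsSilting}, the only genuine work being the uniqueness clause.

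First I would pass to the extended coheart. By \cref{lem:ExtendedCoheart0Ausl}, $\Csc$ is reduced $0$-Auslander, and by \cref{cor:IntermCoTvsSilting} the coheart $\Scal' = \vph_3(\Acal',\Bcal') = \susp\Acal'\cap\Bcal'$ is a tilting subcategory of $\Csc$. Since $\Csc$ is reduced, the indecomposable $X$ is not projective-injective, so \cref{Cor:mutation} applies to $\Rcal = \Scal'$ and $R = X$: writing $\Scal'' = \add\{X'\in\Scal'\text{ indecomposable},\ X'\ncong X\}$, it produces a unique indecomposable $Y\ncong X$ with $\Tcal := \add(\Scal''\cup\{Y\})$ tilting, together with one of the two exchange $\sfr$-triangles between $X$, $Y$ and objects of $\Scal''$. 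I would then set $(\Ccal,\Dcal) := \psi_3(\Tcal)$; since $\vph_3,\psi_3$ are inverse bijections, the coheart of $(\Ccal,\Dcal)$ is $\Tcal$, and because $X$ is indecomposable with $X\ncong Y$ and $X\notin\Scal''$ we get $X\notin\Tcal$, whence $\Tcal\cap\Scal' = \Scal'' = \add\{X'\in\Scal'\text{ indecomposable},\ X'\ncong X\}$. The object $Y\in\Tcal$ manifestly satisfies the ``moreover'' clause.

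For uniqueness, suppose $(\Ccal',\Dcal')\in\Cot_{(\Acal,\Bcal)}(\Dsc)$ has coheart $\Tcal'$ with $\Tcal'\cap\Scal' = \Scal''$; then by \cref{cor:IntermCoTvsSilting} $\Tcal'$ is a tilting subcategory of $\Csc$ containing $\Scal''$, and since $\psi_3$ is injective it suffices to show that $\Scal'$ and $\Tcal$ are the only tilting subcategories of $\Csc$ containing $\Scal''$ --- then $\Tcal'\neq\Scal'$ (as $\Scal'\cap\Scal' = \Scal'\neq\Scal''$) forces $\Tcal' = \Tcal$. To prove this I would reduce $\Csc$ at $\Scal''$ as in \cref{Rem: extriangulated reduction}: by \cref{lemma:E tilting implies Ebar tilting}, together with the equivalence of the various notions of tilting in \cref{thm: equivalent versions of tilting}, reduction induces a bijection between tilting subcategories of $\Csc$ containing $\Scal''$ and tilting subcategories of $\overline{\Csc}_{\Scal''}$. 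The proof of \cref{Th:mutation} already establishes that $\overline{\Csc}_{\Scal''}$ is a Krull--Schmidt reduced $0$-Auslander category with a unique indecomposable projective and a unique indecomposable injective; in particular its injective objects form $\add$ of a single basic object, so \cref{corollary:m=n} shows that every tilting subcategory of $\overline{\Csc}_{\Scal''}$ is generated by a single indecomposable. Applying \cref{Th:mutation} to the almost complete rigid subcategory $0\subseteq\overline{\Csc}_{\Scal''}$, whose complements are the unique indecomposable projective and the unique indecomposable injective (using \cref{Prop: projective E-tilting} and its dual), yields exactly two complements, hence exactly two tilting subcategories of $\overline{\Csc}_{\Scal''}$; these pull back to $\Scal'$ and $\Tcal$.

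The step I expect to be the main obstacle is precisely this uniqueness: since $\Csc$ itself may have infinitely many indecomposable injectives there is no naive count of indecomposable summands available, and the argument has to be pushed through the reduction $\overline{\Csc}_{\Scal''}$, where the structural analysis in the proof of \cref{Th:mutation} makes the category effectively of rank one. Once that is in place, the rest is a purely formal transport of \cref{Cor:mutation} across \cref{cor:IntermCoTvsSilting}.
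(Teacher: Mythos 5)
Your proof is correct and takes the same route as the paper, whose entire argument for this corollary is the one line ``combine \cref{Th:mutation}, \cref{lem:ExtendedCoheart0Ausl} and \cref{cor:IntermCoTvsSilting}''. Your elaboration of the uniqueness clause --- passing to the reduction at $\Scal''$, where the injectives are $\add$ of a single indecomposable, to conclude that every tilting subcategory of $\Csc$ meeting $\Scal'$ in $\Scal''$ has the form $\add(\Scal''\cup\{Z\})$ for a single complement $Z$ --- is a correct filling-in of a step the paper's proof leaves implicit, and it is indeed the only place where genuine work beyond the cited bijections is needed.
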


\begin{proof}
	Combine \cref{Th:mutation}, \cref{lem:ExtendedCoheart0Ausl} and \cref{cor:IntermCoTvsSilting}. 
 \end{proof}

\begin{remark}
We note that the assumptions of \cref{Th:mutation} are satisfied for example if $\Csc$ is Krull--Schmidt  and any almost complete rigid subcategory is functorially finite in $\Csc$, as in Condition (F) in \cite[Secion 2.4]{AiharaIyama}. Note though that our assumptions concern the subcategory $\Csc$ and not the entire category $\Dsc$ which might have no silting subcategories.
\end{remark}

The theory of mutations of bounded co-$t$-structures is implicit in \cite{AiharaIyama} and stated explicitly in certain generality in \cite{KoenigYang}, %\Misha{and never completed Keller-Nicolas?},
see also \cite[Section 6]{jorgensen2018co}. It goes through the bijection of bounded co-$t$-structures on $\Dsc$ and silting subcategories in $\Dsc$ mentioned in \cref{rem:BoundedCoT} and silting mutations from \cite{AiharaIyama}. 
Mutations of co-$t$-structures, intermediate with respect to a certain bounded co-$t$-structure (and so also bounded), are  considered in \cite[Corollary 4.3]{BrustleYang}. It follows from the results in this section that mutations of intermediate co-$t$-structures can be canonically defined via mutations of silting subcategories in the extended coheart of the original co-$t$-structure, which do not have to be silting in the entire $\Dsc$.

\begin{proposition} \label{prop: mut_co_t_compatibility}
\begin{itemize}
\item[(i)] Mutations in \cref{cor:MutationIntermCoT} recover known mutations of co-$t$-structures, intermediate with respect to a bounded one.
\item[(ii)] Mutations in \cref{cor:MutationIntermCoT} commute with the restriction to and extension from $\thick(\Scal).$
\end{itemize}
\end{proposition}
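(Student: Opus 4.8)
The plan is to deduce both parts from one observation: the mutation of intermediate co-$t$-structures of \cref{cor:MutationIntermCoT} is, by construction, the mutation of tilting subcategories in the extended coheart $\Csc$ furnished by \cref{Cor:mutation}, transported along the bijection $\vph_3$ of \cref{cor:IntermCoTvsSilting}. Both statements will then follow by identifying the ``known'' construction on the other side with the same transported mutation.

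For part (i), I would assume $(\Acal,\Bcal)$ bounded and recall from \cref{rem:BoundedCoT} that then every intermediate co-$t$-structure $(\Acal',\Bcal')$ is bounded, its coheart $\Scal'$ is a silting subcategory of $\Dsc$ contained in $\Csc=\Scal\ast\Sigma\Scal$, and, conversely, every tilting subcategory of $\Csc$ arises in this way. Under the bijection between bounded co-$t$-structures on $\Dsc$ and silting subcategories of $\Dsc$ of \cite[Corollary 5.9]{hernandez2013auslander}, the map $\vph_3$ is simply $\Scal'\mapsto\Scal'$, and the mutation of bounded co-$t$-structures that are intermediate with respect to $(\Acal,\Bcal)$ studied in \cite{KoenigYang,BrustleYang} is, by definition, the restriction to this range of the silting mutation of \cite{AiharaIyama}. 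So it remains to check that, for an indecomposable $X\in\Scal'$, the tilting subcategory of $\Csc$ produced by \cref{Cor:mutation} equals whichever of the two silting mutations $\mu^{-}_{X}(\Scal')$, $\mu^{+}_{X}(\Scal')$ is again $2$-term with respect to $\Scal$, hence again tilting in $\Csc$. I would verify this by observing that the exchange $\sfr$-triangle $X\infl E\defl Y\dashrightarrow$ (resp.\ $Y\infl E'\defl X\dashrightarrow$) of \cref{Cor:mutation}, read inside $\Dsc$ through the exact embedding $\Csc\hookrightarrow\Dsc$, is exactly the left (resp.\ right) silting exchange triangle of \cite{AiharaIyama}: both come from a minimal left (resp.\ right) approximation by the subcategory of $\Scal'$ spanned by the summands not isomorphic to $X$, and such approximations agree whether computed in $\Csc$ or in $\Dsc$ because $\Scal'\subseteq\Csc$ and $\Csc$ is extension-closed in $\Dsc$. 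Uniqueness of the complement in \cref{Cor:mutation} then forces the two mutated silting subcategories to coincide, the ``but not both'' clause there reflecting that $\mu^{-}_{X}$ and $\mu^{+}_{X}$ cannot simultaneously remain in the $2$-term range.

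For part (ii), put $\Tsc=\thick(\Scal)$. I would first note that the restricted co-$t$-structure $(\Acal\cap\Tsc,\Bcal\cap\Tsc)$ on $\Tsc$ has coheart $\Scal$ (as $\Scal\subseteq\Tsc$) and, since $\Tsc$ is a full triangulated subcategory of $\Dsc$ (so extensions in $\Tsc$ coincide with those in $\Dsc$) and $\Csc\subseteq\Tsc$, its extended coheart computed inside $\Tsc$ is again $\Csc$. Hence \cref{cor:IntermCoTvsSilting} applies to both $\bigl(\Dsc,(\Acal,\Bcal)\bigr)$ and $\bigl(\Tsc,(\Acal\cap\Tsc,\Bcal\cap\Tsc)\bigr)$ with the \emph{same} target category $\Tilt(\Csc)$, giving bijections $\vph_3^{\Dsc}$ and $\vph_3^{\Tsc}$. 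Because the coheart $\Scal'=\Sigma\Acal'\cap\Bcal'$ of any intermediate co-$t$-structure on $\Dsc$ satisfies $\Scal'\subseteq\Sigma^{2}\Acal\cap\Bcal=\Csc\subseteq\Tsc$, one has $\vph_3^{\Dsc}(\Acal',\Bcal')=\Scal'=\Sigma(\Acal'\cap\Tsc)\cap(\Bcal'\cap\Tsc)=\vph_3^{\Tsc}(\Acal'\cap\Tsc,\Bcal'\cap\Tsc)$, so the restriction/extension bijection of \cref{cor:RestrictionExtensionCoT} is intertwined by $\vph_3^{\Dsc}$ and $\vph_3^{\Tsc}$ with the identity of $\Tilt(\Csc)$. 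Since the mutations on the two sides are, by construction, transports of the single mutation of tilting subcategories of $\Csc$, they commute with restriction to and extension from $\Tsc$.

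The main obstacle I anticipate is in part (i): carefully matching the exchange $\sfr$-triangles of \cref{Cor:mutation} with the left/right silting exchange triangles of \cite{AiharaIyama}, and confirming that the mutation of intermediate bounded co-$t$-structures used in \cite{KoenigYang,BrustleYang} is literally silting mutation transported along \cite[Corollary 5.9]{hernandez2013auslander}. Part (ii) is, by comparison, a formal consequence of the identification of the two extended cohearts.
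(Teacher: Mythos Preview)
Your argument for part (ii) is essentially the paper's: both observe that the extended coheart of the restricted co-$t$-structure on $\Tsc=\thick(\Scal)$ is again $\Csc$, so that $\vph_3^{\Dsc}$ and $\vph_3^{\Tsc}$ land in the same $\Tilt(\Csc)$ and restriction/extension becomes the identity there. The paper phrases this as ``a direct consequence of \cref{cor:RestrictionExtensionCoT} and its proof,'' which is exactly what you unpack.

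Where you diverge is in part (i). The paper does not prove (i) independently; it writes only that (i) follows from (ii). The logic is: by (ii) one may pass to $\Tsc$, where the co-$t$-structure is bounded, and there the known silting-based mutation is defined; since the mutation on $\Tsc$ is, by construction, the same tilting mutation in $\Csc$ as on $\Dsc$, agreement follows. You instead verify directly, in the bounded case, that the exchange $\sfr$-triangle of \cref{Cor:mutation} coincides with the left/right silting exchange triangle of \cite{AiharaIyama}, by noting that both are cones of the same minimal $\Rcal'$-approximation (computed identically in $\Csc$ and in $\Dsc$). This is correct, and in fact makes explicit the one step the paper's sentence ``(i) follows from (ii)'' leaves to the reader: that silting mutation restricted to the two-term range really is tilting mutation in the extended coheart. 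Your anticipated ``main obstacle'' is thus not an obstacle but precisely the content the paper treats as understood. Either route is fine; the paper's is shorter, yours is more self-contained.
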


\begin{proof}
Part (i) follows from part (ii), which in turn is direct consequence of \cref{cor:RestrictionExtensionCoT} and its proof. %\Misha{More details?}
\end{proof}

\begin{remark} \label{rem:caution_about_mut_co_t}
The upshot of \cref{cor:MutationIntermCoT} and \cref{prop: mut_co_t_compatibility} is that 
 the theory of mutation for bounded co-t-structures implies that for non-bounded ones by restricting to and extending from thick subcategories. The fact that the extension of a mutated restriction exists and is unique follows from \cref{cor:RestrictionExtensionCoT} and seems to be novel. See also the discussion in \cref{rem: caution_about_extending_co_t}.
\end{remark}

\begin{remark} \label{rem: PZ_torsion_tau-tilting}
Another result of Pauksztello--Zvonareva \cite[Theorem 3.6]{PauksztelloZvonareva} provides a bijection between complete cotorsion  pairs in extended cohearts $\Scal\ast\Sigma\Scal$ of co-$t$-structures and functorially finite torsion classes in $\modd\Scal$, under certain natural conditions. By adapting the proofs in \cite[Section 3]{PauksztelloZvonareva} to the setting of $\Hom$-finite Krull-Schmidt 0-Auslander categories with noetherian $\modd\overline{\Pcal}$ and using our \cref{prop:module as quotient by injectives}, we can obtain a bijection between complete cotorsion pairs in such 0-Auslander categories and functorially finite torsion classes in $\modd\overline{\Pcal}$.  A similar bijection was given in \cite{liu2020hereditary} for a certain class of 0-Auslander subcategories (defined in terms of 2-rigid subcategories $\Wcal$ with $\Ical \subsetneq \Wcal$) in extriangulated categories; there, support $\tau$-tilting subcategories were considered instead of functorially finite torsion classes. Via these bijections, \cref{Th:mutation} recovers mutations of functorially finite torsion classes and support $\tau$-tilting subcategories in $\tau$-tilting theory \cite{AdachiIyamaReiten, iyama2014introduction, treffinger2021tau, IyamaJorgensenYang}.

Generalizing \cite[Theorem 3.6]{PauksztelloZvonareva}, the functor inducing the equivalence in \cref{prop:module as quotient by injectives} sends cotorsionfree classes to functorially finite torsion classes. A \defn{maximal green sequence} for a finite-dimensional algebra $\Lambda$ is a maximal chain of functorially finite torsion classes in the lattice of torsion classes in $\modd\Lambda$. In our setup, this motivates a (dual, in some sense) definition of \defn{maximal green sequences for 0-Auslander categories} as a maximal chain in the poset of cotorsionfree classes in complete cotorsion pairs. We do not investigate such sequences further in this paper.
\end{remark}

\section{0-Auslander 
%exact 
categories from gentle algebras}
\label{section:gentle}
% \Misha{Should we emphasize in the titles of this section and its subsections that the categories of interest are actually exact? Maybe ``0-Auslander exact categories from gentle algebras'', etc? Or smth else}
% \Yann{Good point. However, the quotient by proj-inj is extriangulated and seems relevant from a combinatorial point-of-view.}
% \Misha{OK, right, maybe we need to mention this somehow in the title. I'll think about it.}

Inspired by~\cite[Figure 30]{PaluPilaudPlamondon-nonkissing}, it was suggested in~\cite{IyamaNakaokaPalu} that the $\tau$-tilting theory for gentle algebras might be viewed as a mutation theory for maximal rigid objects in a certain exact category constructed from the module category over the blossoming gentle algebra. In [loc. cit.], an explicit small example was considered.  In this section, we carry out this strategy for all gentle algebras.
We define two exact subcategories of the module category over the blossoming algebra and show that they are equivalent.
The first subcategory, studied in \cref{ssection: extricat for gentle}, is algebraically more convenient, while the second (see \cref{ssection:ExtricatWalk}) is more obviously related to the combinatorics of non-kissing walks~\cite{McConville,BrustleDouvilleMousavandThomasYildirim,PaluPilaudPlamondon-nonkissing}.
We prove that those categories are 0-Auslander and their maximal rigid objects are in bijection with support $\tau$-tilting modules~\cite{AdachiIyamaReiten} over the gentle algebra, respectively with non-kissing facets. Under this identification, straight walks bijectively correspond to indecomposable projective-injective objects. The ideal quotients of these categories by the projective-injective objects are no longer exact, but are reduced 0-Auslander extriangulated categories.
The mutation theory developed in~\cref{section:tilting} thus recovers $\tau$-tilting mutation for gentle algebras, and flips of non-kissing facets. 

\subsection{Reminder on gentle algebras and non-kissing}

% gentle algebras, strings and bands, morphisms, (co)hooks, projs, injs,
% blossoming, walks, nonkissing and tau-tilting.
Fix an algebraically closed field $K$.
If $\alpha$ and $\beta$ are two arrows in a quiver with $t(\alpha)=s(\beta)$, we write $\alpha\beta$ for their composition.

\subsubsection{Gentle bound quivers}

\begin{definition}\cite{ButlerRingel}
A gentle algebra is a basic, finite-dimensional algebra $A=KQ/I$ where:
\begin{itemize}
 \item Each vertex in $Q$ is the source of at most two arrows and the target of at most two arrows.
 \item The ideal $I$ is generated by paths of length two.
 \item For each arrow $\alpha$ in $Q$ with target vertex $i$, there is at most one arrow $\beta$ and at most one arrow $\gamma$ with source $i$, such that $\alpha\beta\notin I$ and $\alpha\gamma\in I$.
 \item For each arrow $\alpha$ in $Q$ with source vertex $j$, there is at most one arrow $\beta$ and at most one arrow $\gamma$ with target $j$, such that $\beta\alpha\notin I$ and $\gamma\alpha\in I$.
\end{itemize}
\end{definition}

\subsubsection{String and band modules}

Indecomposable representations of a gentle bound quiver $(Q,I)$ are parameterized by strings and bands.
A string for $(Q,I)$ is a non-oriented path in $Q$, written as a word $\alpha_1^{\pm 1}\cdots\alpha_r^{\pm 1},$ where:
\begin{itemize}
 \item for every subpath $\alpha_i\alpha_{i+1},$ we have $t(\alpha_i)=s(\alpha_{i+1})$ and $\alpha_i\alpha_{i+1}\notin I$;
 \item for every subpath $\alpha_i^{-1}\alpha_{i+1}^{-1},$ we have $s(\alpha_i)=t(\alpha_{i+1})$ and $\alpha_{i+1}\alpha_{i}\notin I$;
 \item for every subpath $\alpha_i\alpha_{i+1}^{-1},$ or $\alpha_i^{-1}\alpha_{i+1}$ we have $\alpha_i\neq\alpha_{i+1}$.
\end{itemize}
Note that strings of length 0 are allowed.
For each vertex $p\in Q_0$, we write $\eps_p$ for the string of length 0 at $p$, and also for the associated idempotent in $KQ/I$.
Strings are usually considered unoriented (e.g. the strings $\alpha\beta^{-1}$ and $\beta\alpha^{-1}$ are implicitly identified).

With each string $\sigma$, one can associate an indecomposable representation $M_\sigma$ of $(Q,I)$: The vertices appearing in the string encode basis vectors, while the arrows show which basis vector is sent to which other basis vector under the corresponding linear map.
The representation associated with the string $\eps_p$ is the simple object concentrated at vertex $p$.

\begin{example}
\label{example:gentleQuiver}
 The quiver below, with the ideal of relations generated by $\beta\epsilon$ and $\delta\gamma$, is gentle.
 On the right, the string $\epsilon^{-1}\delta^{-1}\alpha\beta\gamma$ is drawn.:
% https://q.uiver.app/?q=WzAsMTIsWzAsMiwiMSJdLFsxLDEsIjIiXSxbMiwyLCIzIl0sWzMsMywiNCJdLFszLDIsIjUiXSxbNiwyLCJcXGVwc2lsb25eLVxcZGVsdGFeLVxcYWxwaGFcXGJldGFcXGdhbW1hIDoiXSxbOSwwLCIxIl0sWzgsMSwiMyJdLFs3LDIsIjUiXSxbMTAsMSwiMiJdLFsxMSwyLCIzIl0sWzEyLDMsIjQiXSxbMCwxLCJcXGFscGhhIl0sWzEsMiwiXFxiZXRhIl0sWzIsMywiXFxnYW1tYSIsMl0sWzAsMiwiXFxkZWx0YSJdLFsyLDQsIlxcZXBzaWxvbiJdLFs2LDcsIlxcZGVsdGEiLDJdLFs3LDgsIlxcZXBzaWxvbiIsMl0sWzYsOSwiXFxhbHBoYSJdLFs5LDEwLCJcXGJldGEiXSxbMTAsMTEsIlxcZ2FtbWEiXSxbMTMsMTYsIiIsMCx7Im9mZnNldCI6MSwiY3VydmUiOi0yLCJzaG9ydGVuIjp7InNvdXJjZSI6MjAsInRhcmdldCI6MjB9LCJsZXZlbCI6MSwiY29sb3VyIjpbMCw2MCw2MF0sInN0eWxlIjp7ImhlYWQiOnsibmFtZSI6Im5vbmUifX19XSxbMTUsMTQsIiIsMCx7Im9mZnNldCI6LTIsImN1cnZlIjoyLCJzaG9ydGVuIjp7InNvdXJjZSI6MjAsInRhcmdldCI6MjB9LCJsZXZlbCI6MSwiY29sb3VyIjpbMCw2MCw2MF0sInN0eWxlIjp7ImhlYWQiOnsibmFtZSI6Im5vbmUifX19XV0=
\[\begin{tikzcd}[sep=tiny]
	&&&&&&&&& 1 \\
	& 2 &&&&&&& 3 && 2 \\
	1 && 3 & 5 &&& {\epsilon^{-1}\delta^{-1}\alpha\beta\gamma :} & 5 &&&& 3 \\
	&&& 4 &&&&&&&&& 4
	\arrow["\alpha", from=3-1, to=2-2]
	\arrow[""{name=0, anchor=center, inner sep=0}, "\beta", from=2-2, to=3-3]
	\arrow[""{name=1, anchor=center, inner sep=0}, "\gamma"', from=3-3, to=4-4]
	\arrow[""{name=2, anchor=center, inner sep=0}, "\delta", from=3-1, to=3-3]
	\arrow[""{name=3, anchor=center, inner sep=0}, "\epsilon", from=3-3, to=3-4]
	\arrow["\delta"', from=1-10, to=2-9]
	\arrow["\epsilon"', from=2-9, to=3-8]
	\arrow["\alpha", from=1-10, to=2-11]
	\arrow["\beta", from=2-11, to=3-12]
	\arrow["\gamma", from=3-12, to=4-13]
	\arrow[shift right=1, color={rgb,255:red,214;green,92;blue,92}, curve={height=-12pt}, shorten <=8pt, shorten >=8pt, no head, from=0, to=3]
	\arrow[shift left=2, color={rgb,255:red,214;green,92;blue,92}, curve={height=12pt}, shorten <=11pt, shorten >=11pt, no head, from=2, to=1]
\end{tikzcd}\]
The associated indecomposable representation is:
% https://q.uiver.app/?q=WzAsNSxbMCwxLCJLIl0sWzEsMCwiSyJdLFsyLDEsIkteMiJdLFszLDIsIksiXSxbMywxLCJLIl0sWzAsMSwiMSJdLFsxLDIsIlxcbGVmdFteMV8wXFxyaWdodF0iXSxbMiwzLCJee1sxXFwsMF19IiwyXSxbMCwyLCJcXGxlZnRbXjBfMVxccmlnaHRdIl0sWzIsNCwiX3tbMFxcLCAxXX0iXV0=
\[\begin{tikzcd}[sep=small]
	& K \\
	K && {K^2} & K \\
	&&& K.
	\arrow["1", from=2-1, to=1-2]
	\arrow["{\left[^1_0\right]}", from=1-2, to=2-3]
	\arrow["{^{[1\,0]}}"', from=2-3, to=3-4]
	\arrow["{\left[^0_1\right]}", from=2-1, to=2-3]
	\arrow["{_{[0\, 1]}}", from=2-3, to=2-4]
\end{tikzcd}\]
This is the indecomposable projective cover of the simple module concentrated at vertex 1.
\end{example}

A band is a string of length at least one, whose source and target coincide, and such that
\begin{itemize}
 \item its square is a string;
 \item it is not the power of a smaller string.
\end{itemize}
Given a band, an element $\lambda\in K^\times$ and a positive integer $d$, there is an associated indecomposable representation.
The vector space at each vertex appearing in the band is $K^d$ (note that the source and target vertices of the band are identified), all linear maps are identities except for one which is given by a Jordan block of size $d$ with eigenvalue $\lambda$.

All indecomposable representations of a gentle bound quiver are given by string and band modules. We refer to~\cite[p.161]{ButlerRingel} for a more detailed statement.

\subsubsection{Blossoming}

Let $(Q,I)$ be a gentle bound quiver.
In order to simplify computations (of the Avella-Alaminos--Gei{\ss} invariant~\cite{Avella-AlaminosGeiss} in~\cite{Asashiba-Sugaku} or of the Auslander--Reiten translation in~\cite{BrustleDouvilleMousavandThomasYildirim,PaluPilaudPlamondon-nonkissing}), it is convenient to expand $(Q,I)$ by making all vertices in $Q_0$ 4-valent by adding as few arrows as possible.
The resulting gentle bound quiver is called the blossoming bound quiver of $(Q,I)$ and is denoted $(Q\bls,I\bls)$. 
It is characterised (up to isomorphism) by the fact that:
\begin{itemize}
 \item it is gentle;
 \item its vertices are either leaves (i.e. sinks or sources) or 4-valent;
 \item by deleting its leaves, one recovers $(Q,I)$.
\end{itemize}
The leaves of $Q\bls$ are called its blossoming vertices, or simply its blossoms.
\begin{example}
 The blossoming bound quiver of the gentle bound quiver appearing in \cref{example:gentleQuiver} is:

\[\begin{tikzcd}
	& {} && {} \\
	&& 2 && {} \\
	{} & 1 && 3 & 5 & {} \\
	{} && {} & 4 & {} & {} \\
	&&& {}
	\arrow[""{name=0, anchor=center, inner sep=0}, "\alpha", from=3-2, to=2-3]
	\arrow[""{name=1, anchor=center, inner sep=0}, "\beta", from=2-3, to=3-4]
	\arrow[""{name=2, anchor=center, inner sep=0}, "\gamma", from=3-4, to=4-4]
	\arrow[""{name=3, anchor=center, inner sep=0}, "\delta", from=3-2, to=3-4]
	\arrow[""{name=4, anchor=center, inner sep=0}, "\epsilon", from=3-4, to=3-5]
	\arrow[""{name=5, anchor=center, inner sep=0}, shorten <=5pt, from=3-1, to=3-2]
	\arrow[""{name=6, anchor=center, inner sep=0}, shorten <=12pt, from=4-1, to=3-2]
	\arrow[""{name=7, anchor=center, inner sep=0}, shorten <=12pt, from=1-4, to=2-3]
	\arrow[""{name=8, anchor=center, inner sep=0}, shorten >=12pt, from=2-3, to=1-2]
	\arrow[""{name=9, anchor=center, inner sep=0}, shorten >=6pt, from=3-5, to=3-6]
	\arrow[""{name=10, anchor=center, inner sep=0}, shorten <=4pt, from=2-5, to=3-5]
	\arrow[""{name=11, anchor=center, inner sep=0}, shorten <=6pt, from=4-3, to=4-4]
	\arrow[""{name=12, anchor=center, inner sep=0}, shorten >=4pt, from=4-4, to=5-4]
	\arrow[""{name=13, anchor=center, inner sep=0}, shorten >=6pt, from=4-4, to=4-5]
	\arrow[""{name=14, anchor=center, inner sep=0}, shorten >=4pt, from=3-5, to=4-5]
	\arrow[shift right=1, draw={rgb,255:red,214;green,92;blue,92}, curve={height=-12pt}, shorten <=8pt, shorten >=8pt, no head, from=1, to=4]
	\arrow[shift left=2, draw={rgb,255:red,214;green,92;blue,92}, curve={height=12pt}, shorten <=8pt, shorten >=8pt, no head, from=3, to=2]
	\arrow[shift right=1, color={rgb,255:red,214;green,92;blue,92}, curve={height=-6pt}, shorten <=8pt, shorten >=8pt, no head, from=5, to=0]
	\arrow[shift left=1, color={rgb,255:red,214;green,92;blue,92}, curve={height=6pt}, shorten <=10pt, shorten >=10pt, no head, from=6, to=3]
	\arrow[shift right=2, color={rgb,255:red,214;green,92;blue,92}, curve={height=-6pt}, shorten <=7pt, shorten >=7pt, no head, from=0, to=8]
	\arrow[shift right=2, color={rgb,255:red,214;green,92;blue,92}, curve={height=-6pt}, shorten <=7pt, shorten >=7pt, no head, from=7, to=1]
	\arrow[shift left=1, color={rgb,255:red,214;green,92;blue,92}, curve={height=6pt}, shorten <=4pt, shorten >=4pt, no head, from=11, to=12]
	\arrow[shift right=1, color={rgb,255:red,214;green,92;blue,92}, curve={height=-6pt}, shorten <=4pt, shorten >=4pt, no head, from=13, to=12]
	\arrow[shift right=1, color={rgb,255:red,214;green,92;blue,92}, curve={height=-6pt}, shorten <=4pt, shorten >=4pt, no head, from=10, to=9]
	\arrow[shift left=1, color={rgb,255:red,214;green,92;blue,92}, curve={height=6pt}, shorten <=4pt, shorten >=4pt, no head, from=4, to=14]
\end{tikzcd}\]
\end{example}

We denote by $A\bls$ the algebra $KQ\bls/I\bls.$
A walk on $(Q,I)$ is defined as a maximal string for $(Q\bls,I\bls)$.
A cohook is a string of the form $\al_1\cdots\al_r\beta^{-1}$.
By adding left and a right cohooks to a string for $(Q,I)$, one obtains a walk on $(Q,I)$:
If $\sigma$ is a string for $(Q,I)$ and if $\omega$ is the associated walk, then we have $M_\omega = \tau_{A\bls}M_\sigma$.
This yields a bijection between strings for $(Q,I)$ and walks on $(Q,I)$ that do not correspond to injective string modules over $(Q\bls,I\bls)$.
It is shown in~\cite{BrustleDouvilleMousavandThomasYildirim,PaluPilaudPlamondon-nonkissing} that the $\tau$-rigidity of representations of $(Q,I)$ can be described purely combinatorially by making use of this bijection.
The compatibility relation between walks corresponding to the $\tau$-rigidity is called non-kissing and was introduced (for grids) in~\cite{McConville}.
Maximal sets of pairwise non-kissing walks on $(Q,I)$, called non-kissing facets, are thus in bijection with isomorphism classes of basic $\tau$-tilting pairs (in the sense of~\cite{AdachiIyamaReiten}) over $A$. By~\cite{McConville,BrustleDouvilleMousavandThomasYildirim,PaluPilaudPlamondon-nonkissing}, there is a notion of flip for non-kissing facets, similar to the notion of flip for triangulations.
Under the bijection explained above, flips correspond to $\tau$-tilting mutations.

\subsection{Exact blossoming of the module category of a gentle algebra}\label{ssection: extricat for gentle}

\begin{definition}\label{DefGentle_E}
Let $\Esc\se\modd A\bls$ be the full subcategory consisting of those $E\in \modd A\bls$ that satisfy the following conditions.
\begin{itemize}
\item[{\rm (e1)}] $\Hom_{A\bls}(\soc P,E)=0$ for any $P\in\proj A\bls$.
\item[{\rm (e2)}] $\mathrm{pd}_{A\bls}E\le 1$.
\end{itemize}
It is straightforward to check that  $\Esc\se\modd A\bls$ is closed under isomorphisms, extensions, and direct summands.
\end{definition}

\begin{remark}\label{RemGentle_E}
For any $E\in\modd A\bls$, the following holds.
\begin{enumerate}
\item $E$ satisfies {\rm (e1)} if and only if it satisfies $\Hom_{A\bls}(S,E)=0$ for any simple projective $S\in\modd A\bls$, if and only if it does not contain any non-zero simple projective in its composition series.
\item If $E$ satisfies {\rm (e2)}, then $\Ext^1_{A\bls}(E,\Fac J)=0$ holds for any $J\in\inj A\bls$. This is true in any abelian category.
\item $E$ belongs to $\Esc$ if and only if there exists a short exact sequence
\begin{equation}\label{ExPPE}
0\to P_1\ov{\iota}{\lra}P_0\to E\to 0
\end{equation}
in $\modd A\bls$ satisfying $P_0,P_1\in\proj A\bls$ and $\iota(\soc P_1)=\soc P_0$.
\end{enumerate}
\end{remark}

\begin{proposition}\label{ProGentle_ProjE}
The following holds.
\begin{enumerate}
\item For any $P\in\proj A\bls$, we have $P/\soc P\in\proj\Esc$.
\item $\Esc$ has enough projectives. Moreover, any $E\in\Esc$ has a projective resolution of the form
\[ 0\to P_1/\soc P_1\to P_0/\soc P_0\to E\to 0. \]
In particular, we have $\mathrm{pd}_{\Esc}E\le 1$.
\item Any object in $\proj\Esc$ is a direct summand of an object of the form $P/\soc P$ for some $P\in\proj A\bls$.
\end{enumerate}
\end{proposition}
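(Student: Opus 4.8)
The plan is to reduce all three statements to a single structural fact about the blossoming algebra: \emph{for every $P\in\proj A\bls$, the socle $\soc P$ is a direct sum of simple projective $A\bls$-modules.} For an indecomposable projective $P=P_i$, which is the string module $M(w)$ with $w=p^{-1}q$ for $p,q$ the (possibly trivial) maximal direct paths out of $i$, the socle is $\bigoplus S_{t(p')}$ over the nontrivial such paths $p'$ (together with $S_i$ itself in the degenerate case $i$ a sink-blossom): there are no interior valley vertices because in $w$ every inverse letter precedes every direct letter. Since all non-leaf vertices of $Q\bls$ are $4$-valent, a maximal direct path of positive length can only terminate at a leaf, and, receiving an arrow, such a leaf must be a sink-blossom; hence each $S_{t(p')}=P_{t(p')}$ is simple projective. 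This observation has two immediate consequences for $P/\soc P$: the short exact sequence $0\to\soc P\to P\to P/\soc P\to 0$ has projective outer terms, so $\mathrm{pd}_{A\bls}(P/\soc P)\le 1$, which is (e2); and $\soc P$ absorbs exactly the simple projective composition factors of $P$ (each occurring once, as its vertex is a leaf), so $P/\soc P$ has no simple projective in its composition series, which is (e1). Thus $P/\soc P\in\Esc$ — equivalently, apply \cref{RemGentle_E}(3) to $0\to\soc P\to P\to P/\soc P\to 0$, noting $\soc P$ is semisimple so its socle is all of it.

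For part (1) it remains to see $P/\soc P$ is projective \emph{in} $\Esc$. Apply $\Hom_{A\bls}(-,E)$ to $0\to\soc P\to P\to P/\soc P\to 0$ for an arbitrary $E\in\Esc$; the long exact sequence gives $\Hom_{A\bls}(\soc P,E)\to\Ext^1_{A\bls}(P/\soc P,E)\to\Ext^1_{A\bls}(P,E)$, where the last term vanishes since $P$ is projective and the first vanishes by (e1) for $E$ (via \cref{RemGentle_E}(1), as $\soc P$ is a sum of simple projectives). Hence $\Ext^1_{A\bls}(P/\soc P,E)=0$. Because $\Esc$ is closed under extensions in $\modd A\bls$, every conflation $0\to E\to Z\to P/\soc P\to 0$ in $\Esc$ is an $A\bls$-short exact sequence representing a class in $\Ext^1_{A\bls}(P/\soc P,E)=0$, so it splits in $\modd A\bls$ and therefore in $\Esc$; thus $P/\soc P\in\proj\Esc$.

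For part (2), take $E\in\Esc$ and a short exact sequence $0\to P_1\xrightarrow{\ \iota\ }P_0\to E\to 0$ as in \cref{RemGentle_E}(3), so $P_0,P_1\in\proj A\bls$ and $\iota(\soc P_1)=\soc P_0$. As $\iota$ is injective, it restricts to an isomorphism $\soc P_1\xrightarrow{\ \sim\ }\soc P_0$, so the snake lemma applied to the evident map between the short exact sequences $0\to\soc P_j\to P_j\to P_j/\soc P_j\to 0$ for $j=0,1$ yields a short exact sequence $0\to P_1/\soc P_1\to P_0/\soc P_0\to E\to 0$ in $\modd A\bls$. All three terms lie in $\Esc$ (the outer two by the previous step), so this is a conflation in $\Esc$, and by part (1) it is the asserted projective resolution; in particular $\Esc$ has enough projectives and $\mathrm{pd}_\Esc E\le 1$.

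For part (3), let $X\in\proj\Esc$. By part (2) there is a conflation $0\to P_1/\soc P_1\to P_0/\soc P_0\to X\to 0$ in $\Esc$ with $P_0\in\proj A\bls$; since $X$ is projective in $\Esc$, the identity of $X$ lifts along the deflation, the conflation splits, and $X$ is a direct summand of $P_0/\soc P_0$. I expect the only non-formal point to be the socle computation in the first paragraph — this is precisely where the defining features of the blossoming bound quiver (leaves are the sinks and sources, non-leaves are $4$-valent) enter; once it is established, parts (1)–(3) are short homological arguments (a long exact sequence, the snake lemma, and splitting of sequences onto a projective).
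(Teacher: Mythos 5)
Your proof is correct and follows essentially the same route as the paper's: reduce everything to the fact that $\soc P$ is projective (a sum of simple projectives at sink-blossoms), deduce $P/\soc P\in\Esc$ via \cref{RemGentle_E}(3) and its $\Esc$-projectivity from the vanishing of $\Ext^1_{A\bls}(P/\soc P,-)$ on $\Esc$, then obtain (2) by passing the resolution of \cref{RemGentle_E}(3) to socle quotients and (3) by splitting. The only difference is that you supply the string-combinatorial justification that $\soc P\in\proj A\bls$ and the snake-lemma step, both of which the paper leaves implicit.
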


\begin{proof}
{\rm (1)} Remark that we have $\soc P\in\proj A\bls$. Thus, $P/\soc P\in\Esc$ is a direct consequence of \cref{RemGentle_E} {\rm (3)}.
From the short exact sequence $0\to\soc P\to P\to P/\soc P\to 0$ and the assumption of $\Hom_{A\bls}(\soc P,E)=0$, we also have $\Ext^1_{A\bls}(P/\soc P,E)=0$ for any $E\in\Esc$. This shows $P/\soc P\in\proj\Esc$.

{\rm (2)} This follows from {\rm (1)} and \cref{RemGentle_E} {\rm (3)}.

{\rm (3)} This follows immediately from {\rm (1)} and {\rm (2)}.
\end{proof}

\begin{lemma}\label{LemGentle_InjE}
The following holds.
\begin{enumerate}
\item If $E\in\Esc$ belongs to $\Fac J$ for some $J\in\inj A\bls$, then $E\in\inj\Esc$.
\item If a short exact sequence in $\modd A\bls$
\[ 0\to P\ov{\iota}{\lra}H\to I\to0 \]
satisfies $P\in\proj A\bls$, $H\in(\proj A\bls)\cap(\inj A\bls)$ and $\iota(\soc P)=\soc H$, then we have $I\in\inj\Esc$.
\item For any $P\in\proj A\bls$, there exists a short exact sequence $0\to P\ov{\iota}{\lra}H\to I\to0$ in $\modd A\bls$ satisfying the conditions in {\rm (2)}. Remark that this also induces the following short exact sequence in $\Esc$,
\[ 0\to P/\soc P\ov{\iota}{\lra}H/\soc H\to I\to0 \]
in which we have $P/\soc P\in\proj\Esc$, $H/\soc H\in(\proj\Esc)\cap(\inj\Esc)$, and $I\in\inj\Esc$.
\end{enumerate}
\end{lemma}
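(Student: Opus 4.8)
The plan is to treat the three parts in order; the first two are quick consequences of the set-up in \cref{DefGentle_E} and \cref{RemGentle_E}, while the third carries the combinatorial content. For part (1): since $\Esc$ is extension-closed in $\modd A\bls$, one has $\Ext^1_{\Esc}(X,E)=\Ext^1_{A\bls}(X,E)$ for every $X\in\Esc$, and because $\mathrm{pd}_{A\bls}X\le 1$ by (e2), the vanishing $\Ext^1_{A\bls}(X,E)=0$ is exactly \cref{RemGentle_E}(2) applied to $X$ and to $E\in\Fac J$; hence $E\in\inj\Esc$. For part (2): the given short exact sequence has both outer terms projective over $A\bls$ (note $H\in\proj A\bls$) and satisfies $\iota(\soc P)=\soc H$, so $I\in\Esc$ by the criterion in \cref{RemGentle_E}(3); since moreover $I$ is a quotient of the injective module $H$, i.e. $I\in\Fac H$, part (1) gives $I\in\inj\Esc$.

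For part (3) it suffices, by taking direct sums, to produce $H$ when $P=P_i$ is indecomposable, and the essential claim is that the injective envelope of $P_i$ is again projective over $A\bls$ (hence projective-injective). I would prove this through the string combinatorics of the blossoming quiver: writing $P_i$ as a string module, its string has a single peak at $i$ built from the two maximal direct strings of $Q\bls$ starting at $i$ (one possibly trivial), so $\soc P_i$ is the direct sum of the simples at the terminal vertices of these two strings; each such terminal vertex is a sink leaf, because a maximal direct string cannot be prolonged, which by the gentle axioms forces its terminus to carry at most one outgoing arrow and hence, being not $4$-valent and having an incoming arrow, to be a sink leaf. For a sink leaf $j$ the injective $I_j$ is uniserial, equal to $M_{p_j}$ for the unique maximal direct string $p_j$ ending at $j$; by the symmetric analysis $p_j$ starts at a source leaf, which in $Q\bls$ carries a single outgoing arrow, so $M_{p_j}$ is the indecomposable projective at that source leaf. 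Thus each $I_j$ is projective, and the injective envelope of $P_i$, which is the direct sum of these $I_j$ over the terminal vertices $j$ (with multiplicity), is projective-injective.

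Granting this, I would take $H$ to be the injective envelope of $P$, with $\iota\colon P\hookrightarrow H$ the canonical inclusion and $I:=H/P$. Since $P\hookrightarrow H$ is an essential extension, $\soc P$ and $\soc H$ have equal length, so $\iota$ restricts to an isomorphism $\soc P\xrightarrow{\ \sim\ }\soc H$, that is $\iota(\soc P)=\soc H$; hence $0\to P\xrightarrow{\iota}H\to I\to 0$ meets the hypotheses of (2) and $I\in\inj\Esc$. Quotienting this sequence by the socles, which $\iota$ identifies, yields the exact sequence $0\to P/\soc P\to H/\soc H\to I\to 0$ in $\modd A\bls$; here $P/\soc P$ and $H/\soc H$ lie in $\proj\Esc$ by \cref{ProGentle_ProjE}(1), the module $H/\soc H$ lies in $\Esc$ and in $\Fac H$ with $H$ injective, hence in $\inj\Esc$ by part (1), and $I\in\inj\Esc$ as just shown. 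The main obstacle is the structural claim of the middle paragraph --- that socles of projectives over $A\bls$ are supported at sink leaves and that injectives at sink leaves are projective; once this is in hand, everything else is formal bookkeeping with injective envelopes together with \cref{RemGentle_E} and \cref{ProGentle_ProjE}.
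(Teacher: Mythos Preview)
Your proof is correct and follows the same approach as the paper: for (1) and (2) you invoke \cref{RemGentle_E} exactly as the paper does, and for (3) you take $H$ to be the injective hull of $P$. The paper's argument for (3) is a single sentence (``Since $\soc P\in\proj A\bls$, we just have to take an injective hull $P\ov{\iota}{\lra}H$ and put $I=\Cok\iota$''), leaving implicit the verification that this $H$ is projective-injective; your string-combinatorial paragraph supplies precisely that missing detail, so your write-up is in fact a more complete version of the same proof.
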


\begin{proof}
{\rm (1)} This follows from \cref{RemGentle_E} {\rm (2)}.

{\rm (2)} This follows from {\rm (1)} and \cref{RemGentle_E} {\rm (3)}.

{\rm (3)} Since $\soc P\in\proj A\bls$, we just have to take an injective hull $P\ov{\iota}{\lra}H$ and put $I=\Cok\iota$.
\end{proof}

\begin{corollary}
\label{cor:E is 0-Auslander}
The category $\Esc$ is $0-$Auslander.
\end{corollary}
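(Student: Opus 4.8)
The plan is to verify directly that $\Esc$ satisfies condition (i) of \cref{Thm:Auslander:equiv}: it has enough projectives, has projective dimension at most one, and has dominant dimension at least one. All three ingredients have essentially been assembled in the lemmata immediately preceding the corollary, so the proof will be a short assembly argument rather than a fresh computation.

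First, $\Esc$ has enough projectives and $\mathrm{pd}\Esc\le 1$: this is exactly \cref{ProGentle_ProjE}(2), which produces for each $E\in\Esc$ a projective resolution $0\to P_1/\soc P_1\to P_0/\soc P_0\to E\to 0$ of length one, with the middle and outer terms in $\proj\Esc$ by \cref{ProGentle_ProjE}(1). In particular this gives, for each $E\in\Esc$, an $\sfr$-triangle $P_1/\soc P_1 \infl P_0/\soc P_0 \defl E \dashrightarrow$ with projective components, so $\Esc$ is hereditary by \cref{hered_equiv}(2).

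Next, I would check that $\domdim\Esc\ge 1$, i.e. that every projective object of $\Esc$ admits an inflation into a projective-injective object of $\Esc$. By \cref{ProGentle_ProjE}(3), any object of $\proj\Esc$ is a summand of some $P/\soc P$ with $P\in\proj A\bls$; since both $\proj\Esc$ and $(\proj\Esc)\cap(\inj\Esc)$ are closed under summands, it suffices to treat the objects $P/\soc P$. For those, \cref{LemGentle_InjE}(3) provides precisely the required $\sfr$-triangle $P/\soc P \infl H/\soc H \defl I \dashrightarrow$ in $\Esc$, with $H/\soc H \in (\proj\Esc)\cap(\inj\Esc)$ and $I\in\inj\Esc$. (For a general projective $Q$ in $\Esc$, writing $P/\soc P \cong Q\oplus Q'$ and splitting off the triangle for $Q'$ by taking the direct sum with $Q'\overset{1}{\infl} Q' \defl 0$ reduces to this case.) This verifies all clauses of \cref{Thm:Auslander:equiv}(i), so $\Esc$ is 0-Auslander.

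The only point requiring a little care — and the closest thing to an obstacle — is the bookkeeping needed to pass from the distinguished objects $P/\soc P$ to arbitrary objects of $\proj\Esc$ when checking dominant dimension, since $\proj\Esc$ need not consist only of the objects $P/\soc P$ on the nose; but this is handled cleanly by the closure-under-summands of the relevant subcategories together with the additivity of the constructions in \cref{LemGentle_InjE}. Everything else is a direct invocation of the preceding results, so the proof is essentially immediate.
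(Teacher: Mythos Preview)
Your proof is correct and follows exactly the paper's approach: the paper's proof is the one-line combination of \cref{ProGentle_ProjE}(2) (enough projectives and $\mathrm{pd}\le 1$) and \cref{LemGentle_InjE}(3) ($\domdim\ge 1$), and you have simply spelled this out in more detail. One small remark: your parenthetical about ``splitting off the triangle for $Q'$'' is not quite the right maneuver; the clean way to pass from $P/\soc P$ to an arbitrary summand $Q$ is to compose the split inflation $Q\infl Q\oplus Q'\cong P/\soc P$ with $P/\soc P\infl H/\soc H$ and use that compositions of inflations are inflations, yielding $Q\infl H/\soc H$ with projective-injective middle term.
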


\begin{proof}
This is a combination of \cref{ProGentle_ProjE}.(2) and \cref{LemGentle_InjE}.(3): these state that $\mathrm{pd}(\Esc) \le 1$ and that $\mathrm{dom.dim}(\Esc) \ge 1,$ respectively.
\end{proof}

By \cref{Thm:Auslander:equiv}, we find that $\Esc$ also has enough injectives and that $\mathrm{id}(\Esc, \Ebb) \le 1 \le \mathrm{codom.dim}(\Esc)$. For the sake of completeness, we describe the injectives and write this more explicitly.

\begin{proposition}\label{PropGentle_InjE}
The following holds.
\begin{enumerate}
\item For any $E\in\Esc$, there exists $H\in(\proj A\bls)\cap(\inj A\bls)$ and a short exact sequence
\begin{equation}\label{InjResolE}
0\to E\to I^0\to I^1\to0
\end{equation}
satisfying $I^0,I^1\in\Esc\cap\Fac H$.
\item $\Esc$ has enough injectives. More precisely, for any $E\in\Esc$, the sequence $(\ref{InjResolE})$ gives an injective resolution of $E$ in $\Esc$. In particular, we have $\mathrm{id}_{\Esc}E\le 1$.
\item Any object in $\inj\Esc$ belongs to $\Esc\cap\Fac H$ for some $H\in(\proj A\bls)\cap(\inj A\bls)$. Moreover for any $I\in\inj \Esc$,  there is a short exact sequence in $\modd A\bls$
\[ 0\to P\ov{\iota}{\lra}H\to I\to0 \]
satisfying conditions in \cref{LemGentle_InjE} {\rm (2)}. Remark that this also induces the following short exact sequence in $\Esc$,
\[ 0\to P/\soc P\ov{\iota}{\lra}H/\soc H\to I\to0 \]
in which we have $P/\soc P\in\proj\Esc$, $H/\soc H\in(\proj\Esc)\cap(\inj\Esc)$, and $I\in\inj\Esc$.
\end{enumerate}
\end{proposition}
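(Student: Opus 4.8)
The plan is to reduce all three parts to a single statement — the first assertion of part (3): every $I\in\inj\Esc$ lies in $\Fac H$ for some $H\in(\proj A\bls)\cap(\inj A\bls)$ — and to prove that statement by combining the projective presentation of $I$ coming from \cref{RemGentle_E}(3) with the projective-injective hull furnished by \cref{LemGentle_InjE}(3).

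To prove this key statement, I would take $I\in\inj\Esc$ and, using \cref{RemGentle_E}(3), fix a short exact sequence $0\to K\xrightarrow{\rho}P_0\to I\to0$ in $\modd A\bls$ with $K,P_0\in\proj A\bls$ and $\rho(\soc K)=\soc P_0$. By \cref{LemGentle_InjE}(3) applied to $P_0$, there is a short exact sequence $0\to P_0\xrightarrow{\nu}H\to C\to0$ in $\modd A\bls$ with $H\in(\proj A\bls)\cap(\inj A\bls)$, $\nu(\soc P_0)=\soc H$ and $C\in\inj\Esc$. Setting $\iota:=\nu\rho\colon K\hookrightarrow H$, one has $\iota(\soc K)=\soc H$, so $\Cok\iota=H/\iota(K)\in\Esc$ by \cref{RemGentle_E}(3); moreover $\iota(K)\subseteq\nu(P_0)\subseteq H$ with $\nu(P_0)/\iota(K)\cong P_0/\rho(K)\cong I$, giving a short exact sequence $0\to I\to\Cok\iota\to C\to0$ with all three terms in $\Esc$, i.e.\ a conflation of $\Esc$. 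Since $I$ is injective in $\Esc$ this conflation splits, so $I$ is a direct summand of $\Cok\iota$; as $\Cok\iota$ is a quotient of $H$ and $\Fac H$ is closed under direct summands, $I\in\Fac H$, as desired.

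Granting this, the rest follows quickly. For the \emph{moreover} clause of (3): from $I\in\Fac H_0$ with $H_0$ projective-injective, choose an epimorphism $\pi\colon H:=H_0^{\oplus n}\twoheadrightarrow I$; its kernel $P$ is projective, since $H$ is projective and $\mathrm{pd}_{A\bls}I\le1$ by condition (e2), while $\soc H\subseteq P$ because $\pi(\soc H)$ is a semisimple projective submodule of $I$, hence $0$ by condition (e1) (via \cref{RemGentle_E}(1) and the standing fact that socles of projective $A\bls$-modules are projective). Thus $\soc P=\soc H$, the inclusion $\iota\colon P\hookrightarrow H$ satisfies the hypotheses of \cref{LemGentle_InjE}(2), and the snake lemma yields the induced conflation $0\to P/\soc P\to H/\soc H\to I\to0$ of $\Esc$, whose terms have the asserted properties by \cref{ProGentle_ProjE}(1) and \cref{LemGentle_InjE}(3). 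For part (1): $\Esc$ is $0$-Auslander by \cref{cor:E is 0-Auslander}, so by \cref{Thm:Auslander:equiv} it has enough injectives and injective dimension at most $1$; hence $E$ admits a short exact sequence $0\to E\to I^0\to I^1\to0$ with $I^0,I^1\in\inj\Esc$, and applying the key statement to $I^0$ and to $I^1$ and letting $H$ be the direct sum of the two projective-injectives so obtained gives $I^0,I^1\in\Esc\cap\Fac H$. Part (2) is then immediate: enough injectives and $\mathrm{id}_\Esc(-)\le1$ are already available, and the sequence produced in (1) is the required injective resolution of length one.

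The main obstacle is the subtle step inside the key statement: embedding the projective cover of $I$ into its injective hull does \emph{not} directly present $I$ itself as $H/P$ with $H$ projective-injective, because a spurious direct summand (the term $C$ above) necessarily appears. The point that makes the proof go through is that it suffices to locate $I$ inside $\Fac H$, which is closed under summands, and only afterwards to re-present $I$ alone as a quotient of a projective-injective module with projective, socle-matching kernel. Everything else reduces to routine applications of \cref{RemGentle_E}, \cref{ProGentle_ProjE} and \cref{LemGentle_InjE}, plus the standing fact on socles of projectives.
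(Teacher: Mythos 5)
Your proof is correct, but it reorganizes the argument in a way that differs noticeably from the paper's. The core computation is the same in both: compose the two socle-matching inclusions (projective presentation into projective-injective hull) and read off a two-step filtration of $H$ whose subquotients land in $\Esc$ by \cref{RemGentle_E}(3). The paper runs this computation for an \emph{arbitrary} $E\in\Esc$, producing the coresolution (7.2) explicitly as a $3\times3$ diagram with $I^0,I^1$ concrete quotients of a single $H$; parts (2) and the first half of (3) are then read off, the latter by specializing to $E=I$ injective and splitting. You instead run the computation only for $E=I$ injective (your ``key statement''), and you obtain the coresolution for general $E$ from \cref{cor:E is 0-Auslander} together with \cref{Thm:Auslander:equiv} — a route the paper explicitly acknowledges is available (``By \cref{Thm:Auslander:equiv}, we find that $\Esc$ also has enough injectives\dots For the sake of completeness, we describe the injectives\dots more explicitly''), and which is non-circular since \cref{cor:E is 0-Auslander} is established beforehand using only \cref{ProGentle_ProjE}(2) and \cref{LemGentle_InjE}(3). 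Your handling of the ``moreover'' clause of (3) — projectivity of the kernel from $\mathrm{pd}\le 1$, and $\iota(\soc P)=\soc H$ from (e1) applied to $\pi(\soc H)$ — coincides with the paper's. What each approach buys: the paper's is self-contained and exhibits $I^0,I^1$ as explicit quotients of one $H$ determined by a projective resolution of $E$; yours is shorter on part (1) at the cost of outsourcing existence to the general $0$-Auslander machinery and of taking $H$ to be a direct sum $H_1\oplus H_2$ of two projective-injectives (which still satisfies the statement, since a single such $H$ is all that is required and $\Fac H_i\subseteq\Fac(H_1\oplus H_2)$).
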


\begin{proof}
{\rm (1)} By \cref{RemGentle_E} {\rm (3)} and \cref{LemGentle_InjE} {\rm (3)}, we have short exact sequences in $\modd A\bls$
\[ 0\to P_1\ov{\iota_1}{\lra}P_0\to E\to0,\quad 0\to P_0\ov{\iota_2}{\lra}H\to I^1\to0 \]
satisfying $P_0,P_1\in\proj A\bls$, $H\in(\proj A\bls)\cap(\inj A\bls)$, and
\[ \iota_1(\soc P_1)=\soc P_0,\quad \iota_2(\soc P_0)=\soc H. \]
Thus we obtain the following commutative diagram in $\modd A\bls$ for some $I^0\in\modd A\bls$,
\[
\xy
(-6,28)*+{0}="-22";
(6,28)*+{0}="-24";
(-6,18)*+{P_1}="-12";
(6,18)*+{P_1}="-14";
(-18,6)*+{0}="0";
(-6,6)*+{P_0}="2";
(6,6)*+{H}="4";
(18,6)*+{I^1}="6";
(30,6)*+{0}="8";
(-18,-6)*+{0}="10";
(-6,-6)*+{E}="12";
(6,-6)*+{I^0}="14";
(18,-6)*+{I^1}="16";
(30,-6)*+{0}="18";
(-6,-16)*+{0}="22";
(6,-16)*+{0}="24";
{\ar "-22";"-12"};
{\ar "-24";"-14"};
{\ar@{=} "-12";"-14"};
{\ar_{\iota_1} "-12";"2"};
{\ar "-14";"4"};
{\ar "0";"2"};
{\ar_{\iota_2} "2";"4"};
{\ar "4";"6"};
{\ar "6";"8"};
{\ar "2";"12"};
{\ar "4";"14"};
{\ar@{=} "6";"16"};
{\ar "10";"12"};
{\ar "12";"14"};
{\ar "14";"16"};
{\ar "16";"18"};
{\ar "12";"22"};
{\ar "14";"24"};
{\ar@{}|\circlearrowright "-12";"4"};
{\ar@{}|\circlearrowright "2";"14"};
{\ar@{}|\circlearrowright "4";"16"};
\endxy
\]
in which rows and columns are exact. \Cref{RemGentle_E} {\rm (3)} shows $I^0,I^1\in\Esc$.
%By the argument so far, we have $I^1\in\Esc\cap\Fac H$ and $I^0\in\Fac H$. Since $\Esc\se\modd A\bls$ is extension-closed, we also have $I^0\in\Esc$.

{\rm (2)} This follows from {\rm (1)}, since \cref{LemGentle_InjE} {\rm (2)} shows $I^0,I^1\in\inj\Esc$.

{\rm (3)} By {\rm (2)}, any $I\in\inj\Esc$ is a direct summand of an object in $\Esc\cap\Fac H$ for some $H\in(\proj A\bls)\cap(\inj A\bls)$. Since $\Esc\cap\Fac H\se\modd A\bls$ is closed under isomorphisms and direct summands, it follows $I\in\Esc\cap\Fac H$.

Take an epimorphism $H\to I$ in $\modd A\bls$. Since $\mathrm{pd}_{A\bls}I\le1$, it gives a short exact sequence $0\to P\ov{\iota}{\lra} H\to I\to0$ with $P\in\proj A\bls$, which necessarily satisfies $\iota(\soc P)=\soc H$ since $\Hom_{A\bls}(\soc H,I)=0$.
\end{proof}

\vspace{1cm}

Let $\ep_p\in A\bls$ denote the idempotent element corresponding to each $p\in Q_0\bls$. Define $\ep\in A\bls$ to be
\[ \ep=\sum_{p\in Q_0\bls\setminus Q_0}\ep_p, \]
namely, the sum of idempotents corresponding to blossom vertices. Denote the two-sided ideal $A\bls\ep A\bls\se A\bls$ by $\afr$.

\begin{definition}\label{DefGentle_FunctorG}
Denote the functor tensoring with $A=A\bls/\afr$ by
\[ G\co\modd A\bls\to\modd A\ ;\ M\mapsto M/\afr M. \]
\end{definition}

The following proposition can be seen as a consequence of \cref{prop:module as quotient by injectives} and \cref{cor:E is 0-Auslander}; we nonetheless give a representation-theoretic proof. 

\begin{proposition}\label{PropGentle_Equiv}
The functor $G$ induces an equivalence of categories $\ovl{G}\co\Esc/\inj\Esc\ov{\simeq}{\lra}\modd A$.
\end{proposition}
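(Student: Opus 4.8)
The strategy is to verify that the functor $G$ kills exactly the injective objects of $\Esc$ and is dense and full, then identify the induced functor $\ovl{G}$ on the quotient. First I would recall from \cref{PropGentle_Equiv}'s setup (and \cref{ProGentle_ProjE}, \cref{LemGentle_InjE}) that every object $E\in\Esc$ sits in a short exact sequence $0\to P_1\to P_0\to E\to 0$ in $\modd A\bls$ with $P_0,P_1\in\proj A\bls$ and $\iota(\soc P_1)=\soc P_0$; applying $G$ and using that $G$ is right exact, together with the fact that $G(P/\soc P)$ is computed from $G(P)=P/\afr P=e_pA$ for the appropriate vertices, one identifies $G(P_i/\soc P_i)$ with the corresponding projective $A$-module. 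Concretely, for a projective $A\bls$-module $P = P_{A\bls}(p)$ at a non-blossom vertex $p$, one checks that $G(P/\soc P)\cong P_A(p)$, since passing to the quotient by the blossom idempotents exactly trims the ``cohook'' added by blossoming. This should follow directly from the string combinatorics: the string of $P_{A\bls}(p)/\soc P_{A\bls}(p)$ is obtained from the string of $P_A(p)$ by prepending/appending the cohook arrows landing on blossoms, and $G$ deletes precisely the composition factors supported at blossom vertices. Then right-exactness of $G$ applied to the projective presentation of $E$ in $\Esc$ gives a projective presentation of $G(E)$ in $\modd A$, so $G$ restricted to $\Esc$ is essentially surjective onto $\modd A$ (every $A$-module has a projective presentation which we can lift).

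Next I would show $G|_\Esc$ is full. Given $E,E'\in\Esc$ and an $A$-morphism $\bar f\colon G(E)\to G(E')$, lift it along projective presentations: choose presentations $P_1\to P_0\to E$ and $P_1'\to P_0'\to E'$ in $\Esc$ as above, lift $\bar f$ to a map of presentations $P_0/\soc P_0\to P_0'/\soc P_0'$ (possible since $G$ is bijective on Hom-sets between objects of the form $P/\soc P$ — this is the key computation, reducing to $\Hom_{A\bls}(P, P') \cong \Hom_A(G(P), G(P'))$ modulo maps through socles, which follows because maps between projectives over a gentle algebra are described by the string combinatorics and blossom vertices contribute nothing to $\Hom$ by condition (e1)), and push down to a map $E\to E'$ lifting $\bar f$. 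Then I would compute $\Ker(G|_\Esc)$, i.e. the morphisms $f\colon E\to E'$ with $G(f)=0$. I claim this ideal is exactly $[\inj\Esc]$, the morphisms factoring through injective objects of $\Esc$. For the inclusion $[\inj\Esc]\subseteq\Ker G$: by \cref{PropGentle_InjE}.(3), every $I\in\inj\Esc$ fits in $0\to P/\soc P\to H/\soc H\to I\to 0$ with $H\in(\proj A\bls)\cap(\inj A\bls)$, and $H$ is a projective-injective string module over the \emph{blossoming} algebra, hence $G(H)=0$ (projective-injective $A\bls$-modules are supported, as strings, in a way that meets blossom vertices at both ends — more precisely their string, after deleting blossoms, becomes the zero string or shows $H/\afr H=0$); thus $G(I)=0$ and $G$ kills any map through $I$.

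The reverse inclusion $\Ker G\subseteq[\inj\Esc]$ is the heart of the matter and the step I expect to be the main obstacle. Here I would argue: if $G(f)=0$ for $f\colon E\to E'$, then on strings $f$ factors through the ``blossom part''. Concretely, using the injective copresentation $0\to E\to I^0\to I^1\to 0$ from \cref{PropGentle_InjE} with $I^0,I^1\in\Esc\cap\Fac H$, and the fact that $G$ is exact enough on this sequence, one shows that $f$ lifts to the copresentation and, because $G(f)=0$ and $G$ is faithful on the relevant quotient data, the lift factors through $I^0$. Alternatively — and this may be cleaner — I would invoke \cref{prop:module as quotient by injectives}: since $\Esc$ is 0-Auslander by \cref{cor:E is 0-Auslander}, that proposition already gives $\Esc/\Ical \cong \modd\ovl{\Pcal}$ where $\ovl{\Pcal}$ is the image of $\proj\Esc$ in the quotient by projective-injectives; so it suffices to identify $\modd\ovl{\Pcal}$ with $\modd A$ compatibly with $G$. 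By \cref{ProGentle_ProjE}.(3) the indecomposable non-projective-injective objects of $\proj\Esc$ are the $P/\soc P$ for $P=P_{A\bls}(p)$ with $p$ a non-blossom vertex, and the computation $\Hom_\Esc(P/\soc P, P'/\soc P')\big/[\text{proj-inj}] \cong \Hom_A(P_A(p), P_A(p'))$ identifies $\ovl{\Pcal}$ with $\proj A$ as additive categories; hence $\modd\ovl{\Pcal}\cong\modd(\proj A)\cong\modd A$, and chasing the definitions shows the composite equivalence is induced by $G$, giving $\ovl{G}$. Either route, the genuine content is the Hom-computation between the objects $P/\soc P$ and the verification that $G$ sends projective-injectives of $\Esc$ to zero; both are string-combinatorial facts about gentle and blossoming algebras that I would reduce to the explicit description of $\Hom$-spaces via graph maps of strings.
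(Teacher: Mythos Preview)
Your Route 2 (invoking \cref{prop:module as quotient by injectives} together with \cref{cor:E is 0-Auslander}) is valid, and the paper explicitly acknowledges this: it says the proposition ``can be seen as a consequence of \cref{prop:module as quotient by injectives} and \cref{cor:E is 0-Auslander}'' before giving a direct representation-theoretic proof instead. So your alternative route is correct in outline; the remaining content is the Hom computation identifying $\ovl{\Pcal}$ with $\proj A$ and checking that the abstract equivalence is the one induced by $G$, which you correctly isolate.

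Your Route 1, however, diverges from the paper and has two soft spots. First, for essential surjectivity you say ``lift projective presentations'', but a naive lift $P_1\to P_0$ of a presentation of $M\in\modd A$ need not have cokernel in $\Esc$: the map may fail to be injective, and even if injective may not carry $\soc P_1$ onto $\soc P_0$. The paper's proof handles this by first composing with an injective hull $P_1\hookrightarrow H$ (forcing injectivity) and then adjoining a semisimple projective summand $S$ to $P_1$ to force the socle condition; only after this modification does the cokernel lie in $\Esc$. You do not mention this step. Second, for fully faithfulness you split the problem into ``full'' plus ``kernel $=[\inj\Esc]$'' and call the latter the heart of the matter. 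The paper bypasses this split entirely: it proves full faithfulness in one stroke by observing that $\afr F$ is a quotient of a projective-injective $H$ (since $\topp(\afr F)$ is supported only at source blossoms), whence $\Ext^1_{A\bls}(-,\afr F)=0$ on $\Esc$, and the two resulting short exact sequences immediately give $(\Esc/\inj\Esc)(E,F)\cong\Hom_A(E/\afr E,F/\afr F)$. This is both shorter and avoids the string-combinatorial case analysis you anticipate for the kernel inclusion $\Ker G\subseteq[\inj\Esc]$.
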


\begin{proof}
Since $G$ obviously satisfies $G(H)=0$ for any $H\in(\proj A\bls)\cap(\inj A\bls)$, \cref{PropGentle_InjE} {\rm (3)} and the right-exactness of $G$ shows $G(\inj\Esc)=0$. Thus $G$ naturally induces an additive functor $\ovl{G}\co\Esc/\inj\Esc\to\modd A$. Let us show $\ovl{G}$ is an equivalence of categories.

Let $E,F\in\Esc$ be any pair of objects. By \cref{PropGentle_InjE}, we have a short exact sequence
\[ 0\to E\to I^0\to I^1\to0 \]
in $\modd A\bls$, satisfying $I^0,I^1\in\inj\Esc$. Since $F$ does not contain any non-zero simple projective in its composition series, so does $\afr F$. This implies that $\topp(\afr F)\in\modd A\bls$ is a direct sum of simples corresponding to source blossom vertices, and thus $\afr F$ has a projective cover $H\to \afr F$ for some $H\in(\proj A\bls)\cap(\inj A\bls)$. Thus by \cref{RemGentle_E} {\rm (2)}, we have $\Ext^1_{A\bls}(I^1,\afr F)=\Ext^1_{A\bls}(E,\afr F)=0$, and obtain the following exact sequences.
\[ 0\to\Hom_{A\bls}(E,\afr F)\to\Hom_{A\bls}(E,F)\to\Hom_{A\bls}(E,F/\afr F)\to0, \]
\[ \Hom_{A\bls}(I^0,\afr F)\to\Hom_{A\bls}(E,\afr F)\to0. \]
They naturally induce the isomorphism
\[ (\Esc/\inj\Esc)(E,F)\cong\Hom_{A\bls}(E,F/\afr F)\cong\Hom_A(E/\afr E,F/\afr F), \]
which shows that $\ovl{G}$ is fully faithful.

It remains to show that $\ovl{G}$ is essentially surjective.
Remark that for any $v\in Q_0$, we have $G(\ep_v A\bls)=\ep_vA\bls/\soc(\ep_v A\bls)\cong\ep_vA$. Let $M\in\modd A$ be any object, and take its projective presentation
\[ Q_1\ov{q}{\lra}Q_0\to M\to0\quad(Q_0,Q_1\in\proj A) \]
in $\modd A$. By the remark above, we may find $P_0,P_1\in\proj A\bls$ and epimorphisms in $\modd A\bls$
\[ \pi_i\co P_i\to Q_i\quad(i=1,2) \]
which give $G(P_i)\cong Q_i$. Here, $Q_0,Q_1\in\modd A$ are regarded as objects in $\modd A\bls$ through the fully faithful embedding $\modd A\hookrightarrow\modd A\bls$ induced by the restriction of the action along the surjective homomorphism of $K$-algebras $A\bls\to A\bls/\afr\cong A$. Since $P_1\in\proj A\bls$, we may find $p\in\Hom_{A\bls}(P_1,P_0)$ which makes
\[
\xy
(-6,6)*+{P_1}="0";
(6,6)*+{P_0}="2";
(-6,-6)*+{Q_1}="4";
(6,-6)*+{Q_0}="6";
{\ar^{p} "0";"2"};
{\ar_{\pi_1} "0";"4"};
{\ar^{\pi_0} "2";"6"};
{\ar_{q} "4";"6"};
{\ar@{}|\circlearrowright "0";"6"};
\endxy
\]
commutative in $\modd A\bls$. Since $G$ is right exact, this gives $G(\Cok p)\cong M$ in $\modd A$.

We will modify $p\co P_1\to P_0$ to make the cokernel belong to $\Esc$.
Take an injective hull $P_1\ov{i}{\lra}H$, with $H\in(\proj A\bls)\cap(\inj A\bls)$. Then
\[ p\ppr=\begin{bmatrix}p\\ i\end{bmatrix}\co P_1\to P_0\oplus H \]
is a monomorphism in $\modd A\bls$. By adding the semisimple projective $S=\soc(P_0\oplus H)/p\ppr(\soc P_1)$ to $P_1$, we obtain a monomorphism
\[ \iota=\left[\begin{array}{cc}p&\ast\\ i&\ast\end{array}\right]\co P_1\oplus S\to P_0\oplus H \]
which satisfies $\iota(\soc(P_1\oplus S))=\soc(P_0\oplus H)$. Since we have $G(S)=G(H)=0$, we have $G(\Cok\iota)\cong G(\Cok p)\cong M$. By \cref{RemGentle_E} {\rm (3)}, we have $\Cok\iota\in\Esc$.
\end{proof}

\subsection{The category of walks}
\label{ssection:ExtricatWalk}

The exact category $\Esc$ of \cref{ssection: extricat for gentle} is convenient to define and study from a purely algebraic point of view, but is not well-adapted to the combinatorics of walks (i.e. maximal strings in the blossoming bound quiver).
In this section, we exchange this subcategory for an equivalent one, which will be more adapted to walks.

\begin{definition}
 The exact category of walks is defined as the full subcategory $\Fsc$ of $\modd A\bls$ whose objects $M$ satisfy $\Hom_{A\bls}(\soc P,\tau M)=0$ (equivalently $\Ext^1_{A\bls}(M,\soc P)=0$) for any $P\in\proj A\bls$, $M$ has projective dimension at most one, and has no simple projective summands.
\end{definition}

\begin{remark}
The subcategory $\Fsc$ is extension-closed in $\modd A\bls$, and is thus an exact category.
\end{remark}

\begin{remark}
The subcategory $\Fsc$ contains all band modules, since they are all of projective dimension one, and not supported at sinks or sources.
\end{remark}

\begin{example}
In \cref{figure: ARquiverModEx3}, taken from \cite{IyamaNakaokaPalu}, we highlight the indecomposable $A\bls$-modules that belong to the subcategories $\Fsc$ and $\Esc$.
Those in $\Fsc$ precisely coincide with the modules whose support contains two different blossoming vertices.
Moreover, we see that $\Fsc$ is stable under the action of $\tau_{A\bls}^2$.
Those two facts are instances of \cref{prop: in F iff walk} and \cref{cor: tau2-stable} respectively.
One can also notice that the full subquivers of this Aulsander--Reiten quiver given by $\Esc$ and $\Fsc$ are isomorphic. This is explained by the equivalence of exact categories proven in \cref{prop: equivalence F to E}.
\end{example}

\begin{landscape}
\begin{figure}[H]
\begin{tikzpicture}[scale=0.6, fl/.style={->,>=latex}]
%%%%%%%%%%%%%%%%%%%
% Arrows
%%%%%%%%%%%%%%%%%%%
\foreach \x in {0,1,...,4} { 
  \draw[fl] (2*\x-14,-2*\x+2) -- (2*\x-13,-2*\x+1) ;
  \draw[fl] (2*\x-8,-2*\x+4) -- (2*\x-7,-2*\x+3) ;
};
\foreach \x in {0,1,2,3} {
  \draw[fl] (2*\x-10,-2*\x+2) -- (2*\x-9,-2*\x+1) ;
  \draw[fl] (2*\x-10,2*\x-5) -- (2*\x-9,2*\x-4) ;
};
\foreach \x in {0,1,2} {
  \draw[fl] (2*\x-14,2*\x-1) -- (2*\x-13,2*\x) ;
  \draw[fl] (2*\x-6,2*\x-5) -- (2*\x-5,2*\x-4) ;
  \draw[fl] (2*\x-4,2*\x-7) -- (2*\x-3,2*\x-6) ;
};
\foreach \x in {0,1} {
  \draw[fl] (2*\x-10,2*\x-1) -- (2*\x-9,2*\x) ;
  \draw[fl] (2*\x-2,-2*\x+2) -- (2*\x-1,-2*\x+1) ;
};
\begin{scope}[yshift=1cm, xshift=-1cm, rotate=180, fl/.style={<-,>=latex}]
 \foreach \x in {0,1,...,4} { 
  \draw[fl] (2*\x-14,-2*\x+2) -- (2*\x-13,-2*\x+1) ;
  \draw[fl] (2*\x-8,-2*\x+4) -- (2*\x-7,-2*\x+3) ;
};
\foreach \x in {0,1,2,3} {
  \draw[fl] (2*\x-10,-2*\x+2) -- (2*\x-9,-2*\x+1) ;
  \draw[fl] (2*\x-10,2*\x-5) -- (2*\x-9,2*\x-4) ;
};
\foreach \x in {0,1,2} {
  \draw[fl] (2*\x-14,2*\x-1) -- (2*\x-13,2*\x) ;
  \draw[fl] (2*\x-6,2*\x-5) -- (2*\x-5,2*\x-4) ;
  \draw[fl] (2*\x-4,2*\x-7) -- (2*\x-3,2*\x-6) ;
};
\foreach \x in {0,1} {
  \draw[fl] (2*\x-10,2*\x-1) -- (2*\x-9,2*\x) ;
};
\end{scope}
%%%%%%%%%%%%%%%%%%%
% Nodes
%%%%%%%%%%%%%%%%%%%
\begin{scope}[xshift=-0.5cm, yshift=0.5cm]
\draw (0,0) node {$2$} ;
\draw (-14,-2) node {$h$} ;
\draw (14,2) node {$b$} ;
\draw (-14,2) node {$f$} ;
\draw (14,-2) node {$a$} ;
\draw (-12,0) node {$\bsm3\\f\;h\esm$} ;
\draw[green] (-12,0) +(-30pt,-30pt) rectangle +(30pt,30pt) ;
\draw (12,0) node {$\bsm a\;b\\1\esm$} ;
\draw[blue] (12,0) circle (.5cm) ;
\draw[green] (12,0) +(-30pt,-30pt) rectangle +(30pt,30pt) ;
\draw (-10,-6) node {$g$} ;
\draw (10,6) node {$e$} ;
\draw (-10,-2) node {$\bsm3\\f\esm$} ;
\draw (10,2) node {$\bsm a\\1\esm$} ;
\draw (-10,2) node {$\bsm3\\h\esm$} ;
\draw (10,-2) node {$\bsm b\\1\esm$} ;
\draw[blue] (10,-2) ellipse (.4cm and .6cm) ;
\draw (-8,-4) node {$\bsm 2\\3\;g\\f\;\phantom{\;g}\esm$} ;
\draw[green] (-8,-4) +(-30pt,-30pt) rectangle +(30pt,30pt) ;
\draw (8,4) node {$\bsm a\;\phantom{\;e}\\1\;e\\2\esm$} ;
\draw[blue] (8,4.05) ellipse (.55cm and .7cm) ;
\draw[green] (8.1,4) +(-30pt,-30pt) rectangle +(30pt,30pt) ;
\draw (-8,0) node {$3$} ;
\draw[blue] (-8,0) circle (.5cm) ;
\draw (8,0) node {$1$} ;
\draw (-8,4) node {$\bsm d\\3\\h\esm$} ;
\draw[green] (-8,4) +(-30pt,-30pt) rectangle +(30pt,30pt) ;
\draw (8,-4) node {$\bsm b\\1\\c\esm$} ;
\draw[green] (8,-4) +(-30pt,-30pt) rectangle +(30pt,30pt) ;
\draw (-6,-6) node {$\bsm2\\3\\f\esm$} ;
\draw (6,6) node {$\bsm a\\1\\2\esm$} ;
\draw[blue] (6,6) ellipse (.3cm and .7cm) ;
\draw (-6,-2) node {$\bsm2\\3\;g\esm$} ;
\draw (6,2) node {$\bsm1\;e\\2\esm$} ;
\draw[blue] (6,2) circle (.55cm) ;
\draw (-6,2) node {$\bsm d\\3\esm$} ;
\draw[blue] (-6,2) ellipse (.4cm and .6cm) ;
\draw (6,-2) node {$\bsm1\\c\esm$} ;
\draw (-4,-8) node {$\bsm e\\2\\3\\f\esm$} ;
\draw[green] (-4,-8) +(-30pt,-30pt) rectangle +(30pt,30pt) ;
\draw (4,8) node {$\bsm a\\1\\2\\g\esm$} ;
\draw[green] (4,8) +(-30pt,-30pt) rectangle +(30pt,30pt) ;
\draw (-4,-4) node {$\bsm 2\\3\esm$} ;
\draw[blue] (-4,-4) ellipse (.4cm and .6cm) ;
\draw (4,4) node {$\bsm 1\\2\esm$} ;
\draw[blue] (4,4) ellipse (.4cm and .6cm) ;
\draw (-4,0) node {$\bsm d\phantom{3}2\\\phantom{d\;}\,3\phantom{2}g\esm$} ;
\draw[green] (-4,0) +(-30pt,-30pt) rectangle +(30pt,30pt) ;
\draw (4,0) node {$\bsm \phantom{c\;}1\phantom{2}e\\c\phantom{1}2\esm$} ;
\draw[green] (4,0) +(-30pt,-30pt) rectangle +(30pt,30pt) ;
\draw (-2,-6) node {$\bsm e\\2\\3\esm$} ;
\draw[blue] (-2,-6) ellipse (.3cm and .7cm) ;
\draw (2,6) node {$\bsm 1\\2\\g\esm$} ;
\draw (-2,-2) node {$\bsm d\phantom{3}2\\3\esm$} ;
\draw[blue] (-2,-2) circle (.55cm) ;
\draw (2,2) node {$\bsm1\\c\;\,2\esm$} ;
\draw (-2,2) node {$\bsm 2\\g\esm$} ;
\draw (2,-2) node {$\bsm e\\2\esm$} ;
\draw (-2,6) node {$c$} ;
\draw (2,-6) node {$d$} ;
\draw (0,-4) node {$\bsm \phantom{d3}e\\d\phantom{3}2\\3\esm$} ;
\draw[blue] (0,-4) ellipse (.55cm and .7cm) ;
\draw[green] (0,-4) +(-30pt,-30pt) rectangle +(30pt,30pt) ;
\draw (0,4) node {$\bsm 1\\c\phantom{3}2\\\phantom{c3}g\esm$} ;
\draw[green] (0,4) +(-30pt,-30pt) rectangle +(30pt,30pt) ;
%%%%%%%%%%%%%%%%%%%
% Projectives and Injectives
%%%%%%%%%%%%%%%%%%%
\draw[red] (14.3,2.5) -- (14.3,1.5) ;
\draw[red] (14.3,-1.5) -- (14.3,-2.5) ;
\draw[red] (12.7,0.5) -- (12.7,-0.5) ;
\draw[red] (8.8,4.5) -- (8.8,3.5) ;
\draw[red] (10.3,6.5) -- (10.3,5.5) ;
\draw[red] (8.3,-3.4) -- (8.3,-4.6) ;
\draw[red] (7.7,-3.4) -- (7.7,-4.6) ;
\draw[red] (3.7,8.7) -- (3.7,7.3) ;
\draw[red] (4.3,8.7) -- (4.3,7.3) ;
\draw[red] (2.3,-5.5) -- (2.3,-6.5) ;
\draw[red] (-0.4,4.7) -- (-0.4,3.3) ;
\begin{scope}[rotate=180]
\draw[red] (14.3,2.5) -- (14.3,1.5) ;
\draw[red] (14.3,-1.5) -- (14.3,-2.5) ;
\draw[red] (12.37,0.6) -- (12.37,-0.6) ;
\draw[red] (8.4,4.7) -- (8.4,3.3) ;
\draw[red] (10.3,6.5) -- (10.3,5.5) ;
\draw[red] (8.3,-3.4) -- (8.3,-4.6) ;
\draw[red] (7.7,-3.4) -- (7.7,-4.6) ;
\draw[red] (3.7,8.7) -- (3.7,7.3) ;
\draw[red] (4.3,8.7) -- (4.3,7.3) ;
\draw[red] (2.3,-5.5) -- (2.3,-6.5) ;
\draw[red] (-0.7,4.5) -- (-0.7,3.5) ;
\end{scope}
%%%%%%%%%%%%%%%%%%%
% relative Projectives and Injectives
%%%%%%%%%%%%%%%%%%%
\draw[dashed, thick, blue] (3.4,4.45) -- (3.4,3.45) ;
\draw[dashed, thick, blue] (5.6,6.6) -- (5.6,5.3) ;
\draw[dashed, thick, blue] (6.4,6.6) -- (6.4,5.3) ;
\draw[dashed, thick, blue] (9.45,-1.6) -- (9.45,-2.7) ;
\draw[dashed, thick, blue] (9,4.5) -- (9,3.4) ;
\draw[dashed, thick, blue] (10.55,-1.6) -- (10.55,-2.7) ;
\draw[dashed, thick, blue] (12.9,0.5) -- (12.9,-0.6) ;
\begin{scope}[xshift=4cm, rotate=180]
\draw[dashed, thick, blue] (3.1,4.5) -- (3.1,3.4) ;
\draw[dashed, thick, blue] (5.6,6.6) -- (5.6,5.3) ;
\draw[dashed, thick, blue] (6.4,6.6) -- (6.4,5.3) ;
\draw[dashed, thick, blue] (9.45,-1.6) -- (9.45,-2.7) ;
\draw[dashed, thick, blue] (8.6,4.5) -- (8.6,3.4) ;
\draw[dashed, thick, blue] (10.55,-1.6) -- (10.55,-2.7) ;
\draw[dashed, thick, blue] (12.7,0.5) -- (12.7,-0.5) ;
\end{scope}
\end{scope}
\end{tikzpicture}

\caption{The Auslander--Reiten quiver of $\modd A\bls$: The indecomposable objects in $\mathscr{E}$ are circled in blue; the projective (resp. injective) modules are highlighted by a red vertical line to their left (resp. to their right) and similarly for the relative projectives (resp. injectives) in $\mathscr{E}$ by using dashed blue vertical lines. The indecomposable objects in $\mathscr{F}$ are squared in green. Projectives (resp. injectives) in $\mathscr{F}$ are the projective (resp. injective) $A\bls$-modules that belong to $\mathscr{F}$.}
\label{figure: ARquiverModEx3}
%\end{hfigure}
\end{figure}
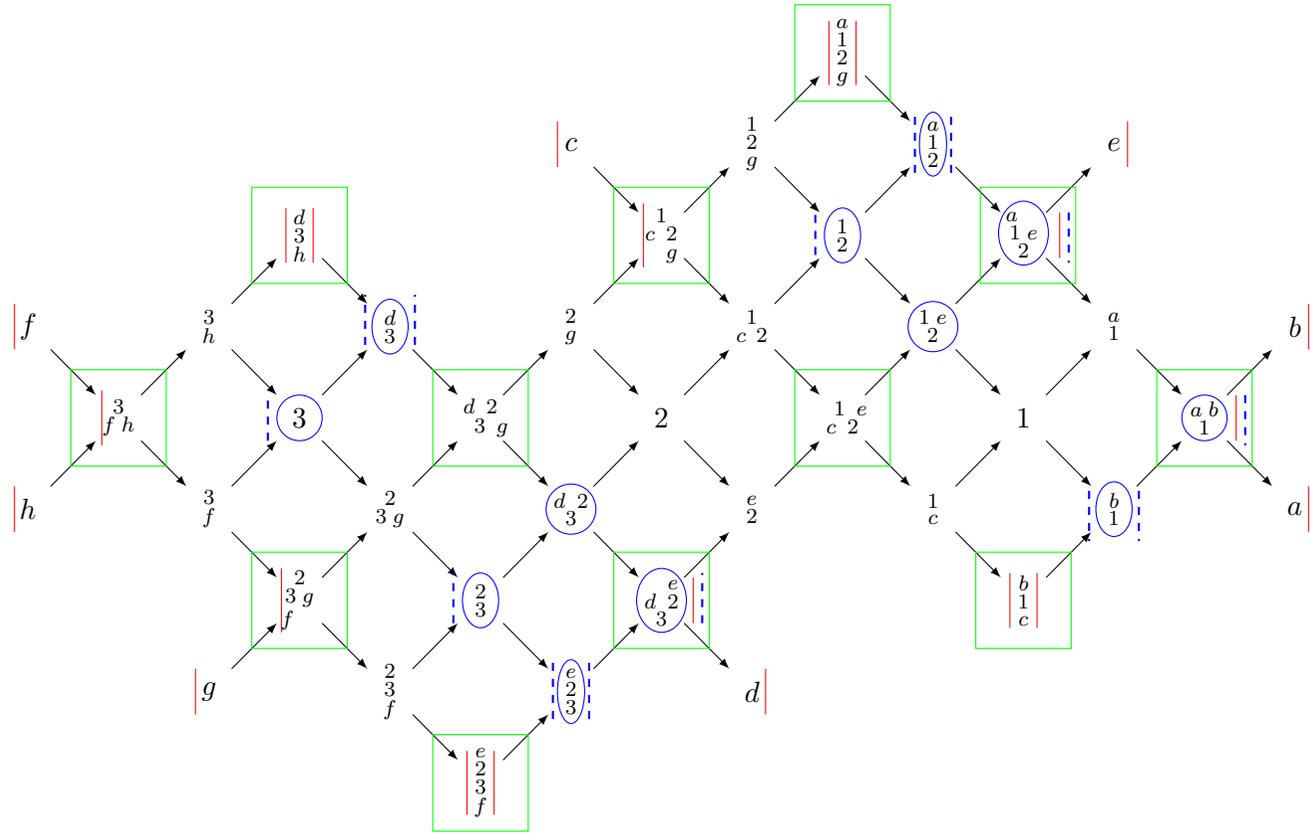

\restoregeometry

\end{landscape}

% \begin{remark}
% Verify that $\Fsc$ is equivalently described by the conditions $\Ext^1_{A\bls}(\operatorname{top} I,M)=0$ for any $I\in\inj A\bls$, $M$ has injective dimension at most one and has no simple injective summands. This would simplify the proof of \cref{cor:projFinjF}
% \end{remark}

\begin{proposition}\label{prop: equivalence F to E}
 There is an equivalence of exact categories $F: \Fsc \xrightarrow{\simeq} \Esc$, sending an $A\bls$-module $M\in \Fsc$ to the cokernel $FM$ of $\eps.\soc M \to M$, where $\eps$ is the sum of the idempotents $e_t$, over all sinks $t\in Q_0\bls$.
 A quasi-inverse is constructed as follows: For an $A\bls$-module $N\in\Esc$, consider its minimal projective resolution $0\to S_1\oplus P_1\to P_0\to N$, where $S_1$ is semisimple and $P_0,P_1$ have no simple summands.
 Then the cokernel $G'N$ of $P_1\to P_0$ belongs to $\Fsc$.
\end{proposition}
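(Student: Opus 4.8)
The plan is to recognise both $\Fsc$ and $\Esc$ as full subcategories of $\modd A\bls$ cut out by conditions at the \emph{sink leaves} of $Q\bls$, and then to see that $F$ and $G'$ merely delete, respectively re-create, the sink-leaf part of the socle. The structural input needed is that, since every non-leaf vertex of $Q\bls$ is $4$-valent, the simple projective $A\bls$-modules are exactly the simples $S_t$ at the sink leaves $t$, and for every $P\in\proj A\bls$ the socle $\soc P$ is a direct sum of such $S_t$ (gentleness together with $4$-valence always leaves, at a non-leaf, an outgoing arrow along which a nonzero path can be prolonged). Granting this, \cref{RemGentle_E}(1) says that condition $\textrm{(e1)}$ for $E$ is equivalent to $\eps\cdot\soc E=0$, with $\eps=\sum_{t\ \mathrm{sink\ leaf}}e_t$ as in the statement; and, since the modules of the form $\soc P$ are precisely the semisimple modules supported at the sink leaves, a module $M$ lies in $\Fsc$ if and only if $\mathrm{pd}_{A\bls}M\le 1$, $M$ has no simple projective summand, and $\Ext^1_{A\bls}(M,S_t)=0$ for every sink leaf $t$. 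The leverage throughout is that each $S_t$ is \emph{projective}: morphisms into a module lift along any epimorphism onto it, and, for a minimal projective presentation $0\to Q_1\to Q_0\to M\to 0$, one has $\Ext^1_{A\bls}(M,S_t)\cong\Hom_{A\bls}(Q_1,S_t)$ (because $Q_1\subseteq\rad Q_0$).

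\textbf{Step 1 ($F$ well defined and fully faithful).} For $M\in\Fsc$ there is a short exact sequence $0\to\eps\soc M\to M\to FM\to 0$ with $\eps\soc M$ semisimple projective; the long exact $\Ext$-sequence gives $\mathrm{pd}_{A\bls}FM\le 1$, and lifting any hypothetical embedding $S_t\hookrightarrow FM$ to $S_t\hookrightarrow M$ (necessarily landing in $\eps\soc M$) produces a contradiction, so $\eps\soc FM=0$ and $FM\in\Esc$. On morphisms $f$ restricts to the sink-leaf socles, so $F$ is additive. It is faithful because a morphism killed by $F$ has image in $\eps\soc(\text{target})$, hence factors through a sum of $S_t$'s, but $\Hom_{A\bls}(M,S_t)=\Hom_{A\bls}(\topp M,S_t)=0$ since $M$ has no simple summand; and it is full because the obstruction to lifting $g\colon FM\to FN$ along $N\twoheadrightarrow FN$ lies in $\Ext^1_{A\bls}(M,\eps\soc N)=\bigoplus_t\Ext^1_{A\bls}(M,S_t)=0$.

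\textbf{Step 2 ($G'$ lands in $\Fsc$; $F$ is an equivalence).} Let $N\in\Esc$ with minimal projective resolution $0\to S_1\oplus P_1\to P_0\to N\to 0$ as in the statement; then $S_1$ is semisimple projective and $P_0$ has no simple summand by minimality. The restriction $P_1\to P_0$ is injective, yielding a short exact sequence $0\to S_1\to G'N\to N\to 0$, and $G'N$ has $\topp G'N=\topp P_0$ (hence no simple projective summand) and $\Ext^1_{A\bls}(G'N,S_t)\cong\Hom_{A\bls}(P_1,S_t)=0$ for every sink leaf $t$; the last isomorphism comes from the six-term sequence of $0\to S_1\to G'N\to N\to 0$ once one identifies the connecting map with the inclusion of the summand $\Hom_{A\bls}(S_1,S_t)\subseteq\Hom_{A\bls}(S_1\oplus P_1,S_t)\cong\Ext^1_{A\bls}(N,S_t)$, using that $G'N$ is the pushout of the minimal resolution along $S_1\oplus P_1\twoheadrightarrow S_1$. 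Thus $G'N\in\Fsc$. Since $\eps\soc N=0$ by $\textrm{(e1)}$, the semisimple module $\eps\soc(G'N)/S_1$ embeds into $\eps\soc N=0$, so $\eps\soc(G'N)=S_1$ and $F(G'N)=G'N/S_1\cong N$; hence $F$ is essentially surjective, and therefore an equivalence of additive categories, with $G'$ an object-level description of a quasi-inverse. The direct check $G'(FM)\cong M$ goes through the fact that $M\in\Fsc$ forces $\Ext^1_{A\bls}(M,S_t)=\Hom_{A\bls}(Q_1,S_t)=0$, whence $Q_1$ has no simple summand; since $M$ has no simple summand one has $\soc M\subseteq\rad M$, so $\topp FM=\topp M$ and $Q_0$ is the projective cover of $FM$ with minimal syzygy $Q_1\oplus\eps\soc M$, and therefore $G'(FM)=\Cok(Q_1\to Q_0)=M$.

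\textbf{Step 3 (exactness) and the main obstacle.} Each of $\Fsc,\Esc$ carries the exact structure given by short exact sequences of $A\bls$-modules with all terms in the subcategory. Applying the snake lemma (or the nine lemma) to the inclusions $\eps\soc(-)\hookrightarrow(-)$ along a conflation in $\Fsc$, and using once more that the $S_t$ are projective so that $\eps\soc(-)$ stays surjective on a surjection in $\Fsc$, shows that $F$ carries conflations to conflations; the symmetric argument applied to the natural sequences $0\to\eps\soc(G'N)\to G'N\to N\to 0$ does the same for $G'$. Hence $F$ is an equivalence of exact categories with quasi-inverse $G'$. I expect Step 2 to need the most care: one must work with the minimal projective presentation over $A\bls$ rather than over $\Esc$ (whose projectives are the $P/\soc P$ of \cref{ProGentle_ProjE}), isolate precisely the semisimple projective part $S_1$ of the first syzygy, and pin down the connecting homomorphism in the $\Ext$-sequence of $0\to S_1\to G'N\to N\to 0$ — that last identification is the one genuinely computational ingredient; everything else becomes formal once the sink-leaf reformulations of $\Fsc$ and $\Esc$ from the first paragraph are in place.
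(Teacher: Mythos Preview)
Your proof is correct and takes essentially the same approach as the paper: the arguments for $FM\in\Esc$ and $G'N\in\Fsc$, for fullness and faithfulness of $F$, for essential surjectivity via $FG'\cong\id$, and for exactness of $F$ via the snake lemma applied to the sink-socle sequences $0\to\eps\soc(-)\to(-)\to F(-)\to 0$ all match the paper's proof almost step for step. Your additional direct check that $G'F\cong\id$ and your remark on exactness of $G'$ go slightly beyond what the paper writes out; the latter sketch is a little loose (the ``symmetric argument'' would need $\eps\soc(G'-)$ to send conflations in $\Esc$ to short exact sequences, which is not immediate), but this is harmless since $F$ in fact reflects conflations: any defect $K=\ker g/\operatorname{im} f$ is semisimple projective and would split off from the middle term because $\Ext^1_{A\bls}(N,S_t)=0$ for $N\in\Fsc$.
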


\begin{proof}
The assignment $M\mapsto FM$ is functorial by construction.

$\bullet$ For any $M\in\Fsc$, $FM$ belongs to $\Esc$: The short exact sequence $\eps.\soc M \infl M\defl FM$ induces, for any simple projective $S$, an exact sequence
\[
0\to (S,\eps.\soc M) \xrightarrow{f} (S,M)\xrightarrow{g} (S,FM)\to \Ext^1_{A\bls}(S,\eps.\soc M)
\]
where we have written $(L,N)$ for $\Hom_{A\bls}(L,N)$.
By definition of $\eps$, the map $f$ is bijective, hence $g$ is zero.
Moreover, $S$ being projective, we have $\Ext^1_{A\bls}(S\eps.\soc M)=0$.
Thus $\Hom_{A\bls}(S,M)=0$.

Because $M$ has projective dimension at most one, there is a short exact sequence $P_1\infl P_0\defl M$, with $P_0,P_1$ projective $A\bls$-modules.
There is a diagram
% https://q.uiver.app/?q=WzAsOCxbMCwwLCJQXzEiXSxbMSwwLCJQXzEiXSxbMCwxLCJQXzFcXG9wbHVzIEsiXSxbMSwxLCJQXzAiXSxbMiwxLCJGTSJdLFswLDIsIksiXSxbMSwyLCJNIl0sWzIsMiwiRk0iXSxbMCwxLCIiLDAseyJzdHlsZSI6eyJoZWFkIjp7Im5hbWUiOiJub25lIn19fV0sWzAsMSwiIiwyLHsib2Zmc2V0IjoxLCJzdHlsZSI6eyJoZWFkIjp7Im5hbWUiOiJub25lIn19fV0sWzQsNywiIiwyLHsic3R5bGUiOnsiaGVhZCI6eyJuYW1lIjoibm9uZSJ9fX1dLFs0LDcsIiIsMCx7Im9mZnNldCI6MSwic3R5bGUiOnsiaGVhZCI6eyJuYW1lIjoibm9uZSJ9fX1dLFswLDIsIiIsMCx7InN0eWxlIjp7InRhaWwiOnsibmFtZSI6Im1vbm8ifX19XSxbMSwzLCIiLDAseyJzdHlsZSI6eyJ0YWlsIjp7Im5hbWUiOiJtb25vIn19fV0sWzIsMywiIiwxLHsic3R5bGUiOnsidGFpbCI6eyJuYW1lIjoibW9ubyJ9fX1dLFs1LDYsIiIsMSx7InN0eWxlIjp7InRhaWwiOnsibmFtZSI6Im1vbm8ifX19XSxbMiw1LCIiLDEseyJzdHlsZSI6eyJoZWFkIjp7Im5hbWUiOiJlcGkifX19XSxbMyw2LCIiLDEseyJzdHlsZSI6eyJoZWFkIjp7Im5hbWUiOiJlcGkifX19XSxbNiw3LCIiLDEseyJzdHlsZSI6eyJoZWFkIjp7Im5hbWUiOiJlcGkifX19XSxbMyw0LCIiLDEseyJzdHlsZSI6eyJoZWFkIjp7Im5hbWUiOiJlcGkifX19XV0=
\[\begin{tikzcd}
	{P_1} & {P_1} \\
	{P_1\oplus K} & {P_0} & FM \\
	K & M & FM
	\arrow[no head, from=1-1, to=1-2]
	\arrow[shift right=1, no head, from=1-1, to=1-2]
	\arrow[no head, from=2-3, to=3-3]
	\arrow[shift right=1, no head, from=2-3, to=3-3]
	\arrow[tail, from=1-1, to=2-1]
	\arrow[tail, from=1-2, to=2-2]
	\arrow[tail, from=2-1, to=2-2]
	\arrow[tail, from=3-1, to=3-2]
	\arrow[two heads, from=2-1, to=3-1]
	\arrow[two heads, from=2-2, to=3-2]
	\arrow[two heads, from=3-2, to=3-3]
	\arrow[two heads, from=2-2, to=2-3]
\end{tikzcd}\]
(where $K=\eps.\soc M$) showing that $FM$ also has projective dimension at most one.

$\bullet$ For any $N\in\Esc$, $G'N$ belongs to $\Fsc$: By definition, $G'N$ has projective dimension at most one and there is a diagram
% https://q.uiver.app/?q=WzAsOCxbMCwwLCJQXzEiXSxbMSwwLCJQXzEiXSxbMCwxLCJTXzFcXG9wbHVzIFBfMSJdLFsxLDEsIlBfMCJdLFsyLDEsIk4iXSxbMCwyLCJTXzEiXSxbMSwyLCJHTiJdLFsyLDIsIk4iXSxbMCwxLCIiLDAseyJzdHlsZSI6eyJoZWFkIjp7Im5hbWUiOiJub25lIn19fV0sWzAsMSwiIiwyLHsib2Zmc2V0IjoxLCJzdHlsZSI6eyJoZWFkIjp7Im5hbWUiOiJub25lIn19fV0sWzQsNywiIiwyLHsic3R5bGUiOnsiaGVhZCI6eyJuYW1lIjoibm9uZSJ9fX1dLFs0LDcsIiIsMCx7Im9mZnNldCI6MSwic3R5bGUiOnsiaGVhZCI6eyJuYW1lIjoibm9uZSJ9fX1dLFswLDIsIiIsMCx7InN0eWxlIjp7InRhaWwiOnsibmFtZSI6Im1vbm8ifX19XSxbMSwzLCIiLDAseyJzdHlsZSI6eyJ0YWlsIjp7Im5hbWUiOiJtb25vIn19fV0sWzIsMywiIiwxLHsic3R5bGUiOnsidGFpbCI6eyJuYW1lIjoibW9ubyJ9fX1dLFs1LDYsIiIsMSx7InN0eWxlIjp7InRhaWwiOnsibmFtZSI6Im1vbm8ifX19XSxbMiw1LCIiLDEseyJzdHlsZSI6eyJoZWFkIjp7Im5hbWUiOiJlcGkifX19XSxbMyw2LCIiLDEseyJzdHlsZSI6eyJoZWFkIjp7Im5hbWUiOiJlcGkifX19XSxbNiw3LCIiLDEseyJzdHlsZSI6eyJoZWFkIjp7Im5hbWUiOiJlcGkifX19XSxbMyw0LCIiLDEseyJzdHlsZSI6eyJoZWFkIjp7Im5hbWUiOiJlcGkifX19XV0=
\[\begin{tikzcd}
	{P_1} & {P_1} \\
	{S_1\oplus P_1} & {P_0} & N \\
	{S_1} & G'N & N
	\arrow[no head, from=1-1, to=1-2]
	\arrow[shift right=1, no head, from=1-1, to=1-2]
	\arrow[no head, from=2-3, to=3-3]
	\arrow[shift right=1, no head, from=2-3, to=3-3]
	\arrow[tail, from=1-1, to=2-1]
	\arrow[tail, from=1-2, to=2-2]
	\arrow[tail, from=2-1, to=2-2]
	\arrow[tail, from=3-1, to=3-2]
	\arrow[two heads, from=2-1, to=3-1]
	\arrow[two heads, from=2-2, to=3-2]
	\arrow[two heads, from=3-2, to=3-3]
	\arrow[two heads, from=2-2, to=2-3]
\end{tikzcd}\]
Because $P_0$ has no simple summands, $G'N$ has no simple projective summands.
Moreover, for any simple projective $A\bls$-module $S$, there is an induced exact sequence
\[
 (S_1,S)\xrightarrow{h} \Ext^1_{A\bls}(N,S) \to \Ext^1_{A\bls}(G'N,S) \to \Ext^1_{A\bls}(S_1,S)
\]
where the last term vanishes since $S_1$ is projective, and where the map $h$ is an isomorphism (because $\Hom_{A\bls}(P_1,S) = 0 = \Hom_{A\bls}(P_0,S)$).
This shows that $\Ext^1_{A\bls}(G'N,S)=0$ 
% \Yann{And also that $(N,S)\cong (G'N,S)$}.

$\bullet$ The funtor $F$ is dense: Let $N\in\Esc$.
The short exact sequence $S_1\infl G'N\defl N$ induces an isomorphism $\soc G'N \cong \soc N\oplus S_1$, where $\soc N$ has no projective summands.
This implies $FG'N\cong N$.

$\bullet$ The functor $F$ is full: Let $M,N\in\Fsc$ and let  $FM \xrightarrow{f} FN$ be any morphism.
The composition $M\defl FM\to FN$ factors through $N\defl FN$, as in the diagram below, because $\Ext^1_{A\bls}(M,\eps.\soc N)=0$:
% https://q.uiver.app/?q=WzAsNSxbMSwwLCJNIl0sWzIsMCwiRk0iXSxbMCwxLCJcXHZhcmVwc2lsb24uXFxvcGVyYXRvcm5hbWV7c29jfSBOIl0sWzEsMSwiTiJdLFsyLDEsIkZOIl0sWzAsMSwiIiwxLHsic3R5bGUiOnsiaGVhZCI6eyJuYW1lIjoiZXBpIn19fV0sWzMsNCwiIiwxLHsic3R5bGUiOnsiaGVhZCI6eyJuYW1lIjoiZXBpIn19fV0sWzEsNCwiZiJdLFsyLDMsIiIsMSx7InN0eWxlIjp7InRhaWwiOnsibmFtZSI6Im1vbm8ifX19XSxbMCwzLCJnIiwyLHsic3R5bGUiOnsiYm9keSI6eyJuYW1lIjoiZGFzaGVkIn19fV1d
\[\begin{tikzcd}
	& M & FM \\
	{\varepsilon.\operatorname{soc} N} & N & FN
	\arrow[two heads, from=1-2, to=1-3]
	\arrow[two heads, from=2-2, to=2-3]
	\arrow["f", from=1-3, to=2-3]
	\arrow[tail, from=2-1, to=2-2]
	\arrow["g"', dashed, from=1-2, to=2-2]
\end{tikzcd}\]
We thus have $Fg=f$.

$\bullet$ The functor $F$ is faithful:
Given a commutative diagram
% https://q.uiver.app/?q=WzAsNSxbMSwwLCJNIl0sWzIsMCwiRk0iXSxbMCwxLCJcXHZhcmVwc2lsb24uXFxvcGVyYXRvcm5hbWV7c29jfSBOIl0sWzEsMSwiTiJdLFsyLDEsIkZOIl0sWzAsMSwiIiwxLHsic3R5bGUiOnsiaGVhZCI6eyJuYW1lIjoiZXBpIn19fV0sWzMsNCwiIiwxLHsic3R5bGUiOnsiaGVhZCI6eyJuYW1lIjoiZXBpIn19fV0sWzEsNCwiMCJdLFsyLDMsImlfTiIsMix7InN0eWxlIjp7InRhaWwiOnsibmFtZSI6Im1vbm8ifX19XSxbMCwzLCJnIl0sWzAsMiwiaCIsMix7ImN1cnZlIjotMSwic3R5bGUiOnsiYm9keSI6eyJuYW1lIjoiZGFzaGVkIn19fV1d
\[\begin{tikzcd}
	& M & FM \\
	{\varepsilon.\operatorname{soc} N} & N & FN
	\arrow[two heads, from=1-2, to=1-3]
	\arrow[two heads, from=2-2, to=2-3]
	\arrow["0", from=1-3, to=2-3]
	\arrow["{i_N}"', tail, from=2-1, to=2-2]
	\arrow["g", from=1-2, to=2-2]
	\arrow["h"', curve={height=-6pt}, dashed, from=1-2, to=2-1]
\end{tikzcd}\]
there is a morphism $h$ such that $i_Nh=g$.
However, $M$ has no simple projective summands.
This implies that $h$, hence $g$, vanishes.

$\bullet$ The functor $F$ is exact: Let $L\infl M\defl N$ be a short exact sequence with all three terms in $\Fsc$.
There is an induced split exact sequence $\eps.\soc L\to \eps.\soc M\to\eps.\soc N$.
By the snake lemma, the induced sequence between the cokernels $FL\to FM\to FN$ is short exact.
\end{proof}

\begin{corollary}
 An $A\bls$-module belongs to $\Fsc$ if and only if it is the cokernel of an injection between projective $A\bls$-modules having no simple summands.
\end{corollary}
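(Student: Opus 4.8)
The plan is to check the two implications directly against the three defining properties of $\Fsc$ --- projective dimension at most one, no simple projective summands, and $\Ext^1_{A\bls}(-,\soc P)=0$ for every $P\in\proj A\bls$ --- using along the way the explicit quasi-inverse $G'$ of the equivalence $F\colon\Fsc\xrightarrow{\simeq}\Esc$ from \cref{prop: equivalence F to E}.

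For the forward implication I would start from $M\in\Fsc$ and use $M\cong G'(FM)$: since, by construction in \cref{prop: equivalence F to E}, $G'$ sends an object $N\in\Esc$ with minimal projective resolution $0\to S_1\oplus P_1\to P_0\to N\to0$ (with $S_1$ semisimple and $P_0,P_1$ without simple summands) to $\Cok(P_1\to P_0)$, this already exhibits $M$ as the cokernel of an injection between projectives with no simple summands. A self-contained alternative avoiding the equivalence: take the minimal projective resolution $0\to\Omega M\to Q_0\to M\to0$, which has $\Omega M$ projective because $\mathrm{pd}\,M\le1$; a simple summand of $Q_0$ would lie in $\topp(Q_0)=\topp(M)$ (the resolution being minimal) and would hence split off $M$ as a simple projective summand, which is excluded; and a simple summand $S$ of $\Omega M$ is projective (a summand of a projective), so $S=\soc(S)$ and, the resolution being minimal so that $\Omega M\subseteq\rad(Q_0)$, we get $\Ext^1_{A\bls}(M,S)\cong\Hom_{A\bls}(\topp(\Omega M),S)\ne0$, contradicting $\Ext^1_{A\bls}(M,\soc(S))=0$.

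For the converse I would take $M=\Cok(\iota)$ for an injection $\iota\colon P_1\hookrightarrow P_0$ between projectives without simple summands. First split off the isomorphism part of $\iota$; this changes neither $\Cok(\iota)$ nor the injectivity nor the hypotheses, so one may assume $\iota(P_1)\subseteq\rad(P_0)$. Then $0\to P_1\to P_0\to M\to0$ gives $\mathrm{pd}\,M\le1$, and $M$ has no simple projective summand because such a summand would be a projective quotient of $P_0$, hence a direct summand of $P_0$. The remaining point, which I expect to be the only mildly delicate step, is $\Ext^1_{A\bls}(M,\soc P)=0$ for all $P\in\proj A\bls$: applying $\Hom_{A\bls}(-,\soc P)$ to the sequence and noting that $\iota(P_1)\subseteq\rad(P_0)$ annihilates every morphism $P_0\to\soc P$ gives $\Ext^1_{A\bls}(M,\soc P)\cong\Hom_{A\bls}(\topp(P_1),\soc P)$; since the socle of a projective $A\bls$-module is itself projective (a basic feature of the blossoming, used repeatedly in this section), a common simple summand $S$ of $\topp(P_1)$ and $\soc P$ would be projective, and being projective it is its own projective cover, so --- being a top-constituent of $P_1$ --- it would be a direct summand of $P_1$, contradicting the hypothesis. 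Hence $M\in\Fsc$. The remaining verifications (the standard splitting-off of the isomorphism part of $\iota$ in the Krull--Schmidt category $\modd A\bls$, and the long-exact-sequence bookkeeping) are routine.
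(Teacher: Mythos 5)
Your proof is correct and follows essentially the same route as the paper's: the forward direction via the equivalence $F$ and its quasi-inverse $G'$ from \cref{prop: equivalence F to E}, and the converse by checking the three defining conditions of $\Fsc$ directly from the long exact sequence, using that the socle of a projective $A\bls$-module is projective so that $\Hom_{A\bls}(P_i,\soc P)$ vanishes when $P_i$ has no simple summands. Your treatment is in fact slightly more careful than the paper's on one point: you rule out simple projective \emph{summands} of $M$ (as the definition of $\Fsc$ requires), whereas the paper's proof only remarks that $M$ itself is not simple projective.
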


\begin{proof}
 By \cref{prop: equivalence F to E}, any module in $\Fsc$ has projective dimension one and has a projective presentation by projective modules having no simple summands.
 Conversely, let $P_1\infl P_0\defl M$ be short exact with $P_0,P_1$ projective without any simple summand.
 Then $M$ is not simple projective and has projective dimension at most one. Moreover, for any simple projective $A\bls$-module $S$, $\Ext^1_{A\bls}(M,S)$ is given by the cokernel of the induced map $\Hom_{A\bls}(P_1,S)\to\Hom{A\bls}(P_0,S)$ whose domain and codomain both vanish.
 Hence $\Ext^1_{A\bls}(M,S) =0$ and $M$ belongs to $\Fsc$.
\end{proof}

\begin{lemma}
\label{lemma:Fdomdim}
For any $v\in Q_0$, there is a short exact sequence in $\modd A\bls$
\[
0 \to P_v \to I_a\oplus I_b \to I_v \to 0,
\]
where $\soc P_v = S_a\oplus S_b$.
\end{lemma}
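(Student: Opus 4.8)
The statement is entirely about the gentle algebra $A\bls$, so the natural approach is to unwind what $P_v$, $I_a$, $I_b$, $I_v$ are as string modules and build the exact sequence explicitly at the level of strings. Here $v\in Q_0$ is a non-blossom vertex; by the blossoming construction it is $4$-valent, so there are exactly two arrows ending at $v$ and two starting at $v$. The projective $P_v = \ep_v A\bls$ has top $S_v$; its radical is determined by the two arrows out of $v$, and following the gentle relations one sees that $\rad P_v$ is a direct sum of two uniserial strings whose socles sit at the two sinks reachable from $v$ by a nonzero path — call these sinks (or rather the corresponding blossoms) $a$ and $b$. Thus $\soc P_v = S_a\oplus S_b$, which is the displayed identity about the socle. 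Dually, $I_v$ is the string module whose socle is $S_v$, built from the two arrows into $v$, and $I_a$, $I_b$ are the injectives at the blossoms $a,b$, i.e. uniserial modules. The point of the blossoming is precisely that these "boundary'' injectives are also projective (they are the $H$'s appearing in \cref{LemGentle_InjE}).

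\textbf{Key steps.} First I would fix notation for the two arrows $\al_1,\al_2$ into $v$ and $\beta_1,\beta_2$ out of $v$, and record, using the gentle axioms, which compositions $\al_i\beta_j$ lie in $I\bls$ and which do not; after relabelling, say $\al_1\beta_1, \al_2\beta_2\notin I\bls$ and $\al_1\beta_2,\al_2\beta_1\in I\bls$. Second, I would write down the string of $P_v$: it is $w_a^{-1}\beta_1\al_1^{-1}\cdots$ no — more carefully, $P_v$ corresponds to the string that goes down from $v$ along $\beta_1$ (continuing maximally within nonzero paths, reaching $a$) and along $\beta_2$ (reaching $b$), glued at the top vertex $v$; so $P_v = M_{\sigma}$ with $\sigma$ a ``valley'' string with peak $v$. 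Similarly $I_v = M_{\rho}$ with $\rho$ a ``peak'' string with valley $v$, obtained from $\al_1$ and $\al_2$. Third, I would identify the inclusion $P_v\hookrightarrow I_a\oplus I_b$: the socle $S_a$ of $P_v$ maps to the socle of $I_a$, the socle $S_b$ to the socle of $I_b$; concretely $I_a$ and $I_b$ are the longer uniserial strings obtained from the descending legs of $P_v$ by \emph{extending past} $v$ along $\al_1$ resp. $\al_2$ (this is exactly a ``cohook'' completion, as in the walk/AR-translate discussion preceding the lemma). Then the cokernel of $P_v\hookrightarrow I_a\oplus I_b$ is computed by the snake lemma / direct inspection on strings: the two legs below $v$ cancel, and what survives is precisely the peak string $\rho$ of $I_v$. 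Fourth, I would check exactness at $P_v$ (injectivity of the map — clear since on each leg it is an inclusion of a string into a longer string agreeing at the socle) and at $I_a\oplus I_b$ (the image is the ``valley part'', the quotient the ``peak part'').

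\textbf{Alternative, cleaner route.} Rather than grinding through string combinatorics, I would prefer to deduce the sequence from results already in the excerpt. By \cref{LemGentle_InjE}.(3) applied to $P = P_v$, there is a short exact sequence $0\to P_v\xrightarrow{\iota} H\to I\to 0$ with $H\in(\proj A\bls)\cap(\inj A\bls)$, $\iota(\soc P_v)=\soc H$, and $I\in\inj\Esc$. Since $\soc P_v = S_a\oplus S_b$ (established in step one), the injective hull $H$ of $P_v$ is forced to be $I_a\oplus I_b$ — the projective-injective indecomposables at the blossoms $a$ and $b$ are exactly the injective envelopes of the simples $S_a, S_b$. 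It then remains to identify the cokernel $I$ with $I_v$. For this one can argue: $I$ has projective dimension $\le 1$, and computing $\Hom_{A\bls}(S_v, I)$ from the sequence shows $S_v\hookrightarrow I$; a dimension-vector / top-and-socle count (the socle of $I$ is $S_v$ because the socles of $P_v$ and $H$ match) then pins down $I\cong I_v$, using that $I_v$ is the unique indecomposable injective with socle $S_v$ and that no extra summands can appear (the cokernel is indecomposable since $H/P_v$ corresponds to a single peak string).

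\textbf{Main obstacle.} The delicate point is the precise identification of the cokernel as $I_v$ rather than merely ``some injective with socle $S_v$''. One must rule out, e.g., $I$ having a semisimple or extra uniserial summand; the clean way is the string picture, where one literally sees that $H/P_v$ is the single peak string $\rho$ attached at $v$, which is exactly the string of $I_v$ by the gentle-quiver description of injectives at a $4$-valent vertex. Everything else — the socle computation for $P_v$, the fact that $I_a, I_b$ are the injective hulls of $S_a, S_b$ and are projective-injective, and exactness — is routine given the gentle axioms and the earlier lemmata on $\Esc$.
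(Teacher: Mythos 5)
Your proposal is correct and takes essentially the same route as the paper, whose entire proof is the remark that the sequence is ``immediate from string combinatorics'' together with a picture of the three strings (the valley string of $P_v$ with socle at the blossoms $a,b$, the two maximal paths $I_a\oplus I_b$, and the peak string of $I_v$); your explicit computation of the cokernel is exactly what that picture encodes. The alternative argument via \cref{LemGentle_InjE}.(3) is a reasonable bonus but not what the paper does.
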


\begin{proof}
This is immediate from string combinatorics:
% https://q.uiver.app/?q=WzAsMTgsWzIsMCwidiJdLFsxLDIsImEiXSxbMiwyXSxbMywzLCJiIl0sWzAsMSwiMCJdLFsxLDFdLFszLDFdLFs0LDEsIlxcaHNwYWNlezMwcHR9XFxvcGx1cyJdLFs0LDIsImEiXSxbNCwwXSxbNSwyLCJiXFxoc3BhY2V7MjVwdH0iXSxbNSwwXSxbNSwxXSxbNiwxXSxbNywyLCJ2Il0sWzYsMF0sWzgsMV0sWzksMSwiMCJdLFswLDEsIiIsMCx7InN0eWxlIjp7ImhlYWQiOnsibmFtZSI6Im5vbmUifX19XSxbMCwzLCIiLDIseyJzdHlsZSI6eyJoZWFkIjp7Im5hbWUiOiJub25lIn19fV0sWzQsNV0sWzYsN10sWzksOCwidiIsMSx7InNob3J0ZW4iOnsic291cmNlIjoyMH0sInN0eWxlIjp7ImhlYWQiOnsibmFtZSI6Im5vbmUifX19XSxbMTEsMTAsInYiLDEseyJsYWJlbF9wb3NpdGlvbiI6NDAsIm9mZnNldCI6NSwic3R5bGUiOnsiaGVhZCI6eyJuYW1lIjoibm9uZSJ9fX1dLFsxMiwxM10sWzE0LDE1LCIiLDEseyJzdHlsZSI6eyJoZWFkIjp7Im5hbWUiOiJub25lIn19fV0sWzE0LDE2LCIiLDEseyJzdHlsZSI6eyJoZWFkIjp7Im5hbWUiOiJub25lIn19fV0sWzE2LDE3XV0=
\[\begin{tikzcd}[sep=small]
	&& v && {} & {} & {} \\
	0 & {} && {} & {\hspace{30pt}\oplus} & {} & {} && {} & 0 \\
	& a & {} && a & {b\hspace{25pt}} && v \\
	&&& b
	\arrow[no head, from=1-3, to=3-2]
	\arrow[no head, from=1-3, to=4-4]
	\arrow[from=2-1, to=2-2]
	\arrow[from=2-4, to=2-5]
	\arrow["v"{description}, shorten <=6pt, no head, from=1-5, to=3-5]
	\arrow["v"{description, pos=0.4}, shift right=5, no head, from=1-6, to=3-6]
	\arrow[from=2-6, to=2-7]
	\arrow[no head, from=3-8, to=1-7]
	\arrow[no head, from=3-8, to=2-9]
	\arrow[from=2-9, to=2-10]
\end{tikzcd}\] 
\end{proof}

\begin{corollary}
\label{cor:projFinjF}
The projective objects of $\Fsc$ are precisely the projective $A\bls$-modules having no simple summands, and the injective objects of $\Fsc$ are precisely the injective $A\bls$-modules having no simple summands.
\end{corollary}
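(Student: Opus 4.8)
The plan is to derive both descriptions by transporting the corresponding descriptions for $\Esc$ through the equivalence of exact categories $F\colon\Fsc\xrightarrow{\ \simeq\ }\Esc$ of \cref{prop: equivalence F to E} and its quasi-inverse $G'$. Since $F$ is an equivalence of exact categories it induces bijections $\proj\Fsc=G'(\proj\Esc)$ and $\inj\Fsc=G'(\inj\Esc)$, so it suffices to compute the right-hand sides. The key algebraic input is that the socle of a non-simple projective $A\bls$-module is a direct sum of simple projectives $S_a$ with $a$ a sink blossom, so that the socle-quotient functor $M\mapsto M/\soc M$ (which underlies $F$) and $G'$ are mutually inverse bijections between projective $A\bls$-modules without simple summands and their socle-quotients.

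For the projectives: by \cref{ProGentle_ProjE}(3) every object of $\proj\Esc$ is a direct summand of one of the form $P/\soc P$ with $P\in\proj A\bls$, and by the preceding remark $G'(P/\soc P)$ is the part of $P$ without simple summands. Hence $\proj\Fsc$ consists of the direct summands in $\modd A\bls$ of projective $A\bls$-modules without simple summands, that is, of the projective $A\bls$-modules without simple summands. The reverse inclusion is immediate: such a module lies in $\Fsc$ (it has no simple projective summands, has projective dimension zero, and $\Ext^1_{A\bls}$ out of a projective vanishes) and is a projective object there because $\Hom_{A\bls}(P,-)$ is exact on the conflations of $\Fsc$, which are precisely short exact sequences in $\modd A\bls$.

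For the injectives: by \cref{PropGentle_InjE}(3) every $I\in\inj\Esc$ fits into a short exact sequence $0\to P\xrightarrow{\ \iota\ }H\to I\to 0$ in $\modd A\bls$ with $P$ projective, $H$ projective-injective and $\iota(\soc P)=\soc H$, which descends to a conflation $0\to P/\soc P\to H/\soc H\to I\to 0$ in $\Esc$. Applying the exact functor $G'$ and the socle fact identifies $G'(I)$ with the $A\bls$-module $\Cok(P\to H)$. A second, combinatorial, input about the blossoming bound quiver — that for a sink blossom $a$ the injective $I_a$ is projective-injective (with no simple summand), its string being that of the indecomposable projective at the source blossom capping off the corresponding string-end — together with \cref{lemma:Fdomdim} for non-blossom vertices, shows that the $A\bls$-modules arising this way are exactly the injective $A\bls$-modules without simple summands. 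Conversely, any injective $A\bls$-module without simple summands lies in $\Fsc$ (it is a cokernel of an inclusion of projective $A\bls$-modules without simple summands, using \cref{lemma:Fdomdim} and the projectivity of $I_a$ for sink blossoms $a$) and is injective in $\Fsc$ since it is injective in $\modd A\bls$ and $\Fsc$ is extension-closed.

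The projective half is essentially formal once the equivalence $F$ is in hand. The main obstacle is the injective half, and concretely the two purely combinatorial facts about $(Q\bls,I\bls)$ invoked above: that the socle of a non-simple projective $A\bls$-module is supported at sink blossoms, and that the injective at a sink blossom (dually, the projective at a source blossom) is projective-injective. These are precisely what make \cref{lemma:Fdomdim} a length-one projective resolution and make $F$ and $G'$ act transparently on the relevant modules; the remaining work is careful bookkeeping of the simple projective summands created and destroyed by passing to socle-quotients and present in \cref{PropGentle_InjE}(3).
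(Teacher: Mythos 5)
Your treatment of the projective objects is correct and essentially the paper's argument: non-simple projective $A\bls$-modules lie in $\Fsc$ and are projective there because conflations of $\Fsc$ are plain short exact sequences, and conversely every projective object of $\Fsc$ is a summand of such a module (by splitting its resolution, or, as you do, by transporting $\proj\Esc$ through $G'$). The injective half, however, contains a genuine error. From the sequence $0\to P\xrightarrow{\iota}H\to I\to 0$ of \cref{PropGentle_InjE}(3) you claim that $G'(I)\cong\Cok(P\to H)$; but $\Cok(P\to H)$ is $I$ itself, whereas $G'$ only discards the \emph{non-simple} part of $P$. Writing $P=P_1\oplus S_1$ with $S_1$ semisimple projective and $P_1$ without simple summands, one has $P/\soc P=P_1/\soc P_1$, so the conflation in $\Esc$ is $P_1/\soc P_1\infl H/\soc H\defl I$, and applying the exact equivalence $G'$ yields $G'(I)\cong\Cok(P_1\to H)$, which sits in $0\to S_1\to G'(I)\to I\to 0$. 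The discrepancy is not cosmetic: already for $Q=A_1$, take $H$ indecomposable projective-injective and $I=H/\soc H$, which is injective in $\Esc$ with $P=\soc H$ simple; then $\Cok(P\to H)=I$ is neither an injective $A\bls$-module nor even an object of $\Fsc$, while $G'(I)=H$ is the correct answer. The simple summands of $P$ are exactly what $G'$ ``adds back'' to turn $I$ into an honest injective $A\bls$-module, so identifying $G'(I)$ with $I$ makes your subsequent claim --- that the modules so obtained are precisely the injective $A\bls$-modules without simple summands --- false as stated.

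Even after correcting the computation to $G'(I)\cong\Cok(P_1\to H)$, the assertion that these cokernels are exactly the injective $A\bls$-modules without simple summands is the real content of the injective half and does not follow from the combinatorial facts you cite alone. One still needs a comparison argument: \cref{lemma:Fdomdim} (together with the projective-injectivity of $I_a$ for a sink blossom $a$) produces a second short exact sequence $0\to P_1\to H_1\to I_1\to 0$ with $H_1$ projective-injective and $I_1$ injective without simple summands, and a co-Schanuel (or pushout/splitting) argument comparing it with $0\to P_1\to H\to G'(I)\to 0$, both middle terms being injective, gives $G'(I)\oplus H_1\cong I_1\oplus H$, whence $G'(I)$ is a summand of an injective $A\bls$-module without simple summands. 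This is in substance the route the paper takes: it shows that an injective $M$ of $\Fsc$ lies in $\Fac H$, extracts a resolution $P_1\infl H_0\defl M$ with $H_0$ projective-injective, and compares it with the sequence of \cref{lemma:Fdomdim}. The ``careful bookkeeping of the simple projective summands'' you defer to is exactly where the proof lives, and as carried out it is incorrect.
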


\begin{proof}
 Any non-simple projective $A\bls$-module belongs to $\Fsc$ and is projective in $\Fsc$.
 Conversely, let $M\in\Fsc$ and let $P_1\infl P_0\defl M$ be short exact with $P_0,P_1$ projective $A\bls$-modules having no simple summands.
 If $M$ is projective in $\Fsc$, then this sequence splits and $M$ is a summand of $P_0$, hence a projective in $\modd A\bls$ having no simple summands.
 
 % \Yann{Dual proof works if we know that $\Fsc$ can be described dually using injectives}
 Let $I$ be indecomposable injective in $\modd A\bls$.
 If $I$ is also projective, then it belongs to $\Fsc$ and is thus injective in $\Fsc$.
 Otherwise, $I$ has simple socle not supported at a sink, hence $I$ belongs to $\Esc$ and $\eps.\soc I = 0$, showing that $I=GI$ belongs to $\Fsc$.
 
Conversely, let $M$ be injective in $\Fsc$.
 Then, by \cref{prop: equivalence F to E}, $FM$ is injective in $\Esc$, which implies that $FM$ belongs to $\Fac H$ by \cref{LemGentle_InjE,PropGentle_InjE}.
 From the short exact sequence $\eps.\soc M\infl M\defl FM$, one easily deduce that $FM$ belongs to $\Fac H$ if and only if $M$ belongs to $\Fac H$ (using that $M$ has no simple projective summands).
It follows that $M$ admits a projective resolution of the form $P_1\infl H_0\defl M$, where $P_1$ is projective having no simple summands and $H_0$ is projective-injective.
By \cref{lemma:Fdomdim}, there is a short exact sequence $P_1\infl H_1\defl I_1$ with $H_1$ projective-injective and $I_1$ injective having no simple summands.
Therefore, $M$ is a summand of $H_0\oplus I_1$, hence an injective with no simple summands.
\end{proof}

\begin{lemma}\label{lemma:pdim1}
 Let $\sigma$ be a string for $(Q\bls,I\bls)$.
 Then $M_\sigma$ is of projective dimension at most one if and only if one of the following three conditions is satisfied:
 \begin{enumerate}
  \item $M_\sigma$ is simple projective over $A\bls$.
  \item $M_\sigma$ is simple, supported at a non-blossoming vertex, and is projective as an $A$-module.
  \item $\sigma$ contains at least one arrow or inverse arrow and
  \begin{enumerate}
   \item starts at a blossom or is such that the target of the unique arrow $\al$, such that $\al^{-1}\sigma$ is a string, is a sink and
   \item ends in a blossom or is such that the target of the unique arrow $\beta$, such that $\sigma\beta$ is a string, is a sink.
  \end{enumerate}
 \end{enumerate}
\end{lemma}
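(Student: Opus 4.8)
The plan is to reduce the statement to the fact that $\mathrm{pd}_{A\bls}M_\sigma\le 1$ if and only if the syzygy $\Omega M_\sigma$ is projective, and then to read off the three cases by string combinatorics. Since $A\bls$ is gentle (hence a string algebra), every module over $A\bls$ has a minimal projective presentation described explicitly in terms of strings. I will use in particular the description of the indecomposable projectives as string modules: for a non-leaf vertex $v$ of $Q\bls$ one has $P_v\cong M_{\mu_v^{-1}\nu_v}$, where $\mu_v,\nu_v$ are the two maximal direct paths starting at $v$, which necessarily end at sink blossoms because $Q\bls$ is gentle with all non-leaf vertices $4$-valent; moreover $P_t=S_t$ when $t$ is a sink blossom and $P_s=M_{\nu_s}$ is uniserial when $s$ is a source blossom. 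This yields the dictionary we need: a string module $M_\tau$ over $A\bls$ is projective if and only if $\tau$ is the trivial string at a sink blossom, or a maximal direct path from a source blossom to a sink blossom, or of the form $\mu_v^{-1}\nu_v$ as above; in particular a uniserial $M_\rho$, with $\rho$ a direct path, is projective if and only if $\rho$ has length at most one and ends at a sink blossom. Here one uses the elementary remark that no arrow of $Q\bls$ joins two blossoms, since the blossom arrows are attached at the non-leaf vertices of $Q$.

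First I would dispose of the trivial strings $\sigma=\eps_v$, so $M_\sigma=S_v$ and $\Omega S_v=\rad P_v$. If $v$ is a sink blossom then $S_v=P_v$ is projective, which is case (1). If $v$ is a non-leaf vertex then $\rad P_v$ is the direct sum of two uniserial modules supported on $\mu_v$ and on $\nu_v$ with the top removed; by the dictionary this is projective exactly when $\mu_v$ and $\nu_v$ both have length one, that is, when both arrows out of $v$ lead to sink blossoms, which happens if and only if $v$ is a sink of $Q$, i.e. $S_v$ is projective as an $A$-module; this is case (2). If $v$ is a source blossom then $\rad P_v$ is uniserial supported on $\nu_v$ with the top removed, and since $\nu_v$ starts with an arrow to a non-leaf vertex of $Q$ this is never projective, in agreement with the fact that conditions (1)--(3) all fail in that case.

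Now let $\sigma$ have at least one letter. I would carry out the standard ``peaks and overhangs'' computation of the minimal projective presentation of $M_\sigma$: the projective cover is $\bigoplus_{v}P_v$, the sum over the peaks of $\sigma$ (local maxima, including at the two ends), and its kernel decomposes as
\[
\Omega M_\sigma\;\cong\;\Big(\bigoplus_{q}P_{v(q)}\Big)\oplus L\oplus R,
\]
the first sum running over the internal valleys of $\sigma$, where at each internal valley the two overhangs coming from the two neighbouring peaks glue to the indecomposable projective $P_{v(q)}$ at the vertex of that valley — this gluing being forced by the matching of the maximal direct paths out of $v(q)$ — while $L$ and $R$ are the two uniserial ``legs'' hanging below the two endpoints of $\sigma$, each given by the overhang of the relevant maximal direct path of the outermost peak beyond that endpoint. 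Hence $\mathrm{pd}_{A\bls}M_\sigma\le 1$ if and only if $L$ and $R$ are both projective. To finish, one matches this with (3a)--(3b): $L$ is either zero — precisely when $\sigma$ starts at a blossom, since then the outer branch of the outermost peak admits no prolongation — or it is uniserial, supported on a direct path whose first arrow is the unique arrow $\alpha$ for which $\alpha^{-1}\sigma$ is a string; by the dictionary $L$ is then projective if and only if that path has length one, i.e. $t(\alpha)$ is a sink, which is exactly (3a). Symmetrically, $R$ is projective if and only if (3b) holds, and the lemma follows.

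The only genuine work lies in this last matching. One must check, over the four types of endpoint of $\sigma$ (endpoint a peak or a valley, a blossom or a non-leaf vertex), that the first arrow of the endpoint leg $L$, respectively $R$, coincides with the unique arrow $\alpha$, respectively $\beta$, making $\alpha^{-1}\sigma$, respectively $\sigma\beta$, a string, and to dispose of the small degenerate configurations — a string consisting of a single arrow, and connected components of $Q$ of type $A_1$ or $A_2$ — in which a leg is already simple projective or vanishes. This is all elementary manipulation of strings in $(Q\bls,I\bls)$; the conceptual content is entirely in the syzygy formula and the projectivity dictionary of the first two paragraphs.
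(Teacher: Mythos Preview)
Your argument is correct and takes essentially the same route as the paper: analyse the minimal projective presentation of $M_\sigma$ via string combinatorics, treat simple and non-simple strings separately, and in the non-simple case reduce everything to the two endpoints. The paper phrases the endpoint test as injectivity of the summands $P_{t(\alpha)}$ and $P_{t(\beta)}$ of $P_1$ inside $P_0$ (citing \cite[Proposition~1.43]{PaluPilaudPlamondon-nonkissing} for the fact that only these endpoint summands matter), whereas you phrase it as projectivity of the legs $L$ and $R$ inside $\Omega M_\sigma$, having argued directly that the internal valleys contribute full indecomposable projectives; these are two sides of the same computation.

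One small slip worth fixing: your ``in particular'' dictionary for uniserial projectives is misstated. The uniserial $P_s$ at a source blossom $s$ has length at least $2$, so the correct statement is that a uniserial $M_\rho$ (with $\rho$ a direct path) is projective iff $\rho$ is the trivial string at a sink blossom or $\rho$ is a maximal direct path starting at a source blossom. This does not damage your argument, because every uniserial you actually test---the summands of $\rad P_v$ in the simple case, and the legs $L$, $R$ in the non-simple case---has its top at the target of some arrow, hence never at a source blossom, and for such uniserials your criterion ``simple at a sink'' is correct.
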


\begin{proof}
 Let $\sigma$ be a string for $(Q\bls,I\bls)$.
 
$\bullet$ Assume first that $M_\sigma=S_i$ is simple.
If $i$ is a sink, then $S_i$ is projective, hence of projective dimension at most one.
Otherwise, its projective presentation is of the form $P_j\oplus P_k \to P_i \defl S_i$, where the quiver $Q\bls$ contains the subquiver $j\leftarrow i\to k$.
Because each non-blossoming vertex has two successors, the morphism $P_j\oplus P_k \to P_i$ is injective if and only if both $j$ and $k$ are sinks, if and only if $i$ is a sink in $Q$.

$\bullet$ Assume that $M_\sigma$ is not simple, and let $P_1\to P_0$ be a projective presentation of $M_\sigma$.

If the starting vertex of $\sigma$ is not a blossom, then $P_1$ contains $P_{t(\al)}$ as a summand, where $\al^{-1}\sigma$ is a string.
The restriction $P_{t(\al)}\to P_0$ of $P_1\to P_0$ is injective if and only if $t(\al)$ is not the source of two distinct arrows, if and only if $t(\al)$ is a blossoming vertex.
A similar argument applies at the endpoint of $\sigma$.
By string combinatorics (see for example \cite[Proposition 1.43]{PaluPilaudPlamondon-nonkissing}), we have that $P_1\to P_0$ is injective if and only if $P_{t(\al)}\oplus P_{t(\beta)}\to P_0$ is injective (with the convention that $P_{t(\al)}$ is zero if $\sigma$ starts at a blossoming vertex, and similarly for $P_{t(\beta)}$).
\end{proof}

The following proposition justifies the name of the exact category $\Fsc$.

\begin{proposition}\label{prop: in F iff walk}
 Let $\sigma$ be a string for the gentle bound quiver $(Q\bls,I\bls)$. Then the string module $M_\sigma$ belongs to $\Fsc$ if and only if $\sigma$ is a maximal string for $(Q\bls,I\bls)$, hence a walk for $(Q,I)$.
\end{proposition}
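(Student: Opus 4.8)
The plan is to characterize membership in $\Fsc$ via the three defining conditions — projective dimension at most one, no simple projective summands, and $\Ext^1_{A\bls}(M_\sigma, \soc P) = 0$ for all $P \in \proj A\bls$ — and to show that, taken together, these are equivalent to $\sigma$ being a maximal string, i.e.\ a walk. The key computational input is already in place: \cref{lemma:pdim1} gives a purely combinatorial description of when $M_\sigma$ has projective dimension at most one, in terms of the endpoints of $\sigma$ being either blossoms or having the ``next'' arrow point to a sink. So the proof reduces to matching this list of conditions against the definition of a maximal string in $(Q\bls, I\bls)$.

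First I would recall what a maximal string means: a string $\sigma$ for $(Q\bls, I\bls)$ is maximal if it cannot be extended on either side, i.e.\ there is no arrow or inverse arrow $\gamma$ such that $\gamma\sigma$ (resp.\ $\sigma\gamma$) is again a string. Since in the blossoming quiver every non-blossom vertex is $4$-valent and every blossom is a leaf (a sink or a source), I would analyze when extension is blocked at, say, the starting vertex $p$ of $\sigma$. If $p$ is a blossom, no extension is possible (a leaf has only the one incident arrow, already used or unavailable by the gentleness/relation conditions) — this matches clause (3)(a) ``starts at a blossom''. If $p$ is a $4$-valent vertex, the gentle conditions say that among the arrows at $p$ exactly one continues the string on the outgoing side and exactly one on the incoming side; extension is blocked precisely when the unique such continuation arrow $\al$ has its other endpoint $t(\al)$ being a blossom, and since all blossoms are sinks or sources, ``$t(\al)$ is a sink'' is exactly the condition under which $\al^{-1}\sigma$ fails to be a string. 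This matches clause (3)(b) of \cref{lemma:pdim1}. The symmetric analysis applies at the end of $\sigma$. Hence: $\sigma$ is maximal and contains at least one arrow $\iff$ condition (3) of \cref{lemma:pdim1} holds.

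Next I would handle the degenerate cases (strings of length zero, i.e.\ $M_\sigma$ simple). If $M_\sigma = S_i$ is a simple projective $A\bls$-module (condition (1) of \cref{lemma:pdim1}), then $i$ is a sink of $Q\bls$; if $i$ is a blossom, $M_\sigma$ is simple projective and is excluded from $\Fsc$ by the ``no simple projective summands'' requirement — and indeed the length-zero string at a blossom sink is not maximal (it can be extended along the unique arrow into $i$), consistent with the claim. If $i$ is a non-blossom sink, then $S_i$ is again simple projective over $A\bls$, still excluded, and the length-zero string at $i$ extends along either of the two arrows into $i$, so is not maximal — consistent. If $M_\sigma = S_i$ with $i$ a non-blossom vertex and $S_i$ projective over $A$ but not over $A\bls$ (condition (2)), I would check directly that $S_i$ fails the $\Ext^1_{A\bls}(-, \soc P) = 0$ condition, because its minimal projective presentation over $A\bls$ involves projectives supported at the neighbouring blossoms, so $\Ext^1_{A\bls}(S_i, S_b) \ne 0$ for some blossom source $b$ — hence $S_i \notin \Fsc$; and again the trivial string at $i$ is not maximal. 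So in all length-zero cases both sides of the equivalence fail.

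Finally, for $\sigma$ containing at least one arrow, I would argue that the remaining two defining conditions of $\Fsc$ — no simple projective summands, and $\Ext^1_{A\bls}(M_\sigma, \soc P) = 0$ — are automatic once $\sigma$ is maximal, and conversely that they are forced by membership in $\Fsc$. A non-simple string module has no simple summands at all, so the first is free. For the $\Ext^1$ condition: using the projective presentation $P_1 \infl P_0 \defl M_\sigma$ (which exists since $\mathrm{pd}\, M_\sigma \le 1$ by the maximality-equivalence just established), $\Ext^1_{A\bls}(M_\sigma, S)$ for $S$ simple projective is the cokernel of $\Hom_{A\bls}(P_0, S) \to \Hom_{A\bls}(P_1, S)$; maximality of $\sigma$ forces $P_0$ and $P_1$ to have no simple (blossom) summands — this is exactly the string-combinatorics statement invoked at the end of the proof of \cref{lemma:pdim1} and in \cref{prop: equivalence F to E} — so both Hom groups vanish and $\Ext^1_{A\bls}(M_\sigma, \soc P) = 0$. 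Conversely, if $M_\sigma \in \Fsc$ then $\mathrm{pd}\, M_\sigma \le 1$ puts us in case (1), (2) or (3) of \cref{lemma:pdim1}; cases (1) and (2) are excluded by the other two conditions of $\Fsc$ as analyzed above, leaving case (3), which is maximality. I expect the main obstacle to be the careful bookkeeping in the $4$-valent case — pinning down, via the gentle relation conditions, that ``blocked extension'' corresponds exactly to ``$t(\al)$ is a sink'' and reconciling this with the convention in \cref{lemma:pdim1} that $P_{t(\al)}$ is zero when $\sigma$ starts at a blossom — but this is local combinatorics in the blossoming quiver and should be routine once the cases are laid out.
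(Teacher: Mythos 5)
There is a genuine gap, and it sits in your central claim that ``$\sigma$ is maximal and contains at least one arrow $\iff$ condition (3) of \cref{lemma:pdim1} holds''. Only the forward implication is true: condition (3) characterizes $\mathrm{pd}_{A\bls}M_\sigma\le 1$, which is strictly weaker than maximality. Its clause (a) is a disjunction ``$\sigma$ starts at a blossom \emph{or} the unique arrow $\al$ with $\al^{-1}\sigma$ a string has $t(\al)$ a sink'', and in the second branch $\al^{-1}\sigma$ \emph{is} a string, so $\sigma$ is not maximal. Your combinatorial analysis goes wrong at the sentence ``extension is blocked precisely when the unique such continuation arrow $\al$ has its other endpoint $t(\al)$ being a blossom'': if $t(\al)$ is a blossoming sink, the extension $\al^{-1}\sigma$ is still a perfectly valid string (it merely cannot be prolonged past $t(\al)$). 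In $(Q\bls,I\bls)$ a string ending at a $4$-valent vertex can \emph{always} be extended --- that is the whole point of blossoming --- so maximality is equivalent to both endpoints being blossoms, whereas condition (3) also admits strings with a non-blossom endpoint whose continuation arrow points into a blossoming sink. Concretely, in \cref{figure: ARquiverModEx3} the module $\bsm3\\f\esm$ has projective dimension one and satisfies (3), yet its string extends to that of $\bsm3\\f\;h\esm$ and it does not lie in $\Fsc$.

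This is harmless in the direction ``maximal $\Rightarrow M_\sigma\in\Fsc$'' (there you only use the true half of the equivalence), but it breaks your converse: after reducing to case (3) you conclude ``leaving case (3), which is maximality'', and at that point you have not yet used the hypothesis $\Ext^1_{A\bls}(M_\sigma,\soc P)=0$ to exclude the second branches of (3)(a) and (3)(b). That exclusion is exactly the missing step, and it is how the paper argues: if $\sigma$ does not start at a blossom and $\al$ is as in (3)(a) with $p=t(\al)$ a blossoming sink, then $0\to S_p\to M_{\al^{-1}\sigma}\to M_\sigma\to 0$ is a non-split extension, so $\Ext^1_{A\bls}(M_\sigma,S_p)\neq 0$ and $M_\sigma\notin\Fsc$; dually at the other endpoint. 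Adding this forces both endpoints of $\sigma$ to be blossoms and repairs the converse. Two smaller points: in your case (2) analysis the nonvanishing extension is by the simple projective at a blossoming \emph{sink} (the target of the added arrow), not a ``blossom source''; and in the forward direction your claim that $P_0$ and $P_1$ have no simple summands for a walk must be proved directly from string combinatorics (the corollary following the proposition cannot be invoked without circularity) --- the paper avoids this by instead computing $\tau_{A\bls}M_\sigma$ via removal of cohooks and using $\Hom_{A\bls}(S,\tau M_\sigma)=0$.
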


\begin{proof}
Let $\sigma$ be a string for $(Q\bls,I\bls)$.

$\bullet$ Assume that $M_\sigma$ belongs to $\Fsc$, i.e. that $M_\sigma$ is not simple projective over $A\bls$, has projective dimension at most one, and $\Ext_{A\bls}^1(M_\sigma,S)=0$ (equivalently $\Hom_{A\bls}(S,\tau M_\sigma)=0$) for each simple projective $A\bls$-module $S$.
Notice first that \cref{lemma:pdim1} implies that $M_\sigma$ cannot be simple.
Indeed, case (1) of \cref{lemma:pdim1} is excluded by definition of $\Fsc$ and case (2) is excluded because every vertex that is a sink in $Q$ is the source, in $Q\bls$, of an arrow with target a blossoming vertex $p$, thus has a non-split extension with the simple projective $S_p$.
Hence, $\sigma$ is of the form given in condition (3) of \cref{lemma:pdim1}.
The requirement on vanishing of extensions says that there cannot exist an arrow $\alpha$ as in (3)(a) nor an arrow $\beta$ as in (3)(b).
Thus $\sigma$, which has length at least one, starts and ends at a blossom, meaning that it is a walk on $(Q,I)$.

$\bullet$ Assume that $\sigma$ is a walk on $(Q,I)$. Then $\sigma$ starts at and ends in a blossoming vertex, and $M_\sigma$ is not a simple module.
Hence, by \cref{lemma:pdim1}, $M_\sigma$ has projective dimension at most one.
Moreover, because $\sigma$ is maximal, $\tau M_\sigma$ is the indecomposable module associated with the string $\sigma'$ obtained from $\sigma$ by removing a left and a right cohook.
In particular, $\sigma'$ is not supported on any blossom, showing that $\Hom_{A\bls}(S,\tau M_\sigma)=0$ for each simple projective $A\bls$-module $S$.
\end{proof}

Recall from \cite{PaluPilaudPlamondon-nonkissing} that:
\begin{itemize}
 \item A projective walk is a walk of the form $\alpha_1^{-1}\cdots\alpha_r^{-1}\beta_1\cdots\beta_s$, hence the string of an indecomposable projective $A\bls$-module at a non-blossoming vertex.
 \item A shifted projective walk is a walk of the form $\alpha_1\cdots\alpha_r\beta_1^{-1}\cdots\beta_s^{-1}$, hence the string of an indecomposable injective $A\bls$-module at a non-blossoming vertex.
 \item A straight walk is a walk that is a path in $Q\bls$, hence the string of a projective-injective $A\bls$-module.
\end{itemize}

\begin{proposition}
Let $\omega$ be a maximal string for $(Q\bls,I\bls)$.
The following properties holds:
\begin{enumerate} 
 \item The string module $M_\omega$ is projective and non-injective in $\Fsc$ if and only if $\omega$ is a projective walk.
 \item The string module $M_\omega$ is injective and non-projective in $\Fsc$ if and only if $\omega$ is a shifted projective walk.
 \item The string module $M_\omega$ is projective and injective in $\Fsc$ if and only if $\omega$ is a straight walk.
\end{enumerate}
\end{proposition}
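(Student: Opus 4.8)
The plan is to deduce the three equivalences directly from the classification of the projective and injective objects of $\Fsc$ obtained in \cref{cor:projFinjF}, combined with the dictionary — recalled just before the statement — between walks and strings of indecomposable projective/injective $A\bls$-modules. Since $\omega$ is a maximal string, $M_\omega$ lies in $\Fsc$ by \cref{prop: in F iff walk} and is indecomposable, so an indecomposable summand-free (= non-simple) module; thus \cref{cor:projFinjF} says that $M_\omega$ is projective (resp.\ injective) in $\Fsc$ precisely when $M_\omega$ is a non-simple indecomposable projective (resp.\ injective) $A\bls$-module, and projective-injective in $\Fsc$ precisely when it is a non-simple projective-injective $A\bls$-module.

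I would first treat (3). By the observation above, $M_\omega$ is projective and injective in $\Fsc$ iff $M_\omega$ is a non-simple projective-injective $A\bls$-module. The indecomposable projective-injective $A\bls$-modules are exactly the string modules $M_\eta$ for $\eta$ a straight walk: their strings are paths in $Q\bls$, and conversely a straight walk is by definition the string of a projective-injective $A\bls$-module, necessarily non-simple since a walk has positive length (no trivial string $\eps_p$ is maximal in $Q\bls$, every vertex carrying an incident arrow). This proves (3).

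For (1), \cref{cor:projFinjF} gives that $M_\omega$ is projective in $\Fsc$ iff $M_\omega\cong P_v\bls$ with $P_v\bls$ non-simple, i.e.\ with $v$ not a sink of $Q\bls$; such a $v$ is either a non-blossoming vertex or a source blossom. If $v\in Q_0$, then $v$ is $4$-valent, so the two maximal paths leaving $v$ both have positive length and the string of $P_v\bls$ has the shape $\alpha_1^{-1}\cdots\alpha_r^{-1}\beta_1\cdots\beta_s$ with $r,s\ge 1$: it is a projective walk and, not being a path, $P_v\bls$ is not injective. If $v$ is a source blossom, then $v$ has a unique outgoing arrow, the string of $P_v\bls$ is a path, hence a straight walk, and $P_v\bls$ is projective-injective by (3). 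Conversely, a projective walk is by definition the string of some $P_v\bls$ with $v\in Q_0$, hence projective in $\Fsc$, and not injective by the previous sentence. Combining the two directions yields (1); statement (2) is obtained by the dual argument, exchanging projectives with injectives, sinks with sources, and $P_v\bls$ with $I_v\bls$ (so that the injectives of $\Fsc$ split as the shifted projective walks together with the straight walks, the latter being exactly the injective part of the projective-injectives).

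The step that requires genuine work is the string-combinatorial input used above: that for $v\in Q_0$ the string of $P_v\bls$ really has two branches of positive length reaching blossom sinks (so it is a projective walk and not a straight walk), and that for a source (resp.\ sink) blossom $v$ the indecomposable $P_v\bls$ (resp.\ $I_v\bls$) is the string module of a maximal path, these exhausting the indecomposable projective-injectives. These are facts from \cite{PaluPilaudPlamondon-nonkissing, BrustleDouvilleMousavandThomasYildirim} which may simply be cited; alternatively they can be re-derived as in the proof of \cref{lemma:pdim1}, using that at every $4$-valent vertex gentleness forces a maximal path to have a unique continuation, so that a maximal path can only terminate at a blossom sink. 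Once this bookkeeping is in place, (1)--(3) follow formally from \cref{cor:projFinjF}.
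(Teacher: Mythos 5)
Your proof is correct and follows the same route as the paper, whose entire proof of this proposition is the single sentence that it is a reformulation of \cref{cor:projFinjF}. The combinatorial bookkeeping you supply (classifying the non-simple indecomposable projectives and injectives of $\modd A\bls$ by the shape of their strings, and checking that maximal paths in $Q\bls$ run from source blossoms to sink blossoms) is exactly the implicit content of that reformulation and is carried out correctly.
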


\begin{proof}
This is a reformulation of \cref{cor:projFinjF}.
\end{proof}

Although the following corollary is also immediate from \cref{prop: equivalence F to E,cor:E is 0-Auslander}, we make it explicit for future references.

\begin{corollary}
\label{cor:F is 0-Auslander}
The exact category $\Fsc$ is 0-Auslander. 
\end{corollary}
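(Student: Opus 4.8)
The plan is to deduce \cref{cor:F is 0-Auslander} directly from the equivalence of exact categories $F\co\Fsc\xrightarrow{\simeq}\Esc$ established in \cref{prop: equivalence F to E}, together with the fact, recorded in \cref{cor:E is 0-Auslander}, that $\Esc$ is $0$-Auslander. The key observation is that the property of being $0$-Auslander is invariant under equivalences of exact categories: an exact equivalence sends projectives to projectives, injectives to injectives, conflations to conflations, and hence preserves having enough projectives, having projective dimension at most one, and having dominant dimension at least one. Since \cref{prop: equivalence F to E} gives such an equivalence, and since \cref{Thm:Auslander:equiv} characterises $0$-Auslander categories precisely by conditions of this form, the result follows.

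Concretely, I would argue as follows. By \cref{cor:E is 0-Auslander}, $\Esc$ has enough projectives, satisfies $\mathrm{pd}(\Esc)\le 1$, and has $\domdim(\Esc)\ge 1$. Transporting along the exact equivalence $F$, the category $\Fsc$ has enough projectives, every object of $\Fsc$ is the cone of an inflation between projectives of $\Fsc$ (which by \cref{cor:projFinjF} are the projective $A\bls$-modules without simple summands), and every projective object of $\Fsc$ admits an inflation into a projective-injective object of $\Fsc$ with injective cone. This is exactly condition (i) of \cref{Thm:Auslander:equiv} for $\Fsc$, so $\Fsc$ is $0$-Auslander. Alternatively, and perhaps more transparently for the reader, one can reprove the two required facts directly inside $\Fsc$: \cref{prop: equivalence F to E} (or its corollary) shows that every $M\in\Fsc$ fits into a conflation $P_1\infl P_0\defl M$ with $P_0,P_1$ projective in $\Fsc$, giving $\mathrm{pd}(\Fsc)\le 1$; and \cref{lemma:Fdomdim} provides, for each indecomposable projective $P_v\in\Fsc$ at a non-blossoming vertex $v$, a short exact sequence $P_v\infl I_a\oplus I_b\defl I_v$ in which $I_a\oplus I_b$ is projective-injective in $\Fsc$ (being projective-injective over $A\bls$ without simple summands by \cref{cor:projFinjF}) and $I_v$ is injective in $\Fsc$, while for a projective $P\in\Fsc$ that is already projective-injective the identity inflation $P\infl P$ works. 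This yields $\domdim(\Fsc)\ge 1$, and again \cref{Thm:Auslander:equiv} concludes.

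I do not anticipate a genuine obstacle here: the statement is essentially a bookkeeping consequence of results already proved. The only point requiring a small amount of care is matching the description of the projectives and injectives of $\Fsc$ from \cref{cor:projFinjF} with the projective-injective object appearing in \cref{lemma:Fdomdim}—one must check that $I_a\oplus I_b$, being an injective $A\bls$-module, is also projective (this is where gentleness and the blossoming construction enter, as these boundary-indexed injectives at sink blossoms are projective-injective) and that it has no simple summands, so that it lies in $\Fsc$ and is projective-injective there. Once this identification is in place, the verification of the two clauses of \cref{Thm:Auslander:equiv}.(i) is immediate, and the proof is complete.
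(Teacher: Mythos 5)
Your primary argument—transporting the 0-Auslander property of $\Esc$ (from \cref{cor:E is 0-Auslander}) along the exact equivalence of \cref{prop: equivalence F to E}—is exactly how the paper justifies this corollary, which it explicitly notes is immediate from those two results. The proposal is correct and matches the paper's approach; the alternative direct verification inside $\Fsc$ is a harmless bonus.
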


\begin{lemma}
\label{lemma:costableHom}
 Let $\sigma,\sigma'$ be two strings for $(Q,I)$ and let $\omega,\omega'$ be the associated walks.
 Then, we have
 \[
 \Hom_A(M_\sigma,M_{\sigma'}) \cong \overline{\Fsc}(M_\omega,M_{\omega'}),
 \]
where $\overline{\Fsc}$ denotes the quotient by the ideal of morphisms factoring through an injective object.
\end{lemma}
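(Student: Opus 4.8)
The plan is to reduce the claim to the two equivalences already at hand: the exact equivalence $F\colon\Fsc\xrightarrow{\simeq}\Esc$ of \cref{prop: equivalence F to E} and the equivalence $\ovl{G}\colon\Esc/\inj\Esc\xrightarrow{\simeq}\modd A$ of \cref{PropGentle_Equiv}, which on objects is just $N\mapsto GN$. Since $F$ is an equivalence of exact categories it preserves and reflects injective objects, and it carries the ideal of morphisms factoring through an injective to the corresponding ideal in $\Esc$; hence it descends to an equivalence $\ovl{F}\colon\ovl{\Fsc}\xrightarrow{\simeq}\Esc/\inj\Esc$, where $\ovl{\Fsc}$ is the quotient of $\Fsc$ by its injectives (identified in \cref{cor:projFinjF}). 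Composing, $\Phi:=\ovl{G}\circ\ovl{F}\colon\ovl{\Fsc}\xrightarrow{\simeq}\modd A$ is an equivalence, so
\[
\ovl{\Fsc}(M_\omega,M_{\omega'})\;\cong\;\Hom_A\big(\Phi M_\omega,\Phi M_{\omega'}\big)\;=\;\Hom_A\big(G(FM_\omega),G(FM_{\omega'})\big).
\]
Thus it suffices to identify $G(FM_\omega)\cong M_\sigma$ in $\modd A$ (and likewise for $\omega'$), compatibly with this chain of isomorphisms.

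I would compute $G(FM_\omega)$ in two steps. \textbf{First}, I claim $GF\cong G$ on $\Fsc$: every non-blossom vertex of $Q\bls$ is $4$-valent, hence neither a source nor a sink, so every sink of $Q\bls$ is a blossom; therefore, for $M\in\Fsc$, the kernel $K:=\varepsilon.\soc M$ appearing in the definition of $FM$ (with $\varepsilon$ the sum of the idempotents at the sinks) is supported on blossom vertices, whence $K\se\afr M$ and
\[
G(FM)\;=\;(M/K)\big/\afr(M/K)\;=\;(M/K)\big/(\afr M/K)\;=\;M/\afr M\;=\;GM ,
\]
naturally in $M$. \textbf{Second}, I claim $GM_\omega\cong M_\sigma$: here one uses $M_\omega=\tau M_\sigma$ together with the cohook description of the passage from the string $\sigma$ to the walk $\omega$ (\cite{BrustleDouvilleMousavandThomasYildirim,PaluPilaudPlamondon-nonkissing}), namely that $\omega$ is obtained from $\sigma$ by adjoining a cohook at each end, each terminating at a (source or sink) blossom. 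Inspecting the string module $M_\omega$, the submodule $\afr M_\omega$ — generated by the basis vectors of $M_\omega$ sitting at its two blossom endpoints — turns out to be exactly the span of the basis vectors lying in the two adjoined cohooks (from a source-blossom endpoint the generated submodule descends to the first valley of that cohook and then stops; from a sink-blossom endpoint it is one-dimensional), and the quotient $M_\omega/\afr M_\omega$ is precisely $M_\sigma$ as an $A$-module. Combining the two steps gives $G(FM_\omega)\cong GM_\omega\cong M_\sigma$, as needed.

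The reductions in the first paragraph and the first step are formal consequences of results already established. The genuine work is the second step — matching $M_\omega/\afr M_\omega$ with $M_\sigma$ through the combinatorics of cohooks — and that is where I expect the main (if essentially routine) effort to lie; one minor point to handle there is that the two cohooks do not overlap even when $\sigma$ has length zero, so that exactly $M_\sigma$ survives in the quotient.
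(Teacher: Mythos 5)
Your proof is correct and follows essentially the same route as the paper, which simply combines the equivalences of \cref{prop: equivalence F to E} and \cref{PropGentle_Equiv} after noting $M_\sigma = GFM_\omega$. The only difference is that you spell out the verification $GFM_\omega\cong GM_\omega\cong M_\sigma$ via the cohook combinatorics, which the paper treats as implicit in its setup (cf.\ \cref{notation:X_M}); that verification is sound as you describe it.
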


\begin{proof}
Noting that $M_\sigma = GFM_\omega$, the statement follows from combining~\cref{prop: equivalence F to E,PropGentle_Equiv}
\end{proof}

\begin{lemma}
\label{lemma:comparisonOfTau}
 Let $\sigma,\sigma'$ be two strings for $(Q,I)$, and let $\omega,\omega'$ be the associated walks.
 Assume that $M_{\sigma'}=\tau_A M_\sigma$.
 Then $M_{\omega'}=\tau_{A\bls}^2M_\omega$.
\end{lemma}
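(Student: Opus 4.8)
The plan is to deduce the identity from the special case relating $\tau_A$ to $\tau_{A\bls}^2$ directly on string modules. Since $A$ is a string algebra, $\sigma'$ is again a string of $(Q,I)$ and so has an associated walk; applying the identity $M_\omega=\tau_{A\bls}M_\sigma$ recalled above both to $\sigma$ and to $\sigma'$ gives $M_{\omega'}=\tau_{A\bls}M_{\sigma'}=\tau_{A\bls}\tau_AM_\sigma$ and $\tau_{A\bls}^2M_\omega=\tau_{A\bls}^3M_\sigma$. As $\tau_{A\bls}$ restricts to a bijection from non-projective to non-injective indecomposable $A\bls$-modules, the statement is therefore equivalent to
\[
\tau_{A\bls}^2M_\sigma\;\cong\;\tau_AM_\sigma
\]
for every string $\sigma$ of $(Q,I)$, the finitely many $\sigma$ for which $M_\sigma$ is projective over $A$, or for which one of the translates above degenerates, being treated separately by hand. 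Phrased invariantly, this says that $\tau_{A\bls}^2$, which preserves $\Fsc$ by \cref{cor: tau2-stable}, corresponds under the equivalences $\Fsc/\inj\Fsc\simeq\Esc/\inj\Esc\overset{\ovl{G}}{\simeq}\modd A$ of \cref{prop: equivalence F to E,PropGentle_Equiv} to the Auslander--Reiten translate $\tau_A$ of $\modd A$.

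To prove $\tau_{A\bls}^2M_\sigma\cong\tau_AM_\sigma$ I would use the Butler--Ringel recipe for $\tau$ on a string module, which modifies the string independently at each of its two ends by adding or deleting a cohook according to the local shape of the string there. The two relevant features of blossoming are: (a) every non-leaf vertex of $Q\bls$ is $4$-valent, so a string whose ends lie at non-blossom vertices can always be prolonged (hence $\tau_{A\bls}$ \emph{adds} a cohook at each such end), while a string ending at a blossom cannot be prolonged (hence $\tau_{A\bls}$ \emph{deletes} a cohook there); and (b) the gentle $4$-valencing is unique, so at each non-blossom vertex one knows exactly which incident arrows are the new blossom arrows. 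Using these, one computes $\tau_{A\bls}^2M_\sigma$ in two steps: first a cohook is added at each end of $\sigma$, producing the associated walk $\omega$ (both of whose ends lie at blossoms); then a maximal cohook is deleted at each blossom end of $\omega$. The claim is that, end by end, the cohook deleted in the second step is exactly the one inserted in the first step, extended by the terminal stretch of $\sigma$ modified in precisely the way the $\tau_A$-recipe modifies $\sigma$ at that end; in the case where $\sigma$ already ``ends on a peak'' in $A$ at that end, this amounts to checking that the first new blossom arrow resolves the peak and that the remaining blossom letters are annihilated by the functor $G$ of \cref{DefGentle_FunctorG}. Granting this end-by-end comparison, the string obtained after the two steps is $\sigma'$ up to letters supported only at blossoms, so $G(\tau_{A\bls}^2M_\sigma)\cong\tau_AM_\sigma$; since $\tau_{A\bls}^2M_\sigma\in\Fsc$, the equivalence $\ovl{G}$ upgrades this to the desired isomorphism.

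The main obstacle is precisely the ``peak in $A$'' case of this comparison: one must show that the blossom cohook inserted by $\tau_{A\bls}$ together with the cohook deleted at the next step collapse, under $G$, to exactly the cohook removed by the $\tau_A$-recipe, and that no spurious non-blossom letters survive --- this is where the rigidity of the blossoming (the characterisation of $(Q\bls,I\bls)$ as the unique gentle $4$-valencing) has to be used carefully to keep track of which arrows are blossom arrows. A secondary, more bookkeeping-heavy point is to make the reduction functorial rather than only object-wise --- i.e. to verify that $\tau_{A\bls}^2$ genuinely induces $\tau_A$ on the quotient, not merely agrees with it on isomorphism classes --- which can be handled using the explicit description of morphism spaces in $\Esc/\inj\Esc$ extracted from the proof of \cref{PropGentle_Equiv}, together with \cref{lemma:costableHom}.
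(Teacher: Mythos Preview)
The paper's own proof is a one-line citation to \cite[Proposition 2.49]{PaluPilaudPlamondon-nonkissing}. Your strategy --- reducing to the identity $\tau_{A\bls}^2M_\sigma\cong\tau_AM_\sigma$ by cancelling one $\tau_{A\bls}$, and then verifying that identity end-by-end using the Butler--Ringel recipe for $\tau$ on string modules --- is precisely the content of that cited proposition, so you are reconstructing the reference rather than giving a genuinely different argument.

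Two issues with your write-up deserve flagging. First, your parenthetical appeal to \cref{cor: tau2-stable} is circular: in this paper that corollary is \emph{deduced from} \cref{lemma:comparisonOfTau}. It is only decorative in your argument, but you should drop it.

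Second, and more substantively, your final step is confused. You assert $\tau_{A\bls}^2M_\sigma\in\Fsc$ and then invoke the equivalence $\ovl{G}$ to ``upgrade'' $G(\tau_{A\bls}^2M_\sigma)\cong\tau_AM_\sigma$ to an isomorphism in $\modd A\bls$. But $\tau_{A\bls}^2M_\sigma=\tau_{A\bls}M_\omega$ is obtained from the walk $\omega$ by removing a hook at each end, and the resulting string lands back in $Q_0$: it is a string for $(Q,I)$, not a walk, hence \emph{not} in $\Fsc$. Moreover, an isomorphism in $\Fsc/\inj\Fsc$ or $\Esc/\inj\Esc$ does not lift to one in $\modd A\bls$, so the $\ovl{G}$-step would not do what you want even if its hypothesis held. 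The fix is that you do not need this step at all: a careful end-by-end analysis shows that ``add a $Q\bls$-cohook then remove a $Q\bls$-hook'' produces on the nose the string given by the $\tau_A$-recipe (the $Q\bls$-cohook is the $Q$-cohook preceded by a single blossom arrow when the $Q$-cohook exists, and is just $\beta^{-1}$ for a blossom arrow $\beta$ when it does not; in either case the subsequent hook-removal strips exactly the right letters). So $\tau_{A\bls}^2M_\sigma\cong M_{\sigma'}$ holds directly in $\modd A\bls$, and your hedge ``up to letters supported only at blossoms'' is not needed.

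Minor: in your second paragraph you write ``a maximal cohook is deleted'' for the second $\tau_{A\bls}$-step; it should be a \emph{hook} (the $\tau$-recipe removes a hook when no cohook can be added).
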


\begin{proof}
 This is~\cite[Proposition 2.49]{PaluPilaudPlamondon-nonkissing}.
\end{proof}

\begin{corollary}
\label{cor: tau2-stable}
The subcategory $\Fsc$ of $\modd A\bls$ is $\tau^2$-stable.
\end{corollary}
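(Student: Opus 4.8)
The plan is to reduce the statement to a check on indecomposables and then combine the classification of the indecomposables of $\Fsc$ with \cref{prop: in F iff walk} and \cref{lemma:comparisonOfTau}.

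Since $\tau_{A\bls}^2$ is an additive endofunctor of $\modd A\bls$ and $\Fsc$ is closed under finite direct sums and direct summands, it suffices to prove $\tau_{A\bls}^2 M\in\Fsc$ for every indecomposable $M\in\Fsc$. As $A\bls$ is gentle, hence a string algebra, every indecomposable $A\bls$-module is a string module or a band module; by \cref{prop: in F iff walk} the indecomposable string modules in $\Fsc$ are exactly the $M_\omega$ with $\omega$ a walk, and all band modules lie in $\Fsc$. I would deal with these two families separately. For a band module $M$, the Butler--Ringel description of the Auslander--Reiten theory of a string algebra shows that band modules form homogeneous tubes, so $\tau_{A\bls}$ fixes each band module up to isomorphism and $\tau_{A\bls}^2 M\cong M\in\Fsc$.

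It then remains to show that, for every walk $\omega$, the module $\tau_{A\bls}^2 M_\omega$ is either $0$ or again the string module of a walk; by \cref{prop: in F iff walk} this gives $\tau_{A\bls}^2 M_\omega\in\Fsc$ and finishes the proof. I would organise this by distinguishing whether $M_\omega$ is projective over $A\bls$. If it is (equivalently, by \cref{lemma:pdim1} and the combinatorics of (co)hooks, $\omega$ is a projective or a straight walk), then $\tau_{A\bls}M_\omega=0$, so $\tau_{A\bls}^2 M_\omega=0$. If $M_\omega$ is not projective over $A\bls$ and also not injective over $A\bls$, then by the cohook bijection recalled at the beginning of this section $\omega$ is the walk attached to a unique string $\sigma$ for $(Q,I)$, so that $M_\omega\cong\tau_{A\bls}M_\sigma$; putting $M_{\sigma'}:=\tau_A M_\sigma$ (still a string $A$-module, hence $\sigma'$ a string for $(Q,I)$), \cref{lemma:comparisonOfTau} gives $\tau_{A\bls}^2 M_\omega\cong M_{\omega'}$ with $\omega'$ the walk attached to $\sigma'$. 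Finally, if $M_\omega$ is injective but not projective over $A\bls$, the dual ``hook'' bijection provides a string $\sigma$ for $(Q,I)$ with $\tau_{A\bls}M_\omega\cong M_\sigma$, whence $\tau_{A\bls}^2 M_\omega\cong\tau_{A\bls}M_\sigma$ is again the string module of a walk. Together with the band case this proves the corollary.

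The main obstacle I anticipate is the string-combinatorial bookkeeping in the third paragraph rather than anything conceptual: one has to pin down which walks yield projective, resp. injective, $A\bls$-modules, deal with the degenerate subcase $\tau_A M_\sigma=0$ (where, as one checks from the (co)hook combinatorics, $M_\omega$ is in fact injective over $A\bls$, so that it is handled by the last case), and record the dual ``hook'' form of the bijection used for injective $M_\omega$. All of this is available from \cref{lemma:pdim1} and the descriptions of hooks and cohooks in the references on non-kissing cited above, while the substantive input is \cref{lemma:comparisonOfTau}.
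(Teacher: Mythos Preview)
Your approach is the same as the paper's one-line proof (which just cites \cref{prop: in F iff walk} and \cref{lemma:comparisonOfTau}); you are simply unpacking it. Most of the unpacking is fine, but there is a genuine slip in your handling of the edge case $\tau_A M_\sigma=0$.

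You claim that if $\tau_A M_\sigma=0$ (i.e.\ $M_\sigma$ is $A$-projective), then ``from the (co)hook combinatorics, $M_\omega$ is in fact injective over $A\bls$''. This contradicts the very bijection you are invoking: the paper states that adding cohooks yields a bijection between strings for $(Q,I)$ and walks whose string modules are \emph{not} injective over $A\bls$. Thus every $M_\omega$ produced this way---including those coming from $A$-projective $\sigma$---is non-injective, so this subcase does \emph{not} fall into your case~3 and your case split leaves a gap precisely there.

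The fix is already contained in your own case~3 argument, which actually works for every non-projective walk, not just the injective ones. Namely, for any non-$A\bls$-projective $M_\omega$ one has $\tau_{A\bls}M_\omega\cong M_{\sigma'}$ for some string $\sigma'$ for $(Q,I)$ (this is exactly the computation carried out in the proof of \cref{prop: in F iff walk}, where it is shown that $\tau_{A\bls}$ of a walk module is supported away from all blossoms). Then $\tau_{A\bls}^2 M_\omega\cong\tau_{A\bls}M_{\sigma'}=M_{\omega''}$ for the walk $\omega''$ associated with $\sigma'$, hence lies in $\Fsc$. Together with the projective case and the band case, this covers everything with no appeal to \cref{lemma:comparisonOfTau} (or, equivalently, it is what that lemma really says once one allows the degenerate value~$0$). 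So drop case~2 and apply your case~3 argument to all non-projective walks.
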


\begin{proof}
This follows from \cref{prop: in F iff walk,lemma:comparisonOfTau}.
\end{proof}

\begin{proposition}
\label{prop:Fff}
The subcategory $\Fsc$ of $\modd A\bls$ is functorially finite.
\end{proposition}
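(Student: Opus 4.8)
The plan is to show directly that every $N\in\modd A\bls$ admits a right $\Fsc$-approximation and a left $\Fsc$-approximation. The main input is the description of $\Fsc$ established right after \cref{prop: equivalence F to E}: an $A\bls$-module lies in $\Fsc$ exactly when it is the cokernel of a monomorphism between projective $A\bls$-modules without simple summands. Writing $Q$ (resp.\ $I$) for the direct sum of the indecomposable projective (resp.\ injective) $A\bls$-modules without simple summands, this says $\Fsc=\Cok(\add Q,\add Q)=\Fib(\add I,\add I)$; in particular $\add Q$ and $\add I$ are themselves functorially finite in $\modd A\bls$, being additive closures of single modules, and for any $Y\in\Fsc$ the top $\topp Y=\topp Q_0$ (for a presentation $Q_1\infl Q_0\defl Y$) is supported only at non-sink vertices.

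The heart of the argument is an explicit construction of right approximations. Assume first that $N$ has projective dimension at most $1$ over $A\bls$ and no simple projective direct summand. Then the projective cover $P_0\defl N$ has no simple summands (a simple projective in $\topp N$ would split off), and in the conflation $P_1\infl P_0\defl N$ one may write $P_1=P_1^{\mathrm{ns}}\oplus P_1^{\mathrm{s}}$ with $P_1^{\mathrm{s}}$ semisimple, hence a direct sum of simple projective $A\bls$-modules. The composite $P_1^{\mathrm{ns}}\infl P_1\infl P_0$ is a monomorphism between projectives without simple summands, so its cokernel $X_N$ lies in $\Fsc$, and the induced epimorphism $\phi\colon X_N\defl N$ has kernel isomorphic to $P_1^{\mathrm{s}}$. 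For every $Y\in\Fsc$ the exact sequence
\[
\Hom_{A\bls}(Y,X_N)\xrightarrow{\ \phi_\ast\ }\Hom_{A\bls}(Y,N)\longrightarrow\Ext^1_{A\bls}(Y,P_1^{\mathrm{s}})
\]
then shows $\phi$ is a right $\Fsc$-approximation: since $P_1^{\mathrm{s}}$ is semisimple projective it equals its own socle, so $\Ext^1_{A\bls}(Y,P_1^{\mathrm{s}})=0$ by the very definition of $\Fsc$. Finally, writing an arbitrary $N$ as $N_0\oplus N_1$ with $N_1$ the largest summand lying in $\add(\text{simple projectives})$, one has $\Hom_{A\bls}(Y,N_1)=0$ for all $Y\in\Fsc$ (as $\topp Y$ avoids sinks), so $N_1$ has the zero right $\Fsc$-approximation and it suffices to approximate $N_0$.

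It remains to drop the two standing restrictions, and this is where the real work lies. For an arbitrary $N_0$ one should first take a right $\mathcal{P}^{\le 1}(A\bls)$-approximation $M\defl N_0$, then split off the simple projective summands of $M$ and apply the construction above to the complement; composing these maps yields a right $\Fsc$-approximation of $N_0$ because $\Fsc\subseteq\mathcal{P}^{\le 1}(A\bls)$. This reduction requires that $\mathcal{P}^{\le 1}(A\bls)$ be a contravariantly finite subcategory of $\modd A\bls$, which I would extract from the fact that the gentle algebra $A\bls$ is Iwanaga--Gorenstein together with the explicit combinatorics of (co)syzygies of string and band modules over $A\bls$. For left $\Fsc$-approximations one runs the dual construction: after reducing (dually) to the case where $N$ lies in $\mathcal{I}^{\le 1}(A\bls)$ and has no simple injective summand, one starts from a minimal injective copresentation $N\infl J^0\defl J^1$ and forms $X^N=\ker(J^0\defl J^1_{\mathrm{ns}})\in\Fsc$.

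The step I expect to be the main obstacle is precisely the last one: in the dual construction the obstruction term is $\Ext^1_{A\bls}(T,Y)$ with $T$ a sum of \emph{simple injective} $A\bls$-modules, and — unlike the right case — the vanishing of $\Ext^1_{A\bls}(T,Y)$ for $Y\in\Fsc$ is not built into the definition of $\Fsc$ (which only controls $\Ext^1$ against socles of projectives, and simple injective modules never occur in such socles for the string algebra $A\bls$). Establishing that every simple injective $A\bls$-module is left $\Ext^1$-orthogonal to $\Fsc$ is thus the crux; I would prove it either via the equivalence $\Fsc\simeq\Esc$ of \cref{prop: equivalence F to E}, transporting the socle condition defining $\Esc$, or directly from string combinatorics by computing the Auslander--Reiten translates of the relevant simple modules as in \cref{lemma:comparisonOfTau} and its surroundings. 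Once this is in place, the two constructions above — together with \cref{cor:F is 0-Auslander} and \cref{Thm:Auslander:equiv}, which guarantee $\Fsc\subseteq\mathcal{P}^{\le 1}(A\bls)\cap\mathcal{I}^{\le 1}(A\bls)$ — deliver the functorial finiteness of $\Fsc$ in $\modd A\bls$.
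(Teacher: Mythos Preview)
Your approach is genuinely different from the paper's and, once completed, does work --- but two points deserve comment.

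First, the comparison. The paper argues by explicit string combinatorics on the Auslander--Reiten quiver of $A\bls$: for each indecomposable $M_\sigma\notin\Fsc$ it distinguishes three cases according to whether $\sigma$ is a sink, has exactly one blossom endpoint, or has none, and in each case writes down the (indecomposable) right $\Fsc$-approximation using the explicit effect of $\tau_{A\bls}$ on strings. This is entirely self-contained and, as a bonus, identifies the approximations concretely (which is used immediately afterwards in \cref{cor:tau_F}). Your route is homological: exploit $\Fsc=\Cok(\add Q,\add Q)=\Fib(\add I,\add I)$ and build approximations by stripping simple summands from projective or injective (co)presentations. It is more conceptual and would transport to other settings, but it imports the Gorenstein property of gentle algebras (Gei\ss--Reiten) for the reduction to $\mathcal{P}^{\le 1}$ and $\mathcal{I}^{\le 1}$, which the paper never invokes.

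Second, the ``crux'' you flag is easier than you make it sound, and you already have the tools. You note at the end that \cref{cor:F is 0-Auslander} together with \cref{cor:projFinjF} give $\Fsc\subseteq\mathcal{I}^{\le 1}(A\bls)$: every $Y\in\Fsc$ sits in a short exact sequence $0\to Y\to I^0\to I^1\to 0$ in $\modd A\bls$ with $I^0,I^1$ injective and without simple summands. For a simple injective $T=S_v$ (so $v$ is a source blossom), one has $\Hom_{A\bls}(T,I^k)=0$ because $\soc I^k$ is supported only at non-source vertices; applying $\Hom_{A\bls}(T,-)$ to the coresolution then yields $\Ext^1_{A\bls}(T,Y)=0$ directly. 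So the obstruction term in your dual construction vanishes for the same structural reason as in the contravariant case, and no transport through $\Esc$ or separate AR computation is needed. With this observation and the Gorenstein input, your argument is complete; without the latter, it is not self-contained in the way the paper's proof is.
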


\begin{proof}
Recall that middle terms of almost-split sequences in $\modd A\bls$ have precisely one or two indecomposable summands.
Meshes in the Auslander--Reiten quiver of $A\bls$ are thus either squares or triangles.
Moreover, arrows in this Auslander--Reiten quiver are given by adding or deleting one hook or cohook.
All this will be used implicitly in the proof.

Let $M\in\modd A\bls$ be indecomposable not in $\Fsc$.
We construct a right $\Fsc$-approximation of $M$, the construction of a left approximation being similar.
Note that $M$ cannot be a band module.
Thus it is a string module, associated with some string $\sigma$, at most one of whose endpoints is a blossom.
Let us consider three cases.

$\bullet$ If $\sigma$ is reduced to a sink.
Then $M$ is simple projective and its right approximation is 0.

$\bullet$ If $\sigma$ has length at most one, and has one endpoint a blossom.
Then at the other endpoint of $\sigma$ (which we may assume is its starting point, and which might be the blossom itself if $\sigma$ is reduced to a source) it is possible to add a cohook $c$.
Since every cohook in $(Q\bls,I\bls)$ ends in a blossom, $c\sigma$ is a walk, hence $M_{c\sigma}\in\Fsc$ by \cref{prop: in F iff walk}.
Moreover, in the ray of the Auslander--Reiten quiver of $A\bls$ ending in $M_\sigma$ and not containing $M_{c\sigma}$, all indecomposables are associated with strings starting at the same endpoint as $\sigma$, hence strings that are not walks.
The string module $M_{c\sigma}$ is thus the right approximation of $M_\sigma$.

$\bullet$ If $\sigma$ has length at most one, and does not contain any blossom.
Then if $N$ is an indecomposable $A\bls$-module sitting in one of the two rays of the Auslander--Reiten quiver of $A\bls$ ending in $M_\sigma$, then its string is obtained by a succession of addings of cohooks or removings of hooks, always on the same side of the string $\sigma$.
Hence one of its endpoints is an endpoint of $\sigma$, thus in $Q_0$.
Using \cref{prop: in F iff walk}, this implies that $N$ does not belong to $\Fsc$.
Therefore, any morphism from a module in $\Fsc$ to $M_\sigma$ factors through $\tau_{A\bls}M_\sigma$.
The string $\rho$ associated with $\tau_{A\bls}M_\sigma$ is obtained from $\sigma$ by adding a cohook on each side of $\sigma$.
It follows that $\tau_{A\bls}M_\sigma$ belongs to $\Fsc$ (using \cref{prop: in F iff walk} again) and that it is the right approximation of $M_\sigma$.
\end{proof}

\begin{corollary}
\label{cor:tau_F}
The category $\Fsc$ has almost-split extensions and, for any $M\in\Fsc$, we have $\tau_\Fsc M = \tau_{A\bls}^2M$.
\end{corollary}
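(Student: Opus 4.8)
The plan is to obtain the existence of almost-split extensions from the Auslander--Smal\o{} theory of almost split sequences in subcategories, and then to pin down $\tau_\Fsc$ by combining that theory with the string combinatorics already in place. For existence: by \cref{prop:Fff} the subcategory $\Fsc$ is functorially finite in the Hom-finite Krull--Schmidt abelian category $\modd A\bls$, and it is extension-closed there. The Auslander--Smal\o{} theorem then produces, for every indecomposable $M\in\Fsc$ that is not $\Ext^1_{A\bls}$-projective in $\Fsc$, an almost split conflation $0\to N\to G\to M\to0$ with all terms in $\Fsc$, and dually for objects that are not $\Ext^1_{A\bls}$-injective in $\Fsc$. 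By \cref{cor:projFinjF} these relative projectives (resp.\ injectives) of $\Fsc$ are precisely the non-simple projective (resp.\ injective) $A\bls$-modules, i.e.\ the projective (resp.\ injective) objects of the exact category $\Fsc$; so this is exactly the statement that $\Fsc$ has almost-split extensions, and it defines $\tau_\Fsc$. Note also that on a non-simple projective $A\bls$-module $P\in\Fsc$ one has $\tau_{A\bls}^2P=\tau_{A\bls}(0)=0$, so the asserted formula is consistent on projectives with the convention $\tau_\Fsc=0$ there.

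For the identification, fix an indecomposable $M=M_\omega\in\Fsc$ which is non-projective in $\Fsc$; by \cref{cor:projFinjF} such an $M$ is non-projective as an $A\bls$-module as well (a projective $A\bls$-module lying in $\Fsc$ has no simple summand, hence is projective in $\Fsc$, while objects of $\Fsc$ have no simple projective summand). Thus $M$ has an almost split sequence $0\to\tau_{A\bls}M\to E\to M\to0$ in $\modd A\bls$, with $\tau_{A\bls}M\neq0$. I would then run the Auslander--Smal\o{} construction of the almost split conflation of $M$ in $\Fsc$: it is extracted from this ambient almost split sequence by passing to minimal right $\Fsc$-approximations, so that its left-hand term $\tau_\Fsc M$ gets expressed in terms of $\tau_{A\bls}M$ and $\Fsc$-approximations. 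The remaining, and main, task is to evaluate this expression combinatorially, using the description of right $\Fsc$-approximations of indecomposable $A\bls$-modules from the proof of \cref{prop:Fff} (append a left and/or a right cohook at the non-maximal ends of the corresponding string), together with the walk/string dictionary and \cref{lemma:pdim1,prop: in F iff walk}; the upshot should be that $\tau_\Fsc M$ is the walk module attached to $\tau_A M_\sigma$, where $M_\sigma$ is the string module for $(Q,I)$ corresponding to $M_\omega$, which by \cref{lemma:comparisonOfTau} (that is, \cite[Proposition 2.49]{PaluPilaudPlamondon-nonkissing}) is exactly $\tau_{A\bls}^2 M$. Equivalently, one may phrase the argument as showing that the equivalence $\Fsc/\inj\Fsc\simeq\modd A$ of \cref{prop: equivalence F to E,PropGentle_Equiv} intertwines $\tau_\Fsc$ with $\tau_A$, and then invoke \cref{lemma:comparisonOfTau} to translate back.

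I expect the main obstacle to be precisely this combinatorial bookkeeping: one must organise the computation according to which ends of $\omega$ are blossoms and whether $\tau_{A\bls}$ acts at each end by deleting a hook or adding a cohook, check that minimal right $\Fsc$-approximations append exactly the cohooks needed to realise $\tau_{A\bls}^2$, and verify right-minimality so that $\tau_\Fsc M$ is all of $\tau_{A\bls}^2 M$ rather than a proper summand. The degenerate cases — straight walks, (shifted) projective walks, and strings for which $\tau_{A\bls}M$ is close to being projective or injective in $\modd A\bls$ — should be disposed of separately with the help of \cref{cor:projFinjF,prop: in F iff walk}. Finally, the dual half (that $\tau_\Fsc^{-1}$ agrees with $\tau_{A\bls}^{-2}$ on $\Fsc$, i.e.\ that the construction is symmetric) follows from the same argument applied to injectives in place of projectives, where \cref{cor: tau2-stable} guarantees that $\tau_{A\bls}^{\pm2}$ keeps us inside $\Fsc$.
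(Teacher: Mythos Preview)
Your approach is essentially the same as the paper's: both obtain existence from functorial finiteness (\cref{prop:Fff}) via Auslander--Smal\o{}, then identify $\tau_\Fsc M$ as a summand of the right $\Fsc$-approximation of $\tau_{A\bls}M$ and compute that approximation combinatorially. The paper's execution is much shorter than you anticipate, because the needed computation is already contained in the third case of the proof of \cref{prop:Fff}: for $M$ a non-projective string module in $\Fsc$, the string of $\tau_{A\bls}M$ is obtained from the walk $\omega$ by removing a hook at each side, so neither endpoint is a blossom, and that case shows the right $\Fsc$-approximation of $\tau_{A\bls}M$ is $\tau_{A\bls}^2M$ itself --- indecomposable, hence equal to $\tau_\Fsc M$ rather than containing it as a proper summand. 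No further case analysis or minimality check is required. One case you should add is band modules: there $\tau_{A\bls}M=M\in\Fsc$, so $\tau_\Fsc M=\tau_{A\bls}M=\tau_{A\bls}^2M$ immediately.
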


\begin{proof}
First note that $\Fsc$ is $\tau_{A\bls}^2$-stable by \cref{cor: tau2-stable}, so that the statement makes sense.
Since $\Fsc$ is functorially finite (\cref{prop:Fff}), it has almost-split extensions.
Moreover, for any indecomposable module $M\in\Fsc$, its Auslander--Reiten translation $\tau_\Fsc M$ is a summand of the right $\Fsc$-approximation of $\tau_{A\bls}M$.
If $M$ is a band module, then $\tau_{A\bls}M = M$ and we have $\tau_\Fsc M = \tau_{A\bls}M = \tau_{A\bls}^2M$.
If $M$ is projective in $\modd A\bls$, then it is projective in $\Fsc$ by \cref{cor:projFinjF} and we have $\tau_\Fsc M = 0 = \tau_{A\bls}^2M$.
Otherwise, $M$ is a string module associated with a non-projective walk $\omega$, and the string associated with $\tau_{A\bls}M$ is obtained from that walk by removing a hook at each side of $\omega$.
We are thus in the third case of the proof of \cref{prop:Fff}, and the right $\Fsc$-approximation of $\tau_{A\bls}M$ is $\tau_{A\bls}^2M$, which is indecomposable.
Therefore $\tau_\Fsc M = \tau_{A\bls}^2M$.
\end{proof}

\begin{notation}
\label{notation:X_M}
If $M$ belongs to $\modd A$, there is a unique (up to isomorphism) object $X_M$ in $\Fsc$ such that
\begin{itemize}
 \item $GFX_M = M$, where $F$ and $G$ are the functors from \cref{PropGentle_Equiv,prop: equivalence F to E}, and
 \item $X_M$ has no non-zero injective summands.
\end{itemize}
In other words we have $X_{M_\sigma}=M_\omega\in \modd A\bls$, where $\omega$ is the walk associated with $\sigma$ if $\sigma$ is a string and where $\omega=\sigma$ if $\sigma$ is a band.
If $v\in Q_0$ and $P=P_v$ is the associated indecomposable projective $A$-module, let $X_{(0,P)}$ be the indecomposable injective $A\bls$-module $I_v\bls$ associated with vertex $v$.
Let $X_{(M,P_{v_1}\oplus\cdots\oplus P_{v_r})}=X_M\oplus X_{(0,P_{v_1})}\oplus\cdots\oplus X_{(0,P_{v_r})}$.

Following \cite{AdachiIyamaReiten}, for two pairs $(M,P),(N,Q)\in\modd A\times\proj A$ we write $(M,P)\perp_\tau(N,Q)$ if and only if $\Hom_A(M,\tau N)=0$ and $\Hom_A(Q,M)=0$.
\end{notation}

\begin{theorem}
\label{thm:kissings as extensions}
Assume that $(Q,I)$ is gentle, and let $A=KQ/I$.
\begin{enumerate}
 \item Let $M,N\in\modd A\times\proj A$.
Then $M \perp_\tau N$ if and only if $\Ebb(X_N,X_M)=0$ in $\Fsc$.
 \item Let $\omega, \omega'$ be two walks for $(Q,I)$.
Then $\E(M_{\omega'},M_\omega)\neq 0$ in $\Fsc$ if and only if the walk $\omega$ kisses the walk $\omega'$.
\end{enumerate}
\end{theorem}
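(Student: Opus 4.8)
The plan is to reduce both statements to Auslander--Reiten theory inside the exact category $\Fsc$, transported to $\modd A$ through three facts already at our disposal: $\Fsc$ is extension-closed in $\modd A\bls$, so $\Ebb_\Fsc$ agrees with $\Ext^1_{A\bls}$ on $\Fsc$; the quotient of $\Fsc$ by the ideal of maps factoring through injectives is equivalent to $\modd A$, sending $X_M$ to $M$ (this is the source of \cref{lemma:costableHom}, via \cref{prop: equivalence F to E,PropGentle_Equiv}); and $\Fsc$ has almost split sequences with Auslander--Reiten translate $\tau_\Fsc=\tau_{A\bls}^2$ (\cref{prop:Fff,cor:tau_F}), which corresponds to $\tau_A$ under the walk dictionary (\cref{lemma:comparisonOfTau}).

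For (1), write $M=(M_0,P_0)$ and $N=(N_0,Q_0)$, so $X_M=X_{M_0}\oplus X_{(0,P_0)}$ and $X_N=X_{N_0}\oplus X_{(0,Q_0)}$ by \cref{notation:X_M}. Since $X_{(0,P_0)}$ and $X_{(0,Q_0)}$ are sums of indecomposable injective $A\bls$-modules, $\Ext^1_{A\bls}(-,X_{(0,P_0)})=0$, and additivity reduces $\Ebb_\Fsc(X_N,X_M)=0$ to the simultaneous vanishing of $\Ext^1_{A\bls}(X_{N_0},X_{M_0})$ and $\Ext^1_{A\bls}(X_{(0,Q_0)},X_{M_0})$. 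For the first, the Auslander--Reiten formula in the Hom-finite Krull--Schmidt exact category $\Fsc$ gives $\Ext^1_{A\bls}(X_{N_0},X_{M_0})=\Ebb_\Fsc(X_{N_0},X_{M_0})\cong D\,\ovl{\Hom}_\Fsc(X_{M_0},\tau_\Fsc X_{N_0})$, the bar being the quotient by maps through injective objects of $\Fsc$; by \cref{cor:tau_F,lemma:comparisonOfTau} one has $\tau_\Fsc X_{N_0}=\tau_{A\bls}^2X_{N_0}=X_{\tau_A N_0}$, and by \cref{lemma:costableHom} (the very same quotient) $\ovl{\Hom}_\Fsc(X_{M_0},X_{\tau_A N_0})\cong\Hom_A(M_0,\tau_A N_0)$, so this factor vanishes exactly when $\Hom_A(M_0,\tau_A N_0)=0$. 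For the second factor, decompose $Q_0=\bigoplus_jP^A_{v_j}$ and use, for each $v_j$, the conflation $P_{v_j}\infl Q\defl X_{(0,P_{v_j})}$ in $\Fsc$ with $Q$ projective-injective furnished by \cref{lemma:Fdomdim} (here $P_{v_j}$ is the projective object of $\Fsc$ corresponding to $P^A_{v_j}$); applying $\Hom_\Fsc(-,X_{M_0})$ and observing that a map $P_{v_j}\to X_{M_0}$ factors through a projective-injective precisely when it factors through $Q$ (as $P_{v_j}\infl Q$ is an inflation and $Q$ is injective), one gets $\Ebb_\Fsc(X_{(0,P_{v_j})},X_{M_0})\cong\bigl(\Fsc/[\Pcal\cap\Ical]\bigr)(P_{v_j},X_{M_0})$, which by \cref{prop:module as quotient by injectives} is the value at $P^A_{v_j}$ of the $\modd A$-module attached to $X_{M_0}$, that is $\Hom_A(P^A_{v_j},M_0)$; hence the second factor vanishes exactly when $\Hom_A(Q_0,M_0)=0$. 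Combining the two conditions yields $\Ebb_\Fsc(X_N,X_M)=0\iff M\perp_\tau N$.

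For (2), if $\omega$ (resp.\ $\omega'$) is a straight walk then $M_\omega$ (resp.\ $M_{\omega'}$) is projective-injective in $\Fsc$, so $\Ebb_\Fsc(M_{\omega'},M_\omega)=0$ because an injective object occurs as target (resp.\ a projective object as source), and a straight walk is neither kissed by nor kisses any walk, so the equivalence holds trivially. Otherwise $M_\omega$ and $M_{\omega'}$ are each of the form $X_\bullet$ for a pair in $\modd A\times\proj A$ --- a string walk of $\sigma$ corresponds to $(M_\sigma,0)$, a shifted-projective walk at $v$ to $(0,P^A_v)$ --- so part (1) gives $\Ebb_\Fsc(M_{\omega'},M_\omega)=0$ iff the pair attached to $\omega$ is $\perp_\tau$ the pair attached to $\omega'$, and by the non-kissing dictionary of~\cite{McConville,BrustleDouvilleMousavandThomasYildirim,PaluPilaudPlamondon-nonkissing} this failure of $\perp_\tau$ is exactly $\omega$ kissing $\omega'$; negating both sides finishes the proof.

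The step I expect to require the most care is the Auslander--Reiten bookkeeping inside $\Fsc$: one must invoke the formula in the precise form involving the quotient modulo \emph{injective} objects of $\Fsc$, so that \cref{lemma:costableHom} applies on the nose, and one must verify that $\tau_\Fsc X_{N_0}$ is genuinely the walk module $X_{\tau_A N_0}$ with no spurious injective summand. Once this is set up, identifying $\Ebb_\Fsc(X_{(0,Q_0)},X_{M_0})$ with $D\Hom_A(Q_0,M_0)$ via \cref{prop:module as quotient by injectives,lemma:Fdomdim} is routine, and part (2) is then a bookkeeping exercise in the walk dictionary.
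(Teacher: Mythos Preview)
Your proof is correct and follows essentially the same architecture as the paper's: Auslander--Reiten duality in $\Fsc$ combined with \cref{lemma:costableHom,lemma:comparisonOfTau,cor:tau_F} for the string--string part, and the conflation from \cref{lemma:Fdomdim} for the projective--string part, then the non-kissing dictionary for~(2).

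Two small points. First, in the projective--string case you take a more categorical route than the paper: you identify $\Ebb_\Fsc(I_v^\bls,X_{M_0})$ with $(\Fsc/[\Pcal\cap\Ical])(P_v^\bls,X_{M_0})$ and then invoke \cref{prop:module as quotient by injectives}, whereas the paper applies the same exact sequence and does an explicit basis count (occurrences of $v$ in $\omega$ versus in $\sigma$). Your argument is cleaner but relies on the identification $\overline{\Pcal}\cong\proj A$ sending $P_v^\bls$ to $P_v^A$, which you should state; it follows from the explicit description of $GF$ on projectives in the proof of \cref{PropGentle_Equiv}. Second, you do not address band modules in~(1): the pair $M=(M_0,P_0)$ may have $M_0$ a band module, and both \cref{lemma:costableHom} and \cref{lemma:comparisonOfTau} are stated only for strings. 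The paper handles this by noting that bands are $\tau$-fixed in both $\modd A$ and $\modd A^\bls$ (so $\tau_\Fsc X_{N_0}=X_{\tau_A N_0}$ holds trivially) and that the equivalence underlying \cref{lemma:costableHom} applies to all of $\Fsc$, not just string walks; you should add a sentence to the same effect.
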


\begin{proof}
The second part of the statement follows from (1) and~\cite[Theorem 2.46]{PaluPilaudPlamondon-nonkissing}.
Let us prove (1):
Recall from \cref{prop:Fff} that the category $\Fsc$ is functorially finite in $\modd A\bls$ and hence has Auslander--Reiten--Serre duality. Moreover, for any walk or band $\omega$, we have $\tau_{\Fsc}M_\omega = \tau_{A\bls}^2M_\omega$ by \cref{cor:tau_F}.

$\bullet$ Assume first that $\omega$ and $\omega'$ are the walks associated with some strings $\sigma,\sigma'$.
Let $\sigma''$ be such that $M_{\sigma''}=\tau_A M_{\sigma'}$, and let $\omega''$ be the walk associated with $\sigma''$.
By~\cref{lemma:costableHom,lemma:comparisonOfTau,cor:tau_F} and Auslander--Reiten--Serre duality, we have
\begin{eqnarray*}
 \Hom_A(M_\sigma,\tau_AM_{\sigma'}) & = & \Hom_A(M_\sigma,M_{\sigma''}) \\
 & \cong & \overline{\Fsc}(M_\omega,\tau_{A\bls}^2M_{\omega'}) \\
 & \cong & \overline{\Fsc}(M_\omega,\tau_{\Fsc}M_{\omega'}) \\
 & \cong & D\Ebb(M_{\omega'},M_\omega).
\end{eqnarray*}

$\bullet$ We note that the sequence of isomorphisms in the first point above is still valid if $\sigma$ or $\sigma'$ are bands, with the following convention: the walk associated with a band for $(A,I)$ is the same band, viewed as a band for $(Q\bls,I\bls)$. We also note that $\sigma''=\sigma'$ when $\sigma'$ is a band.

$\bullet$ Let $v\in Q_0$, let $\sigma$ be a string or a band for $(Q,I)$ and let $\omega$ be the associated walk (with the convention that $\omega=\sigma$ is $\sigma$ is a band).
Let $c,d$ be the blossoming vertices appearing in the top of the injective $A\bls$-module $I_v\bls$ (with $d=\emptyset$ if the top of $I_v\bls$ is simple), and consider the short exact sequence
\[
0 \to P_v\bls \to P_c\bls\oplus P_d\bls \to I_v\bls \to 0,
\]
of \cref{lemma:Fdomdim}, with the convention that $P_d=0$ if $d=\emptyset$.
There is an induced exact sequence
\[
\Hom_{A\bls}(P_c\bls\oplus P_d\bls,M_\omega) \xrightarrow{\vph} \Hom_{A\bls}(P_v\bls,M_\omega) \to \Ebb(I_v\bls,M_\omega) \to 0.
\]
If $\sigma$ is a band, then $\sigma=\omega$ does not contain any blossoming vertex and the first term of the exact sequence vanishes, showing that $\Hom_A(P_v,M\sigma)\cong\Hom_{A\bls}(P_v\bls,M_\omega) \cong \Ebb(I_v\bls,M_\omega)$.
If $\sigma$ is a string, then a basis for $\Hom_{A\bls}(P_v\bls,M_\omega)$ is parametrized by the occurrences of $v$ in $\omega$ and a basis for the image of $\vph$ is given by those morphisms that are parametrized by the occurrences of $v$ in $\omega$ that do not belong to $\sigma$ (indeed, only the endpoints of $\omega$ are blossoms).
Hence $\Ebb(I_v\bls,M_\omega) \cong \Hom_{A\bls}(P_v\bls,M_\omega) / \operatorname{Im}\vph$ is non-zero if and only if $v$ occurs in $\sigma$, if and only if $\Hom_A(P_v,M_\sigma) \neq 0$.

The remaining cases being trivially satisfied, this finishes the proof.
\end{proof}

By combining \cref{Th:mutation,cor:F is 0-Auslander,thm:kissings as extensions}, we obtain the following:

\begin{corollary}
For any finite-dimensional gentle algebra, the flip of non-kissing facets is an instance of mutations of maximal rigid objects in 0-Auslander extriangulated categories.
\end{corollary}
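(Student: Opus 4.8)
The plan is to read off the statement from the three cited results, with essentially all of the real work being (a) checking the finiteness hypotheses of \cref{Th:mutation} for $\Fsc$, and (b) transporting the categorical notions through the dictionary of \cref{thm:kissings as extensions}.

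First I would record the structural facts. By \cref{cor:F is 0-Auslander}, $\Fsc$ is a $0$-Auslander extriangulated (in fact exact) category, with projective-injective objects the string modules of projective-injective $A\bls$-modules. Being an extension-closed, functorially finite (\cref{prop:Fff}) subcategory of the Hom-finite Krull--Schmidt category $\modd A\bls$, it is itself Hom-finite and Krull--Schmidt, and so are its ideal quotient $\overline{\Fsc}$ by the projective-injectives (which is reduced $0$-Auslander by \cref{rk:reduced}) and the relevant reductions. By \cref{thm: equivalent versions of tilting} together with \cref{corollary:m=n}, every basic maximal rigid object of $\Fsc$ is a finite direct sum, namely of $n = \#\{\text{indecomposable injectives of }\Fsc\}$ indecomposables, so that for such an object every almost complete rigid subcategory $\Rcal'$ has only finitely many indecomposables. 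Hence, by the Remark following \cref{Th:mutation}, the hypotheses of \cref{Th:mutation,Cor:mutation} hold for $\Fsc$.

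Next I would set up the combinatorial dictionary. By \cref{prop: in F iff walk} the indecomposables of $\Fsc$ not coming from bands are exactly the $M_\omega$ with $\omega$ a walk on $(Q,I)$, and straight walks correspond precisely to the projective-injective objects of $\Fsc$. By \cref{thm:kissings as extensions}, the assignment $M\mapsto X_M$ (extended to pairs as in \cref{notation:X_M}) carries $\perp_\tau$-orthogonality of support $\tau$-tilting data over $A$ to vanishing of $\Ebb$ in $\Fsc$, and carries kissing of walks to non-vanishing of $\Ebb$. Consequently a basic object $\bigoplus_i X_{M_i}$ of $\Fsc$ is rigid iff the corresponding collection of walks is pairwise non-kissing, and it is \emph{maximal} rigid iff that collection is a non-kissing facet of $(Q,I)$ in the sense of \cite{McConville,BrustleDouvilleMousavandThomasYildirim,PaluPilaudPlamondon-nonkissing}; under this bijection the non-projective-injective indecomposable summands correspond to the non-straight walks.

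Finally, applying \cref{Cor:mutation}: given a maximal rigid $R = R'\oplus X$ in $\Fsc$ with $X$ a non-projective-injective indecomposable, there is a unique indecomposable $X^\ast\ncong X$ with $R'\oplus X^\ast$ maximal rigid, together with a single exchange $\sfr$-triangle. Translating through the dictionary above, this says exactly that a non-straight walk $\omega$ in a non-kissing facet $\Omega$ can be exchanged, in a unique way, for a walk $\omega^\ast$ with $(\Omega\setminus\{\omega\})\cup\{\omega^\ast\}$ again a non-kissing facet — i.e.\ the flip of non-kissing facets — categorified by the exchange $\sfr$-triangle. The one nonformal point, which I expect to be the main (minor) obstacle, is to identify the categorical mutation $X\rightsquigarrow X^\ast$ with the combinatorial flip $\omega\rightsquigarrow\omega^\ast$; this can be settled either by observing that both are pinned down by the same uniqueness property transported along the bijection of the previous paragraph, or by invoking the known identification of flips with $\tau$-tilting mutations over $A$ from \cite{BrustleDouvilleMousavandThomasYildirim,PaluPilaudPlamondon-nonkissing} and combining it with \cref{thm:kissings as extensions}. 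Along the way one simultaneously recovers support $\tau$-tilting mutation over $A$ itself.
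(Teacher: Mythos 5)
Your proposal is correct and follows the paper's own (one-line) argument exactly: the paper obtains the corollary by combining \cref{Th:mutation}, \cref{cor:F is 0-Auslander} and \cref{thm:kissings as extensions}, which is precisely the combination you spell out, with your verification of the finiteness hypotheses and the walk/indecomposable dictionary being the details the paper leaves implicit. The only point worth adding is that band modules are automatically excluded from maximal rigid objects (they are non-rigid by \cref{thm:kissings as extensions}(1) applied with $M=N$ a band), so the dictionary with non-kissing facets is indeed a bijection.
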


%{\bf Question}:
% Localising $\modd A\bls$ with respect to epimorphisms having semi-simple projective kernel
% seems to almost give $\Fsc$.
% Can we describe $\Fsc$ as a localisation of $\modd A\bls$?
% Or consider only those modules that have pdim at most 1 or idim at most 1 before localising?

\section{Negative extensions 
%and bilinear forms 
in hereditary extriangulated categories}
\label{section:negative}

In \cite{GNP1}, we defined covariant and contravariant left derived functors of the functor $\Hom$ in essentially small extriangulated categories having enough projective, respectively enough injective morphsims. We proved that they agree under certain conditions; in this case, they form a bivariant $\delta-$functor satisfying natural universal properties, whose components we called \emph{balanced negative extensions}. We conjectured that one can define balanced negative extensions in each extriangulated category. In this section, we give a partial confirmation in the hereditary case. Namely, we define balanced negative extensions $\EbbPB^{\bullet}$ in hereditary categories satisfying certain restrictive conditions $(\rm{N+})$. Also, we construct another bivariant $\delta-$functor $\EbbE^{\bullet}$ for each $0-$Auslander category embedded into a triangulated category in a special way. These two functors do not agree in general, but we have nontrivial examples when they do agree. The latter $\delta-$functor a priori depends on the embedding, but for categories linear over fields and having finite-dimensional $\Hom-$ and $\Ebb-$spaces,
%\Yann{Does our proof work over a ring if we assume Hom and Ext are finitely generated free and replace dim by rank?}\Misha{It uses only long exact sequences. So it should work, right?}\Yann{It seems so}
 dimensions of its components are independent of this choice. Moreover, we expect that whenever a $0-$Auslander category admits universal balanced negative extensions, these are given by $\EbbE^{\bullet}$, and so $\EbbE^{\bullet}$ is the correct replacement of $\EbbPB$ when the condition $(\rm{N+})$ is not satisfied. We will work in the following generality, which, in particular, allows for a relatively easy presentation of relevant results from \cite{GNP1}.

\begin{assumption} \label{assumption: hered_enough}
In this section, we always assume that $\CEs$ is an essentially small hereditary $R$-linear extriangulated category which has enough projective objects and enough injective objects.
\end{assumption}

%\Misha{In fact, need to check an old version to see which parts of Section 7.2 require heredity.}
 
By \cref{assumption: hered_enough}, to each object $X\in\Csc$ we can assign a pair of $\sfr$-triangles
\begin{equation}\label{sTri_Chosen_proj}
Q_X\ov{q_X}{\lra}P_X\ov{p_X}{\lra}X\ov{\om_X}{\dra},
\end{equation}
\begin{equation}\label{sTri_Chosen_inj}
X\ov{i_X}{\lra}I_X\ov{j_X}{\lra}J_X\ov{\iota_X}{\dra}
\end{equation}
in which  $P_X\in\Csc$ is projective and $I_X\in\Csc$ is injective. These $\sfr$-triangles are automatically {\it dominant} (resp. {\it codominant}) in the sense of \cite{GNP1}, meaning that the morhpisms
\[
\om_X\ush: \Csc(Q_X,-) \to \Ebb(X, -) \quad \mbox{and} \quad {\iota_X}\ssh: \Csc(-,J_X) \to \Ebb(-,X)
\]
are epimorphic.

Remark that since $\CEs$ is hereditary, $Q_X$ is projective and $J_X$ is injective, so these $\sfr$-triangles can be thought of as a projective resolution (resp. an injective coresolution) of $X$. For each $X$, we fix a pair of $\sfr$-triangles (\ref{sTri_Chosen_proj}) and (\ref{sTri_Chosen_inj}) and use these in our constructions. Note though that the connected sequences of functors we define will not depend on such a choice.

%\subsection{Reminder on $\delta$-functors}

%\section{Connected sequences and $\delta$-functors}\label{Subsection_PA}

\subsection{Reminder on negative extensions}\label{Subsection_CD}

Recall that we defined in \cite{GNP1} a \defn{ covariant connected sequence of functors} as a pair $(\Fm=\{F^n\colon\cat\op\ti\cat\to\Mod R\}_{n\in [a, b]}, \del\ssh)$, where 

\begin{itemize}
\item $a \in \left\{- \infty\right\} \cup \Zbb$ and $b \in \Zbb \cup \left\{+ \infty \right\}$ such that $a \leq b$;
\item $\Fm=\{F^n\colon\cat\op\ti\cat\to\Mod R\}_{n\in [a, b]}$ a sequence of $R$-bilinear functors;
\item $\del\ssh=\del_{F,\sharp}^n\in\cat(F^n(-,C),F^{n+1}(-,A))$ is a collection of morphisms in  $\Mod\cat$, for any $\Ebb$-extension $\del\in\Ebb(C,A)$ and $n \in [a, b],$ natural with respect to morphisms of $\Ebb$-extensions.
\end{itemize}

\defn{Contravariant} connected sequences were defined dually. A \defn{bivariant connected sequence of functors} is a triplet $(\Fm,\del\ssh,\del\ush)$ where $(\Fm,\del\ssh)$ and  $(\Fm,\del\ush)$ form a covariant, respectively a contravariant connected sequence.
As in \cite{GNP1}, we use the shorthand notation $f\sas=F^n(-,f)$ and  $f\uas=F^n(f,-)$ for any morphism $f$ in $\cat$ and any $F^n$ in any connected sequence.

A  \defn{covariant partial $\delta$-functor} is a connected sequence $(\Fm=\{F^n\colon\cat\op\ti\cat\to\Mod R\}_{n\in [a, b]}, \del\ssh)$ such that
for any $\sfr$-triangle $A\ov{x}{\lra}B\ov{y}{\lra}C\ov{\del}{\dra}$, the complex
\begin{equation} \label{eqn:complex}
F^a(-,A)\ov{F^a(-,x)}{\lra}
F^a(-,B)\ov{F^a(-,y)}{\lra}
F^a(-,C)\ov{\del^a_{F,\sharp}}{\lra}F^{a+1}(-,A)\ov{F^{a+1}(-,x)}{\lra}\cdots
\ov{F^b(-,y)}{\lra}
F^b(-,C) 
\end{equation}
is exact except at the end-terms. It is a \defn{$\delta$-functor} if $a = -\infty, b = +\infty$. For a connected sequence $(\Fm, \del\ssh)$, we say that an $\sfr$-triangle is $(\Fm, \del\ssh)$\defn{-acyclic} if the associated unbounded complex (\ref{eqn:complex}) is exact. In other words, a sequence $(\Fm, \del\ssh)$ is a $\delta$-functor if and only if each $\sfr$-triangle in $\CEs$ is $(\Fm, \del\ssh)$-acyclic.

\defn{Contravariant} and \defn{bivariant} (also called \defn{balanced}) \defn{(partial) $\delta$-functors} are defined similarly.

 The bifunctors $\Hom$ and $\Ebb$ are half-exact in either argument and, therefore, are bivariant partial $\delta$-functors. Moreover, the sequence $(\Ebb^{\bullet}=\{\Ebb^n\}_{n\ge0}, \del\ssh, \del\ush)$ is also a bivariant partial $\delta$-functor, where we denote $\Ebb^0=\Hom_{\cat}$, see \cite[Example 5.8]{GNP1} (under \cref{assumption: hered_enough}, $\Ebb^n = 0$ for $n \geq 2$).

 We are interested in connected sequences $(\Fm, \del\ssh)$ such that $F^n=\Ebb^n$, and $\del\ssh\colon F^n(-,C)\to F^{n+1}(-,A)$ agrees with $\del\ssh\colon \Ebb^n(-,C)\to \Ebb^{n+1}(-,A)$ for each $n\ge 0, \, A,C\in\cat$. We then say that $(\Fm, \del\ssh)$ \defn{has} $F^n=\Ebb^n$ \defn{with the canonical connecting morphisms} for $n\ge0$. Similarly for contravariant and bivariant sequences. To prove that the connected sequences we construct later are $\delta$-functors, we will apply the following useful result.

\begin{lemma} \label{lem:enough_to_check_on_dominant}
Assume that $\CEs$ satisfies \cref{assumption: hered_enough}. Then a covariant $(\Fm, \del\ssh)$ having $F^n=\Ebb^n$ with the canonical connecting morphisms for $n\ge0$ is a $\delta$-functor if and only if for each $X$, some (equivalently, each) dominant $\sfr$-triangle (\ref{sTri_Chosen_proj}) is  $(\Fm, \del\ssh)$-acyclic. Dually, a contravariant $(\Fm, \del\ush)$ having $F^n=\Ebb^n$ with the canonical connecting morphisms for $n\ge0$ is a $\delta$-functor if and only if for each $X$, some (equivalently, each) dominant $\sfr$-triangle (\ref{sTri_Chosen_inj}) is  $(\Fm, \del\ush)$-acyclic. 
\end{lemma}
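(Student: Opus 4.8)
The plan is to reduce the statement to a Yoneda-type argument showing that a connected sequence which agrees with $\Ebb^\bullet$ in degrees $\ge 0$ is pinned down on all $\sfr$-triangles by its behaviour on a single (co)resolving $\sfr$-triangle per object, using dimension shifting. I would first treat the covariant case, the contravariant one being dual.

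First I would record the easy direction: if $(\Fm,\del\ssh)$ is a $\delta$-functor, then by definition \emph{every} $\sfr$-triangle, in particular every dominant $\sfr$-triangle of the form \eqref{sTri_Chosen_proj}, is $(\Fm,\del\ssh)$-acyclic. The content is the converse. So assume that for each $X$ the dominant $\sfr$-triangle $Q_X\to P_X\to X\dashrightarrow$ is $(\Fm,\del\ssh)$-acyclic; I must show an arbitrary $\sfr$-triangle $A\overset{x}{\to}B\overset{y}{\to}C\overset{\del}{\dashrightarrow}$ is $(\Fm,\del\ssh)$-acyclic. Since $F^n = \Ebb^n$ with the canonical connecting morphisms for $n\ge 0$, and since $\CEs$ is hereditary so that $\Ebb^n = 0$ for $n\ge 2$, the long exact sequence \eqref{les:1} already gives exactness of the associated complex \eqref{eqn:complex} at all terms in non-negative cohomological degrees \emph{except} possibly at $F^0(-,A) = \Csc(-,A)$, where exactness would require the map $F^{-1}(-,C)\to F^0(-,A)$ to have image the kernel of $x_\ast$. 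Thus the whole problem is concentrated in the negative degrees $n \le 0$, i.e. we must check exactness of
\[
\cdots \to F^{-2}(-,C)\to F^{-1}(-,A)\to F^{-1}(-,B)\to F^{-1}(-,C)\overset{\del\ssh}{\to} \Csc(-,A)\overset{x_\ast}{\to}\Csc(-,B).
\]

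The key step is dimension shifting. Given $X = C$, pick the dominant $\sfr$-triangle $Q_C\overset{q_C}{\to}P_C\overset{p_C}{\to}C\overset{\om_C}{\dashrightarrow}$ with $P_C$ projective (hence $Q_C$ projective, by heredity). Pulling back $A\to B\to C\dashrightarrow$ along $p_C$ and using that $P_C$ is projective, one obtains (via (ET4$^{\mathrm{op}}$) and the dual of \cite[Proposition 3.15]{NakaokaPalu}) a commutative diagram of $\sfr$-triangles comparing $A\to B\to C\dashrightarrow$ with $A\to E\to P_C\dashrightarrow$ and $Q_C \to E' \to A \dashrightarrow$ type triangles; more precisely I would build the standard ``horseshoe'' relating the $\sfr$-triangle through $C$ to one whose third term is projective, so that all the negative-degree groups $F^{<0}(-,P_C)$ and $F^{<0}(-,Q_C)$ are forced to vanish — this uses that $(\Fm,\del\ssh)$ is acyclic on the dominant $\sfr$-triangles $Q_{P_C}\to P_{P_C}\to P_C\dashrightarrow$ and $Q_{Q_C}\to P_{Q_C}\to Q_C\dashrightarrow$, which may be taken to be the split ones $0\to P_C = P_C\dashrightarrow$, giving $F^n(-,P_C) = \Ebb^n(-,P_C) = 0$ for $n<0$ directly by acyclicity of the split triangle, and similarly for $Q_C$. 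Feeding this vanishing into the long exact sequences attached (via the connected-sequence structure) to the comparison diagram, and comparing with the long exact sequence attached to the dominant $\sfr$-triangle of $C$, a diagram chase then transports acyclicity from the dominant $\sfr$-triangle of $C$ (assumed) and the two split triangles (trivial) to the original $\sfr$-triangle $A\to B\to C\dashrightarrow$. The naturality of $\del\ssh$ with respect to morphisms of $\Ebb$-extensions is what makes all the squares in this chase commute.

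The main obstacle I anticipate is the bookkeeping of the connecting morphisms across the comparison diagram: one must check that the $\del\ssh$ attached to the pulled-back/pushed-out $\sfr$-triangles are compatible (via the functoriality axiom for connected sequences) with those of the dominant $\sfr$-triangle of $C$ and of the split resolutions, so that the resulting ladder of long exact sequences is genuinely a ladder of complexes and the five-lemma / four-lemma style chase applies. A clean way to organise this, which I would adopt, is to first prove the statement for $\sfr$-triangles whose third term is projective (where acyclicity is immediate from the split dominant triangle), then for $\sfr$-triangles whose third term is arbitrary by the one-step dimension shift along \eqref{sTri_Chosen_proj}; heredity guarantees a single shift suffices because $F^{-n}$ and $\Ebb^{-n}$ both vanish for $n$ large in the relevant range. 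The contravariant statement is proved verbatim dually, replacing \eqref{sTri_Chosen_proj} by \eqref{sTri_Chosen_inj}, projectives by injectives, and \eqref{les:1} by \eqref{les:2}; I would simply say ``by duality''.
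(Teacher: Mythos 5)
Your reduction of the problem to the negative degrees and the junction at $F^0(-,A)=\Csc(-,A)$ is correct, and your observation that $\sfr$-triangles with projective third term split and that split $\sfr$-triangles are automatically $(\Fm,\del\ssh)$-acyclic (by additivity of the $F^n$ and vanishing of the connecting morphism attached to the zero extension) is fine. The gap is in the step where you claim that acyclicity of the split dominant $\sfr$-triangle $0\infl P_C\defl P_C\dashrightarrow$ gives $F^n(-,P_C)=0$ for $n<0$. It does not: for that triangle the associated complex reads $\cdots\to F^n(-,0)\to F^n(-,P_C)\xrightarrow{\;\mathrm{id}\;}F^n(-,P_C)\xrightarrow{\;0\;}F^{n+1}(-,0)\to\cdots$, which is exact for trivial reasons whatever the groups $F^n(-,P_C)$ are; acyclicity here is vacuous and carries no vanishing information. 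Worse, the desired vanishing is actually \emph{false} for connected sequences satisfying the hypotheses of the lemma: for instance $\EbbPB^{-1}(-,P)\cong\EbbII^{-1}(-,P)$ for $P$ projective (\cref{CorUniv_hered}), and in the triangulated realization this is $\Dsc/_{[\susp\Tsc]}(-,\susp^{-1}P)$, which is nonzero in general. Since your entire dimension-shifting ladder hinges on killing the negative-degree terms over $P_C$ and $Q_C$ so that the four/five-lemma chase closes up, the argument as written does not go through, and no amount of bookkeeping of the connecting morphisms will repair it in this form. (The auxiliary claim that $F^{-n}$ vanishes for $n$ large is likewise unjustified: a $\delta$-functor is indexed by all of $\Zbb$ with no boundedness assumption.)

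The mechanism that actually proves the hard direction is different, and it is the one the paper invokes by citing \cite[Proposition 4.22]{GNP1}: dominance of the $\sfr$-triangle (\ref{sTri_Chosen_proj}) means $\om_C^{\sharp}\colon\Csc(Q_C,-)\to\Ebb(C,-)$ is epimorphic, so \emph{every} extension $\delta\in\Ebb(C,A)$ is of the form $f_\ast\om_C$ for some $f\in\Csc(Q_C,A)$; the morphism of $\Ebb$-extensions $(f,\mathrm{id}_C)\colon\om_C\to\delta$ lifts to a morphism of $\sfr$-triangles, and the cited proposition transports $(\Fm,\del\ssh)$-acyclicity from $\om_C$ to $\delta$ along this morphism, using naturality of $\del\ssh$ together with the already-known exactness in degrees $\ge 0$. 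In other words, the resolution $Q_C\infl P_C\defl C\dashrightarrow$ is used as a \emph{universal source} of extensions with target $C$ (a ``projective morphism'' in the sense of \cite{GNP1}), not as a device for shifting negative extensions onto projective objects. If you want a self-contained proof rather than a citation, you would need to reproduce the diagram chase of \cite[Proposition 4.22]{GNP1} along these lines; the horseshoe-plus-vanishing route is not available.
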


\begin{proof}
This is a direct application of \cite[Proposition 4.22]{GNP1} and its dual version for contravariant functors to our setting and generality, since the dominance of $\sfr$-triangle (\ref{sTri_Chosen_proj}) implies that it can be taken as $\theta$ in the condition \cite[Proposition 4.22.(2)]{GNP1} for any $\delta \in \Ebb(X,Y)$, for any $Y \in \Csc$.
\end{proof}

\begin{remark}
In fact, the same result holds (with the same proof) for any category $\CEs$ of finite (positive) global dimension and having {\it enough  projective morphisms} for $(\Fm, \del\ssh)$ (resp.  {\it enough  injective morphisms} for $(\Fm, \del\ush)$), in the sense of \cite{GNP1}.
\end{remark}
 
 A balanced partial $\delta$-functor $(F^n)_{n \ge -1}, \del\ssh, \del\ush)$ having $F^n=\Ebb^n$ with the canonical connecting morphisms for $n\ge0$ (more precisely, its part with $n \in [-1, 0]$) is the same as a \emph{negative first extension structure} on $\CEs$ in the sense of \cite{AdachiEnomotoTsukamoto}. We are interested in \emph{universal} negative extensions, where the natural universal property was introduced in \cite{GNP1}:

 A contravariant $\delta$-functor  $(F^n)_{n \in \mathbb{Z}}, \del\ssh)$ is \defn{universal} among $\delta$-functors having  $F^n=\Ebb^n$ with the canonical connecting morphisms for $n\ge0$ if for each $((G^n), n \in \mathbb{Z}; \del\ssh)$ having  $G^n=\Ebb^n$ with the canonical connecting morphisms for $n\ge0$, there exists a sequence of natural transformations
$\{\vp^n\colon G^n\to F^n\}_{n \in \mathbb{Z}}$ which satisfies the following conditions:
\begin{itemize}
\item[{\rm (i)}] $\vp^n=\id$ for any $n \ge 0$;
\item[{\rm (ii)}] For any $n \in \mathbb{Z}$, the following diagram
\[
\xy
(-12,7)*+{G^{n}(A,-)}="0";
(12,7)*+{G^{n+1}(C,-)}="2";
(-12,-7)*+{E^{n}(A,-)}="4";
(12,-7)*+{E^{n+1}(C,-)}="6";
{\ar^{\del^{\sharp, n}} "0";"2"};
{\ar_{\vp^{n}_{A,-}} "0";"4"};
{\ar^{\vp^{n+1}_{C,-}} "2";"6"};
{\ar_{\del^{\sharp, n}} "4";"6"};
{\ar@{}|\circlearrowright "0";"6"};
\endxy
\]
is commutative in $\cat\Mod$ for any $\del\in\Ebb(C,A)$.
\end{itemize}

If such universal $\delta$-functor  $(F^n)_{n \in \mathbb{Z}}, \del\ssh)$ exists, $F^{-n}$ is called the \defn{$n$-th left contravariant derived functor} of the bifunctor $\Hom$, or the \defn{universal contravariant negative $n$-th extension}.

Similar definitions apply for covariant and for bivariant (= balanced) $\delta$-functors. Note that \defn{balanced negative extensions} are considered \defn{universal} if they are universal among the balanced $\delta$-functors (having  $F^n~=~\Ebb^n$ with the canonical connecting morphisms for $n\ge0$), which is a weaker property than being a bivariant $\delta$-functor being universal both among covariant and contravariant $\delta$-functors.

In \cite[Section 5]{GNP1}, for categories having enough projective, respectively enough injective morphisms, we defined universal covariant $(\EbbI^{\bullet},\del\ssh)$ and contravariant $(\EbbII^{\bullet},\del\ush)$ $\delta$-functors having  $\EbbI^n~=~\EbbII^n~=~\Ebb^n$ with the canonical connecting morphisms for $n\ge0$. Their definition involves the dualities 
\[(-)^{\vee}\colon \cat\Mod\overset{\leftarrow}{\rightarrow}\Mod\cat \colon (-)^{\vee},\]
see \cite[Section 5.1]{GNP1}. In particular, we have 
\[
\EbbI^{-n}(-,X) = (\Ebb^n(X,-))^{\vee}, \quad 
\EbbII^{-n}(X,-) = (\Ebb^n(-,X))^{\vee}.
\]
In our special case - that is, under \cref{assumption: hered_enough}, - their components $\EbbI^{-n}, \EbbII^{-n}$ admit alternative easier recursive descriptions given in \cite[Remark 5.23]{GNP1}. Namely, we have 
\[ \EbbI^{-n}(-,X)\cong\Ker\big(\EbbI^{-(n-1)}(-,Q_X)\ov{(q_C)\sas}{\lra}\EbbI^{-(n-1)}(-,P_X)\big), \]
for any $\sfr$-triangle of the form (\ref{sTri_Chosen_proj}), for each $X\in\cat$. This is well-defined up to unique natural isomorphisms, independently of the choice of $\sfr$-triangles. Dually for $\EbbII^{\bullet}$, where one can use any $\sfr$-triangle (\ref{sTri_Chosen_inj}) for the first argument. In what follows we will not use the precise definition of connecting morphisms, only their existence; the reader is referred to \cite[Definition 5.3]{GNP1}.

\begin{remark}
\begin{itemize}
\item If $\CEs$ is exact, then $\EbbI^{-n}(X,Y)=\EbbII^{-n}(X, Y)=0$.
\item It is clear from the definition via dualities 
%\Yann{Why is it clear from the second definition also?} \Misha{Right, I had projective resolutions in mind. It's clear from the second definition under \cref{assumption: hered_enough}, maybe not in general.}
%definition 
that for a category of (positive) global dimension $n$, we have $\EbbI^{-m} = \EbbII^{-m} = 0,$ for each $m > n$. In particular, under \cref{assumption: hered_enough}, we have $\EbbI^{-m} = \EbbII^{-m} = 0,$ for each $m \ge 2$ (this can also be seen directly from the alternatve description). 
\end{itemize}
\end{remark}

The universality of $(\EbbI^{\bullet},\del\ssh)$ and $(\EbbII^{\bullet},\del\ush)$ is proved in \cite[Proposition 5.19, Corollary 5.21, Corollary 5.22]{GNP1}. It means that $(\EbbI^{-n}), n > 0$ and $(\EbbII^{-n}), n > 0$ should be seen as negative extensions in the extriangulated category $\CEs$. In general, thus defined negative extensions are not balanced: given an arbitrary extriangulated category, we often do not have $\EbbI^{-n} \cong \EbbII^{-n}$ for all $n > 0$. %This motivates the following problem.

A special case \cite[Corollary 5.39]{GNP1} says that if $\CEs$ satisfies \cref{assumption: hered_enough} and conditions {\rm (NI$+$)} and {\rm (NII+)} defined below, then $(\EbbI^n\cong\EbbII^n)_{n\in\Zbb}$ gives 
a universal bivariant $\delta$-functor having $\Ebb^n$ with the canonical connecting morphisms for $n\ge0$. %Thus, for such categories the answer to \cref{ProblemBivariant} 

\begin{condition}
\begin{itemize}
\item[{\rm (NI$+$)}] $((\om_Y)\ssh)^{-1}((\Ical)(X,Q_Y))=0$
%\Yann{I replaced $\Ical$ and $\Pcal$ by $(\Ical)$ and $(\Pcal)$; was I right? And $\om_X, \iota_X$, or is it too heavy? BTW Isn't it $Q_Y$ and $\om_Y$?} 
holds in $\EbbI^{-1}(X,Y)$ for any $X,Y\in\cat$ and $\om_Y$ in any $\sfr$-triangle of the form (\ref{sTri_Chosen_proj}) for $Y$.
\item[{\rm (NII$+$)}] $((\iota_X)\ush)^{-1}((\Pcal)(J_X,Y))=0$ holds in $\EbbII^{-1}(X,Y)$ for any $X,Y\in\cat$ and $\iota_X$ in any $\sfr$-triangle of the form (\ref{sTri_Chosen_inj}) for $X$.
\end{itemize}
\end{condition}

\begin{example}
Conditions {\rm (NI$+$), (NII$+$)} are satisfied if $\CEs$ is an exact category or the category $K^{[-1, 0]}(\proj \Lambda)$, where $\Lambda$ is a finite-dimensional algebra, see \cite[Remark 5.32, Example 5.34]{GNP1}.
\end{example}

\begin{remark} \label{rem:NI_and_NII}
Certain weaker conditions (NI) and (NII) for categories which are $\Hom$-finite over a field $k$ were shown in \cite[Proposition 5.30]{GNP1} to be sufficient to ensure that \[\mbox{dim}_k \EbbI^{-n} = \mbox{dim}_k\EbbII^{-n}, n > 0.\] 
Under \cref{assumption: hered_enough}, we can reformulate these conditions as follows by using \cite[Corollary 5.9, Corollary 5.11]{GNP1}:
\begin{itemize}
\item[{\rm (NI)}] $\EbbI^{-1}(I, -) = 0$ holds for any injective object $I \in \Csc$.
\item[{\rm (NII)}]  $\EbbII^{-1}(-, P) = 0$ holds for any projective object $P \in \Csc$.
\end{itemize}
\end{remark}

\vspace{0.4cm}

 Consider the following properties a bivariant $\delta$-functor $(E^{\bullet}, \del\ssh,\del\ush)$ might have:

\begin{itemize}
\item[(U)] (Universality) $(E^{\bullet}, \del\ssh,\del\ush)$ is universal among $\delta$-functors having $F^n=\Ebb^n$ with the canonical connecting morphisms for $n\ge0$;
\item[(C)] (Compatibility) $(E^{\bullet}, \del\ssh,\del\ush)$ has $F^n=\Ebb^n$ with the canonical connecting morphisms for $n\ge0$ and
\[E^{-n}(I, -) \cong \EbbI^{-n}(I, -); \quad E^{-n}(-, P) \cong \EbbII^{-n}(-, P),\]
for all $I$ injective, $P$ projective, $n > 0$.
%\item[(V)] \[E^{-m} = 0, \quad \forall m \ge 3.\]
\end{itemize}

Note that under conditions (NI+) and (NII+) on $\CEs$, the $\delta$-functor 
$(\EbbI^n\cong\EbbII^n)_{n\in\Zbb}$ satisfies
both properties (U) and (C). This motivates the following questions (cf. \cite[Section 5.5]{GNP1}).

%Assume that $\CEs$ satisfies \cref{assumption: hered_enough}. 

\begin{problem} \label{ProblemBivariant}
Assume that $\CEs$ satisfies \cref{assumption: hered_enough}. 
\begin{itemize}
\item[(a)] Does $\CEs$ admits a bivariant $\delta$-functor $(\Ebb^{\bullet}, \del\ssh,\del\ush)$ %having $\Ebb^n$ with the canonical connecting morphisms for $n\ge0$ and 
having property (C)?
\item[(b)] Does $\CEs$ admit $(\Ebb^{\bullet}, \del\ssh,\del\ush)$ satisfying property (U)?
\end{itemize}
\end{problem}

In fact, we expect that if 
%\Misha{If and only if? I don't know. (commented this out).} 
a bivariant $\delta$-functor has property (U), it has property (C), %and (V), 
so a positive answer \cref{ProblemBivariant}.(b) would also give the positive answer to \cref{ProblemBivariant}.(a). Moreover, we have the following easy observation.

\begin{lemma}
\label{prop:comparison:univ}
Under \cref{assumption: hered_enough}, if $\CEs$ admits a pair of bivariant $\delta$-functors $(\Ebb_{un}^{\bullet}, \del\ssh, \del\ush)$ having both properties  $\rm{(U)}$ and $\rm{(C)}$ and $(E^{\bullet}, \del\ssh, \del\ush)$ having property $\rm{(C)}$.
%(and, therefore, (V)).
%satisfying conditions (U) and (C) (and, therefore, (V)).  universal among bivariant $\delta$-functors having  $F^n~=~\Ebb^n$ with the canonical connecting morphisms for $n\ge0$, and such that
%$$\Ebb_{un}^{-i}(I,-) = \EbbI^{-i}(I,-) \text{ for any injective } I;$$
%$$\Ebb_{un}^{-i}(-,P) = \EbbII^{-i}(-,P) \text{ for any projective } P.$$
Then we have $(\Ebb_{un}^{\bullet}, \del\ssh, \del\ush) \overset\sim\to (\Ebb^{\bullet}, \del\ssh, \del\ush)$.
\end{lemma}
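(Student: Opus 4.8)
The strategy is to use the universality of $(\Ebb_{un}^{\bullet}, \del\ssh, \del\ush)$ to produce a comparison morphism and then show it is an isomorphism using property $\rm{(C)}$ of both functors together with \cref{lem:enough_to_check_on_dominant} (or rather the five-lemma argument that powers it). First I would invoke property $\rm{(U)}$ of $(\Ebb_{un}^{\bullet}, \del\ssh, \del\ush)$: since $(E^{\bullet}, \del\ssh, \del\ush)$ is in particular a $\delta$-functor having $F^n = \Ebb^n$ with the canonical connecting morphisms for $n \ge 0$, there is a canonical sequence of natural transformations $\{\vp^n \colon E^n \to \Ebb_{un}^n\}_{n \in \Zbb}$ with $\vp^n = \id$ for $n \ge 0$ and commuting with the connecting maps $\del\ssh$ (and, by the bivariant version of universality, also with $\del\ush$). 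The claim is that each $\vp^{-n}$, $n > 0$, is an isomorphism; since both functors vanish for $n \ge 2$ by heredity (the remark after the definition of $\EbbI^{\bullet}, \EbbII^{\bullet}$), only $\vp^{-1}$ needs attention.

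The key step is to check $\vp^{-1}$ is an isomorphism. Fix $X, Y \in \Csc$. Using the chosen dominant $\sfr$-triangle $Q_X \ov{q_X}{\lra} P_X \ov{p_X}{\lra} X \ov{\om_X}{\dra}$ of the form (\ref{sTri_Chosen_proj}), where $Q_X, P_X$ are projective, and applying the contravariant $\delta$-functor property in the first variable, both $(E^{\bullet}, \del\ssh, \del\ush)$ and $(\Ebb_{un}^{\bullet}, \del\ssh, \del\ush)$ give long exact sequences; comparing the relevant stretches around degree $-1$ yields a commutative ladder
\[
\begin{tikzcd}[column sep=small]
E^{-1}(P_X, Y) \ar[r] \ar[d, "\vp^{-1}"] & E^{-1}(Q_X, Y) \ar[r] \ar[d, "\vp^{-1}"] & E^0(X, Y) \ar[r] \ar[d, "\vp^0 = \id"] & E^0(P_X, Y) \ar[r] \ar[d, "\vp^0 = \id"] & E^0(Q_X, Y) \ar[d, "\vp^0 = \id"] \\
\Ebb_{un}^{-1}(P_X, Y) \ar[r] & \Ebb_{un}^{-1}(Q_X, Y) \ar[r] & \Ebb_{un}^0(X, Y) \ar[r] & \Ebb_{un}^0(P_X, Y) \ar[r] & \Ebb_{un}^0(Q_X, Y)
\end{tikzcd}
\]
with exact rows (the leftmost terms being $E^{-2} = \Ebb_{un}^{-2} = 0$ after one more step, or one prepends a zero). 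Now $P_X$ and $Q_X$ are projective, so property $\rm{(C)}$ gives $E^{-1}(P_X, Y) \cong \EbbII^{-1}(P_X, Y)$ and likewise for $Q_X$, and by the same token $\Ebb_{un}^{-1}(P_X, Y) \cong \EbbII^{-1}(P_X, Y)$; moreover one checks these identifications are compatible with $\vp^{-1}$ (both sides being the canonical map to the universal object, or by tracing the universality construction), so the two outer vertical maps involving projective objects are isomorphisms. The $\vp^0$ are identities. The five lemma then forces $\vp^{-1} \colon E^{-1}(X, Y) \to \Ebb_{un}^{-1}(X, Y)$ to be an isomorphism. Since $X, Y$ were arbitrary and the comparison morphisms are natural, this shows $\vp^\bullet$ is an isomorphism of bivariant connected sequences of functors, i.e. $(\Ebb_{un}^{\bullet}, \del\ssh, \del\ush) \overset\sim\to (\Ebb^{\bullet}, \del\ssh, \del\ush)$.

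The main obstacle I anticipate is justifying that property $\rm{(C)}$ is compatible with the universal comparison morphism $\vp^{-1}$ on projective (resp. injective) arguments — i.e. that the isomorphisms $E^{-1}(P, -) \cong \EbbII^{-1}(-, P)$ supplied by $\rm{(C)}$ intertwine $\vp^{-1}$ with the identity under the identification of $\Ebb_{un}^{-1}(-, P)$ with $\EbbII^{-1}(-,P)$. This should follow from the fact that, restricted to projective first arguments, the universality of $(\EbbII^{\bullet}, \del\ush)$ among contravariant $\delta$-functors makes all comparison maps into $\EbbII^{-1}(-, P)$ canonical (there is a unique natural transformation compatible with the connecting maps and the identity in non-negative degrees), so any two such — the one from $E^{\bullet}$ and the one from $\Ebb_{un}^{\bullet}$ composed with $\vp$ — must agree; but spelling this out requires carefully tracking the contravariant universality (\cite[Corollary 5.22]{GNP1}) and the definition of property $\rm{(C)}$. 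A cleaner alternative, which I would pursue if the above gets delicate, is to observe that property $\rm{(C)}$ combined with heredity already pins down $E^{-1}(X,Y)$ up to canonical isomorphism via the exact sequence from $\sfr$-triangle (\ref{sTri_Chosen_proj}) — namely $E^{-1}(X, -) \cong \Ker\big(\EbbII^{-1}(-, Q_X) \to \EbbII^{-1}(-, P_X)\big)$ wait, one uses the dual: $E^{-1}(X, Y)$ fits into $0 \to E^{-1}(X,Y) \to E^{-1}(Q_X', Y)$ — and the same holds for $\Ebb_{un}^{\bullet}$, so both are the same subfunctor, compatibly with $\vp$. Either route reduces to routine diagram chasing once the bookkeeping of universality is set up, so I do not expect genuine difficulty beyond this compatibility check.
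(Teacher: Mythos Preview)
Your overall strategy matches the paper's: invoke universality to get a comparison morphism, then use property $\rm{(C)}$ on both sides and the five lemma over a (co)resolution to conclude that the comparison is an isomorphism in each degree. The paper's proof is no more detailed than this.

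There is, however, a genuine slip in your execution. You resolve $X$ by the projective $\sfr$-triangle $(\ref{sTri_Chosen_proj})$ and then write that ``$P_X$ and $Q_X$ are projective, so property $\rm{(C)}$ gives $E^{-1}(P_X, Y) \cong \EbbII^{-1}(P_X, Y)$''. But property $\rm{(C)}$ says nothing about $E^{-1}(P,-)$ for a \emph{projective} first argument: it gives $E^{-1}(I,-)\cong\EbbI^{-1}(I,-)$ for $I$ \emph{injective} in the first slot, and $E^{-1}(-,P)\cong\EbbII^{-1}(-,P)$ for $P$ projective in the \emph{second} slot. Your ladder therefore does not supply the outer isomorphisms needed for the five lemma; your stream-of-consciousness ``wait, one uses the dual'' is exactly the moment where this bites, and you never actually correct it.

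The fix is immediate: either run the contravariant long exact sequence in the first variable along the \emph{injective} coresolution $(\ref{sTri_Chosen_inj})$ of $X$, so that the outer terms are $E^{-1}(J_X,Y)$ and $E^{-1}(I_X,Y)$ with $I_X,J_X$ injective, or run the covariant long exact sequence in the second variable along the projective resolution $(\ref{sTri_Chosen_proj})$ of $Y$, so that the outer terms are $E^{-1}(X,Q_Y)$ and $E^{-1}(X,P_Y)$ with $P_Y,Q_Y$ projective. In either case property $\rm{(C)}$ applies as stated and the five lemma goes through. This is what the paper means by ``(co)resolutions''. The compatibility issue you flag (that the $\rm{(C)}$-identifications intertwine with $\vp^{-1}$) is then resolved by uniqueness in the universal property of $\EbbI^{\bullet}$ (resp.\ $\EbbII^{\bullet}$), as you sketch.
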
 

\begin{proof}
The universality of $(\Ebb_{un}^{\bullet}, \del\ssh, \del\ush)$ implies the existence of a morphism of $\delta$-functors  $(\Ebb_{un}^{\bullet}, \del\ssh, \del\ush) \to (\Ebb^{\bullet}, \del\ssh, \del\ush)$. Since both $\delta$-functors have property (C), they agree on pairs $(I, -)$ and $(-,P)$. That the morphisms between components are in fact isomorphisms for each pair of objects then follows by the five lemma from the existence of (co)resolutions (\ref{sTri_Chosen_proj}) and (\ref{sTri_Chosen_inj}) for each object in $\Csc$.
\end{proof}

\begin{itemize}
\item As explained above, for $\CEs$ satisfying conditions (NI+) and (NII+), it follows from \cite{GNP1} that the answer to both \cref{ProblemBivariant}.(a) and (b) is positive and the $\delta$-functor in question is given by
$(\EbbI^n\cong\EbbII^n)_{n\in\Zbb}$ with suitable connecting morphisms.
\item In \cref{ssection:universal}, for $\CEs$ satisfying a certain condition (N+), we construct a bivariant $\delta$-functor $\EbbPB^{\bullet}$ giving the positive answer both to \cref{ProblemBivariant}.(a) and (b);
\item In \cref{ssection:computation}, for a class of $0$-Auslander categories embedded in triangulated categories, we construct bivariant $\delta$-functors $\EbbE^{\bullet}$ satisfying property (C).
\end{itemize}

%\Yann{I like the clarification using (U), (C) and (V).}
In fact, under \cref{assumption: hered_enough}, property (C) defines the bifunctor $\Ebb^{-2}$ uniquely up to natural isomorphism and implies that 
\[E^{-m} = 0, \quad \forall m \ge 3,\] as will be explained in the next subsection. The nontrivial problems are then to find $\Ebb^{-1}$,  $\del\ssh$, and $\del\ush$ completing it to a $\delta$-functor, and to check if the result is universal.

\subsection{Definition of $\Ebb^{-2}$}

In \cite{GNP1}, for each $X, Y \in \Csc$ and each choice of $\sfr$-triangles (\ref{sTri_Chosen_proj}) and (\ref{sTri_Chosen_inj}), we studied the following two complexes:
\[ C_{\Irm}^{\bullet}\colon\quad \EbbI^{-1}(J_X,Y)\ov{j\uas}{\lra}\EbbI^{-1}(I_X,Y)\ov{i\uas}{\lra}\EbbI^{-1}(X,Y)\ov{\iota\ush\circ\om\ssh}{\lra}\Ebb(J_X,Q_Y) \]
and
\[ C_{\IIrm}^{\bullet}\colon\quad \EbbII^{-1}(X,Q_Y)\ov{q\sas}{\lra}\EbbII^{-1}(X,P_Y)\ov{p\sas}{\lra}\EbbII^{-1}(X,Y)\ov{\om\ssh\circ\iota\ush}{\lra}\Ebb(J_X,Q_Y). \]

We proved the following.

\begin{fact}\label{PropComparisonIandII}
\begin{itemize}
\item[(i)] \cite[Lemma 5.26]{GNP1} The rightmost morphisms in $C_{\Irm}^{\bullet}$ and $C_{\IIrm}^{\bullet}$, i.e. $\iota_X\ush\circ \om_{Y\sharp}$ and $\om_{Y\sharp}\circ\iota_X\ush$, have the same image.
\item[(ii)] \cite[Proposition 5.28]{GNP1} For any $X, Y \in \Csc$, the cohomologies of the complexes $C_{\Irm}^{\bullet}$ and $C_{\IIrm}^{\bullet}$ are isomorphic. 
\end{itemize}
\end{fact}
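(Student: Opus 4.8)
\textbf{Plan for proving \cref{PropComparisonIandII}.} The final statement to prove consists of two parts about the complexes $C_{\Irm}^{\bullet}$ and $C_{\IIrm}^{\bullet}$ attached to a pair $X,Y\in\Csc$ and fixed $\sfr$-triangles \eqref{sTri_Chosen_proj} for $Y$ and \eqref{sTri_Chosen_inj} for $X$. Part (i) compares the images of the two rightmost maps $\iota_X\ush\circ\om_{Y\sharp}$ and $\om_{Y\sharp}\circ\iota_X\ush$, and part (ii) upgrades this to an isomorphism of the cohomologies of the two three-term-ish complexes at every spot. Throughout I will use freely that $\CEs$ is hereditary (\cref{hered_equiv}), so $\Ebb^2=0$, $Q_Y$ is projective, $J_X$ is injective, and $\EbbI^{-m}=\EbbII^{-m}=0$ for $m\geq 2$; in particular the long exact sequences \eqref{les:1} and \eqref{les:2} truncate, and the recursive descriptions $\EbbI^{-1}(-,Y)\cong\Ker((q_Y)_{*}\colon \EbbI^{-1}(-,Q_Y)\to\EbbI^{-1}(-,P_Y))$ and its dual are available.

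\emph{Part (i).} The two maps in question factor through $\Ebb(J_X,Q_Y)$ and are built from the same four pieces: the connecting morphism $\om_{Y\sharp}\colon\Csc(-,Q_Y)\!\supseteq\!\EbbI^{-1}(-,Y)\to\Ebb(-,Y)$-type data and $\iota_X\ush$. The cleanest route is to unwind both composites into maps out of $\EbbI^{-1}(X,Y)$ (resp.\ $\EbbII^{-1}(X,Y)$, which agree with the same subquotient of $\Csc(J_X,Q_Y)$ after one uses the alternative recursive descriptions from \cite[Remark 5.23]{GNP1}). Concretely, an element of $\EbbI^{-1}(X,Y)$ is represented by a morphism $f\in\Csc(Q_X',Q_Y)$ for a suitable projective presentation of $X$ that maps to $0$ in $\EbbI^{-1}(-,P_Y)$; pushing it forward along $\om_Y\in\Ebb(Y,Q_Y)$ and then pulling back along $\iota_X$, versus the reverse order, both land in $\Ebb(J_X,Q_Y)$ and are equal as subsets of the image because $\Ebb$ is a bifunctor and the relevant square of $\Csc$-morphisms commutes. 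So the content of (i) is really the bifunctoriality identity $(\iota_X)\ush\circ(\om_Y)\ssh=(\om_Y)\ssh\circ(\iota_X)\ush$ on the level of images, which follows once one has identified the two source modules with the same space; this is exactly \cite[Lemma 5.26]{GNP1}, so the plan is to reproduce that identification in the present streamlined generality.

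\emph{Part (ii).} Here the strategy is a zig-zag / spectral-sequence-free diagram chase. Form the double complex (or just a commuting grid) whose rows and columns are the truncated long exact sequences \eqref{les:1} and \eqref{les:2} applied to \eqref{sTri_Chosen_proj} and \eqref{sTri_Chosen_inj} with $\EbbI^{\bullet}$, $\EbbII^{\bullet}$ and their mixed terms $\Ebb(J_X,Q_Y)$ in the corner. Because $\CEs$ is hereditary, all the rows and columns terminate quickly, and the only surviving cohomology sits in a narrow band; the two complexes $C_{\Irm}^{\bullet}$ and $C_{\IIrm}^{\bullet}$ are precisely the two "edge" totalizations of this grid. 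A standard chase — extract a cocycle in $C_{\Irm}^{\bullet}$, lift it across one face of the grid using exactness of a row, correct by a coboundary, push down to $C_{\IIrm}^{\bullet}$ — produces mutually inverse isomorphisms on cohomology. Part (i) is the degree-zero ($=$ rightmost) case of this chase, which is why it is isolated first. The main obstacle I anticipate is bookkeeping: ensuring the identifications of $\EbbI^{-1}(I_X,Y)$, $\EbbI^{-1}(J_X,Y)$ with the corresponding $\EbbII$-terms via the dualities $(-)^\vee$ are done coherently with the connecting morphisms $\del\ssh$, $\del\ush$, so that the "grid" genuinely commutes (up to signs that must be tracked) rather than merely commuting on nose in low degrees. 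Once the grid commutes, the chase itself is routine and forces $E^{-m}=0$ for $m\geq 3$ as a byproduct. Accordingly, I would spend the bulk of the write-up setting up the commuting grid carefully and then invoke the general homological lemma (or quote \cite[Proposition 5.28]{GNP1} directly) for the formal conclusion.
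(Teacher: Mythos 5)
The paper gives no proof of this statement: it is recorded as a \emph{Fact} imported verbatim from \cite[Lemma 5.26]{GNP1} and \cite[Proposition 5.28]{GNP1}, so there is no in-paper argument to match your plan against, and simply citing those results (as you do at the end of each part) is exactly what the paper does. The problem is that the independent sketch you wrap around those citations contains a genuine error in part (i). The two composites $\iota_X\ush\circ\om_{Y\sharp}$ and $\om_{Y\sharp}\circ\iota_X\ush$ have \emph{different domains}, namely $\EbbI^{-1}(X,Y)\subseteq\Csc(X,Q_Y)$ and $\EbbII^{-1}(X,Y)\subseteq\Csc(J_X,Y)$, and these $R$-modules cannot be ``identified with the same space'': the whole point of this section (see the conditions (NI), (NII) and \cref{ex: A_2}) is that $\EbbI^{-1}$ and $\EbbII^{-1}$ differ in general, even in dimension. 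So the statement is emphatically \emph{not} an instance of the bifunctoriality identity $(\iota_X)\ush\circ(\om_Y)\ssh=(\om_Y)\ssh\circ(\iota_X)\ush$ on a common source; it is the nontrivial assertion that two a priori unrelated subgroups of $\Ebb(J_X,Q_Y)$ coincide. Proving it requires chasing the long exact sequences \eqref{les:1}, \eqref{les:2} for the chosen $\sfr$-triangles and using projectivity of $P_Y$ and injectivity of $I_X$ to adjust representatives (e.g.\ showing both images equal the classes killed by both $j_X\uas$ and $q_{Y\ast}$); none of that is present. Your description of elements of $\EbbI^{-1}(X,Y)$ via a projective presentation of $X$ also resolves the wrong argument: the covariant derived functor $\EbbI^{\bullet}$ is computed from the resolution \eqref{sTri_Chosen_proj} of $Y$.

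For part (ii) the ``commuting grid'' picture is the right general shape — compare diagram \eqref{Comm_Added}, where the leftmost cohomologies of $C_{\Irm}^{\bullet}$ and $C_{\IIrm}^{\bullet}$ are both identified with the fiber product over $\Csc(J_X,Q_Y)$ that defines $\Ebb^{-2}(X,Y)$ — but as written it is only a plan: the compatibility of the dualities $(-)^{\vee}$ with the connecting morphisms, which you correctly flag as the main obstacle, is precisely the content of \cite[Proposition 5.28]{GNP1}, and deferring it means no independent proof has been given. In short: citing \cite{GNP1} is fine and matches the paper; the surrounding sketch, taken on its own terms, would not go through because of the misidentification of the two source modules in (i).
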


Note that \cref{PropComparisonIandII}.(ii) implies, in particular, that the morphisms $\EbbI^{-1}(J_X,Y)\ov{j_X\uas}{\lra}\EbbI^{-1}(I_X,Y)$ and $\EbbII^{-1}(X,Q_Y)\ov{q_{Y\ast}}{\lra}\EbbII^{-1}(X,P_Y)$ have isomorphic kernels. These kernels define the bifunctor $\Ebb^{-2}$. More precisely, by taking kernels in the commutative square
\[
\xy
(-14,6)*+{\Csc(J_X,Q_Y)}="0";
(14,6)*+{\Csc(J_X,P_Y)}="2";
(-14,-6)*+{\Csc(I_X,Q_Y)}="4";
(14,-6)*+{\Csc(I_X,P_Y)}="6";
{\ar^{q_{Y\ast}} "0";"2"};
{\ar_{j_X\uas} "0";"4"};
{\ar^{j_X\uas} "2";"6"};
{\ar_{q_{Y\ast}} "4";"6"};
{\ar@{}|\circlearrowright "0";"6"};
\endxy,
\]
we can define $\Ebb^{-2}(X,Y)\in\Mod R$, $\eta^{\Irm}_{X,Y}\in(\Mod R)(\Ebb^{-2}(X,Y),\EbbI^{-1}(J_X,Y))$ and 
\newline $\eta^{\IIrm}_{X,Y}\in(\Mod R)(\Ebb^{-2}(X,Y),\EbbII^{-1}(X,Q_Y))$ so that they fit in the following commutative diagram 
\begin{equation}\label{Comm_Added}
\xy
(-24,16)*+{0}="2";
(4,16)*+{0}="4";
(32,16)*+{0}="6";
(-42,6)*+{0}="10";
(-24,6)*+{\Ebb^{-2}(X,Y)}="12";
(4,6)*+{\EbbII^{-1}(X,Q_Y)}="14";
(32,6)*+{\EbbII^{-1}(X,P_Y)}="16";
(-42,-6)*+{0}="20";
(-24,-6)*+{\EbbI^{-1}(J_X,Y)}="22";
(4,-6)*+{\Csc(J_X,Q_Y)}="24";
(32,-6)*+{\Csc(J_X,P_Y)}="26";
(-42,-18)*+{0}="30";
(-24,-18)*+{\EbbI^{-1}(I_X,Y)}="32";
(4,-18)*+{\Csc(I_X,Q_Y)}="34";
(32,-18)*+{\Csc(I_X,P_Y)}="36";
{\ar_{} "2";"12"};
{\ar_{} "4";"14"};
{\ar_{} "6";"16"};
{\ar_{} "10";"12"};
{\ar^{\eta^{\IIrm}_{X,Y}} "12";"14"};
{\ar^{q_{Y\ast}} "14";"16"};
{\ar_{\eta^{\Irm}_{X,Y}} "12";"22"};
{\ar_{\iota_X\ush} "14";"24"};
{\ar^{\iota_X\ush} "16";"26"};
{\ar_{} "20";"22"};
{\ar_{\om_{Y\sharp}} "22";"24"};
{\ar_{q_{Y\ast}} "24";"26"};
{\ar_{j_X\uas} "22";"32"};
{\ar_{j_X\uas} "24";"34"};
{\ar^{j_X\uas} "26";"36"};
{\ar_{} "30";"32"};
{\ar_{\om_{Y\sharp}} "32";"34"};
{\ar_{q_{Y\ast}} "34";"36"};
{\ar@{}|\circlearrowright "12";"24"};
{\ar@{}|\circlearrowright "14";"26"};
{\ar@{}|\circlearrowright "22";"34"};
{\ar@{}|\circlearrowright "24";"36"};
\endxy
\end{equation}
whose rows and columns are exact.

\begin{lemma}  \label{LemUniv_heredII}
Take any $s\in\Csc(J_{X\ppr},J_X)$ and $t\in\Csc(Q_Y,Q_{Y\ppr})$ satisfying $x\sas\iota_{X\ppr}=s\uas\iota_X$ and $y\uas\om_{Y\ppr}=t\sas\om_Y$, which exist since $I_X\in\Csc$ is injective and $P_Y\in\Csc$ is projective. Then there exists a unique morphism $\Ebb^{-2}(x,y)\colon\Ebb^{-2}(X,Y)\to\Ebb^{-2}(X\ppr,Y\ppr)$ which makes
\begin{equation}\label{CommI_II}
\xy
(-16,6)*+{\Ebb^{-2}(X,Y)}="0";
(16,6)*+{\EbbI^{-1}(J_X,Y)}="2";
(-16,-6)*+{\Ebb^{-2}(X\ppr,Y\ppr)}="4";
(16,-6)*+{\EbbI^{-1}(J_{X\ppr},Y\ppr)}="6";
{\ar^{\eta^{\Irm}_{X,Y}} "0";"2"};
{\ar_{\Ebb^{-2}(x,y)} "0";"4"};
{\ar^{s\uas y\sas} "2";"6"};
{\ar_{\eta^{\Irm}_{X\ppr,Y\ppr}} "4";"6"};
{\ar@{}|\circlearrowright "0";"6"};
\endxy
\quad\text{and}\quad
\xy
(-16,6)*+{\Ebb^{-2}(X,Y)}="0";
(16,6)*+{\EbbII^{-1}(X,Q_Y)}="2";
(-16,-6)*+{\Ebb^{-2}(X\ppr,Y\ppr)}="4";
(16,-6)*+{\EbbII^{-1}(X\ppr,Q_{Y\ppr})}="6";
{\ar^{\eta^{\IIrm}_{X,Y}} "0";"2"};
{\ar_{\Ebb^{-2}(x,y)} "0";"4"};
{\ar^{x\uas t\sas} "2";"6"};
{\ar_{\eta^{\IIrm}_{X\ppr,Y\ppr}} "4";"6"};
{\ar@{}|\circlearrowright "0";"6"};
\endxy
\end{equation}
commutative simultaneously. 
Remark that either of these commutativity conditions determines $\Ebb^{-2}(x,y)$ uniquely. Since the left square does not involve $t$ while the right one does not involve $s$, this in particular implies that $\Ebb^{-2}(x,y)$ does not depend on the choices of $s$ and $t$.
\end{lemma}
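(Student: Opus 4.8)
Diagram~\eqref{Comm_Added} exhibits $\Ebb^{-2}(X,Y)$ as a kernel in $\Mod R$: composing $\eta^{\Irm}_{X,Y}$ with the monomorphism $\om_{Y\sharp}$, and $\eta^{\IIrm}_{X,Y}$ with the monomorphism $\iota_X\ush$, identifies $\Ebb^{-2}(X,Y)$ with the intersection, inside $\Csc(J_X,Q_Y)$, of the submodules $\EbbI^{-1}(J_X,Y)=\{f\in\Csc(J_X,Q_Y)\mid q_Y\ci f=0\}$ and $\EbbII^{-1}(X,Q_Y)=\{f\in\Csc(J_X,Q_Y)\mid f\ci j_X=0\}$; equivalently $\Ebb^{-2}(X,Y)=\Ker\big(\Csc(J_X,Q_Y)\to\Csc(J_X,P_Y)\oplus\Csc(I_X,Q_Y)\big)$, where the map is $(q_Y)\sas$ in the first factor and $(j_X)\uas$ in the second. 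So the plan is to produce, from the data $x,y,s,t$, a morphism $\Csc(J_X,Q_Y)\to\Csc(J_{X\ppr},Q_{Y\ppr})$ carrying this kernel into the analogous one for $(X\ppr,Y\ppr)$ and compatible with the functorial actions on $\EbbI^{-1}$ and on $\EbbII^{-1}$; the universal property of kernels then supplies $\Ebb^{-2}(x,y)$ and makes both squares of~\eqref{CommI_II} commute, and since $\eta^{\Irm}_{X\ppr,Y\ppr}$ and $\eta^{\IIrm}_{X\ppr,Y\ppr}$ are monomorphisms, either square alone already forces uniqueness of $\Ebb^{-2}(x,y)$.

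\textbf{The candidate and the only real computation.}
By the standard properties of morphisms of $\sfr$-triangles, the compatibility $x\sas\iota_{X\ppr}=s\uas\iota_X$ says that $(x,\ovl{s},s)$ extends to a morphism of the $\sfr$-triangles~\eqref{sTri_Chosen_inj} for $X\ppr$ and $X$, for some $\ovl{s}\in\Csc(I_{X\ppr},I_X)$ with $j_X\ci\ovl{s}=s\ci j_{X\ppr}$; likewise $y\uas\om_{Y\ppr}=t\sas\om_Y$ says that $(y,\ovl{t},t)$ extends to a morphism of~\eqref{sTri_Chosen_proj} for $Y$ and $Y\ppr$, for some $\ovl{t}\in\Csc(P_Y,P_{Y\ppr})$ with $q_{Y\ppr}\ci t=\ovl{t}\ci q_Y$ (such $\ovl{s},\ovl{t}$ exist because $I_X$ is injective and $P_Y$ is projective). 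I would then take the candidate to be $\Csc(s,t)\colon g\mapsto t\ci g\ci s$. The one genuine verification is that this maps $\Ebb^{-2}(X,Y)$ into $\Ebb^{-2}(X\ppr,Y\ppr)$: if $g\ci j_X=0$ and $q_Y\ci g=0$, then $(t\ci g\ci s)\ci j_{X\ppr}=t\ci g\ci j_X\ci\ovl{s}=0$ and $q_{Y\ppr}\ci(t\ci g\ci s)=\ovl{t}\ci q_Y\ci g\ci s=0$; this short chase is exactly where the two compatibility hypotheses are used.

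\textbf{Squares, uniqueness, and independence.}
Unwinding the constructions of~\cite{GNP1}, after the identifications of the first paragraph the functorial action $s\uas y\sas$ on $\EbbI^{-1}$ and the functorial action $x\uas t\sas$ on $\EbbII^{-1}$ are \emph{both} given on the ambient $\Csc$-spaces by $g\mapsto t\ci g\ci s$ (computed with $t$, respectively $s$, as the chosen comparison morphism); hence, defining $\Ebb^{-2}(x,y)$ as the restriction of $\Csc(s,t)$, both squares of~\eqref{CommI_II} commute after composing with the monomorphisms $\om_{Y\ppr\sharp}$, respectively $\iota_{X\ppr}\ush$, and therefore commute outright. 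For the last assertion, recall that $\EbbI^{-1}$ and $\EbbII^{-1}$ are functorial, so $s\uas y\sas$ does not depend on the choice of $t$ and $x\uas t\sas$ does not depend on the choice of $s$; since the left square of~\eqref{CommI_II} (which involves $s$ but not $t$) and the right square (which involves $t$ but not $s$) each determine $\Ebb^{-2}(x,y)$ uniquely, the morphism is independent of both auxiliary choices. I expect the bulk of the work to be purely organizational — carefully unwinding the definitions of $s\uas y\sas$ and $x\uas t\sas$ from~\cite{GNP1} and confirming they restrict as claimed — the only substantive step being the diagram chase above, which presents no real difficulty once the variances are kept straight.
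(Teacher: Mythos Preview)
Your approach is correct and essentially the same as the paper's, just more explicitly unwound. The paper invokes kernel universality abstractly: it obtains $\Ebb^{-2}(x,y)$ from the left square alone (checking that $s\uas y\sas\circ\eta^{\Irm}_{X,Y}$ lands in $\Ker j_{X\ppr}\uas$ via the relation $j_X\circ\ovl{s}=s\circ j_{X\ppr}$), then verifies the right square by post-composing with the monomorphism $\iota_{X\ppr}\ush$ and using the commutativity of the top-left square of~\eqref{Comm_Added} for $(X\ppr,Y\ppr)$; you instead identify $\Ebb^{-2}(X,Y)$ concretely inside $\Csc(J_X,Q_Y)$ as the double kernel and work directly with $\Csc(s,t)$. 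The key verification you flag---that $s\uas y\sas$ and $x\uas t\sas$ both become $g\mapsto t\circ g\circ s$ under the embeddings $\om_{Y\sharp}$ and $\iota_X\ush$---follows from naturality of the connecting morphisms of $(\EbbI^{\bullet},\del\ssh)$ and $(\EbbII^{\bullet},\del\ush)$ with respect to the morphisms of extensions $(t,y)\colon\om_Y\to\om_{Y\ppr}$ and $(x,s)\colon\iota_{X\ppr}\to\iota_X$, so it is indeed routine once stated.
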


\begin{proof}
 By the universality of kernel, there exists a unique morphism $\Ebb^{-2}(x,y)$ which makes the left square of (\ref{CommI_II}) commutative. It is not difficult to show that this morphism makes the right diagram also commutative, by using the commutativity of (\ref{Comm_Added}) for $X\ppr,Y\ppr$ and the monomorphicity of morphisms appearing in the left-upper square in it.
\end{proof}

\begin{proposition}
$\Ebb^{-2}\colon\cat\op\ti\Csc\to\Mod R$ is an $R$-bilinear functor. Moreover, $\Ebb^{-2}$ is uniquely determined up to natural isomorphism, independently of the choices $(\ref{sTri_Chosen_proj})$ and $(\ref{sTri_Chosen_inj})$.
\end{proposition}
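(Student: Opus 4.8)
The plan is to verify the two assertions in turn, using the construction already set up in diagram~(\ref{Comm_Added}) and the functoriality morphism produced in \cref{LemUniv_heredII}.

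First I would check that $\Ebb^{-2}$ is an $R$-bilinear functor. Since $\Ebb^{-2}(X,Y)$ is defined as the kernel of $q_{Y\ast}\colon\EbbII^{-1}(X,Q_Y)\to\EbbII^{-1}(X,P_Y)$ (equivalently, of $j_X\uas\colon\EbbI^{-1}(J_X,Y)\to\EbbI^{-1}(I_X,Y)$), it is automatically an $R$-module, $R$-bilinearity of the action being inherited from that on $\EbbII^{-1}$. For functoriality, given morphisms $x\colon X\ppr\to X$ and $y\colon Y\to Y\ppr$, \cref{LemUniv_heredII} produces $\Ebb^{-2}(x,y)$ and records that it is independent of the auxiliary choices of $s$ and $t$. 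It remains to check that $\Ebb^{-2}(\id_X,\id_Y)=\id$ and that $\Ebb^{-2}(x\ppr x, y y\ppr)=\Ebb^{-2}(x,y)\circ\Ebb^{-2}(x\ppr,y\ppr)$ for composable morphisms. Both follow from the uniqueness clause in \cref{LemUniv_heredII}: the identity map on $\Ebb^{-2}(X,Y)$ satisfies the defining commutativity of~(\ref{CommI_II}) when one takes $s=\id_{J_X}$, $t=\id_{Q_Y}$ (which is legitimate since $\EbbI^{-1}(-,-)$ and $\EbbII^{-1}(-,-)$ are themselves bifunctors), hence equals $\Ebb^{-2}(\id,\id)$; and the composite $\Ebb^{-2}(x,y)\circ\Ebb^{-2}(x\ppr,y\ppr)$ satisfies the commutativity characterizing $\Ebb^{-2}(x\ppr x, yy\ppr)$ by pasting the two squares of~(\ref{CommI_II}) and using the bifunctoriality of $\EbbI^{-1}$ and $\EbbII^{-1}$, so uniqueness forces equality. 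Finally, additivity in each variable follows because $\eta^{\Irm}_{X,Y}$ is a monomorphism (top-left of~(\ref{Comm_Added})) and $s\uas y\sas$ is $R$-linear in $(x,y)$, so the unique lift $\Ebb^{-2}(x,y)$ depends $R$-linearly on $(x,y)$ as well.

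Next I would address the independence of $\Ebb^{-2}$, up to canonical natural isomorphism, from the choices of $\sfr$-triangles~(\ref{sTri_Chosen_proj}) and~(\ref{sTri_Chosen_inj}). This is where I expect the main (though still routine) work to lie. Suppose $(\ref{sTri_Chosen_proj})'$ and $(\ref{sTri_Chosen_inj})'$ are alternative choices, with associated objects $Q_Y', P_Y', I_X', J_X'$ and connecting data. Since $P_Y, P_Y'$ are both projective and cover $Y$, and $I_X, I_X'$ both inject $X$, there are comparison morphisms between the two resolutions, which by heredity extend to morphisms of $\sfr$-triangles; these in turn induce, via the functoriality of $\EbbI^{-1}$ in its first variable and $\EbbII^{-1}$ in its second, morphisms between the corresponding kernels, i.e. between $\Ebb^{-2}(X,Y)$ computed with the two choices. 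Invoking \cref{LemUniv_heredII}-style uniqueness (applied now to the comparison maps in place of $s,t$, together with their chosen homotopy inverses) shows these are mutually inverse isomorphisms and are independent of the comparison morphisms chosen. Naturality in $X$ and $Y$ follows again by the same uniqueness argument, since any two candidate natural transformations agreeing after composition with the monomorphism $\eta^{\Irm}$ must coincide. This pins down $\Ebb^{-2}$ uniquely up to a unique natural isomorphism. The only genuine subtlety is bookkeeping: one must keep track of the fact that the comparison morphisms between resolutions are themselves only unique up to the relevant homotopy, but because $\Ebb^{-2}$ is defined as a kernel of a map between $\EbbI^{-1}$'s (resp. $\EbbII^{-1}$'s) and those functors already quotient out the ambiguity, the induced map on kernels is honestly well-defined; this is exactly the role played by the lower portion of diagram~(\ref{Comm_Added}).
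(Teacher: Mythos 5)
Your proposal is correct and follows essentially the same route as the paper, which simply invokes \cref{LemUniv_heredII} and declares the remaining verification (identities, composites, independence of the chosen resolutions via comparison morphisms and the uniqueness clause) to be "the usual, straightforward argument"; you have merely spelled out those routine details. The only cosmetic quibble is that lifting comparison maps to morphisms of $\sfr$-triangles uses (ET3)/projectivity rather than heredity, but this does not affect the argument.
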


\begin{proof}
This follows from \cref{LemUniv_heredII} by the usual, straightforward argument.
\end{proof}

\begin{remark}  \label{rem:E^-2_as_fiber_product}
Note that the top left commutative square in the diagram (\ref{Comm_Added}) %together with the functoriality of $\Ebb^{-2}$ 
means that we have
\begin{equation} \label{eq:E^-2_as_fiber_product}
\Ebb^{-2}(X, Y) = \EbbI^{-1}(J_X, Y) \times_{\Ccal(J_X, Q_Y)} \EbbII^{-1}(X, Q_Y).
\end{equation}
\end{remark}

\begin{proposition}
Assume that $\CEs$ satisfies \cref{assumption: hered_enough}. Let $(E^{\bullet}, \del\ssh,\del\ush)$ be a $\delta$-functor having $E^n=\Ebb^n$ with the canonical connecting morphisms for $n\ge0$. If it satisfies property (C), then  $E^{-2}$ is naturally isomorphic to $\Ebb^{-2}$ and 
\[E^{-m} = 0, \quad \forall m \ge 3.\]
 %\Misha{Better say ``naturally isomorphic'' than ``=''?..}Yann{Agreed}
\end{proposition}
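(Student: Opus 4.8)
The strategy is to exploit the universality of the one-sided $\delta$-functors $(\EbbI^{\bullet},\del\ssh)$ and $(\EbbII^{\bullet},\del\ush)$ together with the fiber-product description of $\Ebb^{-2}$ recorded in \cref{rem:E^-2_as_fiber_product}. Fix $X,Y\in\Csc$. Since $(E^{\bullet},\del\ssh,\del\ush)$ is in particular a covariant $\delta$-functor having $E^n=\Ebb^n$ with the canonical connecting morphisms for $n\ge 0$, universality of $(\EbbII^{\bullet},\del\ush)$ among contravariant $\delta$-functors (in the first variable) yields a natural transformation $E^{-n}(X,-)\to\EbbII^{-n}(X,-)$ compatible with connecting morphisms; dually, universality of $(\EbbI^{\bullet},\del\ssh)$ in the second variable gives $E^{-n}(-,Y)\to\EbbI^{-n}(-,Y)$. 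First I would feed these into the long exact sequences attached to the chosen dominant $\sfr$-triangles \eqref{sTri_Chosen_proj} and \eqref{sTri_Chosen_inj}. Applying $E^{\bullet}$ to \eqref{sTri_Chosen_proj} in the second variable and to \eqref{sTri_Chosen_inj} in the first variable, and using that $E^{-m}(P,-)$, resp.\ $E^{-m}(-,I)$, are controlled by property (C) (they equal $\EbbII^{-m}(-,P)=0$, resp.\ $\EbbI^{-m}(I,-)=0$, for $m\ge 2$ since $\Csc$ has global dimension $\le 1$), one first gets $E^{-m}=0$ for all $m\ge 3$ by downward induction: the relevant three-term pieces of the long exact sequence are squeezed between two vanishing terms.

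Next, for $E^{-2}$, the same two long exact sequences give natural identifications
\[
E^{-2}(X,Y)\;\cong\;\Ker\big(E^{-1}(J_X,Y)\xrightarrow{j_X^{\ast}}E^{-1}(I_X,Y)\big)
\]
(from \eqref{sTri_Chosen_inj} in the first variable, using $E^{-2}(I_X,-)=0$ on one side) and similarly
\[
E^{-2}(X,Y)\;\cong\;\Ker\big(E^{-1}(X,Q_Y)\xrightarrow{q_{Y\ast}}E^{-1}(X,P_Y)\big).
\]
Now property (C) says $E^{-1}(J_X,Y)\cong\EbbI^{-1}(J_X,Y)$ (as $J_X$ is injective) and $E^{-1}(X,Q_Y)\cong\EbbII^{-1}(X,Q_Y)$ (as $Q_Y$ is projective), and these isomorphisms are natural and compatible with the maps to $\Csc(J_X,Q_Y)$ appearing in diagram \eqref{Comm_Added} — because both $E^{-1}$ and $\EbbI^{-1}$, $\EbbII^{-1}$ carry the canonical connecting morphism into $\Ebb$. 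Thus the two kernels realizing $E^{-2}(X,Y)$ map isomorphically onto the two kernels $\EbbI^{-1}(J_X,Y)$ and $\EbbII^{-1}(X,Q_Y)$ that bound the fiber product \eqref{eq:E^-2_as_fiber_product}; chasing diagram \eqref{Comm_Added} identifies $E^{-2}(X,Y)$ with $\EbbI^{-1}(J_X,Y)\times_{\Csc(J_X,Q_Y)}\EbbII^{-1}(X,Q_Y)=\Ebb^{-2}(X,Y)$. Naturality in $(X,Y)$ follows from \cref{LemUniv_heredII} and the naturality of all the transformations involved, so the isomorphism $E^{-2}\cong\Ebb^{-2}$ is an isomorphism of bifunctors.

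I expect the main obstacle to be bookkeeping rather than conceptual: one must check that the two natural transformations $E^{-1}(J_X,Y)\to\EbbI^{-1}(J_X,Y)$ and $E^{-1}(X,Q_Y)\to\EbbII^{-1}(X,Q_Y)$ coming from property (C) are genuinely \emph{compatible} with the morphisms $\om_{Y\sharp}$ and $\iota_X^{\sharp}$ landing in $\Csc(J_X,Q_Y)$, i.e.\ that the relevant squares commute and hence that the induced map of fiber products is an isomorphism. This amounts to unwinding how the connecting morphisms of $E^{\bullet}$, $\EbbI^{\bullet}$, $\EbbII^{\bullet}$ into $\Ebb^1$ are all forced to coincide — which is where property (C) ("with the canonical connecting morphisms") does the real work. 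A minor additional point is to ensure that the two kernel descriptions of $E^{-2}(X,Y)$ are themselves identified compatibly, which is exactly the content of \cref{PropComparisonIandII}(i)–(ii) transported through the (C)-isomorphisms; since those are already available in the excerpt, the argument closes without further input.
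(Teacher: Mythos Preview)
Your approach is correct and is essentially the paper's own argument, but you are doing more work than necessary. The paper's proof is a single sentence: apply the long exact sequence of the contravariant $\delta$-functor $(E^{\bullet},\del\ush)$ to the $\sfr$-triangle \eqref{sTri_Chosen_inj}. Since $I_X$ and $J_X$ are injective, property~(C) gives $E^{-m}(I_X,Y)\cong\EbbI^{-m}(I_X,Y)$ and $E^{-m}(J_X,Y)\cong\EbbI^{-m}(J_X,Y)$, both of which vanish for $m\ge 2$; this already forces $E^{-m}(X,Y)=0$ for $m\ge 3$ and identifies $E^{-2}(X,Y)$ with $\Ker\big(\EbbI^{-1}(J_X,Y)\xrightarrow{j_X^{\ast}}\EbbI^{-1}(I_X,Y)\big)$. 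But that kernel \emph{is} $\Ebb^{-2}(X,Y)$ by its very definition in diagram~\eqref{Comm_Added}. No passage through the fiber-product description \eqref{eq:E^-2_as_fiber_product} is required, nor do you need to match up the two kernel presentations via \cref{PropComparisonIandII}.

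Two further remarks. First, a notational slip: in your vanishing step you write ``$E^{-m}(P,-)$, resp.\ $E^{-m}(-,I)$'' when you mean $E^{-m}(-,P)$ and $E^{-m}(I,-)$; property~(C) controls the latter, not the former. Second, your opening appeal to the universality of $\EbbI^{\bullet}$ and $\EbbII^{\bullet}$ is not wrong, and it does give a clean way to see that the isomorphisms in~(C) can be taken to be natural in both arguments and compatible with $j_X^{\ast}$ --- a point the paper takes for granted. But once that naturality is accepted, the argument is over: the extra paragraph on the fiber product and compatibility with $\om_{Y\sharp}$, $\iota_X^{\sharp}$ never gets used.
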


\begin{proof}
This follows immediately from the long exact sequences for $E^n$ applied to $\sfr$-triangles of the form (\ref{sTri_Chosen_proj}) for the first argument or, equivalently, to $\sfr$-triangles of the form (\ref{sTri_Chosen_inj}) for the second argument. Indeed, they show that $E^{-2}$ agrees with the kernel of $\EbbI^{-1}(J_X,Y)\ov{j_X\uas}{\lra}\EbbI^{-1}(I_X,Y)$ and with the kernel $\EbbII^{-1}(X,Q_Y)\ov{q_{Y\ast}}{\lra}\EbbII^{-1}(X,P_Y)$, and that $E^{-n} = 0$ for $n \ge 3$.
\end{proof}

\subsection{Negative extensions via pullbacks}
\label{ssection:universal}

In this subsection, we complete $\Ebb^{-2}$ to a bivariant connected sequence of functors in an extriangulated category $\CEs$ satisfying \cref{assumption: hered_enough}.
Under certain condition, this sequence is a $\delta$-functor having property (U), so its components give the (universal) balanced negative extensions in $\CEs$ (see \cref{PropEquivAcycBiProl} below).

%\begin{proposition}\label{PropComparisonIandII}
%For any $X, Y \in \Csc$, the cohomologies of the complexes
%\[ C_{\Irm}^{\bullet}\colon\quad \EbbI^{-1}(J_X,Y)\ov{j\uas}{\lra}\EbbI^{-1}(I_X,Y)\ov{i\uas}{\lra}\EbbI^{-1}(X,Y)\ov{\iota\ush\circ\om\ssh}{\lra}\Ebb(J_X,Q_Y) \]
%and
%\[ C_{\IIrm}^{\bullet}\colon\quad \EbbII^{-1}(X,Q_Y)\ov{q\sas}{\lra}\EbbII^{-1}(X,P_Y)\ov{p\sas}{\lra}\EbbII^{-1}(X,Y)\ov{\om\ssh\circ\iota\ush}{\lra}\Ebb(J_X,Q_Y) \]
%are isomorphic.
%\end{proposition}

%Throughout this subsection, we assume that $\CEs$ has enough projective objects and enough injective objects. We moreover assume that each object $X\in\Csc$ is assigned a chosen pair of $\sfr$-triangles
%\begin{equation}\label{sTri_Chosen}
%Q_X\ov{q_X}{\lra}P_X\ov{p_X}{\lra}X\ov{\om_X}{\dra},\quad
%X\ov{i_X}{\lra}I_X\ov{j_X}{\lra}J_X\ov{\iota_X}{\dra}
%\end{equation}
%in which  $P_X\in\Csc$ is projective and $I_X\in\Csc$ is injective. Remark that since $\CEs$ is hereditary, $Q_X$ is projective and $J_X$ is injective.

\begin{definition}\label{DefUniv_hered}
Let $X,Y\in\Csc$ be any pair of objects. Using the complexes $C_{\Irm}^{\bullet},C_{\mathrm{I\! I}}^{\bullet}$ in \cref{PropComparisonIandII}, we define %as follows.
%\begin{enumerate}
%\item Define 
$\EbbPB^{-1}(X,Y)\in\Mod R$ and $\wp^{\suf}_{X,Y}\in(\Mod R)(\EbbPB^{-1}(X,Y),\Ebb^{-1}_{\suf}(X,Y))$ $(\sufe)$ by a pullback diagram taken as below.
\[
\xy
(-14,6)*+{\EbbPB^{-1}(X,Y)}="0";
(14,6)*+{\EbbI^{-1}(X,Y)}="2";
(-14,-6)*+{\EbbII^{-1}(X,Y)}="4";
(14,-6)*+{\Ebb(J_X,Q_Y)}="6";
{\ar^{\wp^{\Irm}_{X,Y}} "0";"2"};
{\ar_{\wp^{\IIrm}_{X,Y}} "0";"4"};
{\ar^{\iota_X\ush\circ \om_{Y\sharp}} "2";"6"};
{\ar_{\om_{Y\sharp}\circ\iota_X\ush} "4";"6"};
{\ar@{}|\circlearrowright "0";"6"};
\endxy
\]
Remark that by \cref{PropComparisonIandII}.(i),  $\iota_X\ush\circ \om_{Y\sharp}$ and $\om_{Y\sharp}\circ\iota_X\ush$ appearing in the diagram have the same image, hence $\wp^{\suf}_{X,Y}$ is surjective for $\sufe$.
\end{definition}

\begin{lemma}\label{LemUniv_heredI}
Let $x\in\Csc(X\ppr,X)$ and $y\in\Csc(Y,Y\ppr)$ be any pair of morphisms. %The following holds.
%\begin{enumerate}
%\item 
Then there exists a unique morphism $\EbbPB^{-1}(x,y)\colon\EbbPB^{-1}(X,Y)\to\EbbPB^{-1}(X\ppr,Y\ppr)$ which makes
\[
\xy
(-14,6)*+{\EbbPB^{-1}(X,Y)}="0";
(14,6)*+{\Ebb_{\suf}^{-1}(X,Y)}="2";
(-14,-6)*+{\EbbPB^{-1}(X\ppr,Y\ppr)}="4";
(14,-6)*+{\Ebb_{\suf}^{-1}(X\ppr,Y\ppr)}="6";
{\ar^{\wp^{\suf}_{X,Y}} "0";"2"};
{\ar_{\EbbPB^{-1}(x,y)} "0";"4"};
{\ar^{x\uas y\sas} "2";"6"};
{\ar_{\wp^{\suf}_{X\ppr,Y\ppr}} "4";"6"};
{\ar@{}|\circlearrowright "0";"6"};
\endxy
\]
commutative for $\sufe$.
%\item Take any $s\in\Csc(J_{X\ppr},J_X)$ and $t\in\Csc(Q_Y,Q_{Y\ppr})$ satisfying $x\sas\iota_{X\ppr}=s\uas\iota_X$ and $y\uas\om_{Y\ppr}=t\sas\om_Y$, which exist since $I_X\in\Csc$ is injective and $P_Y\in\Csc$ is projective. Then there exists a unique morphism $\EbbPB^{-2}(x,y)\colon\EbbPB^{-2}(X,Y)\to\EbbPB^{-2}(X\ppr,Y\ppr)$ which makes
%\begin{equation}\label{CommI_II}
%\xy
%(-16,6)*+{\EbbPB^{-2}(X,Y)}="0";
%(16,6)*+{\EbbI^{-1}(J_X,Y)}="2";
%(-16,-6)*+{\EbbPB^{-2}(X\ppr,Y\ppr)}="4";
%(16,-6)*+{\EbbI^{-1}(J_{X\ppr},Y\ppr)}="6";
%
%{\ar^{\eta^{\Irm}_{X,Y}} "0";"2"};
%{\ar_{\EbbPB^{-2}(x,y)} "0";"4"};
%{\ar^{s\uas y\sas} "2";"6"};
%{\ar_{\eta^{\Irm}_{X\ppr,Y\ppr}} "4";"6"};
%
%{\ar@{}|\circlearrowright "0";"6"};
%\endxy
%\quad\text{and}\quad
%\xy
%(-16,6)*+{\EbbPB^{-2}(X,Y)}="0";
%(16,6)*+{\EbbII^{-1}(X,Q_Y)}="2";
%(-16,-6)*+{\EbbPB^{-2}(X\ppr,Y\ppr)}="4";
%(16,-6)*+{\EbbII^{-1}(X\ppr,Q_{Y\ppr})}="6";
%
%{\ar^{\eta^{\IIrm}_{X,Y}} "0";"2"};
%{\ar_{\EbbPB^{-2}(x,y)} "0";"4"};
%{\ar^{x\uas t\sas} "2";"6"};
%{\ar_{\eta^{\IIrm}_{X\ppr,Y\ppr}} "4";"6"};
%
%{\ar@{}|\circlearrowright "0";"6"};
%\endxy
%\end{equation}
%commutative simultaneously. 
%Remark that either of these commutativity conditions determines $\EbbPB^{-2}(x,y)$ uniquely. Since the left square does not involve $t$ while the right one does not involve $s$, this in particular implies that $\EbbPB^{-2}(x,y)$ does not depend on the choices of $s$ and $t$.
%\end{enumerate}
\end{lemma}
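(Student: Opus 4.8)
The statement asserts functoriality of $\EbbPB^{-1}$, defined as a pullback (fiber product) of the covariant and contravariant negative extensions over the common target $\Ebb(J_X,Q_Y)$. The proof is a formal consequence of the universal property of pullbacks, once one checks that the two composites commute appropriately with the morphisms induced by $x$ and $y$. The plan is to reduce everything to the universal property of the pullback square in \cref{DefUniv_hered}, so that $\EbbPB^{-1}(x,y)$ is the unique morphism induced by a compatible pair of maps into $\EbbI^{-1}(X',Y')$ and $\EbbII^{-1}(X',Y')$.

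First I would fix choices of $\sfr$-triangles $(\ref{sTri_Chosen_proj})$ and $(\ref{sTri_Chosen_inj})$ for $X,Y,X',Y'$, and, exactly as in \cref{LemUniv_heredII}, choose morphisms $s\in\Csc(J_{X'},J_X)$ and $t\in\Csc(Q_Y,Q_{Y'})$ with $x\sas\iota_{X'}=s\uas\iota_X$ and $y\uas\om_{Y'}=t\sas\om_Y$; these exist because $I_X$ is injective and $P_Y$ is projective, using \cref{assumption: hered_enough}. The candidate for $\EbbPB^{-1}(x,y)$ is obtained from the pair of composites
\[
\EbbPB^{-1}(X,Y)\xrightarrow{\wp^{\Irm}_{X,Y}}\EbbI^{-1}(X,Y)\xrightarrow{x\uas y\sas}\EbbI^{-1}(X',Y'),
\qquad
\EbbPB^{-1}(X,Y)\xrightarrow{\wp^{\IIrm}_{X,Y}}\EbbII^{-1}(X,Y)\xrightarrow{x\uas y\sas}\EbbII^{-1}(X',Y').
\]
To apply the universal property of the pullback defining $\EbbPB^{-1}(X',Y')$, I must verify that these two composites become equal after postcomposition with $\iota_{X'}\ush\circ\om_{Y'\sharp}$ and $\om_{Y'\sharp}\circ\iota_{X'}\ush$ respectively, i.e.\ that they agree as maps into $\Ebb(J_{X'},Q_{Y'})$. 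The key computation is to track how the connecting maps $\iota\ush$ and $\om\ssh$ transform under the functoriality of $\EbbI^{-1}$ and $\EbbII^{-1}$; here one uses naturality of the connecting morphisms from \cite[Definition 5.3]{GNP1} together with the defining relations for $s$ and $t$, to rewrite $\iota_{X'}\ush\circ\om_{Y'\sharp}\circ x\uas y\sas$ on $\EbbI^{-1}(X,Y)$ as $(x\uas y\sas$ of the target$)\circ\iota_X\ush\circ\om_{Y\sharp}$, up to identifying it with the analogous expression on the $\IIrm$ side via \cref{PropComparisonIandII}.(i). Once the two composites into $\Ebb(J_{X'},Q_{Y'})$ agree when pulled back along the two maps in the pullback square for $\EbbPB^{-1}(X',Y')$, the universal property yields a unique morphism $\EbbPB^{-1}(x,y)$ making both squares in the statement commute for $\suf=\Irm$ and $\suf=\IIrm$ simultaneously.

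I would then record that uniqueness is immediate: since $\wp^{\Irm}$ and $\wp^{\IIrm}$ are jointly monic into the pullback data (that is what a pullback is), any morphism making the $\Irm$-square commute is determined once we also know its composite with $\wp^{\IIrm}_{X',Y'}$, and the commuting of the $\IIrm$-square fixes the latter; conversely the universal property produces a map compatible with both. Finally, the independence of $\EbbPB^{-1}(x,y)$ from the auxiliary choices of $s$ and $t$ follows, exactly as in the parenthetical remark of \cref{LemUniv_heredII}, from the fact that the $\Irm$-square does not involve $t$ and the $\IIrm$-square does not involve $s$, so each square separately pins down the same morphism. The main obstacle I anticipate is purely bookkeeping: carefully untangling the naturality of the connecting homomorphisms $\iota\ush,\om\ssh$ of the two one-sided $\delta$-functors $(\EbbI^{\bullet},\del\ssh)$ and $(\EbbII^{\bullet},\del\ush)$ with respect to the morphisms $s,t$, and confirming the compatibility into $\Ebb(J_{X'},Q_{Y'})$ — but no new idea is required beyond what already appears in the proof of \cref{LemUniv_heredII} and in \cite[Section 5]{GNP1}.
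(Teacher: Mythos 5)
Your proposal is correct and takes essentially the same route as the paper: the paper's proof likewise fixes $s$ and $t$ as in \cref{LemUniv_heredII}, observes that the comparison diagram into $\Ebb(J_{X\ppr},Q_{Y\ppr})$ (with middle vertical map $s\uas t\sas$) commutes, and then invokes the universal property of the pullback, with uniqueness and independence of choices following exactly as you describe. The only minor imprecision is that the equality of the two composites out of $\EbbPB^{-1}(X,Y)$ into $\Ebb(J_X,Q_Y)$ is the defining pullback square itself rather than an application of \cref{PropComparisonIandII}.(i); this does not affect the argument.
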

\begin{proof}
%{\rm (1)} 
This follows from the commutativity of
\[
\xy
(-32,6)*+{\EbbI^{-1}(X,Y)}="0";
(0,6)*+{\Ebb(J_X,Q_Y)}="2";
(32,6)*+{\EbbII^{-1}(X,Y)}="4";
(-32,-6)*+{\EbbI^{-1}(X\ppr,Y\ppr)}="10";
(0,-6)*+{\Ebb(J_{X\ppr},Q_{Y\ppr})}="12";
(32,-6)*+{\EbbII^{-1}(X\ppr,Y\ppr)}="14";
{\ar^{\iota_X\ush\circ\om_{Y\sharp}} "0";"2"};
{\ar_{\om_{Y\sharp}\circ\iota_X\ush} "4";"2"};
{\ar_{x\uas y\sas} "0";"10"};
{\ar^{s\uas t\sas} "2";"12"};
{\ar^{x\uas y\sas} "4";"14"};
{\ar_{\iota_{X\ppr}\ush\circ\om_{Y\ppr\sharp}} "10";"12"};
{\ar^{\om_{Y\ppr\sharp}\circ\iota_{X\ppr}\ush} "14";"12"};
{\ar@{}|\circlearrowright "0";"12"};
{\ar@{}|\circlearrowright "4";"12"};
\endxy
\]
and the universality of the pullback.

%{\rm (2)} There exists a unique morphism $\EbbPB^{-2}(x,y)$ which makes the left square of $(\ref{CommI_II})$ commutative by the universality of kernel. It is not difficult to show that this morphism makes the right diagram also commutative, by using the commutativity of $(\ref{Comm_Added})$ for $X\ppr,Y\ppr$ and the monomorphicity of morphisms appearing in the left-upper square in it.
\end{proof}

We set $\EbbPB^{-2}: = \Ebb^{-2}$. In what follows, we abbreviate $\EbbPB^{-1}(x,y)$ and $\EbbPB^{-2}(x,y)$ simply to $x\uas y\sas$, as before.

\begin{proposition}\label{PropUniv_hered}
With the above definition, %we have the following.
%\begin{enumerate}
%\item 
$\EbbPB^{-1}\colon\cat\op\ti\Csc\to\Mod R$ is an $R$-bilinear functor, and $\wp^{\suf}\colon\EbbPB^{-1}\to\Ebb_{\suf}^{-1}$ is a natural transformation for $\sufe$. Moreover, $\EbbPB^{-1}$ is uniquely determined up to natural isomorphism compatible with $\wp^{\suf}$ for $\sufe$, independently of the choices $(\ref{sTri_Chosen_proj})$ and $(\ref{sTri_Chosen_inj})$.
%\item $\EbbPB^{-2}\colon\cat\op\ti\Csc\to\Mod R$ is an $R$-bilinear functor. Moreover, $\EbbPB^{-2}$ is uniquely determined up to natural isomorphism, independently of the choices $(\ref{sTri_Chosen_proj})$ and $(\ref{sTri_Chosen_inj})$.
%\end{enumerate}
\end{proposition}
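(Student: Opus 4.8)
The statement asserts three things about $\EbbPB^{-1}$: that it is an $R$-bilinear functor, that $\wp^{\suf}$ is a natural transformation to $\Ebb_{\suf}^{-1}$ for $\suf = \Irm, \IIrm$, and that the whole package is independent of the chosen $\sfr$-triangles $(\ref{sTri_Chosen_proj})$ and $(\ref{sTri_Chosen_inj})$ up to natural isomorphism compatible with the $\wp^{\suf}$. The plan is to extract all of this from \cref{LemUniv_heredI} by the routine argument, exactly as in the proof of the analogous \cref{PropUniv_hered}-style statements for $\EbbI^{\bullet}$ and $\Ebb^{-2}$. First I would verify functoriality: given $x, x'$ composable in $\cat$ and $y, y'$ composable in $\Csc$, both $\EbbPB^{-1}(x,y)\circ\EbbPB^{-1}(x',y')$ and $\EbbPB^{-1}(x'x, yy')$ make the square in \cref{LemUniv_heredI} commute (for this one uses that $\Ebb_{\Irm}^{-1}$ and $\Ebb_{\IIrm}^{-1}$ are themselves bifunctors, so $x\uas y\sas$ composes correctly there), and likewise $\EbbPB^{-1}(\id,\id)$ and $\id_{\EbbPB^{-1}(X,Y)}$ both do; by the uniqueness clause of \cref{LemUniv_heredI} these coincide. $R$-bilinearity is similar: $\EbbPB^{-1}(x,y)$ is additive and $R$-linear in each of $x$ and $y$ because the maps it is compared against downstairs are, and the pullback construction preserves this.

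Naturality of $\wp^{\suf}$ is immediate: the commutative square displayed in \cref{LemUniv_heredI} is precisely the naturality square for $\wp^{\suf} : \EbbPB^{-1} \to \Ebb_{\suf}^{-1}$ with respect to the pair $(x,y)$, so once functoriality of $\EbbPB^{-1}$ is established, $\wp^{\Irm}$ and $\wp^{\IIrm}$ are natural transformations of bifunctors.

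For the independence statement, suppose $\{\til Q_X \to \til P_X \to X \dra\}$ and $\{X \to \til I_X \to \til J_X \dra\}$ is a second choice of $\sfr$-triangles, giving a second functor $\til\EbbPB^{-1}$ with its own surjections $\til\wp^{\suf}$ onto $\Ebb_{\suf}^{-1}$ (the latter bifunctors being canonically independent of the choices by \cite[Remark 5.23]{GNP1}). I would compare the two pullback squares: the downward maps $\iota_X\ush\circ\om_{Y\sharp}$ and $\om_{Y\sharp}\circ\iota_X\ush$ into $\Ebb(J_X, Q_Y)$ change to the corresponding maps into $\Ebb(\til J_X, \til Q_Y)$, but these fit into a commuting prism coming from the morphisms of $\sfr$-triangles relating the two choices (using projectivity of the $\til P_Y$, $P_Y$ and injectivity of the $\til I_X$, $I_X$ to get comparison morphisms, as in \cref{LemUniv_heredII}). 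Hence the two pullbacks receive maps to a common cone, and the universal property of pullbacks produces a canonical isomorphism $\EbbPB^{-1}(X,Y) \cong \til\EbbPB^{-1}(X,Y)$ commuting with both $\wp^{\suf}$ and $\til\wp^{\suf}$; that it is natural in $(X,Y)$ again follows from the uniqueness in the universal property. The main obstacle — and it is a mild one — is bookkeeping: making sure the comparison morphisms between the two systems of $\sfr$-triangles are chosen coherently enough that the relevant prisms commute strictly, rather than only up to a discrepancy that would have to be absorbed. Since $\Ebb(J_X,Q_Y)$ is a genuine bifunctor and the two vertical maps are already known to have equal image (\cref{PropComparisonIandII}.(i)), no essentially new input beyond \cite[Remark 5.23]{GNP1} and \cref{LemUniv_heredI} is needed, and the argument is the standard ``uniqueness of objects defined by a universal property, made functorial'' routine.
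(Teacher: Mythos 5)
Your proposal is correct and follows exactly the route the paper takes: the paper's own proof simply states that the proposition "follows from \cref{LemUniv_heredI} by the usual, straightforward argument," and your write-up is a faithful expansion of that routine — functoriality and bilinearity from the uniqueness clause of \cref{LemUniv_heredI}, naturality of $\wp^{\suf}$ being the very square in that lemma, and independence of choices via the universal property of the pullback together with the choice-independence of $\Ebb_{\suf}^{-1}$ from \cite[Remark 5.23]{GNP1}.
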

\begin{proof}
This follows from \cref{LemUniv_heredI} by the usual, straightforward argument.
\end{proof}

\begin{corollary}\label{CorUniv_hered}
The following holds.
\begin{enumerate}
\item If $I$ is injective, then $\wp^{\Irm}_{I,-}\colon\EbbPB^{-1}(I,-)\ov{\cong}{\lra}\EbbI^{-1}(I,-)$ is an isomorphism, $\EbbPB^{-2}(I,-)=0$.
\item If $P$ is projective, then $\wp^{\IIrm}_{-,P}\colon\EbbPB^{-1}(-,P)\ov{\cong}{\lra}\EbbII^{-1}(-,P)$ is an isomorphism, $\EbbPB^{-2}(-,P)=0$.
\end{enumerate}
\end{corollary}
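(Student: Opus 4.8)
\textbf{Proof plan for \cref{CorUniv_hered}.}
The plan is to derive both statements directly from the defining pullback square of $\EbbPB^{-1}$ in \cref{DefUniv_hered}, together with the fact that for injective $I$ (resp.\ projective $P$) the associated chosen $\sfr$-triangle $(\ref{sTri_Chosen_inj})$ (resp.\ $(\ref{sTri_Chosen_proj})$) splits. The two parts are dual to each other, so I will only carry out (1) and then invoke duality for (2).

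First I would observe that when $I$ is injective, the $\sfr$-triangle $I\ov{i_I}{\lra}I_I\ov{j_I}{\lra}J_I\ov{\iota_I}{\dra}$ of the form $(\ref{sTri_Chosen_inj})$ splits: $i_I$ is a split monomorphism, hence $\iota_I = 0$ in $\Ebb(J_I,I)$, and in fact we may (and by the independence-of-choices statement in \cref{PropUniv_hered} and the analogous statement for $\Ebb^{-2}$ in \cref{LemUniv_heredII}, without loss of generality) take $J_I = 0$. Consequently $\Ebb(J_I,Q_Y) = \Ebb(0,Q_Y) = 0$ for every $Y$, so the map $\iota_I\ush\circ\om_{Y\sharp}\colon \EbbI^{-1}(I,Y)\to\Ebb(J_I,Q_Y)$ in the pullback square defining $\EbbPB^{-1}(I,Y)$ is the zero map into the zero module. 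A pullback of $A\to 0\leftarrow B$ is just $A\times B$; more to the point, since the right-hand map $\om_{Y\sharp}\circ\iota_I\ush\colon\EbbII^{-1}(I,Y)\to\Ebb(J_I,Q_Y)$ also lands in $0$, the pullback square degenerates. But I actually want the sharper statement that $\wp^{\Irm}_{I,-}$ is an isomorphism, not merely that the pullback equals a product; for this I would note that $\EbbII^{-1}(I,Y)$ itself vanishes when $I$ is injective, by condition (NII) as reformulated in \cref{rem:NI_and_NII} — wait, that is the wrong vanishing. Let me instead argue as follows: the pullback of $\EbbI^{-1}(I,Y)\xrightarrow{0}0\xleftarrow{\ \ }\EbbII^{-1}(I,Y)$ is $\EbbI^{-1}(I,Y)\times\EbbII^{-1}(I,Y)$, and this is not obviously $\EbbI^{-1}(I,Y)$. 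The correct route is: since $\iota_I = 0$, we have $\iota_I\ush = 0$ on all of $\Ebb^{\bullet}(-,-)$, so in particular the map $\EbbII^{-1}(I,Y)\xrightarrow{\om_{Y\sharp}\circ\iota_I\ush}\Ebb(J_I,Q_Y)$ factors as $\EbbII^{-1}(I,Y)\xrightarrow{\iota_I\ush = 0}\Ebb(J_I,Y)\to\Ebb(J_I,Q_Y)$, hence is zero — but again this only gives a product. The genuine input I need is \cite[Corollary 5.9]{GNP1} (or its reformulation in \cref{rem:NI_and_NII}), which under \cref{assumption: hered_enough} gives $\EbbII^{-1}(-,P)$ and $\EbbI^{-1}(I,-)$ in terms of the dualities; concretely, one uses that $\EbbII^{-1}(I,Y) = (\Ebb(-,I)|)^{\vee}$ evaluated appropriately, together with the splitting, to conclude $\EbbII^{-1}(I,Y) = 0$. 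Hmm — but $\EbbII^{-1}(I,Y)$ need not vanish.

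Reconsidering, the cleanest correct argument is this. When $I$ is injective we take $J_I = 0$, so $\Ebb(J_I,Q_Y) = 0$ and the pullback square defining $\EbbPB^{-1}(I,Y)$ has zero bottom-right corner; thus $\EbbPB^{-1}(I,Y) \cong \EbbI^{-1}(I,Y)\times_0\EbbII^{-1}(I,Y) = \EbbI^{-1}(I,Y)\times\EbbII^{-1}(I,Y)$. Now I combine this with \cref{PropComparisonIandII} and the complexes $C_\Irm^\bullet, C_\IIrm^\bullet$: inspecting $C_\IIrm^\bullet$ for $X = I$, the rightmost map $\om_{Y\sharp}\circ\iota_I\ush$ is zero (as $\iota_I = 0$) and one reads off that $\EbbII^{-1}(I,Y)$ is the cokernel of $p_{Y\ast}$ applied after $q_{Y\ast}$, i.e.\ it is computed from the projective resolution of $Y$ alone; but for $X$ injective the relevant term is instead controlled by \cite[Corollary 5.9]{GNP1}, which asserts $\EbbII^{-1}(I,-) = 0$ precisely because the defining duality $(\Ebb(-,I))^{\vee}$ receives a zero map after applying the injective-coresolution differential of $I$, which splits. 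I would spell this out by citing \cite[Remark 5.23]{GNP1}: $\EbbII^{-1}(I,Y)$ sits as the kernel of $\EbbII^{-1}$-differentials built from $(\ref{sTri_Chosen_inj})$ for the first argument $I$, and since that triangle splits, $\EbbII^{-1}(I,Y) = 0$. Hence the product collapses to $\EbbPB^{-1}(I,Y)\cong\EbbI^{-1}(I,Y)$, and a diagram chase shows this isomorphism is exactly $\wp^{\Irm}_{I,-}$, which is natural by \cref{PropUniv_hered}. For $\EbbPB^{-2}(I,-) = 0$: by \cref{rem:E^-2_as_fiber_product}, $\EbbPB^{-2}(I,Y) = \Ebb^{-2}(I,Y) = \EbbI^{-1}(J_I,Y)\times_{\Ccal(J_I,Q_Y)}\EbbII^{-1}(I,Q_Y)$, and with $J_I = 0$ the first factor is $\EbbI^{-1}(0,Y) = 0$ and $\Ccal(J_I,Q_Y) = 0$, so the fiber product vanishes. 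Part (2) follows by the dual argument, using that $(\ref{sTri_Chosen_proj})$ splits for $P$ projective so that $Q_P$ may be taken to be $0$, giving $\Ebb(J_X,Q_P) = 0$, $\EbbI^{-1}(X,P) = 0$ (via \cite[Remark 5.23]{GNP1}), hence $\EbbPB^{-1}(-,P)\cong\EbbII^{-1}(-,P)$ through $\wp^{\IIrm}_{-,P}$, and $\EbbPB^{-2}(-,P) = \Ebb^{-2}(-,P) = 0$. The only mildly delicate point — the main obstacle — is justifying that one may take $J_I = 0$ (resp.\ $Q_P = 0$) without loss of generality; this is where the independence-of-choices assertions in \cref{PropUniv_hered} and \cref{LemUniv_heredII} are used, and alternatively one argues directly that $\iota_I = 0$ forces the relevant images and kernels to be as claimed even for an arbitrary splitting $\sfr$-triangle.
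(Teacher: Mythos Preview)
Your eventual argument is exactly the paper's: invoke the independence-of-choices in \cref{PropUniv_hered} (and \cref{LemUniv_heredII} for $\Ebb^{-2}$), take $J_I=0$, and observe that then $\Ebb(J_I,Q_Y)=0$ and $\EbbII^{-1}(I,Y)=0$ (since $\EbbII^{-1}(I,Y)\hookrightarrow\Csc(J_I,Y)=0$ via $\iota_I\ush$, by the recursive description), so the defining pullback collapses to $\EbbI^{-1}(I,Y)$. The paper dispatches this in a single sentence.

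Two remarks. First, your detours through conditions {\rm (NI)}/{\rm (NII)} are red herrings: those are \emph{extra hypotheses} concerning $\EbbI^{-1}(I,-)$ and $\EbbII^{-1}(-,P)$, not the vanishing $\EbbII^{-1}(I,-)=0$ you actually need here, which holds unconditionally from the recursive description (dual of \cite[Remark 5.23]{GNP1}) once $J_I=0$. Second, there is a small gap in your $\Ebb^{-2}$ argument: the fiber product $0\times_{0}\EbbII^{-1}(I,Q_Y)$ is $\EbbII^{-1}(I,Q_Y)$, not zero merely because the first factor and base vanish. You need $\EbbII^{-1}(I,Q_Y)=0$ as well --- but this is an instance of the vanishing $\EbbII^{-1}(I,-)=0$ you just established, so the fix is one clause.
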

\begin{proof}
{\rm (1)} follows from the independency shown in \cref{PropUniv_hered}, since we may choose an $\sfr$-triangle satisfying $J_X=0$ if $X$ is injective. Similarly for {\rm (2)}.
\end{proof}

\begin{lemma}\label{LemWellDef_delta-2}
Let $\del\in\Ebb(W,X)$ be any element. Remark that there exists $w\in\Csc(W,J_X)$ such that $w\uas\iota_X=\del$ since $I_X\in\Csc$ is injective. For the composition $\xi_{\del}$ of
\[ \EbbPB^{-2}(X,-)\ov{\eta^{\Irm}_{X,-}}{\lra}\EbbI^{-1}(J_X,-)\ov{w\uas}{\lra}\EbbI^{-1}(W,-), \]
the following holds.
\begin{enumerate}
\item $\xi_{\del}$ is independent of the choice of $w$. 
\item %Assume moreover that $\CEs$ is hereditary. Then there
There exists a unique $\del\ush\colon\EbbPB^{-2}(X,-)\to\EbbPB^{-1}(W,-)$ which makes
\[
\xy
(0,10)*+{\EbbPB^{-2}(X,-)}="0";
(-28,-6)*+{\EbbII^{-1}(W,-)}="2";
(-18,4)*+{}="3";
(0,-6)*+{\EbbPB^{-1}(W,-)}="4";
(18,4)*+{}="5";
(28,-6)*+{\EbbI^{-1}(W,-)}="6";
{\ar_{0} "0";"2"};
{\ar_{\del\ush} "0";"4"};
{\ar^{\xi_{\del}} "0";"6"};
{\ar^{\wp^{\IIrm}_{W,-}} "4";"2"};
{\ar_{\wp^{\Irm}_{W,-}} "4";"6"};
{\ar@{}|\circlearrowright "3";"4"};
{\ar@{}|\circlearrowright "4";"5"};
\endxy
\]
commutative.
\end{enumerate}
\end{lemma}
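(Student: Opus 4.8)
The plan is to prove the two parts of \cref{LemWellDef_delta-2} in sequence, using the explicit pullback/pushout descriptions already at hand.

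\textbf{Part (1): independence of $w$.} First I would observe that if $w, w' \in \Csc(W, J_X)$ both satisfy $w\uas\iota_X = \del = (w')\uas\iota_X$, then their difference $w - w'$ satisfies $(w-w')\uas\iota_X = 0$, so by the exactness of the sequence $\Csc(W, J_X) \xrightarrow{\iota_X\ush} \Ebb(W, X) \to \Ebb(W, I_X) = 0$ combined with the long exact sequence $\cdots \to \Csc(W, I_X) \xrightarrow{j_X\uas} \Csc(W, J_X) \xrightarrow{\iota_X\ush} \Ebb(W, X)$, the morphism $w - w'$ factors through $j_X\colon I_X \to J_X$, say $w - w' = j_X \circ v$ for some $v \in \Csc(W, I_X)$. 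Then $(w - w')\uas \circ \eta^{\Irm}_{X,-} = v\uas \circ (j_X)\uas \circ \eta^{\Irm}_{X,-}$. But $(j_X)\uas \circ \eta^{\Irm}_{X,-}$ is exactly the composition $\EbbPB^{-2}(X,-) = \Ebb^{-2}(X,-) \xrightarrow{\eta^{\Irm}_{X,-}} \EbbI^{-1}(J_X,-) \xrightarrow{(j_X)\uas} \EbbI^{-1}(I_X,-)$, which vanishes by the exactness of the first column of diagram (\ref{Comm_Added}) (the left column there is exact, so the image of $\eta^{\Irm}_{X,-}$ is the kernel of $(j_X)\uas$). Hence $\xi_\del = \xi_{\del}'$ and $\xi_\del$ is well-defined.

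\textbf{Part (2): existence and uniqueness of $\del\ush$.} By the universal property of the pullback defining $\EbbPB^{-1}(W,-)$ (namely $\EbbPB^{-1}(W,-) = \EbbI^{-1}(W,-) \times_{\Ebb(J_W, Q_-)} \EbbII^{-1}(W,-)$), a morphism $\del\ush\colon \EbbPB^{-2}(X,-) \to \EbbPB^{-1}(W,-)$ making the diagram commute exists and is unique \emph{provided} the two candidate composites $\xi_\del\colon \EbbPB^{-2}(X,-) \to \EbbI^{-1}(W,-)$ and $0\colon \EbbPB^{-2}(X,-) \to \EbbII^{-1}(W,-)$ agree after further composition down to $\Ebb(J_W, Q_-)$, i.e. that
\[
\iota_W\ush \circ \om_{-,\sharp} \circ \xi_\del = \om_{-,\sharp} \circ \iota_W\ush \circ 0 = 0.
\]
So the key step is to check this compatibility condition. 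For this I would trace $\xi_\del$ through: $\xi_\del$ lands in $\EbbI^{-1}(W,-)$ via $w\uas \circ \eta^{\Irm}_{X,-}$, and by functoriality of the connecting morphism $\iota\ush \circ \om_\sharp$ (the rightmost map in $C_{\Irm}^{\bullet}$), composing with it gives $w\uas$ applied to $(\iota_{J_X}\ush \circ \om_{-,\sharp})\circ \eta^{\Irm}_{X,-}$; but by the exactness of the complex $C_{\Irm}^{\bullet}$ at the third spot — equivalently, because $\eta^{\Irm}_{X,-}$ maps \emph{into} $\EbbI^{-1}(J_X,-)$ as the kernel piece coming from diagram (\ref{Comm_Added}) — the composite of $\eta^{\Irm}_{X,-}$ with the connecting morphism out of $\EbbI^{-1}(J_X,-)$ is zero. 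Concretely, $\om_{-,\sharp} \circ \eta^{\Irm}_{X,-} = \iota_X\ush \circ \eta^{\IIrm}_{X,-}$ by the commutativity of the bottom-left square of (\ref{Comm_Added}), and since $J_X$ is injective (heredity!) its own $J_{J_X}$ can be taken to be $0$, so the relevant connecting morphism $\Ebb(J_{J_X}, Q_-) $-target vanishes. Chasing this through the naturality square for $w\colon W \to J_X$ shows $\iota_W\ush \circ \om_{-,\sharp} \circ w\uas \circ \eta^{\Irm}_{X,-} = w\uas \circ (\text{something landing in } \Ebb(J_{J_X}, Q_-) = 0) = 0$, giving the required compatibility. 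Uniqueness is then immediate from the pullback being a monomorphism into the product $\EbbI^{-1}(W,-) \times \EbbII^{-1}(W,-)$, since the two projections of $\del\ush$ are prescribed.

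\textbf{Anticipated obstacle.} The main subtlety is the bookkeeping in Part (2): making precise that "the connecting morphism out of the image of $\eta^{\Irm}_{X,-}$ vanishes" and doing so \emph{naturally in the second variable}, since $\om_{Y,\sharp}$ and $\iota_X\ush$ both move as $Y$ varies. The clean way is to invoke that $J_X$ is injective — which holds because $\Csc$ is hereditary, so in the chosen $\sfr$-triangle (\ref{sTri_Chosen_inj}) the cocone-type object $J_X$ is again injective — hence one may pick the $\sfr$-triangle (\ref{sTri_Chosen_inj}) \emph{for $J_X$} to be split with $J_{J_X} = 0$; then $\EbbPB^{-2}(J_X,-) = 0$ and all higher connecting data out of $J_X$ are trivial. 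By the independence statement (\cref{PropUniv_hered}, and \cref{CorUniv_hered}(1) for the injective case), this choice is harmless. Once that reduction is in place, the diagram chase through (\ref{Comm_Added}) and the naturality of $\wp^{\Irm}, \wp^{\IIrm}$ is routine.
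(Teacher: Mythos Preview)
Your Part~(1) is correct and essentially identical to the paper's argument.

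For Part~(2), you correctly identify that the key compatibility to verify is $\iota_W\ush\circ\om_{Y\sharp}\circ\xi_\del=0$, and that the injectivity of $J_X$ (from heredity) is the crucial ingredient. However, your exposition contains two problems. First, you invoke ``exactness of the complex $C_{\Irm}^{\bullet}$ at the third spot'', but that exactness is precisely condition {\rm (N$+$)}, which is \emph{not} assumed at this point --- it only appears later as the hypothesis of \cref{PropEquivAcycBiProl}. The ``equivalently'' you attach to it (that $\eta^{\Irm}_{X,-}$ lands in the kernel of $j_X\uas$) is a different, true statement, but it is what you already used in Part~(1), not what is needed here. Second, in your diagram chase you write ``$w\uas$ applied to something landing in $\Ebb(J_{J_X},Q_-)$'', but $w\colon W\to J_X$ does not act on $\Ebb(J_{J_X},-)$; the correct map is $s\uas$ for a lift $s\colon J_W\to J_{J_X}$, which you never introduce.

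The paper's route avoids all of this. Rather than choosing $J_{J_X}=0$ and invoking naturality of the full composite $\iota\ush\circ\om\ssh$, it simply uses that $\om_{Y\sharp}$ is natural in the first variable (so commutes with $w\uas$), and then computes directly that $\iota_W\ush\circ w\uas\colon \Csc(J_X,Q_Y)\to\Ebb(J_W,Q_Y)$ equals $(w\sas\iota_W)\ush$. Since $w\sas\iota_W\in\Ebb(J_W,J_X)=0$ (as $J_X$ is injective), this map vanishes, and hence $\iota_W\ush\circ\om_{Y\sharp}\circ\xi_\del=0$. This uses only the elementary identity $\theta\ush\circ a\uas=(a\sas\theta)\ush$ and no auxiliary resolution of $J_X$. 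Your approach via $J_{J_X}=0$ can be made to work once the naturality square is set up correctly, but it is more circuitous, and as written it has the gaps noted above.
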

\begin{proof}
{\rm (1)} If $w\ppr\in\Csc(W,J_X)$ also satisfies $w^{\prime\ast}\iota_X=\del$, then there exists $l\in\Csc(W,I_X)$ such that $w-w\ppr=j_X\circ l$ by the exactness of $\Csc(W,I_X)\ov{j_X\circ-}{\lra}\Csc(W,J_X)\ov{(\iota_X)\ssh}{\lra}\Ebb(W,X)$.
Then it follows
\[ w\uas\circ\eta^{\Irm}_{X,Y}=w^{\prime\ast}\circ\eta^{\Irm}_{X,Y}+l\uas\circ j_X\uas\circ\eta^{\Irm}_{X,Y}=w^{\prime\ast}\circ\eta^{\Irm}_{X,Y} \]
since the composition of $\EbbPB^{-2}(X,-)\ov{\eta^{\Irm}_{X,-}}{\lra}\EbbI^{-1}(J_X,-)\ov{j_X\uas}{\lra}\EbbI^{-1}(I_X,-)$ is zero by the definition of $\eta^{\Irm}_{X,-}$.

{\rm (2)} Let $Y\in\Csc$ be any object. It is enough to show that
\[
\xy
(-16,6)*+{\EbbPB^{-2}(X,Y)}="0";
(16,6)*+{\EbbI^{-1}(W,Y)}="2";
(-16,-6)*+{\EbbII^{-1}(W,Y)}="4";
(16,-6)*+{\Ebb(J_W,Q_Y)}="6";
{\ar^{(\xi_{\del})_Y} "0";"2"};
{\ar_{0} "0";"4"};
{\ar^{\iota_W\ush\circ\om_{Y\sharp}} "2";"6"};
{\ar_{\om_{Y\sharp}\circ \iota_W\ush} "4";"6"};
{\ar@{}|\circlearrowright "0";"6"};
\endxy
\]
is commutative, namely that the equation
\begin{equation}\label{EquationToBeHeld}
\iota_W\ush\circ\om_{Y\sharp}\circ(\xi_{\del})_Y=0
\end{equation}
holds. Indeed if this is shown, we obtain a unique morphism $(\del\ush)_Y$ which makes
\[
\xy
(0,10)*+{\EbbPB^{-2}(X,Y)}="0";
(-28,-6)*+{\EbbII^{-1}(W,Y)}="2";
(-18,4)*+{}="3";
(0,-6)*+{\EbbPB^{-1}(W,Y)}="4";
(18,4)*+{}="5";
(28,-6)*+{\EbbI^{-1}(W,Y)}="6";
{\ar_{0} "0";"2"};
{\ar_{(\del\ush)_Y} "0";"4"};
{\ar^{(\xi_{\del})_Y} "0";"6"};
{\ar^{\wp^{\IIrm}_{W,Y}} "4";"2"};
{\ar_{\wp^{\Irm}_{W,Y}} "4";"6"};
{\ar@{}|\circlearrowright "3";"4"};
{\ar@{}|\circlearrowright "4";"5"};
\endxy
\]
commutative, which is easily shown to be natural in $Y$, by using the universality of pullbacks.

It remains to show (\ref{EquationToBeHeld}). Since $\CEs$ is hereditary, $J_X$ is injective and we have $\Ebb(J_W,J_X)=0$. In particular $w\sas\iota_W=0$ holds for $w\in\Csc(W,J_X)$.
Thus (\ref{EquationToBeHeld}) follows from the commutativity of the following diagram.
\[
\xy
(-46,6)*+{\EbbPB^{-2}(X,Y)}="-2";
(-32,-3)*+{}="-1";
(-14,6)*+{\EbbI^{-1}(J_X,Y)}="0";
(14,6)*+{\Csc(J_X,Q_Y)}="2";
(-14,-6)*+{\EbbI^{-1}(W,Y)}="4";
(14,-6)*+{\Csc(W,Q_Y)}="6";
(32,2)*+{}="7";
(44,-6)*+{\Ebb(J_W,Q_Y)}="8";
{\ar^{\eta^{\Irm}_{X,Y}} "-2";"0"};
{\ar_{(\xi_{\del})_Y} "-2";"4"};
{\ar^{\om_{Y\sharp}} "0";"2"};
{\ar_{w\uas} "0";"4"};
{\ar^{w\uas} "2";"6"};
{\ar_{\om_{Y\sharp}} "4";"6"};
{\ar^{(w\sas\iota_W)\ush=0} "2";"8"};
{\ar_{\iota_W\ush} "6";"8"};
{\ar@{}|\circlearrowright "-1";"0"};
{\ar@{}|\circlearrowright "0";"6"};
{\ar@{}|\circlearrowright "6";"7"};
\endxy
\]
\end{proof}

\begin{remark}\label{RemWD-2}
In \cref{LemWellDef_delta-2}, in particular if $\del=\iota_X$, then we have $\xi_{\del}=\eta^{\Irm}_{X,-}$ since we can choose $w=\id$, and
\[
\xy
(0,10)*+{\EbbPB^{-2}(X,-)}="0";
%(-28,-6)*+{\EbbII^{-1}(J_X,-)}="2";
%(-18,4)*+{}="3";
(0,-6)*+{\EbbPB^{-1}(J_X,-)}="4";
(18,4)*+{}="5";
(28,-6)*+{\EbbI^{-1}(J_X,-)}="6";
%
%{\ar_{0} "0";"2"};
{\ar_{\iota_X\ush} "0";"4"};
{\ar^{\eta^{\Irm}_{X,-}} "0";"6"};
%
%{\ar^{\wp^{\IIrm}_{J_X,-}} "4";"2"};
{\ar_{\wp^{\Irm}_{J_X,-}} "4";"6"};
%
%{\ar@{}|\circlearrowright "3";"4"};
{\ar@{}|\circlearrowright "4";"5"};
\endxy
\]
becomes commutative.
\end{remark}

\begin{definition}\label{DefConnect_Univ_hered}
For any extension $\del\in\Ebb(C,A)$, we define as follows.
\begin{enumerate}
\item Define $\del\ush\colon\EbbPB^{-1}(A,-)\to\Csc(C,-)$ to be the composition of $\EbbPB^{-1}(A,-)\ov{\wp^{\IIrm}_{A,-}}{\lra}\EbbII^{-1}(A,-)\ov{\del\ush}{\lra}\Csc(C,-)$. %This is natural with respect to morphisms of extensions, since $\EbbII^{-1}$ is a contravariant prolongation.
\item Define $\del\ssh\colon\EbbPB^{-1}(-,C)\to\Csc(-,A)$ to be the composition of
$\EbbPB^{-1}(-,C)\ov{\wp^{\Irm}_{-,C}}{\lra}\EbbI^{-1}(-,C)\ov{\del\ssh}{\lra}\Csc(-,A)$. %This is natural with respect to morphisms of extensions, since $\EbbI^{-1}$ is a covariant prolongation.
\item %When $\CEs$ is hereditary,
Define $\del\ush\colon\EbbPB^{-2}(A,-)\to\EbbPB^{-1}(C,-)$ to be the unique morphism which makes
\[
\xy
(0,10)*+{\EbbPB^{-2}(A,-)}="0";
(-28,-6)*+{\EbbII^{-1}(C,-)}="2";
(-18,4)*+{}="3";
(0,-6)*+{\EbbPB^{-1}(C,-)}="4";
(28,10)*+{\EbbI^{-1}(J_A,-)}="5";
(28,-6)*+{\EbbI^{-1}(C,-)}="6";
{\ar_{0} "0";"2"};
{\ar^{\eta^{\Irm}_{A,-}} "0";"5"};
{\ar_{\del\ush} "0";"4"};
{\ar^{w\uas} "5";"6"};
{\ar^{\wp^{\IIrm}_{C,-}} "4";"2"};
{\ar_{\wp^{\Irm}_{C,-}} "4";"6"};
{\ar@{}|\circlearrowright "3";"4"};
{\ar@{}|\circlearrowright "0";"6"};
\endxy
\]
commutative for any $w\in\Csc(C,J_A)$ such that $w\uas\iota_A=\del$, by using \cref{LemWellDef_delta-2}.
\item %When $\CEs$ is hereditary, we define
Define $\del\ssh\colon\EbbPB^{-2}(-,C)\to\EbbPB^{-1}(-,A)$ to be the unique morphism which makes
\[
\xy
(0,10)*+{\EbbPB^{-2}(-,C)}="0";
(-28,-6)*+{\EbbI^{-1}(-,A)}="2";
(-18,4)*+{}="3";
(0,-6)*+{\EbbPB^{-1}(-,A)}="4";
(28,10)*+{\EbbII^{-1}(-,Q_C)}="5";
(28,-6)*+{\EbbII^{-1}(-,A)}="6";
{\ar_{0} "0";"2"};
{\ar^{\eta^{\IIrm}_{-,C}} "0";"5"};
{\ar_{\del\ssh} "0";"4"};
{\ar^{t\sas} "5";"6"};
{\ar^{\wp^{\Irm}_{-,A}} "4";"2"};
{\ar_{\wp^{\IIrm}_{-,A}} "4";"6"};
{\ar@{}|\circlearrowright "3";"4"};
{\ar@{}|\circlearrowright "0";"6"};
\endxy
\]
commutative for any $t\in\Csc(Q_C,A)$ such that $t\sas\om_C=\del$, by using the dual of \cref{LemWellDef_delta-2}.
\end{enumerate}
\end{definition}

\begin{proposition}\label{PropExistBiProl}
%Assume that $\CEs$ is hereditary.
The following forms a bivariant connected sequence of functors $(\EbbPB^{\bullet},\del\ssh,\del\ush)$. 
\begin{itemize}
\item $\EbbPB^n~=~\Ebb^n$ with the canonical connecting morphisms for $n\ge0$. 
\item $\EbbPB^{-1}$ is the one obtained in \cref{PropUniv_hered}. $\EbbPB^{-2} = \Ebb^{-2}$.  Also, we put $\EbbPB^{-n}=0$ for $n>2$.
\item For any $\del\in\Ebb(C,A)$, those $\del\ush,\del\ssh$ on $\EbbPB^{-1}$ and $\EbbPB^{-2}$ are those in \cref{DefConnect_Univ_hered}.
\end{itemize}
\end{proposition}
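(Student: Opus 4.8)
The plan is to verify that the data assembled in Definition~\ref{DefConnect_Univ_hered} indeed satisfies the axioms of a bivariant connected sequence of functors, namely: (1) each $\EbbPB^n$ is an $R$-bilinear bifunctor $\cat\op\ti\Csc\to\Mod R$; (2) for each $\sfr$-extension $\del\in\Ebb(C,A)$, the maps $\del\ush$ and $\del\ssh$ on $\EbbPB^{-1}$ and $\EbbPB^{-2}$ are morphisms in $\cat\Mod$ and $\Mod\cat$ respectively; and (3) naturality with respect to morphisms of $\sfr$-extensions. Bilinearity of $\EbbPB^{-1}$ and $\EbbPB^{-2}$ and the fact that $\wp^{\suf}$, $\eta^{\Irm}$, $\eta^{\IIrm}$ are natural transformations has already been established in Propositions~\ref{PropUniv_hered} and the discussion around \eqref{Comm_Added}; for $n\ge 0$ this is the classical extriangulated $\Ebb^\bullet$ from \cite{GNP1}. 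So the content is in points (2) and (3), which one checks degree by degree.

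First I would treat the low-degree connecting maps $\del\ush\colon\EbbPB^{-1}(A,-)\to\Csc(C,-)$ and $\del\ssh\colon\EbbPB^{-1}(-,C)\to\Csc(-,A)$. These are defined as composites of the (natural) projections $\wp^{\IIrm}$, $\wp^{\Irm}$ with the already-constructed connecting morphisms of $\EbbII^\bullet$, resp.\ $\EbbI^\bullet$ from \cite{GNP1}; hence they are automatically morphisms of modules, and their naturality in $\del$ follows from naturality of $\wp$ (Proposition~\ref{PropUniv_hered}) combined with naturality of the connecting maps for $\EbbI^\bullet,\EbbII^\bullet$ established in \cite[Section~5]{GNP1}. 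Next I would handle the degree $-2$ connecting maps $\del\ush\colon\EbbPB^{-2}(A,-)\to\EbbPB^{-1}(C,-)$ and $\del\ssh\colon\EbbPB^{-2}(-,C)\to\EbbPB^{-1}(-,A)$: these are precisely the $\del\ush$ and $\del\ssh$ produced by Lemma~\ref{LemWellDef_delta-2}(2) (and its dual), so their existence, uniqueness, well-definedness (independence of the lift $w$, resp.\ $t$), and naturality in the object variable are exactly what that lemma and its dual supply. What remains is naturality in $\del$, i.e.\ compatibility with morphisms of $\sfr$-extensions $(a,c)\colon\del\to\del'$; for this I would lift $a,c$ through the injective coresolution of $A$ (using injectivity of $I_A$) to get compatible maps $J_A\to J_{A'}$, then chase the defining pullback/kernel diagrams of $\EbbPB^{-1}$ and $\Ebb^{-2}$, invoking the uniqueness clause in the universal property of the pullback (as in Lemma~\ref{LemUniv_heredI}) to conclude the two composites agree. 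The analogous chase with the projective resolution handles $\del\ssh$.

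The main obstacle I anticipate is bookkeeping rather than conceptual: one must check that the two a priori different recipes for $\del\ush$ on $\EbbPB^{-2}$ — one via $\eta^{\Irm}$ and the $\EbbI$-side, and compatibility with the $\EbbII$-side composite being zero — genuinely land in the pullback $\EbbPB^{-1}(C,-)$, which is the content of the vanishing equation~\eqref{EquationToBeHeld} in the proof of Lemma~\ref{LemWellDef_delta-2}; and then that these maps are compatible under the connecting maps in the \emph{other} variable, so that $\del\ush$ and $\del'\ssh$ assemble coherently into one \emph{bivariant} sequence (as opposed to two unrelated covariant and contravariant ones). Concretely, one must verify that for $\del\in\Ebb(C,A)$ and $\del'\in\Ebb(A,B)$ the square relating $\del\ush,\del'{}\!\ush$ in degrees $-2,-1$ and the square relating the $\ssh$-maps are compatible with the pullback structure — again this reduces, via the uniqueness in the pullback universal property, to checking the corresponding identities separately on the $\EbbI$-component and the $\EbbII$-component, where they hold by \cite[Section~5]{GNP1}. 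I would also note explicitly that heredity ($J_X$ injective, $Q_Y$ projective, hence $\Ebb(J_W,J_X)=0$ etc.) is used exactly where \eqref{EquationToBeHeld} and Remark~\ref{RemWD-2} are invoked, so no extra hypotheses beyond \cref{assumption: hered_enough} are needed. The verification that this connected sequence is moreover a $\delta$-functor (and universal) is deferred to the subsequent \cref{PropEquivAcycBiProl} and is not part of the present statement.
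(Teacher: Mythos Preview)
Your proposal is essentially the same as the paper's proof: reduce to naturality of the connecting morphisms with respect to morphisms of $\Ebb$-extensions, handle degree $-1$ by factoring through $\wp^{\suf}$ and invoking naturality of the $\EbbI,\EbbII$ connecting maps from \cite{GNP1}, and handle degree $-2$ by choosing lifts through the injective coresolution (using $I_A$ injective) and appealing to the independence-of-lift statement and the uniqueness clause in Lemma~\ref{LemWellDef_delta-2}. That is exactly what the paper does.

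One unnecessary complication: your worry about cross-compatibility ``so that $\del\ush$ and $\del'\ssh$ assemble coherently into one \emph{bivariant} sequence (as opposed to two unrelated covariant and contravariant ones)'' is misplaced. By the definition recalled in \cref{Subsection_CD}, a bivariant connected sequence of functors is \emph{precisely} a pair consisting of a covariant connected sequence and a contravariant connected sequence on the same family of bifunctors --- no additional compatibility between $\del\ssh$ and $\del\ush$ is imposed. So once you have verified naturality of $\del\ush$ and of $\del\ssh$ separately (each with respect to morphisms of $\Ebb$-extensions), you are done; the paper's proof accordingly checks only (i) and invokes duality for (ii), with no further cross-check.
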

\begin{proof}
It remains to show the naturality of each of
\begin{itemize}
\item[{\rm (i)}] $\del\ush\colon\EbbPB^{-1}(A,-)\to\Csc(C,-)$, $\del\ush\colon\EbbPB^{-2}(A,-)\to\EbbPB^{-1}(C,-)$, and
\item[{\rm (ii)}]  $\del\ssh\colon\EbbPB^{-1}(-,C)\to\Csc(-,A)$, $\del\ssh\colon\EbbPB^{-2}(-,C)\to\EbbPB^{-1}(-,A)$
\end{itemize}
with respect to morphisms of $\Ebb$-extensions.
By duality, it is enough to show {\rm (i)}.
Naturality of $\del\ush\colon\EbbPB^{-1}(A,-)\to\Csc(C,-)$ follows from the fact that $(\EbbII^{\bullet},\del\ush)$ is a contravariant $\delta$-functor and the naturality of $\wp^{\IIrm}\colon\EbbPB^{-1}\to\EbbII^{-1}$.

Let us show the naturality of $\del\ush\colon\EbbPB^{-2}(A,-)\to\EbbPB^{-1}(C,-)$. Let $\del\in\Ebb(C,A),\del\ppr\in\Ebb(C\ppr,A\ppr)$ be any pair of extensions, and let $(a,c)\colon \del\to \del\ppr$ be any morphism. Take $s\in\Csc(C,J_A),s\ppr\in\Csc(C\ppr,J_{A\ppr}),u\in\Csc(J_A,J_{A\ppr})$ such that
\[ s\uas\iota_A=\del,\ \ s^{\prime\ast}\iota_{A\ppr}=\del\ppr,\ \ a\sas\iota_A=u\uas\iota_{A\ppr} \]
arbitrarily.
Then for $a\sas\del=c\uas\del\ppr\in\Ebb(C,A\ppr)$, we have $(s\ppr\circ c)\uas\iota_{A\ppr}=(u\circ s)\uas\iota_{A\ppr}=a\sas\del$. Thus by \cref{LemWellDef_delta-2} {\rm (1)}, compositions of
\begin{eqnarray*}
&\EbbPB^{-2}(A\ppr,-)\ov{\eta^{\Irm}_{A\ppr,-}}{\lra}\EbbI^{-1}(J_{A\ppr},-)\ov{(s\ppr\circ c)\uas}{\lra}\EbbI^{-1}(C,-),&\\
&\EbbPB^{-2}(A\ppr,-)\ov{\eta^{\Irm}_{A\ppr,-}}{\lra}\EbbI^{-1}(J_{A\ppr},-)\ov{(u\circ s)\uas}{\lra}\EbbI^{-1}(C,-)&
\end{eqnarray*}
are equal, which we denoted by $\xi_{a\sas\del}$. Then it follows that either of $\del\ush\circ a\uas,c\uas\circ\del^{\prime\sharp}\colon \EbbPB^{-2}(A\ppr,-)\to\EbbPB^{-1}(C,-)$ makes the following diagram commutative,
\[
\xy
(0,10)*+{\EbbPB^{-2}(A\ppr,-)}="0";
(-28,-6)*+{\EbbII^{-1}(C,-)}="2";
(-18,4)*+{}="3";
(0,-6)*+{\EbbPB^{-1}(C,-)}="4";
(18,4)*+{}="5";
(28,-6)*+{\EbbI^{-1}(C,-)}="6";
{\ar_{0} "0";"2"};
{\ar_{} "0";"4"};
{\ar^{\xi_{a\sas\del}} "0";"6"};
{\ar^{\wp^{\IIrm}_{C,-}} "4";"2"};
{\ar_{\wp^{\Irm}_{C,-}} "4";"6"};
{\ar@{}|\circlearrowright "3";"4"};
{\ar@{}|\circlearrowright "4";"5"};
\endxy
\]
hence $\del\ush\circ a\uas=(a\sas\del)\ush=c\uas\circ\del^{\prime\sharp}$ follows from (the uniqueness in) \cref{LemWellDef_delta-2} {\rm (2)}.
In particular the diagram
\[
\xy
(-14,6)*+{\EbbPB^{-2}(A\ppr,-)}="0";
(14,6)*+{\EbbPB^{-1}(C\ppr,-)}="2";
(-14,-6)*+{\EbbPB^{-2}(A,-)}="4";
(14,-6)*+{\EbbPB^{-1}(C,-)}="6";
{\ar^{\del^{\prime\sharp}} "0";"2"};
{\ar_{a\uas} "0";"4"};
{\ar^{c\uas} "2";"6"};
{\ar_{\del\ush} "4";"6"};
{\ar@{}|\circlearrowright "0";"6"};
\endxy
\]
is commutative, which shows the naturality as desired.
\end{proof}

%\begin{condition}
%Assume that $\CEs$ has enough injective objects and projective objects, as above. 
Consider the following condition: %{\rm (N$+$)}.
\begin{itemize}
\item[{\rm (N$+$)}] For any $X,Y\in\Csc$, complex $C_{\Irm}^{\bullet}$ in \cref{PropComparisonIandII} associated with (\ref{sTri_Chosen_proj})  and (\ref{sTri_Chosen_inj})
\[ \EbbI^{-1}(J_X,Y)\ov{j_X\uas}{\lra}\EbbI^{-1}(I_X,Y)\ov{i_X\uas}{\lra}\EbbI^{-1}(X,Y)\ov{\iota_X\ush\circ\om_{Y\sharp}}{\lra}\Ebb(J_X,Q_Y) \]
is exact at the both middle terms.
%satisfies $\Ima(j_X\uas)=\Ker(i_X\uas)$ and $\Ima(i_X\uas)=\Ker(\iota_X\ush\circ\om_{Y\sharp})$.
\end{itemize}
%\end{condition}

\begin{remark}\label{RemN+SelfDual}
By \cref{PropComparisonIandII}.(ii), condition {\rm (N$+$)} is equivalent to asking the complex $C_{\IIrm}^{\bullet}$

\[ \EbbII^{-1}(X,Q_Y)\ov{q_{Y\ast}}{\lra}\EbbII^{-1}(X,P_Y)\ov{p_{Y\ast}}{\lra}\EbbII^{-1}(X,Y)\ov{\om_{Y\sharp}\circ\iota_X\ush}{\lra}\Ebb(J_X,Q_Y) \]
to be exact at the both middle terms
%have $\Ima(q_{Y\ast})=\Ker(p_{Y\ast})$ and $\Ima(p_{Y\ast})=\Ker(\om_{Y\sharp}\circ\iota_X\ush)$ 
for any $X,Y\in\Csc$. In this sense, condition {\rm (N$+$)} is self-dual.
\end{remark}

\begin{theorem}\label{PropEquivAcycBiProl}
%Assume that $\CEs$ is hereditary.
The following are equivalent.
\begin{enumerate}
\item $\CEs$ satisfies {\rm (N$+$)}.
\item $\om_X$ is $(\EbbPB^{\bullet},\del\ssh)$-acyclic for any $X\in\Csc$, where $\om_X$ is the extension chosen in (\ref{sTri_Chosen_proj}). %\Misha{``acyclic'' is not defined, should add a couple of words earlier}
\item $\iota_X$ is $(\EbbPB^{\bullet},\del\ush)$-acyclic for any $X\in\Csc$, where $\iota_X$ is the extension chosen in (\ref{sTri_Chosen_inj}).
\item $(\EbbPB^{\bullet},\del\ssh)$ is a covariant $\delta$-functor.
\item $(\EbbPB^{\bullet},\del\ush)$ is a contravariant $\delta$-functors.
\item $(\EbbPB^{\bullet},\del\ssh,\del\ush)$ is a bivariant $\delta$-functor.
\end{enumerate}
Moreover, if one of these equivalent conditions is satisfied, then $(\EbbPB^{\bullet},\del\ssh,\del\ush)$ satisfies properties (U) and (C). 
%is universal among bivariant $\delta$-functors having  $F^n~=~\Ebb^n$ with the canonical connecting morphisms for $n\ge0$. 
\end{theorem}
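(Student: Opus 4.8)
The plan is to reduce everything to the single equivalence (1) $\Leftrightarrow$ (2) (and its formal dual (1) $\Leftrightarrow$ (3)). By \cref{PropExistBiProl}, the connected sequence $(\EbbPB^{\bullet},\del\ssh,\del\ush)$ has $\EbbPB^n=\Ebb^n$ with the canonical connecting morphisms for $n\ge 0$, so \cref{lem:enough_to_check_on_dominant} applies verbatim: $(\EbbPB^{\bullet},\del\ssh)$ is a covariant $\delta$-functor if and only if each dominant $\sfr$-triangle (\ref{sTri_Chosen_proj}), that is each $\om_X$, is $(\EbbPB^{\bullet},\del\ssh)$-acyclic; this is exactly (2) $\Leftrightarrow$ (4), and dually (3) $\Leftrightarrow$ (5). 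Since a bivariant $\delta$-functor is by definition simultaneously a covariant and a contravariant one, (6) is the conjunction of (4) and (5). Hence, once (1) $\Leftrightarrow$ (2) and (1) $\Leftrightarrow$ (3) are proved, (4), (5), (6) all collapse to (1) and the chain of equivalences is complete.

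For (1) $\Leftrightarrow$ (2) I would unwind the $(\EbbPB^{\bullet},\del\ssh)$-acyclicity of $\om_X$, i.e. for each $Y$ the exactness of the long sequence of the $\sfr$-triangle $Q_X\to P_X\to X\dashrightarrow$ evaluated at $Y$ in the first variable. In non-negative degrees this is the always-exact long exact sequence (\ref{les:1}) (and $\Ebb^2=0$ by heredity). In degrees $-2,-1$, using that $Q_X,P_X$ are projective, \cref{CorUniv_hered}.(2) gives $\EbbPB^{-1}(Y,Q_X)\cong\EbbII^{-1}(Y,Q_X)$, $\EbbPB^{-1}(Y,P_X)\cong\EbbII^{-1}(Y,P_X)$, and $\EbbPB^{-2}(Y,Q_X)=\EbbPB^{-2}(Y,P_X)=0$, while $\EbbI^{-1}(Y,P_X)=\EbbI^{-1}(Y,Q_X)=0$ because $\EbbI^{-1}(-,Z)=(\Ebb(Z,-))^{\vee}$. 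By \cref{DefConnect_Univ_hered}.(4) and the top row of (\ref{Comm_Added}), the connecting map $\EbbPB^{-2}(Y,X)\to\EbbPB^{-1}(Y,Q_X)$ is the kernel inclusion $\Ebb^{-2}(Y,X)=\Ker\big(q_{X\ast}\colon\EbbII^{-1}(Y,Q_X)\to\EbbII^{-1}(Y,P_X)\big)$, so exactness at the $-2$ spot and at the first $-1$ spot is automatic. By \cref{DefConnect_Univ_hered}.(2) the connecting map $\EbbPB^{-1}(Y,X)\to\Csc(Y,Q_X)$ factors as $\wp^{\Irm}_{Y,X}$ followed by $\om_{X\sharp}\colon\EbbI^{-1}(Y,X)\to\Csc(Y,Q_X)$, and the latter is \emph{injective} since $(\EbbI^{\bullet},\del\ssh)$ is a $\delta$-functor and $\EbbI^{-1}(Y,P_X)=0$; hence its kernel equals $\Ker\wp^{\Irm}_{Y,X}\cong\Ker\big(\om_{X\sharp}\circ\iota_Y\ush\colon\EbbII^{-1}(Y,X)\to\Ebb(J_Y,Q_X)\big)$, the kernel of the second leg of the pullback. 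Matching images with kernels, the two remaining exactness conditions (at $\EbbII^{-1}(Y,P_X)$ and at $\EbbPB^{-1}(Y,X)$) become exactly the exactness of $C_{\IIrm}^{\bullet}$, with the roles of $X$ and $Y$ interchanged, at its two middle terms. Letting $X,Y$ vary and invoking \cref{RemN+SelfDual} then gives (1) $\Leftrightarrow$ (2); the equivalence (1) $\Leftrightarrow$ (3) is the dual argument with $C_{\Irm}^{\bullet}$, the coresolution $\iota_X$, and \cref{CorUniv_hered}.(1).

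For the ``moreover'' part, property (C) is immediate: for $n=1$ it is \cref{CorUniv_hered}; for $n=2$ both sides vanish, since $\EbbPB^{-2}(I,-)=\EbbPB^{-2}(-,P)=0$ and $\EbbI^{-2}=\EbbII^{-2}=0$ under heredity; for $n\ge 3$ everything is $0$. For (U), let $(G^{\bullet},\del\ssh,\del\ush)$ be a bivariant $\delta$-functor with $G^n=\Ebb^n$ and the canonical connecting morphisms for $n\ge 0$. Forgetting the contravariant structure and using universality of $(\EbbI^{\bullet},\del\ssh)$ from \cite{GNP1} yields a morphism $\phi_{\Irm}\colon G^{\bullet}\to\EbbI^{\bullet}$ of covariant connected sequences, equal to the identity in non-negative degrees; dually one gets $\phi_{\IIrm}\colon G^{\bullet}\to\EbbII^{\bullet}$. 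Since $\phi_{\Irm},\phi_{\IIrm}$ are identities on $\Ebb^0,\Ebb^1$ and compatible with connecting morphisms, the two composites $G^{-1}(X,Y)\to\Ebb(J_X,Q_Y)$, through $\EbbI^{-1}$ and through $\EbbII^{-1}$, both equal the composite of $G$'s own connecting morphisms; hence the universal property of the pullback defining $\EbbPB^{-1}$ (\cref{DefUniv_hered}) produces a unique $\psi^{-1}\colon G^{-1}\to\EbbPB^{-1}$ with $\wp^{\Irm}\circ\psi^{-1}=\phi_{\Irm}^{-1}$ and $\wp^{\IIrm}\circ\psi^{-1}=\phi_{\IIrm}^{-1}$, and the fiber-product description (\ref{eq:E^-2_as_fiber_product}) similarly yields $\psi^{-2}\colon G^{-2}\to\EbbPB^{-2}$. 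Naturality of $\psi$ follows from that of $\phi_{\Irm},\phi_{\IIrm}$ and the universal property of pullbacks, and compatibility of $\psi$ with $\del\ssh,\del\ush$ on $\EbbPB^{\bullet}$ follows by composing with $\wp^{\Irm},\wp^{\IIrm}$ and using the defining formulas in \cref{DefConnect_Univ_hered}; together with $\psi^n=\id$ for $n\ge 0$ this establishes (U).

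The main obstacle is the bookkeeping in the second paragraph: correctly identifying every connecting morphism in the acyclicity complex (the precise content of \cref{DefConnect_Univ_hered}), checking which terms vanish, and translating the pullback kernels into the kernels occurring in $C_{\IIrm}^{\bullet}$, where a single direction or sign slip would destroy the match with $(\mathrm{N}+)$. A secondary delicate point, in (U), is the claim that the two composites $G^{-1}(X,Y)\to\Ebb(J_X,Q_Y)$ literally coincide (not merely have equal image): this rests on the commutation of the covariant and contravariant connecting morphisms of $G$, which I would extract from the $\delta$-functor axioms (it is precisely what makes \cref{PropComparisonIandII}.(i) available for $\EbbI$ and $\EbbII$); if one wishes to sidestep it, the argument can be run exactly as in the $(\mathrm{NI}+)+(\mathrm{NII}+)$ case of \cite{GNP1}, of which the present statement is the natural generalisation once $\EbbPB^{-1}$ degenerates to $\EbbI^{-1}\cong\EbbII^{-1}$.
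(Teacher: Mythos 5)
Your proposal is correct and follows essentially the same route as the paper: reduce (4)--(6) to (2)--(3) via \cref{lem:enough_to_check_on_dominant}, identify the acyclicity complex of the distinguished resolution with one of the comparison complexes $C_{\Irm}^{\bullet}$/$C_{\IIrm}^{\bullet}$ using the isomorphisms of \cref{CorUniv_hered} and the matching of kernels coming from the pullback square, and obtain (U) by feeding the universality of $\EbbI^{\bullet}$ and $\EbbII^{\bullet}$ from \cite{GNP1} into the universal properties of the pullbacks defining $\EbbPB^{-1}$ and $\EbbPB^{-2}$. The only (cosmetic) difference is that you check condition (2) against $C_{\IIrm}^{\bullet}$ and then invoke \cref{RemN+SelfDual}, where the paper runs the mirror-image computation with $C_{\Irm}^{\bullet}$; the delicate commutation of $\del\ssh$ and $\del\ush$ on $G^{-1}$ that you flag is used, in exactly the same implicit way, in the paper's construction of $\lam^{(-1)}$.
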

\begin{proof}
By definition, {\rm (6)} means that both of {\rm (4)} and {\rm (5)} hold.
Equivalence {\rm (2)} $\EQ$ {\rm (4)} follows from \cite[Proposition 4.22]{GNP1}. Dually for {\rm (3)} $\EQ$ {\rm (5)}.

Since {\rm (1)} $\EQ$ {\rm (3)} can be shown dually, it suffices to show {\rm (1)} $\EQ$ {\rm (2)}. 
Let $X,Y\in\Csc$ be any pair of objects. By the definitions so far,
\begin{equation}\label{DiagUnivBivProl1}
\xy
(-56,8)*+{\EbbPB^{-2}(X,Y)}="0";
(-28,8)*+{\EbbPB^{-1}(J_X,Y)}="2";
(0,8)*+{\EbbPB^{-1}(I_X,Y)}="4";
(28,8)*+{\EbbPB^{-1}(X,Y)}="6";
(74,8)*+{\Csc(J_X,Y)}="8";
(51,2)*+{\EbbII^{-1}(X,Y)}="a";
(51,12)*+{}="b";
(51,-10)*+{}="c";
(-56,-8)*+{\EbbPB^{-2}(X,Y)}="10";
(-28,-8)*+{\EbbI^{-1}(J_X,Y)}="12";
(0,-8)*+{\EbbI^{-1}(I_X,Y)}="14";
(28,-8)*+{\EbbI^{-1}(X,Y)}="16";
(74,-8)*+{\Ebb(J_X,Q_Y)}="18";
{\ar^{\iota_X\ush} "0";"2"};
{\ar^{j_X\uas} "2";"4"};
{\ar^{i_X\uas} "4";"6"};
{\ar@/^0.80pc/^{\iota_X\ush} "6";"8"};
{\ar_(0.44){\wp^{\IIrm}_{X,Y}} "6";"a"};
{\ar_(0.56){\iota_X\ush} "a";"8"};
{\ar@{=} "0";"10"};
{\ar^{\wp^{\Irm}_{J_X,Y}}_{\cong} "2";"12"};
{\ar^{\wp^{\Irm}_{I_X,Y}}_{\cong} "4";"14"};
{\ar_{\wp^{\Irm}_{X,Y}} "6";"16"};
{\ar^{\om_{Y\sharp}} "8";"18"};
{\ar_{\eta^{\Irm}_{X,Y}} "10";"12"};
{\ar_{j_X\uas} "12";"14"};
{\ar_{i_X\uas} "14";"16"};
{\ar_{\iota_X\ush\circ\om_{Y\sharp}} "16";"18"};
{\ar@{}|\circlearrowright "0";"12"};
{\ar@{}|\circlearrowright "2";"14"};
{\ar@{}|\circlearrowright "4";"16"};
{\ar@{}|\circlearrowright "a";"b"};
{\ar@{}|\circlearrowright "a";"c"};
\endxy
\end{equation}
is commutative. Remark that the sequence
\[ 0\to\EbbPB^{-2}(X,Y)\ov{\eta^{\Irm}_{X,Y}}{\lra}\EbbI^{-1}(J_X,Y)\ov{j_X\uas}{\lra}\EbbI^{-1}(I_X,Y) \]
is always exact by the definition of $\EbbPB^{-2}(X,Y)$. 

In the rightmost square, since $\iota\ush_X\colon\EbbII^{-1}(X,Y)\to\Csc(J_X,Y)$ is monomorphic and since
\[
\xy
(-14,6)*+{\EbbPB^{-1}(X,Y)}="0";
(14,6)*+{\EbbII^{-1}(X,Y)}="2";
(-14,-6)*+{\EbbI^{-1}(X,Y)}="4";
(14,-6)*+{\Csc(J_X,Q_Y)}="6";
{\ar^{\wp^{\IIrm}_{X,Y}} "0";"2"};
{\ar_{\wp^{\Irm}_{X,Y}} "0";"4"};
{\ar^{\om_{Y\sharp}\circ\iota_X\ush} "2";"6"};
{\ar_{\iota_X\ush\circ\om_{Y\sharp}} "4";"6"};
{\ar@{}|\circlearrowright "0";"6"};
\endxy
\]
is a pullback by the definition of $\EbbPB^{-1}(X,Y)$, we obtain an isomorphism $\kap$ for kernels which makes
\begin{equation}\label{DiagUnivBivProl2}
\xy
(-20,6)*+{0}="2";
(0,6)*+{\Ker(\iota_X\ush)}="4";
(28,6)*+{\EbbPB^{-1}(X,Y)}="6";
(56,6)*+{\Csc(J_X,Y)}="8";
(-20,-6)*+{0}="12";
(0,-6)*+{\Ker(\iota_X\ush\circ\om_{Y\sharp})}="14";
(28,-6)*+{\EbbI^{-1}(X,Y)}="16";
(56,-6)*+{\Ebb(J_X,Q_Y)}="18";
{\ar^{} "2";"4"};
{\ar^{} "4";"6"};
{\ar^{\iota_X\ush} "6";"8"};
{\ar^{\cong}_{\kap} "4";"14"};
{\ar^{\wp^{\Irm}_{X,Y}} "6";"16"};
{\ar^{\om_{Y\sharp}} "8";"18"};
{\ar_{} "12";"14"};
{\ar_{} "14";"16"};
{\ar_{\iota_X\ush\circ\om_{Y\sharp}} "16";"18"};
{\ar@{}|\circlearrowright "4";"16"};
{\ar@{}|\circlearrowright "6";"18"};
\endxy
\end{equation}
commutative. Thus $(\ref{DiagUnivBivProl1})$ and $(\ref{DiagUnivBivProl2})$ give an isomorphism of sequences
\[
\xy
(-63,6)*+{0}="-2";
(-44,6)*+{\EbbPB^{-2}(X,Y)}="0";
(-16,6)*+{\EbbPB^{-1}(J_X,Y)}="2";
(12,6)*+{\EbbPB^{-1}(I_X,Y)}="4";
(40,6)*+{\Ker(\iota_X\ush)}="6";
(60,6)*+{0}="8";
(-63,-6)*+{0}="-12";
(-44,-6)*+{\EbbPB^{-2}(X,Y)}="10";
(-16,-6)*+{\EbbI^{-1}(J_X,Y)}="12";
(12,-6)*+{\EbbI^{-1}(I_X,Y)}="14";
(40,-6)*+{\Ker(\iota_X\ush\circ\om_{Y\sharp})}="16";
(60,-6)*+{0}="18";
{\ar^{} "-2";"0"};
{\ar^{\iota_X\ush} "0";"2"};
{\ar^{j_X\uas} "2";"4"};
{\ar^{i_X\uas} "4";"6"};
{\ar_{} "6";"8"};
{\ar@{=} "0";"10"};
{\ar^{\wp^{\Irm}_{J_X,Y}}_{\cong} "2";"12"};
{\ar^{\wp^{\Irm}_{I_X,Y}}_{\cong} "4";"14"};
{\ar^{\kap}_{\cong} "6";"16"};
{\ar^{} "-12";"10"};
{\ar_{\eta^{\Irm}_{X,Y}} "10";"12"};
{\ar_{j_X\uas} "12";"14"};
{\ar_{i_X\uas} "14";"16"};
{\ar_{} "16";"18"};
{\ar@{}|\circlearrowright "0";"12"};
{\ar@{}|\circlearrowright "2";"14"};
{\ar@{}|\circlearrowright "4";"16"};
{\ar@{}|\circlearrowright "6";"18"};
\endxy
\]
in which we denoted the morphisms induced by $i_X\uas$ to kernels by the same symbols.
Consequently, its lower row is exact if and only if the upper row is exact. This is nothing but {\rm (1)} $\EQ$ {\rm (2)}.

Property (C) holds by \cref{CorUniv_hered}.

It remains to show the universality. Let $(F^{\bullet},\del\ssh,\del\ush)$ be any bivariant $\delta$-functor having  $F^n~=~\Ebb^n$ with the canonical connecting morphisms for $n\ge0$. By \cite[Corollary 5.21, Corollary 5.22]{GNP1}, there exists a unique morphism
$\vp^{\bullet}\colon (F^{\bullet},\del\ssh)\to(\EbbI^{\bullet},\del\ssh)$ of covariant $\delta$-functors and a unique morphism $\psi^{\bullet}\colon (F^{\bullet},\del\ush)\to(\EbbII^{\bullet},\del\ush)$ of contravariant $\delta$-functors.

\medskip
%\newpage
\noindent\und{Construction of $\lam^{(-1)}\colon F^{-1}\to \EbbPB^{-1}$.}

For any $X,Y\in\Csc$, the commutativity of
\[
\xy
(-14,6)*+{F^{-1}(X,Y)}="0";
(14,6)*+{\Csc(X,Q_Y)}="2";
(-14,-6)*+{\Csc(J_X,Y)}="4";
(14,-6)*+{\Ebb(J_X,Q_Y)}="6";
{\ar^{\om_{Y\sharp}} "0";"2"};
{\ar_{\iota_X\ush} "0";"4"};
{\ar^{\iota_X\ush} "2";"6"};
{\ar_{\om_{Y\sharp}} "4";"6"};
{\ar@{}|\circlearrowright "0";"6"};
\endxy
\]
implies that of 
\[
\xy
(-14,6)*+{F^{-1}(X,Y)}="0";
(14,6)*+{\EbbI^{-1}(X,Y)}="2";
(-14,-6)*+{\EbbII^{-1}(X,Y)}="4";
(14,-6)*+{\Ebb(J_X,Q_Y)}="6";
{\ar^{\vp_{X,Y}^{(-1)}} "0";"2"};
{\ar_{\psi_{X,Y}^{(-1)}} "0";"4"};
{\ar^{\iota_X\ush\circ\om_{Y\sharp}} "2";"6"};
{\ar_{\om_{Y\sharp}\circ\iota_X\ush} "4";"6"};
{\ar@{}|\circlearrowright "0";"6"};
\endxy.
\]
Thus we obtain a unique homomorphism $\lam^{(-1)}_{X,Y}\colon F^{-1}(X,Y)\to\EbbPB^{-1}(X,Y)$ which makes
\[
\xy
(0,10)*+{F^{-1}(X,Y)}="0";
(-28,-6)*+{\EbbI^{-1}(X,Y)}="2";
(-18,4)*+{}="3";
(0,-6)*+{\EbbPB^{-1}(X,Y)}="4";
(18,4)*+{}="5";
(28,-6)*+{\EbbII^{-1}(X,Y)}="6";
{\ar_{\vp^{(-1)}_{X,Y}} "0";"2"};
{\ar_{\lam^{(-1)}_{X,Y}} "0";"4"};
{\ar^{\psi^{(-1)}_{X,Y}} "0";"6"};
{\ar^{\wp^{\Irm}_{X,Y}} "4";"2"};
{\ar_{\wp^{\IIrm}_{X,Y}} "4";"6"};
{\ar@{}|\circlearrowright "3";"4"};
{\ar@{}|\circlearrowright "4";"5"};
\endxy
\]
commutative, by the universality of pullback.
It is straightforward to check that $\lam^{(-1)}=\{\lam^{(-1)}_{X,Y}\}_{X,Y\in\Csc}$ forms a natural transformation $\lam^{(-1)}\colon F^{-1}\to \EbbPB^{-1}$, again by using the universality of pullbacks.

\medskip
\noindent\und{Construction of $\lam^{(-2)}\colon F^{-2}\to \EbbPB^{-2}$.}

Remark that the left-upper square in $(\ref{Comm_Added})$ becomes a pullback square. Let us see that $\lam^{(-2)}$ can be obtained in a similar way as for $\lam^{(-1)}$ using this square. Indeed, for any $X,Y\in\Csc$, the commutativity of
\[
\xy
(-14,6)*+{F^{-2}(X,Y)}="0";
(14,6)*+{F^{-1}(X,Q_Y)}="2";
(-14,-6)*+{F^{-1}(J_X,Y)}="4";
(14,-6)*+{\Csc(J_X,Q_Y)}="6";
{\ar^{\om_{Y\sharp}} "0";"2"};
{\ar_{\iota_X\ush} "0";"4"};
{\ar^{\iota_X\ush} "2";"6"};
{\ar_{\om_{Y\sharp}} "4";"6"};
{\ar@{}|\circlearrowright "0";"6"};
\endxy
\]
implies that of 
\[
\xy
(-16,6)*+{F^{-2}(X,Y)}="0";
(16,6)*+{\EbbII^{-1}(X,Q_Y)}="2";
(-16,-6)*+{\EbbI^{-1}(J_X,Y)}="4";
(16,-6)*+{\Csc(J_X,Q_Y)}="6";
{\ar^{\psi_{X,Q_Y}^{(-1)}\circ\om_{Y\sharp}} "0";"2"};
{\ar_{\vp_{J_X,Y}^{(-1)}\circ\iota_X\ush} "0";"4"};
{\ar^{\iota_X\ush} "2";"6"};
{\ar_{\om_{Y\sharp}} "4";"6"};
{\ar@{}|\circlearrowright "0";"6"};
\endxy,
\]
hence we obtain a unique homomorphism $\lam^{(-2)}_{X,Y}\colon F^{-2}(X,Y)\to\EbbPB^{-2}(X,Y)$ which makes
\[
\xy
(0,10)*+{F^{-2}(X,Y)}="0";
(-28,-6)*+{\EbbI^{-1}(J_X,Y)}="2";
(-18,4)*+{}="3";
(0,-6)*+{\EbbPB^{-2}(X,Y)}="4";
(18,4)*+{}="5";
(28,-6)*+{\EbbII^{-1}(X,Q_Y)}="6";
{\ar_{\vp^{(-1)}_{J_X,Y}\circ\iota_X\ush} "0";"2"};
{\ar_{\lam^{(-2)}_{X,Y}} "0";"4"};
{\ar^{\psi^{(-1)}_{X,Q_Y}\circ\om_{Y\sharp}} "0";"6"};
{\ar^{\eta^{\Irm}_{X,Y}} "4";"2"};
{\ar_{\eta^{\IIrm}_{X,Y}} "4";"6"};
{\ar@{}|\circlearrowright "3";"4"};
{\ar@{}|\circlearrowright "4";"5"};
\endxy
\]
commutative, by the universality of pullback.
We can check that $\lam^{(-2)}=\{\lam^{(-2)}_{X,Y}\}_{X,Y\in\Csc}$ forms a natural transformation $\lam^{(-2)}\colon F^{-2}\to \EbbPB^{-2}$, in a straightforward way.

So far we have obtained $\lam^{\bullet}\colon F^{\bullet}\to\EbbPB^{\bullet}$. It remains to show its compatibility with $\del\ssh$ and $\del\ush$. By duality, it is enough to show the compatibility with $\del\ssh$.

Let $\del\in\Ebb(C,A)$ be any element. It is enough to show the commutativity of
\begin{equation}\label{CommD}
\xy
(-13,6)*+{F^{-2}(X,C)}="0";
(13,6)*+{F^{-1}(X,A)}="2";
(-13,-6)*+{\EbbPB^{-2}(X,C)}="4";
(13,-6)*+{\EbbPB^{-1}(X,A)}="6";
{\ar^{\del\ssh} "0";"2"};
{\ar_{\lam^{(-2)}_{X,C}} "0";"4"};
{\ar^{\lam^{(-1)}_{X,A}} "2";"6"};
{\ar_{\del\ssh} "4";"6"};
{\ar@{}|\circlearrowright "0";"6"};
\endxy\quad\text{and}\quad
\xy
(-13,6)*+{F^{-1}(X,C)}="0";
(13,6)*+{\Csc(X,A)}="2";
(-13,-6)*+{\EbbPB^{-1}(X,C)}="4";
(13,-6)*+{\Csc(X,A)}="6";
{\ar^{\del\ssh} "0";"2"};
{\ar_{\lam^{(-1)}_{X,C}} "0";"4"};
{\ar@{=} "2";"6"};
{\ar_{\del\ssh} "4";"6"};
{\ar@{}|\circlearrowright "0";"6"};
\endxy
\end{equation}
for any $X\in\Csc$. By definition, commutativity of the right square of $(\ref{CommD})$ follows immediately from that of
\[
\xy
(-30,8)*+{F^{-1}(X,C)}="0";
(30,8)*+{\Csc(X,A)}="2";
(-30,-8)*+{\EbbPB^{-1}(X,C)}="4";
(4,-8)*+{\EbbI^{-1}(X,C)}="6";
(30,-8)*+{\Csc(X,A)}="8";
{\ar^{\del\ssh} "0";"2"};
{\ar_{\lam^{(-1)}_{X,C}} "0";"4"};
{\ar@{=} "2";"8"};
{\ar^{\vp^{(-1)}_{X,C}} "0";"6"};
{\ar_{\del\ssh} "6";"8"};
{\ar_{\wp^{\Irm}_{X,C}} "4";"6"};
{\ar@/_1.60pc/_{\del\ssh} "4";"8"};
{\ar@{}|\circlearrowright "6"; (-44,2) };
{\ar@{}|\circlearrowright "6"; (14,8) };
{\ar@{}|\circlearrowright "6"; (-4,-16) };
\endxy.
\]
Commutativity of the left square of $(\ref{CommD})$ follows from the commutativity of
\[
\xy
(-30,10)*+{F^{-2}(X,C)}="0";
(30,10)*+{F^{-1}(X,A)}="2";
(-30,-10)*+{\EbbPB^{-2}(X,C)}="4";
(30,-10)*+{\EbbPB^{-1}(X,A)}="6";
(-4,0)*+{\EbbI^{-1}(X,A)}="8";
{\ar^{\del\ssh} "0";"2"};
{\ar_{\lam^{(-2)}_{X,C}} "0";"4"};
{\ar^{\lam^{(-1)}_{X,A}} "2";"6"};
{\ar_{\del\ssh} "4";"6"};
{\ar_{0} "0";"8"};
{\ar_(0.56){\vp^{(-1)}_{X,A}} "2";"8"};
{\ar^(0.56){\wp^{\Irm}_{X,A}} "6";"8"};
{\ar@{}|\circlearrowright "8";(-2,12)};
{\ar@{}|\circlearrowright "8";(40,0)};
{\ar@{}|\circlearrowright "8";(-32,-8)};
\endxy
\]
in which $\vp^{(-1)}_{X,A}\circ\del\ssh=0$ holds since $\EbbI^{-2}(X,C)=0$, and of
\[
\xy
(-36,18)*+{F^{-2}(X,C)}="0";
(36,18)*+{F^{-1}(X,A)}="2";
(-36,-18)*+{\EbbPB^{-2}(X,C)}="4";
(36,-18)*+{\EbbPB^{-1}(X,A)}="6";
(-12,6)*+{F^{-1}(X,Q_C)}="8";
(-12,-6)*+{\EbbII^{-1}(X,Q_C)}="10";
(16,-6)*+{\EbbII^{-1}(X,A)}="12";
{\ar^{\del\ssh} "0";"2"};
{\ar_{\lam^{(-2)}_{X,C}} "0";"4"};
{\ar^{\lam^{(-1)}_{X,A}} "2";"6"};
{\ar_{\del\ssh} "4";"6"};
{\ar_{\om_{C\sharp}} "0";"8"};
{\ar_{\psi^{(-1)}_{X,Q_C}} "8";"10"};
{\ar_{t\sas} "8";"2"};
{\ar_{t\sas} "10";"12"};
{\ar_{\psi^{(-1)}_{X,A}} "2";"12"};
{\ar_{\wp^{\IIrm}_{X,A}} "6";"12"};
{\ar_{\eta^{\IIrm}_{X,C}} "4";"10"};
{\ar@{}|\circlearrowright "10";(-44,4)};
{\ar@{}|\circlearrowright "10";(24,10)};
{\ar@{}|\circlearrowright "10";(-4,32)};
{\ar@{}|\circlearrowright "10";(14,-20)};
{\ar@{}|\circlearrowright "10";(68,0)};
\endxy
\]
for any $t\in\Csc(Q_C,A)$ such that $t\sas\om_C=\del$, and the universality of pullback.

The above argument shows that $\lam^{\bullet}\colon (F^{\bullet},\del\ssh,\del\ush)\to (\EbbPB^{\bullet},\del\ssh,\del\ush)$ is a morphism of bivariant $\delta$-functors.
%$\Hom$-prolongations. 
Its uniqueness is deduced from the uniqueness of $\vp^{\bullet}$ and $\psi^{\bullet}$, in the usual way.
\end{proof}

\begin{remark} %\Misha{Cite \cite{GNP1} for (NI), (NII) (?)}
Under \cref{assumption: hered_enough}, we have the following (see  \cite[Condition 5.12]{GNP1} for the undefined notation, and also \cref{rem:NI_and_NII}).
%\Yann{We might not have recalled (NI) and (NII)} 
\begin{enumerate}
\item $\CEs$ satisfies {\rm (NI$+$)} if and only if it satisfies {\rm (NI)} and {\rm (N$+$)}.
\item Dually, $\CEs$ satisfies {\rm (NII$+$)} if and only if it satisfies {\rm (NII)} and {\rm (N$+$)}.
\end{enumerate}

In particular, if $\CEs$ satisfies both conditions {\rm (NI$+$)} and {\rm (NII$+$)}, it satisfies condition {\rm (N$+$)}.
Indeed, in this case, we have $\EbbPB^{\bullet} \cong \EbbI^{\bullet} \cong \EbbII^{\bullet}$ and \cref{PropEquivAcycBiProl}.(6) together with the universality of $(\EbbPB^{\bullet}, \del\ssh, \del\ush)$ follows from \cite[Corollary 5.39]{GNP1} recalled in Section 7.1.

The converse is not true. A basic example of a category satisfying condition {\rm (N$+$)}, but not having $\EbbI^{\bullet} \cong \EbbII^{\bullet}$ is given by the cluster category of type $A_2$ with the largest relative structure making a given cluster-tilting object projective. See \cref{ex: A_2}, \cref{ex: acyclic_seed} for details.
\end{remark}

\subsection{Balanced $\delta$-functor in embedded reduced $0$-Auslander categories} 
%\Misha{Replace $\Csc/...(-,-)$ by $\Dsc/...(-,-)$ whenever one of the arguments is not necessarily inside $\Csc$!!!}
\label{ssection:computation}

Let $\Dsc$ be a triangulated category with a rigid, full subcategory $\Tsc$.
Define $\Csc$ to be the full subcategory $\Tsc\ast\susp\Tsc$ of $\Dsc$.
Endow $\Dsc$ with the largest relative extriangulated structure making objects in $\Tsc$ projective.
Then $\Csc$ is extension closed in $\Dsc$ for this extriangulated structure, and thus inherits an extriangulated structure $(\Csc,\Ebb,\sfr)$. The latter is $0$-Auslander, as  discussed briefly in \cref{Ex:RigidTriangulated} and in detail in \cref{Ex:RelativeTilting}, and has $\Tsc$ and $\susp\Tsc$ as the full subcategories of projectives and of injectives, respectively. 

In what follows, the term ``triangle'' will always refer to triangles in the triangulated category $\Dsc$, while ``conflations'' will refer to $\sfr$-triangles in $\Csc$.

\begin{remark} \label{rem:Ebb_in_embedded}
 By definition, for any $A,C\in\Csc$, we have $\Ebb(C,A) = [\susp\Tsc](C,\susp A) \subseteq \Dsc(C, \susp A)$.
\end{remark}

For any $X\in\Csc$, there is a triangle $T_1^X \to T_0^X \to X \to \susp T_1^X$, with $T_0^X,T_1^X\in\Tsc$. We fix such a triangle for each $X$.
% It induces an exact sequence in $\Dsc\Mod$:
% \[
% \Csc(T_1,-)\to\Csc(T_0,-)\to\Dsc(\susp^{-1}A,-)
% \]
% where the image of the last morphism is $[\Tsc](\susp^{-1}A,-)$ because $\Tsc$ is rigid.
% For any $C\in\Csc$, we thus have an exact sequence
% \[
% 0\to \Dsc\Mod\!\left([\Tsc](\susp^{-1}A,-),\Csc(C,-)\right) \to
% \Dsc\Mod\!\left(\Csc(T_0,-),\Csc(C,-)\right) \to
% \Dsc\Mod\!\left(\Csc(T_1,-),\Csc(C,-)\right).
% \]
% By the Yoneda lemma, $\EbbI^{-1}(C,A)$ identifies with the kernel of $\Csc(C,T_0)\to\Csc(C,T_1)$ (see also \cref{Rem:InsteadNegative}  for $n=1$).
It defines a projective resolution of the form (\ref{sTri_Chosen_proj}) of $X$ in $\Tsc$. By shifting this triangle in $\Dsc$, we also have a triangle $X \to \susp T_1^X \to \susp T_0^X \to \susp X$, defining an injective coresolution of the form (\ref{sTri_Chosen_inj}) of $X$ in $\Tsc$. In other words, in the notation of (\ref{sTri_Chosen_proj}), (\ref{sTri_Chosen_inj}), we have
\[P_X = T_0^X, Q_X = T_1^X, I_X = \susp T_1^X, J_X = \susp T_0^X.\]

\begin{lemma} \label{lem:Ebb-1_0_Auslander_one-sided}
%\new{Assume $\Tsc=\susp^2\Tsc$.
 For any $A,C\in\Csc$, we have
\begin{eqnarray*}
  \EbbI^{-1}(C,A) & \cong & \Dsc/_{[\susp^{-1}\Tsc]}(C,\susp^{-1}A) \\
 \EbbII^{-1}(C,A) & \cong &  \Dsc/_{[\susp\Tsc]}(C,\susp^{-1}A).
\end{eqnarray*}

 %Moreover, for any $n\geq2$, we have $\Ebb^n(C,A)=0$, hence $\EbbI^{-n}(C,A) = 0 = \EbbII^{-n}(C,A)$.
\end{lemma}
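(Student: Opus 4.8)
The idea is to unfold the recursive description of $\EbbI^{-1}$ and $\EbbII^{-1}$ recalled in Subsection~7.1 and identify the resulting subquotients of $\Hom$-spaces with the displayed quotients of morphism spaces in $\Dsc$. Recall from \cite[Remark 5.23]{GNP1} that, since $\CEs$ satisfies \cref{assumption: hered_enough}, we have
\[
\EbbI^{-1}(C,A)\cong\Ker\big(\Csc(C,Q_A)\ov{(q_A)\sas}{\lra}\Csc(C,P_A)\big)
\]
for any dominant $\sfr$-triangle $Q_A\ov{q_A}{\lra}P_A\ov{p_A}{\lra}A\ov{\om_A}{\dra}$, and dually
\[
\EbbII^{-1}(C,A)\cong\Ker\big(\Csc(J_C,A)\ov{(j_C)\uas}{\lra}\Csc(I_C,A)\big)
\]
for any dominant $\sfr$-triangle $A\ov{i_C}{\lra}I_C\ov{j_C}{\lra}J_C\ov{\iota_C}{\dra}$. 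Here we use the $\sfr$-triangles chosen above, for which $Q_A=T_1^A$, $P_A=T_0^A$, $I_C=\susp T_1^C$, $J_C=\susp T_0^C$, and the maps $q_A$, $j_C$ are (up to the shift identification) the maps $T_1^A\to T_0^A$, respectively $\susp T_1^C\to\susp T_0^C$ coming from the fixed triangle $T_1^X\to T_0^X\to X\to\susp T_1^X$.

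First, for $\EbbI^{-1}$: the triangle $T_1^A\to T_0^A\to A\to\susp T_1^A$ rotates to $\susp^{-1}A\to T_1^A\to T_0^A\to A$, which exhibits $\susp^{-1}A$ as the cocone, in $\Dsc$, of $T_1^A\to T_0^A$. Applying $\Dsc(C,-)$ to this triangle gives an exact sequence
\[
\Dsc(C,\susp^{-1}T_0^A)\to\Dsc(C,\susp^{-1}A)\to\Dsc(C,T_1^A)\ov{(q_A)\sas}{\lra}\Dsc(C,T_0^A),
\]
so $\Ker\big(\Dsc(C,T_1^A)\to\Dsc(C,T_0^A)\big)$ is the cokernel of $\Dsc(C,\susp^{-1}T_0^A)\to\Dsc(C,\susp^{-1}A)$. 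I would then argue that a morphism $C\to\susp^{-1}A$ factors through $\susp^{-1}T_0^A$ if and only if it factors through $\susp^{-1}\Tsc$ (one direction is trivial; for the other, use that $T_0^A\to A$ is a right $\Tsc$-approximation, hence $\susp^{-1}T_0^A\to\susp^{-1}A$ is a right $\susp^{-1}\Tsc$-approximation, so any map $C\to\susp^{-1}T'$ with $T'\in\Tsc$ composed with $\susp^{-1}T'\to\susp^{-1}A$ factors through it). This yields $\EbbI^{-1}(C,A)\cong\Dsc/_{[\susp^{-1}\Tsc]}(C,\susp^{-1}A)$. It remains to check that under this identification $\Csc(C,T_1^A)=\Dsc(C,T_1^A)$, which is clear since $\Csc$ is a full subcategory of $\Dsc$; and that the kernel taken in $\Csc$ versus in $\Dsc$ agree, which also follows from fullness.

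For $\EbbII^{-1}$ the argument is dual. The triangle $A\to\susp T_1^C\to\susp T_0^C\to\susp A$ applied with $\Dsc(-,A)$ — more precisely, starting from the triangle $T_1^C\to T_0^C\to C\to\susp T_1^C$, rotate to $T_0^C\to C\to\susp T_1^C\to\susp T_0^C$ and apply $\Dsc(-,\susp^{-1}A)$ — gives an exact sequence realizing $\EbbII^{-1}(C,A)=\Ker\big(\Csc(\susp T_0^C,A)\to\Csc(\susp T_1^C,A)\big)$, equivalently $\Ker\big(\Dsc(C,\susp^{-1}A)\leftarrow$-related maps$\big)$, as the cokernel of $\Dsc(\susp T_0^C,A)\to\Dsc(C,\susp^{-1}A)$ (after the shift adjustment $\Dsc(\susp T_i^C,A)\cong\Dsc(T_i^C,\susp^{-1}A)\cong\Dsc(C\text{-related})$). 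Using that $C\to\susp T_1^C$ is a left $\susp\Tsc$-approximation, a morphism $C\to\susp^{-1}A$, viewed appropriately, lies in the image iff it factors through $\susp\Tsc$; this gives $\EbbII^{-1}(C,A)\cong\Dsc/_{[\susp\Tsc]}(C,\susp^{-1}A)$.

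\textbf{Main obstacle.} The routine-looking parts are genuinely routine, but the one place requiring care is the bookkeeping of shifts and of which approximation property is being used where — in particular verifying that ``factors through $\susp^{-1}T_0^A$'' can be upgraded to ``factors through an arbitrary object of $\susp^{-1}\Tsc$'' (and dually for $\susp\Tsc$), which relies precisely on $T_0^A\to A$ being a right $\Tsc$-approximation and $C\to\susp T_1^C$ being a left $\susp\Tsc$-approximation, both of which hold because $\Tsc$ and $\susp\Tsc$ are the projectives and injectives of the $0$-Auslander structure. I also need to confirm that the connecting/kernel maps appearing in the recursive formula for $\EbbI^{-1}$ and $\EbbII^{-1}$ are, up to canonical identification, exactly $(q_A)\sas$ and $(j_C)\uas$ for the specific $\sfr$-triangles chosen from the triangle $T_1^X\to T_0^X\to X\to\susp T_1^X$; this is where one must be slightly attentive to signs and to the identification of $\Ebb$ with $[\susp\Tsc](-,\susp-)$ from \cref{rem:Ebb_in_embedded}, but no real difficulty is expected.
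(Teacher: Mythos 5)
Your proposal is correct and follows essentially the same route as the paper: identify $\EbbI^{-1}(C,A)$ (resp.\ $\EbbII^{-1}(C,A)$) with the kernel of $\Csc(C,T_1^A)\to\Csc(C,T_0^A)$ (resp.\ of $\Csc(\susp T_0^C,A)\to\Csc(\susp T_1^C,A)$) via the recursion of \cite[Remark 5.23]{GNP1}, read that kernel off the long exact sequence of the rotated triangle as a cokernel, and use rigidity of $\Tsc$ to see that the relevant image is the full ideal $[\susp^{-1}\Tsc]$ (resp.\ $[\susp\Tsc]$). The paper compresses your approximation argument into the phrase ``since $\Tsc$ is rigid,'' but the content is identical, and your shift bookkeeping for the second isomorphism lands on the same answer as the paper's computation with $\Dsc/_{[\susp^2\Tsc]}(\susp C,A)\cong\Dsc/_{[\susp\Tsc]}(C,\susp^{-1}A)$.
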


\begin{proof}
$\EbbI^{-1}(C,A)$ identifies with the kernel of $\Csc(C,T_1^A)\to\Csc(C,T_0^A)$.
Applying $\Dsc(C,-)$ to the triangle $\susp^{-1}T_0^A\to \susp^{-1}A\to T_1^A \to T_0^A$ gives the exact sequence
\[
 \Dsc(C,\susp^{-1}T_0^A)\to\Dsc(C,\susp^{-1}A)\to\Csc(C,T_1^A)\to\Csc(C,T_0^A)
\]
from which we deduce that $\EbbI^{-1}(C,A)$ identifies with
the cokernel of the first map.
Since $\Tsc$ is rigid, this cokernel is precisely $\Dsc/_{[\susp^{-1}\Tsc]}(C,\susp^{-1}A)$.
Similar computations based on a triangle of the form $\susp T_1^A\to\susp T_0^A\to \susp C\to \susp^2 T_1^A$ show that $\EbbII^{-1}(C,A)$ identifies with $\Dsc/_{[\susp^2\Tsc]}(\susp C,A)\cong\Dsc/_{[\susp\Tsc]}(C,\susp^{-1}A)$.
\end{proof}

\begin{remark}
From \cref{lem:Ebb-1_0_Auslander_one-sided} and its proof, it is straightforward to check that for any $A,C\in\Csc$, the morphisms $\iota_C\ush\circ \om_{A\sharp}: \EbbI^{-1}(C, A) \to \Ebb(J_C, Q_A)$ and  $\om_{A\sharp}\circ\iota_C\ush: \EbbII^{-1}(C, A) \to \Ebb(J_C, Q_A)$ factor through the ideal quotients $\Ebb_{\mathbf{i}}^{-1}(C, \susp^{-1} A) \to \Dsc/_{\left\langle[\susp\Tsc],[\susp^{-1}\Tsc]\right\rangle}(C,\susp^{-1}A)$  for $\sufe$.
Indeed, 
$\om_{A\sharp}: \EbbE^{-1}(C, A) \to \Csc(C, Q_A)$ is isomorphic to $\Dsc/_{[\susp^{-1}\Tsc]}(C,\susp^{-1}A) \hookrightarrow \Dsc/_{[\susp^{-1}\Tsc]}(C,T_1^A)$ (recall that $\Tsc$ is rigid, hence so is $\susp^{-1}\Tsc$), and the image of $\iota_C\ush:  \Dsc(C,T_1^A) \to \Dsc(T_0^C, T_1^A)$ is the cokernel of $\Dsc(\susp T_1^C, T_1^A) \ov{i_C\uas}{\lra} \Dsc(C,T_1^A),$ hence
is isomorphic to 
\[\Dsc/_{[\susp\Tsc]}(C,T_1^A) = \Dsc/_{\left\langle[\susp\Tsc],[\susp^{-1}\Tsc]\right\rangle}(C,T_1^A).\]
Then $\iota_X\ush\circ \om_{Y\sharp}$ factors as
\[\Dsc/_{[\susp^{-1}\Tsc]}(C,\susp^{-1}A) \twoheadrightarrow  \Dsc/_{\left\langle[\susp\Tsc],[\susp^{-1}\Tsc]\right\rangle}(C,\susp^{-1}A)
\to
\Dsc/_{\left\langle[\susp\Tsc],[\susp^{-1}\Tsc]\right\rangle}(C,T_1^A)
\hookrightarrow
\Dsc(T_0^C, T_1^A),
\]
where the first morphism is the cokernel of 
$\Dsc/_{[\susp^{-1}\Tsc]}(\susp T_1^C, \susp^{-1} A) \ov{i\uas}{\lra} \Dsc/_{[\susp^{-1}\Tsc]}(C, \susp^{-1} A),$
and the second morphism is the induced morphism of cokernels. A similar factorisation of  $\om_{A\sharp}\circ\iota_C\ush$ can be written via dual arguments.
The pullback diagram in \cref{DefUniv_hered} then gives
\begin{eqnarray} \label{EBP_explicit}
  \EbbPB^{-1}(C,A) & \cong & \Dsc/_{([\susp\Tsc] \cap [\susp^{-1}\Tsc])}(C,\susp^{-1}A).
\end{eqnarray}

Note that the ideal $([\susp\Tsc] \cap [\susp^{-1}\Tsc])(-, -)$ we quotient out is  formed by the morphisms that admit a factorisation through an object in $\susp\Tsc$ and a possibly different factorisation through an object in $\susp^{-1}\Tsc$. This ideal can be strictly larger than the ideal generated by the morphisms which admit a factorisation through an object in $\susp\Tsc$ and an object in $\susp^{-1}\Tsc$, in some order. It can also be strictly larger than the ideal of the morphisms factoring through an object in the intersection $\susp\Tsc \cap \susp^{-1}\Tsc$ of subcategories in $\Dsc$.
\end{remark}

\begin{lemma} \label{lem: E-2_explicit}
\[\Ebb^{-2}(C, A) =  \Dsc/_{[\susp^{-2}\Tsc]}(T_0^C,\susp^{-2}A)\times_{\Dsc(T_0^C,\susp^{-1}T_1^A)}\Dsc/_{[\susp\Tsc]}(C,\susp^{-1}T_1^A).\]
\end{lemma}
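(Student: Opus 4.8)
\textbf{Proof proposal for Lemma~\ref{lem: E-2_explicit}.}
The plan is to unwind \eqref{eq:E^-2_as_fiber_product} using the explicit computations already available. Recall from \cref{rem:E^-2_as_fiber_product} that
\[
\Ebb^{-2}(C, A) = \EbbI^{-1}(J_C, A) \times_{\Ccal(J_C, Q_A)} \EbbII^{-1}(C, Q_A),
\]
where the maps into $\Csc(J_C,Q_A)$ are $\om_{A\sharp}$ and $\iota_C\ush$ respectively. With our choices of (co)resolutions we have $J_C = \susp T_0^C$ and $Q_A = T_1^A$. So the first step is simply to substitute these into the three terms of the fibre product and identify the two structure maps concretely.

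First I would compute $\EbbI^{-1}(J_C,A) = \EbbI^{-1}(\susp T_0^C, A)$. By \cref{lem:Ebb-1_0_Auslander_one-sided} this is $\Dsc/_{[\susp^{-1}\Tsc]}(\susp T_0^C, \susp^{-1}A) \cong \Dsc/_{[\susp^{-2}\Tsc]}(T_0^C, \susp^{-2}A)$, after shifting by $\susp^{-1}$; note $\susp^{-1}\Tsc$ becomes $\susp^{-2}\Tsc$ under this shift, which matches the first factor in the statement. Next, $\EbbII^{-1}(C,Q_A) = \EbbII^{-1}(C, T_1^A) \cong \Dsc/_{[\susp\Tsc]}(C, \susp^{-1}T_1^A)$, again by \cref{lem:Ebb-1_0_Auslander_one-sided}; this is the second factor. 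For the base of the fibre product, $\Csc(J_C, Q_A) = \Csc(\susp T_0^C, T_1^A)$; since both $\susp T_0^C$ and $T_1^A$ lie in $\Csc$, this is just $\Dsc(\susp T_0^C, T_1^A) \cong \Dsc(T_0^C, \susp^{-1}T_1^A)$. This matches the base in the statement.

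The remaining point is to check that, under these identifications, the two maps into $\Dsc(T_0^C,\susp^{-1}T_1^A)$ coming from $\om_{A\sharp}$ and $\iota_C\ush$ agree with the canonical quotient maps $\Dsc/_{[\susp^{-2}\Tsc]}(T_0^C,\susp^{-2}A) \to \Dsc(T_0^C,\susp^{-1}T_1^A)$ and $\Dsc/_{[\susp\Tsc]}(C,\susp^{-1}T_1^A) \to \Dsc(T_0^C,\susp^{-1}T_1^A)$ that appear implicitly in the fibre product in the statement. Tracing through the proof of \cref{lem:Ebb-1_0_Auslander_one-sided}: the map $\om_{A\sharp}\colon \EbbI^{-1}(J_C,A) \to \Csc(J_C, Q_A)$ is induced by the connecting morphism of the triangle $T_1^A \to T_0^A \to A \to \susp T_1^A$, i.e.\ on representatives it is (a shift of) postcomposition with the component $\susp^{-1}A \to T_1^A$, which is exactly the inclusion of $\Dsc/_{[\susp^{-2}\Tsc]}(T_0^C,\susp^{-2}A)$ into $\Dsc(T_0^C,\susp^{-1}T_1^A)$ discussed in the remark after \cref{lem:Ebb-1_0_Auslander_one-sided}. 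Dually, $\iota_C\ush$ is induced by precomposition with $T_0^C \to \susp^{-1}(\susp T_0^C)$ and corresponds to the canonical map out of $\Dsc/_{[\susp\Tsc]}(C,\susp^{-1}T_1^A)$. Since a pullback only depends on the pair of maps into the base, once these identifications of maps are verified the isomorphism of fibre products follows formally.

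I expect the only mild obstacle to be the bookkeeping of shifts and of which ideal $[\susp^k\Tsc]$ is being quotiented out at each stage, together with making the two structure maps literally agree (rather than merely agree up to the natural isomorphisms). All of this is routine once \cref{lem:Ebb-1_0_Auslander_one-sided}, the remark following it, and \cref{rem:E^-2_as_fiber_product} are in place, so no genuinely new idea is needed.
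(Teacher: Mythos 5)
Your proposal is correct and follows essentially the same route as the paper: both start from the fibre-product description of $\Ebb^{-2}$ in \cref{rem:E^-2_as_fiber_product}, substitute $J_C=\susp T_0^C$ and $Q_A=T_1^A$, and identify the three terms via \cref{lem:Ebb-1_0_Auslander_one-sided} and a shift by $\susp^{-1}$. Your extra check that the two structure maps into $\Dsc(T_0^C,\susp^{-1}T_1^A)$ match the canonical ones is a welcome bit of care that the paper leaves implicit, but it does not change the argument.
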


\begin{proof}
This follows directly from \cref{rem:E^-2_as_fiber_product}. Indeed, in the notation of that remark, we have
\begin{eqnarray*}
\EbbI^{-1}(J_C, A) = \Dsc/_{[\susp^{-1}\Tsc]}(\susp T_0^C,\susp^{-1}A) = \Dsc/_{[\susp^{-2}\Tsc]}(T_0^C,\susp^{-2}A);\\
\Csc(J_C, Q_A) = \Csc(\susp T_0^C, T_1^A) = \Dsc(T_0^C, \susp^{-1} T_1^A);\\
\EbbII^{-1}(C, Q_A) =  \Dsc/_{[\susp\Tsc]}(C,\susp^{-1}T_1^A).
\end{eqnarray*}
\end{proof}

\begin{remark} %\Misha{Merge with the remark just below?}\Yann{Good point. I just did}
An element in $\Dsc(T_0^C,\susp^{-2}A)\times_{\Dsc(T_0^C,\susp^{-1}T_1^A)}\Dsc(C,\susp^{-1}T_1^A)$ is a commutative square
\[
 \xy
(-54,0)*+{(\ast)};
(-18,7)*+{T_0^C}="4";
(0,7)*+{C}="6";
(18,7)*+{\susp T_1^C}="8";
(-36,-7)*+{\susp^{-2}T_0^A}="12";
(-18,-7)*+{\susp^{-2}A}="14";
(0,-7)*+{\susp^{-1}T_1^A.}="16";
{\ar "4";"6"};
{\ar "6";"8"};
{\ar_{f} "4";"14"};
{\ar^{g} "6";"16"};
{\ar "12";"14"};
{\ar "14";"16"};
{\ar@{}|\circlearrowright "4";"16"};
\endxy
\]
Because $\Tsc$ is rigid, the morphism $f$ belongs to the ideal $[\susp^{-2}\Tsc]$ if and only if it factors through the morphism $\susp^{-2}T_0^A\to\susp^{-2}A$.
Similarly, $g$ belongs to $[\susp\Tsc]$ if and only if it factors through $C\to\susp T_1^C$.
This shows that commutativity of the square only depends on the classes of $f$ modulo $[\susp^{-2}\Tsc]$ and of $g$ modulo $[\susp\Tsc]$.
% \Misha{I changed notation to match that from above, needs to be proofread.}\Yann{It looks fine to me}
We also have: \begin{eqnarray*}
   g\in[\susp\Tsc] & \iff &T_0^C\to C\xrightarrow{g}\susp^{-1}T_1^A \text{ vanishes} \\
   & \iff & T_A\xrightarrow{f} \susp^{-2}A\to \susp^{-1}T_1^A \text{ vanishes} \\
   & \iff & f\in[\susp^{-2}\Tsc].
 \end{eqnarray*}
Hence $\Ebb^{-2}$ is the kernel of the canonical map
\[
\Dsc(T_0^C,\susp^{-2}A)\times_{\Dsc(T_0^C,\susp^{-2}\susp T_1^A)}\Dsc(C,\susp^{-2}\susp T_1^A) \longrightarrow 
\Dsc(T_0^C,\susp^{-2}\susp T_1^A).
\]
%where $T_0^C$ can be seen as the projective cover of $C$, and $\susp T_1$ as the injective envelope of $A$.
\end{remark}

\begin{definition}
\label{def:negativeComputation}
%Let $C,A\in\Csc$. Given two conflations $T_1\infl T_0\defl A$, and $T_1^C\infl T_0^C\defl C$ with $T_0,T_1,T_0^C,T_1^C$ in $\Tsc$, define
We define
\begin{eqnarray*}
 \EbbE^{-1}(C,A) & := & \Dsc/_{\left[\susp\Tsc\!\to\!\susp^{-1}\Tsc\right]}(C,\susp^{-1}A). 
 %\\
 %\EbbE^{-2}(C,A) & =  & \Dsc/_{[\susp^{-2}\Tsc]}(T_0^C,\susp^{-2}A)\times_{\Dsc(T_0^C,\susp^{-1}T_1)}\Dsc/_{[\susp\Tsc]}(C,\susp^{-1}T_1).
\end{eqnarray*}
\end{definition}

\begin{lemma} \label{lem:PropCForEmbE}
\begin{enumerate}[(a)]
 \item For any $A,C\in\Csc$ and any $T\in\Tsc$, we have $\EbbE^{-1}(C,T)=\EbbII^{-1}(C,T)$ and $\EbbE^{-1}(\susp T, A)=\EbbI^{-1}(\susp T,A)$.
 \item $\EbbE^{-1}(C,A)$ %and $\EbbE^{-2}(C,A)$ are
 is bifunctorial.
 %\item Any other choices of projective resolutions yield naturally isomorphic bifunctors.
\end{enumerate}
\end{lemma}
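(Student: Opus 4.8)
\textbf{Proof plan for \cref{lem:PropCForEmbE}.}

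The plan is to prove part (a) by exhibiting, in each of the two cases, that the ideal being quotiented out in the definition of $\EbbE^{-1}$ coincides with the ideal being quotiented out in the description of $\EbbII^{-1}$ (respectively $\EbbI^{-1}$) from \cref{lem:Ebb-1_0_Auslander_one-sided}. Recall that $\EbbE^{-1}(C,A) = \Dsc/_{[\susp\Tsc\to\susp^{-1}\Tsc]}(C,\susp^{-1}A)$, where $[\susp\Tsc\to\susp^{-1}\Tsc]$ denotes the ideal of morphisms factoring as a composite through a morphism from an object of $\susp\Tsc$ to an object of $\susp^{-1}\Tsc$; meanwhile $\EbbII^{-1}(C,A)\cong\Dsc/_{[\susp\Tsc]}(C,\susp^{-1}A)$ and $\EbbI^{-1}(C,A)\cong\Dsc/_{[\susp^{-1}\Tsc]}(C,\susp^{-1}A)$. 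For the first equality in (a), I would set $A = T\in\Tsc$; then a morphism $C\to\susp^{-1}T$ lies in $[\susp\Tsc\to\susp^{-1}\Tsc]$ if it factors through some $\susp T'\to\susp^{-1}T''\to\susp^{-1}T$, but since $\Tsc$ is rigid, $\Dsc(\susp T', \susp^{-1}T'')= \Dsc(T', \susp^{-2}T'')=0$, so such a factorisation is impossible unless the morphism already factors through an object of $\susp\Tsc$ directly (the ``second leg'' of a factorisation through $\susp\Tsc$ alone). Conversely any factorisation through $\susp\Tsc$ trivially gives a factorisation through a morphism $\susp T'\to \susp T'$ — but that target is not in $\susp^{-1}\Tsc$ in general; the correct observation is rather that $\susp^{-1}T\in\susp^{-1}\Tsc$, so one may always complete a factorisation $C\to\susp T'\to\susp^{-1}T$ using the morphism $\susp T'\to\susp^{-1}T$ itself as the required morphism from $\susp\Tsc$ to $\susp^{-1}\Tsc$. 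Hence $[\susp\Tsc\to\susp^{-1}\Tsc](C,\susp^{-1}T) = [\susp\Tsc](C,\susp^{-1}T)$, giving $\EbbE^{-1}(C,T)=\EbbII^{-1}(C,T)$. The second equality, $\EbbE^{-1}(\susp T, A) = \EbbI^{-1}(\susp T, A)$, is dual: here the source $\susp T$ lies in $\susp\Tsc$, so any factorisation $\susp T\to\susp^{-1}T'\to\susp^{-1}A$ through $\susp^{-1}\Tsc$ can be promoted to a factorisation through the morphism $\susp T\to\susp^{-1}T'$ from $\susp\Tsc$ to $\susp^{-1}\Tsc$; conversely rigidity of $\Tsc$ forces any honest factorisation through a morphism $\susp T''\to\susp^{-1}T'$ to have its first leg $\susp T\to\susp T''$ — which is arbitrary — but then the composite lies in $[\susp^{-1}\Tsc]$ via the morphism $\susp^{-1}T'\to\susp^{-1}A$ obtained by composition; I would spell this out carefully, again using $\Dsc(T,\susp^{-2}T')=0$ to show no further morphisms arise.

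For part (b), the bifunctoriality, I would argue that $\EbbE^{-1}(C,A)$ is a quotient of the bifunctor $(C,A)\mapsto\Dsc(C,\susp^{-1}A)$ by a sub-bifunctor. Concretely, the assignment sending $(C,A)$ to the $R$-submodule $[\susp\Tsc\to\susp^{-1}\Tsc](C,\susp^{-1}A)\subseteq\Dsc(C,\susp^{-1}A)$ is functorial in both arguments: given $c\colon C'\to C$ and $a\colon A\to A'$, and a morphism $\varphi\colon C\to\susp^{-1}A$ factoring as $C\to\susp T'\xrightarrow{u}\susp^{-1}T''\to\susp^{-1}A$ with $u$ a morphism from $\susp\Tsc$ to $\susp^{-1}\Tsc$, the composite $\susp^{-1}a\circ\varphi\circ c$ factors as $C'\to C\to\susp T'\xrightarrow{u}\susp^{-1}T''\to\susp^{-1}A\to\susp^{-1}A'$, hence through the same morphism $u$; so it again lies in $[\susp\Tsc\to\susp^{-1}\Tsc](C',\susp^{-1}A')$. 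This shows the ideal $[\susp\Tsc\to\susp^{-1}\Tsc](-,\susp^{-1}-)$ is a sub-bifunctor of $\Dsc(-,\susp^{-1}-)$, and the quotient $\EbbE^{-1}$ is therefore a well-defined $R$-bilinear bifunctor $\Csc\op\times\Csc\to\Mod R$.

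I do not expect a serious obstacle here; this is essentially a bookkeeping lemma. The only point requiring slight care is the bidirectional containment of ideals in part (a): one must be precise about what ``factoring through a morphism'' means, and use the rigidity vanishing $\Dsc(T,\susp^{-2}T')= \Dsc(T,\susp^{2}T')=0$ at exactly the right spot to rule out spurious factorisations; the identity morphisms of $\susp T'$ and $\susp^{-1}T'$ do the rest of the work for the reverse inclusions. I would present the two cases of (a) in parallel, noting that the second is the $\Dsc$-opposite of the first, and keep the verification of (b) to the one-paragraph functoriality argument above.
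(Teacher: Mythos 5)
Your proof is correct and follows essentially the same route as the paper: the paper's proof of (a) is just the chain of equalities $\EbbE^{-1}(C,T)=\Dsc/_{[\susp\Tsc\to\susp^{-1}\Tsc]}(C,\susp^{-1}T)=\Dsc/_{[\susp\Tsc]}(C,\susp^{-1}T)=\EbbII^{-1}(C,T)$ (and dually), whose justification is exactly your observation that the target $\susp^{-1}T$ (resp.\ the source $\susp T$) already lies in $\susp^{-1}\Tsc$ (resp.\ $\susp\Tsc$), so the middle morphism of any factorisation through $\susp\Tsc$ (resp.\ $\susp^{-1}\Tsc$) can itself serve as the required morphism $\susp\Tsc\to\susp^{-1}\Tsc$; and (b) is, as you say, inherited from the bifunctoriality of $\Dsc(-,-)$ because the ideal is stable under pre- and post-composition.

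One genuine error in your write-up, though it does not affect the validity of the argument: the repeated claim that rigidity of $\Tsc$ gives $\Dsc(\susp T',\susp^{-1}T'')=\Dsc(T',\susp^{-2}T'')=0$ is false. Rigidity only says $\Dsc(T,\susp T')=0$; it gives no control over $\Dsc(T,\susp^{-2}T')$ (for instance, in a $2$-Calabi--Yau ambient category this space is dual to $\Dsc(T',T)$ and is typically nonzero), and your parenthetical identification $\Dsc(T,\susp^{-2}T')=\Dsc(T,\susp^{2}T')$ is also not valid. Fortunately the inclusions you invoke this for, namely $[\susp\Tsc\to\susp^{-1}\Tsc]\subseteq[\susp\Tsc]$ and $[\susp\Tsc\to\susp^{-1}\Tsc]\subseteq[\susp^{-1}\Tsc]$, are trivial (any morphism factoring through a morphism $\susp T'\to\susp^{-1}T''$ in particular factors through the object $\susp T'$ and through the object $\susp^{-1}T''$), so these false vanishing statements should simply be deleted.
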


\begin{proof}
(a) For any $T\in\Tsc$ and any $C\in\Csc$ we have
\begin{eqnarray*}
 \EbbE^{-1}(C,T) & = & \Dsc/_{\left[\Sigma\Tsc\to\Sigma^{-1}\Tsc\right]}(C,\Sigma^{-1}T) \\
 & = & \Dsc/_{[\Sigma\Tsc]}(C,\Sigma^{-1}T) \\
 & = & \EbbII^{-1}(C,T).
\end{eqnarray*}
The second equality is obtained similarly.

(b) The bifunctoriality of $\EbbE^{-1}$ is inherited from that of $\Dsc(-,-)$.
\end{proof}

%\begin{definition}
%\Misha{Define bivariant connected sequence of functors. Maybe in plain text.}

\begin{definition} \label{def: conn_EbbE}
%Fix three triangles
%\[
%T_1^U \xrightarrow{i_U} T_0^U \xrightarrow{p_U} U \xrightarrow{\eps_U} \susp T_1^U\text{, } 
%T_1^X \xrightarrow{i_X} T_0^X \xrightarrow{p_X} X \xrightarrow{\eps_X} \susp T_1^X  \text{ and }
%T_1^Z \xrightarrow{i_Z}T_0^Z \xrightarrow{p_Z} Y\xrightarrow{\eps_Z} \susp T_1^Z 
%\]
%They give projective resolutions of $U, X$ and $Z$ in $\Csc$.
Set 
\[\EbbE^n = \Ebb^n, n \ge 0; \qquad
\EbbE^{-2} := \Ebb^{-2}; \qquad
\EbbE^{-m} := 0, m \ge 3.
\] 
We complete $(\EbbE^{\bullet})$ to a bivariant connected sequence of functors as follows.

 Consider a conflation $X\overset{i}{\to} Y\overset{p}{\to} Z \ov{\del}{\dra}$  in $\Csc$. By definition of $\CEs$, there is a triangle in $\Dsc$ of the form: $X\xrightarrow{i} Y\xrightarrow{p} Z\xrightarrow{\del} \susp X$, where $\del\in[\susp\Tsc]$ (see also \cref{rem:Ebb_in_embedded}).
Moreover, the projective resolutions of $X$ and $Z$ of the form (\ref{sTri_Chosen_proj}) induce a projective resolution 
$T_1^X\oplus T_1^Z \infl T_0^X\oplus T_0^Z \defl Y \dra$ of $Y$.

We define the connecting morphisms:
%\Yann{Shall we write $\delta$ instead of $\eps$ just above?}

\begin{itemize}
\item The map $\del\ush: \EbbE^{-1}(X,U)=\Dsc/_{\left[\susp^2\Tsc\to\Tsc\right]}(\susp X,U)\to\Csc(Z,U)$ is induced by precomposing with $\del$. It is well-defined because $\del\in[\susp\Tsc]$ and $\Tsc$ is rigid.

\item The map $\del\ush: \EbbE^{-2}(X,U)\to\EbbE^{-1}(Z,U)$ is defined as follows.
Recall that 
\begin{eqnarray*}
\EbbE^{-2}(X,U) & = & \Dsc/_{[\Tsc]}(\susp^2T_0^X,U)\times_{\Dsc(\susp^2 T_0^X,\susp T_1^U)}\Dsc/_{[\susp^3\Tsc]}(\susp^2 X,\susp T_1^U) \text{ and } \\
\EbbE^{-1}(Z,U) & = & \Dsc/_{\left[\susp^2\Tsc\to\Tsc\right]}(\susp Z,U).
\end{eqnarray*}
Any element $(f,g)$ in $\EbbE^{-2}(X,U)$ can be completed to a morphism of triangles $(h,f,g)$ below.
Since the morphism $\susp\del$ belongs to the ideal $[\susp^2\Tsc]$, it factors through the morphism $\susp^2 T_0^X\to\susp^2 X$ as in the diagram below.
Define $\del\ush((f,g))$ to be the composition $f\circ u$.
\[
\xy
(9,18)*+{\susp Z}="top"; 
(-18,7)*+{\susp^2 T_1^X}="2";
(0,7)*+{\susp^2 T_0^X}="4";
(18,7)*+{\susp^2 X}="6";
(-18,-7)*+{T_0^U}="12";
(0,-7)*+{U}="14";
(18,-7)*+{\susp T_1^U}="16";
{\ar "2";"4"};
{\ar "4";"6"};
{\ar^{h} "2";"12"};
{\ar^{f} "4";"14"};
{\ar^{g} "6";"16"};
{\ar "12";"14"};
{\ar "14";"16"};
{\ar^{\susp\del} "top";"6"};
{\ar@{-->}^u "top";"4"};
{\ar@/_/@{..>}_{\text{lifts } u-v} "top";"2"};
{\ar@{}|\circlearrowright "2";"14"};
{\ar@{}|\circlearrowright "4";"16"};
\endxy
\]
To see that the map is well-defined, note that if $v$ is any other choice of a lift of $\susp\del$ to $\susp^2 T_0^X$, then the difference $u-v$ factors through
$\susp^2 T_1^X\to\susp^2 T_0^X$.
This implies that the difference between $f\circ u$ and $f\circ v$ factors through $h$, and hence belongs to the ideal $[\susp^2\Tsc\to\Tsc]$.

\item Connecting morphisms  $\del\ssh$ in the second argument are defined in the dual way.
\end{itemize}
\end{definition}

\begin{proposition}
\label{remark:natural}
The connecting morphisms $(\del\ssh, \del\ush)$ defined above are covariant, resp. contravariant natural transformations in each degree, natural with respect to morphisms of $\Ebb$-extensions, and so $(\EbbE^{\bullet}, \del\ssh, \del\ush)$ is a bivariant connected sequence of functors.
\end{proposition}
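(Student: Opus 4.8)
\textbf{Proof plan for \cref{remark:natural}.}

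The statement asserts that the morphisms $\del\ssh$ and $\del\ush$ introduced in \cref{def: conn_EbbE} are natural transformations in each degree and are natural with respect to morphisms of $\Ebb$-extensions, so that altogether $(\EbbE^{\bullet},\del\ssh,\del\ush)$ is a bivariant connected sequence of functors. By the duality between the covariant and contravariant definitions (they are constructed by shifting in $\Dsc$ in opposite directions, cf. \cref{lem:Ebb-1_0_Auslander_one-sided} and \cref{lem: E-2_explicit}), it suffices to treat the covariant connecting morphisms $\del\ush$; the contravariant case $\del\ssh$ then follows by the analogous argument applied to the opposite triangulated category, which carries the opposite relative extriangulated structure. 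So the plan is to prove: (a) for each $\del\in\Ebb(Z,X)$, the map $\del\ush\colon\EbbE^{-1}(X,-)\to\Csc(Z,-)$ is a natural transformation of functors on $\Csc$, and similarly $\del\ush\colon\EbbE^{-2}(X,-)\to\EbbE^{-1}(Z,-)$; and (b) given a morphism of $\Ebb$-extensions $(a,c)\colon\del\to\del'$ with $\del\in\Ebb(Z,X)$, $\del'\in\Ebb(Z',X')$, the relevant squares relating $\del\ush$ and $\del'^{\sharp}$ commute.

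For part (a), both connecting maps are defined by (pre)composition with fixed morphisms in $\Dsc$ — the map in degree $-1$ is induced by precomposition with $\del\colon Z\to\susp X$, and the map in degree $-2$ is induced by precomposition with a chosen lift $u$ of $\susp\del$ — followed by passing to appropriate ideal quotients. Naturality in the third variable $U$ then reduces to the fact that postcomposition with a morphism $U\to U'$ in $\Csc$ commutes with such (pre)compositions, and respects the ideals $[\Tsc]$, $[\susp^2\Tsc\to\Tsc]$, $[\susp^3\Tsc]$ used to form the quotients. For the degree $-2$ map one also has to check that the chosen lift $u$ can be used uniformly: but well-definedness of $\del\ush$ independently of $u$ was already established in \cref{def: conn_EbbE} (two lifts differ by a morphism factoring through $\susp^2 T_1^X\to\susp^2 T_0^X$, so their images into $\EbbE^{-1}(Z,-)$ agree modulo $[\susp^2\Tsc\to\Tsc]$), and the same diagram chase, carried along a morphism $U\to U'$, yields naturality. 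This is a routine verification using the rigidity of $\Tsc$ (which is what makes the relevant ideals behave well under composition) and the long exact sequences of \eqref{les:1}, \eqref{les:2}.

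For part (b), a morphism of $\Ebb$-extensions $(a,c)\colon\del\to\del'$ is a pair $a\colon X\to X'$, $c\colon Z\to Z'$ with $a_{\ast}\del=c^{\ast}\del'$, which in $\Dsc$ means $\susp a\circ\del=\del'\circ c$ (up to the usual sign, which is harmless here). In degree $-1$, the square to check is that the map $\EbbE^{-1}(X,-)\xrightarrow{a^{\ast}}\EbbE^{-1}(X',-)$ followed by $\del'^{\sharp}$ equals $\del^{\sharp}$ followed by $c^{\ast}\colon\Csc(Z',-)\to\Csc(Z,-)$; unwinding the definitions, both composites send $\phi$ to $\phi$ precomposed with $\susp a\circ\del=\del'\circ c$, so the square commutes at the level of $\Dsc$-morphisms, hence after passing to quotients. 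In degree $-2$, one picks lifts $u$ of $\susp\del$ through $\susp^2 T_0^X\to\susp^2 X$ and $u'$ of $\susp\del'$ through $\susp^2 T_0^{X'}\to\susp^2 X'$, together with a morphism $\susp^2 T_0^X\to\susp^2 T_0^{X'}$ lifting $\susp^2 a$ (which exists because $\susp^2 T_0^{X}$ is projective, or directly because $T_0^{X}$ is); one then checks that the two ways of going around the square — precompose by $u$ then apply $a^{\ast}$, versus apply the induced map on $\EbbE^{-2}$ then precompose by $u'$ — agree modulo the ideal $[\susp^2\Tsc\to\Tsc]$, using once more that any two lifts differ by a morphism through $\susp^2 T_1^{(-)}$. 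I would organize this as a single commuting ladder of triangles in $\Dsc$ and read off the conclusion.

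The main obstacle I anticipate is bookkeeping rather than conceptual: keeping track of which ideal each morphism is considered modulo, and checking that every map in sight descends to the relevant quotient. In particular, in the degree $-2$ verifications one must be careful that the lift $u$ (and its analogue over $X'$) is only determined up to $[\susp^2\Tsc]$-noise, so every identity must be an identity ``modulo $[\susp^2\Tsc\to\Tsc]$'', and one must confirm that the noise terms always land in that ideal — this is exactly where rigidity of $\Tsc$ is used, via the vanishing of $\Dsc(\Tsc,\susp\Tsc)$, to control compositions through projective and injective objects of $\CEs$. Once the ideals are handled correctly, each square reduces to an equality of honest morphisms in $\Dsc$ coming from the defining relation $\susp a\circ\del=\del'\circ c$ and the functoriality of shift, so no genuinely new input beyond the already-established well-definedness statements is required.
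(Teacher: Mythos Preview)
Your proposal is correct and follows essentially the same approach as the paper: the paper also treats naturality in $U$ by noting that the connecting morphisms are given by (pre)composition with fixed morphisms in $\Dsc$, and for the degree $-2$ naturality with respect to morphisms of extensions it picks a lift $t\colon\susp^2 T_0^X\to\susp^2 T_0^{X'}$ of $\susp^2 a$ and verifies that $f'\circ(t\circ u - u'\circ\susp c)$ lies in $[\susp^2\Tsc\to\Tsc]$ via the observation that $p'\circ(t\circ u - u'\circ\susp c)=0$, exactly as you outline. One small slip: in your degree $-1$ square the arrow $a^{\ast}$ should run $\EbbE^{-1}(X',-)\to\EbbE^{-1}(X,-)$ (contravariance in the first variable), so the correct identity is $\del^{\sharp}\circ a^{\ast}=c^{\ast}\circ\del'^{\sharp}$ --- but your subsequent computation using $\susp a\circ\del=\del'\circ c$ is the right one.
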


\begin{proof}
%\Misha{A proof is added. Please proofread, also feel free to edit and change the notation for better readability.}
%\Misha{I should have been more precise in the statement. We need them to be natural with respect to morphisms of $\Ebb$-extensions. I'll try to add a proof, or at least a sketch (cf. proof of \cref{PropExistBiProl}).}\Yann{Sorry, I was tired!}
Each morphism of $\Ebb$-extensions is a morphism of triangles in $\Dsc$. We keep the notation from \cref{def: conn_EbbE}.

\begin{itemize}
\item The map $\del\ush: \EbbE^{-1}(X,U)\to\Csc(Z,U)$ is natural in $U$ because it is induced by composition with $\del$ (considered as a morphism in $\Dsc(Z, \susp Z)$). 

\item To see that the map $\del\ush: \EbbE^{-2}(X,U)\to\EbbE^{-1}(Z,U)$ is also natural in $U$, simply note that if $U\xrightarrow{w} U'$ is any morphism, then both compositions $\EbbE^{-2}(X,U)\to\EbbE^{-1}(Z,U)\to\EbbE^{-1}(Z,U')$ and $\EbbE^{-2}(X,U)\to\EbbE^{-2}(Z,U')\to\EbbE^{-1}(Z,U')$ send the class of $(f,g)$ to the class of the composition $wfu$.

\item A morphism from the triangle in \cref{def: conn_EbbE}  realizing $\del \in \Ebb(Z, X)$ to another triangle
\newline $X' \xrightarrow{i'} Y' \xrightarrow{j'} Z' \xrightarrow{\del'} \susp X'$ realizing $\del' \in \Ebb(Z', X')$ is defined by a pair of morphisms $(x, z) \in \Dsc(X, X') \times \Dsc(Z, Z')$ such that $\del \circ z = \susp x \circ \del'$. This commutativity directly induces the commutativity of the diagram
\[
\xy
(-14,6)*+{\EbbE^{-1}(X',-)}="0";
(14,6)*+{\Csc(Z',-)}="2";
(-14,-6)*+{\EbbE^{-1}(X,-)}="4";
(14,-6)*+{\Csc(Z,-)}="6";
{\ar^{\del^{\prime\sharp}} "0";"2"};
{\ar_{x\uas} "0";"4"};
{\ar^{z\uas} "2";"6"};
{\ar_{\del\ush} "4";"6"};
{\ar@{}|\circlearrowright "0";"6"};
\endxy
\]
This shows the naturality of $\del\ush: \EbbE^{-1}(X,U)\to\Csc(Z,U)$ with respect to morphisms of $\Ebb$-extensions.
%\Yann{or with $\delta$, if we switched above}.

\item  The naturality of $\del\ush: \EbbE^{-2}(X,U)\to\EbbE^{-1}(Z,U)$ with respect to morphisms of $\Ebb$-extensions amounts to the commutativity of the diagram 

\begin{equation} \label{eqn: natur_EbbE-2}
\xy
(-14,6)*+{\EbbE^{-2}(X\ppr,-)}="0";
(14,6)*+{\EbbE^{-1}(Z\ppr,-)}="2";
(-14,-6)*+{\EbbE^{-2}(X,-)}="4";
(14,-6)*+{\EbbE^{-1}(Z,-)}="6";
{\ar^{\del^{\prime\sharp}} "0";"2"};
{\ar_{x\uas} "0";"4"};
{\ar^{z\uas} "2";"6"};
{\ar_{\del\ush} "4";"6"};
{\ar@{}|\circlearrowright "0";"6"};
\endxy
\end{equation}

 The functoriality of $\EbbE^{-2}(-, U) = \EbbE^{-2}(-, U)$ amounts to the fact that $x^*$ sends $(f', g') \in \EbbE^{-2}(X', U)$ to $(f' \circ \mu, g' \circ \susp^2 x)$, for any completion of $\susp^2 x \in \Dsc(\susp^2 X, \susp^2 X')$ to the diagram of the form

\[
\xy
%(9,18)*+{\susp Z}="top"; 
%
(-18,7)*+{\susp^2 T_1^X}="2";
(0,7)*+{\susp^2 T_0^X}="4";
(18,7)*+{\susp^2 X}="6";
(-18,-7)*+{\susp^2 T_1^{X'}}="12";
(0,-7)*+{\susp^2 T_0^{X'}}="14";
(18,-7)*+{\susp^2 X',}="16";
{\ar "2";"4"};
{\ar^{p} "4";"6"};
{\ar@{-->} "2";"12"};
{\ar@{-->}^{t} "4";"14"};
{\ar^{\susp^2 x} "6";"16"};
{\ar "12";"14"};
{\ar^{p'} "14";"16"};
%
%{\ar^{\susp\del} "top";"6"};
%{\ar@{-->}^u "top";"4"};
%{\ar@/_/@{..>}_{\text{lifts } u-v} "top";"2"};
%
{\ar@{}|\circlearrowright "2";"14"};
{\ar@{}|\circlearrowright "4";"16"};
\endxy
\]
and result gives an element of $\EbbE^{-2}(X, U)$ which does not depend on the choice of such a diagram.

In the diagram (\ref{eqn: natur_EbbE-2}) we then have:

\begin{itemize}
\item $(f', g') \in \EbbE^{-2}(X', U)$ is mapped by $\del^{\prime\sharp}$ to $f' \circ u'$, for some $u' \in \Dsc(\susp Z', \susp^2 T_0^{X'})$ such that $\susp\del' = p' \circ u'$. 

\item By functoriality of $\EbbE^{-1}(-, U)$, $z^*$ maps $f' \circ u'$ to $f' \circ u' \circ \susp z$.

\item By functoriality of $\EbbE^{-2}(-, U)$, $x^*$ maps $(f', g')$ to $(f' \circ t, g' \circ \susp^2 x)$. 

\item Finally, $\del\ush$ maps $(f' \circ t, g' \circ \susp^2 x)$ to $f' \circ t \circ u$, for some $u \in \Dsc(\susp Z', \susp^2 T_0^{X'})$ such that $\susp \del = p \circ u$.
 \end{itemize}

The diagram  (\ref{eqn: natur_EbbE-2}) is then commutative if and only if for each $(f', g') \in \EbbE^{-2}(X', U)$, we have 
\[
f' \circ (t \circ u - u' \circ \susp z) \in [\susp^2\Tsc\to\Tsc](\susp Z, U) \subseteq \Dsc(\susp Z, U).
\] 
Note that we have 
\begin{eqnarray*}
p' \circ (t \circ u - u' \circ \susp z) = 
\susp^2 x \circ p \circ u - p' \circ u' \circ \susp z = 
\susp^ x \circ \del - \susp \del' \circ \susp z = 
0
\end{eqnarray*}
As in the argument in \cref{def: conn_EbbE}, this implies that $(\mu \circ u - u' \circ \susp z)$ factors through $\susp^2 T_1^{X'} \to \susp^2 T_0^{X'}$. Therefore, the difference $f' \circ (t \circ u - u' \circ \susp z)$ factors through $h' \in \Dsc(\susp^2 T_1^{X'}, T_0^U)$ completing $(f', g')$ to a morphism of triangles, and hence belongs to the ideal $[\susp^2\Tsc\to\Tsc]$. 

\end{itemize}
\end{proof}

%\begin{proposition}
 %For any conflation $X\infl Y\defl Z$ in $\Csc$, and any $U\in \Csc$, there are long exact sequences:
 %\begin{eqnarray*}
 % 0\to \EbbE^{-2}(Z,U) \to \EbbE^{-2}(Y,U) \to \EbbE^{-2}(X,U) \to \\ \EbbE^{-1}(Z,U) \to \EbbE^{-1}(Y,U) \to \EbbE^{-1}(X,U) \to \\
  %\Csc(Z,U) \to \Csc(Y,U) \to \cdots
 %\end{eqnarray*}
 %and
 %\begin{eqnarray*}
  %0\to \EbbE^{-2}(U,X) \to \EbbE^{-2}(U,Y) \to \EbbE^{-2}(U,Z) \to \\ \EbbE^{-1}(U,X) \to \EbbE^{-1}(U,Y) \to \EbbE^{-1}(U,Z) \to \\
  %\Csc(U,X) \to \Csc(U,Y) \to \cdots
 %\end{eqnarray*}
%\end{proposition}

\begin{proposition} \label{prop:EmbEisDelta}
$(\EbbE, \delta\ssh, \delta\ush)$ is a bivariant $\delta$-functor satisfying property $\rm{(C)}$. 
\end{proposition}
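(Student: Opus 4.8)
The statement has two parts: that $(\EbbE^{\bullet},\del\ssh,\del\ush)$ is a bivariant $\delta$-functor, and that it satisfies property (C). For property (C), recall that by definition (C) requires $\EbbE^n=\Ebb^n$ with the canonical connecting morphisms for $n\ge 0$ --- which holds by construction in \cref{def: conn_EbbE} --- together with the isomorphisms $\EbbE^{-n}(I,-)\cong\EbbI^{-n}(I,-)$ for $I$ injective and $\EbbE^{-n}(-,P)\cong\EbbII^{-n}(-,P)$ for $P$ projective, for $n>0$. For $n\ge 3$ all three functors vanish, and for $n=2$ we have $\EbbE^{-2}=\Ebb^{-2}$ which already satisfies (C) by the discussion preceding this subsection (it agrees with the relevant kernels, and $\Ebb^{-2}(I,-)=0=\Ebb^{-2}(-,P)$). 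For $n=1$ this is precisely the content of \cref{lem:PropCForEmbE}.(a): injective objects of $\Csc$ are exactly $\susp\Tsc$, so $\EbbE^{-1}(\susp T,A)=\EbbI^{-1}(\susp T,A)$, and projective objects are exactly $\Tsc$, so $\EbbE^{-1}(C,T)=\EbbII^{-1}(C,T)$. One should also check these isomorphisms are compatible with the connecting maps, which is immediate from the way the connecting morphisms were constructed (they are induced by the same composition-with-$\del$ operations that define $\del\ush$ on $\EbbI^{\bullet}$ and $\EbbII^{\bullet}$).

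\textbf{Reducing the $\delta$-functor claim.} To prove $(\EbbE^{\bullet},\del\ssh,\del\ush)$ is a bivariant $\delta$-functor, by \cref{lem:enough_to_check_on_dominant} it suffices to check that some dominant $\sfr$-triangle of the form (\ref{sTri_Chosen_proj}) is $(\EbbE^{\bullet},\del\ssh)$-acyclic for each $X$ (and, dually, that some codominant $\sfr$-triangle (\ref{sTri_Chosen_inj}) is $(\EbbE^{\bullet},\del\ush)$-acyclic); by the self-duality of the setup it is enough to do the covariant side. So I would fix $X\in\Csc$ with its chosen triangle $T_1^X\to T_0^X\to X\to\susp T_1^X$ coming from the rigid subcategory $\Tsc\subseteq\Dsc$, and show that the long complex
\[
\cdots\to\EbbE^{-1}(-,X)\to\EbbE^{-1}(-,T_0^X)\to\EbbE^{-1}(-,T_1^X)\to\Csc(-,X)\to\Csc(-,T_0^X)\to\cdots
\]
is exact, where the positive part is exact because $\Ebb^{\bullet}$ is a $\delta$-functor and $\Ebb^2=0$ here. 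Using \cref{lem:Ebb-1_0_Auslander_one-sided}, \cref{lem: E-2_explicit}, the explicit description $\EbbE^{-1}(C,A)=\Dsc/_{[\susp\Tsc\to\susp^{-1}\Tsc]}(C,\susp^{-1}A)$, and the long exact sequence in $\Dsc$ obtained by applying $\Dsc(C,-)$ to suitable rotations of the defining triangle, this becomes a diagram chase: one identifies each term with a quotient or fibre product of $\Dsc$-hom groups modulo ideals of morphisms factoring through shifts of $\Tsc$, and checks exactness at $\EbbE^{-1}(-,T_1^X)$, at $\EbbE^{-1}(-,T_0^X)$, and at $\EbbE^{-2}(-,X)$ directly. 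The key technical input throughout is the rigidity of $\Tsc$ (hence of $\susp^k\Tsc$ for all $k$), which controls when a morphism in $\Dsc$ lies in one of the quotiented-out ideals: a morphism $T\to\susp^{-1}A$ factoring through an object of $\susp^{-1}\Tsc$ is detected by its composite with $\susp^{-1}T_1^A\to\susp^{-1}T_0^A$ vanishing, etc.

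\textbf{Main obstacle.} The delicate point is exactness at $\EbbE^{-1}(-,T_1^X)$ and at $\EbbE^{-2}(-,X)$, because these involve the subtle ideal $[\susp\Tsc\to\susp^{-1}\Tsc]$ of morphisms factoring \emph{through a morphism between} objects of $\susp\Tsc$ and $\susp^{-1}\Tsc$, rather than through a single object --- exactly the point the introduction flags as "fairly unexpected". I expect the argument to hinge on the following: given a morphism $g\colon C\to\susp^{-1}X$ representing a class in $\EbbE^{-1}(-,X)$ that maps to zero in $\EbbE^{-1}(-,T_0^X)$, one must produce a lift to $\EbbE^{-1}(-,T_1^X)$, and the obstruction to lifting naively (working in $\Dsc$) lies in $\Dsc(C,\susp^{-2}T_0^X\text{-part})$, which one then has to show dies after passing to the quotient by $[\susp\Tsc\to\susp^{-1}\Tsc]$. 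This is precisely where the two-sided nature of the ideal is used and where one cannot get away with the coarser ideal generated by objects. I would handle it by writing out the octahedral/$3\times 3$ diagrams relating the triangles for $X$, $T_0^X$, $T_1^X$ in $\Dsc$ and tracking, morphism by morphism, which composites are forced to vanish by rigidity; the bookkeeping is routine but must be done with care so that the factorisations witnessing membership in the ideals are genuinely constructed, not merely asserted. The contravariant acyclicity and the compatibility of all connecting maps with the identifications of \cref{lem:PropCForEmbE} then follow by the dual argument and by naturality established in \cref{remark:natural}.
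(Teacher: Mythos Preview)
Your overall strategy matches the paper's: reduce via \cref{lem:enough_to_check_on_dominant} to acyclicity on the chosen (co)dominant $\sfr$-triangles, check that directly using the explicit descriptions of $\EbbE^{-1}$ and $\Ebb^{-2}$, and read off property~(C) from \cref{lem:PropCForEmbE} together with the definition $\EbbE^{-2}=\Ebb^{-2}$. That is exactly what the paper does.

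However, your long exact sequence is written in the wrong direction, and this then misleads you about where the work is. For the covariant side and the $\sfr$-triangle $T_1^A\to T_0^A\to A\ov{\om_A}{\dra}$, the sequence in the second argument runs
\[
0\to\Ebb^{-2}(C,A)\to\EbbE^{-1}(C,T_1^A)\to\EbbE^{-1}(C,T_0^A)\to\EbbE^{-1}(C,A)\to\Csc(C,T_1^A)\to\Csc(C,T_0^A),
\]
not $\EbbE^{-1}(-,X)\to\EbbE^{-1}(-,T_0^X)\to\EbbE^{-1}(-,T_1^X)\to\Csc(-,X)$ as you wrote. Since $T_0^A,T_1^A\in\Tsc$, the two middle negative terms are computed by \cref{lem:PropCForEmbE}(a) as $\Dsc/_{[\susp\Tsc]}(C,\susp^{-1}T_i^A)$, with the coarser ideal; the two-sided ideal $[\susp\Tsc\to\susp^{-1}\Tsc]$ appears only at $\EbbE^{-1}(C,A)$. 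Consequently, exactness at $\EbbE^{-1}(C,A)$ and at $\Csc(C,T_1^A)$ is immediate from the underlying triangle in $\Dsc$, and exactness at $\EbbE^{-1}(C,T_1^A)$ and at $\Ebb^{-2}(C,A)$ is literally the defining property of $\Ebb^{-2}$ as a kernel---these are not the obstacles.

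The only nontrivial step is exactness at $\EbbE^{-1}(C,T_0^A)$: given $f\in\Dsc(C,\susp^{-1}T_0^A)$ whose image in $\Dsc/_{[\susp\Tsc\to\susp^{-1}\Tsc]}(C,\susp^{-1}A)$ vanishes, one must show that the class of $f$ modulo $[\susp\Tsc]$ lies in the image of $\Dsc/_{[\susp\Tsc]}(C,\susp^{-1}T_1^A)$. The paper does this by writing out the witnessing factorisation $C\to\susp T_1^C\to\susp^{-1}T\to\susp^{-1}A$ (with $T\in\Tsc$), using rigidity to factor $\susp^{-1}T\to\susp^{-1}A$ through $\susp^{-1}T_0^A$, and then observing that $f$ minus this composition lifts along $\susp^{-1}T_1^A\to\susp^{-1}T_0^A$. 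No octahedral or $3\times3$ diagrams are needed; it is a direct manipulation with the single triangle for $A$. Once you correct the orientation of the sequence, your plan goes through with this short chase, and the contravariant side is strictly dual.
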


\begin{proof}
Let $C,A\in\Csc$. %Since $(\EbbE, \delta\ssh, \delta\ush)$ is a bivariant connected sequence of functors, 
By \cref{lem:enough_to_check_on_dominant}, to show that it is a bivariant $\delta$-functor it is enough to prove that for each $X$ some $\sfr$-triangle of the form (\ref{sTri_Chosen_proj})  is $(\EbbE, \delta\ssh)$-acyclic and some $\sfr$-triangle of the form (\ref{sTri_Chosen_inj}) is $(\EbbE, \delta\ush)$-acyclic.  

Let $C,A\in\Csc$. 
As usual, fix two triangles $T_1^A\to T_0^A\to A \to \susp T_1^A$, and $T_1^C\to T_0^C\to C \to \susp T_1^C$ with $T_0^A,T_1^A,T_0^C,T_1^C\in\Tsc$ giving  $\sfr$-triangles of the form (\ref{sTri_Chosen_proj}) and, up to shift, of the form (\ref{sTri_Chosen_inj}) for $A$ and $C$.
The  $(\EbbE, \delta\ssh)$-acyclicity of the dominant $\sfr$-triangle  $T_1^A\to T_0^A\to A \dra$ is equivalent to the exactness of the following sequence:

\begin{tabular}{rccclc}
  & & 0&$\to$ & $\Ebb^{-2}(C, A)$ & $\to$ \\
  $\Dsc/_{[\susp\Tsc]}(C,\susp^{-1}T_1^A)$&$\to$ &$\Dsc/_{[\susp\Tsc]}(C,\susp^{-1}T_0^A)$&$\to$& $\Dsc/_{\left[\susp\Tsc  \to  \susp^{-1}\Tsc\right]}(C,\susp^{-1}A)$ & $\to$ \\
$\Csc(C,T_1^A)$ &$\to$& $\Csc(C,T_0^A)$.& & & 
\end{tabular}

%We prove the exactness of the first one, the proof for the second one uses dual arguments. 
 
%where $K = \Dsc/_{[\susp^{-2}\Tsc]}(T_0^C,\susp^{-2}A)\times_{\Dsc(T_0^C,\susp^{-1}T_1)}\Dsc/_{[\susp\Tsc]}(C,\susp^{-1}T_1)$.
%\Misha{This lemma seems unnecessary, except for the definition of K.}

 %$K$ is well-defined: An element in $\Dsc(T_0^C,\susp^{-2}A)\times_{\Dsc(T_0^C,\susp^{-1}T_1)}\Dsc(C,\susp^{-1}T_1)$ is a commutative square
%\[
 %\xy
%(-54,0)*+{(\ast)};
%
%(-18,7)*+{T_0^C}="4";
%(0,7)*+{C}="6";
%(18,7)*+{\susp T_1^C}="8";
%
%(-36,-7)*+{\susp^{-2}T_0}="12";
%(-18,-7)*+{\susp^{-2}A}="14";
%(0,-7)*+{\susp^{-1}T_1.}="16";
%
%{\ar "4";"6"};
%{\ar "6";"8"};
%
%{\ar_{f} "4";"14"};
%{\ar^{g} "6";"16"};
%
%{\ar "12";"14"};
%{\ar "14";"16"};
%
%{\ar@{}|\circlearrowright "4";"16"};
%\endxy
%\]
%Because $\Tsc$ is rigid, the morphism $f$ belongs to the ideal $[\susp^{-2}\Tsc]$ if and only if it factors through the morphism $\susp^{-2}T_0\to\susp^{-2}A$.
%Similarly, $g$ belongs to $[\susp\Tsc]$ if and only if it factors through $C\to\susp T_1^C$.
%This shows that commutativity of the square only depends on the classes of $f$ modulo $[\susp^{-2}\Tsc]$ and of $g$ modulo $[\susp\Tsc]$.

%The map $\Dsc/_{\left[\susp\Tsc\!\to\!\susp^{-1}\Tsc\right]}(C,\susp^{-1}A) \to \Csc(C,T_1^A)$ is well-defined since any morphism in $\Dsc(C,\susp^{-1}A)$ that factors through $\susp^{-1}\Tsc$
%automatically factors through $\susp^{-1}T_0^A\to\susp^{-1}A$.

Exactness at $\Csc(C,T_1^A)$ and $\Dsc/_{\left[\susp\Tsc\!\to\!\susp^{-1}\Tsc\right]}(C,\susp^{-1}A)$ are immediate from the triangle $T_1^A\to T_0^A\to A\to\susp T_1^A$.

Exactness at $\Dsc/_{[\susp\Tsc]}(C,\susp^{-1}T_0^A)$:
Let $f\in\Dsc(C,\susp^{-1}T_0^A)$ be such that the composition $C\xrightarrow{f}\susp^{-1}T_0^A\to\susp^{-1}A$ vanishes in $\Dsc/_{\left[\susp\Tsc\!\to\!\susp^{-1}\Tsc\right]}(C,\susp^{-1}A)$.
By definition, there is a commutative diagram, in plain below:
\[
 \xy
(-18,7)*+{T_0^C}="2";
(0,7)*+{C}="4";
(18,7)*+{\susp T_1^C}="6";
(30,0)*+{\susp^{-1}T}="8";
(-18,-7)*+{\susp^{-1}T_1^A}="12";
(0,-7)*+{\susp^{-1}T_0^A}="14";
(18,-7)*+{\susp^{-1}A}="16";
(38,-7)*+{T_1^A}="18";
{\ar "2";"4"};
{\ar "4";"6"};
{\ar^{f} "4";"14"};
{\ar "12";"14"};
{\ar "14";"16"};
{\ar "16";"18"};
{\ar "6";"8"};
{\ar "8";"16"};
{\ar@{-->} "8";"14"};
{\ar@{-->} "4";"12"};
%{\ar@/_/@{..>} "6";"12"};
%
{\ar@{}|\circlearrowright "4";"16"};
\endxy
\]
Since $\Tsc$ is rigid, the morphism $\susp^{-1}T\to\susp^{-1}A$ factors through $\susp^{-1}T_0^A$ (dashed, on the right).
The difference between $f$ and the composition $C\to\susp T_1^C\to \susp^{-1}T\to\susp^{-1}T_0^A$ vanishes once post-composed with the morphism $\susp^{-1}T_0^A\to\susp^{-1}A$.
Thus, there is a morphism $C\to \susp^{-1}T_1^A$ (dashed, on the left) such that $f$ is the sum of the two compositions involving the dashed morphisms.
This means precisely that the class of $f$ in $\Dsc/_{[\susp\Tsc]}(C,\susp^{-1}T_0^A)$ is in the image of 
$$\Dsc/_{[\susp\Tsc]}(C,\susp^{-1}T_1^A)\to \Dsc/_{[\susp\Tsc]}(C,\susp^{-1}T_0^A).$$

Exactness at $\Dsc/_{[\susp\Tsc]}(C,\susp^{-1}T_1^A)$ and at $\Ebb^{-2}(C, A)$ are immediate consequences of the definition of $\Ebb^{-2}(C, A)$ as the kernel of the morphism $\Dsc/_{[\susp\Tsc]}(C,\susp^{-1}T_1^A) \to \Dsc/_{[\susp\Tsc]}(C,\susp^{-1}T_0^A)$.

The $(\EbbE, \delta\ush)$-acyclicity of the codominant $\sfr$-triangle  $C \to \susp T_1^C \to \susp T_0^C \dra$ is proved dually.

Property (C) is nothing but \cref{lem:PropCForEmbE} combined with the definition of $\EbbE^{-n}$ for $n \ge 2$.

%Using that $\Tsc$ is rigid, the dashed morphism on the left factorises through $\susp T_1^C$ (dotted), showing that $f$ belongs to the ideal $[\susp\Tsc]$.

%Exactness at $\Dsc/_{[\susp\Tsc]}(C,\susp^{-1}T_1^A)$:
%Immediately follows from the fact that a commutative square induces a morphism of triangles.

%Exactness at $K$: In a commutative square $(f,g)$ as in $(\ast)$ above, $g$ factors through $\susp\Tsc$ if and only if $f$ factors through $\susp^{-2}\Tsc$ (see the remark below).
\end{proof}

\subsection{Comparison and universality} 

Let $(\Csc', \Ebb, \mathfrak{s})$ be an extriangulated category. Assume that there exists a triangulated category $\Dsc$ with a rigid full subcategory $\Tsc$ and an equivalence of extriangulated categories $F: \Csc' \overset\sim\to \Csc = \Tsc \ast \Sigma \Tsc$, where the extriangulated structure on the category on the right hand side is, as before, induced by the largest relative extriangulated structure on $\Dsc$ making objects in $\Tsc$ projective. In this situation, we can apply  constructions from both of the previous two subsections and define connected sequences of functors  $\EbbPB^{\bullet}$ and $(F\op\ti F) \circ \EbbE^{\bullet}$ on $\Csc'$, where $\EbbE^{\bullet}$ is defined on $\Tsc \ast \Sigma \Tsc \hookrightarrow \Dsc$ as in \cref{ssection:computation}. The second sequence a priori depends on the embedding $\Tsc \hookrightarrow \Dsc$ and the equivalence $F$. It is natural to compare these sequences and see if they form (universal balanced) $\delta$-functors. Results in \cref{ssection:computation} imply that the second sequence is always a $\delta$-functor with property (C), while the first sequence is a  $\delta$-functor with properties (C) and (U) under condition (N+) on the category $\Csc.$ 

\begin{proposition}
Assume that $R = K$ is a field, $\Csc$ is $R$-linear and has finite-dimensional $\Hom$- and $\Ebb$-spaces. Then $\dim \EbbE^{-i}(-, -)$ are independent of the choices of the categories $\Tsc \hookrightarrow \Dsc$ and of the equivalence $\Csc \overset\sim\to \Tsc \ast \Sigma \Tsc.$
\end{proposition}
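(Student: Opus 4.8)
The plan is to show that the dimensions of $\EbbE^{-i}$ are determined intrinsically, i.e. computed from data in $\Csc$ alone, using the fact that $\EbbE^{\bullet}$ is a bivariant $\delta$-functor satisfying property (C) (\cref{prop:EmbEisDelta}) together with \cref{prop:comparison:univ}. First I would recall that by \cref{assumption: hered_enough}, a $\delta$-functor with property (C) is forced in all degrees $\leq -2$: its degree $-2$ component is naturally isomorphic to $\Ebb^{-2}$ and it vanishes in degrees $\leq -3$. In particular $\dim\EbbE^{-i}$ is independent of the choices for all $i \neq 1$, since $\Ebb^{\bullet}, \Ebb^{-2}$ are constructed intrinsically from $\Csc$ (the latter via \cref{LemUniv_heredII} and the discussion preceding it, depending only on the chosen dominant $\sfr$-triangles in $\Csc$, and independent of those up to natural isomorphism). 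So the only nontrivial case is $i = 1$.

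For $i = 1$, the key point is that $\EbbE^{-1}(C,A)$ sits in the four-term exact sequence coming from $(\EbbE, \del\ssh)$-acyclicity of a dominant $\sfr$-triangle $P_1 \infl P_0 \defl A \dra$ as in \cref{prop:EmbEisDelta}: one gets
\[
\EbbE^{-1}(C,P_1) \to \EbbE^{-1}(C,P_0) \to \EbbE^{-1}(C,A) \to \Csc(C,P_1) \to \Csc(C,P_0),
\]
preceded by $0 \to \Ebb^{-2}(C,A) \to \EbbE^{-1}(C,P_1) \to \EbbE^{-1}(C,P_0)$. Since $P_0,P_1$ are projective, property (C) gives $\EbbE^{-1}(C,P_j) \cong \EbbII^{-1}(C,P_j)$, and $\EbbII^{-1}$ is defined intrinsically in $\Csc$ (via dualities, or the recursive description in \cite[Remark 5.23]{GNP1}). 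Combining the two exact pieces, $\dim\EbbE^{-1}(C,A)$ is expressed purely in terms of $\dim\EbbII^{-1}(C,P_0)$, $\dim\EbbII^{-1}(C,P_1)$, $\dim\Ebb^{-2}(C,A)$, and the dimension of the image of $\EbbE^{-1}(C,A)\to\Csc(C,P_1)$; but that image is, by exactness, the kernel of $\Csc(C,P_1)\to\Csc(C,P_0)$, whose dimension depends only on the chosen dominant $\sfr$-triangle of $A$ in $\Csc$. All terms on the right-hand side of the resulting formula for $\dim\EbbE^{-1}(C,A)$ are thus intrinsic to $\Csc$, and the claim follows; the argument is symmetric in the two variables, so one may equally use the codominant $\sfr$-triangle of $C$ and $\EbbI^{-1}$.

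The main obstacle — really the only subtlety — is making sure that the four-term acyclicity sequence is available without invoking condition (N+): this is exactly the content of \cref{prop:EmbEisDelta}, which establishes that $\EbbE^{\bullet}$ is an honest bivariant $\delta$-functor (not merely a connected sequence) for \emph{every} embedding $\Tsc \hookrightarrow \Dsc$, so the long exact sequences are unconditional. Given that, the rest is bookkeeping with dimensions of kernels and cokernels in the exact sequences, using that $\Ebb^0 = \Hom_{\Csc}$, $\Ebb^1 = \Ebb$, $\Ebb^{-2}$, $\EbbI^{-1}$ and $\EbbII^{-1}$ are all manifestly independent of the embedding because they are defined from $\Csc$ (and its chosen dominant/codominant $\sfr$-triangles, independent up to canonical isomorphism by \cref{PropUniv_hered}-type arguments and \cref{GNP1}). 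One should also remark, for completeness, that a quasi-inverse equivalence $F$ identifies the two $\delta$-functors' components up to natural isomorphism via \cref{prop:comparison:univ} when property (C) is shared, so the dimension statement transports along $F$ as well.
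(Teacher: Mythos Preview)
Your argument is correct and follows exactly the paper's approach: use property~(C) from \cref{prop:EmbEisDelta} to identify $\EbbE^{-1}(-,P)\cong\EbbII^{-1}(-,P)$ on projectives, then read off $\dim\EbbE^{-1}(C,A)$ from the long exact sequence attached to a projective resolution of $A$; the paper states this in two lines, you spell out the dimension bookkeeping.

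One small correction: your closing remark invoking \cref{prop:comparison:univ} is misplaced. That lemma needs one of the two $\delta$-functors to satisfy property~(U), which is precisely what is \emph{not} assumed here (and is the whole point of the proposition --- the dimensions agree even when universality is unavailable). Fortunately this remark is extraneous: your main argument already shows that $\dim\EbbE^{-1}(C,A)$ is computed from $\EbbII^{-1}$, $\Ebb^{-2}$ and $\Hom_{\Csc}$ alone, so it automatically transports along any equivalence $F$ of extriangulated categories without appealing to universality. Simply delete that last sentence.
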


%\begin{proposition}\Yann{I wanted to make this suggestion (with same proof) but I do not see why $\EbbE^{-i}$ should still be free a priori. Does it obviously follow from the proof?}
%Assume that $\Csc$ is $R$-linear with finitely generated free $\Hom-$ and $\Ebb-$spaces. Then $\operatorname{rank} \EbbE^{-i}(-, -)$ are independent of the choices of the categories $\Tsc \hookrightarrow \Dsc$ and of the equivalence $\Csc \overset\sim\to \Tsc \ast \Sigma \Tsc.$
%\end{proposition}

\begin{proof}
First note that for any projective $P \in \Csc',$ we have $\EbbE^{-i}(-, P) \cong \EbbII^{-i}(-, P)$ for any projective and so does not depend on the embedding or the equivalence. The result then follows by using the long exact sequence in the second argument and the fact that $\Csc$ has projective dimension 1.
\end{proof}

\begin{lemma} \label{lem:comparison:univ}
Assume that there exists a balanced $\delta-$functor $\Ebb_{un}^{\bullet}$ having  $\Ebb_{un}^n~=~\Ebb^n$ with the canonical connecting morphisms for $n\ge0$ and satisfying conditions $\rm{(U)}$ and $\rm{(C)}$.
%(and, therefore, (V)).
%satisfying conditions (U) and (C) (and, therefore, (V)).  universal among bivariant $\delta$-functors having  $F^n~=~\Ebb^n$ with the canonical connecting morphisms for $n\ge0$, and such that
%$$\Ebb_{un}^{-i}(I,-) = \EbbI^{-i}(I,-) \text{ for any injective } I;$$
%$$\Ebb_{un}^{-i}(-,P) = \EbbII^{-i}(-,P) \text{ for any projective } P.$$
Then we have $\Ebb_{un}^{\bullet} \overset\sim\to \EbbE^{\bullet}.$
\end{lemma}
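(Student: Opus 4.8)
The statement is essentially an immediate consequence of \cref{prop:comparison:univ} once we know that $\EbbE^{\bullet}$ enjoys property $\rm{(C)}$, which is exactly the content of \cref{prop:EmbEisDelta}. So the plan is: first invoke \cref{prop:EmbEisDelta} to conclude that $(\EbbE^{\bullet}, \del\ssh, \del\ush)$ is a bivariant $\delta$-functor having $\EbbE^n = \Ebb^n$ with the canonical connecting morphisms for $n \ge 0$, and that it satisfies property $\rm{(C)}$. Then apply \cref{prop:comparison:univ} with the pair consisting of $\Ebb_{un}^{\bullet}$ (which by hypothesis has both $\rm{(U)}$ and $\rm{(C)}$) and $E^{\bullet} = \EbbE^{\bullet}$ (which has $\rm{(C)}$): the proposition produces an isomorphism $\Ebb_{un}^{\bullet} \overset\sim\to \EbbE^{\bullet}$ of bivariant $\delta$-functors. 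That is all.

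Concretely, I would write: by \cref{prop:EmbEisDelta}, $(\EbbE^{\bullet}, \del\ssh, \del\ush)$ is a bivariant $\delta$-functor with property $\rm{(C)}$. By \cref{prop:comparison:univ}, applied to $\Ebb_{un}^{\bullet}$ and $\EbbE^{\bullet}$, we obtain $\Ebb_{un}^{\bullet} \overset\sim\to \EbbE^{\bullet}$, and this isomorphism is compatible with the connecting morphisms $\del\ssh, \del\ush$ by construction in the proof of \cref{prop:comparison:univ} (it comes from the morphism of $\delta$-functors supplied by the universality of $\Ebb_{un}^{\bullet}$, and the five lemma upgrades it to an isomorphism using the (co)resolutions $(\ref{sTri_Chosen_proj})$ and $(\ref{sTri_Chosen_inj})$ together with property $\rm{(C)}$). \qed

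There is no real obstacle here: the only thing to double-check is that the hypotheses of \cref{prop:comparison:univ} match — namely that $\EbbE^{\bullet}$ has property $\rm{(C)}$ in the precise sense used there (agreement with $\EbbI^{-n}(I,-)$ on injective first arguments and with $\EbbII^{-n}(-,P)$ on projective second arguments, together with $\EbbE^n = \Ebb^n$ with canonical connecting morphisms for $n \ge 0$). This is precisely what \cref{lem:PropCForEmbE}(a) and the definition $\EbbE^{-n} := 0$ for $n \ge 3$, $\EbbE^{-2} := \Ebb^{-2}$, give, as recorded in \cref{prop:EmbEisDelta}. If one wanted to be more careful, the mildly non-routine point is to confirm that the natural transformations appearing in $\rm{(U)}$ for $\Ebb_{un}^{\bullet}$ restrict to \emph{isomorphisms} on all pairs $(X,Y)$, not just on pairs with one projective or injective coordinate; but this is exactly the five-lemma argument already carried out inside the proof of \cref{prop:comparison:univ}, so nothing new needs to be done. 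Hence the proof is a two-line deduction.
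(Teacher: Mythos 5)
Your proof is correct and is exactly the paper's argument: the paper's own proof reads "This follows from \cref{prop:EmbEisDelta} by \cref{prop:comparison:univ}." Your additional remarks about verifying property (C) for $\EbbE^{\bullet}$ and the five-lemma upgrade are accurate elaborations of what those two results already contain.
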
 
 
\begin{proof}
This follows from \cref{prop:EmbEisDelta} by \cref{prop:comparison:univ}.
\end{proof}

We also give a more explicit reformulation of the condition (N+) in terms of restrictions of certain sub-bifunctors of $\Dsc$ on $\Csc\op\ti\susp^{-1}\Csc$ and compare the constructions from \cref{ssection:universal} and \cref{ssection:computation}.

\begin{corollary} \label{cor: N+_comparison}
The following are equivalent:
\begin{itemize}
    \item[(i)] Condition $\rm{(N+)}$ is satisfied;
    \item[(ii)] We have \[([\susp \Tsc] \cap [\susp^{-1} \Tsc]) (\Csc, \susp^{-1} \Csc) = [\susp \Tsc \to \susp^{-1} \Tsc] (\Csc, \susp^{-1} \Csc);\]
    \item[(iii)] $\EbbPB^{\bullet}$ is a $\delta-$functor;
    \item[(iv)] $\EbbPB^{\bullet} \overset\sim\to \EbbE^{\bullet}$.
\end{itemize}
\end{corollary}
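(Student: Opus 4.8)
\textbf{Proof plan for \cref{cor: N+_comparison}.}
The strategy is to chain together equivalences that are, for the most part, already established or immediate from the explicit descriptions obtained in \cref{ssection:computation}. First I would prove (i) $\Leftrightarrow$ (iii): the equivalence of (1) and (6) in \cref{PropEquivAcycBiProl} says precisely that $\CEs$ satisfies (N$+$) if and only if $(\EbbPB^{\bullet}, \del\ssh, \del\ush)$ is a bivariant $\delta$-functor, which in our situation is the same as being a $\delta$-functor in the sense of (iii). Next I would prove (iii) $\Rightarrow$ (iv): if $\EbbPB^{\bullet}$ is a $\delta$-functor, then by the last sentence of \cref{PropEquivAcycBiProl} it satisfies properties (U) and (C); since $\EbbE^{\bullet}$ satisfies (C) by \cref{prop:EmbEisDelta}, \cref{prop:comparison:univ} (or directly \cref{lem:comparison:univ}) gives a natural isomorphism $\EbbPB^{\bullet} \overset\sim\to \EbbE^{\bullet}$. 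The implication (iv) $\Rightarrow$ (iii) is then trivial, because $\EbbE^{\bullet}$ is a $\delta$-functor by \cref{prop:EmbEisDelta}, so any connected sequence isomorphic to it (compatibly with the connecting morphisms) is also a $\delta$-functor.

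It remains to incorporate (ii), and this is where the genuine work lies. One direction, (iv) $\Rightarrow$ (ii), should be read off from the explicit formulas: by \eqref{EBP_explicit} we have $\EbbPB^{-1}(C,A) \cong \Dsc/_{([\susp\Tsc]\cap[\susp^{-1}\Tsc])}(C, \susp^{-1}A)$, while by \cref{def:negativeComputation} we have $\EbbE^{-1}(C,A) = \Dsc/_{[\susp\Tsc\to\susp^{-1}\Tsc]}(C, \susp^{-1}A)$; both are quotients of the same space $\Dsc(C,\susp^{-1}A)$ by ideals, and the inclusion $[\susp\Tsc\to\susp^{-1}\Tsc] \subseteq [\susp\Tsc]\cap[\susp^{-1}\Tsc]$ always holds, so a natural isomorphism $\EbbPB^{-1} \overset\sim\to \EbbE^{-1}$ compatible with the canonical surjections from $\Dsc(-,\susp^{-1}-)$ forces equality of the two ideals on $\Csc\op\ti\susp^{-1}\Csc$, which is exactly (ii). Conversely, for (ii) $\Rightarrow$ (iv) — or, what is cleaner, (ii) $\Rightarrow$ (i) so as to close the cycle — I would argue that if the two ideals coincide, then the canonical surjection $\wp^{\Irm}$ and $\wp^{\IIrm}$ from the pullback defining $\EbbPB^{-1}$ can be analyzed degreewise: the pullback square in \cref{DefUniv_hered} then realizes $\EbbPB^{-1}(C,A)$ as $\Dsc/_{[\susp\Tsc\to\susp^{-1}\Tsc]}(C,\susp^{-1}A)$, and one checks that the complex $C^\bullet_{\Irm}$ of \cref{PropComparisonIandII}, rewritten via \cref{lem:Ebb-1_0_Auslander_one-sided} in terms of the ideal quotients $\Dsc/_{[\susp^{-1}\Tsc]}(C,\susp^{-1}-)$, becomes exact at its two middle terms precisely by the same diagram chase that proves the $(\EbbE,\del\ssh)$-acyclicity of the dominant $\sfr$-triangle in the proof of \cref{prop:EmbEisDelta}. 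That chase already shows exactness of $\Dsc/_{[\susp\Tsc]}(C,\susp^{-1}T_1^A)\to\Dsc/_{[\susp\Tsc]}(C,\susp^{-1}T_0^A)\to\Dsc/_{[\susp\Tsc\to\susp^{-1}\Tsc]}(C,\susp^{-1}A)$, and under (ii) the rightmost term is $\EbbPB^{-1}(C,A)\cong\EbbI^{-1}(C,A)$, which is what condition (N$+$) demands at these two spots.

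The main obstacle, then, is the bookkeeping in this last step: one must be careful that the isomorphisms of \cref{lem:Ebb-1_0_Auslander_one-sided} and the factorisations recorded in the remark following it genuinely identify the maps $j_X\uas$, $i_X\uas$, $\iota_X\ush\circ\om_{Y\sharp}$ appearing in $C^\bullet_{\Irm}$ with the corresponding maps between ideal quotients of $\Dsc$-Hom-spaces, and that the quotient by $[\susp\Tsc\to\susp^{-1}\Tsc]$ on the last term is exactly the one produced by the pullback under hypothesis (ii). I expect no conceptual difficulty beyond this matching, since all the diagram chases needed are either already in the proof of \cref{prop:EmbEisDelta} or are the one used to establish \eqref{EBP_explicit}; the proof of the corollary is essentially an assembly of \cref{PropEquivAcycBiProl}, \cref{prop:EmbEisDelta}, \cref{prop:comparison:univ}, and the explicit formulas \eqref{EBP_explicit} and \cref{def:negativeComputation}, with (ii) serving as the bridge between the pullback description and the one-sided ideal-quotient description of $\EbbPB^{-1}$.
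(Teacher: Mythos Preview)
Your chain (i)$\Leftrightarrow$(iii), (iii)$\Rightarrow$(iv), (iv)$\Rightarrow$(iii), and (iv)$\Rightarrow$(ii) is correct and matches the paper's argument essentially verbatim.

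The gap is in your treatment of (ii)$\Rightarrow$(i). Two things go wrong. First, the sequence you quote from the proof of \cref{prop:EmbEisDelta},
\[
\Dsc/_{[\susp\Tsc]}(C,\susp^{-1}T_1^A)\to\Dsc/_{[\susp\Tsc]}(C,\susp^{-1}T_0^A)\to\Dsc/_{[\susp\Tsc\to\susp^{-1}\Tsc]}(C,\susp^{-1}A),
\]
varies the \emph{second} argument and has $\EbbII^{-1}$-terms in the first two slots; it is not $C^\bullet_{\Irm}$, which varies the first argument through the injective coresolution. Second, your claim that under (ii) one has $\EbbPB^{-1}(C,A)\cong\EbbI^{-1}(C,A)$ is false: under (ii) you get $\EbbPB^{-1}=\EbbE^{-1}=\Dsc/_{[\susp\Tsc\to\susp^{-1}\Tsc]}$, whereas $\EbbI^{-1}=\Dsc/_{[\susp^{-1}\Tsc]}$, and these differ whenever $[\susp\Tsc\to\susp^{-1}\Tsc]\subsetneq[\susp^{-1}\Tsc]$ on the relevant Hom-spaces. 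So the diagram chase from \cref{prop:EmbEisDelta} does not feed directly into condition (N$+$).

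The paper's argument for (i)$\Leftrightarrow$(ii) is shorter and avoids this tangle. It first observes that, in the embedded setting, $C^\bullet_{\Irm}$ is \emph{always} exact at its second middle term $\EbbI^{-1}(I_X,Y)$; this is a special feature here, not a consequence of (ii). Hence (N$+$) reduces to exactness at the third middle term $\EbbI^{-1}(X,Y)$. Writing out the image of $i_X\uas$ and the kernel of $\iota_X\ush\circ\om_{Y\sharp}$ via the ideal-quotient descriptions, this exactness says precisely that the image of $[\susp\Tsc]$ in $\Dsc/_{[\susp^{-1}\Tsc]}(X,\susp^{-1}Y)$ equals the image of $[\susp\Tsc\to\susp^{-1}\Tsc]$ there, which is equivalent to (ii). The self-dual reformulation via $C^\bullet_{\IIrm}$ (\cref{RemN+SelfDual}) gives the same conclusion with the roles of $[\susp\Tsc]$ and $[\susp^{-1}\Tsc]$ swapped.
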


\begin{proof}
Conditions (i) and (iii) are equivalent by \cref{PropEquivAcycBiProl}. Moreover, by the same theorem, (i) implies that $\EbbPB^{\bullet} = \Ebb_{un}^{\bullet}$ is universal and satisfies the conditions of \cref{lem:comparison:univ}. Thus, (i) implies (iv). If (iv) holds, then (iii) holds authomatically, since $\EbbE^{\bullet}$ is a $\delta-$functor by \cref{prop:EmbEisDelta}.

Finally, the equivalence (ii) $\Leftrightarrow$ (iv) follows from the comparison of the formula (\ref{EBP_explicit}) for $\EbbPB^{-1}$ with that for $\EbbE^{-1}$ in \cref{def:negativeComputation}.
%and apply \cref{cor: N+_comparison}. 
Alternatively, using the above paragraph it amounts to the equivalence (i) $\Leftrightarrow$ (ii). To show the latter, note that in our settting, the complex $C_{\Irm}^{\bullet}$ is always exact in the second term, so the condition (N+) holds if and only if it is exact in the third term (for each $C, A \in \Csc$). The latter happens if and only if 
\[[\susp \Tsc]/_{[\susp^{-1} \Tsc]} (\susp \Csc, \susp^{-1} \Csc) = [\susp \Tsc]/_{[\susp \Tsc \to \susp^{-1} \Tsc]} (\susp \Csc, \susp^{-1} \Csc),\] 
which is clearly equivalent to the reformulation in (ii). Dually, the exactness of $C_{\IIrm}^{\bullet}$ in the third term for each $C, A \in \Csc$ amounts to asking that
\[[\susp^{-1} \Tsc]/_{[\susp \Tsc]} (\susp \Csc, \susp^{-1} \Csc) = [\susp^{-1} \Tsc]/_{[\susp \Tsc \to \susp^{-1} \Tsc]} (\susp \Csc, \susp^{-1} \Csc),\]
which is also equivalent to the reformulation in (ii).
\end{proof}

\subsection{Examples}

\begin{example}
$\Csc$ is exact if and only if $\EbbI^{-1} = \EbbII^{-1} = 0.$ This can be immediately deduced from \cite[Proposition 2.6]{AdachiEnomotoTsukamoto} by using the universal properties of $\EbbI$ and $\EbbII$. Also, the ``only if'' part is \cite[Proposition 5.4]{GNP1} and its dual, and the ``if'' part follows directly from \cite[Corollary 3.18]{NakaokaPalu}. 
\end{example}

\begin{example}
In $K^{[-1, 0]}(\proj \Lambda),$ we have $\EbbI^{-1} = \EbbII^{-1} = \EbbPB^{-1} = \EbbE^{-1}$ and $\Ebb^{-2} = 0.$ More precisely, $\Fbb^{\bullet}$ with $\Fbb^{-1} = \EbbI^{-1} = 
\EbbII^{-1}$ and $\Fbb^{-i} = 0,$ for $i \geq 2,$ is the universal bivariant $\delta-$functor among those having $\Fbb^i = \Ebb^i, $ for $i \geq 0.$ This happens because conditions (NI+) and (NII+) are satisfied in this case (see \cite[Example 5.34, Corollary 5.39, Example 5.42]{GNP1}).
\end{example}

\begin{example} \label{ex:frob_quotient}
Let $\Csc$ be a triangulated category with a cluster-tilting subcategory $\ct$. By \cite[Theorem 3.3]{KoenigZhu}, the quotient category $\cA = \Csc/\ct$ is abelian.\footnote{Note that in \cite{KoenigZhu} the authors call \emph{tilting} those subcategories of triangulated categories which are nowadays known as \emph{cluster-tilting} subcategories.} Assume that $\cA$ is Frobenius. By \cite[Proposition 4.6]{KoenigZhu}, we then have $\Sigma\ct = \Sigma^{-1}\ct.$ Thus, in this case for the relative structure on $\Csc$ making $\mathcal{T}$ the full subcategory of projectives, we  have $\EbbI^{-1} = \EbbII^{-1}$. Then conditions (NI+) and (NII+) are satisfied, and, in particular, (N+) is satisfied. Thus, we have $\EbbI^{-1} = \EbbII^{-1} = \EbbPB^{-1} = \EbbE^{-1}$ and $\Ebb^{-2} = 0.$
\end{example}

\begin{example} \label{ex:self-inj}
We have the following important special case of Example \ref{ex:frob_quotient}. Let $(Q, W)$ be a Jacobi-finite quiver with potential. Let $\mathcal{T} = \add T$ be the standard initial cluster-tilting subcategory in the associated cluster category $\Csc$. Assume that the Jacobian algebra $J(Q, W) = (\End_{\Csc} T)^{op}$ is self-injective. By \cite{KoenigZhu} as in the previous example, and also by \cite[Proposition 3.6]{IyamaOppermann}, we have $\Sigma \mathcal{T} = \Sigma^{-1} \mathcal{T}.$ Thus, in this case, for the relative structure on $\Csc$ making $\mathcal{T}$ the full subcategory of projectives, we have $\EbbI^{-1} = \EbbII^{-1} = \EbbPB^{-1} = \EbbE^{-1}$ and $\EbbPB^{-2} = \EbbE^{-2} = 0.$
\end{example}

\begin{example}

Assume that in the notation of the previous subsection, we have $C = \Sigma^{-1} T_a = \Sigma^{-2} A,$ for some $T_a \in \mathcal{T}.$ 

Then $\Dsc(C, \Sigma^{-1} A) = \Dsc(\Sigma^{-1} T_a, T_a) = 0.$ Therefore, $\EbbE^{-1}(C, A) = \EbbI^{-1}(C, A) = \EbbII^{-1}(C, A) = 0.$

We also have $T_0^A = 0$ and $T_1^A = T_a$. By \cref{lem: E-2_explicit}, we get 
\begin{eqnarray*}
\Ebb^{-2}(C, A) & = & \Dsc/_{(\susp^{-2}\Tsc)}(T_0^C,\susp^{-2}A)\times_{\Dsc(T_0^C,\susp^{-1}T_1^A)}\Dsc/_{(\susp\Tsc)}(C,\susp^{-1}T_1^A) \\
& = &  \Dsc/_{(\susp^{-2}\Tsc)}(T_0^C, \Sigma^{-1} T_a)\times_{\Dsc(T_0^C,\susp^{-1}T_a)}\Dsc/_{(\susp\Tsc)}(C,C) \\
& = & \Dsc(T_0^C, \Sigma^{-1} T_a)\times_{\Dsc(T_0^C,\susp^{-1}T_a)}\Dsc/_{(\susp\Tsc)}(C,C) \\
& = & \Dsc/_{(\susp\Tsc)}(C,C).
\end{eqnarray*}

% \[\Ebb^{-2}(C, A) = \Csc/_{(\susp^{-2}\Tsc)}(T_0^C,\susp^{-2}A)\times_{\Csc(T_0^C,\susp^{-1}T_1)}\Csc/_{(\susp\Tsc)}(C,\susp^{-1}T_1) =\]
% \[ \Csc/_{(\susp^{-2}\Tsc)}(T_0^C, \Sigma^{-1} T_1)\times_{\Csc(T_0^C,\susp^{-1}T_1)}\Csc/_{(\susp\Tsc)}(C,C)=\]
% \[ \Csc(T_0^C, \Sigma^{-1} T_1)\times_{\Csc(T_0^C,\susp^{-1}T_1)}\Csc/_{(\susp\Tsc)}(C,C)=\]
% \[\Csc/_{(\susp\Tsc)}(C,C).\]

If $C \notin \Sigma \mathcal{T},$ this contains the identity automorphism of $C$ and so is non-zero.

To summarize, in this scenario we have
$\EbbI^{-1}(C, A) =\EbbII^{-1}(C, A) = 0,$ but $\Ebb^{-2}(C, A) \neq 0$. The latter happens because $\EbbII^{-1} (C, \mathcal{T}) \neq 0.$
\end{example}

For an explicit example of a pair $C, A$ with $\EbbI^{-1}(C, A) =\EbbII^{-1}(C, A) = 0,$ but $\Ebb^{-2}(C, A) \neq 0,$ we can consider the cluster category of the quiver $A_2.$ By the above argument, it is sufficient to find a cluster--tilting subcategory $\mathcal{T}$ with an object $T_a$ such that $\Sigma^{-1} T_a \notin \Sigma \mathcal{T}$.

\begin{example} \label{ex: A_2}
Let $\Csc$ be the cluster category of type $A_2$, whose Auslander--Reiten quiver is shown below.
% https://q.uiver.app/?q=WzAsNyxbMCwxLCJUXzEiXSxbMSwwLCJUXzIiXSxbMiwxLCJDIl0sWzMsMCwiQSJdLFs0LDEsIlxcYnVsbGV0Il0sWzUsMCwiVF8xIl0sWzYsMSwiVF8yIl0sWzAsMV0sWzEsMl0sWzIsM10sWzMsNF0sWzQsNV0sWzUsNl1d
\[\begin{tikzcd}
	& {T_b} && A && {T_a} \\
	{T_a} && C && \bullet && {T_b}
	\arrow[from=2-1, to=1-2]
	\arrow[from=1-2, to=2-3]
	\arrow[from=2-3, to=1-4]
	\arrow[from=1-4, to=2-5]
	\arrow[from=2-5, to=1-6]
	\arrow[from=1-6, to=2-7]
\end{tikzcd}\]
The object $T=T_a\oplus T_b$ is cluster--tilting, and we have
$A=\Sigma T_a$, $C=\Sigma^{-1}T_b$.
Endow $\Csc$ with the relative extriangulated structure given by this choice of a cluster tilting object: the $\sfr$-triangles are those triangles for which the third morphism factors through $\Sigma \Tsc$, where $\Tsc=\add T$.
Because $\Csc(C,\Sigma^{-1}A)=0$, we have $\EbbI^{-1}(C,A)=0=\EbbII^{-1}(C,A)$, which shows that $\EbbPB^{-1}(C,A)=0$.
We also have $\EbbE^{-1}(C,A)=0$.
However, $\Ebb^{-2}(C,A)\neq0$, as shown by the following computation.
There are $\sfr$-triangles
\[
T_a\infl T_b\defl C \text{ and } T_a\infl 0 \defl A.
\]
Thus,
\begin{eqnarray*}
\Ebb^{-2}(C,A) & = &
\Csc/_{(\susp^{-2}\Tsc)}(T_b,\susp^{-2}A)\times_{\Csc(T_,\susp^{-1}T_a)}\Csc/_{(\susp\Tsc)}(C,\susp^{-1}T_a) \\
& = & \Csc(T_b,C)\times_{\Csc(T_b,C)}\Csc(C,C) \\
& = & \Csc(C,C) \neq  0.
\end{eqnarray*}
\end{example}

\begin{example} \label{ex: acyclic_seed}
For acyclic cluster-tilting objects in (acyclic) cluster categories, if a map factors both through $\Sigma^{-1} \mathcal{T}$ and through $\Sigma \mathcal{T},$ then it has to factor through $\Sigma \mathcal{T} \to \Sigma^{-1} \ct,$ since the Auslander-Reiten quiver is directed. Thus, in this case, $\EbbPB^{-1} = \EbbE^{-1}$, and the condition (N+) holds. Usually (and as \cref{ex: A_2} shows), we have $\EbbI^{-1} \neq \EbbII^{-1}$, and so $\Ebb^{-2} \neq 0.$
\end{example}

We see that the condition (N+) holds both for acyclic and self-injective seeds. These are, in a sense, two extreme cases, since a cluster-tilted algebra is always 1-Gorenstein \cite{KellerReiten}. It is thus natural to ask if the condition holds for any cluster seed. The answer turns out to be negative, and the counterexample is given by the smallest case which is neither acyclic, nor self-injective:

\begin{example}[Non-acyclic seed in type $A_4$]
Let $\Csc$ be the cluster category of type $A_4$, whose Auslander--Reiten quiver is shown below.
Consider the cluster tilting object $T=T_a\oplus T_b\oplus T_c\oplus T_d$ whose endomorphism algebra has a Gabriel quiver with a 3-cycle, but is not self-injective.
Endow $\Csc$ with the relative extriangulated structure $\Ebb_{\Tsc}$ associated with $\Tsc = \add T$. The irreducible morphism $f: \susp^{-1} T_b \to \susp T_c$ factors both through $[\susp\Tsc]$ and $[\susp^{-1}\Tsc]$, but does not factor through $[\susp\Tsc \to \susp^{-1}\Tsc]$. By \cref{cor: N+_comparison}, this means that the condition (N+) is not satisfied for this extriangulated category, and so $\EbbPB^{\bullet}$ is not a $\delta$-functor. Let us give a more explicit proof of this by providing an example of an $\sfr$-triangle which is not $(\EbbPB^{\bullet},\del\ssh,\del\ush)$-acyclic.

For the sake of contradiction, assume that condition (N+) holds.
Then from the $\sfr$-triangle $\Sigma^{-1}T_b \infl \Sigma T_c\oplus \Sigma T_a \defl \Sigma T_b \dashrightarrow$, we obtain a long exact sequence

\begin{eqnarray*}
\EbbPB^{-1}(\Sigma T_b,\Sigma^{-1}T_a) \to \EbbPB^{-1}(\Sigma T_c\oplus\Sigma T_a,\Sigma^{-1}T_a) \to \EbbPB^{-1}(\Sigma^{-1}T_b,\Sigma^{-1}T_a) \to\\ \Csc(\Sigma T_b,\Sigma^{-1}T_a)\to \Csc(\Sigma T_c\oplus\Sigma T_a,\Sigma^{-1}T_a).
\end{eqnarray*}
We have
$$\EbbPB^{-1}(\Sigma T_b,\Sigma^{-1}T_a) = \EbbI^{-1}(\Sigma T_b,\Sigma^{-1}T_a) = \Csc/_{[\Sigma^{-1}\Tsc]}(\Sigma T_b,\Sigma^{-2}T_a) =  \Csc/_{[\Sigma^{-1}\Tsc]}(\Sigma T_b,\Sigma T_c) =0,$$
and 
$$\EbbPB^{-1}(\Sigma T_c\oplus\Sigma T_a,\Sigma^{-1}T_a) = \EbbI^{-1}(\Sigma T_c\oplus\Sigma T_a,\Sigma^{-1}T_a) = \Csc/_{[\Sigma^{-1}\Tsc]}(\Sigma T_c\oplus \Sigma T_a,\Sigma T_c) \cong \Csc(\Sigma T_c,\Sigma T_c)\neq 0.$$
Therefore, we must have $\EbbPB^{-1}(\Sigma^{-1}T_b,\Sigma^{-1}T_a)\neq 0$.
But on the other hand, we have
$$\EbbI^{-1}(\Sigma^{-1}T_b,\Sigma^{-1}T_a)=\Csc/_{[\Sigma \Tsc]}(\Sigma^{-1} T_b,\Sigma T_c) = 0$$
 and
$$\EbbII^{-1}(\Sigma^{-1}T_b,\Sigma^{-1}T_a)=\Csc/_{[\Sigma^{-1} \Tsc]}(\Sigma^{-1} T_b,\Sigma T_c) = 0,$$
so that the pullback diagram in \cref{DefUniv_hered} gives $\EbbPB^{-1}(\Sigma^{-1}T_b,\Sigma^{-1}T_a)=0$, a contradiction.

%\Misha{(Maybe for some further version) Is is clear how to generalize this example and characterise seeds for which (N+) does not hold?}

\end{example}
% https://q.uiver.app/?q=WzAsMTgsWzAsMywiVF8xIl0sWzEsMiwiVF8yIl0sWzIsMSwiXFxidWxsZXQiXSxbMywwLCJUXzQiXSxbMiwzLCJcXFNpZ21hXnstMX0gVF8xIl0sWzMsMiwiXFxTaWdtYV57LTF9VF8yIl0sWzQsMywiXFxTaWdtYSBUXzMiXSxbNCwxLCJcXGJ1bGxldCJdLFs1LDIsIlxcYnVsbGV0Il0sWzYsMywiVF8zIl0sWzYsMSwiXFxTaWdtYSBUXzIiXSxbNSwwLCJcXFNpZ21hIFRfMSJdLFs3LDIsIlxcYnVsbGV0Il0sWzgsMywiXFxTaWdtYSBUXzQiXSxbNywwLCJUXzEiXSxbOCwxLCJUXzIiXSxbOSwyLCJcXGJ1bGxldCJdLFsxMCwzLCJUXzQiXSxbMCwxXSxbMSwyXSxbMiwzXSxbMSw0XSxbNCw1XSxbMiw1XSxbNSw2LCJmIiwyXSxbNSw3XSxbMyw3XSxbNyw4XSxbOCw5XSxbNiw4XSxbOCwxMF0sWzcsMTFdLFsxMSwxMF0sWzEwLDEyXSxbOSwxMl0sWzEyLDEzXSxbMTAsMTRdLFsxMiwxNV0sWzE0LDE1XSxbMTUsMTZdLFsxMywxNl0sWzE2LDE3XV0=
\[\begin{tikzcd} %[scale cd=1.2]
\begin{tikzpicture}

\node (v1) at (-5,-3.5) {$T_a$};
\node (v2) at (-4,-2.5) {$T_b$};
\node (v3) at (-3,-1.5) {$\bullet$};
\node (v13) at (-2, -0.5) {$T_d$};
\node (v5) at (-2,-2.5) {$\Sigma^{-1}T_b$};
\node (v7) at (-1,-3.5) {$\Sigma T_c$};
\node (v4) at (-3,-3.5) {$\Sigma^{-1}T_a$};
\node (v6) at (-1,-1.5) {$\bullet$};
\node (v8) at (0,-2.5) {$\bullet$};
\node (v14) at (0, -0.5) {$\Sigma T_a$};
\node (v10) at (1,-3.5) {$T_c$};
\node (v9) at (1,-1.5) {$\Sigma T_b$};
\node (v11) at (2,-2.5) {$\bullet$};
\node (v15) at (2, -0.5) {$T_a$};
\node (v12) at (3,-3.5) {$\Sigma T_d$};
\node (v16) at (3, -1.5) {$T_b$};
\node (v17) at (4, -2.5) {$\bullet$};
\node (v18) at (5, - 3.5) {$T_d$};
\draw [->]  (v1) -- (v2);
\draw [->]  (v2) -- (v3);
\draw [->]  (v2) -- (v4);
\draw [->]  (v3) -- (v13);
\draw [->]  (v13) -- (v6);
\draw [->]  (v6) -- (v14);
\draw [->]  (v14) -- (v9);
\draw [->]  (v9) -- (v15);
\draw [->]  (v11) -- (v16);
\draw [->]  (v12) -- (v17);
\draw [->]  (v15) -- (v16);
\draw [->]  (v16) -- (v17);
\draw [->]  (v17) -- (v18);
\draw [->]  (v3) -- (v5);
\draw [->]  (v4) -- (v5);
\draw [->]  (v5) -- (v6);
\draw [->]  (v5) -- (v7) node[midway,above right] {\tiny{$f$}};
\draw [->]  (v7) -- (v8);
\draw [->]  (v6) -- (v8);
\draw [->]  (v8) -- (v9);
\draw [->]  (v8) -- (v10);
\draw [->]  (v9) -- (v11);
\draw [->]  (v10) -- (v11);
\draw [->]  (v11) -- (v12);
\end{tikzpicture}
\end{tikzcd}\]

%\[\begin{tikzcd} [scale cd=0.7]
%	&&& {T_4} && {\Sigma T_1} && {T_1} \\
%	&& \bullet && \bullet && {\Sigma T_2} && {T_2} \\
%	& {T_2} && {\Sigma^{-1}T_2} && \bullet && \bullet && \bullet \\
%	{T_1} && {\Sigma^{-1} T_1} && {\Sigma T_3} && {T_3} && {\Sigma T_4} && {T_4}
%	\arrow[from=4-1, to=3-2]
%	\arrow[from=3-2, to=2-3]
%	\arrow[from=2-3, to=1-4]
%	\arrow[from=3-2, to=4-3]
%	\arrow[from=4-3, to=3-4]
%	\arrow[from=2-3, to=3-4]
%	\arrow["f"', from=3-4, to=4-5]
%	\arrow[from=3-4, to=2-5]
%	\arrow[from=1-4, to=2-5]
%    \arrow[from=2-5, to=3-6]
%	\arrow[from=3-6, to=4-7]
%	\arrow[from=4-5, to=3-6]
%	\arrow[from=3-6, to=2-7]
%	\arrow[from=2-5, to=1-6]
%	\arrow[from=1-6, to=2-7]
%	\arrow[from=2-7, to=3-8]
%	\arrow[from=4-7, to=3-8]
%	\arrow[from=3-8, to=4-9]
%	\arrow[from=2-7, to=1-8]
%	\arrow[from=3-8, to=2-9]
%	\arrow[from=1-8, to=2-9]
%	\arrow[from=2-9, to=3-10]
%	\arrow[from=4-9, to=3-10]
%	\arrow[from=3-10, to=4-11]

%\end{tikzcd}\]

We see that in this example condition (N+) is not satisfied and the pullback construction of \cref{ssection:universal} fails to produce a $\delta$-functor.
This motivates the explicit computations in \cref{ssection:computation}, that do give negative extensions in this example.

\appendix
%\addcontentsline{toc}{section}{Appendices}
%\section*{Appendices}
\section{Reminder on extriangulated categories}
\label{ssection:extricats}

Extriangulated categories, introduced in~\cite{NakaokaPalu}, axiomatize extension-closed subcategories of triangulated categories and generalize both exact categories and triangulated categories.
An extriangulated category is given by:
\begin{itemize}
 \item an additive category $\Csc$,
 \item an additive bifunctor $\Ebb:\Csc^\mathrm{op}\times\Csc\to \mathrm{Ab}$ taking values in the category of abelian groups (and to be thought of as an $\operatorname{Ext}^1$-bifunctor),
 \item an additive realization $\mathfrak{s}$ sending each element $\delta\in\Ebb(Z,X)$ to some equivalence class of diagrams $X\to Y\to Z$ playing the role of short exact sequences or of triangles.
\end{itemize}
A few axioms, recalled below, have to be satisfied.

Let $\Csc$ be an additive category, and let $\Ebb: \Csc^\mathrm{op}\times\Csc\to\mathit{Ab}$ be an additive bifunctor.

\begin{remark}
Let $\delta\in\Ebb(Z,X)$, $f\in\Csc(X,X')$ and $h\in\Csc(Z',Z)$. By functoriality there are elements $\Ebb(Z,f)(\delta)\in\Ebb(Z,X')$ and $\Ebb(h,X)(\delta)\in\Ebb(Z',X)$, which are written $f_\ast\delta$ and $h^\ast\delta$ for short.
Using those notations, we have, in $\Ebb(Z',X')$
\[ \Ebb(h,f)(\delta)=h^\ast f_\ast\delta=f_\ast h^\ast\delta. \]
\end{remark}

\begin{definition}
A \defn{morphism} $(f,h):\delta\to\delta'$, where $\delta\in\Ebb(Z,X)$ and $\delta'\in\Ebb(Z',X')$, is a pair of morphisms $f\in\Csc(X,X')$ and $h\in\Csc(Z,Z')$ in $\Csc$, satisfying $f_\ast\delta=h^\ast\delta'$.
\end{definition}

\begin{definition}\label{DefSqEquiv}
Let $X,Z\in\Csc$. Two diagrams 
\[ X \overset{x}{\longrightarrow}Y\overset{y}{\longrightarrow}Z \ \text{ and } \ X\overset{x'}{\longrightarrow}Y'\overset{y'}{\longrightarrow}Z \]
in $\Csc$ are \defn{equivalent} if there exists an isomorphism $g\in\Csc(Y,Y')$ such that
\[
\xy
(-20,0)*+{X}="X";
(5,0)*+{}="1";
(0,6)*+{Y}="Y";
(0,-6)*+{Y'}="Y'";
(-5,0)*+{}="2";
(20,0)*+{Z}="Z";
{\ar^{x} "X";"Y"};
{\ar^{y} "Y";"Z"};
{\ar_{x^{\prime}} "X";"Y'"};
{\ar_{y^{\prime}} "Y'";"Z"};
{\ar^{g}_{\cong} "Y";"Y'"};
{\ar@{}|{} "X";"1"};
{\ar@{}|{} "2";"Z"};
\endxy
\]
commutes.
The equivalence class of $X \xrightarrow{x} Y \xrightarrow{y} Z$ is denoted by $[X \xrightarrow{x} Y \xrightarrow{y} Z]$.
\end{definition}

\begin{notation}
For any $X,Y,Z,A,B,C\in\Csc$, and any $[X \xrightarrow{x} Y \xrightarrow{y} Z]$, $[A \xrightarrow{a} B \xrightarrow{b} C]$, we let
\[ 0=[X\overset{\left[\bsm1\\0\esm\right]}{\longrightarrow} X\oplus Y \overset{\left[\bsm0\;1\esm\right]}{\longrightarrow} Y]
\]
and
\[
[X \xrightarrow{x} Y \xrightarrow{y} Z]\oplus [A \xrightarrow{a} B \xrightarrow{b} C] = [X\oplus A \overset{\left[\bsm x \; 0 \\ 0\; a\esm\right]}{\longrightarrow} Y\oplus B \overset{\left[\bsm y\;0\\0\;b \esm\right]}{\longrightarrow} Z\oplus C].
\]
\end{notation}

\begin{definition}
An \defn{additive realization} $\mathfrak{s}$ is a correspondence associating, with any $X,Z,$ and $\delta\in\Ebb(Z,X)$, an equivalence class $\mathfrak{s}(\delta)=[X\xrightarrow{x}Y\xrightarrow{y}Z]$ and satisfying the following three conditions: 
\begin{itemize}
\item[$(\ast)$] Let $\delta\in\Ebb(Z,X)$ and $\delta'\in\Ebb(Z',X')$ be such that
\[\mathfrak{s}(\delta)=[X\xrightarrow{x}Y\xrightarrow{y}Z] \text{ and } \mathfrak{s}(\delta')=[X'\xrightarrow{x'}Y'\xrightarrow{y'}Z'].\]
Then, for any morphism $(f,h):\delta\to\delta'$, there exists $g\in\Csc(Y,Y')$ making
\[
\xy
(-12,6)*+{X}="0";
(0,6)*+{Y}="2";
(12,6)*+{Z}="4";
(-12,-6)*+{X'}="10";
(0,-6)*+{Y'}="12";
(12,-6)*+{Z'}="14";
{\ar^{x} "0";"2"};
{\ar^{y} "2";"4"};
{\ar_{f} "0";"10"};
{\ar^{g} "2";"12"};
{\ar^{h} "4";"14"};
{\ar_{x'} "10";"12"};
{\ar_{y'} "12";"14"};
{\ar@{}|{} "0";"12"};
{\ar@{}|{} "2";"14"};
{\ar@{}|\circlearrowright "0";"12"};
{\ar@{}|\circlearrowright "2";"14"};
\endxy
\]
commute.
\end{itemize}

The sequence $X\xrightarrow{x}Y\xrightarrow{y}Z$ \defn{realizes} $\delta$ if $\mathfrak{s}(\delta)=[X\xrightarrow{x}Y\xrightarrow{y}Z]$.

\begin{enumerate}
\item For any $X,Z\in\Csc$, the realization of $0\in\Ebb(Z,X)$ is given by $\mathfrak{s}(0)=0$.
\item For any $\delta\in\Ebb(Z,X)$ and $\delta'\in\Ebb(Z',X')$, the realization of $\delta\oplus\delta'\in\Ebb(Z\oplus Z',X\oplus X')$ is given by $\mathfrak{s}(\delta\oplus\delta')=\mathfrak{s}(\delta)\oplus\mathfrak{s}(\delta')$.
\end{enumerate}
\end{definition}

\begin{definition}(\cite[Definition 2.12]{NakaokaPalu})
A triple $(\Csc,\Ebb,\mathfrak{s})$ is called an \defn{extriangulated category} if the following holds:
\begin{itemize}
\item[{\rm (ET1)}] $\Ebb:\Csc^{\mathrm{op}}\times\Csc\to\mathit{Ab}$ is an additive bifunctor;
\item[{\rm (ET2)}] $\mathfrak{s}$ is an additive realization of $\Ebb$;
\item[{\rm (ET3)}] Let $\delta\in\Ebb(Z,X)$ and $\delta'\in\Ebb(Z',X')$ be respectively realized by $X\xrightarrow{x}Y\xrightarrow{y}Z$ and $X'\xrightarrow{x'}Y'\xrightarrow{y'}Z'$. Then, for any commutative square
\[
\xy
(-12,6)*+{X}="0";
(0,6)*+{Y}="2";
(12,6)*+{Z}="4";
(-12,-6)*+{X'}="10";
(0,-6)*+{Y'}="12";
(12,-6)*+{Z'}="14";
{\ar^{x} "0";"2"};
{\ar^{y} "2";"4"};
{\ar_{f} "0";"10"};
{\ar^{g} "2";"12"};
{\ar_{x'} "10";"12"};
{\ar_{y'} "12";"14"};
{\ar@{}|{} "0";"12"};
{\ar@{}|\circlearrowright "0";"12"};
\endxy 
\]
in $\Csc$, there exists a morphism $(f,h):\delta\to\delta'$ satisfying $h\circ y=y'\circ g$.
\item[{\rm (ET3)}]$\hspace*{-5pt}^{\mathrm{op}}$ Dual of {\rm (ET3)}.
\item[{\rm (ET4)}] Let $\delta\in\Ebb(Z',X)$ and $\delta'\in\Ebb(X',Y)$ be realized respectively by
\[
X\overset{f}{\longrightarrow}Y\overset{f'}{\longrightarrow}Z'\ \text{ and } \ Y\overset{g}{\longrightarrow}Z\overset{g'}{\longrightarrow}X'.
\]
Then there exist an object $Y'\in\Csc$, a commutative diagram in $\Csc$
\[
\xy
(-18,6)*+{X}="0";
(-6,6)*+{Y}="2";
(6,6)*+{Z'}="4";
(18,6)="6";
(-18,-6)*+{X}="10";
(-6,-6)*+{Z}="12";
(6,-6)*+{Y'}="14";
(18,-6)="16";
(-6,-18)*+{X'}="22";
(6,-18)*+{X'}="24";
(-6,-28)="32";
(6,-28)="34";
{\ar^{f} "0";"2"};
{\ar^{f'} "2";"4"};
{\ar@{=} "0";"10"};
{\ar_{g} "2";"12"};
{\ar^{d} "4";"14"};
{\ar^{h} "10";"12"};
{\ar^{h'} "12";"14"};
{\ar_{g'} "12";"22"};
{\ar^{e} "14";"24"};
{\ar@{=} "22";"24"};
{\ar@{}|{} "0";"12"};
{\ar@{}|{} "2";"14"};
{\ar@{}|{} "12";"24"};
{\ar@{-->}^\delta "4";"6"};
{\ar@{-->}^{\delta''} "14";"16"};
{\ar@{-->}^{\delta'} "22";"32"};
{\ar@{-->}^{f'_\ast \delta'} "24";"34"};
{\ar@{}|\circlearrowright "0";"12"};
{\ar@{}|\circlearrowright "2";"14"};
{\ar@{}|\circlearrowright "12";"24"};
\endxy
\]
and $\delta''\in\Ebb(Y',X)$ realized by $X\overset{h}{\longrightarrow}Z\overset{h'}{\longrightarrow}Y'$, which satisfy the following compatibilities.
  \begin{enumerate}[(i)]
    \item $Z'\overset{d}{\longrightarrow}Y'\overset{e}{\longrightarrow}X'$ realizes $f'_\ast\delta'$,
    \item $d^\ast\delta''=\delta$,
    \item $f_\ast\delta''=e^\ast\delta'$. 
  \end{enumerate}
\item[{\rm (ET4)}]$\hspace*{-5pt}^{\mathrm{op}}$ Dual of {\rm (ET4)}.
\end{itemize}
\end{definition}

We use the following terminology.
\begin{notation}
Let $(\Csc,\Ebb,\mathfrak{s})$ be an extriangulated category.
\begin{enumerate}
\item A sequence $X\xrightarrow{f}Y\xrightarrow{g}Z$ is called a \defn{conflation} if it realizes some $\delta$ in $\Ebb(Z,X)$. In that case the morphism $X\xrightarrow{f}Y$ is called an \defn{inflation}, written $X\infl Y$, and the morphism $Y\xrightarrow{g}Z$ is called a \defn{deflation}, written $Y\defl Z$.
\item For an inflation $X \overset{f}\infl Y,$ the \defn{cone} $\Cone(f)$ is an object $Z$ appearing as the last term in some conflation $X \overset{f}\infl Y \defl Z.$ It is unique up to isomorphism. Dually, for a deflation $Y \overset{g}\defl Y,$ the \defn{cocone}, or the \defn{fiber} $\mbox{CoCone}(f) = \Fib(f)$ is an object $X$ appearing as the first term in some conflation $X \infl Y \overset{g}\defl Z.$ It is unique up to isomorphism. 
\item An \defn{$\sfr$-triangle} is a diagram $X\overset{f}{\infl} Y\overset{g}{\defl} Z\overset{\delta}{\dashrightarrow}$ where $X\overset{f}{\infl} Y\overset{g}{\defl} Z$ is a conflation realizing $\delta\in\Ebb(Z,X)$.
\item A \defn{morphism of $\sfr$-triangles} is a commutative diagram
\[
\xy
(-12,6)*+{X}="0";
(0,6)*+{Y}="2";
(12,6)*+{Z}="4";
(24,6)*+{}="6";
(-12,-6)*+{X'}="10";
(0,-6)*+{Y'}="12";
(12,-6)*+{Z'}="14";
(24,-6)*+{}="16";
{\ar^{f} "0";"2"};
{\ar^{g} "2";"4"};
{\ar@{-->}^{\delta} "4";"6"};
{\ar_{x} "0";"10"};
{\ar^{y} "2";"12"};
{\ar^{z} "4";"14"};
{\ar_{f'} "10";"12"};
{\ar_{g'} "12";"14"};
{\ar@{-->}_{\delta'} "14";"16"};
{\ar@{}|{} "0";"12"};
{\ar@{}|{} "2";"14"};
\endxy
\]
where $x_\ast\delta=z^\ast\delta'$.
\end{enumerate}
\end{notation}

The axioms above imply the following:

\begin{proposition}
\label{Appendix: l.e.s.}
Assume that $(\Csc,\Ebb,\mathfrak{s})$ is an extriangulated category, and let $X\xrightarrow{f}Y\xrightarrow{g}Z\overset{\delta}{\dashrightarrow}$ be an $\sfr$-triangle.
Then the following sequences of natural transformations are exact:
\[ \Csc(Z,-)\overset{-\circ g}{\longrightarrow}\Csc(Y,-)\overset{-\circ f}{\longrightarrow}\Csc(X,-)\overset{\delta^\sharp}{\longrightarrow}\Ebb(Z,-)\overset{g^\ast}{\longrightarrow}\Ebb(Y,-)\overset{f^\ast}{\longrightarrow}\Ebb(X,-), \]
\[ \Csc(-,X)\overset{f\circ-}{\longrightarrow}\Csc(-,Y)\overset{g\circ-}{\longrightarrow}\Csc(-,Z)\overset{\delta_\sharp}{\longrightarrow}\Ebb(-,X)\overset{f_\ast}{\longrightarrow}\Ebb(-,Y)\overset{g_\ast}{\longrightarrow}\Ebb(-,Z), \]
where $\delta^\sharp(x)=x_\ast\delta$ and $\delta_\sharp(y)=y^\ast\delta$.
\end{proposition}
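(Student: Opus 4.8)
The plan is as follows. The two displayed sequences are exchanged under passage to the opposite extriangulated category $(\Csc\op,\Ebb\op,\mathfrak{s}\op)$, in which $\Ebb\op(A,B)=\Ebb(B,A)$ and $\mathfrak{s}$-triangles are reversed, so I would only prove exactness of the first sequence, at its four middle terms $\Csc(Y,-)$, $\Csc(X,-)$, $\Ebb(Z,-)$ and $\Ebb(Y,-)$. First I would record the elementary vanishings $g\circ f=0$, $f_\ast\delta=0$ and $g^\ast\delta=0$ — obtained by comparing the $\mathfrak{s}$-triangle $X\xrightarrow{f}Y\xrightarrow{g}Z\xrightarrow{\delta}$ with split realizations via (ET3) and its dual — which together show that all consecutive composites in the sequence are zero, so that it is at least a complex.

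For exactness at $\Csc(Y,-)$ and at $\Csc(X,-)$ I would use only the compatibility axiom (ET3), equivalently property $(\ast)$ of an additive realization. The first of these says precisely that $g$ is a weak cokernel of $f$; the second says that a morphism $x\colon X\to W$ satisfies $x_\ast\delta=0$ if and only if it factors through $f$. For the latter: given such an $x$, the pair $(x,\id_Z)$ is a morphism of $\Ebb$-extensions $\delta\to x_\ast\delta$, so realizing $x_\ast\delta$ and invoking $(\ast)$ yields a morphism between the two realizations extending the ladder; since $x_\ast\delta=0$ its middle term splits off a retraction onto $W$, through which the desired factorization of $x$ is read off. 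The argument at $\Csc(Y,-)$ is of the same flavour, comparing instead with a split realization of $0\in\Ebb(W,X)$ and using $g\circ f=0$.

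The two remaining statements, exactness at $\Ebb(Z,-)$ and at $\Ebb(Y,-)$, are where the full octahedron axiom (ET4) and its dual are needed, and I expect this to be the main obstacle. For $\Ebb(Y,-)$ I would take $\rho\in\Ebb(Y,W)$ with $f^\ast\rho=0$, realize it as $W\to V\to Y$, and feed $\delta$ and $\rho$ into the appropriate version of (ET4); the resulting commutative diagram of $\mathfrak{s}$-triangles, once $f^\ast\rho=0$ is used to split an auxiliary conflation, should exhibit some $\sigma\in\Ebb(Z,W)$ with $g^\ast\sigma=\rho$. Exactness at $\Ebb(Z,-)$ would be handled symmetrically, combining (ET4)$^{\mathrm{op}}$ with the weak-kernel property already established for the $\Csc$-terms to upgrade a lift of $g$ through the realization of a given $\rho\in\Ebb(Z,W)$ with $g^\ast\rho=0$ into an honest morphism $x\colon X\to W$ with $x_\ast\delta=\rho$. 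The genuinely delicate bookkeeping throughout is keeping track of which of the two exchanged compatibilities $(\mathrm{ii})$, $(\mathrm{iii})$ in (ET4) is being invoked; the rest is routine diagram chasing. Alternatively, the statement may simply be cited from \cite[Corollary~3.12]{NakaokaPalu}.
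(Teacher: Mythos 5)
Your fallback is in fact exactly what the paper does: Proposition \ref{Appendix: l.e.s.} appears in the appendix as a recalled fact with no proof given (``The axioms above imply the following''), and it is precisely \cite[Corollary~3.12]{NakaokaPalu}, together with the weak (co)kernel property from \cite[Proposition~3.3]{NakaokaPalu}. Your sketch reproduces the standard argument from that reference correctly in outline: the duality reduction, exactness at $\Csc(Y,-)$ via (ET3) against a split realization of $0\in\Ebb(W,X)$ (with the left vertical map taken to be $0$, using $h\circ f=0$), exactness at $\Csc(X,-)$ via property $(\ast)$ applied to the morphism of extensions $(x,\id_Z)\colon\delta\to x_\ast\delta=0$, and exactness at $\Ebb(Y,-)$ via (ET4)$^{\mathrm{op}}$ applied to the composable deflations $V\defl Y\defl Z$, where the auxiliary conflation $W\infl U\defl X$ produced by the axiom realizes $f^\ast\rho=0$ and therefore splits, so that the retraction $U\to W$ pushes the class of $U\infl V\defl Z$ forward to the required $\sigma$ with $g^\ast\sigma=\rho$.

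One correction on exactness at $\Ebb(Z,-)$: you do not need (ET4)$^{\mathrm{op}}$ there, and the route you describe carries a genuine pitfall. If you build $x\colon X\to W$ by hand from the weak-kernel property (lift $g$ to $m$ with $vm=g$, then factor $mf$ through $w$), you obtain a commutative ladder $(x,m,\id_Z)$ between the realizations of $\delta$ and $\rho$, but a commutative ladder does \emph{not} by itself imply the relation $x_\ast\delta=\id_Z^\ast\rho$ between the classes --- the axioms only pass from morphisms of $\Ebb$-extensions to ladders, never conversely. The correct tool is (ET3)$^{\mathrm{op}}$: the single commutative right-hand square $(m,\id_Z)$ already produces a morphism $(x,\id_Z)\colon\delta\to\rho$ of $\Ebb$-extensions, which by definition means $x_\ast\delta=\rho$. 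With that substitution (and otherwise as written, or simply by citing \cite[Corollary~3.12]{NakaokaPalu} as the paper implicitly does), your outline is complete.
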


It follows that, in a conflation, the inflation is a weak kernel of the deflation, and the deflation is a weak cokernel of the inflation.

\begin{proposition}
 The following are examples of extriangulated categories (for more details, see~\emph{\cite{NakaokaPalu}}):
 \begin{itemize}
  \item Triangulated categories;
  \item Exact categories;
  \item Extension-closed full subcategories of triangulated (or extriangulated) categories;
  \item Quotients of exact (or extriangulated) categories by ideals generated by projective-injective objects (this was used in~\cref{rk:reduced});
  \item Homotopy categories of exact infinity-categories (see~\emph{\cite{NakaokaPalu2,Klemenc}}).
 \end{itemize}
\end{proposition}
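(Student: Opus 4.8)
The plan is to treat each class by exhibiting the triple $(\Csc,\Ebb,\sfr)$ explicitly and verifying the axioms (ET1)--(ET4) together with their duals; all of these verifications are carried out in detail in~\cite{NakaokaPalu}, so I would present the constructions and point to the source of each axiom rather than grind through the bookkeeping. The two foundational cases are triangulated and exact categories, from which the remaining three follow by restriction, descent, or appeal to the $\infty$-categorical literature. For a triangulated category $(\Csc,\susp)$ I would set $\Ebb(Z,X):=\Csc(Z,\susp X)$, which is additive and bifunctorial, giving (ET1). Given $\delta\in\Csc(Z,\susp X)$, one rotates $\delta$ to a distinguished triangle $X\xrightarrow{x}Y\xrightarrow{y}Z\xrightarrow{\delta}\susp X$ and defines $\sfr(\delta)$ to be the equivalence class $[X\xrightarrow{x}Y\xrightarrow{y}Z]$; additivity of $\sfr$, i.e. (ET2), follows from the fact that a direct sum of distinguished triangles is distinguished. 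Axiom (ET3) is then a direct translation of (TR3), and the pair (ET4), (ET4)$\op$ is precisely a reformulation of the octahedral axiom (TR4): the object $Y'$ and the morphisms $d,e$ are read off from the octahedron completing the composite $f'\circ f$, and the compatibilities $(i)$--$(iii)$ encode the commutativities built into the octahedral diagram. I would carry out this identification in full, since it is the prototype for the whole statement.

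For an exact category $(\Csc,\mathcal{E})$ I would put $\Ebb(Z,X):=\Ext^1_{\mathcal E}(Z,X)$, the group of Yoneda equivalence classes of conflations $X\infl Y\defl Z$, with $\sfr$ the tautological assignment sending a class to its own equivalence class of diagrams. Bifunctoriality (ET1) is realized by pushout along $f\in\Csc(X,X')$ and pullback along $h\in\Csc(Z',Z)$, and additivity (ET2) is the compatibility with Baer/direct sums. Axiom (ET3) follows from the universal property of the pushout together with the fact that a morphism of conflations is determined by its components on the outer terms, while (ET4) and (ET4)$\op$ reduce to constructing $Y'$ as a pushout (dually, a pullback) and checking the Noether-type $3\times3$ diagrams available in any exact category; these are the exact-category analogue of the octahedron, and I would indicate the construction while noting that the class identities $d^\ast\delta''=\delta$ and $f_\ast\delta''=e^\ast\delta'$ are verified on Yoneda representatives.

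For an extension-closed full additive subcategory $\Csc\subseteq\Dsc$ of an extriangulated category $\Dsc$, I would simply restrict $\Ebb_{\Dsc}$ to $\Csc\op\times\Csc$ and restrict $\sfr$. The only point requiring the hypothesis is that the realization stays inside $\Csc$: if $X,Z\in\Csc$ and $\delta\in\Ebb_{\Dsc}(Z,X)$, extension-closedness guarantees that the middle term of any realizing conflation is isomorphic to an object of $\Csc$, so $\sfr$ is well defined after restriction; the four axioms are then inherited verbatim. For the quotient of an extriangulated category $\Csc$ by the ideal $[\Ical]$ generated by a subcategory $\Ical$ of projective-injective objects, the key observation is that $\Ebb(\Ical,-)=0=\Ebb(-,\Ical)$, so any morphism factoring through $\Ical$ acts as zero on $\Ebb$ in either variable; hence $\Ebb$ factors through $\Csc/[\Ical]$ and the realization descends, yielding an extriangulated structure. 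This is exactly~\cite[Proposition~3.30]{NakaokaPalu}, invoked in~\cref{rk:reduced}, which I would cite. Finally, the homotopy category of an exact $\infty$-category carries an extriangulated structure by~\cite{NakaokaPalu2,Klemenc}, which I would cite directly.

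The main obstacle is the verification of (ET4) and (ET4)$\op$ in the two foundational cases: in the triangulated case it is the precise matching of the octahedral diagram with the data $(Y',d,e,\delta'')$ and the three extension-class identities, and in the exact case it is the pushout construction of $Y'$ together with the compatibility checks at the level of Yoneda classes. Everything else is formal. Since these axioms are already established in~\cite{NakaokaPalu}, I would present the triangulated computation in full as the model, observe that the exact case is strictly analogous and moreover self-dual, and let the remaining three classes follow from the restriction and descent arguments above.
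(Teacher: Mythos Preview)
Your proposal is correct and well-organized, but note that the paper itself gives no proof of this proposition: it is stated in the appendix as a reminder, with the parenthetical ``for more details, see~\cite{NakaokaPalu}'' (and~\cite{NakaokaPalu2,Klemenc} for the last item), and no proof environment follows. In that sense your write-up goes well beyond what the paper does---you actually sketch the constructions of $(\Ebb,\sfr)$ in each case and isolate where the work lies (matching the octahedral axiom with (ET4) in the triangulated case, pushout/pullback $3\times3$ diagrams in the exact case, restriction and descent for the remaining items)---whereas the paper simply points to the literature. Your identifications are accurate and your citations (in particular~\cite[Proposition~3.30]{NakaokaPalu} for the quotient case) match the paper's own references.
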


The following shifted version of the octahedron axiom (and its dual) is often used in our proofs.
\begin{lemma}
\label{lemma:shiftedOctahedron}
Let $X\infl Y\defl Z\dashrightarrow$ and $X\infl Y'\defl Z'\dashrightarrow$ be two $\sfr$-triangles whose inflations have the same domain.
Then there exists a diagram made of morphisms of $\sfr$-triangles of the form
% https://q.uiver.app/?q=WzAsMTIsWzAsMCwiWCJdLFsxLDAsIlkiXSxbMiwwLCJaIl0sWzAsMSwiWSciXSxbMCwyLCJaJyJdLFsxLDEsIkUiXSxbMiwxLCJaIl0sWzEsMiwiWiciXSxbMywwXSxbMywxXSxbMSwzXSxbMCwzXSxbMCwxLCIiLDAseyJzdHlsZSI6eyJ0YWlsIjp7Im5hbWUiOiJtb25vIn19fV0sWzEsMiwiIiwwLHsic3R5bGUiOnsiaGVhZCI6eyJuYW1lIjoiZXBpIn19fV0sWzAsMywiIiwyLHsic3R5bGUiOnsidGFpbCI6eyJuYW1lIjoibW9ubyJ9fX1dLFszLDQsIiIsMix7InN0eWxlIjp7ImhlYWQiOnsibmFtZSI6ImVwaSJ9fX1dLFszLDUsIiIsMix7ImNvbG91ciI6WzI0MCw2MCw2MF0sInN0eWxlIjp7InRhaWwiOnsibmFtZSI6Im1vbm8ifX19XSxbMSw1LCIiLDIseyJjb2xvdXIiOlsyNDAsNjAsNjBdLCJzdHlsZSI6eyJ0YWlsIjp7Im5hbWUiOiJtb25vIn19fV0sWzUsNiwiIiwyLHsiY29sb3VyIjpbMjQwLDYwLDYwXSwic3R5bGUiOnsiaGVhZCI6eyJuYW1lIjoiZXBpIn19fV0sWzUsNywiIiwyLHsiY29sb3VyIjpbMjQwLDYwLDYwXSwic3R5bGUiOnsiaGVhZCI6eyJuYW1lIjoiZXBpIn19fV0sWzQsNywiIiwxLHsibGV2ZWwiOjIsImNvbG91ciI6WzI0MCw2MCw2MF0sInN0eWxlIjp7ImhlYWQiOnsibmFtZSI6Im5vbmUifX19XSxbMiw2LCIiLDAseyJsZXZlbCI6MiwiY29sb3VyIjpbMjQwLDYwLDYwXSwic3R5bGUiOnsiaGVhZCI6eyJuYW1lIjoibm9uZSJ9fX1dLFsyLDgsIiIsMCx7InN0eWxlIjp7ImJvZHkiOnsibmFtZSI6ImRhc2hlZCJ9fX1dLFs2LDksIiIsMCx7ImNvbG91ciI6WzI0MCw2MCw2MF0sInN0eWxlIjp7ImJvZHkiOnsibmFtZSI6ImRhc2hlZCJ9fX1dLFs3LDEwLCIiLDAseyJjb2xvdXIiOlsyNDAsNjAsNjBdLCJzdHlsZSI6eyJib2R5Ijp7Im5hbWUiOiJkYXNoZWQifX19XSxbNCwxMSwiIiwwLHsic3R5bGUiOnsiYm9keSI6eyJuYW1lIjoiZGFzaGVkIn19fV1d
\[\begin{tikzcd}
	X & Y & Z & {} \\
	{Y'} & E & Z & {} \\
	{Z'} & {Z'} \\
	{} & {}
	\arrow[tail, from=1-1, to=1-2]
	\arrow[two heads, from=1-2, to=1-3]
	\arrow[tail, from=1-1, to=2-1]
	\arrow[two heads, from=2-1, to=3-1]
	\arrow[color={rgb,255:red,92;green,92;blue,214}, tail, from=2-1, to=2-2]
	\arrow[color={rgb,255:red,92;green,92;blue,214}, tail, from=1-2, to=2-2]
	\arrow[color={rgb,255:red,92;green,92;blue,214}, two heads, from=2-2, to=2-3]
	\arrow[color={rgb,255:red,92;green,92;blue,214}, two heads, from=2-2, to=3-2]
	\arrow[color={rgb,255:red,92;green,92;blue,214}, Rightarrow, no head, from=3-1, to=3-2]
	\arrow[color={rgb,255:red,92;green,92;blue,214}, Rightarrow, no head, from=1-3, to=2-3]
	\arrow[dashed, from=1-3, to=1-4]
	\arrow[color={rgb,255:red,92;green,92;blue,214}, dashed, from=2-3, to=2-4]
	\arrow[color={rgb,255:red,92;green,92;blue,214}, dashed, from=3-2, to=4-2]
	\arrow[dashed, from=3-1, to=4-1]
	\end{tikzcd}\]
	% https://q.uiver.app/?q=WzAsMTIsWzAsMCwiWCJdLFsxLDAsIlkiXSxbMiwwLCJaIl0sWzAsMSwiWSciXSxbMCwyLCJaJyJdLFsxLDEsIkUiXSxbMiwxLCJaIl0sWzEsMiwiWiciXSxbMywwXSxbMywxXSxbMSwzXSxbMCwzXSxbMCwxLCIiLDAseyJzdHlsZSI6eyJ0YWlsIjp7Im5hbWUiOiJtb25vIn19fV0sWzEsMiwiIiwwLHsic3R5bGUiOnsiaGVhZCI6eyJuYW1lIjoiZXBpIn19fV0sWzAsMywiIiwyLHsic3R5bGUiOnsidGFpbCI6eyJuYW1lIjoibW9ubyJ9fX1dLFszLDQsIiIsMix7InN0eWxlIjp7ImhlYWQiOnsibmFtZSI6ImVwaSJ9fX1dLFszLDUsIiIsMix7ImNvbG91ciI6WzI3MCw2MCw2MF0sInN0eWxlIjp7InRhaWwiOnsibmFtZSI6Im1vbm8ifX19XSxbMSw1LCIiLDIseyJjb2xvdXIiOlsyNzAsNjAsNjBdLCJzdHlsZSI6eyJ0YWlsIjp7Im5hbWUiOiJtb25vIn19fV0sWzUsNiwiIiwyLHsiY29sb3VyIjpbMjcwLDYwLDYwXSwic3R5bGUiOnsiaGVhZCI6eyJuYW1lIjoiZXBpIn19fV0sWzUsNywiIiwyLHsiY29sb3VyIjpbMjcwLDYwLDYwXSwic3R5bGUiOnsiaGVhZCI6eyJuYW1lIjoiZXBpIn19fV0sWzQsNywiIiwxLHsibGV2ZWwiOjIsImNvbG91ciI6WzI3MCw2MCw2MF0sInN0eWxlIjp7ImhlYWQiOnsibmFtZSI6Im5vbmUifX19XSxbMiw2LCIiLDAseyJsZXZlbCI6MiwiY29sb3VyIjpbMjcwLDYwLDYwXSwic3R5bGUiOnsiaGVhZCI6eyJuYW1lIjoibm9uZSJ9fX1dLFsyLDgsIiIsMCx7InN0eWxlIjp7ImJvZHkiOnsibmFtZSI6ImRhc2hlZCJ9fX1dLFs2LDksIiIsMCx7ImNvbG91ciI6WzI3MCw2MCw2MF0sInN0eWxlIjp7ImJvZHkiOnsibmFtZSI6ImRhc2hlZCJ9fX1dLFs3LDEwLCIiLDAseyJjb2xvdXIiOlsyNzAsNjAsNjBdLCJzdHlsZSI6eyJib2R5Ijp7Im5hbWUiOiJkYXNoZWQifX19XSxbNCwxMSwiIiwwLHsic3R5bGUiOnsiYm9keSI6eyJuYW1lIjoiZGFzaGVkIn19fV1d
\end{lemma}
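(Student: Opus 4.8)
The statement to prove is \cref{lemma:shiftedOctahedron}: given two $\sfr$-triangles $X\overset{x}{\infl} Y\overset{y}{\defl} Z\overset{\delta}{\dashrightarrow}$ and $X\overset{x'}{\infl} Y'\overset{y'}{\defl} Z'\overset{\delta'}{\dashrightarrow}$ sharing the domain $X$ of their inflations, we want a "shifted octahedron": an object $E$ fitting into a $3\times 3$-style diagram of morphisms of $\sfr$-triangles. The plan is to reduce this to the standard axiom (ET4)$^{\mathrm{op}}$ (the dual octahedron axiom), which concerns a composition of two deflations, not a pair of inflations with common domain. So the first and conceptually central step is to manufacture such a composition.

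First I would form the direct sum and use it to pass from "common domain of inflations" to "composable deflations". Concretely, consider the inflation $x\colon X\infl Y$ and complete it; separately, consider the conflation $X\overset{x'}{\infl} Y'\overset{y'}{\defl} Z'$. The trick is to look at the split conflation and the conflation $X\infl Y'\defl Z'$ together, or better: apply (ET4) (resp. (ET4)$^{\mathrm{op}}$) to the pair consisting of $X\overset{x}{\infl}Y$ and an appropriately chosen second inflation/deflation so that $E$ appears as the middle object. The cleanest route: by (ET3)$^{\mathrm{op}}$ there is a morphism of $\sfr$-triangles from $X\infl Y'\defl Z'\dashrightarrow$ to $X\infl Y\defl Z\dashrightarrow$ only if we have a suitable commuting square, which we do not a priori; so instead I would dualize. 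Note that $X\overset{x}\infl Y$ and $X\overset{x'}\infl Y'$ being inflations out of $X$ is, after applying the involution that swaps inflations and deflations (i.e. working in $\Csc^{\mathrm{op}}$), the statement that $Y\defl X$ and $Y'\defl X$ are deflations into $X$. Then taking the pullback/weak pullback of these two deflations — equivalently applying (ET4)$^{\mathrm{op}}$ in $\Csc^{\mathrm{op}}$, which is (ET4) in $\Csc$ — produces exactly the object $E$ with deflations $E\defl Y$ and $E\defl Y'$ and cones $Z'$ and $Z$ respectively, together with the compatibility of the connecting extensions. Unwinding back to $\Csc$ gives inflations $Y\infl E$, $Y'\infl E$ with cokernels $Z'$, $Z$, which is precisely the middle row and column of the claimed diagram.

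The concrete steps in order would be: (1) set $\delta\in\Ebb(Z,X)$ and $\delta'\in\Ebb(Z',X)$ to be the classes realized by the two given conflations; (2) apply axiom (ET4)$^{\mathrm{op}}$ (or, symmetrically, (ET4) after passing to the opposite extriangulated category, as permitted by the self-dual nature of the axioms) to the pair of conflations viewed with common "codomain" $X$, obtaining an object $E$, morphisms $Y\infl E\defl Z'$ and $Y'\infl E\defl Z$, and an extension $\delta''\in\Ebb$ relating everything; (3) read off from the conclusions (i)--(iii) of (ET4)$^{\mathrm{op}}$ that the four squares of the diagram commute and that each row and column is an $\sfr$-triangle; (4) identify the connecting morphisms in the outer row/column with $\delta$ and $\delta'$ up to the canonical identifications, and check the $x_\ast = z^\ast$ compatibility defining morphisms of $\sfr$-triangles, which is exactly what the pushout/pullback compatibility in (ET4)$^{\mathrm{op}}$ delivers. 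Steps (3)--(4) are bookkeeping once the object $E$ is in hand.

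The main obstacle is step (2): matching the shape of the desired diagram to the literal shape in axiom (ET4)/(ET4)$^{\mathrm{op}}$. The octahedron axiom as stated is about a \emph{composition} $g'\circ g$ of morphisms (a deflation followed by a deflation, or the dual for inflations), whereas here the two given inflations are \emph{parallel} (same source), not composable. The resolution is that "parallel inflations out of $X$" is the correct input to the \emph{pullback} construction, and the pullback of two deflations is exactly what (ET4)$^{\mathrm{op}}$ produces (see the weak-pullback/weak-pushout formulations, e.g. the diagrams labelled $(wPO)$ earlier in the paper and~\cite[Proposition 1.20]{LiuNakaoka}); so I must be careful to invoke the dual axiom in the right direction, or equivalently to first dualize. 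An alternative, if one prefers to avoid dualizing, is to take the biproduct $Y\oplus Y'$, form the conflation $X\overset{\tiny{\begin{bmatrix}x\\-x'\end{bmatrix}}}{\infl} Y\oplus Y'\defl E'\dashrightarrow$ for a suitable $E'$ (this exists because $\begin{bmatrix}x\\-x'\end{bmatrix}$ is an inflation, being the composite of the inflation $x\oplus\id$ after a split one, or directly by (ET4)), and then resolve the resulting diagram; but this amounts to the same computation. I would present the dualization route as it is shortest, and verify the self-duality of the axiom system (which is explicitly part of the definition: (ET1)--(ET4) come with their (ET3)$^{\mathrm{op}}$, (ET4)$^{\mathrm{op}}$ counterparts) to justify that no extra hypothesis is needed.
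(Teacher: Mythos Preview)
The paper does not prove this lemma; it is stated in the appendix as a known fact (it is essentially \cite[Proposition~3.15]{NakaokaPalu}). So the comparison is between your argument and the standard proof in the literature.

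Your main line of argument contains a genuine gap. You propose to dualize, obtaining two deflations $Y\defl X$ and $Y'\defl X$ with common \emph{target} in $\Csc^{\mathrm{op}}$, and then to ``apply (ET4)$^{\mathrm{op}}$'' to this pair. But (ET4)$^{\mathrm{op}}$, like (ET4), takes as input a \emph{composable} pair of morphisms (two deflations $g'\circ g$), not two deflations with a common codomain. Passing to the opposite category exchanges (ET4) and (ET4)$^{\mathrm{op}}$, but it does not turn a span into a composable pair; after dualizing you still have a cospan, and neither axiom applies directly. What you are implicitly invoking---that the weak pullback of two deflations with common target exists and sits in a $3\times3$ diagram---is precisely the dual of the statement you are trying to prove, so citing it is circular.

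The actual argument proceeds in two steps. First, realize $x'_\ast\delta\in\Ebb(Z,Y')$ by a conflation $Y'\infl E\defl Z$; the realization axiom $(\ast)$ then furnishes a morphism of $\sfr$-triangles from $X\infl Y\defl Z$ to $Y'\infl E\defl Z$ over $(x',\id_Z)$, producing the map $Y\to E$. Second, to obtain the remaining column $Y\infl E\defl Z'$ and the required compatibilities, one applies (ET4) to the \emph{composable} inflations $X\overset{x'}{\infl}Y'\infl E$ (now one does have a composable pair) and identifies the resulting data with the given $\delta,\delta'$. Your alternative sketch via $\bigl[\begin{smallmatrix}x\\-x'\end{smallmatrix}\bigr]\colon X\infl Y\oplus Y'$ is closer to this, but you would still need to justify that this map is an inflation and then run the (ET4) argument; you have not done so.
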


We also make use of \defn{relative extriangulated structures}, as defined in~\cite[Section 3.2]{HerschendLiuNakaoka-I}.
For any full subcategory $\Xcal\subseteq \cat$ there are two associated subfunctors $\Ebb_\Xcal$ and $\Ebb^\Xcal$ of $\Ebb$, defined as follows:
\begin{definition}
\label{definition:relative structures}
\begin{eqnarray*}
   \Ebb_\Xcal(C,A) & = & \{\delta\in\Ebb(C,A)\;|\;g^\ast\delta = 0 \text{ for any $g$ with domain in }\Xcal \} \\
   \Ebb_\Xcal(C,A) & = & \{\delta\in\Ebb(C,A)\;|\;f_\ast\delta = 0 \text{ for any $f$ with codomain in }\Xcal \} \\
\end{eqnarray*}
\end{definition}

Those subfunctors are shown to induce extriangulated substructures in~\cite[Proposition 3.19]{HerschendLiuNakaoka-I}, where the first (resp. second) is the maximal extriangulated substructure making all objects in $\Xcal$ projective (resp. injective).

\bibliographystyle{alpha}
\bibliography{MutationExtricats}

\newcommand{\etalchar}[1]{$^{#1}$}
\begin{thebibliography}{ABCJP10}

\bibitem[AAG08]{Avella-AlaminosGeiss}
Diana Avella-Alaminos and Christof Gei\ss.
\newblock Combinatorial derived invariants for gentle algebras.
\newblock {\em J. Pure Appl. Algebra}, 212(1):228--243, 2008.

\bibitem[ABCJP10]{AssemBrustleCharbonneau-JodoinPlamondon}
Ibrahim Assem, Thomas Br\"ustle, Gabrielle Charbonneau-Jodoin, and Pierre-Guy
  Plamondon.
\newblock Gentle algebras arising from surface triangulations.
\newblock {\em Algebra Number Theory}, 4(2):201--229, 2010.

\bibitem[AET21]{AdachiEnomotoTsukamoto}
Takahide Adachi, Haruhisa Enomoto, and Mayu Tsukamoto.
\newblock Intervals of $s$-torsion pairs in extriangulated categories with
  negative first extensions.
\newblock Preprint,
  \href{https://arxiv.org/abs/2103.09549}{\texttt{arXiv:2103.09549}}, 2021.

\bibitem[AI12]{AiharaIyama}
Takuma Aihara and Osamu Iyama.
\newblock Silting mutation in triangulated categories.
\newblock {\em Journal of the London Mathematical Society}, 85(3):633--668,
  2012.

\bibitem[AIR14]{AdachiIyamaReiten}
Takahide Adachi, Osamu Iyama, and Idun Reiten.
\newblock {$\tau$}-tilting theory.
\newblock {\em Compos. Math.}, 150(3):415--452, 2014.

\bibitem[Ami09]{Amiot-ClusterCats}
Claire Amiot.
\newblock Cluster categories for algebras of global dimension 2 and quivers
  with potential.
\newblock {\em Ann. Inst. Fourier (Grenoble)}, 59(6):2525--2590, 2009.

\bibitem[Asa12]{Asashiba-Sugaku}
Hideto Asashiba.
\newblock Derived equivalence classification of algebras.
\newblock {\em S\={u}gaku}, 64(4):357--383, 2012.

\bibitem[AT22]{AdachiTsukamoto}
Takahide Adachi and Mayu Tsukamoto.
\newblock Hereditary cotorsion pairs and silting subcategories in
  extriangulated categories.
\newblock {\em J. Algebra}, 594:109--137, 2022.

\bibitem[Bar15]{barwick2015exact}
Clark Barwick.
\newblock On exact $\infty$-categories and the theorem of the heart.
\newblock {\em Compositio Mathematica}, 151(11):2160--2186, 2015.

\bibitem[BCS21]{BaurCoelhoSimoes}
Karin Baur and Raquel Coelho~Sim{\~{o}}es.
\newblock A geometric model for the module category of a gentle algebra.
\newblock {\em Int. Math. Res. Not. IMRN}, (15):11357--11392, 2021.

\bibitem[BDM{\etalchar{+}}20]{BrustleDouvilleMousavandThomasYildirim}
Thomas Br\"{u}stle, Guillaume Douville, Kaveh Mousavand, Hugh Thomas, and Emine
  Y{\i}ld{\i}r{\i}m.
\newblock On the combinatorics of gentle algebras.
\newblock {\em Canad. J. Math.}, 72(6):1551--1580, 2020.

\bibitem[BGMS23]{BarnardGunawanMeehanSchiffler}
Emily Barnard, Emily Gunawan, Emily Meehan, and Ralf Schiffler.
\newblock Cambrian combinatorics on quiver representations (type {$\Bbb{A}$}).
\newblock {\em Adv. in Appl. Math.}, 143:Paper No. 102428, 37, 2023.

\bibitem[BMR{\etalchar{+}}06]{BuanMarshReinekeReitenTodorov}
Aslak~Bakke Buan, Robert Marsh, Markus Reineke, Idun Reiten, and Gordana
  Todorov.
\newblock Tilting theory and cluster combinatorics.
\newblock {\em Adv. Math.}, 204(2):572--618, 2006.

\bibitem[Bon10]{Bondarko}
M.~V. Bondarko.
\newblock Weight structures vs. {$t$}-structures; weight filtrations, spectral
  sequences, and complexes (for motives and in general).
\newblock {\em J. K-Theory}, 6(3):387--504, 2010.

\bibitem[BR87]{ButlerRingel}
M.~C.~R. Butler and Claus~Michael Ringel.
\newblock Auslander-{R}eiten sequences with few middle terms and applications
  to string algebras.
\newblock {\em Comm. Algebra}, 15(1-2):145--179, 1987.

\bibitem[BS23]{BrustleSchiffler-Oberwolfach}
Thomas Br\"ustle and Ralf Schiffler.
\newblock Private communication in {O}berwolfach.
\newblock 2023.

\bibitem[BSSZ05]{BautistaSalorioZuazua}
Raymundo Bautista, Maria~Jose Souto~Salorio, and Rita Zuazua.
\newblock Almost split sequences of complexes of fixed size.
\newblock {\em J. Algebra}, 287(1):140--168, 2005.

\bibitem[BV20]{bondarko2020hearts}
Mikhail~Vladimirovich Bondarko and Sergei~Vladimirovich Vostokov.
\newblock The hearts of weight structures are the weakly idempotent complete
  categories.
\newblock {\em Chebyshevski\v{i} Sb.}, 21(3):29--38, 2020.

\bibitem[BY13]{BrustleYang}
Thomas Br\"{u}stle and Dong Yang.
\newblock Ordered exchange graphs.
\newblock In {\em Advances in representation theory of algebras}, EMS Ser.
  Congr. Rep., pages 135--193. Eur. Math. Soc., Z\"{u}rich, 2013.

\bibitem[BZ21]{BuanZhou}
Aslak~Bakke Buan and Yu~Zhou.
\newblock Weak cotorsion, $\tau$-tilting and two-term categories.
\newblock Preprint,
  \href{https://arxiv.org/abs/2111.10995}{\texttt{arXiv:2111.10995}}, 2021.

\bibitem[CB96]{CrawleyBoevey}
William Crawley-Boevey.
\newblock Rigid integral representations of quivers.
\newblock In {\em Can. Math. Soc. Conf. Proc. 18}, pages 155--163, 1996.

\bibitem[CCS06]{CalderoChapotonSchiffler}
Philippe Caldero, Fr\'ed\'eric Chapoton, and Ralf Schiffler.
\newblock Quivers with relations arising from clusters ({$A_n$} case).
\newblock {\em Trans. Amer. Math. Soc.}, 358(3):1347--1364, 2006.

\bibitem[CLZ22]{chen2022extensions}
Xiao-Wu Chen, Zengqiang Lin, and Yu~Zhou.
\newblock The extensions of t-structures.
\newblock Preprint,
  \href{https://arxiv.org/abs/2205.10831}{\texttt{arXiv:2205.10831}}, 2022.

\bibitem[DF15]{DerksenFei}
Harm Derksen and Jiarui Fei.
\newblock General presentations of algebras.
\newblock {\em Adv. Math.}, 278:210--237, 2015.

\bibitem[DIJ19]{DemonetIyamaJasso}
Laurent Demonet, Osamu Iyama, and Gustavo Jasso.
\newblock {$\tau$}-tilting finite algebras, bricks, and {$g$}-vectors.
\newblock {\em Int. Math. Res. Not. IMRN}, (3):852--892, 2019.

\bibitem[FGL19]{FuGengLiu}
Changjian Fu, Shengfei Geng, and Pin Liu.
\newblock Relative rigid objects in triangulated categories.
\newblock {\em J. Algebra}, 520:171--185, 2019.

\bibitem[FK10]{FuKeller}
Changjian Fu and Bernhard Keller.
\newblock On cluster algebras with coefficients and 2-{C}alabi-{Y}au
  categories.
\newblock {\em Transactions of the American Mathematical Society},
  362(2):859--895, 2010.

\bibitem[GLS08]{GeissLeclerclSchroer-survey}
Christof Gei{\ss}, Bernard Leclerc, and Jan Schr\"{o}er.
\newblock Preprojective algebras and cluster algebras.
\newblock In {\em Trends in representation theory of algebras and related
  topics}, EMS Ser. Congr. Rep., pages 253--283. Eur. Math. Soc., Z\"{u}rich,
  2008.

\bibitem[GNP21]{GNP1}
Mikhail Gorsky, Hiroyuki Nakaoka, and Yann Palu.
\newblock Positive and negative extensions in extriangulated categories.
\newblock Preprint, \href{https://arxiv.org/abs/2103.12482}
  {\texttt{arXiv:2103.12482}}, 2021.

\bibitem[HJ15]{HolmJorgensen-modifiedCC}
Thorsten Holm and Peter J{\o}rgensen.
\newblock Generalized friezes and a modified {C}aldero-{C}hapoton map depending
  on a rigid object.
\newblock {\em Nagoya Math. J.}, 218:101--124, 2015.

\bibitem[HJ16]{HolmJorgensen-modifiedCC2}
Thorsten Holm and Peter J{\o}rgensen.
\newblock Generalised friezes and a modified {C}aldero-{C}hapoton map depending
  on a rigid object, {II}.
\newblock {\em Bull. Sci. Math.}, 140(4):112--131, 2016.

\bibitem[HLN21]{HerschendLiuNakaoka-I}
Martin Herschend, Yu~Liu, and Hiroyuki Nakaoka.
\newblock {$n$}-exangulated categories ({I}): {D}efinitions and fundamental
  properties.
\newblock {\em J. Algebra}, 570:531--586, 2021.

\bibitem[HLN22]{HLNII}
Martin Herschend, Yu~Liu, and Hiroyuki Nakaoka.
\newblock n-{E}xangulated categories ({I}{I}): {C}onstructions from n-cluster
  tilting subcategories.
\newblock {\em Journal of Algebra}, 594:636--684, 2022.

\bibitem[IJ17]{IyamaJasso}
Osamu Iyama and Gustavo Jasso.
\newblock Higher {A}uslander correspondence for dualizing {R}-varieties.
\newblock {\em Algebras and Representation Theory}, 20(2):335--354, 2017.

\bibitem[IJY14]{IyamaJorgensenYang}
Osamu Iyama, Peter J{\o}rgensen, and Dong Yang.
\newblock Intermediate co-{$t$}-structures, two-term silting objects,
  {$\tau$}-tilting modules, and torsion classes.
\newblock {\em Algebra Number Theory}, 8(10):2413--2431, 2014.

\bibitem[INP18]{IyamaNakaokaPalu}
Osamu Iyama, Hiroyuki Nakaoka, and Yann Palu.
\newblock Auslander--{R}eiten theory in extriangulated categories.
\newblock Preprint,
  \href{https://arxiv.org/abs/1805.03776}{\texttt{arXiv:1805.03776}}, 2018.

\bibitem[IO13]{IyamaOppermann}
Osamu Iyama and Steffen Oppermann.
\newblock Stable categories of higher preprojective algebras.
\newblock {\em Advances in Mathematics}, 244:23--68, 2013.

\bibitem[IR14]{iyama2014introduction}
Osamu Iyama and Idun Reiten.
\newblock Introduction to $\tau$-tilting theory.
\newblock {\em Proceedings of the National Academy of Sciences},
  111(27):9704--9711, 2014.

\bibitem[IY08]{IyamaYoshino}
Osamu Iyama and Yuji Yoshino.
\newblock Mutation in triangulated categories and rigid {C}ohen--{M}acaulay
  modules.
\newblock {\em Inventiones mathematicae}, 172(1):117--168, 2008.

\bibitem[Iya11]{Iyama}
Osamu Iyama.
\newblock Cluster tilting for higher {A}uslander algebras.
\newblock {\em Advances in Mathematics}, 226(1):1--61, 2011.

\bibitem[Jas15]{Jasso-Reduction}
Gustavo Jasso.
\newblock Reduction of {$\tau$}-tilting modules and torsion pairs.
\newblock {\em Int. Math. Res. Not.}, (16):7190--7237, 2015.

\bibitem[JKS16]{JensenKingSu}
Bernt~Tore Jensen, Alastair~D. King, and Xiuping Su.
\newblock A categorification of {G}rassmannian cluster algebras.
\newblock {\em Proc. Lond. Math. Soc. (3)}, 113(2):185--212, 2016.

\bibitem[J{\o}r18]{jorgensen2018co}
Peter J{\o}rgensen.
\newblock Co-t-structures: The first decade.
\newblock {\em Surveys in Representation Theory of Algebras}, 716:25--36, 2018.

\bibitem[JS21]{JorgensenShah-modifiedCC}
Peter J{\o}rgensen and Amit Shah.
\newblock Grothendieck groups of $d$-exangulated categories and a modified
  {C}aldero--{C}hapoton map.
\newblock Preprint,
  \href{https://arxiv.org/abs/2106.02142}{\texttt{arXiv:2106.02142}}, 2021.

\bibitem[JS22]{JorgensenShah-Index}
Peter J{\o}rgensen and Amit Shah.
\newblock The index with respect to a rigid subcategory of a triangulated
  category.
\newblock Preprint,
  \href{https://arxiv.org/abs/2201.00740}{\texttt{arXiv:2201.00740}}, 2022.

\bibitem[JY20]{JorgensenYakimov}
Peter J{\o}rgensen and Milen Yakimov.
\newblock {$c$}-vectors of 2-{C}alabi-{Y}au categories and {B}orel subalgebras
  of {$\mathfrak{sl}_\infty$}.
\newblock {\em Selecta Math. (N.S.)}, 26(1):Paper No. 1, 46, 2020.

\bibitem[Kel05]{keller2005triangulated}
Bernhard Keller.
\newblock On triangulated orbit categories.
\newblock {\em Documenta Mathematica}, 10:551--581, 2005.

\bibitem[Kim20]{Kimura}
Yuta Kimura.
\newblock Tilting theory of noetherian algebras.
\newblock Preprint, \href{https://arxiv.org/abs/2006.01677}
  {\texttt{arXiv:2006.01677}}, 2020.

\bibitem[Kle22]{Klemenc}
Jona Klemenc.
\newblock The stable hull of an exact $\infty$-category.
\newblock {\em Homology, Homotopy and Applications}, 24(2):195--220, 2022.

\bibitem[KR07]{KellerReiten}
Bernhard Keller and Idun Reiten.
\newblock Cluster-tilted algebras are {G}orenstein and stably {C}alabi--{Y}au.
\newblock {\em Advances in Mathematics}, 211(1):123--151, 2007.

\bibitem[KS12]{KellerScherotzke-Integral}
Bernhard Keller and Sarah Scherotzke.
\newblock The integral cluster category.
\newblock {\em Int. Math. Res. Not. IMRN}, (12):2867--2887, 2012.

\bibitem[KY14]{KoenigYang}
Steffen Koenig and Dong Yang.
\newblock Silting objects, simple-minded collections, t-structures and
  co-t-structures for finite-dimensional algebras.
\newblock {\em Documenta Mathematica}, 19:403--438, 2014.

\bibitem[KZ08]{KoenigZhu}
Steffen Koenig and Bin Zhu.
\newblock From triangulated categories to abelian categories: cluster tilting
  in a general framework.
\newblock {\em Math. Z.}, 258(1):143--160, 2008.

\bibitem[LN19]{LiuNakaoka}
Yu~Liu and Hiroyuki Nakaoka.
\newblock Hearts of twin cotorsion pairs on extriangulated categories.
\newblock {\em Journal of Algebra}, 528:96--149, 2019.

\bibitem[LZ20]{liu2020hereditary}
Yu~Liu and Panyue Zhou.
\newblock Hereditary cotorsion pairs on extriangulated subcategories.
\newblock Preprint,
  \href{https://arxiv.org/abs/2012.06997}{\texttt{arXiv:2012.06997}}, 2020.

\bibitem[LZ22]{LiuZhou-relative}
Yu~Liu and Panyue Zhou.
\newblock Relative rigid objects in extriangulated categories.
\newblock {\em Journal of Pure and Applied Algebra}, 226(5):106923, 2022.

\bibitem[McC17]{McConville}
Thomas McConville.
\newblock Lattice structure of {G}rid-{T}amari orders.
\newblock {\em J. Combin. Theory Ser. A}, 148:27--56, 2017.

\bibitem[MSSSS13]{hernandez2013auslander}
Octavio Mendoza, Edith~Corina S{\'a}enz, Valente Santiago, and
  Mar{\'\i}a~Jos{\'e} Souto~Salorio.
\newblock {A}uslander--{B}uchweitz context and co-t-structures.
\newblock {\em Applied Categorical Structures}, 5(21):417--440, 2013.

\bibitem[MZ22]{marks2022lifting}
Frederik Marks and Alexandra Zvonareva.
\newblock Lifting and restricting t-structures.
\newblock {\em Bulletin of the London Mathematical Society}, 2022.

\bibitem[NOS22]{NakaokaOgawaSakai}
Hiroyuki Nakaoka, Yasuaki Ogawa, and Arashi Sakai.
\newblock Localization of extriangulated categories.
\newblock {\em Journal of Algebra}, 611:341--398, 2022.

\bibitem[NP19]{NakaokaPalu}
Hiroyuki Nakaoka and Yann Palu.
\newblock Extriangulated categories, {H}ovey twin cotorsion pairs and model
  structures.
\newblock {\em Cah. Topol. G\'{e}om. Diff\'{e}r. Cat\'{e}g.}, {L}{X}, 2019.

\bibitem[NP20]{NakaokaPalu2}
Hiroyuki Nakaoka and Yann Palu.
\newblock External triangulation of the homotopy category of an exact
  quasi-category.
\newblock Preprint,
  \href{https://arxiv.org/abs/2004.02479}{\texttt{arXiv:2004.02479}}, 2020.

\bibitem[OPS18]{OpperPlamondonSchroll}
Sebastian Opper, Pierre-Guy Plamondon, and Sibylle Schroll.
\newblock A geometric model for the derived category of gentle algebras.
\newblock Preprint,
  \href{http://arxiv.org/abs/1801.09659}{\texttt{arXiv:1801.09659}}, 2018.

\bibitem[Pau08]{Pauksztello}
David Pauksztello.
\newblock Compact corigid objects in triangulated categories and
  co-{$t$}-structures.
\newblock {\em Cent. Eur. J. Math.}, 6(1):25--42, 2008.

\bibitem[PPP19]{PaluPilaudPlamondon-surfaces}
Yann Palu, Vincent Pilaud, and Pierre-Guy Plamondon.
\newblock Non-kissing and non-crossing complexes for locally gentle algebras.
\newblock {\em Journal of Combinatorial Algebra}, 3(4):401--438, 2019.

\bibitem[PPP21]{PaluPilaudPlamondon-nonkissing}
Yann Palu, Vincent Pilaud, and Pierre-Guy Plamondon.
\newblock Non-kissing complexes and tau-tilting for gentle algebras.
\newblock {\em Mem. Amer. Math. Soc.}, 274(1343):vii+110, 2021.

\bibitem[PPPP19]{PadrolPaluPilaudPlamondon}
Arnau Padrol, Yann Palu, Vincent Pilaud, and Pierre-Guy Plamondon.
\newblock Associahedra for finite type cluster algebras and minimal relations
  between g-vectors.
\newblock Preprint,
  \href{http://arxiv.org/abs/1906.06861}{\texttt{arXiv:1906.06861}}, 2019.

\bibitem[Pre20]{Pressland}
Matthew Pressland.
\newblock Mutation of frozen {J}acobian algebras.
\newblock {\em J. Algebra}, 546:236--273, 2020.

\bibitem[Pre21]{Pressland-corrigendum}
Matthew Pressland.
\newblock Corrigendum to ``{M}utation of frozen {J}acobian algebras'' [{J}.
  {A}lgebra 546 (2020) 236-273].
\newblock {\em J. Algebra}, 588:533--537, 2021.

\bibitem[PZ20]{PauksztelloZvonareva}
David Pauksztello and Alexandra Zvonareva.
\newblock Co-{$t$}-structures, cotilting and cotorsion pairs.
\newblock Preprint,
  \href{https://arxiv.org/abs/2007.06536}{\texttt{arXiv:2007.06536}}, 2020.

\bibitem[Rea06]{reading2006cambrian}
Nathan Reading.
\newblock Cambrian lattices.
\newblock {\em Advances in Mathematics}, 205(2):313--353, 2006.

\bibitem[Tac64]{Tachikawa}
Hiroyuki Tachikawa.
\newblock On dominant dimensions of {Q}{F}-3 algebras.
\newblock {\em Transactions of the American Mathematical Society},
  112(2):249--266, 1964.

\bibitem[Tre21]{treffinger2021tau}
Hipolito Treffinger.
\newblock $\tau$-tilting theory--an introduction.
\newblock Preprint,
  \href{https://arxiv.org/abs/2106.00426}{\texttt{arXiv:2106.00426}}, 2021.

\bibitem[Wu21]{Wu}
Yilin Wu.
\newblock Relative cluster categories and higgs categories.
\newblock Preprint,
  \href{https://arxiv.org/abs/2109.03707}{\texttt{arXiv:2109.03707}}, 2021.

\bibitem[YZ19]{YangZhu}
Wuzhong Yang and Bin Zhu.
\newblock Relative cluster tilting objects in triangulated categories.
\newblock {\em Trans. Amer. Math. Soc.}, 371(1):387--412, 2019.

\bibitem[YZZ19]{YangZhouZhu-Ghost}
Wuzhong Yang, Panyue Zhou, and Bin Zhu.
\newblock Triangulated categories with cluster tilting subcategories.
\newblock {\em Pacific J. Math.}, 301(2):703--740, 2019.

\bibitem[ZZ20]{ZhouZhu-2termRelative}
Panyue Zhou and Bin Zhu.
\newblock Two-term relative cluster tilting subcategories, {$\tau$}-tilting
  modules and silting subcategories.
\newblock {\em J. Pure Appl. Algebra}, 224(9):106365, 22, 2020.

\end{thebibliography}
% \printbibliography{}

\end{document}